\definecolor{cof}{RGB}{219,144,71}
\definecolor{pur}{RGB}{186,146,162}
\definecolor{greeo}{RGB}{91,173,69}
\definecolor{greet}{RGB}{52,111,72}
\numberwithin{equation}{section}
\newtheorem{thm}{Theorem}[section]
\newtheorem{defn}[thm]{Definition}
\newtheorem{prop}[thm]{Proposition}
\newtheorem{lemma}[thm]{Lemmminglea}
\newtheorem{cor}[thm]{Corollary}
\newtheorem{example}[thm]{Example}
\newtheorem{rem}[thm]{Remark}
\newcommand{\PGL}{{\rm PGL}}
\newcommand{\Id}{{\rm Id}}
\newcommand{\Gr}{{\rm Gr}}
\newcommand{\lt}{{\rm lt}}
\newcommand{\CD}{\xymatrix@R=1pc@C=1pc}
\newcommand{\CDR}{\xymatrix@R=1pc}
\newcommand{\CDC}{\xymatrix@C=1pc}
 \DeclareMathOperator{\Spec}{Spec}
\def\cB{{\mathcal B}}
\def\cD{{\mathcal D}}
\def\cE{{\mathcal E}}
\def\cL{{\mathcal L}}
\def\cZ{{\mathcal Z}}
\def\sF{{\mathscr F}}
\def\sR{\mathscr{R}}
\def\sV{\mathscr{V}}
\def\tsV{\widetilde{\mathscr{V}}}
\def\fG{\mathfrak{G}}
\def\fL{\mathfrak{L}}
\def\fT{\mathfrak{T}}
\def\fV{\mathfrak{V}}
\def\fb{\mathfrak{b}}
\def\fd{\mathfrak{d}}
\def\fe{\mathfrak{e}}
\def\fl{\mathfrak{l}}
\def\fr{\mathfrak{r}}
\def\frb{{\fr\fb}}
\def\fr{\mathfrak{r}}
\def\fs{\mathfrak{s}}
\def\ft{\mathfrak{t}}
\def\FF{{\mathbb F}}
\def\NN{{\mathbb N}}
\def\PP{{\mathbb P}}
\def\GG{{\mathbb G}}
\def\GGm{{{\mathbb G}_{\rm m}}}
\def\TT{{\mathbb T}}
\def\ZZ{{\mathbb Z}}
\def\II{{\mathbb I}}
\def\AA{{\mathbb A}}
\def\QQ{{\mathbb Q}}
\def\rU{{\rm U}}
\def\Ga{{\Gamma}}
\def\tGa{{\widetilde{\Gamma}}}
\def\Om{{\Omega}}
\def\vt{{\varTheta}}
\def\vt{{\vartheta}}
\def\si{{\sigma}}
\def\bG{{\bar G}}
\def\var{{\rm Var}}
\def\Index{{\rm Index}}
\def\invlex{{\rm rlex}}
\def\rlex{{\rm rlex}}
\def\lex{{\rm lex}}
\def\zero{{=0}}
\def\one{{=1}}
\def\tX{{\widetilde X}}
\def\tY{{\widetilde Y}}
\def\tZ{{\widetilde Z}}
\def\lra{\longrightarrow}
\def\kk{{\bf k}}
\def\bp{{\bf p}}
\def\bt{{\bf t}}
\def\bU{{\bf U}}
\def\bx{{\bf x}}
\def\bz{{\bf z}}
\def\bm{{\bf m}}
\def\bn{{\bf n}}
\def\ua{{\underbar {\it a}}}
\def\ub{{\underbar {\it b}}}
\def\ue{{\underbar {\it e}}}
\def\ui{{\underbar {\it i}}}
\def\uh{{\underbar {\it h}}}
\def\uk{{\underbar {\it k}}}
\def\um{{\underbar {{\mu}}}}
\def\um{{\underbar {\it m}}}
\def\uu{{\underbar {\it u}}}
\def\uv{{\underbar {\it v}}}
\def\uw{{\underbar {\it w}}}
\def\buw{{\underbar {\bf w}}}
\def\pl{{\hbox{Pl\"ucker}}}
\def\hpl{{\hbox{$\vp\vr$-Pl\"ucker}}}
 \def\2{{\rm I\!I}}
\def\bF{{\bar F}}
\def\-{{\setminus}}
\def\ve{{\varepsilon}}
\def\vp{{\varpi}}
\def\vr{{\varrho}}
\def\hs{{\hslash}}
\def\vi{{\varphi}}
\def\vik{{\varphi_{[k]}}}
\def\cBk{{\cB_{[k]}}}
\def\cBkd1{{\cB_{[k]}^{d_\vr=1}}}
\def\cBd1{{\cB^{d_\vr=1}}}
\def\modcBkd1{{\mod \; \cB_{[k]}^{d_\vr=1}}}
\def\cBgov{{\cB^\gov}}
\def\cBngv{{\cB^\ngv}}
\def\sgn{{\rm sgn}}
\def\rk{{\rm rank \;}}
\def\ori{{\rm ori}}
\def\inc{{\rm inc}}
\def\ngv{{\rm ngv}}
\def\gov{{\rm gov}}
\def\mh{{\hbox{\scriptsize ${\rm mh}$}}}
\def\sfm{{{\sF}}}
\def\sFgr{{{\sF}^{\rm rel}_{\um, \Ga}}}
\def\sFgir{{{\sF}^{\rm irr}_{\um, \Ga}}}
\def\de{\delta}
\def\tsR{{\widetilde{\sR}}}
\def\La{\Lambda}
\def\up{{\Upsilon}}
\def\bGr{{\underline \Gr}}
\def\ud{{\underbar  d}}
\def\whwp{{\widehat\wp}}
\def\barf{{\bar f}}
\def\di{\diamond}
\newcommand*\bigcdot{\mathpalette\bigcdot@{.7}}
\newcommand*\bigcdot@[2]{\mathbin{\vcenter{\hbox{\scalebox{#2}{$\m@th#1\bullet$}}}}}
\def\bcd{{\bigcdot}}
\def\@tocline#1#2#3#4#5#6#7{\relax
  \ifnum #1>\c@tocdepth 
  \else
    \par \addpenalty\@secpenalty\addvspace{#2}%
    \begingroup \hyphenpenalty\@M
    \@ifempty{#4}{%
      \@tempdima\csname r@tocindent\number#1\endcsname\relax
    }{%
      \@tempdima#4\relax
    }%
    \parindent\z@ \leftskip#3\relax \advance\leftskip\@tempdima\relax
    \rightskip\@pnumwidth plus4em \parfillskip-\@pnumwidth
    #5\leavevmode\hskip-\@tempdima
      \ifcase #1
       \or\or \hskip 1em \or \hskip 2em \else \hskip 3em \fi%
      #6\nobreak\relax
    \hfill\hbox to\@pnumwidth{\@tocpagenum{#7}}\par
    \nobreak
    \endgroup
  \fi}
\begin{document}

\title{Universal Characteristic-free Resolution of Singularities, I}

\date{}
\author{Yi Hu}

\maketitle

\begin{abstract} 
We prove that for any singular integral affine variety $X$ of finite presentation over a perfect field defined over $\ZZ$,
there exists a smooth morphism from $Y$
 onto $X$ such that $Y$ admits a resolution. That is, there exists a smooth scheme $\widetilde{Y}$ and a projective birational   morphism from
$\widetilde{Y}$ onto $Y$, followed by a  smooth morphism 
from $Y$ onto $X$.

Our approach differs fundamentally from existing methods, as we neither restrict to any specific singular variety nor fix the characteristic. Instead, we design a  {\it universal} blowup process that
{\it simultaneously} resolves all possible singularities, and,
 our method is entirely characteristic-free.
 \end{abstract}

\maketitle

\tableofcontents

\section{Introduction}


\begin{thm}\label{main:intro}
{\rm (Characteristic-free Resolution of Singularity Types)}
Let $X$ be an integral affine scheme
of finite presentation  over a perfect field defined over $\ZZ$,. 
Assume further that $X$ is singular. 
Then, there exists a  smooth morphism $Y \to X$ from a scheme $Y$ onto $X$,
a smooth scheme  $\tY$ and a surjective  proper birational morphism $\tY \to Y$.  
\end{thm}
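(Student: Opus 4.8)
The plan is to resolve not $X$ itself but to \emph{spread $X$ out} inside a universal family of affine schemes of the same ``type'', whose total space is canonically smooth in every characteristic, to run a characteristic-free blow-up process on that smooth space, and finally to descend the output to a smooth cover $Y \to X$ together with its resolution $\widetilde Y \to Y$.

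\textit{Step 1: the universal smooth model.} Since $X$ is of finite presentation, fix a closed embedding $X \hookrightarrow \AA^N$ and generators $f_1,\dots,f_r$ of its ideal, of degrees $\le D$. Let $B = \AA^M$ be the affine space whose coordinates $c_{i\alpha}$ ($1\le i\le r$, $|\alpha|\le D$) are the coefficients of $r$ polynomials of degree $\le D$ in $N$ variables, and set $\mathcal X = V(F_1,\dots,F_r)\subset\AA^N\times B$ with $F_i = \textstyle\sum_{\alpha} c_{i\alpha}\,a^{\alpha}$, with projections $\pi\colon\mathcal X\to B$ and $p\colon\mathcal X\to\AA^N$. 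Because $\partial F_i/\partial c_{i,0}\equiv 1$ and the coefficient blocks $(c_{i\alpha})_\alpha$ are disjoint for distinct $i$, the Jacobian of $(F_1,\dots,F_r)$ in the $c$-variables has rank $r$ at every point of $\mathcal X$; hence $\mathcal X$ is \emph{smooth} over the ground field (indeed $p$ realizes $\mathcal X$ as a vector bundle of rank $M-r$ over $\AA^N$, so $\mathcal X\cong\AA^{N+M-r}$), and this is insensitive to the characteristic. Moreover $X=\pi^{-1}(c^0)$ for the coefficient point $c^0\in B(\kk)$ of the chosen $f_i$, so $X$ is a fibre of $\pi$ sitting inside the smooth scheme $\mathcal X$; and $Y_0:=p^{-1}(X)=\mathcal X\times_{\AA^N}X\to X$ is a smooth affine-space bundle, the first candidate for the required cover.

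\textit{Step 2: the universal blow-up process and descent.} On the smooth scheme $\mathcal X$ run a canonical finite sequence of blow-ups $\mathcal X=\mathcal X_0\leftarrow\mathcal X_1\leftarrow\cdots\leftarrow\mathcal X_\ell$ with \emph{smooth} centers $Z_j\subset\mathcal X_j$ defined intrinsically from the incidence structure --- e.g. from the loci where $\Omega_{\mathcal X_j/B}$ drops rank, organized by a characteristic-free order function together with a combinatorial invariant attached to the accumulated exceptional divisors; because the centers depend only on the type, one such process handles all $X$ of that type simultaneously. Since each $\mathcal X_j$ and each center is smooth, every $\mathcal X_j$ is smooth and $\mathcal X_\ell\to\mathcal X$ is projective birational with $\mathcal X_\ell$ smooth. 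The centers are to be chosen so that, after pulling the whole tower back over $X$ (equivalently, over $Y_0$) and deleting the components that fall into centers, the resulting scheme $\widetilde Y$ is \emph{smooth}, while the intermediate scheme $Y$ --- the appropriate strict/total transform of $Y_0$ taken \emph{before} the final honest blow-up layer --- is arranged to map smoothly and surjectively onto $X$; one checks this last point using that $Y$ is obtained from the affine bundle $Y_0\to X$ through ambient blow-ups in smooth centers compatible with the bundle structure. The remaining layer then gives the projective birational resolution $\widetilde Y\to Y$. It is essential here that one resolves a smooth \emph{cover} of $X$, not $X$ itself: the fibre of $\mathcal X_\ell\to B$ over $c^0$ need not be smooth, and this extra room is precisely what lets the final stage be purely monomial/combinatorial, hence characteristic-free.

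\textit{Main obstacle, and degenerate cases.} The whole weight of the argument rests on Step 2: proving that the universal process \emph{terminates} and that its output descends to a genuinely smooth cover, uniformly in the characteristic. In ordinary resolution termination is forced by Hironaka-type invariants (maximal order and its companions) which are ill-behaved in characteristic $p$; here one must instead show that a combinatorial invariant attached to the exceptional configuration of the \emph{universal} space strictly decreases, and that this decrease survives restriction to the cover of the arbitrary fibre $X$. Controlling the interaction between the relative-singularity locus of $\pi$ and the growing exceptional divisors, and establishing the required upper-semicontinuity and strict-drop statements with no characteristic-zero input, is where essentially all the difficulty lies; the smoothness of $\mathcal X$, the bundle structure of $Y_0\to X$, and the formal descent are by comparison routine. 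Finally, when $\mathrm{char}\,\kk=0$, or when $\dim X\le 3$ in any characteristic, Step 2 can be bypassed by taking $Y=X$ and invoking classical resolution (Hironaka, resp.\ Abhyankar--Cossart--Piltant), so the genuinely new content concerns $\dim X\ge 4$ in positive characteristic, where no direct resolution of $X$ is available and the universal cover construction is indispensable.
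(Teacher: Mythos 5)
Your Step~1 is internally correct as far as it goes: the incidence scheme $\mathcal X\subset\AA^N\times B$ is indeed smooth (an affine space), and $Y_0=p^{-1}(X)\to X$ is a smooth surjective morphism. But this construction buys you nothing. Since $p\colon\mathcal X\to\AA^N$ is an affine-space bundle, $Y_0\cong X\times\AA^{M-r}$, defined in $\mathcal X$ by the pullback of the \emph{same arbitrary equations} $f_1,\dots,f_r$ that define $X$; the singularity type is unchanged, and resolving $Y_0$ is tautologically equivalent to resolving $X$. No structural simplification of the defining equations has been achieved. This is precisely the point the paper's use of Mn\"ev/Lafforgue universality is designed to address: the smooth morphism $Y\to X$ that the paper actually constructs has $Y$ cut out (inside a chart of $\Gr^{3,E}$) by \emph{Pl\"ucker relations} together with coordinate hyperplanes --- equations that are quadratic, square-free, and rigidly organized --- which is entirely different from being cut out by arbitrary $f_i$. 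Your ``universality'' (all systems of bounded degree) provides parameter space, not structure.

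The real gap is Step~2, which is a restatement of the problem dressed as a solution. You assert a ``canonical finite sequence of blow-ups with smooth centers defined intrinsically from the incidence structure,'' ``organized by a characteristic-free order function together with a combinatorial invariant,'' and then concede in the next paragraph that proving such an invariant exists, is upper-semicontinuous, and strictly drops in all characteristics ``is where essentially all the difficulty lies.'' That is exactly the open problem; nothing in the proposal advances it. Blowing up where $\Omega_{\mathcal X_j/B}$ drops rank is the classical relative-singular-locus strategy, whose failure in characteristic $p$ (the $x^p$ obstruction) is well documented --- and you never explain how the incidence structure helps circumvent it, because with arbitrary $f_i$ it cannot. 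In the paper, by contrast, the blowup centers are \emph{explicitly} constructed from the terms of the Pl\"ucker relations after passing to the binomial model $\sV\subset\sR$: the $\vartheta$-, $\wp$-, and $\ell$-blowups are codimension-two intersections of named divisors, chosen in a specified order, and the termination and final smoothness are proved by direct Jacobian computations (\S\ref{main-statement}), with no need for a decreasing invariant at all. The key insight --- that arbitrary singularities have, up to smooth morphism, a \emph{standardized square-free presentation} to which an explicit toric-flavored blowup algorithm can be attached --- is the one your proposal never reaches, and without it Step~2 has no content.
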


Put it in display, we have two surjective morphisms
\begin{equation}\label{reso}
\tY \lra Y \lra X
\end{equation}
such that the first is proper birational  and the second is smooth.
In such a case, we say $X$  admits a resolution of singularity type.
We prove the existence of \eqref{reso} as follow.

Our approach is fundamentally distinct from all existing methods in that we neither restrict to any specific singular variety nor fix the characteristic.  Precisely, we develop a \emph{{\it universal platform}} for all singular affine varieties, governed by \emph{{\it universal defining relations.}}
Within this framework,  every singularity as a stratum of a smooth
ambient space
is realized as an intersection of a fixed set of smooth divisors. 
We then design a \emph{{\it universal blowup process}}
 that  separates these divisors, 
thereby simultaneously resolving all possible singularities.  
This approach eliminates the need for any monotone invariants to track singularity changes, as our method inherently ensures transversal intersections of all ambient smooth divisors, universal for all possible singularities over $\ZZ$.

In summary, Theorem \ref{main:intro} provides 
{\it universal characteristic-free resolutions of all singularity types}.
Our method operates over integers, therefore, is entirely characteristic-free.
This is rigorously formulated in Theorem \ref{main2:intro}.

We now provide some details about our approach.

\subsection{Some key new insights:
 the universal relations for all singularities} $\ $

\subsubsection{}
\noindent
 {\bf Using universal relations for singularities.}
\smallskip

In inertial thinking, one needs  arbitrary relations when  treating arbitrary singularities.
Our key point  instead is that we can commence with 
a set of considerably simpler  standardized equations for singularities. 
To furnish such standard equations for arbitrary singularities, 
Mn\"ev's universality (\cite{Mnev88}) offers a platform to begin with.
By the Gelfand-MacPherson correspondence,  Lafforgue's version of Mn\"ev's universality [Theorem I.14, \cite{La03}] (cf. \cite{LV12})
 asserts that the quotient  by the maximal torus  $\GG_m^{n-1}$ of 
a matroid Schubert cell of 
the Grassmannian $\Gr^{3,E}$ with varying $n=\dim E$  
can possess singularity of any given type defined over $\ZZ$. In its rigorous term,
 Lafforgue's [Theorem I.14, \cite{La03}]
 provides the smooth morphism $Y \lra X$
as stated in Theorem \ref{main:intro}.

We still need to resolve $Y$, which has the same singularity types as $X$.

In the inertial approach, when working with the arbitrary affine variety $X$, we start with its ideal $I$, generated by finitely many arbitrary equations in an affine space. The goal is to construct a birational modification $\widetilde{X}$ of $X$, where the new ideal $\widetilde{I}$ is locally generated by a refined set of equations. Despite the apparent diversity of methods, the 
fundamental idea remains the same: one aims to replace the poorly behaved Jacobian matrix $\text{Jac}(I)$ with a well-behaved Jacobian matrix $\text{Jac}(\widetilde{I})$ (e.g., making it of maximal rank).

However, since the original equations defining $I$ are arbitrary, deriving a set of desirable new equations for $\widetilde{I}$ from these arbitrary relations seems highly non-trivial—especially in positive characteristic $p > 0$. A key obstacle to the author is that these equations may a priori contain factors like $x^p$, and it is unclear whether existing techniques, even with blowups, can reliably eliminate or make such terms irrelevant.

This is exactly where our key insight comes into play: for the equally singular variety $Y$, we can instead begin with a set of much simpler, \emph{{\it standardized universal equations}},  
thereby circumventing the aforementioned obstacle.
Perhaps more importantly, we do not fix any singular variety $Y$ and
treat it individually, instead, we put singularities altogether and treat them \emph{\it universally}.

We elaborate on the above points below.

In Proposition (p.4) of \cite{La03}, Lafforgue provides explicit equations for the variety $Y$, showing that $Y$ is a Zariski open subset of a closed subscheme $Y_\Gamma$ of 
an affine chart $\bU_\Gr \subset \mathrm{Gr}^{3,E}$. This closed 
subscheme $Y_\Gamma$ is cut out from $\bU_\Gr$ 
by explicit coordinate hyperplanes (indexed by $\Gamma$) in 
an affine chart $\bU$ of the $\pl$ projective space 
$\mathbb{P}(\wedge^3 E)$. In this article, we call this unique closed 
affine subscheme $Y_\Gamma$ a \emph{\it $\Gamma$-scheme}.

The defining equations of $Y_\Gamma$ in
the affine space $\bU$ consist precisely of:
\begin{itemize}
\item All $\pl$ relations, and
\item The equations of coordinate hyperplanes (indexed by $\Gamma$)
\end{itemize}

These equations are much less $``$arbitrary$"$ than those defining $X$: they are either linear or quadratic, square-free (thereby, of course, containing no factors like $x^p$), and equally crucially,
they are \emph{{\it exceptionally well-organized}}.

This structure has two key consequences:
\begin{enumerate}
\item The first two properties eliminate the fundamental obstruction of $x^p$ factors when working in characteristic $p>0$.
\item The last property—the highly organized nature of the equations—allows us to systematically track and manage changes in the equations throughout an extensive sequence of blowups.  This precise bookkeeping is crucial, as it allows us to:
\begin{itemize}
\item Maintain control over the equations' form throughout the extensive resolution process, and
\item Preserve and propagate certain desired properties (particularly 
 square-freeness),
\end{itemize}
both of which are essential for our approach.
\end{enumerate}

As a result, at the end of our resolution process, we can explicitly compute the Jacobian matrix $\mathrm{Jac}(\widetilde{I})$ for the transformed variety $\widetilde{Y}$, which establishes the smoothness of $\widetilde{Y}$.

\subsubsection{}
\noindent
 {\bf Resolving all singularities simultaneously:
\noindent
 universal and characteristic-free approach}
\smallskip

Moreover, our approach differs fundamentally from all existing methods in studying resolution problems in the following key aspects:
\begin{itemize}
\item Unlike traditional approaches, we neither fix nor treat individual singular varieties $X$ in isolation;
\item Nor do we restrict our method to any specific characteristic $p$. Instead, we develop a completely characteristic-free framework.
\end{itemize}

Again, according to Theorem I.14 of \cite{La03}, all singularity types $Y_\Gamma$ are embedded in $\mathrm{Gr}^{3,E}$ with varying $n = \dim E$, sharing the same-type defining relations:
\begin{itemize}
\item The $\pl$ relations
\item Equations of coordinate subspaces indexed by $\Gamma$
\end{itemize}
In other words, these are the \emph{{\it universal relations}} for all
 singularity types.

Heuristically, $\mathrm{Gr}^{3,E}$ (with varying $n = \dim E$) serves as a $``$universal space$"$ containing all possible singularity types. We develop a uniform resolution process that:
\begin{itemize}
\item Blows up the universal space $\mathrm{Gr}^{3,E}$;
\item Depends solely on the Pl\"ucker relations\footnote{Though a genius might be needed to manage arbitrary equations over characteristic zero, $\pl$ relations demand no such brilliance—they apply universally to all singularities, irrespective of characteristic.}, universal for all $Y_\Ga$;
\item Applies universally to all singularity types $Y_\Ga$.
\end{itemize}

Upon completing this process, we calculate Jacobians, 
for the transformed  universe
and all transformed $\widetilde Y_\Gamma$, and then 
find that all singularity types 
$Y_\Gamma$ are  simultaneously resolved.
During the process, we never need to check
singularities for any individual $Y_\Ga$, 
thereby eliminating  the need
 for introducing monotone invariants to track singularity changes,
For example, it is very possible that the singularities of 
some individual $Y_\Ga$ might become worse at some steps
of the transforms.

Put it differently, we developed a \emph{{\it universal blowups}} in
the ambient universe  $\Gr^{3, E}$ 
that design to resolve all singularity types simultaneously in the very end.

Since our resolution process is performed on 
the universe  $\Gr^{3, E}$ defined over
$\ZZ$ and makes no reference to any particular characteristic, the  entire
approach is fundamentally characteristic-free.

\smallskip
In the sequel, we will provide more technical 
details for the statements in this subsection.
As mentioned, we start with blowing up the universe 
$\Gr^{3,E}$, and the blowup process is designed solely 
based on the $\pl$ relations.

\subsection{The newer universe $\sV$ with relations suitable for 
designing the blowups}
$\ $

Instead of directly working with the $\pl$ relations,
we first establish a newer universe $\sV$, birational to $\Gr^{3,E}$, also
contains all singularity types.  The benefits are that
the $\pl$ relations are transformed into linear equations and all
the remaining relations are binomials.
These binomials are also neatly organized, and are especially suitable to
design desirable sequential blowups to achieve our purpose.

We now establish $\sV$.

\subsubsection{$\pl$ embedding and $\pl$ relations} $\ $

Following Lafforgue's presentation of \cite{La03}, 
suppose we have a set of vector spaces, $E_1, \cdots, E_n$ such that 
every $E_\alpha$, $1\le \alpha\le n$,  is of dimension 1 over a field $\kk$
 (or, a free module of rank 1 over $\ZZ$),
 for some positive integer $n>1$.
We let  $$E=E_1 \oplus \ldots \oplus E_n.$$ 
 Then, the Grassmannian $\Gr^{3,E}$, defined by
$$\Gr^{3, E}=\{ \hbox{linear subspaces} \;F \hookrightarrow E \mid \dim F=3\}, $$
is a projective variety defined over $\ZZ$.

We have a canonical decomposition
$$\wedge^3 E=\bigoplus_{\ui =(i_1<i_2< i_3) \in \II_{3,n}} E_{i_1}\otimes E_{i_2} \otimes E_{i_3},$$
where $\II_{3,n}$ is the set of all sequences of 3 distinct integers between 1 and $n$.

This gives rise to the $\pl$ embedding of the Grassmannian by
$$\Gr^{3, E} \hookrightarrow \PP(\wedge^3 E)=\{(p_\ui)_{\ui \in \II_{3,n}} \in \GG_m 
\backslash (\wedge^3 E \- \{0\} )\},$$
$$F \lra [\wedge^3 F],$$
where $\GG_m$ is the multiplicative group.

As a closed subscheme of $\PP(\wedge^3 E)$, the Grassmanian  $\Gr^{3, E}$ 
 is defined,  among other relations in general, by 
the $\pl$  ideal $I_\wp$, generated by all $\pl$ relations, 
whose typical member is expressed succinctly, in this article, as
\begin{equation}\label{eq1-intro}
F: \; \sum_{s \in S_F} \sgn (s) p_{\uu_s} p_{\uv_s}
\end{equation}
where $S_F$ is an index set, $\uu_s, \uv_s \in \II_{3,n}$
for any $s \in S_F$, and $\sgn (s)$ is the $\pm$ sign associated with the term $p_{\uu_s} p_{\uv_s}$
(see \eqref{pluckerEq} and \eqref{succinct-pl} for details).

\subsubsection
{$\pl$ relation $F$ and the projective space $\PP_F$} $\ $

Given the above $\pl$ relation $F$, we introduce
  the projective space $$\PP_F$$ which comes equipped with the homogeneous coordinates
$$[x_{(\uu_s,\uv_s)}]_{s\in S_F}.$$
Then, corresponding to each $\pl$ relation \eqref{eq1-intro}, there is
a linear  homogeneous equation in $\PP_F$, 
called the induced {\it  linearized $\pl$ relation}, 
\begin{equation}\label{eq2-intro}
L_F: \; \sum_{s \in S_F} \sgn (s) x_{(\uu_s,\uv_s)}
\end{equation}
 (see Definition \ref{defn:linear-pl}). We set
 $ \La_F:=\{(\uu_s,\uv_s) \mid s \in S_F\}.$
 
As  any $\Ga$-scheme is a closed subscheme of some affine chart,  we can focus on an affine chart
$\bU =(p_\um \ne 0)$ of the $\pl$ projective space $\PP(\wedge^3 E)$ for some fixed $\um \in \II_{3,n}$. 
Up to permutation, we can assume $\um=(123)$ and this is
what we do throughout this paper.
We can identify the coordinate ring of $\bU$ with the polynomial ring 
$\kk [x_\uu]_{\uu \in \II_{3,n} \- \{\um\}}$. For any $\pl$ relation $F$, we let
$\bF$ be the de-homogenization of $F$ on the chart $\bU$.
Given this chart, we then explicitly describe a set of $\pl$ relations, called
{\it $\um$-primary $\pl$ relations}, listed under a carefully chosen total order $``<_\wp"$,
\begin{equation}\label{order-sF}
\sF =\{\bF_1 <_\wp \cdots <_\wp \bF_\Upsilon\},
\end{equation}
with $\Upsilon= {n \choose 3}-1-3(n-3)$, such that together they define
the closed embedding 
$$\bU \cap \Gr^{3,E} \lra \bU.$$
Further, on the chart $\bU$,  if we set $p_\um =1$
 and set $x_\uu=p_\uu$ for any $\uu \in \II_{3,n}\-\{\um\}$,
 then any  $\um$-primary relation $\bF\in \sF$ admits the following de-homogenized expression
\begin{equation}
\bF: \;  \sgn(s_F) x_{\uu_{s_F}} +\sum_{s \in S_F \- \{s_F\}} \sgn (s) x_{\uu_s}x_{\uv_s},
\end{equation}
where $x_{\uu_{s_F}}$ is called the \emph{{\it leading variable}} of $\bF$ whose term is called the \emph{{\it leading term }}\footnote{
The notion of leading term or leading variable plays important technical
roles in several aspects: it helps to design the universal blowups,
it particularly helps to assure certain important 
square-freeness and hence helps to obtain the desirable Jacobian matrices in
the final outputs. They are special and technically important,
the reader will meet them frequently throughout this paper.} of $\bF$
and $s_F \in S_F$ is the index
for the leading term. 
Correspondingly, the term $ \sgn (s_F) x_{(\um,\uu_{s_F})}$
 is called the \emph{{\it leading term}}
 of the linearized $\pl$ relation $L_F$ of \eqref{eq2-intro}.
 See \eqref{tour: all primary pl}  for concrete presentations of all the
$\um$-primary relations; see around
\eqref{equ:localized-uu} and \eqref{the-form-LF} for precise details.

Next,   parallel to the construction used in \cite{Hu2025},
we introduce the  rational map
\begin{equation}\label{this-theta-intro}
 \xymatrix{
\Theta_{[\up],\Gr}: \bU \cap \Gr^{3, E} \ar @{^{(}->}[r]  & \bU \ar @{-->}[r]  & \prod_{\bF \in \sF} \PP_F   }
\end{equation}
$$
 [x_\uu]_{\uu \in \II_{3,n}^\star} \lra  
\prod_{\bF \in \sF}   [x_\uu x_\uv]_{(\uu,\uv) \in \La_F}
$$   
where $[x_\uu]_{\uu \in \II_{3,n}^\star} $ 
is the de-homogenized $\pl$ coordinates of a point 
of  $\bU \cap \Gr^{3, E}$.   Here, we set
$$\II_{3,n}^\star=\II_{3,n}\- \{(123)\}.$$

\subsubsection{The new model $\sV$} $\ $

We let $\sV$ be the closure of the graph of the rational map $\bar\Theta_{[\up],\Gr}$. Then, 
 we obtain the following diagram 
$$ \xymatrix{
\sV \ar[d] \ar @{^{(}->}[r]   \ar[d] \ar @{^{(}->}[r]  &
\sR:= \bU  \times \prod_{\bF \in \sF} \PP_F \ar[d] \\
\bU \cap \Gr^{3, E}  \ar @{^{(}->}[r]    & \bU.}
$$

The scheme $\sV$ is singular, in general, and is birational to $\bU \cap \Gr^{3, E}$.
In \S \ref{tour}, the reader can find a detailed exposition of the motivations behind introducing the model $\sV$.


Then, as the further necessary
 steps to achieve our ultimate goal, we are to perform some carefully designed
sequential embedded blowups for $(\sV \subset \sR)$.

For  the purpose of applying induction, employed in some  proofs, 
we also need to introduce the following intermediate rational maps. 

For any positive integer $h$, 
we set  $[h]:=\{1,\cdots,h\}.$

Then,  for any $k \in [\up]$, we have the rational map
\begin{equation}\label{this-theta[k]-intro}
 \xymatrix{
\Theta_{[k],\Gr}: \bU \cap \Gr^{3, E} \ar @{^{(}->}[r]  & \bU \ar @{-->}[r]  & 
 \prod_{i \in [k]} \PP_{F_i}   }
\end{equation}
$$
 [x_\uu]_{\uu \in \II_{3,n}^\star} \lra  
\prod_{i \in [k]}   [x_\uu x_\uv]_{(\uu,\uv) \in \La_{F_i}}
$$ 
We let $\sV_{[k]}$ be the closure of the graph of the rational map $\Theta_{[k],\Gr}$. Then, 
 we obtain the following diagram 
$$ \xymatrix{
\sV_{[k]} \ar[d] \ar @{^{(}->}[r]   \ar[d] \ar @{^{(}->}[r]  &
\sR_{[k]}:= \bU  \times  \prod_{i \in [k]} \PP_{F_i}\ar[d] \\
\bU \cap \Gr^{3, E}  \ar @{^{(}->}[r]    & \bU.
}
$$
The scheme $\sV_{[k]}$ is birational to  $\bU \cap \Gr^{3, E}$. 

Set $\sR_{\sF_{[0]}}:=\bU$.  There exists a forgetful map 
$$\sR_{\sF_{[j]}} \lra \sR_{\sF_{[j-1]}},\;\;\; \hbox{ for any $j \in [\up]$}.$$

In the above notations, we have
$$\sV=\sV_{\sF_{[\up]}}, \;\; \sR_{\sF}=\sR_{\sF_{[\up]}}.$$

\subsubsection{Establishing the new platform $(\sV \subset \sR_\sF)$} $\ $

We need defining relations for $\sV$ in the smooth ambient space $\sR_{\sF}$.

\begin{prop}  {\rm (Corollary \ref{eq-tA-for-sV})}  The scheme $\sV$, as a closed subscheme of
$\sR= \bU \times  \prod_{\bF \in \sF} \PP_F$,
is defined by the following relations, for all $\bF \in \sF$,
\begin{eqnarray} 
B_{F,(s,t)}: \;\; x_{(\uu_s, \uv_s)}x_{\uu_t}x_{ \uv_t}-x_{(\uu_t, \uv_t)}x_{\uu_s}x_{ \uv_s}, \;\; \forall \;\; 
s, t \in S_F \- \{s_F\}, \label{eq-Bres-intro}\\
B_{F, (s_F,s)}: \;\; x_{(\uu_s, \uv_s)}x_{\uu_F} - x_{(\um,\uu_F)}   x_{\uu_s} x_{\uv_s}, \;\;
\forall \;\; s \in S_F \- \{s_F\},  \label{eq-B-intro} \\ 
\cB^\frb,   \;\; \;\; \;\; \;\; \;\; \;\; \;\; \;\; \;\; \;\; \;\; \;\; \;\; \;\; \;\; \;\; \;\; \;\; \;\;
\label{eq-hq-intro}\\
L_F: \;\; \sum_{s \in S_F} \sgn (s) x_{(\uu_s,\uv_s)}, 
\;\; \;\; \;\; \;\; \;\; \;\;
\label{linear-pl-intro}
\end{eqnarray}
with $\bF$  expressed as 
$\sgn (s_F) x_{\uu_F} +\sum_{s \in S_F \- \{s_F\}} \sgn (s) x_{\uu_s}x_{\uv_s}$,
 where $\cB^\frb$  is the set of 
$\frb$-binomials  (see Definitions \ref{defn:frb}).  
\end{prop}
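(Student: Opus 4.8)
The plan is to compute the defining ideal of $\sV$ inside $\sR = \bU \times \prod_{\bF \in \sF} \PP_F$ by following the graph construction through the rational map $\bar\Theta_{[\up],\Gr}$, and then to identify the resulting relations with the four families \eqref{eq-Bres-intro}--\eqref{linear-pl-intro}. Since $\sV$ is the closure of the graph, the starting point is the tautological relations: on the locus where all de-homogenized $\pl$ coordinates $x_\uu$ are nonzero, the point in $\PP_F$ has homogeneous coordinates $x_{(\uu_s,\uv_s)} = x_{\uu_s} x_{\uv_s}$ up to simultaneous scaling, for $(\uu_s,\uv_s) \in \La_F$. Thus on the open part of the graph one has, for each $\bF$ and each pair $s,t \in S_F$, the equality $x_{(\uu_s,\uv_s)} x_{\uu_t} x_{\uv_t} = x_{(\uu_t,\uv_t)} x_{\uu_s} x_{\uv_s}$ — these are exactly the binomials $B_{F,(s,t)}$. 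Separating off the leading index $s_F$ (for which $x_{(\um,\uu_F)}$ plays the role of $x_{\uu_{s_F}} x_{\uv_{s_F}}$, i.e. the leading term of $\bF$ corresponds to $x_{\uu_F}\cdot x_{(\um,\um)}=x_{\uu_F}$ after setting $p_\um=1$) gives the mixed-type binomials $B_{F,(s_F,s)}$ in \eqref{eq-B-intro}. The linearized $\pl$ relation $L_F$ of \eqref{linear-pl-intro} then arises because the original $\pl$ relation $\bF$ vanishes identically on $\bU \cap \Gr^{3,E}$: substituting $x_{(\uu_s,\uv_s)} = x_{\uu_s}x_{\uv_s}$ (valid on the graph over the torus) into $\sum \sgn(s) x_{(\uu_s,\uv_s)}$ recovers $\bF$ up to the leading-term normalization, so $L_F$ holds on $\sV$; and $\cB^\frb$ collects the remaining $\frb$-binomials coming from the other (non-$\um$-primary) $\pl$ relations and the homogeneity relations among the various $\PP_F$ factors, which by Definition \ref{defn:frb} are already part of the established toolkit.

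The substantive content is the reverse inclusion: showing that the closed subscheme $\sV' \subset \sR$ cut out by \eqref{eq-Bres-intro}--\eqref{linear-pl-intro} contains no extra components beyond $\sV$, i.e. that these relations actually generate the full ideal of the graph closure and not merely cut out a scheme containing it. I would argue this chart by chart on $\sR$. Fix an affine chart of $\prod_{\bF \in \sF}\PP_F$ by choosing, in each $\PP_F$, a distinguished coordinate to set equal to $1$ — the natural choice being the leading coordinate $x_{(\um,\uu_F)}$, since the leading term of $L_F$ is $\sgn(s_F) x_{(\um,\uu_F)}$ and the leading-variable structure of the $\sF$ list was designed precisely to make such normalizations coherent. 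On such a chart, the relation $B_{F,(s_F,s)}$ becomes $x_{(\uu_s,\uv_s)} x_{\uu_F} = x_{\uu_s}x_{\uv_s}$, which (on the locus $x_{\uu_F}\neq 0$, and then by taking closure) expresses each auxiliary coordinate $x_{(\uu_s,\uv_s)}$ as a Laurent monomial in the $x_\uu$'s; feeding these back into $L_F$ reproduces exactly $\bF/x_{\uu_F}$, so the image is contained in the graph of $\Theta_{[\up],\Gr}$ restricted to the corresponding chart of $\bU$. One then checks that the binomials $B_{F,(s,t)}$ and the $\frb$-binomials $\cB^\frb$ impose no further constraints once the $B_{F,(s_F,s)}$ are used to eliminate the auxiliary variables — i.e. they are consequences — so the chart of $\sV'$ is precisely the chart of the graph closure. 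Running over all charts and all choices of leading coordinates glues to $\sV' = \sV$.

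I expect the main obstacle to be the bookkeeping in the reverse inclusion across the charts where the leading coordinate $x_{(\um,\uu_F)}$ vanishes, i.e. the boundary strata of the $\PP_F$ factors. There one cannot simply solve for the auxiliary coordinates as monomials, and one must instead argue that the binomial relations $B_{F,(s,t)}$ together with $\cB^\frb$ and the linear relations $L_F$ still cut out exactly the graph closure — this is where the careful organization of $\sF$ under the order $<_\wp$, and the square-freeness / leading-term properties emphasized in the introduction, do the real work, presumably by an inductive argument on $k$ using the forgetful maps $\sR_{\sF_{[k]}} \to \sR_{\sF_{[k-1]}}$ and the intermediate models $\sV_{[k]}$. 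A secondary point requiring care is verifying that the list \eqref{eq-Bres-intro}--\eqref{linear-pl-intro} is genuinely complete, namely that every $\pl$ relation (not only the $\um$-primary ones in $\sF$) is accounted for — this should follow from the fact, recalled in the excerpt, that the $\um$-primary relations $\sF$ already define the embedding $\bU \cap \Gr^{3,E} \hookrightarrow \bU$, so the non-primary $\pl$ relations are in the ideal generated by $\sF$ and contribute nothing new beyond what the $\cB^\frb$ already encode. Since the Proposition is cited as Corollary \ref{eq-tA-for-sV}, I anticipate the actual proof simply assembles these ingredients from the preceding sections rather than re-deriving them.
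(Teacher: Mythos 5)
Your proposal correctly identifies the easy forward direction and correctly locates the difficulty (showing the listed relations generate the full ideal, not merely a subideal), but both your description of $\cB^\frb$ and your strategy for the hard direction diverge from the paper in ways worth flagging.

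On $\cB^\frb$: you write that it collects ``the remaining $\frb$-binomials coming from the other (non-$\um$-primary) $\pl$ relations and the homogeneity relations among the various $\PP_F$ factors.'' That is not what $\cB^\frb$ is. The $\frb$-binomials are, by Definition \ref{defn:frb}, the $\fb$-irreducible, $\wp$-irreducible root binomials lying in $\ker^\mh \vi$, i.e.\ in the \emph{toric} kernel of the degree-two map $x_{(\uu,\uv)}\mapsto x_\uu x_\uv$. They are purely monomial-combinatorial objects of $\vr$-degree $\ge 2$ (see Examples \ref{exam:frb deg =2}, \ref{exam:frb deg >3}, \ref{exam:frb deg >3 not vr}); they have nothing to do with the non-primary $\pl$ relations, which — as you correctly note later — already lie in the ideal generated by $\sF$ and contribute nothing new. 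The $\pl$ geometry enters only through the linearized relations $L_F$.

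On the hard direction: the paper does not argue chart by chart by solving for auxiliary coordinates. Its decisive step is Lemma \ref{defined-by-ker-Gr}, which identifies the ideal of $\sV_{[k]}$ in $\sR_{[k]}$ \emph{exactly} with $\ker^\mh \vi_{[k],\Gr}$, the multi-homogeneous kernel of the ring homomorphism $\vi_{[k],\Gr}\colon R_{[k]} \to R_0/\bar I_\wp$. Once that is in hand there is no ``extra components'' question to settle geometrically: the task becomes the purely algebraic one of exhibiting generators of this kernel. That is done in Lemma \ref{equas-for-sVk} in two steps. First, $\ker^\mh \vi$ (the toric part) is reduced to $\cB^\wp\sqcup\cB^\frb$ via the $\wp$-reducibility and $\fb$-reducibility machinery of \S\,\ref{singular-model} (Claims 1--3 in the proof of Lemma \ref{equas-for-sVk}, together with Corollary \ref{reduce to rb}). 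Second, any multi-homogeneous $f$ with $\vi(f)\in\bar I_\wp$ is shown, via Lemmas \ref{f-to-F}, \ref{f-to-LF}, \ref{f-abc} and Proposition \ref{h-pl}, to reduce modulo $\ker^\mh\vi$ to a multiple of $\bF$ or of $L_F$, and $\bF$ itself is then shown (using the identity $x_{(\uu,\uv)}\bF_i \equiv x_\uu x_\uv L_{F_i} \pmod{\cB^\wp}$ over each chart of $\PP_{F_i}$) to be dependent on $L_{F_i}$. Corollary \ref{eq-tA-for-sV} is the case $k=\up$, and the intro Proposition merely spells out $\cB^\wp = \cB^\ngv \sqcup \cB^\gov$ as \eqref{eq-Bres-intro} and \eqref{eq-B-intro}.

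Your chart-by-chart elimination sketch could in principle be made to work, but as written it reduces to the dense-torus computation and then gestures at ``careful organization'' and ``an inductive argument'' for the boundary strata — precisely the part you admit you have not done. The gap is not just technical bookkeeping: without the characterization of the ideal as $\ker^\mh\vi_\Gr$ (or some substitute), you do not have an a priori description of what the ideal of the closure \emph{is}, so you cannot verify that your relations generate it. I'd encourage you to start from Lemmas \ref{defined-by-ker} and \ref{defined-by-ker-Gr} and then follow the reduction arguments rather than attempting the geometric route.
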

As promised, now, the platform
 $$(\bU \cap \Gr^{3,E} \subset \bU)$$ with the quadratic $\um$-primary $\pl$ relations
is replaced by the new platform
$$(\sV \subset \sR_\sF)$$
with linearized $\pl$ relations \eqref{linear-pl-intro} and 
some explicit binomial relations 
\eqref{eq-Bres-intro}, \eqref{eq-B-intro} and  \eqref{eq-hq-intro}.

\subsection{Designing the universal blowup algorithms 
on the new platform $(\sV \subset \sR_\sF)$} $\ $

\subsubsection
{The governing defining  relations for $(\sV \subset \sR_\sF)$} $\ $

Our construction of the desired embedded blowups on  
$\sV \subset \sR$
 is based upon the set of all binomial relations of \eqref{eq-B-intro}:
\begin{equation}\label{gov b intro}
\cB^\gov_F=\{B_{F,(s_F,s)} \mid  s \in S_F \- \{s_F\}\},\;\;
\cB^\gov=\bigsqcup_{\bF \in \sF} \cB^\gov_F,
\end{equation}
and all the linearized $\pl$ relation 
$$L_{\sF}=\{L_F \mid \bF \in \sF\}.$$
An element $B_{F,(s_F,s)}$ of  $\cB^\gov$ is called a \emph{\it governing} binomial relation (Definition \ref{gov-ngov-bi}).
The binomial relations of  $\cB^\gov$ together with
all linearized $\pl$ relations
$\{L_{\sF} \mid \bF \in \sF\}$
are called \emph{\it governing} relations.

We  also let
$$\cB^\ngv=\{ B_{F,(s,t)} \mid \bF \in \sF, \; s, t \in S_F \- \{s_F\}\}.$$
An element $B_{F,(s,t)}$ of  $\cB^\ngv$ is called a non-governing binomial relation of $\vr$-degree 1.
The non-governing binomial relations and 
all other defining binomial relations of $\cB^\frb$
play no roles in the {\it design} of the aforesaid embedded blowups.

($\di$) {\it 
The above may seem mysterious but our insight is that by calculating (estimating) 
the Jacobian of  the governing  relations, all the remaining relations
will become dependent after the process of the designed blowups.
This will be proved in \S \ref{main-statement}.}

Our process is algorithmic. To carry it out, we need various linear orders
on various finite sets.

The first such an order is the important order $(\sF,  <_\wp)$ 
 mentioned in \eqref{order-sF}.
Next, we provide a total order on the set $S_F \- \{s_F\}$ 
for every $\bF \in \sF$, and list it as
$$S_F \- \{s_F\}=\{s_1 < \cdots < s_{\ft_F}\}$$
where $(\ft_F+1)$ is the number of terms in the relation $F$, hence
$\ft_F$ only assumes value 2 or 3.
This renders us  to write 
\begin{equation}\label{Bktau intro}
\hbox{$B_{F,(s_F,s)}$ as $B_{(k\tau)}$}
\end{equation}
 where
$F=F_k$ for some $k \in [\up]$ and $s = s_\tau$ for some $ \tau \in [\ft_{F_k}]$.

We can now synopsize  the process of the embedded blowups for  $(\sV \subset \sR)$.

It is divided into two sequential blowups.

($\di$) {\it 
The first is the sequence of $\vt$-blowups: 
one for each $F_k$ with $k \in [\up]$,
based solely on the {\it leading} pairs 
$(x_{\uu_{F_k}}, x_{(\um, \uu_{F_k})})$ of
the governing binomial $B_{(k\tau)}$.}

($\di$) {\it 
The second is constructed by induction on $k \in [\up]$:
for each fixed $k \in [\up]$, it consists of a sequential $\wp$-blowups  
and then  a single $\ell$-blowup.   Then center for a $\wp$-blowup
depends solely on the two terms of
a governing binomial $B_{(k\tau)}$.
The center of a  $\ell$-blowup depends solely on a linearized
$\pl$ relation.
}

Every of the $\vt$-, $\wp$-, and $\ell$-blowups
is the blowup of a smooth ambient space along a codimension two
smooth closed subvariety that is the intersection of two smooth
divisors.

Every of the $\vt$-, $\wp$-, and $\ell$-blowups
 does its own designed purposes, which will be motivated and explained in 
some details in \S \ref{tour}.

\subsubsection{On $\vt$-sets, $\vt$-centers, and $\vt$-blowups} $\ $

For any primary $\pl$ relation $\bF_k \in \sF$, we introduce the corresponding $\vt$-set
$$\vt_{[k]}=\{x_{\uu_{F_k}}, x_{(\um,\uu_{F_k})} \}$$
 and the corresponding $\vt$-center 
$$Z_{\vt_{[k]}} = X_{\uu_{F_k}} \cap X_{(\um,\uu_{F_k})}$$ where
$$\hbox{$X_{\uu_{F_k}} = (x_{\uu_{F_k}}=0)$ and
$ X_{(\um,\uu_{F_k})} =(x_{(\um,\uu_{F_k})} =0)$.}$$
 We set $\tsR_{\vt_{[0]}}:=\sR$. Then, inductively,
 we let  $$\tsR_{\vt_{[k]}} \to \tsR_{\vt_{[k-1]}}$$ be the blowup of $\tsR_{\vt_{[k-1]}}$
 along (the proper transform of) the $\vt$-center
 $Z_{\vt_{[k]}}$ for all $k \in [\up]$. 
 This gives rise to the sequential $\vt$-blowups
\begin{equation}\label{vt-sequence-intro}
\tsR_{\vt}:=\tsR_{\vt_{[\up]}}  \lra \cdots \lra \tsR_{\vt_{[k]}} \lra \tsR_{\vt_{[k-1]}} \lra \cdots \lra \tsR_{\vt_{[0]}}.
\end{equation}
 Each morphism $\tsR_{\vt_{[k]}} \to \tsR_{\vt_{[k-1]}}$ is a smooth blowup, meaning, 
 the blowup of a smooth scheme along a smooth closed center. For any $k$, we let
 $\tsV_{\vt_{[k]}} \subset \tsR_{\vt_{[k]}} $ be the proper transform of $\sV$ in $\tsR_{\vt_{[k]}}$.
 We set $\tsV_{\vt}:=\tsV_{\vt_{[\up]}}$.

An notable benefit here is that 
after the completion of $\vt$-blowups, 
all the relations in $\cB^\ngv$ become dependent and can be discarded.

 We will describe more properties of $\vt$-blowups after we introduce all blowups.

\subsubsection{On $\wp$-sets, $\wp$-centers, and $\wp$-blowups} $\ $

All these blowups are constructed based on $\cB^\gov_{F_k}$ and $L_{F_k}$,
 inductively on $k \in [\up]$.

For any governing binomial $B_{(k\tau)} \in \cB^\gov_{F_k}$
(see \eqref{gov b intro} and \eqref{Bktau intro}), there exist a finite 
integer $\rho_{(k\tau)}$ depending on $(k\tau)$ and 
a finite integer $\si_{(k\tau)\mu}$ depending on $(k\tau)\mu$ for any 
$ \mu \in [\rho_{(k\tau)}]$.
For the initial case of $\wp$-blowups, we set $\tsR_{(\wp_{(11)}\fr_0)}=\tsR_\vt$.
Then for each 
$$\hbox{$((k\tau), \mu, h)$ with $\mu \in [\rho_{(k\tau)}]$ and
$h \in [\si_{(k\tau)\mu}]$,}$$ 
there exists a $\wp$-set $\phi_{(k\tau)\mu h}$ 
consisting of  two special divisors  on an inductively 
defined scheme $\tsR_{(\wp_{(k\tau)}\fr_{\mu -1})}$; its corresponding $\wp$-center
$Z_{\phi_{(k\tau)\mu h}}$ is the scheme-theoretic intersection of the two divisors.
Both of these two divisors are smooth. If we write $B_{(k\tau)}$ as in 
\eqref{eq-B-intro},
$$B_{F, (s_F,s)}: \;\; x_{(\uu_s, \uv_s)}x_{\uu_F} - x_{(\um,\uu_F)}   x_{\uu_s} x_{\uv_s}$$
for some $s \in  S_F \-\{s_F\}$.
Then,  one divisor is associated to
 $x_{(\uu_s, \uv_s)}x_{\uu_F}$ (i.e., determined by this term and its transformed
form in the previous blowups),
while the other is associated to $x_{(\um,\uu_F)}   x_{\uu_s} x_{\uv_s}$
(determined by it and its transformed form).
We let 
$$\cZ_{\wp_k}=\{Z_{\phi_{(k\tau)\mu h}} \mid k \in [\up], \tau \in [\ft_{F_k}], \mu \in [\rho_{(k\tau)}],
h \in [\si_{(k\tau)\mu}]\},$$
totally ordered lexicographically on the indexes $(k,\tau, \mu, h)$. Then, inductively,
 we let 
 $$\tsR_{(\wp_{(k\tau)}\fr_\mu\fs_{h})} \to \tsR_{(\wp_{(k\tau)}\fr_\mu\fs_{h-1})}$$
 be the blowup of $\tsR_{(\wp_{(k\tau)}\fr_\mu\fs_{h-1})}$ 
 along (the proper transform of) the $\wp$-center
 $Z_{\phi_{(k\tau)\mu h}}$. 
 This gives rise to the sequential  $\wp$-blowups with respect to $\cB^\gov_{F_k}$
\begin{equation}\label{wp-sequence-intro}
\tsR_{\wp_k}  \to \cdots \to
\tsR_{(\wp_{(k\tau)}\fr_\mu\fs_{h})} \to \tsR_{(\wp_{(k\tau)}\fr_\mu\fs_{h-1})} \to \cdots \to \tsR_{\ell_{k-1}},
\end{equation}
where $\tsR_{\ell_{k-1}}$ is inductively constructed from the previous $\wp$ and $\ell$-blowups, and 
$\tsR_{\wp_k} $ is the end scheme of $\wp$-blowups with respect to $\cB^\gov_{F_k}$.
  For any $(k\tau)\mu h$, we let
 $$\tsV_{(\wp_{(k\tau)}\fr_\mu\fs_{h})} \subset \tsR_{(\wp_{(k\tau)}\fr_\mu\fs_{h})} $$
  be the proper transform of $\sV$ in $\tsR_{(\wp_{(k\tau)}\fr_\mu\fs_{h})} $.
 We set $$\tsV_{\wp_k} \subset \tsR_{\wp_k}$$ be the last induced subscheme.
 Every scheme $\tsR_{(\wp_{(k\tau)}\fr_\mu\fs_{h})}$ 
  has a smooth open subset  $\tsR^\circ_{(\wp_{(k\tau)}\fr_\mu\fs_{h})}$ 
  containing  $\tsV_{(\wp_{(k\tau)}\fr_\mu\fs_{h})}$.
 We can cover $\tsR^\circ_{(\wp_{(k\tau)}\fr_\mu\fs_{h})}$,
hence also $\tsV_{(\wp_{(k\tau)}\fr_\mu\fs_{h})}$, by
smooth affine charts, called admissible affine charts.


\subsubsection{On $\ell$-sets, $\ell$-centers, and $\ell$-blowups} $\ $

Now, we let $D_{L_{F_k}}$ be the divisor of $\sR$ defined by $(L_{F_k}=0)$;
we let $E_{\vt_{k}}$ be the exceptional
divisor created by the $\vt$-blowup $\tsR_{\vt_{[k]}} \to \tsR_{\vt_{[k-1]}}$.
We let $D_{\wp_k, L_{F_k}}$ be proper transform of $D_{L_{F_k}}$ 
 and $E_{\wp_k, \vt_k}$ be the proper transform of
 $E_{\vt_{k}}$ in $\tsR_{\wp_k}$.
 We then let
\begin{equation}\label{ell-sequence-intro}
\tsR_{\ell_k} \lra \tsR_{\wp_k}
\end{equation}
 be the blowup of $\tsR_{\wp_k}$ along the intersection
 $D_{\wp_k, L_{F_k}} \cap E_{\wp_k, \vt_k}$.

We let $\tsV_{\ell_{k}}$ be the proper transform of $\tsV_{\wp_{k}}$ 
in $\tsR_{\ell_{}}$. 
The scheme $\tsR_{\ell_{k}}$ has a smooth open subset   $\tsR^\circ_{\ell_{k}}$ 
containing $\tsV_{\ell_{k}}$.  We can cover $\tsR^\circ_{\ell_{k}}$,
hence also $\tsV_{\ell_{k}}$, by
smooth affine charts, called admissible affine charts.

When $k=\up$, we obtain our \emph{\it final schemes}
$$\tsV_\ell:=\tsV_{\ell_{\up}}\subset \tsR_{\ell}:= \tsR_{\ell_{\up}}.$$

We point out here the $\ell$-blowup with respect to $F_k$ 
has to immediately follow the $\wp$-blowups
 with respect to $F_k$; the order of $\wp$-blowups  with respect to a fixed $\pl$ relation $F_k$
may be subtle and are carefully chosen. 
 
 \subsection{Properties of 
the universal $\vt$-, $\wp$-, and $\ell$-blowups} $\ $

To study the local structure of $\tsV_\ell \subset \tsR_\ell$, we approach 
it by induction via the sequential blowups \eqref{vt-sequence-intro},
\eqref{wp-sequence-intro},   and \eqref{ell-sequence-intro}.

As already mentioned, the schemes 
$$\hbox{ $\tsR_{\vt_{[k]}}$,
  $\tsR_{(\wp_{(k\tau)}\fr_\mu\fs_h)}$, and $\tsR_{\ell_{k}}$,}$$
respectively, have smooth open subsets 
 $$\hbox{   $\tsR^\circ_{\vt_{[k]}}$, $\tsR^\circ_{(\wp_{(k\tau)}\fr_\mu\fs_h)}$,
  and $\tsR^\circ_{\ell_{k}}$,}$$ containing 
$$\hbox{ $\tsV_{\vt_{[k]}}$, $\tsV_{(\wp_{(k\tau)}\fr_\mu\fs_h)}$,
 and $\tsV_{\ell_{k}}$. }$$
 We can cover  $\tsR^\circ_{\vt_{[k]}}$, $\tsR^\circ_{(\wp_{(k\tau)}\fr_\mu\fs_h)}$,
  and $\tsR^\circ_{\ell_{k}}$, hence also 
$\tsV_{\vt_{[k]}}$, $\tsV_{(\wp_{(k\tau)}\fr_\mu\fs_h)}$,
 and $\tsV_{\ell_{k}}$, respectively, by smooth affine charts, called admissible charts.

We prove the following.
\begin{itemize}
\item Proposition \ref{meaning-of-var-vtk} introduces local free variables
 for  any admissible affine chart $\fV$ of $\tsR_{\vt_{[k]}}$ and provides
 explicit geometric meaning for every of these free variables.
 
 Proposition  \ref{eq-for-sV-vtk} provides 
explicit description of the local defining equations of 
the scheme $\tsV_{\vt_{[k]}} \cap \fV$ on any admissible affine chart $\fV$
 of $\tsR_{\vt_{[k]}}$ and propagates certain square-freeness of these local equations.
 
\item Proposition \ref{meaning-of-var-wp/ell} introduces local free variables
 for  any admissible affine chart $\fV$ of $\tsR^\circ_{(\wp_{(k\tau)}\fr_\mu\fs_h)}$
 and  $\tsR^\circ_{\ell_k}$, 
 and provides explicit geometric meaning for every of these local free variables.
 
 Proposition  \ref{equas-wp/ell-kmuh}  combined with
 Proposition \ref{meaning-of-var-wp/ell} (9)
 provide explicit description of the local defining equations of the scheme  
$\tsV_{\wp_{(k\tau)}\fr_\mu\fs_h}  \cap \fV$ 
or $\tsV_{\ell_k}  \cap \fV$ on any admissible affine chart $\fV$ of 
 $\tsR^\circ_{(\wp_{(k\tau)}\fr_\mu\fs_h)}$ or $\tsR^\circ_{\ell_k}$,
and propagates certain square-freeness of these local equations.
 \end{itemize}

\subsection{The $\vt$-, $\wp$-, and $\ell$-birational transforms of $\Ga$-schemes} $\ $

To summarize the progress, we depict it in the diagram \eqref{theDiagram} below.

\begin{equation}\label{theDiagram}
 \xymatrix@C-=0.4cm{
  \tsR_{\ell} \ar[r] & \cdots  \ar[r] &  \tsR_{{\hbar}} \ar[r] &  \tsR_{{\hbar}'} \ar[r] &  \cdots  \ar[r] &   \sR_{\sF_{[j]}}  \ar[r] &  \sR_{\sF_{[j-1]}} \cdots \ar[r] &  \bU \\
   \tsR^\circ_{\ell}\ar @{^{(}->} [u]  \ar[r] & \cdots  \ar[r] &  \tsR^\circ_{\hbar}\ar @{^{(}->} [u]  \ar[r] &  \tsR^\circ_{\hbar} \ar @{^{(}->} [u] \ar[r] &  \cdots  \ar[r] &   \sR_{\sF_{[j]}} \ar @{^{(}->} [u]_{=} \ar[r] &  \sR_{\sF_{[j-1]}} \ar @{^{(}->} [u]_{=} \cdots \ar[r] &  \bU \ar @{^{(}->} [u]_{=}\\
    \tsV_{\ell} \ar @{^{(}->} [u]  \ar[r] & \cdots  \ar[r] &  \tsV_{{\hbar}}\ar @{^{(}->} [u]   \ar[r] &  \tsV_{{\hbar}'} \ar @{^{(}->} [u]  \ar[r] &  \cdots    \ar[r] &   \sV_{\sF_{[j]}} \ar @{^{(}->} [u]\ar[r] &  \sV_{\sF_{[j-1]}} \cdots \ar @{^{(}->} [u]  \ar[r] &  \bU \cap \Gr^{ d,E}   \ar @{^{(}->} [u]  \\
   \tZ_{\ell, \Ga} \ar @{^{(}->} [u]  \ar[r] & \cdots  \ar[r] &  \tZ_{{\hbar},\Ga}\ar @{^{(}->} [u]   \ar[r] &  \tZ_{{\hbar}',\Ga} \ar @{^{(}->} [u]  \ar[r] &  \cdots    \ar[r] &   Z_{\sF_{[j]},\Ga} \ar @{^{(}->} [u]\ar[r] &  Z_{\sF_{[j-1])},\Ga} \cdots \ar @{^{(}->} [u]  \ar[r] &  Z_\Ga  \ar @{^{(}->} [u]  \\
    \tZ^\dagger_{\ell, \Ga} \ar @{^{(}->} [u]  \ar[r] & \cdots  \ar[r] &  \tZ^\dagger_{{\hbar},\Ga}\ar @{^{(}->} [u]   \ar[r] &  \tZ^\dagger_{{\hbar}',\Ga} \ar @{^{(}->} [u]  \ar[r] &  \cdots    \ar[r] &   Z^\dagger_{\sF_{[j]},\Ga} \ar @{^{(}->} [u]\ar[r] &  Z^\dagger_{\sF_{[j-1])},\Ga} \cdots \ar @{^{(}->} [u]  \ar[r] &  Z_\Ga, \ar[u]_{=}       }
\end{equation}
 where all vertical uparrows are closed embeddings,
except the ones in the first row (upward vertical arrows)
where they are either identities or open embeddings.

Thus far, we have obtained the first three rows of the  diagram:  

 \begin{itemize}
\item In the first row: each morphism  $\tsR_{{\hbar}} \to  \tsR_{{\hbar}'}$ is 
$$\hbox{$\tsR_{\vt_{[k]}} \to \tsR_{\vt_{[k-1]}}$,  or 
$\tsR_{(\wp_{(k\tau)}\fr_\mu\fs_{h})} \to  \tsR_{(\wp_{(k\tau)}\fr_\mu\fs_{h-1})}$, or
  $\tsR_{\ell_{k}} \to  \tsR_{\wp_k}$,}$$
 and each is a blowup;
every  $\sR_{\sF_{[j]}}  \to  \sR_{\sF_{[j-1]}}$  is a projection, a forgetful map.
\item 
 In the third row: each morphism $\tsV_{{\hbar}} \to  \tsV_{{\hbar}'}$ is 
$$\hbox{$\tsV_{\vt_{[k]}} \to \tsV_{\vt_{[k-1]}}$, or 
$\tsV_{(\wp_{(k\tau)}\fr_\mu\fs_{h})} \to  \tsV_{(\wp_{(k\tau)}\fr_\mu\fs_{h-1})}$, or
 $\tsV_{\ell_k} \to  \tsV_{\wp_k}$, }$$
and  this morphism as well as each $\sV_{\sF_{[j]}}  \to  \sV_{\sF_{[j-1]}}$  
  is surjective, projective, and birational.
\item 
 Further, a scheme in the second row is a smooth open subset of the scheme in first row,
containing the one in the third row, correspondingly.
\end{itemize}

To explain the fourth and fifth rows of the  diagram,
we return to the fixed chart $\bU$. This is the affine space which comes equipped with
the local free variables 
$$\var_{\bU}:=\{x_\uu \mid \uu \in \II_{3,n} \- \{\um\}\}.$$
Let $\Ga$ be an \emph{\it arbitrary}  subset of $\var_{\bU}$
 and let $Z_\Ga$ be the closed affine 
subscheme of $\bU$ defined by the ideal generated by
 all the elements of $\Ga$ together with all the de-homogenized $\um$-primary $\pl$ relations $\bF$
 with $\bF \in \sF$.   
When $\Ga$ is determined by a matroid, then 
$Z_\Ga$ is the $\Ga$-scheme, having written as $Y_\Ga$,
as mentioned in the beginning of this introduction.
The precise relation between 
a given  matroid Schubert cell and its corresponding $\Ga$-scheme
is given in \eqref{ud=Ga}.

 Our goal is to resolve the $\Ga$-scheme $Z_\Ga$ when it is integral and singular.
But, it is worth to emphasize again: we do not fix any particular 
$\Ga$-scheme $Z_\Ga$ and treat it individually, instead,
we treat them altogether, \emph{{\it universally, once and for all.}}

 Let $\Ga$ be any subset of $\var_{\bU}$. Assume that 
 $Z_\Ga$ is integral. Then, starting from $Z_\Ga$, step by step,
  via induction within every of the sequential $\vt$-, $\wp$-, and
  $\ell$-blowup, we are able to construct
 the third and fourth rows in the  diagram  \eqref{theDiagram} such that 
 
\begin{itemize} 
\item every closed subscheme in the fourth row, $Z_{\sF_{[j]},\Ga}$, respectively
$\tZ_{{\hbar}}$, {\it admits explicit local defining equations}
in any admissible affine chart of a smooth open subset,
containing  $\sV_{\sF_{[j]},\Ga}$, respectively,
$\tsV_{{\hbar}}$, of the corresponding scheme in the first row;
 \item 
every closed subscheme in the fifth row $Z^\dagger_{\sF_{[j]},\Ga}$, respectively,
$\tZ^\dagger_{{\hbar}}$, is an {\it irreducible component} of its
corresponding subscheme $Z_{\sF_{[j]},\Ga}$, respectively, $\tZ_{{\hbar}}$, such that  the induced 
morphism $\hbox{$Z^\dagger_{\sF_{[j]},\Ga} \lra Z_\Ga$, respectively,
$\tZ^\dagger_{{\hbar}}\lra Z_\Ga$}$
is surjective, projective, and birational.
\item  
 the left-most   $\tZ_{\ell, \Ga} $ is smooth; so is $\tZ^\dagger_{\ell, \Ga}$,
now a connected component of $\tZ_{\ell, \Ga} $.
\end{itemize} 
 
More specifically, 
 \begin{itemize} 
\item 
The closed subscheme $Z_{\sF_{[j]},\Ga}^\dagger$, 
called an $\sF$-transform of $Z_\Ga$,
  is constructed in Lemma \ref{wp-transform-sVk-Ga};
\item 
  The closed subscheme $Z_{\vt_{[j]},\Ga}^\dagger$, called a $\vt$-transform of $Z_\Ga$,
  is constructed in Lemma \ref{vt-transform-k};
\item 
 The closed subscheme $\tZ_{(\wp_{(k\tau)}\fr_\mu\fs_{h}),\Ga}^\dagger$, 
called a $\wp$-transform of $Z_\Ga$,
 is constructed in Lemma \ref{wp/ell-transform-ktauh};
 \item  The closed subscheme $\tZ_{\ell_k,\Ga}$, called an $\ell$-transform of $Z_\Ga$,
  is also constructed in Lemma \ref{wp/ell-transform-ktauh}.
 \end{itemize}
 
($\di$) {\it  The above
echoes and reinforces our key insight stated earlier: Rather than focusing on individual singularity types, we instead perform a blow-up of the ambient 
universal space $\sV$ to obtain their birational transforms simultaneously. Crucially, during this process, there is no need to pause at any intermediate stage to check whether the singularities of any individual $Y_\Ga$
 have been resolved or improved, thus eliminating the need 
to introduce any invariants measuring singularities.
}

\subsection{Making conclusions on smoothness} $\ $

{\it
In this article, a scheme $X$ is smooth if it is a disjoint union of finitely many 
connected smooth schemes of
possibly various dimensions.}

Our main theorem on the Grassmannian is

\begin{thm}\label{main2:intro} 
{\rm 
(Universal characteristic-free resolution of singularity types)}
Let $\FF$ be either $\QQ$ or a finite field with $p$ elements where
$p$ is a prime number.
Let $\Ga$ be any subset of $\var_{\bU}$.
Assume that $Z_\Ga$ is integral. 
Let $\tZ_{\ell,\Ga}$ be  the $\ell$-transform of $Z_\Ga$ in $\tsV_{\ell}$.
Then,   $\tZ_{\ell,\Ga}$ is smooth over $\FF$.
In particular, the induced morphism $\tZ^\dagger_{\ell,\Ga} \to Z_\Ga$ is a resolution over $\FF$,
provided that $Z_\Ga$ is singular.
\end{thm}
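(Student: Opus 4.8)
The plan is to reduce Theorem \ref{main2:intro} to the local computations that the introduction has already advertised, and then feed those into the Jacobian criterion for smoothness. Concretely, I would argue as follows. By the construction recorded in the diagram \eqref{theDiagram} and in Lemmas \ref{wp-transform-sVk-Ga}, \ref{vt-transform-k} and \ref{wp/ell-transform-ktauh}, the $\ell$-transform $\tZ_{\ell,\Ga}$ sits as a closed subscheme of the smooth open set $\tsR^\circ_{\ell}$, and the whole of $\tsR^\circ_\ell$ — hence $\tsV_\ell$ and hence $\tZ_{\ell,\Ga}$ — is covered by the admissible affine charts. So it suffices to fix one admissible affine chart $\fV$ of $\tsR^\circ_\ell$ and show that $\tZ_{\ell,\Ga}\cap\fV$ is smooth over $\FF$. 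On such a chart Proposition \ref{meaning-of-var-wp/ell} supplies an explicit set of local free coordinates together with their geometric meaning, and Proposition \ref{equas-wp/ell-kmuh} (combined with Proposition \ref{meaning-of-var-wp/ell}(9)) supplies an explicit finite list of local defining equations for $\tsV_{\ell}\cap\fV$; intersecting further with the (transformed) equations coming from $\Ga$ and the $\um$-primary $\pl$ relations gives an explicit list of generators for the ideal of $\tZ_{\ell,\Ga}\cap\fV$.

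The heart of the proof is then a Jacobian rank count on this explicit list. Here I would lean on the insight flagged with $(\di)$ after \eqref{gov b intro}: only the \emph{governing} relations — the governing binomials $\cB^\gov$ of \eqref{eq-B-intro} and the linearized $\pl$ relations $L_{\sF}$ of \eqref{linear-pl-intro} — matter for the design of the blowups, and after the full sequence of $\vt$-, $\wp$- and $\ell$-blowups all of the non-governing binomials $\cB^\ngv$ and the remaining $\frb$-binomials $\cB^\frb$ become dependent on the governing ones (the $\cB^\ngv$ already drop out after the $\vt$-blowups, as noted in the excerpt). So the Jacobian matrix $\mathrm{Jac}(\widetilde I)$ of $\tZ_{\ell,\Ga}\cap\fV$ has the same rank as the Jacobian of the transformed governing relations restricted to that chart. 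The square-freeness propagated by Propositions \ref{eq-for-sV-vtk} and \ref{equas-wp/ell-kmuh} is exactly what guarantees that, after the $\vt$-blowup separates the leading pair $(x_{\uu_{F_k}},x_{(\um,\uu_{F_k})})$ and the $\wp$- and $\ell$-blowups separate the two monomials of each governing binomial and the terms of each $L_{F_k}$, each surviving governing equation contributes, in the admissible coordinates, a variable appearing to the first power and not shared with the others — so its partial derivative is a unit. Counting: each such equation kills one degree of freedom with a pivot that is invertible on $\fV$, so $\mathrm{Jac}(\widetilde I)$ has maximal rank $= \operatorname{codim}(\tZ_{\ell,\Ga}\cap\fV)$ at every point of the chart, whence $\tZ_{\ell,\Ga}\cap\fV$ is smooth over $\FF$ by the Jacobian criterion (valid over $\QQ$ and over $\FF_p$ alike, since $\FF$ is perfect and the pivots are units, so no subtlety with inseparability arises — this is precisely where square-freeness, rather than characteristic, does the work).

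Assembling the charts: since the admissible charts cover $\tsV_\ell\supset\tZ_{\ell,\Ga}$ and smoothness is local, $\tZ_{\ell,\Ga}$ is smooth over $\FF$. By the convention recalled just before Theorem \ref{main2:intro}, ``smooth'' allows a finite disjoint union of connected smooth pieces of possibly different dimensions, so I do not need to worry about equidimensionality. For the last sentence: by the third/fifth-row properties in \eqref{theDiagram}, $\tZ^\dagger_{\ell,\Ga}$ is a connected component of $\tZ_{\ell,\Ga}$ (hence smooth) and the induced morphism $\tZ^\dagger_{\ell,\Ga}\to Z_\Ga$ is surjective, projective and birational; composing through the chain of blowups and projections in the diagram, this morphism is proper and birational, so when $Z_\Ga$ is singular it is a resolution over $\FF$.

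The main obstacle I expect is the Jacobian rank count itself: one must verify that, chart by chart, the transformed governing equations really are in ``triangular'' form with unit pivots, i.e. that the bookkeeping through the long sequence \eqref{vt-sequence-intro}, \eqref{wp-sequence-intro}, \eqref{ell-sequence-intro} has genuinely propagated square-freeness of the \emph{leading} terms and that the non-governing and $\frb$-binomial relations are honestly in the ideal generated by the governing ones plus the chart equations. Both of these are asserted in the cited Propositions and Lemmas, so modulo invoking them the argument is a finite linear-algebra check; the real content (and the real risk of error) lives in those earlier local-structure results, which is why they, and not this theorem, carry the technical weight of the paper.
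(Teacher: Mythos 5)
Your proposal takes the same broad route as the paper — reduce to an admissible chart, exploit the propagated square-freeness, and finish with a Jacobian rank count using only the governing relations — but two points in your sketch are looser than they can afford to be.

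First, the logical order around discarding $\cB^\frb$ matters. You state that after all blowups ``the remaining $\frb$-binomials $\cB^\frb$ become dependent on the governing ones'' and then drop them from the Jacobian for general $\Ga$. But the paper does not establish that dependence directly from the blowup construction; it \emph{first} proves the special case $\Ga=\emptyset$ (i.e.\ smoothness of $\tsV_\ell$) using only the lower bound $\rk J \ge \rk J^*(\fG_\fV)$ supplied by the governing relations, and only \emph{then} deduces — from the fact that the governing relations already cut out a smooth subscheme of the correct dimension $\dim\tsV_\ell$ — that $\cB^\frb_\fV$ lies in the ideal they generate. This dependence is what licenses dropping $\cB^\frb$ when you restrict to the slice $\fV_\Ga$ for a general $\Ga$. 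If you try to compute the codimension of $\tZ_{\ell,\Ga}\cap\fV$ before knowing the dependence, you can no longer assert that the governing Jacobian alone achieves the full rank, because the $\cB^\frb$ relations might, a priori, drop the dimension further on the slice. So you need the $\Ga=\emptyset$ step as an explicit prerequisite.

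Second, the mechanism you give for the invertible minor — ``each surviving governing equation contributes a variable appearing to the first power and not shared with the others, so its partial derivative is a unit'' — is an oversimplification of the within-block structure. The minor $J^*(\fG_{\fV,F})$ that the paper constructs (Lemma \ref{max-minor}) is \emph{not} triangular inside a block: in the case $x_{\fV_{[0]},(\um,\uu_F)}(\bz_0)\ne 0$ the column for $y_{\uu_F}$ has nonzero entries in every $B_{s_i}$-row, the $B_{t_i}$-rows have $*$ entries there too, and the $L_F$-row ties all the $x_{(\uu_{s_i},\uv_{s_i})}$-columns together; full rank is obtained only after explicit column operations using $L_{\fV,F}(\bz)=0$. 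What \emph{is} shared-free is the across-block structure, governed by the ``pleasant variable'' property (the variables used for block $\fG_{F_k}$ do not appear in any earlier block $\fG_{F_j}$, $j<k$), which yields the block \emph{lower}-triangular matrix \eqref{the-grand-matrix}. So the two separate cases ($x_{\fV_{[0]},(\um,\uu_F)}(\bz_0)\ne 0$ versus $=0$) and the column-reduction argument inside each block are not just bookkeeping you can wave at — they are where the proof actually lives.
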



The proof of Theorem \ref{main2:intro},
=Theorem \ref{main-thm} $+$ Theorem \ref{cor:main},
 is based upon the explicit  description of
 the governing binomials and linearized $\pl$ defining equations of $ \tZ_{\ell,\Ga}$
 given in Corollary \ref{ell-transform-up}.

First, when $\Ga=\emptyset$, we have $\tZ_{\ell,\Ga}=\tsV_\ell$.
Using  the governing binomials and linearized $\pl$ defining equations of $\tsV_\ell$,
by computing and estimating their Jacobians,
we show that $\tsV_\ell$ is smooth. This has an important implication that all
the relations in $\cB^\frb$ now become dependent and hence can be discarded.
Recall here that after the completion of $\vt$-blowups, 
the relations in $\cB^\ngv$ already become dependent and are discarded.
Then, using only the governing binomials and linearized $\pl$ relations,
we proceed to compute the Jacobian of any given $\tZ_{\ell,\Ga}$ and conclude
that it is smooth, hence so is $\tZ_{\ell,\Ga}^\dagger$, now a connected
component of $\tZ_{\ell,\Ga}$.

The detailed calculation and careful analysis on the Jacobian matrices of these equations 
occupy the entire \S \ref{main-statement}.


Theorem \ref{main:intro} is  obtained by applying Theorem \ref{main2:intro},
combining with Lafforgue's version of Mn\"ev's unversality theorem
(Theorems \ref{Mn-La} and \ref{Mn-La-Gr}), provided that $X$ is  defined over $\ZZ$.

For a singular affine or projective variety $X$ over a general perfect field $\kk$, 
we spread it out and deduce that $X/\kk$ admits a resolution as well.
The details are written in Part II.

 


We learned that Hironaka posted a preprint on resolution of singularities 
in positive characteristics \cite{Hironaka17}. 
In spite of the current article, the author is not in a position to survey the topics of
resolution of singularities, not even very briefly.
We refer to Koll\'ar's book \cite{Kollar} for an extensive list of references on resolution of singularities.
 There have been some recent progresses since the book \cite{Kollar}:
 risking inadvertently omitting some other's works, let us just mention a few recent ones
\cite{ATW},  \cite{McG}, and \cite{Temkin}.

\medskip
The construction of the $\sV$-model of 
$\Gr^{3,E}$ in this paper was parallel to that of the $\fG$-family
of Grassmannian appeared in \cite{Hu2025}.

The current article represents the first eight sections of \cite{Hu2022}.
Part II will follow.

The author thanks J\'anos Koll\'ar and Chenyang Xu
 for the suggestion to write 
a summary  section, \S \ref{tour}, to lead the reader a quick tour through the paper.

He very much thanks Laurent Lafforgue for several very kind suggestions,
including connecting him with Lean,  and for sharing a general question. 

He specially  thanks Caucher Birkar,  also Mircea Mustață,
James McKernan and Ravi Vakil for invitation to speak in workshop and
seminars, and for helpful correspondences. 

He especially thanks Caucher Birkar for  kind support and 
communications about 
some proofs in the last chapter in an earlier version, and
Bingyi Chen for spotting a mistake in an earlier version.

\bigskip 

\centerline {A List of Fixed Notations Used Throughout}
\medskip

\smallskip\noindent $[h]$: the set of all  integers from 1 to $h$, $\{1, 2 \cdots ,h \}.$

\noindent 
$\II_{3,n}$: the set of all sequences of integers $\{(1\le u_1 < \cdots < u_d\le n) \}.$

\noindent
$\PP(\wedge^3 E)$: the projective space with $\pl$ coordinates 
$p_\ui, \ui \in \II_{3,n}$. 




\noindent
$I_\wp$: the ideal of $\kk[p_\ui]_{\ui \in \II_{3,n}}$ generated by all $\pl$ relations.

\noindent
$I_\whwp$: the ideal of $\Gr^{3, E}$ in $\PP(\wedge^3 E)$.

\noindent
$\bU$: the affine chart of $\PP(\wedge^3 E)$ defined by $p_\um \ne 0$ for some fixed $\um \in \II_{3,n}$.

\noindent
$\sF$: the set of $\um$-primary $\pl$ equations.

 \noindent
$\up:= {n \choose 3} -1 - 3(n-3)$: the cardinality of $\sF$;

\noindent
$\fV$:  a standard affine chart of an ambient smooth scheme; 
 
 \noindent 
$\cB^\gov$:  the set of all governing binomial relations; 

 \noindent 
$\cB^\ngv$:  the set of all  non-governing binomial relations; 



 \noindent 
$\cB^{\fr\fb}$:  the set of all $\fr\fb$-irreducible binomial relations. 

 \noindent 
$\cB$:   $\cB^\gov \sqcup \cB^\ngv \sqcup \cB^{\fb}$;

\noindent
$L_{\sF}$: the set of all linearized $\um$-primary $\pl$ equations.



\noindent $\Ga$: a subset of $\var_\bU$.

\noindent $A \- a$: shorthand for $A\-\{a\}$ where $A$ is a finite set and $a \in A$.

\noindent $|A|$: the cardinality of a finite set $A$.

\noindent $\kk$: a fixed perfect field.

\section{A Quick Tour: the main idea and approach}\label{tour}

{\it While skippable, we recommend at least browsing this section first.}


\subsection{ A detour to $\Gr^{3,E}$ via Mn\"ev's universality}  $\;$

By Mn\"ev's universality, any (singular) affine variety 
over $\ZZ$ admits an open immersion
 in the quotient of a matroid Schubert cell of 
the Grassmannian $\Gr^{3,E}$ of three dimensional linear subspaces in a vector space $E$, up to smooth morphism.

For readers' convenience, we review
Lafforgue's  version of Mn\"ev's universality in \S \ref{universality}.

Consider the $\pl$ embedding $\Gr^{3,E} \subset \PP(\wedge^3 E)$ 
with $\pl$ coordinates $p_{ijk}$.
A  matroid Schubert cell of $\Gr^{3,E}$ is a nonempty intersection of codimension one Schubert cells
of  $\Gr^{3,E}$; it corresponds to  a matroid $\ud$ of rank 3 on the set $[n]$.
Any Schubert divisor is defined by $p_{ijk} =0$ for some $(ijk)$. Thus, 
a  matroid Schubert cell  $\Gr^{3,E}_\ud$ of the matroid $\ud$
 is an open subset of the closed subscheme $\overline{Z}_\Ga$ of
$\Gr^{3,E}$ defined by $\{p_{ijk}=0 \mid p_{ijk} \in \Ga\}$ for some subset $\Ga$ of all $\pl$ variables.
The  matroid Schubert cell  must lie in an affine chart $(p_\um \ne 0)$ for some $\um \in \II_{3,n}$.
Thus,  $\Gr^{3,E}_\ud$  is an open subset of a closed subscheme 
of  $\Gr^{3,E} \cap (p_\um \ne 0)$ of
the following form 
$$Z_\Ga=\{ p_{ijk} =0 \mid p_{ijk} \in \Ga\} \cap \Gr^{3,E} \cap (p_\um \ne 0).$$ 
This is a closed affine subscheme of the affine chart $(p_\um \ne 0)$.
We aim to resolve $Z_\Ga$, hence also the  matroid Schubert cell  
$Z_\Ga^\circ:=\Gr^{3,E}_\ud$, when both
are integral and singular.


\subsection{Minimal set of $\pl$ relations for the chart $(p_\um \ne 0)$}  $\; $
 Up to permutation, we may assume that $\um=(123)$ and the chart is
$$\bU:=(p_{123} \ne 0).$$
We write the de-homogenized coordinates of $\bU$ as
$$\{x_{abc} \mid (abc) \in \II_{3,n} \- \{(123) \}.$$

As a closed subscheme of the affine space $\bU,$
$Z_\Ga$ is defined by
$$\{ x_{ijk} =0, \;\; \bF=0 \; \mid \; x_{ijk} \in \Ga\},$$
where $\bF$ rans over all de-homogenized $\pl$ relations.
We need to pin down some explicit $\pl$ relations to form 
a minimal set of generators of the ideal of $\Gr^{3,E} \cap \bU$ in
the affine space $\bU$.

They are of the following forms:
\begin{eqnarray}\label{tour: all primary pl}
\hbox{de-homogenized primary $\pl$ relations for $\bU \cap \Gr^{3,E}$:}  \\
\bF_{(123),1uv}=x_{1uv}-x_{12u}x_{13v} + x_{13u}x_{12v}, \nonumber
\label{rk0-1uv}\\
\bF_{(123),2uv}=x_{2uv}-x_{12u}x_{23v} + x_{23u}x_{12v}, \nonumber
\label{rk0-2uv} \\
\bF_{(123),3uv}=x_{3uv}-x_{13u}x_{23v} + x_{23u}x_{13v} , \nonumber
\label{rk0-3uv}\\
\bF_{(123),abc}=x_{abc}-x_{12a}x_{3bc} + x_{13a}x_{2bc} -x_{23a}x_{1bc}, \label{rk1-abc} \nonumber
\end{eqnarray}
where $u < v \in [n]\-\{1,2,3\}$ and $a<b<c \in [n]\-\{1,2,3\}$.
Here, $[n]=\{1,\cdots,n\}$.

In a nutshell, we have the set of de-homogenized $\pl$ relations
\begin{eqnarray}\label{rk0-and-1}
\sF=\{\bF_{(123),iuv},  \; 1\le i\le 3; \;\; \bF_{(123),abc}\}
\end{eqnarray}
and they generated the ideal of $\Gr^{3,E} \cap \bU$ in $\bU$.
Every relation of $\sF$ is called $\um$-primary. Here, $\um=(123)$.



\subsection{Especially organized equations 
of $\Ga$-schemes and arbitrary singularities} $\ $

Hence, as a closed subscheme of the affine space $\bU,$
$Z_\Ga$ is defined by
\begin{eqnarray}\label{normal-form}
Z_\Ga=\{ x_\uu =0, \;\; \bF_{(123),iuv},  \; 1\le i\le 3, \;\; \bF_{(123),abc}  \mid x_\uu \in \Ga\},
\end{eqnarray}
for all $u < v \in [n]\-\{1,2,3\}$ and $a<b<c \in [n]\-\{1,2,3\}$.
The  matroid Schubert cell $Z_\Ga^\circ$ in $Z_\Ga$ is characterized by
$x_\uv \ne 0$ for any $x_\uv \notin \Ga$ (see Proposition \ref{to-Ga} and \eqref{ud=Ga}.)

Upon setting $x_\uu =0$ with $\uu \in \Ga$, we obtain 
the affine coordinate subspace $$\bU_\Ga \subset \bU$$ such that $Z_\Ga$,
 as a closed subscheme of
 the affine subspace $\bU_\Ga$, is defined by 
 \begin{eqnarray}\label{reduced-normal-form}
\{\bF_{(123),iuv}|_\Ga, \; 1\le i\le 3;  \;\;  \bF_{(123),abc}|_\Ga\},
\end{eqnarray}
where $\bF|_\Ga$ denotes the restriction of $\bF$ to the affine subspace $\bU_\Ga$.
These are in general truncated  $\pl$ equations, some of which  may be identically zero.

($\di$) {\it  One may view \eqref{normal-form} as the normal form of singularities
and \eqref{reduced-normal-form} as the reduced normal form of singularities.
These provide standardized equations of singularities over $\ZZ$, up to smooth morphisms.

In other words, while singularities themselves can be arbitrary, it is remarkable, and perhaps underappreciated, that their defining equations admit such a \emph{ strikingly organized} \emph{universal} presentation, modulo smooth morphisms.}

We  {\it do not} analyse singularities of $Z_\Ga$ nor do we even
focus on any individual $Z_\Ga$.

But, we make some heuristic remarks.
 By the normal form of singularities \eqref{normal-form},
the $\Ga$-scheme $Z_\Ga$ is the intersection
the affine chart $\bU \cap\Gr^{3,E}$ 
of the Grassmannian $\Gr^{3,E}$
with the hyperplanes $(x_\uu =0)$ for all $x_\uu \in \Ga$. 
The intersections of these
coordinate hyperplanes with the chart $\bU\cap\Gr^{3,E}$ of the Grassmannian $\Gr^{3,E}$ are arbitrary, according to Mn\"ev's universality. 
We may view $\bU$ (allowing 
$\Gr^{3,E}$ to vary) as a universe that contains arbitrary singularities.
Hence, intuitively, we need to birationally change
 the universe $``$along these intersections" 
 so that eventually in the new universe, $``$they" re-intersect properly. 




 To achieve this, it is more workable if we can
put all the singularities in a different universe $\sV$, 
birationally modified from the chart $\bU \cap \Gr^{3,E}$,
  so that in the {\it new model $\sV$},  all the terms of 
the $\pl$ relations of \eqref{tour: all primary pl}
can be \emph{\it separated}. 
(Years had been passed, or wasted in a way, before we {\it returned} to this correct approach.)

\subsection{Separating the terms of $\pl$ relations} $\ $

Motivated by a parallel construction in \cite{Hu2025}, we establish 
a local model  $\sV$, birational  to the chart $\bU \cap \Gr^{3,E}$,
such  that in a {\it specific} set of defining binomial equations of $\sV$, {\it all the terms} of 
the above $\pl$ relations are separated.\footnote{The same idea and construction
can be applied to general polynomials. In geometric terms, any affine scheme admits
a birational model that is cut-out from a toric variety by  linear hyperplanes.}

To elaborate the above, we introduce the projective space 
$$\PP_F$$ for each and
 every $\pl$ relation of \eqref{tour: all primary pl}
$$\bF=\sum_{s \in S_F} \sgn (s) x_{\uu_s}x_{\uv_s}$$ with
$$[x_{(\uu_s,\uv_s)}]_{s \in S_F}$$ 
as its homogeneous coordinates.

We then let $\sV$ be the closure of the graph of 
the rational map $\Theta_{[\up],\Gr}$ of \eqref{this-theta-intro}.
By calculating the multi-homogeneous kernel of the homomorphism
\begin{equation}\label{vi-hom} \vi: \kk[(x_\uw);(x_{(\uu,\uv)})] \lra \kk[x_\uw]
\end{equation}
$$x_{(\uu,\uv)} \to x_\uu x_\uv,$$
we determine a set of defining relations of $\sV$ as a closed subscheme
of the smooth ambient scheme $$\sR:=\bU \times \prod_{\bF \in \sF} \PP_F.$$ 
These defining relations, among many others, 
include the following binomials
\begin{eqnarray}\label{mainB-tour}
x_{1uv}x_{(12u,13v)} - x_{12u}x_{13v} x_{(123,1uv)}, \; x_{1uv}x_{(13u,12v)}- x_{13u}x_{12v}x_{(123,1uv)}; \\
x_{2uv}x_{(12u,23v)} -x_{12u}x_{23v} x_{(123,2uv)}, \;  x_{2uv}x_{(23u,12v)}-x_{23u}x_{12v} x_{(123,2uv)}; \nonumber \\
x_{3uv}x_{(13u,23v)} -x_{13u}x_{23v}x_{(123,3uv)}, \;  x_{3uv}x_{(23u,13v)} -x_{23u}x_{12v}x_{(123,3uv)}; \nonumber\\
 x_{abc}x_{(12a,3bc)}-x_{12a}x_{3bc} x_{(123,abc)},\;
 x_{abc}x_{(13a,2bc)}-x_{13a}x_{2bc} x_{(123,abc)},\nonumber \\
x_{abc}x_{(23a,1bc)} -x_{23a}x_{1bc}x_{(123,abc)}. \nonumber
\end{eqnarray}
We see that  the terms of all the $\um$-primary $\pl$ relations 
of \eqref{rk0-and-1} are separated into the two terms of the above binomials.
Also, we note here that these binomial relations, as written above,
 are in linear order.

The defining relations of $\sV$ in $\sR$ 
also include the linearized $\pl$ relations as in \eqref{eq2-intro}:
\begin{equation}\label{linPl-tour}
L_F=\sum_{s \in S_F} \sgn (s) x_{(\uu_s,\uv_s)}, \; \; \forall \; \bF \in \sF.
\end{equation}
The set of all  linearized $\pl$ relations is denoted by $L_{\sF}$.

To distinguish two kinds of variables and divisors
 in $\sR$, we call 
\begin{itemize}
\item we call $x_\uu$ (e.g., $x_{12u}$)
a $\vp$-variable and $X_\uu=(x_\uu=0)$  a $\vp$-divisor;
\item  we call $x_{(\uu,\uv)}$ (e.g., $x_{(12u,13v)}$) a $\vr$-variable and
$X_{(\uu,\uv)}=(x_{(\uu,\uv)}=0)$ a $\vr$-divisor.
\end{itemize}
In addition, for any $\bF \in \sF$, we have the divisor of $\sR$ 
$$D_{L_F} := (L_F=0),$$
called an $\fL$-divisor.

The binomial relations of \eqref{mainB-tour}
together with the linearized $\pl$ relations
\eqref{linPl-tour}
are called {\it governing} relations.
Our blowups are constructed solely in accordance with the governing relations.

There are many other {\it extra} defining relations,
but they do not play any roles in {\it designing} the blowups.

 All the $\Ga$-schemes $Z_\Ga$ admit  birational transforms in the singular model
$\sV$. We still {\it do not} analyse the singularities of these transforms. But, we 
make a quick observation: 
when all the terms of  some of the binomials in \eqref{mainB-tour}
vanish at a point of the transform of a $\Ga$-scheme, then a singularity is likely to occur.

Thus, {\it as the first steps,} we would like to eliminate
all the zero factors from all the terms of 
the binomials in \eqref{mainB-tour}. 
As it turns out, after years of trial and error, 
eliminating all the zero factors from these binomial relations 
 successfully guides us onto the correct path toward our destination.

{\it In a way, the geometric intuition behind the above sufficiency is as follows.
 The equations of \eqref{mainB-tour} alone together with $L_{\sF}$ only define
a reducible closed scheme, in general. The roles of other extra relations (to be discussed soon)
are to  pin down
its main component $\sV$. 
As the process of eliminating zero factors goes, a process of some specific blowups, 
all the boundary components are eventually blown out of existence, making  
the proper transforms of \eqref{mainB-tour} together with the linearized
$\pl$ relations  generate the ideal of the final blowup scheme $\tsV_\ell$ of $\sV$,
locally on all affine charts.}

For this reason, we call the 
binomial equations of  \eqref{mainB-tour} the {\it governing} binomials.
The set of  governing binomials is denoted $\cB^\gov$. 
The set $\cB^\gov$ is equipped with a carefully chosen total ordering 
(see \eqref{indexing-Bmn}).

The defining relations of $\sV$ in $\sR$ also include many other binomials: we classify them as {\it non-governing} binomials 
 (see Definition \ref{gov-ngov-bi}), 
 $\frb$-binomials  (see Definition \ref{defn:frb}).
The set of  non-governing binomials is denoted $\cB^\ngv$;
the set of  $\frb$-binomials is denoted $\cB^\frb$.

Together, the equations in the following sets
\begin{equation}\label{tour: all relations}
\cB^\gov, \; \cB^\ngv, \; \cB^\frb, \; L_{\sF}
\end{equation}
define the scheme $\sV$ in the smooth ambient scheme $\sR$.
See Corollary \ref{eq-tA-for-sV}.

\subsection{The process of eliminating zero factors 
of governing binomials} $\ $

To eliminate zero factors, for example, from the governing binomial 
$$x_{1uv}x_{(12u,13v)} - x_{12u}x_{13v} x_{(123,1uv)},$$
we mean to select two zero factors,  one from each term, say, 
$x_{(12u,13v)}$ and $x_{12u}$, and then blowup along the locus defined by
$$(x_{(12u,13v)}=0) \cap (x_{12u}=0).$$
But, we need to proceed under carefully chosen orders.

Thus, we need to endow a linear order on the set
 $\cB^\gov$. Note that every relation of $\cB^\gov$ as in \eqref{mainB-tour}
is associated with a $\pl$ relation of \eqref{rk0-and-1}
$$\sF=\{\bF_{(123),iuv},  \; 1\le i\le 3; \;\; \bF_{(123),abc}\}.$$


We define the linear order on $\cB^\gov$ as follows.
Take any two $B, B' \in \cB^\gov$.
\begin{itemize}
\item Suppose $B$ is associated with 
$\bF_{(123),iuv}$ for some $1\le i\le 3$ and
$B'$ is associated with $\bF_{(123),abc}$. Then, $B < B'$.
\item
Suppose $B$ is associated with $\bF_{(123),iuv}$ for some $1\le i\le 3$, 
and $B'$  is associated with $\bF_{(123),ju'v'}$ for some $1 \le j \le 3$.
Then, $B < B'$ if $(uvi)<(u'v'j)$  lexicographically.
\item  
Suppose $B$ is associated with $\bF_{(123),abc}$ for some $1\le i\le 3$, 
and $B'$  is associated with $\bF_{(123),a'b'c'}$.
Then $B <B'$ if $(abc)<(a'b'c')$  lexicographically.
\end{itemize}

Given the above order, fix and consider each governing binomial,
again for example,
$$x_{1uv}x_{(12u,13v)} - x_{12u}x_{13v} x_{(123,1uv)},$$
we find that it is important to 
select the leading factors and 
blow up along the center
$$ (x_{1uv}=0) \cap (x_{(123,1uv)}=0)$$
first. This leads us to the first sequential blowups:  $\vt$-blowups.





\subsubsection{ On $\vt$-blowups}   $\ $

 From the governing binomials of $\cB^\gov$ 
\eqref{mainB-tour}, 
we select the following closed centers
 $$Z_{iuv}: (x_{iuv}=0) \cap (x_{(123, iuv)}=0),  i \in [3];$$ 
 $$ Z_{abc}: (x_{abc}=0) \cap (x_{(123, abc)}=0), a \ne b \ne c \in [n] \- [3].$$
 We let $\cZ_\vt=\{Z_{iuv}; Z_{abc}\}$ 
be the set of these centers.

Similar to the order of $\cB^\gov$, we define the following.
\begin{itemize}
\item   $Z_{iuv} < Z_{abc}$;
\item  $Z_{iuv} < Z_{ju'v'}$ if $(uvi) < (u'v'j)$ lexicographically.
\item $Z_{abc} < Z_{a'b'c'}$ if $(abc) < (a'b'c')$ lexicographically.
\end{itemize}
  This way,  the set $\cZ_\vt$ is equipped with a total order.

We then blow up $\sR$ along (the proper transforms of) the centers in $\cZ_\vt$, in the above order.
This gives rise to the sequence \eqref{vt-sequence-intro} in the introduction
$$\tsR_{\vt}:=\tsR_{\vt_{[\up]}}  \lra \cdots \lra \tsR_{\vt_{[k]}} 
\lra \tsR_{\vt_{[k-1]}} \lra \cdots \lra \tsR_{\vt_{[0]}}.$$
Each arrow in this sequence is a smooth blowup.

For any $k \in [\up]$, we let $\tsV_{\vt_{[k]}} \subset  \tsR_{\vt_{[k]}}$ 
be the proper transform of $\sV_\um$ in $\tsR_{\vt_{[k]}}$. We then set
$\tsV_{\vt}=\tsV_{\vt_{[\up]}}$ and $ \tsR_{\vt}= \tsR_{\vt_{[\up]}}$.


By $\vt$-blowups, we achieve the following,
from the obvious to the less-trivial.

\begin{itemize}
\item
Locally on affine charts, at least one of the
 factors as displayed  in  the centers of $\cZ_\vt$
are eliminated.
\item 
 $\vt$-blowups also make the proper transforms of the non-governing binomial equations become
 dependent on the proper transforms of the governing binomial equations,
  locally on affine charts.
 Thus, upon completing $\vt$-blowups, we can discard all the non-governing binomials $\cB^\ngv$ from consideration.
\item In addition, it also leads to 
 the conclusion $\tsV_{\vt} \cap X_{\vt, (\um, \uu_k)} = \emptyset$ for all $k \in [\up]$
 where $X_{\vt, (\um, \uu_k)}$ is the proper transform of the $\vr$-divisor 
$X_{(\um, \uu_k)}=(x_{(\um, \uu_k)}=0)$. Put it differently, 
the factor $x_{(\um, \uu_k)}$, possibly zero somewhere before the $\vt$-blowup,
now that $``$ zero factor $"$ is  eliminated once and for all,  locally on all affine charts, upon completing $\vt$-blowups.
 \end{itemize}

\subsubsection{ Blocks of governing relations}\label{blocks of gov} $\ $

Here, we continue the process of eliminating zero factors
of the proper transforms of the governing binomials.

Recall that every relation of $\cB^\gov$ as in \eqref{mainB-tour}
is associated with a $\pl$ relation of 
$$\sF=\{\bF_{(123),iuv},  \; 1\le i\le 3; \;\; \bF_{(123),abc}\}.$$
A $\pl$ relation is also regarded as a governing relation.

The set $\sF$ admits a linear order as follows.
\begin{itemize}
\item   $\bF_{(123),iuv} < F_{(123), abc}$;
\item  $\bF_{(123),iuv}< \bF_{(123),ju'v'}$ if $(uvi) < (u'v'j)$ lexicographically.
\item $\bF_{(123),abc} < \bF_{(123),a'b'c'}$ if $(abc) < (a'b'c')$ lexicographically.
\end{itemize}
Given any $\pl$ relation $\bF \in \sF$, we let
$\fG_{F}$ be the set of all governing binomials associated with
$F$ together with $\bF$ itself, called the block of governing relations
with respect to $F$.
This way, if we write $\sF$ as
$$\bF_1 < \cdots < \bF_\up,$$
then, we can write
$$\fG_{\bF_1} < \cdots <\fG_{\bF_\up}.$$
The $\wp$-blowups and $\ell$-blowups will be 
performed alternatively, block by block.

For every of the relations in \eqref{mainB-tour},
its first term is called the {\it leading term} whose $\pl$ variable
(e.g., $x_{1uv}$, $x_{2uv}$, $x_{3uv}$, and $x_{abc}$)
is called its leading $\vp$-variable.

{\it Hugely importantly, } observe that we have

\noindent
($\di$) the leading $\vp$-variable {\it does not appear} in any 
relation of \eqref{mainB-tour} in any earlier block
under the  linear order mentioned above.

\noindent
($\di$) the  $\vr$-variable in the leading term
(e.g., $x_{(12u,13v)}$, $x_{(12u,23v)}$, $x_{(12a,3bc)}$, etc.) 
 {\it uniquely appear} in its own binomial relation and its corresponding 
linearized $\pl$ relation.

The above two properties assure to achieve:

\noindent
 (1) the desirable square-freeness
is preserved under any intermediate blowup; 

\noindent
(2) the desirable Jacobian matrices of the governing relations
in their proper transforms under the final blowup.

\subsubsection{On $\wp$-blowups and $\ell$-blowups in a block $(\fG_F)$} $\ $

We proceed with the first binomial in the first block $(\fG_{F_1})$.

The first governing binomial equation of \eqref{mainB-tour} is 
$$B_{145, 1}:  x_{(124,135)} x_{145} - x_{(123,145)} x_{124}x_{135} .$$
The proper transforms of all the variables of $B_{145}$ may assume zero value on $\tsV_{\vt}$ 
somewhere.

\begin{itemize}
\item For each and every term of $B_{145, 1}$, we pick a $``$zero$"$ 
 factor to form a pair. For example, $(x_{145}, x_{124})$ is such a pair
of $B_{145, 1}$.
 Such a pair is called a $\wp$-set with respect to $B_{145, 1}$. 
 The common vanishing locus of the variables 
in a $\wp$-set gives rise to  a $\wp$-center.
\item Before we can blow up these $\wp$-centers, we need to order them.
The order is somewhat subtle. But, the general rule is
 that we let $\wp$-sets having $\vr$-variables go last
and those having $\vp$-variable go as the second last.
 In other words, we first declare $\vr$-variables
are the largest, $\vp$-variables are the second largest, 
and then compare the $\wp$-sets as pairs
lexicographically. 
Then, we can  blow up $\tsR_\vt$ along (the proper transforms) of 
 these $\wp$-centers, starting from the smallest one.
\end{itemize}
 
We then move on to the next governing binomial relation in the same block
 $(\fG_{F_1})$:
$$B_{145, 2}:  x_{(125,134)} x_{145} - x_{(123,145)} x_{125}x_{134} .$$
Notice here that the variable $x_{145}$ may become an exceptional variable
or acquires one due to the previous $\wp$-blowups. Hence, $B_{145, 2}$ should have more
$\wp$-sets.
We declare these exceptional variables to be the smallest ones, and wthin them,
we order them by reversing the order of occurrence.
We can then select pairs of variables, one from each term, define $\wp$-centers, 
make an order on them,
and repeat the above.

This way, we complete our $\wp$-blowups with respect to the block of relations of
$\cB^\gov_{F_1}$ and obtain $\tsR_{\wp_1}$. 

We now turn to the last equation in the block $(\fG_{F_1}$),
the linearized $\pl$ relation
$$L_{F_1}=  x_{(123,145)}- x_{(124,135)}+x_{(125,134)}.$$
Due to the $\vt$-blowup with respect to $F_1$, there exists
the exceptional divisor $E_{\vt, (145)}$ created by that blowup.
It corresponds to a free variable $\de_{\fV, (123,145)}$ on some chart
$\fV$
whose vanishing locus is the intersection of 
the proper transform of $E_{\vt, (145)}$ 
with that chart. And, on that chart $\fV$, the proper transform of $L_{F_1}$
takes of the form
$$L_{\fV, F_1}=  \de_{\fV, (123,145)} - \cdots.$$
We can blow up $\tsR_{\wp_1}$ along 
the intersection
 $$\hbox{the proper transform of $(L_{F_1}=0)$
$\cap$ the proper transform of  $E_{\vt,145}$},$$
which is equal to, locally on the chart $\fV$,
$$(L_{\fV, F_1}=0) \cap (\de_{\fV, (123,145)}=0),$$
to obtain $$\tsR_{\ell_1} \lra \tsR_{\wp_1}.$$

This completes all the desired blowups for the block of equations
$$\fG_{F_1}=\{B_{145, 1}, \; B_{145, 2}, \; L_{F_1}\}.$$

We then move on to the next bock of relations
$$\fG_{F_2}=\{B_{245, 1}, \; B_{245, 2}, \; L_{F_2}\},$$
and repeat all the above.

And  then,  to the next block 
 $$\fG_{F_3}=\{B_{345,1}, B_{345,2}, L_{F_3}\},$$
 repeat all the above, and so on. 
 
 This gives rise to the sequential blowups \eqref{wp-sequence-intro}
 and  \eqref{ell-sequence-intro} in the introduction
 $$\tsR_{\ell_k} \lra \tsR_{\wp_k}  \to \cdots \to
\tsR_{(\wp_{(k\tau)}\fr_\mu\fs_{h})} \to \tsR_{(\wp_{(k\tau)}\fr_\mu\fs_{h-1})} \to \cdots \to \tsR_{\ell_{k-1}},$$
coming with the induced blowups
$$\tsV_{\ell_k} \lra \tsV_{\wp_k}  \to \cdots \to
\tsV_{(\wp_{(k\tau)}\fr_\mu\fs_{h})} \to \tsV_{(\wp_{(k\tau)}\fr_\mu\fs_{h-1})} \to \cdots \to \tsV_{\ell_{k-1}}.$$
Each scheme in first sequence above has a smooth open subset
$\tsR^\circ_{\ell_k}$ or $\tsR^\circ_{\wp_k}$ or  $\tsR^\circ_{(\wp_{(k\tau)}\fr_\mu\fs_{h})}$
containing $\tsV_{\ell_k}$ or  $\tsV_{\wp_k}$ or
$\tsV_{(\wp_{(k\tau)}\fr_\mu\fs_{h})}$, respectively.

\begin{itemize}
\item An intermediate blowup scheme in the above 
is denoted by $\tsR_{(\wp_{(k\tau)}\fr_\mu\fs_h)}$.
 Here $(k\tau)$ is the index of the corresponding governing binomial
 $B_{(k\tau)}$.
\item As the process of $\wp$-blowups goes on, more and more exceptional 
parameters may be acquired and appear in the proper transform of the later governing binomial $B_{(k'\tau')}>B_{(k\tau)}$, 
resulting more pairs of zero factors, hence more corresponding $\wp$-sets
and $\wp$-centers. 
The existence of the index $\fr_\mu$, called \emph{\it \it round $\mu$}, 
is due to the need to deal with the situation when 
an  exceptional parameter with exponent greater than one is accumulated
in the proper transform of the governing binomial $B_{(k\tau)}$ (such a situation does not occur for the
first few governing binomials).
\item The index $\fs_h$ of $(\wp_{(k\tau)}\fr_\mu\fs_h)$, called 
\emph{\it \it step $h$}, simply indicates the corresponding step of the blowup in that round.
 \end{itemize}

When the process of $\wp$-blowups terminates, all the governing binomials terminate, that is, 
all the variables in the proper transforms of the governing binomials are invertible along $\tsV_{\wp_k}$. This is (just) one of the crucial point for computing
the final Jacobian matrices.

Now, we have achieved
our goal to eliminate zero factors of governing binomials.

 \subsubsection{ More on $\ell$-blowups}  $\ $
  
The $\ell$-blowups are the only blowups designed on
linearized $\pl$ relations, while all $\vt$- and $\wp$-blowups are
designed according to governing binomials.

The purpose of these $\ell$-blowups is

\medskip 
$\bullet$ {\sl  eliminating zero factors of the leading terms
of linearized $\pl$ relations.}
\smallskip

  Here, we make some more comments on $\ell$-blowups.
  
For any of the $\pl$ relations of \eqref {rk0-and-1}, either 
$\bF_{(123),iuv}$ for some  $1\le i\le 3$ or  $\bF_{(123),abc}$, we express its linearized $\pl$ relation
as 
$$L_{F_k}: \;\; \sgn(s_{F_k}) x_{(\um, \uu_{F_k})}+\sum_{s \in S_{F_k} \- s_{F_k}} \sgn (s) x_{(\uu_s,\uv_s)}$$
where $\sgn(s_{F_k}) x_{(\um, \uu_{F_k})}$ is the leading term. 
It comes equipped with a divisor $$D_{L_{F_k}}=(L_{F_k} =0)$$ in $\sR.$ 
We let $D_{\wp_k,L_{F_k}}$ be the proper transform of $D_{L_{F_k}}$ in $\tsR_{\wp_k}$.

 After the process of $\vt$-blowups,
locally on some chart $\fV$,  the leading $x_{(\um, \uu_{F_k})}$, can become 
 an exceptional variable $\de_{\fV, (\um, \uu_{F_k})}$
 for the exceptional divisor $E_{\vt_{[k]}}$ created by the corresponding $\vt$-blowup with respect to $F_k$.  In other words, 
the proper transform of $L_{F_k}$ on the chart $\fV$ can take of the form
$$L_{F_k}: \;\; \sgn(s_{F_k}) \de_{\fV, (\um, \uu_{F_k})}+
\sum_{s \in S_{F_k} \- s_{F_k}} \sgn (s) x_{(\uu_s,\uv_s)}$$
We let $E_{\wp_k, \vt_k}$
 be the proper transform of $E_{\vt_{[k]}}$ in $\tsR_{\wp_k}$. 
 We can then let $$\tsR_{\ell_k} \lra \tsR_{\wp_k}$$
 be the blowup of $\tsR_{\wp_k}$ along the intersection 
$D_{\wp_k, L_{F_k}}\cap E_{\wp_k, \vt_k}$.
 Then, the blowup
 $$\tsR_{\ell_k} \lra \tsR_{\wp_k}$$ will eliminate that $``$zero factor$"$ 
$\de_{\fV, (\um, \uu_{F_k})}$
and bring up a 
 variable $y_{(\um, \uu_{F_k})}$ invertible along $\tsV_{\ell_k}$.
 Geometrically, 
this process {\it separates the two divisors}
 $D_{\wp_k, L_{F_k}}$
 and $E_{\wp_k, \vt_k}$, while $\tsV_{\ell_k}$ is contained in 
the proper transform of $D_{\wp_k, L_{F_k}}$.
  The induced blowup
 $\tsV_{\ell_k} \lra \tsV_{\wp_k}$ is an isomorphism as it is a blowup along
a Cartier divisor. {\it  However, had this blowup  
not performed \emph{\it here}, then 
 that $``$zero factor$"$ would remain around $\tsV_{\wp_k}$ and 
affect or would be affected by the later blowups with unknown 
consequences.}
 
We point out here the $\ell$-blowup with respect to $F$ has to immediately follow the $\wp$-blowups
 with respect to $F$; the order of $\wp$-blowups  with respect to a fixed $\pl$ relation $F$,
 as already mentioned,  may be subtle and are carefully chosen. 
 
{\it  In fact, the birational model $\sV$ of the chart $\bU \cap \Gr^{3,E}$, 
a new universe for singularities, 
also as a (singular) blowup of $\bU \cap \Gr^{3,E}$, 
has to be constructed first, as experience has shown. 
That is to say,  the method of our approach is highly sensitive to (some of)
 the orders used above.}

\smallskip

In the above, the constructions of $\wp$-, and $\ell$-blowups 
are discussed in terms of local free variables
 of the proper transforms of the governing binomials or lineaized $\pl$ relations on local charts.
 In the main text, the constructions of all these blowups, like $\vt$-blowups, are done globally via induction.


Every of the $\vt$-, $\wp$-, and $\ell$-blowups
is the blowup of a smooth ambient space along a codimension two
smooth closed subvariety that is the intersection of two smooth
divisors. Hence, we resolve singularites by separating poor intersections
of divisors pairwise, one pair at a time. This partly explains that
this process is highly inefficient.

 \smallskip
Finally, a final remark for this subsection.
 From the previous discussions, one sees that the process of $\wp$-blowups
 is highly inefficient. This is not a surprise as we treat {\it all singularities} 
 all together, {\it once and for all.}
To provide a concrete example for the whole process,  $\Gr(2,n)$ would miss some main points;
$\Gr(3,6)$ would be too long to include, and also, perhaps not too helpful as far as 
showing (a resolution of) a singularity is concerned.


\subsection{$\Ga$-schemes and their 
$\vt$-, $\wp$-, $\ell$-transforms} $\ $
 

Fix any $\Ga$-scheme $Z_\Ga$, 
considered as a closed subscheme of $\bU \cap \Gr^{3,E}$.
Our goal is to resolve $Z_\Ga$ when it is integral and singular,
even though we do not particularly focus on any of them.

 As in the introduction,
  we have the  instrumental diagram \eqref{theDiagram2}.
\begin{equation}\label{theDiagram2}
 \xymatrix@C-=0.4cm{
  \tsR_{\ell} \ar[r] & \cdots  \ar[r] &  \tsR_{\hbar} \ar[r] &  \tsR_{\hbar} \ar[r] &  \cdots  \ar[r] &   \sR_{\sF_{[j]}}  \ar[r] &  \sR_{\sF_{[j-1]}} \cdots \ar[r] &  \bU \\
  \tsR^\circ_{\ell}\ar @{^{(}->} [u]  \ar[r] & \cdots  \ar[r] &  \tsR^\circ_{\hbar}\ar @{^{(}->} [u]  \ar[r] &  \tsR^\circ_{\hbar} \ar @{^{(}->} [u] \ar[r] &  \cdots  \ar[r] &   \sR_{\sF_{[j]}} \ar @{^{(}->} [u]_{=} \ar[r] &  \sR_{\sF_{[j-1]}} \ar @{^{(}->} [u]_{=} \cdots \ar[r] &  \bU \ar @{^{(}->} [u]_{=}\\
    \tsV_{\ell} \ar @{^{(}->} [u]  \ar[r] & \cdots  \ar[r] &  \tsV_{\hbar}\ar @{^{(}->} [u]   \ar[r] &  \tsV_{\hbar'} \ar @{^{(}->} [u]  \ar[r] &  \cdots    \ar[r] &   \sV_{\sF_{[j]}} \ar @{^{(}->} [u]\ar[r] &  \sV_{\sF_{[j-1]}} \cdots \ar @{^{(}->} [u]  \ar[r] &  \bU \cap \Gr^{3,E}   \ar @{^{(}->} [u]  \\
   \tZ_{\ell, \Ga} \ar @{^{(}->} [u]  \ar[r] & \cdots  \ar[r] &  \tZ_{\hbar,\Ga}\ar @{^{(}->} [u]   \ar[r] &  \tZ_{\hbar',\Ga} \ar @{^{(}->} [u]  \ar[r] &  \cdots    \ar[r] &   Z_{\sF_{[j]},\Ga} \ar @{^{(}->} [u]\ar[r] &  Z_{\sF_{[j-1])},\Ga} \cdots \ar @{^{(}->} [u]  \ar[r] &  Z_\Ga  \ar @{^{(}->} [u]  \\
    \tZ^\dagger_{\ell, \Ga} \ar @{^{(}->} [u]  \ar[r] & \cdots  \ar[r] &  \tZ^\dagger_{\hbar,\Ga}\ar @{^{(}->} [u]   \ar[r] &  \tZ^\dagger_{\hbar',\Ga} \ar @{^{(}->} [u]  \ar[r] &  \cdots    \ar[r] &   Z^\dagger_{\sF_{[j]},\Ga} \ar @{^{(}->} [u]\ar[r] &  Z^\dagger_{\sF_{[j-1])},\Ga} \cdots \ar @{^{(}->} [u]  \ar[r] &  Z_\Ga.  \ar[u]_{=}       }
\end{equation}

The first three rows follow from the above discussion;
we  only need to explain the  fourth and fifth rows.

\begin{itemize}
\item When  $Z_{\sF_{[j-1])}}$ (resp.  $\tZ_{{\hbar}',\Ga}$)
is not contained in the corresponding
  blowup center $Z_{\sF_{[j])}}$ (resp. $\tZ_{{\hbar},\Ga}$) 
is  obtained from  the proper transform
  of $Z_{\sF_{[j-1])}}$ (resp. $\tZ_{{\hbar}',\Ga}$). 
\item
  When  $Z_{\sF_{[j-1])}}$ (resp. $\tZ_{{\hbar}',\Ga}$)
 is contained in the corresponding
  blowup center, then $Z_{\sF_{[j])}}$ (resp. $\tZ_{{\hbar},\Ga}$)
  is obtained from  a rational slice
  of  the total  transform of $Z_{\sF_{[j-1])}}$  
(resp.  $\tZ_{{\hbar}',\Ga}$) under the morphism 
   $ \sV_{\sF_{[j]}} \to  \sV_{\sF_{[j-1]}}$
 (resp. $\tsV_{\hbar} \to \tsV_{{\hbar}'}$). 
\item   Moreover, every $Z_{\sF_{[j])}}$  (resp.  $\tZ_{{\hbar},\Ga}$) admits explicit defining equations 
   over any admissible affine chart of the corresponding smooth  scheme in the second row.
\item Furthermore, in every case, $Z_{\sF_{[j])}}$  (resp.  $\tZ_{{\hbar},\Ga}$)  contains an irreducible
component  $Z^\dagger_{\sF_{[j]},\Ga}$  (resp. 
   $\tZ^\dagger_{{\hbar},\Ga}$) such that it maps onto $Z_\Ga$ 
   projectively and birationally.
 \end{itemize}

\subsection{Smoothness by Jacobian of governing
 relations} $\ $

We are now ready to explain the smoothness of 
 $\tZ_{\ell, \Ga}$ when $Z_\Ga$ is integral.
  We first investigate the smoothness of $\tsV_\ell$ which is a special case 
 of  $\tZ_{\ell, \Ga}$ when 
 $\Ga=\emptyset$.

The question is local. So we focus on any fixed
admissible smooth affine chart of $\fV$ of $\tsR_\ell$.
Corollary \ref{ell-transform-up} provides  
the local defining equations for $\tZ_{\ell, \Ga}$.

As envisioned, we confirm that the scheme $\tsV_\ell$
 is smooth on the chart $\fV$ by  explicitly computing
and carefully analysing 
$${\rm Jac}(\cB^\gov_\fV, L_{\sF, \fV}),$$
 the Jacobian of
{\it the governing binomial relations of $\cB^\gov_\fV$ and linearized $\pl$ relations of $L_{\sF, \fV}$.}
This implies that on the chart $\fV$, 
 the governing binomial relations of $\cB^\gov_\fV$ and the linearized $\pl$ relations 
 of $L_{\fV,\sF}$
together generate the ideal of $\tsV_\ell \cap \fV$. Thus,  as a consequence, 
the $\frb$-binomials of   $\cB^\frb_\fV$ can be discarded from consideration, as well.  Recall here that after the completion of $\vt$-blowups, all relations
in $\cB^\ngv$ become dependent and are already discarded.

Then, a parallel calculation and analysis on the Jacobian of the induced governing binomial relations of
$\cB^\gov_\fV$ and the induced linearized $\pl$ relations of $L_{\fV,\sF}$ for 
$\tZ_{\ell,\Ga}$ imply that $\tZ_{\ell,\Ga}$  is smooth as well, on all charts $\fV$.
In particular, $\tZ^\dagger_{\ell,\Ga}$, now a connected component of $\tZ_{\ell,\Ga}$,  is  smooth, too.

This implies that $\tZ^\dagger_{\ell,\Ga} \lra Z_\Ga$ is a resolution, if $Z_\Ga$ is singular.

The above are done in \S \ref{main-statement}.

\medskip

We end this tour with the following to start the main text.

{ \it  Let $p$ be an arbitrarily fixed prime number.
 Let $\mathbb F$ be either $\QQ$ or a finite field with $p$ elements.
From Section \ref{localization} to Section \ref{main-statement}, 
every scheme considered is defined over $\ZZ$, consequently,
 is defined over $\mathbb F$, and is considered as a scheme over 
 the perfect field $\mathbb F$.}

\section{Primary $\pl$ relations and de-homogenized  $\pl$-ideals}\label{localization}

{\it The purpose of this section is to 
describe a minimal set of $\pl$ relations so that they
generate the $\pl$ ideal  for a given chart. The entire section is 
 elementary, hence could have been placed in the appendix,
 but then it would have interrupted the smooth
flow of reading as the entire approach depends on the materials set up in this section.

Not particularly recommended, however,  if the reader has read \S \ref{tour}, he may skip this section for the first read and only refers back when called upon.

}


Since all the results of this section are elementary,
 many must have already been known. 
 Nonetheless,  since the development in the current section is  instrumental for our approach, some good details are necessary.

All being said, in \S \ref{Gr3n},
the  part of this section directly relevant for the remainder of this paper,  we encapsulate the notations and terms 
to be used throughout.

We make a convention. Let $A$ be a finite set and $a \in A$. Then, we write
$$A \- a := A\-\{a\}.$$
Also, we use $|A|$ to denote the cardinality of the set A.

\subsection{$\pl$ relations} $\ $

Fix any positive integer $n>3$. 
We denote the set $\{1,\cdots, n\}$ by $[n]$.
We let $\II_{3,n}$ be the set of all sequences of distinct integers
 $$\{1\le u_1 < u_2 < u_3\le n \}.$$
An element of $\II_{3,n}$ is frequently written as $\uu=(u_1u_2u_3)$.
We also regard an element of $\II_{3,n}$ as a subset of 3 distinct integers in $[n]$.
For instance, for any $\uu, \um \in \II_{3,n}$, $\uu \- \um$ takes its set-theoretic meaning.
Also, $v \in [n] \- \uu$  if and only if $v \ne u_i$ for all $1\le i\le 3$.

As in the introduction, suppose we have a set of vector spaces, 
$E_1, \cdots, E_n$ such that 
every $E_\alpha$, $1\le \alpha\le n$,  is of dimension 1 over $\kk$ (or, a free module of rank 1 over $\ZZ$), 
and, we let 
$$E:=E_1 \oplus \ldots \oplus E_n.$$ 

For any fixed  integer $1\le d <n$, the Grassmannian, defined by
$$\Gr^{3, E}=\{ F \hookrightarrow E \mid \dim F=3\}, $$
is a projective variety defined over $\ZZ$.

We have the canonical decomposition
$$\wedge^3 E=\bigoplus_{\ui =(i_1i_2 i_3)\in \II_{3,n}} E_{i_1}\otimes E_{i_2} \otimes E_{i_3}.$$
This gives rise to the $\pl$ embedding of the Grassmannian:
$$\Gr^{3, E} \hookrightarrow \PP(\wedge^3 E)=\{(p_\ui)_{\ui \in \II_{3,n}} \in \GG_m 
\backslash (\wedge^3 E \- \{0\} )\},$$
$$F \lra [\wedge^3 F],$$
 where $\GGm$ is the multiplicative group.

The group 
$$\TT=(\GGm)^n/\GG_m,$$ where $\GGm$
 is embedded in $(\GGm)^n$ as the diagonal, acts on $\PP(\wedge^3 E)$ by
 $${\bf t} \cdot p_{\ui} = t_{i_1} t_{i_2} t_{i_3} p_{\ui}$$
where ${\bf t} = (t_1, \cdots , t_n)$ is (a representative of) an element of $\TT$
and $\ui=(i_1 i_2  i_3)$. This action leaves $\Gr^{3, E}$ invariant.
If we have an algebraic group $G$ (over $\kk$) acting on a scheme $X$,
we say that the action is quasi-free if every isotropy subgroup of
$G$ on any point $x \in X$ is a connected group. Then, the above
 $\TT$-action on $\PP(\wedge^3 E)$, hence on $\Gr^{3, E}$,
is quasi-free.

The Grassmannian $\Gr^{3, E}$ as a closed subscheme of $\PP(\wedge^3 E)$ is
defined by, among others,  a set of specific quadratic relations, called $\pl$ relations. We describe them below.

For narrative convenience, we will assume that
$p_{u_1u_2 u_3}$ is defined for any sequence of
$d$ distinct integers between 1 and $n$,  not necessarily listed in 
the sequential order of natural numbers,
subject to the relation
\begin{equation}\label{signConvention}
p_{\si(u_1) \si(u_2) \si (u_3)}=\sgn(\si) p_{u_1 u_2u_3}
\end{equation}
for any permutation $\si$ on the set $[n]$, 
where $\sgn(\si)$ denotes the sign of the permutation.
Furthermore, also for convenience, we set 
\begin{equation}\label{zeroConvention}
 p_{\uu} := 0,
\end{equation}
for any $\uu=(u_1u_2 u_3)$ of a set of $3$  integers  in $[n]$ if
$u_i=u_j$ for some $1\le i \ne j\le 3$.

Now, for any pair $(\uh, \uk) \in \II_{d-1,n} \times \II_{d+1,n}$ with
$$\uh=\{h_1, h_2\}  \;\;
 \hbox{and} \;\; \uk=\{k_1, k_2,k_3, k_4\} ,$$
we have the  relation:
\begin{equation} \label{pluckerEq}
F_{\uh,\uk}= p_{h_1 h_2 k_1 } p_{k_2 k_3k_4}
-p_{h_1 h_2 k_2 } p_{k_1 k_3k_4} +p_{h_1 h_2 k_3 } p_{k_1k_2 k_4}
-p_{h_1 h_2 k_4 } p_{k_1k_2 k_3}.
\end{equation}
It is called a $\pl$ relation if it is not identically zero.

{\it For the sake of  presentation, we frequently  
express a  general $\pl$ relation as
\begin{equation}\label{succinct-pl}
F= \sum_{s \in S_F} \sgn(s) p_{\uu_s}p_{\uv_s},
\end{equation}
 for some index set $S_F$, with $\uu_s, \uv_s \in \II_{3,n}$, where
$ \sgn(s) $ is the $\pm$ sign associated with the quadratic  monomial term $p_{\uu_s}p_{\uv_s}$.
We note here that $\sgn(s)$ depends on 
how every of ${\uu_s}$ and ${\uv_s}$ is presented, per the convention \eqref{signConvention}.
}

\begin{defn}\label{ftF}
Consider any $\pl$ relation $F=F_{\uh,\uk}$ for some pair
 $(\uh, \uk) \in \II_{2,n} \times \II_{4,n}$.
We let $\ft_{F}+1$ be the number of terms in $F$
(which assumes values 3 or 4). We then define the rank of $F$ to be $\ft_{F}-2$. We denote this number by $\rk (F)$.
\end{defn}
The integer $\ft_{F}$, as defined above, will  be frequently used throughout. It assumes values 2 or 3. Hence,
 the integer $\rk (F)$ assumes values 0 or 1.

\begin{example}\label{exam:(3,6)}
Consider the Grassmannian $\Gr(3,6)$. Then, the $\pl$ relation
$$F_{(16), (3456)}:  p_{163}p_{456} - p_{164}p_{356} + p_{165}p_{346}$$
is of rank zero; the $\pl$ relation
$$F_{(12), (3456)}: p_{123}p_{456} - p_{124}p_{356} + p_{125}p_{346}- p_{126}p_{345}$$
is of rank one. 
\end{example}

Let $\ZZ[p_\ui]_{\ui \in \II_{3,n}}$ be the homogeneous coordinate ring
 of the $\pl$ projective space $\PP(\wedge^3 E)$ and $I_\wp \subset \ZZ[p_\ui]_{\ui \in \II_{3,n}}$ 
be the homogeneous ideal generated by all the $\pl$ relations  \eqref{pluckerEq} or
\eqref{succinct-pl}.
We let $I_{\widehat\wp}$ by the homogeneous ideal of  $\Gr^{3, E}$ in $\PP(\wedge^3 E)$.
Then $I_{\widehat\wp} \supset I_\wp$, but not equal over $\ZZ$,
or a field of positive characteristic, in general: there are other additional
relations (multivariate $\pl$ relations) for the Grassmannian $\Gr^{3, E}$, thanks to Matt Baker for pointing this out to the author. In characteristic zero, $I_{\widehat\wp} = I_\wp$.


\subsection{Primary $\pl$ equations with respect to a fixed affine chart} $\ $

In this subsection, we focus on a fixed affine chart 
($p_\um \ne 0$) of the $\pl$ projective space $\PP(\wedge^3 E)$
for some $\um \in \II_{3,n}$. Up to permutation, it suffices to assume
$\um=(123)$. More precisely, in $\PP(\wedge^3 E)$,
we let $$\bU:=(p_\um \equiv 1)$$  stand for the open chart
 defined by $p_\um \ne 0$.  Then,
the affine space $\bU$  comes equipped with the 
local free variables $x_\uu=p_\uu/p_\um$
 for all $\uu \in \II_{3,n} \- \um$.
In practical calculations, we will simply set $p_\um =1$, whence the notation
$(p_\um \equiv 1)$ for the chart. We let
$$\bU_\Gr = \bU \cap \Gr^{3, E}$$
be the corresponding induced open chart  of $\Gr^{3, E}$.

The chart $\bU_\Gr$ is canonically an affine space.
Below, we explicitly describe $$\up:={n \choose 3} -1- 3(n-3) $$
many specific  $\pl$ relations with respect to the chart $\bU$, called 
the $\um$-primary 
$\pl$ relations, such that their restrictions to the chart $\bU$ define
$\bU_\Gr$ as a closed subscheme of the affine space $\bU$.

To this end, for  $\um=(123)$, we set 
\begin{equation}\label{nbc}
\II_{3,n}^\lt=\{\uu \in \II_{3,n} \mid |\uu \- \um| \ge 2\} \subset \II_{3,n}
\end{equation}
where $\uu$ and  $\um$ are also regarded as subsets of integers, and $|\uu \- \um|$ denotes the cardinality of $\uu \- \um$. 
In  words,  $\uu \in \II^\um_{3,n}$ if and only if
$\uu=(u_1u_2 u_3)$ contains at least two elements distinct from elements in  
$\um=(123)$. Here, lt stands for leading term,
 the choice of which will become
clear soon. It is helpful to write explicitly the set $\II_{3,n}^\lt$
$$
 \II_{3,n}^\lt=
\{(1uv),(2uv),(3uv),(abc) \mid 3<u<v \le n,\; 3<a<b<c\le n\}.
$$
Hence,
$$\II_{3,n} \- \II_{3,n}^\lt =
\{(123), (12u), (13u), (23u) \mid \hbox{for all $u \in [n]\- [3]$}\}.$$
Then, one calculates and finds
$$|\II_{3,n}^\lt|=\up={n \choose 3} -1- 3(n-3) ,$$
where $|\II_{3,n}^\lt|$ denotes the cardinality of $\II_{3,n}^\lt$.

Further, let $\ua=(a_1\cdots a_k)$ be a list of some elements of $[n]$, not necessarily mutually distinct,
 for some $k<n$.
We will write $$v \ua=v(a_1\cdots a_k)=(v a_1\cdots a_k)
\;\; \hbox{and} \;\;  \ua  v=(a_1\cdots a_k)v=(a_1\cdots a_k v),$$
each is considered as a list of some elements of $[n]$, 
for any $v \in [n] \- \ua$.

Now, take any element $\uu =(u_1u_2u_3) \in \II_{3,n}^\lt$.
We then set 
$$ \uh=(u_2u_3) \;\;\hbox{and} \;\; 
\uk=(u123).$$
 This gives rise
to the $\pl$ relation $F_{\uh,\uk}$, taking of the following form
\begin{equation} \label{keyTrick}
F_{\uh,\uk}=p_{u_2u_3 u_1} p_{123} -  p_{u_2u_3 1} p_{u_1 23}
+ p_{u_2u_3 2} p_{u_1 13} - p_{u_2u_3 3} p_{u_2 12},
\end{equation}
where $\um \setminus m_i = \um \setminus \{m_i\}$ and $\um$ is regarded as a set of integers,
for all $i \in [d]$.

Alternatively, we give a new notation for this particular equation: 
\begin{equation} \label{keyTrick2}
F_{\um, \uu}=p_{(\uu \setminus u_1)u_1} p_\um + \sum_{i=1}^3 (-1)^i  
p_{(\uu \setminus u_1) i} p_{u_1 (\um \setminus \{i\} )},
\end{equation} because it only depends on $\um$ and $\uu \in \II_{3,n}^\lt$. 
To simplify the notation, we introduce
$$\uu^r= \uu \setminus u_1, \;\; \widehat{\um_i} = \um\setminus \{i\}, \;\; \hbox{for all $i \in [3]$.}$$
Then, \eqref{keyTrick2} becomes
\begin{equation} \label{keyTrick4}
F_{\um, \uu}=p_{\um}p_{\uu} +
\sum_{i=1}^3 (-1)^i p_{ \uu^r i } p_{u_1 \widehat{\um_i} }.
\end{equation}
More concretely, all of the equations in \eqref{keyTrick4}
are of the following forms:
\begin{eqnarray}
F_{(123),1uv}=p_{123}p_{1uv}-p_{12u}p_{13v} + p_{13u}p_{12v}, 
\nonumber \label{rk0-1uv}\\
F_{(123),2uv}=p_{123}p_{2uv}-p_{12u}p_{23v} + p_{23u}p_{12v}, 
\nonumber \label{rk0-2uv} \\
F_{(123),3uv}=p_{123}p_{3uv}-p_{13u}p_{23v} + p_{23u}p_{13v} ,
\nonumber \label{rk0-3uv}\\
F_{(123),abc}=p_{123}p_{abc}-p_{12a}p_{3bc} + p_{13a}p_{2bc} -p_{23a}p_{1bc}, 
\nonumber \label{rk1-abc}
\end{eqnarray}
where $u < v \in [n]\-\{1,2,3\}$ and $a<b<c \in [n]\-\{1,2,3\}$.
Here, $[n]=\{1,\cdots,n\}$.

\begin{defn}\label{lt-ltvar}
We call the $\pl$ equation $F_{\um, \uu}$ in
 \eqref{keyTrick4} a primary $\pl$ equation
for the chart $\bU=(p_\um \equiv 1)$. 
We also say $F_{\um,\uu}$ is $\um$-primary.
The term $p_\um p_{\uu}$ is called the {\it leading} term of $F_{\um, \uu}$.
\end{defn}

One should not confuse  $F_{\um, \uu}$ with 
the expression of a general $\pl$ equation
$F_{\uh,\uk}$: we have $(\um, \uu)\in \II_{3,n}^2$ for the former and 
$(\uh,\uk) \in \II_{2,n} \times \II_{4,n}$ for the latter.

\subsection{De-homogenized $\pl$ ideal with respect to a fixed affine chart} $\ $

Following the previous subsection, we continue to fix the element 
$\um=(123) \in \II_{3,n}$ 
and  focus on the chart $\bU$ of $\PP(\wedge^3 E)$.
For convenience, 
we may write  $\II_{3,n} \- \um$ for  $\II_{3,n} \- \{\um\}$.

Given any $\uu \in \II_{3,n}^\lt$, by \eqref{keyTrick4}, 
it gives rise to the $\um$-primary equation 
$$ F_{\um, \uu}=p_{\um}p_{\uu} +
\sum_{i=1}^3 (-1)^i p_{\uu^r i } p_{ u_1 \widehat{\um_i} }.$$
 If we set $p_\um =1$ and let $x_\uw=p_\uw$, 
for all $\uw \in \II_{3,n} \- \um$,
 then it  becomes 
\begin{equation}\label{equ:localized-uu}
\bF_{\um, \uu}=x_{\uu} +
\sum_{i=1}^3 (-1)^i  x_{\uu^r i} x_{ u_1 \widehat{\um_i} }.
\end{equation}

\begin{defn}\label{localized-primary} 
We call the relation \eqref{equ:localized-uu} 
 the de-homogenized (or the localized)
$\um$-primary $\pl$ relation corresponding to $\uu \in \II_{3,n}^\lt$.
We call the unique distinguished variable, $x_{\uu}$, the 
{\it leading} variable
of the  de-homogenized $\pl$ relation $\bF_{\um, \uu}$. 
\end{defn}

We let $\sF_\um$
 be the set of all of the equations in \eqref{equ:localized-uu}.
Then, we have the correspondence 
$$\II_{3,n}^\lt \lra \sF_\um, \;\; \uu \to F_{\um, \uu}.$$
Obviously, this is a bijection.

In concrete terms, all of the equations in \eqref{keyTrick4}
are of the following forms:
\begin{eqnarray}
F_{(123),1uv}=x_{1uv}-x_{12u}x_{13v} + x_{13u}x_{12v}, 
 \label{-rk0-1uv}\\
F_{(123),2uv}=x_{2uv}-x_{12u}x_{23v} + x_{23u}x_{12v}, 
 \label{-rk0-2uv} \\
F_{(123),3uv}=x_{3uv}-x_{13u}x_{23v} + x_{23u}x_{13v} ,
 \label{-rk0-3uv}\\
F_{(123),abc}=x_{abc}-x_{12a}x_{3bc} + x_{13a}x_{2bc}
 -x_{23a}x_{1bc}, 
 \label{-rk1-abc}
\end{eqnarray}
where $u < v \in [n]\-\{1,2,3\}$ and $a<b<c \in [n]\-\{1,2,3\}$.
Here, $[n]=\{1,\cdots,n\}$.

Throughout  this paper, 
we often express an $\um$-primary $\pl$ equation $F$ as
\begin{equation}\label{the-form-F}
F=\sum_{s \in S_F} \sgn (s) p_{\uu_s} p_{\uv_s}=
 \sgn (s_F) p_\um p_{\uu_{s_F}}+
\sum_{s \in S_F\- s_F} \sgn (s) p_{\uu_s} p_{\uv_s}
\end{equation}
where $s_F$ is the index for the leading term of $F$, and $S_F\- s_F:=S_F\-\{s_F\}$.
Then, upon setting $p_\um=1$ and letting $x_\uw=p_\uw$ for all $\uw \in \II_{3,n}\- \um$,
 we can write the corresponding de-homogenized  $\um$-primary $\pl$ equation $\bF$ as
\begin{equation}\label{the-form-LF}
\bF=\sum_{s \in S_F} \sgn (s) x_{\uu_s} x_{\uv_s}= \sgn (s_F) x_{\uu_F}+
\sum_{s \in S_F\- s_F} \sgn (s) x_{\uu_s} x_{\uv_s}
\end{equation}
where  $x_{\uu_F}:=x_{\uu_{s_F}}$ is the {\it leading variable}\footnote{Leading variables  crucially help to propagate and derive
certain square-freeness property such as in Corollary \ref{linear-or-vanish}.}
 of $\bF$.

\begin{defn}\label{ft-bF}
Let $F$ be an $\um$-primary $\pl$ relation and $\bF$ its 
de-homogenization with respect to the chart $\bU$.
We set $\ft_{\bF}=\ft_F$  and $\rk (\bF) =\rk (F)$.
\end{defn}

For any $\uu \in \II_{3,n} \- \um$, 
 we let $x_\uu=p_\uu/p_\um$ for all $\uu \in \II_{3,n} \- \um$.
Then, we can identify the coordinate ring of $\bU$ 
with $\kk [x_\uu]_{\uu \in \II_{3,n} \- \um}$.
We let 
$I_{\wp,\um}$ be the ideal of $\kk [x_\uu]_{\uu \in \II_{3,n} \- \um}$
obtained from the ideal $I_\wp$ be setting $p_\um=1$ and letting 
$x_\uu=p_\uu$ for all $\uu \in \II_{3,n} \- \um$.
The ideal $I_{\wp,\um}$ is the de-homogenization of the 
homogeneous $\pl$ ideal $I_\wp$ on the chart $\bU$.
Recall that we always set $\um=(123)$.

\begin{prop}\label{primary-generate} 
The affine subspace $\bU_\Gr=\bU \cap \Gr^{3, E}$ embedded in
the affine space $\bU$ is defined by the relations in
$$\sF=\{ \bF_{(123), \uu} \mid \uu \in \II_{3,n}^{(123)} \}.$$
 Consequently, the chart $\bU_\Gr=\bU \cap \Gr^{3, E}$ comes equipped with
 the set of free variables
 $$\var_{\bU}:=\{x_\uu \mid \uu \in \II_{3,n} \-\{ (123)\} \- \II_{3,n}^{(123)}\}$$ and is 
 canonically isomorphic to the affine space 
 with  the above variables as its local free variables.
\end{prop}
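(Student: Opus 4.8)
The plan is to exploit the ``leading variable'' bookkeeping recorded after \eqref{the-form-LF}: I will show that $\sF$ is a \emph{triangular} system which eliminates all the variables $x_\uu$ with $\uu\in\II_{3,n}^\lt$, and then match Krull dimensions with $\bU_\Gr$ to see that nothing further is needed. First I would recall from \eqref{equ:localized-uu} that each $\bF_{(123),\uu}$, $\uu\in\II_{3,n}^\lt$, is the restriction to the chart $\bU=(p_\um\equiv 1)$ of the honest $\pl$ relation $F_{\um,\uu}$ of \eqref{keyTrick4}. Since $F_{\um,\uu}\in I_\wp\subseteq I_{\widehat\wp}$, its de-homogenization vanishes on $\bU_\Gr=\bU\cap\Gr^{3,E}$; hence $(\sF)\subseteq I(\bU_\Gr)$, i.e.\ $\bU_\Gr$ is a closed subscheme of $V(\sF)\subseteq\bU$.

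\textbf{$V(\sF)$ is the affine space on $\var_\bU$.} Next I would read off from the explicit list \eqref{-rk0-1uv}--\eqref{-rk1-abc} the following triangularity. In each rank-$0$ relation $\bF_{(123),iuv}=x_{iuv}-(\cdots)$ the leading variable $x_{iuv}$ occurs linearly with unit coefficient and the remaining monomials involve only variables $x_{12w},x_{13w},x_{23w}\in\var_\bU$; in the rank-$1$ relation $\bF_{(123),abc}=x_{abc}-(\cdots)$ the leading variable $x_{abc}$ occurs linearly with unit coefficient and the remaining monomials involve, besides variables in $\var_\bU$, only the rank-$0$ leading variables $x_{1bc},x_{2bc},x_{3bc}$. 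Ordering the non-free variables $\{x_\uu:\uu\in\II_{3,n}^\lt\}$ with all rank-$0$ leading variables preceding all rank-$1$ ones, the system $\sF$ thus lets one solve for each $x_\uu$ ($\uu\in\II_{3,n}^\lt$) in terms of strictly earlier variables, down to $\var_\bU$. Formally, the composite $\kk[\var_\bU]\hookrightarrow \kk[x_\uu]_{\uu\in\II_{3,n}\setminus\um}\twoheadrightarrow \kk[x_\uu]_{\uu\in\II_{3,n}\setminus\um}/(\sF)$ is an isomorphism: surjectivity because every non-free variable is killed in favour of $\var_\bU$-variables, injectivity because iterated back-substitution furnishes a retraction. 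Hence $V(\sF)$ is integral and isomorphic to affine space of dimension $|\var_\bU|=\binom{n}{3}-1-\up=3(n-3)$.

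\textbf{Dimension count closes the gap.} Finally I would note that $\bU_\Gr$ is a nonempty open subscheme of the integral variety $\Gr^{3,E}$, so it is integral of dimension $\dim\Gr(3,n)=3(n-3)$. Thus in $R:=\kk[x_\uu]_{\uu\in\II_{3,n}\setminus\um}$ we have prime ideals $(\sF)\subseteq I(\bU_\Gr)$ with $\dim R/(\sF)=3(n-3)=\dim R/I(\bU_\Gr)$; passing to the integral domain $R/(\sF)$ of Krull dimension $3(n-3)$, the image of $I(\bU_\Gr)$ is a prime of full coheight, hence zero. Therefore $(\sF)=I(\bU_\Gr)$, so $\bU_\Gr=V(\sF)$ as closed subschemes of $\bU$, and by the previous step it is canonically the affine space on the free variables $\var_\bU$.

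\textbf{Expected main obstacle.} The only point that needs care — and it turns out to be a non-issue — is that over a field of positive characteristic $I_{\widehat\wp}$ strictly contains $I_\wp$ (there are extra ``multivariate'' $\pl$ relations), so a priori $(\sF)$ could define something strictly larger than $\bU_\Gr$. The dimension step disposes of this: the triangular system $\sF$ already cuts out an \emph{integral} scheme of the correct minimal dimension $3(n-3)$ on the chart $\bU$, leaving no room for additional relations. The other place demanding genuine (if routine) verification is the triangularity assertion — specifically that no rank-$0$ leading variable $x_{iuv}$ reappears inside another rank-$0$ relation, and that the rank-$1$ relations feed only on rank-$0$ leading variables and on $\var_\bU$-variables — which is exactly what the defining condition of $\II_{3,n}^\lt$ (``at least two indices outside $\um$'') is engineered to guarantee.
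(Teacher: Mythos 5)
Your proof is correct and follows essentially the same route as the paper's: triangular elimination using the leading variables shows that $V(\sF)$ is the affine space on $\var_\bU$, and then the containment $\bU_\Gr\subseteq V(\sF)$ together with equality of dimensions forces scheme-theoretic equality. The paper's final step is terse (it simply asserts that $\bU_\Gr\subset V(J)$ implies $\bU_\Gr=V(J)$); your explicit dimension count supplies the justification and also cleanly disposes of the concern that the extra multivariate Pl\"ucker relations in positive characteristic might cut $\bU_\Gr$ out strictly smaller.
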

\begin{proof}
(This proposition is elementary; 
it serves as the initial check of an induction for  some later proposition;
we provide sufficient details for completeness.)

We first prove the claim that for any  $\uu \in \II_{3,n}^{(123)}$, 
its corresponding de-homogenized $\pl$ 
primary $\pl$ equation $\bF_{(123), \uu}$ is equivalent to an expression of
the leading variable $x_\uu$ as a polynomial in  the free variables of
$\var_{\bU}$. 

Fix any  $\uu \in \II_{3,n}^{(123)}$.
Suppose $\rk \bF_{(123), \uu}= 0$.  Then,
$\bF_{(123), \uu}$ takes one of the forms in
\eqref{-rk0-1uv}, \eqref{-rk0-2uv}, and \eqref{-rk0-3uv}.
Then, by inspection, the claim obviously holds in such a case.

Now suppose that Suppose $\rk \bF_{(123), \uu}= 1$.
Then by \eqref{-rk1-abc}, $\bF_{(123), \uu)}$ takes the form
$$F_{(123),abc}=x_{abc}-x_{12a}x_{3bc} + x_{13a}x_{2bc}
 -x_{23a}x_{1bc}$$
with $\uu=(abc)$ for some $a<b<c \in [n]\-\{1,2,3\}$.
By the previous case $\rk \bF_{(123), \uu}= 0$, 
we have that using the relations $\bF_{(123), (3bc)},
\bF_{(123), (2bc)}, \bF_{(123), (1bc)}$, respectively, the variables
$x_{3bc}, x_{2bc}, x_{1bc}$ can be  expressed 
as  polynomials in  the free variables of
$\var_{\bU}$. 
 Therefore,  the relation $F_{(123),abc}$, 
   is also equivalent to an expression of $x_{(abc)}$ as a polynomial in 
 the variables of $\var_{\bU}$.

Let $J$ be the ideal of $\kk [x_\uu]_{\uu \in \II_{3,n}\- (123)}$ 
generated by
 $\{ \bF_{(123), \uu} \mid \uu \in \II_{3,n}^{(123)} \}$ and let
 $V(J)$ the subscheme of $\bU$ defined by $J$.
By the above discussion,  $V(J)$ is canonically isomorphic to
 the affine space of dimension $3(n-3)$ with  
  the variables of $\var_{\bU}$ as its local free variables.
Since $\bU_\Gr \subset V(J)$, we conclude $\bU_\Gr=V(J)$.
\end{proof}

\begin{defn}\label{basic} We call the variables in
$$\var_{\bU}:=\{x_\uu \mid \uu \in \II_{3,n} \- \um \- \II_{3,n}^\lt\}$$
the $\um$-basic $\pl$ variables, or simply basic variables.
\end{defn}
Only non-basic $\pl$ variables correspond to $\um$-primary $\pl$ equations.

For $r=0$ or 1, we let
$$\sF^r= 
\{\bF_{\um,\uu}  \mid {\rm rank} (F_{\um,\uu}) =r,\; \uu \in \II_{3,n}^\lt\}.$$
Note that for any $\pl$ relations $F$, we have  ${\rm rank} (F)=0$ or 1.
Hence $$\sfm=\sF^0 \sqcup \sF^1.$$

Then, one observes the following easy but useful fact.

\begin{prop}\label{leadingTerm} Fix  any $r=0$ or 1 and 
 any $\uu \in \II_{3,n}^\lt$ with ${\rm rank}_\um (F_{\um, \uu})=r$.
 Then,  the leading variable 
$x_\uu$ of $\bF_{\um, \uu}$ does not appear in any relation in
$$\sF^{r-1} \cup (\sF^r_\um \setminus \bF_{\um, \uu})$$
where we set $\sF^{-1} =\emptyset$ for the case of $r=0$.
\end{prop}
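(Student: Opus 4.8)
The plan is to prove the statement by a direct, case-by-case inspection of the explicit forms of the de-homogenized $\um$-primary relations listed in \eqref{-rk0-1uv}--\eqref{-rk1-abc}, organized by the rank $r$. The key observation is that for $\uu \in \II_{3,n}^\lt$, the leading variable $x_\uu$ of $\bF_{\um,\uu}$ is precisely the \emph{unique} variable with index in $\II_{3,n}^\lt$ that occurs \emph{linearly} in $\bF_{\um,\uu}$, while every \emph{non-leading} term of any $\um$-primary relation is a product of two variables whose indices lie in $\II_{3,n}\setminus\II_{3,n}^\lt$ (these are exactly the patterns $x_{12u}, x_{13u}, x_{23u}$ appearing in the rank-$0$ relations, or the mixed products $x_{12a}x_{3bc}$, $x_{13a}x_{2bc}$, $x_{23a}x_{1bc}$ in the rank-$1$ relations, where the first factor always has index outside $\II_{3,n}^\lt$). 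So the proof reduces to tracking which variables with index in $\II_{3,n}^\lt$ can appear in a relation, and in which term.

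First I would dispose of the case $r=0$. Here $\sF^{r-1}=\sF^{-1}=\emptyset$ by convention, so the claim is only that $x_\uu$ does not appear in any relation of $\sF^0\setminus\{\bF_{\um,\uu}\}$. Write $\uu=(iuv)$ with $i\in\{1,2,3\}$ and $u<v$ in $[n]\setminus[3]$, or $\uu=(uvw)$-type is excluded since rank-$0$ leading variables are exactly those of the form $x_{iuv}$. Inspecting \eqref{-rk0-1uv}--\eqref{-rk0-3uv}: every non-leading term is a product $x_{\um\setminus\{j\}, u}\,x_{\um\setminus\{j'\},v}$ with both factor-indices \emph{not} in $\II_{3,n}^\lt$; hence the only variable with index in $\II_{3,n}^\lt$ occurring in $\bF_{\um,(iuv)}$ is its own leading variable $x_{iuv}$. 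Since distinct elements of $\II_{3,n}^\lt$ give distinct leading variables, $x_\uu$ appears in no other relation of $\sF^0$.

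Next the case $r=1$. Now I must show $x_\uu$ appears neither in any relation of $\sF^0$ nor in any relation of $\sF^1\setminus\{\bF_{\um,\uu}\}$. Write $\uu=(abc)$ with $a<b<c$ in $[n]\setminus[3]$, so $\bF_{\um,(abc)}$ has the form \eqref{-rk1-abc}. For the $\sF^0$ part: a rank-$0$ relation $\bF_{\um,(iu'v')}$ has leading variable $x_{iu'v'}$ and all other variables outside $\II_{3,n}^\lt$; since $(abc)\in\II_{3,n}^\lt$ and $(abc)\neq(iu'v')$ (the former has no index in $[3]$), $x_{abc}$ cannot be the leading variable, and cannot be a non-leading factor either — contradiction. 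For the $\sF^1$ part: in $\bF_{\um,(a'b'c')}$ the variable $x_{abc}$ could in principle appear as a non-leading factor of the form $x_{2b'c'}, x_{1b'c'}$, etc.; but those factors always carry an index containing an element of $[3]=\{1,2,3\}$, whereas $(abc)$ has \emph{no} element of $[3]$ — so $x_{abc}$ cannot match. Hence $x_{abc}$ occurs in $\bF_{\um,(a'b'c')}$ only as the leading variable, which forces $(a'b'c')=(abc)$. This exhausts all cases and completes the proof.

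I do not anticipate a serious obstacle here: the statement is essentially a bookkeeping lemma recording a structural feature of the primary relations, and the only point requiring care is making the ``no element of $[3]$ vs.\ contains an element of $[3]$'' dichotomy fully precise for the rank-$1$ leading variables versus the non-leading factors — this is exactly where the definition \eqref{nbc} of $\II_{3,n}^\lt$ (``$|\uu\setminus\um|\ge 2$'') does the work, since the non-leading factors in every primary relation always have $|\uu\setminus\um|\le 1$ for at least one of the two factors. If one wants a cleaner argument avoiding explicit enumeration, one could instead phrase it via the observation that $\bF_{\um,\uu}-x_\uu$ lies in the subring generated by $\{x_\uw : \uw\notin\II_{3,n}^\lt\}$ (the basic variables plus the ``second-smallest'' variables $x_{12u}$ etc.), combined with $\sF^0$-induced rewriting for rank $1$; but the direct inspection is shorter and self-contained.
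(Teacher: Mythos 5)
Your proof is correct; the paper states Proposition~\ref{leadingTerm} as an observation and gives no proof, and your direct inspection of the explicit forms \eqref{-rk0-1uv}--\eqref{-rk1-abc}, organized by the dichotomy between how many elements of $[3]$ the sub-index $\uu$ contains, is the natural (indeed essentially forced) argument. The case analysis in the body of the proof is accurate, and the key discriminating observations — that all non-leading factors in rank-$0$ relations lie outside $\II_{3,n}^\lt$, and that every non-leading factor in a rank-$1$ relation carries at least one element of $[3]$ whereas a rank-$1$ leading index $(abc)$ carries none — are exactly what the statement rests on.

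One small inaccuracy in the framing paragraph is worth flagging: the claim that ``every non-leading term of any $\um$-primary relation is a product of two variables whose indices lie in $\II_{3,n}\setminus\II_{3,n}^\lt$'' is false for the rank-$1$ relations, since the second factors $x_{3bc}$, $x_{2bc}$, $x_{1bc}$ in $\bF_{(123),abc}$ have $|\uu\setminus\um|=2$ and therefore do lie in $\II_{3,n}^\lt$ (they are in fact rank-$0$ leading variables). You correct for this implicitly in the rank-$1$ case analysis, where you switch to the sharper criterion ``contains an element of $[3]$''; but the overview sentence as written overstates the structure. The same slip recurs in the closing aside, where the claim that $\bF_{\um,\uu}-x_\uu$ lies in the subring generated by $\{x_\uw:\uw\notin\II_{3,n}^\lt\}$ holds only for $r=0$ and requires the $\sF^0$-rewriting you mention (i.e.\ after substituting the rank-$0$ relations) before it becomes true for $r=1$; and ``the basic variables plus the `second-smallest' variables $x_{12u}$'' is redundant, since $x_{12u}, x_{13u}, x_{23u}$ \emph{are} the basic variables by Definition~\ref{basic}. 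None of this affects the correctness of the actual case-by-case verification, which stands on its own.
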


To close this subsection, we raise  a concrete question.
 Fix the chart $(p_\um \equiv 1)$.
 In  $\kk[x_\uu]_{\uu \in \II_{3,n}\- \um}$,
according to  Proposition \ref{primary-generate}, 
 every de-homogenized $\pl$ equation
$\bF_{\uh,\uk}$ on the chart $\bU$ can be expressed
as a polynomial in the  de-homogenized primary $\pl$ relations $\bF_{\um, \uu}$ 
with $\uu \in \II_{3,n}^\lt$. 
 It may be useful  in practice to find such an expression explicitly for an arbitrary $F_{\uh,\uk}$.
For example, for the case of $\Gr(2,5)$, this can be done as follows.

\begin{example}\label{pl2-5} For $\Gr(2,5)$, we have five $\pl$ relations:
$$F_1= p_{12}p_{34}-p_{13}p_{24} + p_{14}p_{23},\;
F_2= p_{12}p_{35}-p_{13}p_{25} + p_{15}p_{23}, \; $$
$$F_3= p_{12}p_{45}-p_{14}p_{25} + p_{15}p_{24},\;
F_4= p_{13}p_{45}-p_{14}p_{35} + p_{15}p_{34}, \;$$
$$F_5= p_{23}p_{45}-p_{24}p_{35} + p_{25}p_{34}. $$
On the chart $(p_{45} \equiv 1)$,
$F_3, F_4,$ and $ F_5$ are primary. 
One calculates and finds
$$p_{45} F_1 = p_{34} F_3 -p_{24} F_4 + p_{14} F_5, $$ 
$$p_{45} F_2 = p_{35} F_3 -p_{25} F_4 + p_{15} F_5.$$
In addition,  the  Jacobian of the de-homogenized $\pl$ equations of $\bF_3, \bF_4, \bF_5$ with respect to 
all the variables,
$x_{12}, x_{14}, x_{15} ,x_{13}, x_{35} , x_{34},x_{23} ,x_{24} ,x_{25}, $
is given by
$$
\left(
\begin{array}{cccccccccc}
1 & x_{25} & x_{24} & 0  & 0 & 0 & 0 & 0 & 0 \\
0 & 0 & 0 & 1  & x_{14} & x_{15}& 0 & 0 & 0 \\
0& 0 & 0 & 0  & 0 & 0 & 1 & x_{35} & x_{34} \\
\end{array}
\right).
$$
There, one sees visibly  a $(3 \times 3)$  identity minor. 
\end{example}

\begin{rem}\label{many-pri} Fix $\um$ and $\uu \in \II_{3,n}^\lt$. There can be many $\pl$ equations
containing the term $p_\um p_\uu$. For example, consider $\Gr^{3,E}$ with $\dim E=n$. 
 
 Let $\um =(123)$ and
$\uu=(abc)$ with $a< b< c \notin \{1,2,3\}$. Then, we have the following 
$$F_{\um, (abc,a)}= p_\um p_{abc}-p_{12a}p_{3bc}+p_{13a}p_{2bc}-p_{23a}p_{1bc},$$
$$F_{\um, (abc,b)}= p_\um p_{abc}-p_{12b}p_{3ac}+p_{13b}p_{2ac}-p_{23b}p_{1ac},$$
$$F_{\um, (abc,c)}= p_\um p_{abc}-p_{12c}p_{3ab}+p_{13c}p_{2ab}-p_{23c}p_{1ab}.$$
But, we also have
$$F_{(12), (3abc)}=p_\um p_{abc}-p_{12a}p_{3bc}+p_{12b}p_{3ac}-p_{12c}p_{3ab},$$
$$F_{(13), (2abc)}=p_{(132)}p_{abc}-p_{13a}p_{2bc}+p_{13b}p_{2ac}-p_{13c}p_{2ab},$$
$$F_{(23), (1abc)}=p_\um p_{abc}-p_{23a}p_{1bc}+p_{23b}p_{1ac}-p_{23c}p_{1ab}.$$

The one we used in this paper is $F_{\um, (abc,a)}$, although
any other among the above list might be chosen
and fixed to take the role of $F_{\um, (abc,a)}$.
\end{rem}

\subsection{Ordering the $\pl$ variables and the primary $\pl$ relations}\label{orders}

\begin{defn}\label{gen-order} Let $K$ be any fixed totally ordered finite set, 
with its order denoted by $<$.
Consider any two subsets $\eta \subset K$ and ${ \zeta} \subset K$ with the cardinality $n$
for some positive integer $n$.
We write $\eta=(\eta_1,\cdots,\eta_n)$ 
and  ${\zeta}=(\zeta_1,\cdots,\zeta_n)$ as arrays according to the ordering of $K$.
We say $\eta <_{\lex} { \zeta}$ if the left most nonzero number in the vector $\eta-{\zeta}$ is negative,
or more explicitly,  if we can express
$$\eta=\{t_{1}< \cdots <t_{r-1} <s_r< \cdots \}$$
$${\zeta}=\{t_{1}< \cdots <t_{r-1}<t_r  < \cdots \}$$
such that $s_r< t_r$ for some integer $r \ge 1$.  We call $<_\lex$ the lexicographic order
induced by $(K, <)$.

Likewise, we say $\eta <_{\invlex} { \zeta}$
 if the right most non-zero number in the vector $\eta-{ \zeta}$ is negative,
or more explicitly, if we can express
$$\eta=\{\cdots <s_r< t_{r+1}< \cdots <t_n\}$$
$${\zeta}=\{\cdots <t_r  < t_{r+1}< \cdots <t_n\}$$
such that $s_r< t_r$ for some integer $r \ge 1$. 
We call $<_\invlex$ the reverse lexicographic order
induced by $(K, <)$. 
\end{defn}

This definition can be applied to the set
$$\II_{3,n} =\{ (i_1i_2 i_3) \; \mid \; 1 \le  i_1 < i_2 < i_3 \le n\}$$
This way, we have equipped the set $\II_{3,n}$ 
with both the  lexicographic ordering $``<_\lex "$ and
the reverse lexicographic ordering $``<_\invlex "$. 

{\it We point out here that neither is the order we used for the set of $\pl$ variables
$$\{x_\uu \mid \uu \in \II_{3,n} \- \um\},$$
even though by the obvious bijection
between $\II_{3,n} \- \um$ and $\{x_\uu \mid \uu \in \II_{3,n} \- \um\}$, each  of $``<_\lex "$ and $``<_\invlex "$
provides a total ordering on $\{x_\uu \mid \uu \in \II_{3,n} \- \um\}$. }

\begin{defn} For any  $\uu \in \II_{3,n}^\lt$, we define 
the $\um$-rank of $\uu$ (resp.  $x_\uu$) to be the rank of its corresponding primary $\pl$ equation $F_{\um,\uu}$.  
If $\uu \in (\II_{3,n} \- \um) \- \II_{3,n}^\lt$,
then we set $\rk (\uu)=-1$ (resp.  $\rk (x_\uu)=-1$).
\end{defn}

\begin{defn}\label{invlex}
Consider any $\uu, \uv \in \II_{3,n} \- \um$. 
We say $$\uu <_\wp \uv$$ if one of the following three holds:
\begin{itemize}
\item ${\rm rank}_\um \; \uu < {\rm rank}_\um \; \uv$;
\item ${\rm rank}_\um \; \uu = {\rm rank}_\um \; \uv$, $\uu \- \um<_\lex \uv \- \um$;
\item ${\rm rank}_\um \; \uu ={\rm rank}_\um \; \uv$, $\uu \- \um = \uv\- \um$, and
$\um \cap \uu  <_\lex \um \cap \uv$.
\end{itemize}
\end{defn}

\begin{defn}\label{cFi-partial-order} Consider any two $\pl$ variables $x_\uu$ and $x_\uv$.
We say 
$$\hbox{$x_\uu <_\wp x_\uv$ if $\uu<_\wp \uv$.}$$

Consider any two distinct primary equations, 
$\bF_{\um,\uu}, \bF_{\um,\uv} \in \sfm$ 
 We say 
$$\bF_{\um,\uu} <_\wp \bF_{\um,\uv}\; \; \hbox{if} \;\;  \uu <_\wp \uv.$$
\end{defn} 
This order coincides with the order on $\sF$
 described in \S \ref{blocks of gov}. The reader may consult with 
it since it is more concrete.
Under the above order, we can write
$$\sfm=\{\bF_1 <_\wp \cdots <_\wp \bF_\up\}.$$

In the sequel, when comparing two $\pl$ variables $x_\uu$ and $x_\uv$
or  two $\um$-primary $\pl$ equations, we exclusively use $<_\wp$.
Thus, throughout, for simplicity, we will simply write $<$ for $<_\wp$.
A confusion is unlikely.

{\it We point out here that $x_\uu <_\wp x_\uv$ is neither lexicographic
nor reverse-lexicographic on the indexes. Indeed,
every non-lexicographic or   non-reverse-lexicographic order,
introduced in this article, is important for our method. Some orders we shall use may be subtle.}


For later use, we introduce 

\begin{defn}\label{p-t}
Let $\fT_i$ be a finite set for all $i \in [h]$ for some positive integer $h$. Then,
the order $$\fT_1 < \cdots < \fT_h$$ naturally induces a partial order on the disjoint union
$\sqcup_{i \in [h]} \fT_i $ as follows. Take any $i < j \in [h]$,
$a_i \in \fT_i$, and $a_j \in \fT_j$. Then, we say $a_i < a_j$.
\end{defn}

If every $\fT_i$ comes equipped with a total order for all $i \in [h]$. Then,
in the situation of Definition \ref{p-t}, the disjoint union
$\sqcup_{i \in [h]} \fT_i $ is totally ordered.

\subsection{Summary:  primary $\pl$ relations,  total order, and notation }\label{Gr3n} $\ $

Throughout the remainder of this paper, we will focus on $\Gr^{3,E} \subset \PP(\wedge^3 E)$ and the chart
$$\bU=(p_{123} \equiv 1) \subset \PP(\wedge^3 E).$$
We let
$$\{ x_{abc}:= p_{abc}/p_{123} \mid (abc) \in \II_{3,n} \- (123)\}$$
be the affine coordinates of the chart $\bU$.

By Proposition \ref{primary-generate}, $\bU_\Gr:=\bU \cap \Gr^{3,E}$ is defined by 
the set of de-homogenized 
primary $\pl$ relations with respect to the chart $\bU$.
Specifically, they are in the following forms.
\begin{eqnarray}\label{all primaries}
\hbox{de-homogenized primary $\pl$ relations for $\bU_\Gr=\bU \cap \Gr^{3,E}$:}  \\  \nonumber 
\bF_{(123),1uv}=x_{1uv}-x_{12u}x_{13v} + x_{13u}x_{12v}, \label{rk0-1uv}\\ \nonumber
\bF_{(123),2uv}=x_{2uv}-x_{12u}x_{23v} + x_{23u}x_{12v}, \label{rk0-2uv} \\ \nonumber
\bF_{(123),3uv}=x_{3uv}-x_{13u}x_{23v} + x_{23u}x_{13v} ,\label{rk0-3uv}\\ \nonumber
\bF_{(123),abc}=x_{abc}-x_{12a}x_{3bc} + x_{13a}x_{2bc} -x_{23a}x_{1bc}, \label{rk1-abc} \nonumber
\end{eqnarray}
for all $3 <u<v$ and $3<a<b<c$. The first three are of rank 0, the last is of rank 1.

The above (de-homogenized) $\pl$ relations are ordered as follows.
\begin{itemize}
\item any relation of rank 0 is smaller than any relation of rank 1;
\item $\bF_{(123),1uv}<\bF_{(123),2uv}<\bF_{(123),3uv}$;
\item  $\bF_{(123),iuv} <\bF_{(123),ju'v'} $ if $(uv)<_\lex (u'v')$, for any $i, j \in [3]$.
\item $\bF_{(123),abc}<\bF_{(123),a'b'c'}$ if $(abc)<_\lex (a'b'c')$.
\end{itemize}

We let $\sF=\sF_{(123)}$ denote the set of  all primary $\pl$ relations with respect to the chart $\bU$.
Then, we list all the above primary $\pl$ relations as
\begin{equation}\label{list sF}
\bF_1< \bF_2 <\cdots < \bF_\up.
\end{equation}

As in  \eqref{nbc},  we have
\begin{equation}\label{ld-index}
 \II_{3,n}^\lt=\{(1uv),(2uv),(3uv),(abc) \mid 3<u<v \le n,\; 3<a<b<c\le n\}.
\end{equation}
This is 
 the index set for the
 leading terms of primary $\pl$ relations. It contains
\begin{equation}\label{ld-index-0}
 \II_{3,n}^{\lt, 0}=\{(1uv),(2uv),(3uv)\mid 3<u<v \le n\},
\end{equation}
  the index set for the leading terms of primary $\pl$ relations of rank-0,
as a  subset.

\section{A Singular Local Birational Model $\sV$ for $\Gr^{3,E}$}\label{singular-model}

{\it The purpose of this section is to establish a local model $\sV$, 
birational to $\Gr^{3,E}$,  such that  all
terms of all $\um$-primary $\pl$ equations can be separated in  the
defining governing binomial relations of $\sV$ in a smooth ambient scheme $\sR$.

We view $\sV$ as a new paradigm for all possible singularities.
And, we will design the universal blowups based on the
governing defining relations of $\sV$.
}

\subsection{The construction  of $\sV$ in a smooth model $\sR$} $\ $

For any $\bF \in \sF$ with
 $$\bF=\sum_{s \in S_F} \sgn (s) x_{\uu_s}x_{\uv_s}$$
we let $\PP_F$ be the projective space with 
homogeneous coordinates written as $[x_{(\uu_s, \uv_s)}]_{s \in S_F}$.
For convenience, we make a convention: 
\begin{equation}\label{uv=vu}
x_{(\uu_s, \uv_s)}=x_{(\uv_s, \uu_s)}, \; \forall \; s \in S_{F}.
\end{equation}
(If we write $(\uu_s, \uv_s)$ in the lexicographical order, i.e., we insist $\uu_s <_\lex \uv_s$,
then the ambiguity is automatically avoided. However, the convention allows flexibility of writing.)

\begin{defn}
We call $x_{(\uu_s, \uv_s)}$ a $\vr$-variable of $\PP_F$, or simply a $\vr$-variable.
To distinguish, we call a $\pl$ variable, $x_\uu$ with $\uu \in 
\II_{3,n} \- \um$, a $\vp$-variable. By convention, $x_\um=1$.
\end{defn}

{\it In what follows, we will ultimately only be interested in the case when $k=\up$. But, as it will be
useful for applying induction in later proofs, we will do it for all $k \in [\up]$.
}

\smallskip

Fix $k \in [\up]$. We introduce the natural rational map
\begin{equation}\label{theta-k}
 \xymatrix{ \Theta_{[k]}: \bU 
 \ar @{-->}[r]  & \prod_{i \in [k]} \PP_{F_i}  } \end{equation} $$
 [x_\uu]_{\uu \in \II_{3,n}} \lra  
\prod_{i \in [k]}  [x_\uu x_\uv]_{(\uu,\uv) \in \La_{F_i}} $$   
where $[x_\uu]_{\uu \in \II_{3,n}}$ is the de-homogenized  $\pl$
 coordinates of a point in $\bU \subset \PP(\wedge^3 E)$. 
It restricts to give rise to 
\begin{equation}\label{theta-k-gr}
 \xymatrix{ \Theta_{[k],\Gr}: \bU_\Gr
 \ar @{-->}[r]  & \prod_{i \in [k]} \PP_{F_i}  } \end{equation} 
where recall that $\bU_\Gr:=\bU \cap \Gr^{3,E}$

 We let
 \begin{equation}\label{embed sV}
 \xymatrix{
\sV_{[k]} \ar @{^{(}->}[r]  & \sR_{[k]}:= \bU \times  \prod_{i \in [k]} \PP_{F_i}  
 }
\end{equation}
be the closure of the graph of the rational map $\Theta_{[k], \Gr}$.
The case when $k=\up$ is what we are mainly interested.  We let
$$\sV:=  \sV_{\sF_{[\up]}},\;\;
\sR=   \sR_{\sF_{[\up]}}.$$
We ultimately are only interested in the model $\sV$;
the intermediate models $\sV_{[k]}$ are introduced so that
induction can be applied in some proofs.

As an intermediate toroidal ambient space of $\sV_{[k]}$,
we let
 \begin{equation}\label{embed sU}
 \xymatrix{
\bU_{[k]} \ar @{^{(}->}[r]  & \sR_{[k]}:= \bU \times  \prod_{i \in [k]} \PP_{F_i}  
 }
\end{equation}
be the closure of the graph of the rational map $\Theta_{[k]}$.


\begin{lemma}\label{lift-action-to-Um[k]}
 The torus $\TT$ acts on 
$\sR_{[k]}= \bU \times  \prod_{i \in [k]} \PP_{F_i}$ 
with the given action on $\bU$
and the trivial action of $\TT$ on $\prod_{i \in [k]} \PP_{F_i}$.
Then, the rational map $ \Theta_{[k]}$ of \eqref{theta-k} is
$\TT$-equivarint.

Moreover, the quasi-free action of the torus $\TT$ on $\bU$ lifts to a
quasi-free action on $\bU_{[k]}$ so that the birational morphism
$$\bU_{[k]} \lra \bU$$ is $\TT$-equivariant. 
Restricting the above to the invariant subset $\sV_{[k]}\subset \bU_{[k]}$,
 we obtain that the birational morphism
$$\sV_{[k]} \lra \bU_\Gr=\bU \cap \Gr^{3,E}$$
is $\TT$-equivariant.
\end{lemma}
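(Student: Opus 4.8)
The plan is to make the $\TT$-action explicit in coordinates, to observe that the de-homogenized primary $\pl$ relations are $\TT$-homogeneous (which forces the target projective factors to be fixed), to deduce the $\TT$-equivariance of the rational maps $\Theta_{[k]}$ and $\Theta_{[k],\Gr}$, and then to pass to Zariski closures. First I would record that the chart $\bU=(p_{123}\ne 0)$ is a $\TT$-invariant open subset of $\PP(\wedge^3 E)$, since $\bt\cdot p_{123}=t_1t_2t_3\,p_{123}$; hence $\TT$ acts on $\bU$, and in the affine coordinates $x_\uu=p_\uu/p_{123}$ this action is the diagonal rescaling $\bt\cdot x_\uu=\chi_\uu(\bt)\,x_\uu$ with $\chi_\uu(\bt)=t_{u_1}t_{u_2}t_{u_3}/(t_1t_2t_3)$ and $\chi_\um\equiv 1$, consistent with the convention $x_\um=1$. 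Being $\TT$-invariant and open in $\PP(\wedge^3 E)$, the chart $\bU$ inherits the quasi-free $\TT$-action, and $\bU_\Gr=\bU\cap\Gr^{3,E}$ is $\TT$-invariant because $\Gr^{3,E}$ is.

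Next, the central point: every de-homogenized primary $\pl$ relation $\bF\in\sF$, written as in \eqref{the-form-LF}, is $\TT$-homogeneous. Indeed, each homogeneous $\pl$ relation $F_{\uh,\uk}$ of \eqref{pluckerEq} is a sum of quadratic monomials $p_{\uu_s}p_{\uv_s}$ all sharing the combined index multiset $\uh\sqcup\uk$, hence a common $\TT$-weight $w_F$; de-homogenization divides every such monomial by the same $p_{123}^2$, so every monomial of $\bF$ --- including the linear leading monomial, read as $x_\um x_{\uu_F}$ with pair $(\um,\uu_F)$ --- has $\TT$-weight $\chi_{\uu_s}(\bt)\chi_{\uv_s}(\bt)=w_F(\bt)/(t_1t_2t_3)^2$, independent of $s\in S_F$. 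Therefore, for $x\in\bU$ and each $i\in[k]$, the images $\Theta_{[k]}(x)$ and $\Theta_{[k]}(\bt\cdot x)$ in the factor $\PP_{F_i}$ have homogeneous coordinates $[x_{\uu_s}x_{\uv_s}]_{s\in S_{F_i}}$ and $[\,(w_{F_i}(\bt)/(t_1t_2t_3)^2)\,x_{\uu_s}x_{\uv_s}\,]_{s\in S_{F_i}}$, which define the same point. Hence $\Theta_{[k]}$ of \eqref{theta-k}, and its restriction $\Theta_{[k],\Gr}$ of \eqref{theta-k-gr}, are $\TT$-equivariant for the action on $\sR_{[k]}=\bU\times\prod_{i\in[k]}\PP_{F_i}$ that is the given one on $\bU$ and trivial on $\prod_{i\in[k]}\PP_{F_i}$.

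It then remains to pass to closures. By equivariance, the graph of $\Theta_{[k]}$ is a $\TT$-invariant locally closed subset of $\sR_{[k]}$; since $\TT$ acts on $\sR_{[k]}$ by automorphisms, its Zariski closure $\bU_{[k]}$ of \eqref{embed sU} is $\TT$-invariant, so $\TT$ acts on $\bU_{[k]}$, and the birational morphism $\bU_{[k]}\to\bU$ is $\TT$-equivariant, being the restriction of the $\TT$-equivariant first projection $\sR_{[k]}\to\bU$. For quasi-freeness, any $\tilde x=(x,z)\in\bU_{[k]}\subset\bU\times\prod_{i\in[k]}\PP_{F_i}$ has isotropy $\TT_{\tilde x}=\TT_x\cap\TT_z=\TT_x$, because $\TT$ fixes $z$; and $\TT_x$ is connected since the $\TT$-action on $\bU$ is quasi-free, so the $\TT$-action on $\bU_{[k]}$ is quasi-free. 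Finally, the graph of $\Theta_{[k],\Gr}$ lies in $\bU_\Gr\times\prod_{i\in[k]}\PP_{F_i}$, which is $\TT$-invariant, so its closure $\sV_{[k]}\subset\bU_{[k]}$ of \eqref{embed sV} is $\TT$-invariant, and the birational morphism $\sV_{[k]}\to\bU_\Gr$ is again $\TT$-equivariant as the restriction of $\bU_{[k]}\to\bU$.

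The only step requiring real care is the $\TT$-homogeneity of the de-homogenized primary relations: one must reconcile the linearity of the leading term (and the different number of terms for rank-$0$ versus rank-$1$ relations) with the uniformity of $\TT$-weights. This is handled by the single observation that de-homogenization divides every quadratic $\pl$ monomial by the same $p_{123}^2$, after which $\TT$-homogeneity survives automatically; the rest --- invariance of graphs and of their Zariski closures, and the triviality of the stabilizer contribution from the projective factors --- is formal.
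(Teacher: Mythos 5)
Your proposal is correct and carries out explicitly what the paper dismisses with the single sentence ``All statements are immediate.'' The computation that every de\-homogenized primary $\pl$ relation $\bF_{\um,\uu}$ is $\TT$-homogeneous of weight $\chi_\uu(\bt)=t_{u_1}t_{u_2}t_{u_3}/(t_1t_2t_3)$ --- because every quadratic monomial of the original $\pl$ relation carries the same index multiset, and de-homogenization divides all of them by the same $p_{123}^2$ --- is exactly the observation that makes the lemma ``immediate,'' and you correctly deduce from it that each factor map $\bU\dashrightarrow\PP_{F_i}$ is equivariant with respect to the \emph{trivial} action on $\PP_{F_i}$. The remaining steps (invariance of graph closures, equivariance of the projections, and quasi-freeness via $\TT_{(x,z)}=\TT_x$ because the action on the projective factors is trivial) are formal and correctly executed. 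Since the paper gives no written argument, there is no alternative approach to compare against; your write-up is a legitimate filling-in of the gap.
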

\begin{proof} All statements are immediate.
\end{proof}
We can summarize the above lemma by the following diagram
\begin{equation}\label{action-1}
\xymatrix{
\sV_{[k]} \ar [d] \ar @{^{(}->}[r]  & \bU_{[k]} \ar [d] \ar @{^{(}->}[r] & \sR_{[k]} \ar [d]\\
\bU_\Gr \ar @{^{(}->}[r]  &  \bU \ar[r]^{=} & \bU
}
\end{equation} 
where all the arrows are   $\TT$-equivariant and the 
first two vertical arrows are birational.


We  need $(\sV_{[k]} \subset \bU_{[k]})$ 
only for inductive purpose. We are mainly interested in
$$\xymatrix{
\sV_{[k]}  \ar @{^{(}->}[r] & \sR_{[k]}
}
$$
and ultimately interested in the final case of the above when $k=\up$,
$$\xymatrix{
\sV  \ar @{^{(}->}[r]  & \sR .
}
$$

We need to obtain the defining relations for $\sV$ in $\sR$.
For this purpose, we  first introduce and consider the following algebras.

\begin{defn}
Fix any $ k \in [\up]$. We let 
$$ R_0=\kk[x_\uu]_{\uu \in \II_{3,n} \- \um} \; ,\;\;\;
 R_{[k]}=R_0[ x_{(\uv_s, \uu_s)}]_{s \in S_{F_i},  i \in [k]}.$$
A  polynomial $f \in R_{[k]}$  is called multi-homogeneous if it is homogenous 
in $[x_{(\uv_s, \uu_s)}]_{s \in S_{F_i}}$, for every $i \in [k]$.
 A multi-homogeneous polynomial $f \in  R_{[k]}$ 
 is $\vr$-linear if it is linear 
 in $[x_{(\uv_s, \uu_s)}]_{s \in S_{F_i}}$, 
 whenever it contains some $\vr$-variables of $\PP_{F_i}$,
 for  any $i \in [k]$.
 \end{defn}

Then, corresponding to the embedding \eqref{embed sU}, 
we have the degree two homomorphism
\begin{equation}\label{vik}
\vi_{[k]}: \; R_{[k]} \lra R_0   
\end{equation} 
$$\vi_{[k]}|_{R_0} =\Id_{R_0}; \;\;\; x_{(\uu_s,\uv_s)} \to x_{\uu_s} x_{\uv_s}$$
for all  $s \in S_{F_i}, \; i \in [k]$. 

As we are ultimately interested in the case when $k=\up$, we set 
$$R:=R_{[\up]},\;\; \vi:=\vi_{[\up]}.$$

We let $\ker^\mh \vi_{[k]}$ 
denote the set of all  multi-homogeneous polynomials
 in $\ker  \vi_{[k]}$. 


\begin{lemma}\label{defined-by-ker} 
The scheme 
$\rU_{[k]}$, as a closed subscheme of 
 $\sR_{[k]}= \bU \times  \prod_{i \in [k]} \PP_{F_i}$,
 is defined by the ideal $\ker^\mh \vi_{[k]}$. 
\end{lemma}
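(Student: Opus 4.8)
The claim is that $\bU_{[k]}$, defined as the closure of the graph of the monomial map $\Theta_{[k]}$ inside $\sR_{[k]} = \bU \times \prod_{i\in[k]}\PP_{F_i}$, coincides set-theoretically and scheme-theoretically with the closed subscheme cut out by the multihomogeneous kernel $\ker^\mh \vi_{[k]}$. The plan is to argue by the standard equivalence between closures of graphs of rational maps given by monomials and the toric/binomial ideals computed via the associated semigroup homomorphism, adapted to the multi-projective setting. First I would fix notation: write $S = R_{[k]}$, multigraded by $\ZZ_{\ge 0}^k$ via the $\vr$-variable blocks, with $R_0$ in degree $0$; the homomorphism $\vi_{[k]}\colon S\to R_0$ of \eqref{vik} is multigraded (sending each degree-$e_i$ piece to the single ring $R_0$), and $\ker^\mh\vi_{[k]}$ is the irrelevant-saturated homogeneous ideal one associates to a closed subscheme of $\sR_{[k]}$.

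**The two inclusions.** One direction is essentially formal: on the dense open locus $\bU^\circ \subset \bU$ where all $\vp$-variables $x_\uu$ are invertible, the map $\Theta_{[k]}$ is a morphism, its graph is $\{([x_\uu], \prod_i [x_\uu x_\uv]) \}$, and every multihomogeneous $f\in\ker\vi_{[k]}$ vanishes identically on this graph because substituting $x_{(\uu_s,\uv_s)} = x_{\uu_s}x_{\uv_s}$ (up to the overall scaling in each $\PP_{F_i}$, which is harmless by multihomogeneity) gives $\vi_{[k]}(f)=0$. Taking closures, the graph closure $\bU_{[k]}$ is contained in $V(\ker^\mh\vi_{[k]})$. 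For the reverse inclusion I would argue that $V(\ker^\mh\vi_{[k]})$ is irreducible of the right dimension and meets $\bU^\circ \times \prod\PP_{F_i}$ exactly in the graph: since $\ker\vi_{[k]}$ is a prime ideal (it is the kernel of a ring homomorphism into the domain $R_0$), its multihomogeneous part defines an integral scheme; and over $\bU^\circ$ the equations $x_{(\uu_s,\uv_s)} x_{\uu_t} x_{\uv_t} - x_{(\uu_t,\uv_t)} x_{\uu_s} x_{\uv_s}$ (which lie in $\ker^\mh\vi_{[k]}$) force the point of each $\PP_{F_i}$ to be exactly $[x_{\uu_s}x_{\uv_s}]_{s\in S_{F_i}}$. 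Hence $V(\ker^\mh\vi_{[k]})$ is the closure of that graph, which is $\bU_{[k]}$.

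**Scheme-theoretic refinement.** To upgrade set-theoretic equality to an isomorphism of schemes, I would invoke that $\bU_{[k]}$, being a closure of a graph, is reduced and irreducible (it is dominated by the integral scheme $\bU$), so its ideal in $\sR_{[k]}$ is prime; and $\ker^\mh\vi_{[k]}$ is already prime and saturated with respect to each irrelevant ideal $(x_{(\uu_s,\uv_s)} : s\in S_{F_i})$ — this saturation is automatic because $\ker\vi_{[k]}$ is prime and contains no monomial (the target $R_0$ is a domain in which all images $x_\uu x_\uv$ are nonzero, so no $\vr$-variable maps to zero). Two prime homogeneous ideals defining the same closed subset, each saturated, must coincide. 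This gives the scheme-theoretic statement.

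**Anticipated difficulty.** The one step requiring genuine care is verifying that $V(\ker^\mh\vi_{[k]})$ has \emph{no extra components} supported in the boundary $\bigcup_\uu (x_\uu = 0)$ — i.e. that passing to closure does not create spurious irreducible pieces missed by the ``over $\bU^\circ$'' analysis. The clean way around this is precisely the primeness of $\ker\vi_{[k]}$: a prime ideal cannot define a reducible scheme, so once I know $V(\ker^\mh\vi_{[k]})$ is irreducible and contains the graph over $\bU^\circ$ as a dense open, equality with $\bU_{[k]}$ is forced, and the boundary behaviour takes care of itself. I expect the write-up to be short, the only subtlety being the bookkeeping of multihomogeneity versus the several independent rescalings in $\prod_i \PP_{F_i}$, which is handled by working block-by-block in the multigrading.
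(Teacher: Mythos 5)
Your proposal is correct. The paper's entire proof of this lemma is the single sentence ``This is immediate,'' and your write-up is simply the standard unwinding of why: primeness of $\ker\vi_{[k]}$ (hence of its multihomogeneous part) plus density of the graph over the locus where the $\vp$-variables are invertible forces $V(\ker^\mh\vi_{[k]})$ to equal the graph closure, and saturation with respect to each irrelevant ideal is automatic for a prime not containing any $\vr$-variable.
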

\begin{proof} This is immediate.
\end{proof}

\subsection{The toroidal part of the  defining ideal of $\sV$ in $\sR$: 
$\ker^\mh \vi$} $\;$

We need to investigate $\ker^\mh \vi_{[k]}$.

For any $f \in R_{[k]}$ ($\subset  R$), note that $\vi_{[k]} (f) =\vi(f)$,
thus knowing $f \in R_{[k]}$, 
$$f \in \ker \vi_{[k]} \iff f \in \ker \vi.$$
Hence, we frequently just use $\vi (f)$ even if we intend to prove a statement
within $R_{[k]}$. 

Consider any $f \in \ker^\mh \vi_{[k]}$.
 We express it as the sum of its  monomials
$$f= \sum \bm_i.$$
We have $\vi_{[k]} (f)=\sum \vi_{[k]} (\bm_i)=0$ in $R_0$. Thus, the set of the monomials $\{\bm_i\}$ 
can be grouped into minimal groups to form partial sums
of $f$ so that {\it the 
$\vi_{[k]}$-images of elements of each group are 
 identical} and the image of the partial sum of each minimal group equals 0 in $R_0$.
When ch.$\kk=0$, 
this means each minimal group consists of a pair 
$\{\bm_i, \bm_j\}$ with the identical image under $\vi_{[k]}$
and its partial sum
is the difference $\bm_i -\bm_j$.
When ch.$\kk=p>0$ for some prime number $p$, this means each minimal group 
 consists of either  (1): a pair $\{\bm_i, \bm_j\}$ with the identical image under $\vi_{[k]}$
 and $\bm_i -\bm_j$ is a partial sum of $f$;
or (2):   exactly $p$ elements $\{\bm_{i_1}, \cdots,\bm_{i_p}\}$
with the identical image under $\vi_{[k]}$
and $\bm_{i_1}+ \cdots + \bm_{i_p}$ is a partial sum of $f$.
But, the relation $\bm_{i_1}+ \cdots + \bm_{i_p}$, expressed as
$$\bm_{i_1}+ \cdots + \bm_{i_p}-p \; \bm_{i_1}=
\sum_{i=2}^p  (\bm_{i_a}- \bm_{i_1}),$$ is always generated by 
the relations $\bm_{i_a} -\bm_{i_1}$, for all $2 \le a \le p$.
Thus, regardless of the characteristic of the field $\kk$, it suffices to consider binomials
$\bm -\bm' \in \ker^\mh \vi_{[k]}$.

\subsubsection{$\ker^\mh \vi$: $\wp$-binomials and
$\wp$-reducibility}

\begin{lemma} \label{trivialB}
Fix any $i \in [k]$. We have
\begin{equation}\label{trivialBk} 
x_{\uu'}x_{\uv'}x_{(\uu,\uv)} - x_\uu x_\uv x_{(\uu',\uv')} \in \ker^\mh \bar\vi_{[k]}.
\end{equation}
where $x_{(\uu,\uv)}, x_{(\uu',\uv')}$ are any two distinct $\vr$-variables of $\PP_{F_i}$.
\end{lemma}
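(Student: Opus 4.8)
The statement to prove is Lemma \ref{trivialB}: for any fixed $i\in[k]$ and any two distinct $\vr$-variables $x_{(\uu,\uv)}$, $x_{(\uu',\uv')}$ of $\PP_{F_i}$, the binomial
\[
x_{\uu'}x_{\uv'}x_{(\uu,\uv)} - x_\uu x_\uv x_{(\uu',\uv')}
\]
lies in $\ker^\mh\vi_{[k]}$. The plan is simply to verify the two defining conditions: that the binomial is multi-homogeneous, and that it maps to zero under $\vi_{[k]}$.

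First I would check that the binomial is multi-homogeneous. By definition $\vi_{[k]}$ acts as the identity on $R_0=\kk[x_\uu]_{\uu\in\II_{3,n}\-\um}$, so the $\vp$-variables $x_{\uu'},x_{\uv'},x_\uu,x_\uv$ carry no $\vr$-grading; only the $\vr$-variables $x_{(\uu,\uv)}$ and $x_{(\uu',\uv')}$ do, and both belong to $\PP_{F_i}$. Each monomial of the binomial contains exactly one such $\vr$-variable of $\PP_{F_i}$ and contains no $\vr$-variable of $\PP_{F_j}$ for $j\ne i$. Hence the binomial is homogeneous of degree $1$ in the coordinates $[x_{(\uv_s,\uu_s)}]_{s\in S_{F_i}}$ and of degree $0$ in the coordinates of $\PP_{F_j}$ for all $j\ne i$; in particular it is multi-homogeneous (indeed $\vr$-linear).

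Next I would compute the image under $\vi_{[k]}$. Using $\vi_{[k]}|_{R_0}=\Id$ and $\vi_{[k]}(x_{(\uu_s,\uv_s)})=x_{\uu_s}x_{\uv_s}$, we get
\[
\vi_{[k]}\bigl(x_{\uu'}x_{\uv'}x_{(\uu,\uv)}\bigr)
= x_{\uu'}x_{\uv'}\,x_{\uu}x_{\uv},\qquad
\vi_{[k]}\bigl(x_\uu x_\uv x_{(\uu',\uv')}\bigr)
= x_{\uu}x_{\uv}\,x_{\uu'}x_{\uv'},
\]
and these are equal in $R_0$ since $R_0$ is commutative, so their difference is $0$. (The convention \eqref{uv=vu} that $x_{(\uu_s,\uv_s)}=x_{(\uv_s,\uu_s)}$ is compatible with this: it only permutes the two $\vp$-factors of the image, which does not affect the product.) Therefore the binomial lies in $\ker\vi_{[k]}$, and combined with the multi-homogeneity established above, it lies in $\ker^\mh\vi_{[k]}$, as claimed.

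This lemma is essentially a bookkeeping verification, so I do not expect any real obstacle; the only point requiring a moment's care is confirming the multi-homogeneity bookkeeping — namely that no stray $\vr$-variable from another factor $\PP_{F_j}$ sneaks in and that both $\vr$-variables genuinely belong to the same $\PP_{F_i}$, which is exactly the hypothesis. Everything else is the commutativity of $R_0$.
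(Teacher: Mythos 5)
Your proof is correct and is precisely the direct verification the paper intends: the paper's own proof is the single line ``This is trivial,'' and what you have written is the expanded version of that triviality (multi-homogeneity from the fact that both $\vr$-variables lie in the same $\PP_{F_i}$, and $\vi_{[k]}$ sending both monomials to $x_{\uu}x_{\uv}x_{\uu'}x_{\uv'}$). Nothing further to add.
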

\begin{proof} This is trivial.
\end{proof}

\begin{defn}\label{wp-bino}
Any nonzero binomial  as in \eqref{trivialBk} 
 is called a $\wp$-binomial of $R_{[k]}$.
We let $\cB_{[k]}^\wp$ denote the set of all $\wp$-binomials of 
$R_{[k]}$.
\end{defn}

\begin{defn}\label{no common wp factor}
Consider any nonzero binomial 
$f=\bm -\bm' \in \ker^\mh \vi_{[k]}$.
We say that $f$ is $\wp$-reducible if there exists 
$x_{\uu'} x_{\uv'}   x_{(\uu,\uv)} -  x_{\uu}x_{\uv}  x_{(\uu',\uv')} \in \cB^\wp_{[k]}$ 
such that either
$$\hbox{ $x_{\uu'} x_{\uv'}   x_{(\uu,\uv)}  \mid \bm$
 and $x_{(\uu',\uv')} \mid \bm'$}$$
or $$\hbox{    $x_{(\uu,\uv)}  \mid \bm$ and
$x_{\uu}x_{\uv}  x_{(\uu',\uv')} \mid \bm'$.}$$

We say $f$ is $\wp$-irreducible if 
it is not $\wp$-reducible.
\end{defn}

Suppose $f$ is $\wp$-reducible as in the definition, w.l.o.g., say,
$$\hbox{$x_{(\uu,\uv)}  \mid \bm$ \; and \;
$  x_{\uu}x_{\uv}  x_{(\uu',\uv')} \mid \bm'$.}$$
 Then, we can write
$$\bm =x_{(\uu,\uv)}   \bn \;\; \hbox{and} \;\;
\bm'= x_{\uu}x_{\uv}  x_{(\uu',\uv')} \bn'$$ for some $\bn, \bn'$.
Then,  using, 
$x_{\uu'} x_{\uv'}   x_{(\uu,\uv)} -  x_{\uu}x_{\uv}  x_{(\uu',\uv')} \in \cB^\wp_{[k]}$,
 we obtain
\begin{equation}\label{fully recover}
\bm- \bm'=x_{(\uu,\uv)}   \bn -x_{\uu}x_{\uv}  x_{(\uu',\uv')} \bn'
\equiv x_{(\uu,\uv)} (  \bn -x_{\uu'}x_{\uv'} \bn'), \; \mod \cB^\wp_{[k]}.
\end{equation}
Furthermore, knowing that the relation 
$x_{\uu'} x_{\uv'}   x_{(\uu,\uv)} -  x_{\uu}x_{\uv}  x_{(\uu',\uv')}$
is applied in the above, we can fully recover the original form of $\bm-\bm'$.

As usual, 
we say $f$ has no common factor
 if $\bm$ and $\bm'$ do not have any non-constant common factor.

\begin{lemma}\label{abc}
 Let $F_{(123,abc)} \in \sF$ be of rank one. 
Consider a nonzero 
 binomial $\bm -\bm' \in \ker^\mh \vi_{[k]}$,
having no common factor. 
If $x_{(123,abc)}$, respectively, $x_{abc}$ divides $\bm$, then $x_{abc}$, respectively,  $x_{(123,abc)}$ divides $\bm'$.
Consequently, we have that  $\bm -\bm'$  is $\wp$-reducible  and
$$\bm-\bm' 
\equiv x_{(123,abc)} (\bn-\bn'),
\mod \cB_{[k]}^\wp.$$
\end{lemma}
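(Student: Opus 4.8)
The plan is to exploit the very explicit shape of the rank-one primary relation, $\bF_{(123),abc}=x_{abc}-x_{12a}x_{3bc}+x_{13a}x_{2bc}-x_{23a}x_{1bc}$, together with multi-homogeneity of $\bm-\bm'$. Write $F:=\bF_{(123),abc}$; its $\vr$-variables are $x_{(123,abc)},x_{(12a,3bc)},x_{(13a,2bc)},x_{(23a,1bc)}$. The single observation on which everything rests is: \emph{among all $\vr$- and $\vp$-variables occurring in $R_{[k]}$, the only ones whose $\vi_{[k]}$-image is divisible by $x_{abc}$ are the $\vp$-variable $x_{abc}$ and the $\vr$-variable $x_{(123,abc)}$.} For the $\vp$-variables this is clear since $\vi_{[k]}$ fixes them; for a $\vr$-variable $x_{(\uu_s,\uv_s)}$ one has $\vi_{[k]}(x_{(\uu_s,\uv_s)})=x_{\uu_s}x_{\uv_s}$, so $x_{abc}$ (an irreducible element of the UFD $R_0$) divides it iff $(abc)\in\{\uu_s,\uv_s\}$, and by Proposition \ref{leadingTerm} the index $(abc)$ appears in no primary $\pl$ relation other than $F$, while within $F$ it appears, by inspection of the four displayed terms, only in the leading pair $(123,abc)$; recall $\vi_{[k]}(x_{(123,abc)})=x_{123}x_{abc}=x_{abc}$ because $x_{123}=1$.

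With this in hand I would argue as follows. First the two implications: if $x_{(123,abc)}\mid\bm$ then $x_{abc}\mid\vi_{[k]}(\bm)=\vi_{[k]}(\bm')$, so some variable dividing $\bm'$ has $\vi_{[k]}$-image divisible by $x_{abc}$; by the observation that variable is $x_{abc}$ or $x_{(123,abc)}$, and the latter would be a non-constant common factor of $\bm$ and $\bm'$, contradicting the hypothesis, so $x_{abc}\mid\bm'$. Symmetrically, if $x_{abc}\mid\bm$ then $x_{abc}\mid\vi_{[k]}(\bm')$, and the same dichotomy plus ``no common factor'' forces $x_{(123,abc)}\mid\bm'$. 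For the $\wp$-reduction, assume — up to swapping $\bm,\bm'$ and negating — that $x_{(123,abc)}\mid\bm$, hence $x_{abc}\mid\bm'$. Since $\bm-\bm'$ is multi-homogeneous, $\bm'$ has the same degree as $\bm$ in the $\vr$-variables of $\PP_F$, namely $\ge 1$, so some $\vr$-variable $x_{(\uu',\uv')}$ of $\PP_F$ divides $\bm'$, with $(\uu',\uv')\ne(123,abc)$ (else $x_{(123,abc)}$ is a common factor); thus $(\uu',\uv')$ is one of $(12a,3bc),(13a,2bc),(23a,1bc)$, and $x_{abc}x_{(\uu',\uv')}\mid\bm'$ as these are distinct variables. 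By Lemma \ref{trivialB} and Definition \ref{wp-bino} the binomial $B:=x_{\uu'}x_{\uv'}x_{(123,abc)}-x_{123}x_{abc}x_{(\uu',\uv')}=x_{\uu'}x_{\uv'}x_{(123,abc)}-x_{abc}x_{(\uu',\uv')}$ lies in $\cB^\wp_{[k]}$, and writing $\bm=x_{(123,abc)}\bn$, $\bm'=x_{abc}x_{(\uu',\uv')}\bn'$ exhibits $\bm-\bm'$ as $\wp$-reducible (second alternative of Definition \ref{no common wp factor}). Finally, exactly as in \eqref{fully recover},
$$\bm-\bm'=x_{(123,abc)}\bn-x_{abc}x_{(\uu',\uv')}\bn'\equiv x_{(123,abc)}\bigl(\bn-x_{\uu'}x_{\uv'}\bn'\bigr)\pmod{\cB^\wp_{[k]}},$$
and renaming $x_{\uu'}x_{\uv'}\bn'$ as $\bn'$ yields the asserted congruence; the other case ($x_{abc}\mid\bm$, $x_{(123,abc)}\mid\bm'$) follows by applying this to $\bm'-\bm$.

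The only genuinely non-formal point — hence the ``hard part'' — is the combinatorial observation in the first paragraph: that $x_{abc}$ cannot be produced under $\vi_{[k]}$ by any $\vr$-variable except $x_{(123,abc)}$. This is precisely where the structure of the rank-one primary relations (and Proposition \ref{leadingTerm}) enters, and it is what allows the $\wp$-reduction to be localized at the single factor $x_{(123,abc)}$. A secondary point worth flagging is that multi-homogeneity of $\bm-\bm'$ is indispensable: it is what guarantees that $\bm'$, like $\bm$, carries a $\vr$-variable of $\PP_F$, without which there would be no second $\vr$-variable $x_{(\uu',\uv')}$ to feed the $\wp$-binomial $B$. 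Everything else is bookkeeping with multi-degrees and the convention $x_{123}=1$.
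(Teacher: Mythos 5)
Your proof is correct and takes essentially the same approach as the paper's: the key observation that $x_{(123,abc)}$ is the unique $\vr$-variable whose $\vi$-image is divisible by $x_{abc}$, combined with the no-common-factor hypothesis, followed by the $\wp$-reduction of \eqref{fully recover}. You merely make explicit several points the paper leaves implicit (irreducibility of $x_{abc}$ in the UFD $R_0$, multi-homogeneity furnishing a second $\vr$-variable of $\PP_F$ in $\bm'$, and the appeal to Proposition \ref{leadingTerm}).
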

\begin{proof}
Because $f$ admits no common factor,
if  $x_{(123,abc)} \mid \bm$, then $x_{(123,abc)} \nmid \bm'$;
if $x_{abc} \mid \bm$, then $x_{abc} \nmid \bm'$. 
Suppose $x_{(123,abc)}$, respectively,  $x_{abc}$ divides $\bm$, because $x_{(123,abc)}$ is the unique $\vr$-variable such that
its $\vi$-image is dividable by $x_{abc}$, then under the assumption, 
we must have that  $x_{abc}$, respectively,  $x_{(123,abc)}$ divides $\bm'$.
By symmetry, it suffices to consider the case $x_{(123,abc)} \mid \bm$ and   $x_{abc} \mid \bm'$.
Then, we can express
$$\bm -\bm'=x_{(123,abc)}\bn - x_{abc} x_{(\uu, \uv)} \bar\bn' $$
such that $x_{(123,abc)}, x_{(\uu, \uv)}$ are homogeneous coordinates of $\PP_{F_{(123,abc)}}$.
This implies that  $\bm -\bm'$  is $\wp$-reducible  and
$$\bm -\bm'=x_{(123,abc)}( \bn - x_{\uu} x_{\uv} \bar\bn'), \mod \cB_{[k]}^\wp,$$
hence the lemma holds.
\end{proof}

Modulo $\cB^\wp_{[k]}$, 
this allows us to reduce generators of $\ker^\mh \vi_{[k]}$
to $\bn-\bn'$ such that $\vi(\bn)$ $(=\vi(\bn'))$ is not divisible by any rank-1 variable $x_{abc}$.

\begin{lemma}\label{uu uv}
Consider a nonzero binomial $\bm -\bm' \in \ker^\mh \vi_{[k]}$.
Suppose $x_{(\uu,\uv)} \mid \bm$ (resp. $\bm'$) and $x_\uu x_\uv \mid  \bm'$ (resp. $\bm$).
Then, $\bm -\bm'$  is $\wp$-reducible  and
$$\bm-\bm' \equiv x_{(\uu,\uv)} (\bn-\bn'),
\mod \cB_{[k]}^\wp.$$
\end{lemma}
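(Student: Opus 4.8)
The claim to prove is Lemma~\ref{uu uv}: given a nonzero binomial $\bm-\bm' \in \ker^\mh \vi_{[k]}$ with $x_{(\uu,\uv)} \mid \bm$ and $x_\uu x_\uv \mid \bm'$, then $\bm-\bm'$ is $\wp$-reducible and $\bm-\bm' \equiv x_{(\uu,\uv)}(\bn-\bn') \bmod \cB^\wp_{[k]}$. The plan is to first produce a second $\vr$-variable in $\bm'$ lying in the same projective factor $\PP_{F_i}$ as $x_{(\uu,\uv)}$, so that the definition of $\wp$-reducibility (Definition~\ref{no common wp factor}) is literally met, and then quote the reduction computation \eqref{fully recover} that was already carried out in the discussion following that definition.

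**Key steps.**
First I would locate which projective factor $x_{(\uu,\uv)}$ lives in: say $x_{(\uu,\uv)}$ is a $\vr$-variable of $\PP_{F_i}$, i.e.\ $(\uu,\uv) \in \La_{F_i}$ for some $i \in [k]$. Since $\bm - \bm' \in \ker^\mh \vi_{[k]}$, the binomial is multi-homogeneous, hence homogeneous in the $\vr$-variables of $\PP_{F_i}$; because $x_{(\uu,\uv)} \mid \bm$, the monomial $\bm$ has positive degree in the $\PP_{F_i}$-variables, so $\bm'$ must have the \emph{same} positive degree in those variables. Therefore there exists a $\vr$-variable $x_{(\uu',\uv')}$ of $\PP_{F_i}$ with $x_{(\uu',\uv')} \mid \bm'$. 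Second, I would observe that by Lemma~\ref{trivialB} the binomial $x_{\uu'}x_{\uv'}x_{(\uu,\uv)} - x_\uu x_\uv x_{(\uu',\uv')}$ is a $\wp$-binomial in $\cB^\wp_{[k]}$ (it is nonzero unless $(\uu,\uv)=(\uu',\uv')$; if $x_{(\uu,\uv)}=x_{(\uu',\uv')}$ the conclusion is even more immediate since then $x_\uu x_\uv x_{(\uu,\uv)} \mid \gcd$-type argument applies, or one simply factors $x_{(\uu,\uv)}$ out directly). Given $x_{(\uu,\uv)} \mid \bm$ and $x_\uu x_\uv x_{(\uu',\uv')} \mid \bm'$, the second alternative in Definition~\ref{no common wp factor} is satisfied, so $\bm-\bm'$ is $\wp$-reducible. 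Third, I would invoke exactly the computation in \eqref{fully recover}: writing $\bm = x_{(\uu,\uv)}\bn$ and $\bm' = x_\uu x_\uv x_{(\uu',\uv')}\bn'$, one gets $\bm-\bm' \equiv x_{(\uu,\uv)}(\bn - x_{\uu'}x_{\uv'}\bn') \bmod \cB^\wp_{[k]}$, which is of the asserted form with $\bn' $ replaced by $x_{\uu'}x_{\uv'}\bn'$.

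**Main obstacle.**
The only subtlety I anticipate is the degenerate case where the \emph{only} $\vr$-variable of $\PP_{F_i}$ dividing $\bm'$ is $x_{(\uu,\uv)}$ itself — i.e.\ one cannot find a distinct $x_{(\uu',\uv')}$, so no genuine $\wp$-binomial of the needed shape exists. In that situation $x_{(\uu,\uv)}$ divides both $\bm$ and $\bm'$, and one should simply cancel it: $\bm-\bm' = x_{(\uu,\uv)}(\bn-\bn'')$ directly, with no reduction modulo $\cB^\wp_{[k]}$ required, and the statement holds trivially (this also shows $\bm-\bm'$ is $\wp$-reducible via the $\wp$-binomial using $x_\uu x_\uv$ and $x_{(\uu,\uv)}$ against themselves only if that is nonzero; more cleanly, the hypothesis then forces a common factor and the conclusion is automatic). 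A secondary point to handle carefully is verifying that the chosen $\wp$-binomial is genuinely nonzero as required in Definition~\ref{wp-bino}; this is immediate once $x_{(\uu,\uv)} \neq x_{(\uu',\uv')}$, which the multi-homogeneity argument can be arranged to guarantee except in the degenerate case just discussed. Beyond this bookkeeping, the lemma is a direct consequence of multi-homogeneity plus the already-established reduction identity \eqref{fully recover}, so I expect the proof to be short.
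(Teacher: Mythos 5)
Your proof follows the same route as the paper: use multi-homogeneity in the $\PP_{F_i}$-factor to locate a $\vr$-variable $x_{(\uu',\uv')}$ of that factor dividing $\bm'$, then apply the $\wp$-binomial $x_{\uu'}x_{\uv'}x_{(\uu,\uv)} - x_\uu x_\uv x_{(\uu',\uv')}$ exactly as in \eqref{fully recover}. Your extra paragraph handling the degenerate case $x_{(\uu',\uv')}=x_{(\uu,\uv)}$ (where the putative $\wp$-binomial vanishes and one instead factors $x_{(\uu,\uv)}$ out directly) is a sound observation that the paper's own proof leaves implicit, but the substance of the argument is the same.
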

\begin{proof}
It suffices to prove the case when  $x_{(\uu,\uv)} \mid \bm$  and $x_\uu x_\uv \mid  \bm'$. In this case, by the homogeneity of $\bm -\bm'$,
there exists $x_{(\uu',\uv')} $ such that $\{x_{(\uu,\uv)},x_{(\uu',\uv')} \}$
are homogeneous coordinates of $\PP_F$ for some $\bF \in \sF$, and
we can write
$$\bm-\bm'= \bn x_{(\uu,\uv)}  - \bn'x_\uu x_\uv x_{(\uu',\uv')}. $$
This implies that $\bm -\bm'$  is $\wp$-reducible  and
$$\bm-\bm' \equiv \bn x_{(\uu,\uv)}  - \bn' x_{\uu' }x_{\uv'} x_{(\uu,\uv)}
= x_{(\uu,\uv)} (\bn - \bn' x_{\uu' }x_{\uv'}), \mod \cB_{[k]}^\wp.$$
\end{proof}

\begin{cor}\label{wp-irr implies not abc no iuv}
Consider a nonzero binomial $\bm -\bm' \in \ker^\mh \vi_{[k]}$,
having no common factor. 
Suppose $\bm-\bm'$ is $\wp$-irreducible. Then
 $\vi(\bm)$ $(=\vi(\bm'))$ is not divisible by any rank-1 variable $x_{abc}$. Moreover, it is impossible to have
$x_{(123,iuv)} \mid \bm$ and
$x_{iuv}  \mid  \bm'$ nor to have
$x_{(123,iuv)} \mid \bm'$ and
$x_{iuv}  \mid \bm $,
for any rank-0 variable $x_{iuv}$.
\end{cor}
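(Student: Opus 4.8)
The plan is to derive the statement directly from the three preceding lemmas (Lemma \ref{abc}, Lemma \ref{uu uv}) together with the definition of $\wp$-irreducibility. The logical structure is a contrapositive in each of the three assertions: I want to show that if any of the ``forbidden'' divisibility patterns occurs, then $\bm - \bm'$ is in fact $\wp$-reducible, contradicting the hypothesis.

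First I would handle the claim that $\vi(\bm)$ is not divisible by any rank-$1$ variable $x_{abc}$. Suppose for contradiction that $x_{abc} \mid \vi(\bm) = \vi(\bm')$. Since $x_{(123,abc)}$ is the unique $\vr$-variable whose $\vi$-image is divisible by $x_{abc}$, and since the only $\vp$-monomial hitting $x_{abc}$ under $\vi$ is $x_{abc}$ itself (here one uses that $F_{(123),abc}$ is $\um$-primary of rank one, so $x_{abc}$ is its leading variable and, by Proposition \ref{leadingTerm}, does not occur as a leading variable of any other primary relation), the monomial $\bm$ must be divisible by either $x_{(123,abc)}$ or $x_{abc}$. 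After first clearing any common factor (which does not affect $\wp$-irreducibility, since a common factor can be cancelled), Lemma \ref{abc} applies verbatim and shows $\bm - \bm'$ is $\wp$-reducible — contradiction. Hence no such $x_{abc}$ divides $\vi(\bm)$.

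Next I would treat the rank-$0$ assertion. Suppose $x_{(123,iuv)} \mid \bm$ and $x_{iuv} \mid \bm'$ for some rank-$0$ variable $x_{iuv}$ (the symmetric case with the roles of $\bm,\bm'$ swapped is identical). Then $x_{iuv} \mid \bm'$ means $x_{iuv}\cdot(\text{the partner }\vp\text{-factor})$ — but more directly, I would simply invoke Lemma \ref{uu uv} with the $\vr$-variable $x_{(\uu,\uv)} := x_{(123,iuv)}$: that lemma requires only that $x_{(\uu,\uv)} \mid \bm$ and $x_\uu x_\uv \mid \bm'$, and here $\uu\uv$ is the pair $(123)$-$(iuv)$ whose product $x_{123}x_{iuv} = x_{iuv}$ (recall $x_\um = 1$) divides $\bm'$. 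So $\bm-\bm'$ is $\wp$-reducible, again a contradiction. This settles all cases.

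The only genuinely delicate point — and the step I expect to be the main obstacle in writing this up cleanly — is the bookkeeping needed to justify that $x_{(123,abc)}$ is the \emph{unique} $\vr$-variable and $x_{abc}$ the \emph{unique} $\vp$-monomial pattern whose $\vi$-image contains the factor $x_{abc}$. This rests on the combinatorial structure of $\sF$: by Proposition \ref{leadingTerm}, the leading variable $x_\uu$ of a primary relation appears in no other primary relation of rank $\le \rk(\bF_{\um,\uu})$, and every $\vr$-variable $x_{(\uu_s,\uv_s)}$ with $x_{abc}\in\{\uu_s,\uv_s\}$ would have to come from a primary relation having $x_{abc}x_{(\text{something})}$ as a monomial term, which forces it to be $F_{(123),abc}$ itself (the choice $F_{\um,(abc,a)}$ fixed in Remark \ref{many-pri}). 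Once this uniqueness is in hand, the reduction to Lemmas \ref{abc} and \ref{uu uv} is purely mechanical, and the ``moreover'' about reducing generators modulo $\cB^\wp_{[k]}$ to binomials $\bn-\bn'$ with $\vi(\bn)$ not divisible by any rank-$1$ variable follows immediately by iterating Lemma \ref{abc}.
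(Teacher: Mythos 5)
Your proposal is correct and takes essentially the same approach as the paper: the paper's proof of this corollary is a single line citing Lemmas \ref{abc} and \ref{uu uv}, and you have correctly filled in the two reductions — the rank-one claim via Lemma \ref{abc} (after observing that $x_{abc}\mid\vi(\bm)$ forces $x_{abc}\mid\bm$ or $x_{(123,abc)}\mid\bm$, since $x_{(123,abc)}$ is the unique $\vr$-variable with $\vi$-image divisible by $x_{abc}$), and the rank-zero claim via Lemma \ref{uu uv} with $(\uu,\uv)=(\um,iuv)$ and the convention $x_\um=1$. Two small remarks that do not affect correctness: the parenthetical about ``clearing a common factor'' is unnecessary since the corollary's hypothesis already assumes no common factor, and your closing sentence about reducing generators modulo $\cB^\wp_{[k]}$ is actually the content of the remark following Lemma \ref{abc}, not the ``Moreover'' of the corollary being proved.
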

\begin{proof}
It follows by combining Lemmas \ref{abc} and \ref{uu uv}.
\end{proof}

\subsubsection{Descendants and partial descendants of 
multi-homogeneous polynomials}

\begin{defn} 
Let $f=\sum_i x_{(\uu_i, \uv_i)}\bn_i \in R_{[k]} \; (\subset R)$ be a multi-homogeneous 
expression/polynomial,
written as above,  which is allowed to be zero,
such that  $\{x_{(\uu_i, \uv_i)}\}$ are a subset of homogeneous coordinates of $\PP_F$
for some $\bF \in \sF $. Set
$$\barf = \sum_i x_{\uu_i}x_{ \uv_i}\bn_i \in R_{[k]}.$$
Then, we call $f$ a parent of $\barf$ and $\barf$ a descendant of $f$.

Further, if $f$ is a parent of $g$, and $g$ is parent of $h$, then we also say
$f$ is a parent of $h$ and $h$ is a descendant of $f$.
 
Moreover, if a multi-homogeneous polynomial 
$f \in  R_{[k]}$ does not admit any parent other than itself,
then we say $f$ is root polynomial.
In particular, $f$ is a root parent of $g$ if it is a root polynomial
and a parent of $g$.
\end{defn}

One sees that if $f$ belongs to $\ker^\mh \vi_{[k]}$ if and only if  any of its descendant does.

To deal with non-homogeneous decompositions of binomials
of $\ker^\mh \vi_{[k]}$ (cf. \eqref{non-hom decom}),
we also need the following.

\begin{defn} 
Let $f=\sum_{i \in I} x_{(\uu_i, \uv_i)}\bn_i \in R_{[k]} \; (\subset R)$ be a multi-homogeneous expression/polynomial,
written as above,  which is allowed to be zero,
such that  $\{x_{(\uu_i, \uv_i)}\}$
 are a subset of homogeneous coordinates of $\PP_F$
for some $\bF \in \sF$. 
Let $I= I_1 \sqcup I_2$ be a disjoint union.
Set
$$\barf = \sum_{i \in I_1} p_{\uu_i}p_{ \uv_i}\bn_i
+ \sum_{i \in I_2} x_{(\uu_i, \uv_i)}\bn_i \in R_{[k]}
 \in R_{[k]}.$$
Note that $\barf$ is non-homogeneous if $I_1, I_2 \ne \emptyset$.
Then, we call $f$ a parent of $\barf$ and $\barf$ 
a \emph{partial} descendant of $f$.

Further, if $f$ is a parent of $g$, and $g$ is parent of $h$, then we also say
$f$ is a parent of $h$ and $h$ is a partial descendant of $f$.
\end{defn}

One sees that if $f$ belongs to $\ker \vi_{[k]}$ if and only if  any of its 
partial descendant does.

\begin{example} 
$$ x_{(\ua,\ub)} x_{(\uu,\uv)} - x_{\ua'}x_{\ub'} x_{(\uu',\uv')}$$
is a partial descendant of 
$x_{(\ua,\ub)} x_{(\uu,\uv)} - x_{(\ua',\ub')} x_{(\uu,\uv)}$.
\end{example}

\begin{example}\label{exam-expression}
The following homogeneous {\bf expression}, written as 
\begin{equation}\label{expression}
x_{(\uu', \uv')}x_{(\uu,\uv)} - x_{(\uu, \uv)} x_{(\uu',\uv')} 
\end{equation}
(which is zero as a binomial),
descends to the $\wp$-binomial
$$x_{\uu'}x_{\uv'}x_{(\uu,\uv)} - x_\uu x_\uv x_{(\uu',\uv')} .$$
It is not hard to see that a  $\wp$-binomial does not admit 
any non-zero  root parent.

The following homogeneous binomial of $\ker^\mh \vi$
\begin{eqnarray}\nonumber
\;\; \; x_{(12a,13a')}x_{(13a,2bc)}x_{(12a',3\bar b \bar c)}x_{(12 \bar a,3bc)} x_{(13 \bar a,2 \bar b \bar c)} \;
\nonumber \\
- x_{(13a,12a')} x_{(12a,3bc)}x_{(13a',2\bar b \bar c)}x_{(13 \bar a,2bc)} x_{(12 \bar a, 3 \bar b \bar c)}. \nonumber
\end{eqnarray}  
descends to 
\begin{eqnarray}\nonumber
\;\; \; 
x_{12a}x_{13a'}x_{(13a,2bc)}x_{(12a',3\bar b \bar c)}x_{(12 \bar a,3bc)} x_{(13 \bar a,2 \bar b \bar c)} \;
\nonumber \\
- x_{13a}x_{12a'} x_{(12a,3bc)}x_{(13a',2\bar b \bar c)}x_{(13 \bar a,2bc)} x_{(12 \bar a, 3 \bar b \bar c)}.  \nonumber
\end{eqnarray}  
Also, the first binomial as a  root parent is uniquely determined by the second.
\end{example}

\subsubsection{$\fb$-reducibility of binomials of $\ker^\mh \vi_{[k]}$}

\begin{defn}\label{fb-irr} Let 
$f=\bm-\bm' \in \ker^\mh \vi_{[k]}$. 
 We say $f$ is  ${\fb}$-reducible if
 there exists a decomposition
\begin{equation}\label{fb reducible decom}
\bm=\bm_1\bm_2, \;\; \bm'=\bm_1'\bm_2'
\end{equation}
such that all the monomials in the expression are not constant and 
$$\bm_1-\bm_1', \;\; \bm_2 -\bm_2' \in \ker \vi_{[k]}$$
and every of $\bm_1-\bm_1'$ and  $\bm_2 -\bm_2'$
is a descendant or a partial descendant of a multi-homogenous binomial.
Here, when one of the two  
above binomials in the display is zero, say, $\bm_2 -\bm_2'=0$, then we have
 $f=\bm_2(\bm_1-\bm_1')$, as a special case.

We call \eqref{fb reducible decom} a $\fb$-reducible decomposition of
$f$.

We say $f$ is $\fb$-irreducible if it is not $\fb$-reducible.
\end{defn}

By definition, $\bm-\bm'$ is $\fb$-irreducible,
then $\bm$ and $\bm'$ do not have
any (non-constant) common factor.

Here, in \eqref{fb reducible decom}, we emphasize that $\bm_1-\bm_1'$ 
and $\bm_2 -\bm_2'$ are required to
belong to $\ker \vi$ but are not required to be multi-homogeneous.
This can occur in the following situation. Suppose we have
two multi-homogenous binomials
$$ X\bn_1 -  X'\bn_1', \;\; Y \bn_2 -  Y'\bn_2' \in \ker^\mh \vi_{[k]}$$
such that $X, X', Y, Y'$ are homogeneous coordinates of
$\PP_F$ for some $\bF \in \sF$.
Then, the multi-homogenous binomial
$$f= (X\bn_1) (Y\bn_2) -  (X'\bn_1') (Y'\bn_2') \in \ker^\mh \vi_{[k]} $$
can have a \emph{\it multi-homogeneous} descendant,
\begin{equation}\label{non-hom decom}
\barf= (\vi(X)\bn_1) (Y\bn_2) - 
 (X'\bn_1') (\vi(Y')\bn_2') \in \ker^\mh \vi_{[k]} \end{equation}
such that  each of $\vi(X)\bn_1 - X'\bn_1'$
and $Y\bn_2- \vi(Y')\bn_2'$ still belongs to $\ker \vi_{[k]} $
but is not multi-homogeneous.

\begin{lemma}\label{auto homo} Let $f$ 
be as in Definition \ref{fb-irr}. Assume that $f$ is a  root binomial.
Then, in the 
$\fb$-reducible decomposition 
\eqref{fb reducible decom} of $f$, 
\begin{equation}\label{fb reducible decom 2}\nonumber
f= \bm_1\bm_2- \bm_1'\bm_2',
\end{equation}
we automatically have
$$\bm_1-\bm_1', \;\; \bm_2 -\bm_2' \in \ker^\mh \vi_{[k]}.$$
\end{lemma}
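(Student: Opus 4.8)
The plan is to argue by contradiction, exploiting the hypothesis that $f$ is a root binomial. Assume that in the $\fb$-reducible decomposition \eqref{fb reducible decom} one of the two factor-binomials, say $\bm_1-\bm_1'$, fails to be multi-homogeneous. A full descendant of a multi-homogeneous polynomial is again multi-homogeneous (a descent only lowers, uniformly, the degree in the $\vr$-variables of whichever $\PP_F$ is being descended), so the hypothesis that $\bm_1-\bm_1'$ is a descendant or partial descendant of a multi-homogeneous binomial forces it to arise through a genuinely \emph{partial} descent. I would unwind the (possibly iterated) descent operations and record the parent as a multi-homogeneous binomial $\mathbf{q}-\mathbf{q}'$, together with factorizations $\mathbf{q}=\bx\bn$ and $\mathbf{q}'=\bx'\bn'$ in which $\bx$ (resp.\ $\bx'$) is the product of those $\vr$-variable occurrences of $\mathbf{q}$ (resp.\ $\mathbf{q}'$) that are eventually replaced, so that $\bm_1=\vi(\bx)\bn$ and $\bm_1'=\vi(\bx')\bn'$. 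This presentation is legitimate because descents never create $\vr$-variables and $\vi$ fixes every $\vp$-variable, so the cumulative effect on each monomial of $\mathbf{q}-\mathbf{q}'$ is exactly the substitution of a sub-multiset of its $\vr$-occurrences by their $\vi$-images. For each $\bF=F_i\in\sF$ with $i\le k$, the degree $\deg_F(\bm_1)$ equals $\deg_F(\mathbf{q})$ minus the number of $\vr$-variables of $\PP_F$ occurring in $\bx$, and similarly for $\bm_1'$ and $\bx'$, while $\deg_F(\mathbf{q})=\deg_F(\mathbf{q}')$ by multi-homogeneity of the parent. Hence non-multi-homogeneity of $\bm_1-\bm_1'$ means $\deg_F(\bm_1)\neq\deg_F(\bm_1')$ for some such $F$; say $\deg_F(\bm_1)<\deg_F(\bm_1')$ (the reverse case being symmetric). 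Then $\bx$ contains a $\vr$-variable $X$ of $\PP_F$, and since the $\vp$-factor $\vi(X)$ it contributes can never be removed by further descents, $\vi(X)\mid\bm_1$, hence $\vi(X)\mid\bm$.

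Next I would transfer this to the companion factor. Since $f=\bm-\bm'$ is multi-homogeneous, $\deg_F(\bm)=\deg_F(\bm')$; combining with $\deg_F(\bm)=\deg_F(\bm_1)+\deg_F(\bm_2)$ and its analogue for $\bm'$ gives $\deg_F(\bm_2)>\deg_F(\bm_2')$. In particular $\bm_2\neq\bm_2'$, so the degenerate case $\bm_2-\bm_2'=0$ of Definition~\ref{fb-irr} does not occur, and the very same analysis applied to $\bm_2-\bm_2'$ (itself a descendant or partial descendant of a multi-homogeneous binomial) yields a $\vr$-variable $X'$ of $\PP_F$ with $\vi(X')\mid\bm_2'$, hence $\vi(X')\mid\bm'$.

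Finally I would manufacture a parent of $f$ and reach a contradiction. Setting
$$g:=X\cdot\bigl(\bm/\vi(X)\bigr)-X'\cdot\bigl(\bm'/\vi(X')\bigr),$$
this is a well-formed element of $R_{[k]}$ because $X,X'$ are $\vr$-variables of the single space $\PP_F$ with $F=F_i$, $i\le k$; each of its two monomials has $\vi$-image $\vi(\bm)=\vi(\bm')$, so $g\in\ker\vi_{[k]}$; a short degree count — each monomial of $g$ gains one unit of $\PP_F$-degree over the corresponding monomial of $f$ and keeps every other degree, using that $f$ is multi-homogeneous — shows $g$ is multi-homogeneous, hence $g\in\ker^\mh\vi_{[k]}$; and substituting $\vi(X)$ back for $X$ and $\vi(X')$ back for $X'$ recovers $\bar g=\bm-\bm'=f$, so $g$ is a parent of $f$ with $\deg_F(g)=1+\deg_F(f)$, in particular $g\neq f$. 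This contradicts the hypothesis that $f$ is a root binomial. Therefore no factor-binomial of an $\fb$-reducible decomposition of $f$ can be non-multi-homogeneous, i.e.\ $\bm_1-\bm_1',\ \bm_2-\bm_2'\in\ker^\mh\vi_{[k]}$.

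The step I expect to be the main obstacle is the exceptional case in which the naive parent $g$ above is the zero binomial. A direct computation shows this forces $\bm=N\,X'\vi(X)$ and $\bm'=N\,X\vi(X')$ for a monomial $N$, i.e.\ $f$ equals $N$ times the $\wp$-binomial $X'\vi(X)-X\vi(X')$; in that situation $\bm$ and $\bm'$ each in fact contain a $\vr$-variable of $\PP_F$, and one must instead produce a nonzero parent by a different choice of which $\vi$-image to lift (there is room to do so thanks to the freedom in selecting $X$ and $X'$ among the several $\vr$-variables that may occur in the relevant descended parts), or else reconcile the coincidence with the fact that $\wp$-binomials are themselves multi-homogeneous and so cannot serve as the non-multi-homogeneous factor $\bm_1-\bm_1'$. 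Pinning this down, together with the bookkeeping that an arbitrary iterated partial descent acts monomial-by-monomial by substituting $\vi$-images for a chosen sub-multiset of $\vr$-occurrences, is where the real work lies.
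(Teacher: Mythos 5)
Your proof follows essentially the same route as the paper's own: derive a contradiction by exhibiting a multi-homogeneous parent of $f$ that is not $f$ itself. Your treatment of iterated descents is more careful than the paper's (which writes $\bm_1-\bm_1'=\vi(X)\bn_1-X'\bn_1'$ as though a single un-descent already reaches a multi-homogeneous ancestor), and your explicit degree count to locate $X$ and $X'$ in the same $\PP_F$ makes precise the paper's terse ``we must have $F'=F$''. The lift $g=X\,(\bm/\vi(X))-X'\,(\bm'/\vi(X'))$ is the same move as the paper's $\hat f$.

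The gap is in your final paragraph: you flag the case $g=0$ as ``the main obstacle'' and ``where the real work lies'', but in fact there is no work left to do. The paper's definition of parent/descendant explicitly contains the clause ``which is allowed to be zero'', so the zero expression is a legitimate parent; since $f\neq 0$, we have $g\neq f$ whether or not $g=0$, and the contradiction with rootness goes through unchanged. Your degree identity $\deg_F(g)=1+\deg_F(f)$ is needed only to conclude $g\neq f$ when $g\neq 0$; when $g=0$ the inequality $g\neq f$ is free. Concretely, your own computation shows that $g=0$ forces $f=N\,(\vi(X)X'-X\vi(X'))$, a monomial multiple of a $\wp$-binomial, and such an $f$ is a full descendant of the zero multi-homogeneous expression $N\,(XX'-XX')$; this is exactly the phenomenon Example~\ref{exam-expression} records when it says a $\wp$-binomial admits no nonzero root parent. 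Neither the ``freedom in selecting $X$ and $X'$'' nor a ``reconciliation with multi-homogeneity of $\wp$-binomials'' that you gesture at is required. Once you note that zero parents count, your argument is complete.
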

\begin{proof}
Suppose not. W.l.o.g., assume 
$\bm_1-\bm_1' \in \ker \vi_{[k]} \- \ker^\mh \vi_{[k]}.$
Then, w.l.o.g.,  we can write
$$\bm_1 -\bm_1'= \vi (X) \bn_1 -X' \bn_1'$$
as a partial descendant of  $X \bn_1 -X' \bn_1'$,
where $X$ and $X'$ are homogeneous coordinates of $\PP_F$ for
some $\bF \in \sF$.
Meanwhile, by definition,
$\bm_2 -\bm_2'$ is also the partial descendant 
$$\bm_2 -\bm_2'=Y \bn_2 -\vi(Y') \bn_2'$$ of a multi-homogeneous binomial
$Y \bn_1 -Y' \bn_1'$,
where $Y$ and $Y'$ are homogeneous coordinates of $\PP_{F'}$ for
some $F' \in \sF$. Because $f$ is multi-homogeneous, we see that 
we must have
$$F' =F.$$
 Therefore, 
$$f= (\vi (X) \bn_1) (Y \bn_2) - (X' \bn_1') (\vi (Y') \bn_2'),$$
 implying that $f$ is the (partial) descendant of
$$\hat f= (X \bn_1) (Y \bn_2) - (X' \bn_1') (Y' \bn_2'),$$
a contradiction to that $f$ is a root binomial.

This establishes the lemma.
\end{proof}

There are many  binomials of
 $\ker^\mh \vi$ that are $\fb$-irreducible and also
$\wp$-irreducible.

\begin{example}\label{exam:Bq} 
The following binomials are $\fb$-irreducible. Since
they are all binomials in $R_\vr$,
they cannot be $\wp$-reducible.

Fix $a,b,c \in [n]$, all being distinct:
\begin{eqnarray} x_{(12b,13c)}  x_{(23b,12c)} x_{(13b,23c)}  \; \nonumber \\
 - x_{(13b,12c)} x_{(12b,23c)} x_{(23b,13c)}.  \label{rk0-0-3'} 
 \end{eqnarray}
 \begin{eqnarray}
x_{(12a,13b)}x_{(13a,12c)}x_{(12b,13c)} \; \nonumber \\
-x_{(13a,12b)}x_{(12a,13c)}x_{(13b,12c)}. \label{rk0-0-3} 
\end{eqnarray}
\begin{eqnarray}
x_{(12a,13b)}x_{(13a,12c)}x_{(12b,23c)} x_{(23b,13c)} \;\nonumber \\
-x_{(13a,12b)}x_{(12a,13c)}x_{(23b,12c)} x_{(13b,23c)}. \label{rk0-0-4}
\end{eqnarray}
\begin{eqnarray}
x_{(123,3bc)}x_{(13a,2bc)}x_{(12b,23c)} x_{(12a,13b)} \;\nonumber \\
-x_{(13b,23c)}x_{(12a,3bc)}x_{(123,2bc)} x_{(13a,12b)}. \label{rk0-1-4}
\end{eqnarray}
Fix $a,b,c, \bar a, \bar b, \bar c \in [n]$, all being distinct:
\begin{eqnarray}
\;\; \;x_{(12a,3bc)}x_{(13a,2\bar b \bar c)}x_{(13 \bar a,2bc)} x_{(12 \bar a,3 \bar b \bar c)}\; \nonumber \\
-x_{(13a,2bc)}x_{(12a,3\bar b \bar c)}x_{(12 \bar a,3bc)} x_{(13 \bar a,2 \bar b \bar c)}. \label{rk1-1}
\end{eqnarray}

Fix $a,b,c, a', \bar a, \bar b, \bar c \in [n]$, all being distinct:
\begin{eqnarray}
\;\; \; x_{(12a,13a')}x_{(13a,2bc)}x_{(12a',3\bar b \bar c)}x_{(12 \bar a,3bc)} x_{(13 \bar a,2 \bar b \bar c)} \;
\nonumber \\
- x_{(13a,12a')} x_{(12a,3bc)}x_{(13a',2\bar b \bar c)}x_{(13 \bar a,2bc)} x_{(12 \bar a, 3 \bar b \bar c)}.\label{rk0-1}
\end{eqnarray} 
These binomials are arranged so that one sees visibly the matching for multi-homogeneity.
\end{example}

More examples of  binomials of
 $\ker^\mh \vi$ that are $\fb$-irreducible but are not
$\wp$-reducible, including ones not belonging to $R_\vr$,
can be found in Examples \ref{exam:frb deg =2} ,
\ref{exam:frb deg >3} and \ref{exam:frb deg >3 not vr}.

\subsubsection{$\vr$-linearity and $\vi$-square-freeness}

For any $\bm -\bm' \in \ker^\mh \vi_{[k]}$, we define 
$$\deg_{\vr} (\bm -\bm')$$
 to be the total degree of $\bm$ (equivalently, $\bm'$)
in $\vr$-variables of $R_{[k]}$.

Observe here that  for any nonzero binomial 
$\bm -\bm' \in \ker^\mh \vi_{[k]}$,
we automatically have $ \deg_\vr (\bm -\bm') > 0$, since $\vi_{[k]}$ restricts
to the identity on $R_0$.

\begin{lemma}\label{rho=1}
Consider any nonzero binomial $\bm -\bm' \in \ker^\mh \vi_{[k]}$ with $ \deg_\vr (\bm -\bm') = 1$.
Then $\bm -\bm'$ is a multiple of $\wp$-binomial.
\end{lemma}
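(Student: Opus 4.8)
The plan is to reduce the statement to unique factorization in the polynomial ring $R_0$, after using $\deg_\vr(\bm-\bm')=1$ together with multi-homogeneity to pin down the shape of the binomial. Since $\bm$ is a single monomial of total $\vr$-degree $1$, I would write $\bm = x_{(\uu,\uv)}\bn$ and $\bm' = x_{(\uu',\uv')}\bn'$, where $x_{(\uu,\uv)},x_{(\uu',\uv')}$ are single $\vr$-variables and $\bn,\bn'$ are monomials in the $\vp$-variables alone. Multi-homogeneity of $\bm-\bm'$ forces $\bm$ and $\bm'$ to have equal $\vr$-degree in the coordinates of each factor $\PP_{F_j}$; as $\bm$ has $\vr$-degree $1$ in exactly one factor $\PP_{F_i}$ and $0$ in the others, the same must hold for $\bm'$, so $x_{(\uu,\uv)}$ and $x_{(\uu',\uv')}$ are both homogeneous coordinates of the \emph{same} $\PP_{F_i}$, i.e. $(\uu,\uv),(\uu',\uv')\in\La_{F_i}$. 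Applying $\vi_{[k]}$ and using that $\vi_{[k]}$ is the identity on $\vp$-variables gives $\vi_{[k]}(x_{(\uu,\uv)})\,\bn = \vi_{[k]}(x_{(\uu',\uv')})\,\bn'$ in $R_0$; if $x_{(\uu,\uv)}=x_{(\uu',\uv')}$, cancelling in the domain $R_0$ forces $\bn=\bn'$ and hence $\bm=\bm'$, contradicting that $\bm-\bm'$ is nonzero. So $x_{(\uu,\uv)}\neq x_{(\uu',\uv')}$.

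The one step that is not a formality is the structural claim that two \emph{distinct} $\vr$-variables of one and the same $\PP_{F_i}$ (with $\bF_i\in\sF$) have coprime images under $\vi_{[k]}$, equivalently their two index sets are disjoint as subsets of $\II_{3,n}$. I would verify this by direct inspection of the four families of de-homogenized primary $\pl$ relations \eqref{all primaries}: in each such relation the $3$-element indices attached to its distinct (leading and quadratic) terms are pairwise disjoint — for instance the three $\vr$-variables of $\bF_{(123),1uv}$ carry the index sets $\{123,1uv\}$, $\{12u,13v\}$, $\{13u,12v\}$, pairwise disjoint, and similarly for the rank-$1$ relations $\bF_{(123),abc}$. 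One only has to keep the convention $x_{\um}=1$ in mind, so that a leading $\vr$-variable $x_{(\um,\uw)}$ has the degree-one image $x_{\uw}$; this is harmless, since at most one coordinate of a given $\PP_{F_i}$ carries the index $\um$, so disjointness of index sets is unaffected.

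Granting the claim, the images $\vi_{[k]}(x_{(\uu,\uv)})$ and $\vi_{[k]}(x_{(\uu',\uv')})$ are coprime monomials of $R_0$, so from $\vi_{[k]}(x_{(\uu,\uv)})\bn = \vi_{[k]}(x_{(\uu',\uv')})\bn'$ and unique factorization I would deduce $\vi_{[k]}(x_{(\uu',\uv')})\mid\bn$ and $\vi_{[k]}(x_{(\uu,\uv)})\mid\bn'$, say $\bn = \vi_{[k]}(x_{(\uu',\uv')})\,\bn_0$ and $\bn' = \vi_{[k]}(x_{(\uu,\uv)})\,\bn_0$ (the two cofactors coincide after one further cancellation in $R_0$). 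Substituting back,
\[
\bm-\bm' = \bn_0\bigl(x_{\uu'}x_{\uv'}x_{(\uu,\uv)} - x_{\uu}x_{\uv}x_{(\uu',\uv')}\bigr),
\]
and the factor in parentheses is precisely a $\wp$-binomial in the sense of Definition \ref{wp-bino}: it is an instance of \eqref{trivialBk} built from the two distinct $\vr$-variables $x_{(\uu,\uv)},x_{(\uu',\uv')}$ of $\PP_{F_i}$, and it is nonzero because those two variables differ. Hence $\bm-\bm'$ is a (monomial) multiple of a $\wp$-binomial, which is the assertion. Everything outside the structural claim is pure monomial combinatorics plus unique factorization in $R_0$, so I do not expect any serious obstacle.
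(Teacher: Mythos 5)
Your proof is correct and follows essentially the same route as the paper's: extract the single $\vr$-variable from each monomial, use multi-homogeneity to place both in the same $\PP_{F_i}$, apply $\vi_{[k]}$, use disjointness of the two index sets (which the paper states as ``one checks from the definition'' and you verify by inspecting \eqref{all primaries}) to invoke unique factorization in $R_0$, and reassemble into a monomial multiple of the relevant $\wp$-binomial. Your extra care with the $x_{\um}\equiv 1$ convention for the leading $\vr$-variable is a welcome clarification but does not change the argument.
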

\begin{proof}
Because $\deg_{\vr}(\bm-\bm') =1$, we can write $$\bm -\bm' = f x_{(\uu,\uv)} -g  x_{(\uu',\uv')} $$ 
for some $f, g \in R_0$,
and two $\vr$-variables  
$x_{(\uu,\uv)}$ and $x_{(\uu',\uv')}$ of $\PP_F$
for some $\bf \in \sF$.  
If $x_{(\uu,\uv)} =x_{(\uu',\uv')}$, 
then one sees that $f=g$ and $\bm -\bm'=0$.
Hence, we assume that  $x_{(\uu,\uv)} \ne x_{(\uu',\uv')}$. Then, we have
$$f x_{\uu} x_{\uv}=g  x_{\uu'} x_{\uv'}.$$
Because  $x_{(\uu,\uv)}$ and $ x_{(\uu',\uv')}$
are two distinct $\vr$-variables of $\PP_{\buw}$,  one checks from the definition that the two sets
$$\{ x_{\uu}, x_{\uv} \}, \; \{x_{\uu'}, x_{\uv'} \}$$ are disjoint. 
Consequently, 
$$  x_{\uu}x_{\uv} \mid g, \;\; x_{\uu'} x_{\uv'}  \mid f.$$
Write $$ g=g_1 x_{\uu}x_{\uv} , \;\;  f=f_1 x_{\uu'} x_{\uv'} .$$
Then we have 
$$ x_{\uu}x_{\uv} x_{\uu'} x_{\uv'}  (f_1-g_1)=0 \in R_0.$$
Hence, $f_1=g_1$. Then, we have
$$\bm -\bm' =h ( x_{\uu'} x_{\uv'}   x_{(\uu,\uv)} -  x_{\uu}x_{\uv}  x_{(\uu',\uv')}) $$ 
where $h:=f_1=g_1$. This implies the statement.
\end{proof}

Let $\AA^l$ (resp. $\PP^l$) be the affine (resp. projective)
space of dimension $l$ for some positive integer $l$ with
coordinate variables $(x_1,\cdots, x_l)$ (resp. with
homogeneous coordinates $[x_1,\cdots, x_l]$).
A monomial $\bf m$ is {\it square-free} if 
$x^2$ does not divide $\bf m$ for every coordinate variable $x$ in the affine space.  A polynomial is square-free if all of its monomials are
square-free. Similarly, suppose $\fV$ is a smooth affine variety with local
free variables $(x_1,\cdots, x_l)$, then a monomial $\bf m$ 
in  $(x_1,\cdots, x_l)$
is square-free if 
$x^2$ does not divide $\bf m$ for every variable $x$ in  
$(x_1,\cdots, x_l)$.  A polynomial in  $(x_1,\cdots, x_l)$ is square-free if all of its monomials are
square-free.

\begin{defn}\label{vi sq free}
A polynomial $f$ of $R$ is called $\vi$-square-free if
for any monomial summand $\bm$ of $f$, $\vi(\bm)$ is square-free.
\end{defn}

Recall that a polynomial $f \in R_{[k]} \ (\subset R)$
 is $\vr$-linear if it is linear
in the homogeneous coordinates of $\PP_F$ for any $\bF \in \sF$ whenever 
some homogeneous coordinate of $\PP_F$ divides a monomial summand of $f$.

\begin{example}\label{exam:frb deg =2} 
The following is a $\fb$-irreducible root binomial of $\ker^\mh \vi$.
$$ x_{1bc} x_{2b'c'} x_{(13a,2bc)} x_{(23a,1b'c')}-$$
$$x_{2bc}  x_{1b'c'} x_{(23a,1bc)} x_{(13a,2b'c')}$$
for suitable choices of $a,b,c, b',c'$  not belonging to $\{1,2,3\}$.

The following example
$$ x_{23a} x_{13b}x_{(13a,2ef)} x_{(23b,1ef)} -$$
$$x_{13a} x_{23b} x_{(23a,1ef)} x_{(13b,2ef)}$$
is a $\fb$-irreducible binomial of $\ker^\mh \vi$ but
has a root-parent
$$ x_{(23a,13b)} x_{(13a,2ef)} x_{(23b,1ef)} -$$
$$x_{(13a,23b)}  x_{(23a,1ef)} x_{(13b,2ef)}.$$

Note that both examples are $\vr$-linear and $\vi$-square-free.
\end{example}

We will show, in Lemma \ref{rho=2} and
Corollar \ref{cor:not used}, that any 
$\fb$-irreducible  binomial of $\ker^\mh \vi$
of $\deg_\vr=2$ takes form like one of the above.

To be used below, we introduce the following.
Let $\uu=(u_1u_2u_3), \uv=(v_1v_2v_3) \in \II_{3,n}$.
We then let
$$\uu \vee \uv :=\{u_1,u_2,u_3, v_1,v_2,v_3 \}$$
as a set of six integers, allowing repetition. Thus, if
$\uu \vee \uv = \ua \vee \uv$, then $\uu=\ua$.
Observe that if $\{ x_{(\uu_1,\uv_1)},  x_{(\uu_2,\uv_2)} \}$
 are $\vr$-variables of $\PP_F$ for some $\bF \in \sF$, then
$\uu_1 \vee \uv_1=\uu_2 \vee \uv_2$.

Consider any nonzero binomial 
$\bm -\bm' \in \ker^\mh \vi_{[k]}$ with $ \deg_\vr (\bm -\bm') = 2$.
Because $\deg_{\vr}(\bm-\bm') =2$, we can write 
\begin{equation}\label{rho=2 form}
\bm -\bm' = f x_{(\ua_1,\ub_1)}x_{(\uu_1,\uv_1)} -g x_{(\ua_2,\ub_2)} x_{(\uu_2,\uv_2)} 
\end{equation}
for some $f, g \in R_0$ and 
$\{x_{(\ua_1,\ub_1)},x_{(\ua_2,\ub_2)}\}$
($\{ x_{(\uu_1,\uv_1)},  x_{(\uu_2,\uv_2)} \}$) are $\vr$-variables of $\PP_F$ for some $\bF \in \sF$ ($\bF' \in \sF$).

\begin{lemma}\label{rho=2}
Consider any nonzero binomial 
$\bm -\bm' \in \ker^\mh \vi_{[k]}$ with $ \deg_\vr (\bm -\bm') = 2$.
We write $\bm -\bm'$ as in \eqref{rho=2 form}.
Assume  
$\bm$ and $\bm'$ do not have a common factor,
and $\bm-\bm'$ is $\wp$-irreducible.
Then, w.l.o.g.,  more precisely, 
up to writing $x_{(a_i, b_i)}$ and/or  $x_{(u_i, v_i)}$ 
as $x_{(b_i, a_i)}$ and/or  $x_{(v_i, u_i)}$, 
$\bm -\bm' $ takes of the form
$$\bm -\bm' =x_{\uv_2} x_{\ub_2} x_{(\ua_1,\ub_1)}x_{(\ua_2,\uv_1)} 
-x_{\uv_1}x_{\ub_1} x_{(\ua_2,\ub_2)} x_{(\ua_1,\uv_2)} $$ 
such that $\uv_2 \vee \ub_2, \ua_1 \vee \ub_1, \ua_2 \vee \uv_1$
are pairwise distinct. In particular, $\bm -\bm' $ is $\vr$-linear.
Moreover, $\bm -\bm' $ is also  $\vi$-square-free.
\end{lemma}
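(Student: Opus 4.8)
The plan is to analyze the equation $\vi(\bm) = \vi(\bm')$ in $R_0$ under the standing hypotheses (no common factor, $\wp$-irreducibility, $\deg_\vr = 2$). Write, as in \eqref{rho=2 form},
\[
\bm = f\, x_{(\ua_1,\ub_1)} x_{(\uu_1,\uv_1)}, \qquad \bm' = g\, x_{(\ua_2,\ub_2)} x_{(\uu_2,\uv_2)}, \qquad f,g \in R_0,
\]
so that $f\, x_{\ua_1}x_{\ub_1}x_{\uu_1}x_{\uv_1} = g\, x_{\ua_2}x_{\ub_2}x_{\uu_2}x_{\uv_2}$ in $R_0$. First I would record the constraint coming from $\PP_F$, $\PP_{F'}$: since $\{x_{(\ua_1,\ub_1)}, x_{(\ua_2,\ub_2)}\}$ are $\vr$-variables of the same $\PP_F$, we get $\ua_1 \vee \ub_1 = \ua_2 \vee \ub_2$, and likewise $\uu_1 \vee \uv_1 = \uu_2 \vee \uv_2$. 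Also $x_{(\ua_1,\ub_1)} \ne x_{(\ua_2,\ub_2)}$ (else, using the $\wp$-binomial relating them, one could cancel and reduce $\deg_\vr$, forcing the binomial to be a multiple of a $\wp$-binomial by Lemma \ref{rho=1}, hence $\wp$-reducible or zero). The same applies to the second pair. This means $\{x_{\ua_1},x_{\ub_1}\}$ and $\{x_{\ua_2},x_{\ub_2}\}$ are disjoint sets of $\vp$-variables (as observed before Lemma \ref{uu uv}), and similarly for the $\uu_i,\uv_i$.

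Next I would run the divisibility bookkeeping in the UFD $R_0$. From $f\, x_{\ua_1}x_{\ub_1}x_{\uu_1}x_{\uv_1} = g\, x_{\ua_2}x_{\ub_2}x_{\uu_2}x_{\uv_2}$ and disjointness of $\{x_{\ua_1},x_{\ub_1}\}$ from $\{x_{\ua_2},x_{\ub_2}\}$, one argues that each of $x_{\ua_2},x_{\ub_2}$ divides $f\, x_{\uu_1}x_{\uv_1}$; by $\wp$-irreducibility (Corollary \ref{wp-irr implies not abc no iuv} rules out the $x_{(123,abc)}/x_{abc}$ and $x_{(123,iuv)}/x_{iuv}$ coincidences, and Lemma \ref{uu uv} rules out $x_{(\uu,\uv)}\mid \bm$ with $x_\uu x_\uv \mid \bm'$), one narrows down which of these $\vp$-variables can be absorbed into $f$ versus must match against $x_{\uu_1}x_{\uv_1}$. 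The combinatorics of $\uu_1\vee\uv_1 = \uu_2\vee\uv_2$ and $\ua_1\vee\ub_1 = \ua_2\vee\ub_2$ then pins the picture down: after relabeling $x_{(a_i,b_i)} \leftrightarrow x_{(b_i,a_i)}$ and $x_{(u_i,v_i)} \leftrightarrow x_{(v_i,u_i)}$ as allowed by convention \eqref{uv=vu}, one must have $\ua_1 = \ua_2$ (call it $\ua_1$) playing the role of the "shared" entry, $\uu_2 = \ub_1$ and $\uu_1 = \ub_2$ after suitable matching, and $f = x_{\uv_2}x_{\ub_2}$, $g = x_{\uv_1}x_{\ub_1}$, yielding exactly
\[
\bm - \bm' = x_{\uv_2} x_{\ub_2} x_{(\ua_1,\ub_1)} x_{(\ua_2,\uv_1)} - x_{\uv_1} x_{\ub_1} x_{(\ua_2,\ub_2)} x_{(\ua_1,\uv_2)}.
\]
Pairwise distinctness of $\uv_2\vee\ub_2$, $\ua_1\vee\ub_1$, $\ua_2\vee\uv_1$ follows because any coincidence would reintroduce a common factor or a $\wp$-reduction. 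Given this normal form, $\vr$-linearity is immediate (each $\PP_F$ contributes one $\vr$-variable to each monomial), and $\vi$-square-freeness is a direct check: $\vi(\bm) = x_{\uv_2}x_{\ub_2}x_{\ua_1}x_{\ub_1}x_{\ua_2}x_{\uv_1}$, and the six $\vp$-variables are pairwise distinct precisely because of the disjointness/distinctness facts just established, so no square appears.

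The main obstacle I expect is the case analysis in the second paragraph — carefully enumerating how the four $\vp$-variables $x_{\ua_1},x_{\ub_1},x_{\uu_1},x_{\uv_1}$ on the left match (via $R_0$-factorization and the $\vee$-constraints) against $x_{\ua_2},x_{\ub_2},x_{\uu_2},x_{\uv_2}$ and $f,g$ on the right, and checking in each branch that any configuration other than the asserted one contradicts either "no common factor" or "$\wp$-irreducible" (invoking Lemma \ref{uu uv} and Corollary \ref{wp-irr implies not abc no iuv} at the right moments). This is finite but fiddly; the key simplification is that the disjointness of the two $\vr$-pairs' underlying $\vp$-variable sets (from $x_{(\ua_1,\ub_1)}\ne x_{(\ua_2,\ub_2)}$ within one $\PP_F$) drastically limits how variables can be shuffled, so the enumeration collapses quickly once that is in hand.
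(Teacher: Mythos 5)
Your overall plan follows the paper's proof quite closely: expand one $\vr$-variable in each monomial as in \eqref{rho=2 form}, argue the underlying $\vp$-variable sets $\{\ua_1,\ub_1\}$ and $\{\ua_2,\ub_2\}$ are disjoint (likewise for the $\uu_i,\uv_i$), use $\wp$-irreducibility (via Lemma \ref{uu uv} and Corollary \ref{wp-irr implies not abc no iuv}) to pin the $R_0$-factorization, deduce the normal form, and then argue pairwise distinctness of the $\vee$-classes and $\vi$-square-freeness from no-common-factor and $\wp$-irreducibility.

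There is, however, a genuine error in the identification step. You claim $\ua_1 = \ua_2$, $\uu_2 = \ub_1$, $\uu_1 = \ub_2$. The first equality directly contradicts the disjointness $\{\ua_1,\ub_1\}\cap\{\ua_2,\ub_2\}=\emptyset$ you established earlier; moreover $\ua_1=\ua_2$ together with $\ua_1\vee\ub_1 = \ua_2\vee\ub_2$ forces $\ub_1=\ub_2$ and hence $x_{(\ua_1,\ub_1)}=x_{(\ua_2,\ub_2)}$, a common factor, which is excluded. Substituting your identifications gives $\bm = f\,x_{(\ua_1,\ub_1)}x_{(\ub_2,\uv_1)}$ and $\bm' = g\,x_{(\ua_1,\ub_2)}x_{(\ub_1,\uv_2)}$, which is not the normal form you then quote. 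The correct outcome of the factorization argument is $\uu_2 = \ua_1$ and $\uu_1 = \ua_2$: exactly one of $x_{\ua_1},x_{\ub_1}$ falls into $\{x_{\uu_2},x_{\uv_2}\}$ (w.l.o.g.\ $x_{\ua_1}=x_{\uu_2}$) while $x_{\ub_1}\mid g$, and symmetrically $x_{\ua_2}=x_{\uu_1}$ and $x_{\ub_2}\mid f$; then comparing $\vi$-images and invoking no-common-factor yields $f=x_{\uv_2}x_{\ub_2}$, $g=x_{\uv_1}x_{\ub_1}$. As a smaller point, your assertion that pairwise distinctness of the six $\vp$-indices follows immediately "from the disjointness/distinctness facts just established" does not cover the pairs $(\ub_1,\uv_1),(\ub_1,\uv_2),(\ub_2,\uv_1),(\ub_2,\uv_2)$; one must also bring in the derived identity $\ub_1\vee\uv_1=\ub_2\vee\uv_2$ and rule out each coincidence (by producing a common factor), which is the case analysis the paper carries out.
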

\begin{proof}
Consider 
$$ f x_{\ua_1}x_{\ub_1}x_{(\uu_1,\uv_1)} -g x_{\ua_2} x_{\ub_2} x_{(\uu_2,\uv_2)} .$$ 
Because $\bm-\bm'$ has no common factor, we must have
$x_{(\ua_1,\ub_1)} \ne x_{(\ua_2,\ub_2)}$, equivalently, 
$$\{ x_{\ua_1}, x_{\ub_1} \} \cap \{ x_{\ua_2}, x_{\ub_2} \}=\emptyset,$$
 for otherwise $\bm$ and $\bm'$ have a common factor,
contradicting to the assumption;
$ x_{\ua_1}x_{\ub_1} \nmid g$, for otherwise 
$ x_{\ua_1} x_{\ub_1}x_{(\ua_2,\ub_2)} \mid \bm'$,
contradicting to that $\bm-\bm'$ is $\wp$-irreducible.
This leaves only one possibility: 
$\{ x_{\ua_1}, x_{\ub_1} \} \cap \{ x_{\uu_2}, x_{\uv_2} \}$ is a singleton,
w.l.o.g., say, 
$$\hbox{$ x_{\ua_1}=x_{\uu_2}$, and then $x_{\ub_1} \mid g$.}$$
By symmetry, the similar arguments when applied to 
$x_{\ua_2} x_{\ub_2}$ will give us, 
w.l.o.g., say, 
$$\hbox{$ x_{\ua_2}=x_{\uu_1}$, and then $x_{\ub_2} \mid f$.}$$
Hence,
$$\bm -\bm' = {\bar f}x_{\ub_2} x_{(\ua_1,\ub_1)}x_{(\ua_2,\uv_1)} 
-{\bar g}x_{\ub_1} x_{(\ua_2,\ub_2)} x_{(\ua_1,\uv_2)} .$$ 
Because $\bm -\bm' \in \ker^\mh \vi_{[k]}$, we obtain
$${\bar f} x_{\uv_1}={\bar g} x_{\uv_2}.$$
Because $\bm -\bm' $  has no common factor, we obtain
$${\bar f}=x_{\uv_2}, \; {\bar g} =x_{\uv_1}.$$
That is
$$\bm -\bm' =x_{\uv_2} x_{\ub_2} x_{(\ua_1,\ub_1)}x_{(\ua_2,\uv_1)} 
-x_{\uv_1}x_{\ub_1} x_{(\ua_2,\ub_2)} x_{(\ua_1,\uv_2)} .$$ 
Because $\ua_1 \vee \ub_1 = \ua_2 \vee \ub_2$ and $\ua_2 \vee \uv_1 =\ua_1 \vee \uv_2$,
we obtain 
$$\ua_1 \vee \ub_1 \vee \ua_2 \vee \uv_1 = \ua_2 \vee \ub_2 \vee \ua_1 \vee \uv_2,$$
hence $$ \ub_1  \vee \uv_1 =  \ub_2 \vee \uv_2.$$
This implies $\bm -\bm' $ is a descendant of 
$$x_{(\uv_2, \ub_2)} x_{(\ua_1,\ub_1)}x_{(\ua_2,\uv_1)} 
-x_{(\uv_1, \ub_1)} x_{(\ua_2,\ub_2)} x_{(\ua_1,\uv_2)}, $$
provided that $x_{(\uv_2, \ub_2)}$ and
$x_{(\uv_1, \ub_1)}$ are homogeneous coordinates of $\PP_F$ for some
$\bF \in \sF$.

We now prove that $\uv_2 \vee \ub_2 \ne \ua_1 \vee \ub_1.$
Assume on the contrary
that $\uv_2 \vee \ub_2 = \ua_1 \vee \ub_1$. Then 
$\uv_2 \vee \ub_2 = \ua_1 \vee \ub_1 =\ua_2 \vee \ub_2$,
hence $\uv_2= \ua_2$. But then $\ua_1 = \uv_1$
since $\ua_2 \vee \uv_1 =  \ua_1 \vee \uv_2$,
implying a common factor $x_{(\uv_1, \uv_2)}$,  equal to both
$x_{(\ua_2,\uv_1)}$ and $x_{(\ua_1,\uv_2)}$.
Similarly, $\uv_2 \vee \ub_2 \ne \ua_2 \vee \uv_1.$
Finally, suppose $\ua_1 \vee \ub_1=\ua_2 \vee \uv_1.$
Then, $\ua_1 \vee \ub_1=\ua_1 \vee \uv_2$, hence
$\ub_1=\uv_2$, implying a common factor 
$x_{(\ua_1,\ub_1)}=x_{(\ua_1,\uv_2)}$ of $\bm-\bm'$,
a contradiction. Consequently, $\bm -\bm'$ is $\vr$-linear.

Assume on the contrary  that 
$\bm -\bm'$ is not $\vi$-square-free. For example, 
Assume 
$$\{\uv_2, \ub_2 \} \cap \{\ua_1,\ub_1\} \ne \emptyset.$$
Because of $x_{(\ua_1,\uv_2)}$,  we see that $\uv_2 \ne \ua_1$.
If $\uv_2=\ub_1$, then $\uv_2 \vee \ub_2=\uv_1\vee \ub_1$
implies $\ub_2=\uv_1$. Then $x_{(\ub_1, \ub_2)}$ is a common factor 
of $\bm-\bm'$, a contradiction. Because 
$\ua_1 \vee \ub_1=\ua_2 \vee \ub_2$, we see that 
$\ub_2 \ne \ua_1,\ub_1$ because either case would imply 
a common factor $x_{(\ua_1,\ub_1)}$. Hence, we conclude
$\{\uv_2, \ub_2 \} \cap \{\ua_1,\ub_1\} = \emptyset.$
All the remaining cases follow from similar routine checks.

This establishes the lemma.
\end{proof}

\begin{cor}\label{cor:not used} Let the notation and assumption
be as in Lemma \ref{rho=2}.
Assume in addition that $x_{(\uv_2, \ub_2)}$ and
$x_{(\uv_1, \ub_1)}$ are homogeneous coordinates of $\PP_F$ for some
$\bF \in \sF$.
Then, we have $$\bm -\bm'=\vi(X_1)X_2X_3-\vi(X_1')X_2'X_3'$$
where $\{X_i, X_i' \}$ are $\vr$-variables of $\PP_{F_i}$ 
for some $\bF_i \in \sF$ 
such that $\bF_1, \bF_2$ and $\bF_3$ are mutually distinct.
 In particular, 
 the root parent $X_1X_2X_3 - X_1'X_2'X_3'$  of $\bm -\bm'$ is $\vr$-linear and $\vi$-square-free.
\end{cor}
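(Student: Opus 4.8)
The plan is to deduce the statement directly from the normal form produced by Lemma \ref{rho=2}, the only genuinely new ingredient being the bookkeeping that identifies which $\PP_F$ each $\vr$-variable belongs to. First I would invoke Lemma \ref{rho=2} to write, after the permissible index swaps inside the $\vr$-variables, $\bm - \bm' = x_{\uv_2}x_{\ub_2}\, x_{(\ua_1,\ub_1)}\, x_{(\ua_2,\uv_1)} - x_{\uv_1}x_{\ub_1}\, x_{(\ua_2,\ub_2)}\, x_{(\ua_1,\uv_2)}$, together with the facts already established there: $\bm-\bm'$ is $\vr$-linear and $\vi$-square-free, and the three data $\uv_2\vee\ub_2$, $\ua_1\vee\ub_1$, $\ua_2\vee\uv_1$ are pairwise distinct. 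Using the extra hypothesis, I set $X_1 := x_{(\uv_2,\ub_2)}$ and $X_1' := x_{(\uv_1,\ub_1)}$, which by assumption are $\vr$-variables of one fixed $\PP_{F_1}$ with $\bF_1\in\sF$.

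Next I would recall, from \eqref{rho=2 form} and its reduction in the proof of Lemma \ref{rho=2} (where $\ua_1=\uu_2$ and $\ua_2=\uu_1$), that $x_{(\ua_1,\ub_1)}$ and $x_{(\ua_2,\ub_2)}$ are $\vr$-variables of a common $\PP_{F_2}$, and that $x_{(\ua_2,\uv_1)}=x_{(\uu_1,\uv_1)}$ and $x_{(\ua_1,\uv_2)}=x_{(\uu_2,\uv_2)}$ are $\vr$-variables of a common $\PP_{F_3}$. I then set $X_2:=x_{(\ua_1,\ub_1)},\, X_2':=x_{(\ua_2,\ub_2)}$ and $X_3:=x_{(\ua_2,\uv_1)},\, X_3':=x_{(\ua_1,\uv_2)}$. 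Since $\vi$ fixes the $\vp$-variables, $\vi(X_1)=x_{\uv_2}x_{\ub_2}$ and $\vi(X_1')=x_{\uv_1}x_{\ub_1}$, so the normal form reads precisely $\bm - \bm' = \vi(X_1)X_2X_3 - \vi(X_1')X_2'X_3'$. This exhibits $X_1X_2X_3 - X_1'X_2'X_3'$ as a parent of $\bm-\bm'$ (a descent with respect to $\PP_{F_1}$), and it is a root binomial because it carries no $\vp$-variable that could be lifted; uniqueness of the root parent is immediate for the same reason. To get that $\bF_1,\bF_2,\bF_3$ are mutually distinct I would use the elementary fact that the $\PP_F$ containing a $\vr$-variable $x_{(\uu,\uv)}$ is determined by the six-element datum $\uu\vee\uv$: the data attached to $X_1,X_2,X_3$ are exactly $\uv_2\vee\ub_2$, $\ua_1\vee\ub_1$, $\ua_2\vee\uv_1$, which are pairwise distinct by Lemma \ref{rho=2}. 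Finally, $\vr$-linearity of $X_1X_2X_3-X_1'X_2'X_3'$ is clear since each monomial contains exactly one $\vr$-variable from each of the three distinct $\PP_{F_i}$, and $\vi$-square-freeness follows from $\vi(X_1X_2X_3)=x_{\uv_2}x_{\ub_2}x_{\ua_1}x_{\ub_1}x_{\ua_2}x_{\uv_1}=\vi(\bm)$ (and likewise $\vi(X_1'X_2'X_3')=\vi(\bm')$), both of which are square-free by Lemma \ref{rho=2}.

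There is essentially no real obstacle here: the content of the corollary is carried by Lemma \ref{rho=2}, and the work consists solely in tracking projective-space membership of the $\vr$-variables. The one subtle point, and the sole reason for imposing the extra hypothesis that $x_{(\uv_2,\ub_2)}$ and $x_{(\uv_1,\ub_1)}$ are homogeneous coordinates of some $\PP_F$, is that the relation $\ub_1\vee\uv_1=\ub_2\vee\uv_2$ derived in Lemma \ref{rho=2} does not by itself guarantee that a single $\bF\in\sF$ admits both $(\uv_2,\ub_2)$ and $(\uv_1,\ub_1)$ as factorizations of its terms; once this is granted the identification of $\bF_1$ — hence the whole statement — goes through verbatim.
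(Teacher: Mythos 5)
Your proposal is correct, and it is the natural deduction; the paper in fact leaves this corollary without a written proof (note its label \texttt{cor:not used}), so there is no official argument to compare against. The only real content beyond Lemma \ref{rho=2} is the bookkeeping you do: identifying $X_2,X_2'$ as the $\PP_{F_2}$-coordinates and $X_3,X_3'$ as the $\PP_{F_3}$-coordinates already present in the normal form, using the extra hypothesis to place $X_1:=x_{(\uv_2,\ub_2)}$ and $X_1':=x_{(\uv_1,\ub_1)}$ in a common $\PP_{F_1}$, and observing that each $\um$-primary $\pl$ relation $\bF\in\sF$ is uniquely recovered from the multiset $\uu_s\vee\uv_s=(123)\sqcup\uu_F$ of any of its terms, so the pairwise distinctness of $\uv_2\vee\ub_2$, $\ua_1\vee\ub_1$, $\ua_2\vee\uv_1$ proved in Lemma \ref{rho=2} gives $\bF_1\ne\bF_2\ne\bF_3\ne\bF_1$. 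Your identification of the root parent is also sound: a binomial with no $\vp$-variables cannot be a proper descendant, hence is a root polynomial, and $\vi$-square-freeness passes from $\bm-\bm'$ to the root parent because $\vi(X_1X_2X_3)=\vi(\bm)$ and $\vi(X_1'X_2'X_3')=\vi(\bm')$. Your closing remark correctly isolates why the extra hypothesis is genuinely needed: the identity $\ub_1\vee\uv_1=\ub_2\vee\uv_2$ from Lemma \ref{rho=2} pins down the candidate $\bF_1$, but it does not guarantee that the specific pairs $(\uv_2,\ub_2)$ and $(\uv_1,\ub_1)$ arise as factorizations of terms of that $\bF_1$.
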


\begin{example}\label{p-reducible}
Consider the following binomial $f$:
$$x_{13a}x_{2bc}x_{(13b,23c)}x_{(12a,3bc)}-$$
$$x_{12a}x_{13b}x_{23c}x_{(123,3bc)}x_{(13a,2bc)}.$$
This binomial belongs to $\ker^\mh (\vi)$, is
$\fb$-irreducible, but $\wp$-reducible.
Modulo $\cB^\wp$,
using $x_{13a}x_{2bc}x_{(12a,3bc)} -x_{12a}x_{3bc} x_{(13a,2bc)}$,
 $f$ can be reduced to
$$x_{12a}x_{(13a,2bc)}(x_{3bc}x_{(13b,23c)}
-x_{13b}x_{23c}x_{(123,3bc)}).$$
Or, modulo $\cB^\wp$, using 
$x_{13b}x_{23c}x_{(123,3bc)} - x_{3bc}x_{(13b,23c)}$,
 it can also be reduced to
$$x_{(13b,23c)}(x_{13a}x_{2bc}x_{(12a,3bc)}
-x_{12a}x_{3bc}x_{(13a,2bc)}).$$
In addition, observe that the binomial $f$ is a root binomial.
\end{example}

Indeed, below, we will prove that any 
$\fb$-irreducible but $\wp$-reducible 
binomial $\bm-\bm' \in \ker^\mh  \vi_{[k]}$ of
$\deg_\vr =2$ takes a form like the above.

\begin{lemma}\label{rho=2 wp-reducible}
Suppose $\bm-\bm' \in \ker^\mh  \vi_{[k]}$ with
$\deg_\vr (\bm-\bm')=2$ is $\fb$-irreducible and $\wp$-reducible.
Then, it takes of the following form
$$x_{\uu'}x_{\uv'} x_{\ua'} x_{(\ua,\ub)} x_{(\uu, \uv)}-
x_{\ua}x_{\ub} x_\uu x_{(\ua',\uv)} x_{(\uu', \uv')}$$
where $x_{(\ua,\ub)}, x_{(\ua',\uv)}$ are homogeneous coordinates
of $\PP_F$ for some $\bF \in \sF$ and 
$x_{(\uu,\uv)}, x_{(\uu',\uv')}$ are homogeneous coordinates
of $\PP_{F'}$ for some $\bF' \ne \bF \in \sF$. In particular,
$\bm-\bm'$ is $\vr$-linear. Moreover, $\bm-\bm'$ is also
$\vi$-square-free.
\end{lemma}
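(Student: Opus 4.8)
The plan is to follow the template of Lemma \ref{rho=2} very closely, since the situation here is the $\wp$-reducible analogue of that lemma. We write $\bm-\bm'$ in the shape \eqref{rho=2 form},
$$\bm -\bm' = f\, x_{(\ua_1,\ub_1)}x_{(\uu_1,\uv_1)} - g\, x_{(\ua_2,\ub_2)} x_{(\uu_2,\uv_2)},\qquad f,g\in R_0,$$
with $\{x_{(\ua_1,\ub_1)},x_{(\ua_2,\ub_2)}\}$ homogeneous coordinates of $\PP_F$ and $\{x_{(\uu_1,\uv_1)},x_{(\uu_2,\uv_2)}\}$ homogeneous coordinates of $\PP_{F'}$, for suitable $\bF,\bF'\in\sF$. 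Applying $\vi_{[k]}$ and using that $\bm-\bm'$ has no common factor (which holds since it is $\fb$-irreducible, by the remark after Definition \ref{fb-irr}), I will do the same bookkeeping of which $\vp$-variables must divide $f$ versus $g$ as in the proof of Lemma \ref{rho=2}. The key branching point is different from there: in Lemma \ref{rho=2} $\wp$-irreducibility \emph{excluded} the case $\bF=\bF'$ and excluded $x_{\ua_1}x_{\ub_1}\mid g$; here $\wp$-reducibility forces exactly one of those excluded configurations to occur. So the first step is to rule out $\bF=\bF'$ (if $\bF=\bF'$ then, by Lemma \ref{rho=1} applied after factoring, or directly, one finds $\bm-\bm'$ is a multiple of a $\wp$-binomial, contradicting $\fb$-irreducibility — or else it reduces to $\deg_\vr=1$ which is handled by Lemma \ref{rho=1}), hence $\bF\ne\bF'$, and to conclude that the only way $\bm-\bm'$ can be $\wp$-reducible with $\deg_\vr=2$ is that one term, say $\bm$, has some $\vr$-variable, say $x_{(\uu_1,\uv_1)}$, dividing it while the other term $\bm'$ is divisible by $x_{\uu_1}x_{\uv_1}$ times a \emph{different} $\vr$-variable $x_{(\uu',\uv')}$ of $\PP_{F'}$ (this is exactly the second alternative in Definition \ref{no common wp factor}).

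Once this configuration is fixed, I compare $\vp$-parts: from $\vi_{[k]}(\bm)=\vi_{[k]}(\bm')$ and the absence of common factors, the $\vp$-variables $\{x_{\uu_1},x_{\uv_1}\}$ on the $\bm'$ side and $\{x_{\ua_2},x_{\ub_2},x_{\uu_2},x_{\uv_2}\}$ versus $\{x_{\ua_1},x_{\ub_1}\}$ get matched exactly as in Lemma \ref{rho=2}: one element of $\{\ua_1,\ub_1\}$ equals one of $\{\uu_1,\uv_1\}$ (coming from the $\PP_{F'}$ constraint connecting $x_{(\uu_1,\uv_1)}$ and $x_{(\uu',\uv')}$), another variable must divide $f$, etc. Carrying out this matching and relabeling (renaming $(\ua_i,\ub_i)\leftrightarrow(\ub_i,\ua_i)$ and $(\uu_i,\uv_i)\leftrightarrow(\uv_i,\uu_i)$ as needed, exactly the ``w.l.o.g.'' of Lemma \ref{rho=2}) should pin $\bm-\bm'$ down to the asserted normal form
$$x_{\uu'}x_{\uv'} x_{\ua'} x_{(\ua,\ub)} x_{(\uu, \uv)}-
x_{\ua}x_{\ub} x_\uu x_{(\ua',\uv)} x_{(\uu', \uv')},$$
with $x_{(\ua,\ub)},x_{(\ua',\uv)}$ coordinates of $\PP_F$ and $x_{(\uu,\uv)},x_{(\uu',\uv')}$ coordinates of $\PP_{F'}$, $\bF\ne\bF'$. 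I will cross-check against Example \ref{p-reducible}, which is precisely an instance of this form (with the rank-one $\PP_{F_{(123,3bc)}}$ playing the role of $\PP_{F'}$ and $\PP_{F_{(12a,3bc)}}$-type coordinates the role of $\PP_F$), to make sure the relabeling conventions are consistent.

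Finally, $\vr$-linearity and $\vi$-square-freeness are read off the normal form by the same disjointness arguments as at the end of Lemma \ref{rho=2}. For $\vr$-linearity: the two $\vr$-variables $x_{(\ua,\ub)}$ and $x_{(\ua',\uv)}$ of $\PP_F$ appearing (one in each term) must be distinct — if $x_{(\ua,\ub)}=x_{(\ua',\uv)}$ one extracts a common factor, contradicting $\fb$-irreducibility — and likewise $x_{(\uu,\uv)}\ne x_{(\uu',\uv')}$; since the monomial in each term contains at most one $\vr$-variable from each $\PP_{F''}$, linearity follows. For $\vi$-square-freeness: one checks that in each term the multiset of indices $\{\ua,\ub\}\cup\{\uu,\uv\}$ (resp.\ $\{\uu',\uv',\ua'\}\cup\{\uu',\uv'\}$, after applying $\vi$) has no repeated integer-triple, using the matching relations $\ua\vee\ub=\ua'\vee\uv'$ and $\uu\vee\uv=\uu'\vee\uv'$ exactly as in the square-freeness paragraph of Lemma \ref{rho=2}; any collision would produce a common factor or force $\bF=\bF'$. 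The main obstacle I anticipate is the combinatorial case analysis in the middle step: pinning down which of the several a priori possible index identifications actually occurs when $\wp$-reducibility holds, and in particular being careful that the ``different $\vr$-variable'' $x_{(\uu',\uv')}$ in the $\wp$-reduction genuinely lies on the \emph{other} projective factor $\PP_{F'}$ rather than on $\PP_F$ — ruling out the $\bF=\bF'$ overlap cleanly (via Lemma \ref{rho=1} and $\fb$-irreducibility) is the crux that makes the rest of the matching forced.
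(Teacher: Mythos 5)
Your proposal has a genuine gap in the middle step. You propose to ``do the same bookkeeping of which $\vp$-variables must divide $f$ versus $g$ as in the proof of Lemma \ref{rho=2}'' and conclude that ``one element of $\{\ua_1,\ub_1\}$ equals one of $\{\uu_1,\uv_1\}$.'' But that index overlap in Lemma \ref{rho=2} was forced \emph{precisely by} $\wp$-irreducibility: there, $\wp$-irreducibility ruled out $x_{\ua_1}x_{\ub_1}\mid g$, so overlap of $\{\ua_1,\ub_1\}$ with $\{\uu_2,\uv_2\}$ was the only remaining option. In the $\wp$-reducible case, $x_{\ua_1}x_{\ub_1}\mid g$ (up to a harmless relabeling) is exactly what \emph{does} happen, so that overlap is not forced and in general does not occur: in the asserted normal form, the indices $\{\ua,\ub\}$ of the $\vr$-variable $x_{(\ua,\ub)}$ in $\bm$ reappear in $\bm'$ as the $\vp$-variables $x_\ua x_\ub$, not as indices of the other $\vr$-variable of $\bm$. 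So the Lemma \ref{rho=2} matching step does not transfer, and your ``same bookkeeping'' has nothing to stand on.

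The argument the paper actually runs is structurally different. Starting from the $\wp$-reducible shape $\bm-\bm' = f\, x_{(\ua,\ub)}x_{(\uu,\uv)} - g\, x_\ua x_\ub\, x_{(\ua',\ub')}x_{(\uu',\uv')}$, one cancels the factor $x_\ua x_\ub$ in the $\vi$-images and observes that $f\, x_{(\uu,\uv)} - g\, x_{\ua'}x_{\ub'}\, x_{(\uu',\uv')}$ lies in $\ker^\mh\vi_{[k]}$ with $\deg_\vr=1$; Lemma \ref{rho=1} then writes this as $h\bigl(x_{\uu'}x_{\uv'}x_{(\uu,\uv)} - x_\uu x_\uv x_{(\uu',\uv')}\bigr)$, and comparing coefficients gives $f=h\,x_{\uu'}x_{\uv'}$ and $g\,x_{\ua'}x_{\ub'}=h\,x_\uu x_\uv$. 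The two uses of $\fb$-irreducibility do the real work: first $\gcd(h,g)=1$ (since $h\mid f$ and $\bm,\bm'$ are coprime) forces $h\mid x_{\ua'}x_{\ub'}$; then $h\ne x_{\ua'}x_{\ub'}$, because $h=x_{\ua'}x_{\ub'}$ would make $x_{\ua'}x_{\ub'}x_{(\ua,\ub)}\mid\bm$ and $x_\ua x_\ub x_{(\ua',\ub')}\mid\bm'$, an $\fb$-reducible decomposition. This pins down $h=x_{\ua'}$, $\ub'=\uv$ (up to relabeling) and hence the normal form. The claim $\bF\ne\bF'$ ($\vr$-linearity) then drops out of the normal form at the end in two lines (if $\bF=\bF'$ then $\uu\vee\uv=\ua'\vee\uv$ forces $\uu=\ua'$ and a common factor); you do not need, and should not try, to rule it out at the start — your sketched argument for that step is vague and would in any case be harder than the post-hoc one. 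Your reading of $\vi$-square-freeness from the normal form and the cross-check against Example \ref{p-reducible} are fine, but without the reduction to $\deg_\vr=1$ via Lemma \ref{rho=1} and the coefficient comparison, the middle of your plan has no support.
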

\begin{proof} Because $\bm-\bm'$ is $\wp$-reducible, w.l.o.g., we can assume
$$\bm-\bm'= f x_{(\ua,\ub)} x_{(\uu, \uv)}-
g x_{\ua}x_{\ub}  x_{(\ua',\ub')} x_{(\uu', \uv')}$$
for some $f, g \in R_0$.
Then, we have 
$$f  x_{(\uu, \uv)}- g x_{\ua'}x_{\ub'}   x_{(\uu', \uv')} \in \ker^\mh \vi_{[k]}.$$
By Lemma \ref{rho=1}, the above must be equal to 
$$hx_{\uu'}x_{\uv'} x_{(\uu, \uv)}-  hx_{\uu}x_{\uv}   x_{(\uu', \uv')}$$
for some $h \in R_0$. Hence, by comparing the coefficients, 
we obtain
$$f=hx_{\uu'}x_{\uv'} ,\; g x_{\ua'}x_{\ub'}=hx_{\uu}x_{\uv}.$$
Because $h \mid f$, using that $\bm-\bm'$ is $\fb$-irreduicble 
(hence having no common factor),
we have  that $h$ and $g$ are co-prime,  
therefore $h \mid x_{\ua'}x_{\ub'}$. Observe that
$h$ can not equal to $x_{\ua'}x_{\ub'}$, for otherwise, $x_{\ua'}x_{\ub'} \mid f$, and we obtain 
$$\hbox{$x_{\ua'}x_{\ub'}  x_{(\ua,\ub)} \mid \bm$ and
$x_{\ua}x_{\ub}  x_{(\ua',\ub')} \mid \bm'$},$$
then this would imply that $\bm -\bm'$ is $\fb$-reducible, 
a contradiction.
W.l.o.g., we assume $h =x_{\ua'}$. Then $g x_{\ub'} = x_{\uu}x_{\uv}$.
W.l.o.g., we can assume $x_{\ub'}=x_\uv$. 
Then, we obtain $$f=x_{\ua'}x_{\uu'}x_{\uv'} ,\; g=x_{\uu},
\; \ub'=\uv.$$
This implies 
$$\bm -\bm'=x_{\uu'}x_{\uv'} x_{\ua'} x_{(\ua,\ub)} x_{(\uu, \uv)}-
x_{\ua}x_{\ub} x_\uu x_{(\ua',\uv)} x_{(\uu', \uv')}$$

Suppose $\bF =\bF'$ as in the statement.
Then, we have $\uu \vee \uv=\ua' \vee \uv$, hence $\uu=\ua'$,
implying a common factor $ x_{(\uu, \uv)}=x_{(\ua',\uv)}$,
contradicting to that $\bm -\bm'$ is $\fb$-irreducible.
This implies that $\bm-\bm'$ is $\vr$-linear.

To show that $\bm-\bm'$ is $\vi$-square-free, 
we offer two different proofs.

(1).  Suppose that $\bm-\bm'$ is not $\vi$-square-free, 
w.l.o.g., say, $\ua=\uu$. Then, we have
$$\ua \vee \ub = \ua' \vee \uv \;\; \hbox{and} \;\;
  \ua \vee \uv=\uu' \vee \uv'.$$
Hence, 
$$\ua \vee \ub \vee  \ua \vee \uv= \ua' \vee \uv \vee \uu' \vee \uv'.  $$
Thus, $$\ua \vee \ub \vee  \ua=\ua' \vee \uu' \vee \uv'.  $$
Note that $$\ua' \subset \ua \vee \ub=\ua' \vee \uv.$$
Therefore, as sets of  integers (allowing repetition), we have
\begin{equation}\label{wrong id}
\{\ua \vee \ub \vee  \ua\} \; \- \{\ua'\}= \uu' \vee \uv'
\end{equation}
Observe also that as a set of  integers (allowing repetition), 
$\ua'$ is made of exactly half of
$\ua \vee \ub=\ua' \vee \uv$, hence as a set of 
\emph{\it distinct} integers, we see that 
$$\ua \vee \ub \vee  \ua \- \{\ua'\}$$
contains at most 3 distinct integers. 
But, by the definition,
$$\uu' \vee \uv' \supset \{1,2,3\}$$
contains at least $3+2$ distinct integers, a contradiction to the identity
\eqref{wrong id}. The remaining cases such as 
$\ua'=\uu'$, etc., are totally parallel.

(2). For an alternative proof,
we begin with analysing
$x_{(\uu, \uv)}$ in  $\bm$ and $x_{(\ua',\uv)}$ in $\bm'$.
First, observe that $\uv$ (or equivalently $x_\uv$) can not be of rank-1,
for otherwise, both $x_{(\uu, \uv)}$ and $x_{(\ua',\uv)}$ have to equal
to $x_{(123, \uv)}$, implying a common factor, a contradiction.
Therefore, 
up to permutations within $\{1,2,3\}$ for sub-indexes of some variables, 
w.l.o.g., we can assume
$$x_{(\uu, \uv)}=x_{(12a, 3bc)} \;\; \hbox{and} \;\; 
x_{(\ua',\uv)}=x_{(12a', 3bc)}.$$
Here, we allow $a$ or $a'$ equal to 3, and allow $b$ equals to 1 or 2,
thus exhausting all possibilities, up to permutations within $\{1,2,3\}$
for sub-indexes of some variables.\footnote{One could consider
the cases $$x_{(\uu, \uv)}=x_{(13a, 2bc)} \;\; \hbox{and} \;\; 
x_{(\ua',\uv)}=x_{(13a', 3bc)},$$
$$x_{(\uu, \uv)}=x_{(23a, 1bc)} \;\; \hbox{and} \;\; 
x_{(\ua',\uv)}=x_{(23a', 1bc)},$$
independently. But, up to permutations within $\{1,2,3\}$,
they are all parallel.}
Hence, up to permutations within $\{1,2,3\}$ for sub-indexes of some variables, w.l.o.g., we obtain
$\bm-\bm'$ takes the form of 
\begin{equation}\label{wp-red 1}
x_{13a}x_{2bc} x_{12a'} x_{(23a',1bc)} x_{(12a, 3bc)}-
x_{12a}x_{23a'} x_{1bc} x_{(12a', 3bc)} x_{(13a, 2bc)}
\end{equation}
when $a' \ne 3$; or else, when $a'=3$,
\begin{equation}\label{wp-red 2}
x_{13a}x_{2bc}  x_{(13b,23c)} x_{(12a, 3bc)}-
x_{12a}x_{13b} x_{23c} x_{(123, 3bc)} x_{(13a, 2bc)}
\end{equation}
From here, one checks that it is impossible to get two identical 
sub-indexes from $\bm$ (or respectively, from $\bm'$) without yielding a contradiction.

This establishes the lemma.
\end{proof}

Note that the binomial \eqref{wp-red 2}
is the same as the one in Example \ref{p-reducible}

\begin{lemma}\label{rho=2 fb reducible}
Suppose $\bm-\bm' \in \ker^\mh  \vi_{[k]}$ with
$\deg_\vr (\bm-\bm')=2$ has no common factor 
and is $\fb$-reducible.
Then, we have
$$\bm-\bm'=\bm_1\bm_2 - \bm_1' \bm_2'$$
such that $\bm_i-\bm'_i \in \ker^\mh \vi$, $i=1,2$.
Consequently, we have $\deg_\vr (\bm_i-\bm_i')=1$, $i=1,2$, and
by Lemma \ref{rho=1}, each of 
$\bm_i-\bm'_i \in \ker^\mh \vi$, $i=1,2$, is a $\wp$-binomial.
\end{lemma}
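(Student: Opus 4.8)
The plan is to unwind the $\fb$-reducibility hypothesis and show that the factorization it provides is already ``honest'', i.e. that both halves are multi-homogeneous $\wp$-binomials. By Definition \ref{fb-irr} we may write $\bm=\bm_1\bm_2$ and $\bm'=\bm_1'\bm_2'$ with all four monomials non-constant, with $\bm_1-\bm_1',\ \bm_2-\bm_2'\in\ker\vi_{[k]}$, and with each of $\bm_1-\bm_1'$ and $\bm_2-\bm_2'$ a descendant or partial descendant of a multi-homogeneous binomial. First I would observe that both differences are nonzero: if, say, $\bm_1=\bm_1'$, then $\bm_1$ (non-constant) divides both $\bm$ and $\bm'$, contradicting that $\bm-\bm'$ has no common factor; similarly $\bm_2\ne\bm_2'$.

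Next I would pin down the $\vr$-degrees. Since $\bm-\bm'\in\ker^\mh\vi_{[k]}$ we have $\deg_\vr(\bm)=\deg_\vr(\bm')=2$ (matching $\PP_F$-degrees one $\bF\in\sF$ at a time), so $\deg_\vr(\bm_1)+\deg_\vr(\bm_2)=2$ and likewise for the primed monomials. The key step is to exclude any split in which a factor, say $\bm_1$, lies in $R_0$. In that case $\bm_1=\vi(\bm_1)=\vi(\bm_1')$, so $\bm_1'$ carries a $\vr$-variable; tracking how the two $\vr$-variables of $\bm$ and of $\bm'$ are distributed, and using that $\bm_1-\bm_1'$ (resp. $\bm_2-\bm_2'$) is a partial descendant of a multi-homogeneous binomial, one is forced, after cancelling, into precisely the ``cross-split'' configuration of \eqref{non-hom decom}: the two $\vr$-variables of $\bm$ lie in distinct $\PP_F$ and $\PP_{F'}$ with $\bF\ne\bF'$, one in $\bm_1$ and one in $\bm_2$, symmetrically for $\bm'$, and all residual factors constant. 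I would rule this out exactly in the spirit of the combinatorial arguments of Lemmas \ref{rho=1} and \ref{rho=2}: distinct $\vr$-coordinates of a fixed $\PP_F$ have disjoint $\vp$-index supports, and the six-integer supports $\uu\vee\uv$ attached to the primary $\pl$ relations are rigid enough that the identity $\vi(\bm)=\vi(\bm')$ (forced by $\bm-\bm'\in\ker^\mh\vi_{[k]}$) cannot hold in this configuration without producing a common $\vp$-factor of $\bm$ and $\bm'$, a contradiction. (For root binomials this is exactly Lemma \ref{auto homo}; here $\bm-\bm'$ need not be root, so the check must be redone directly.) Hence $\deg_\vr(\bm_i)=\deg_\vr(\bm_i')=1$ for $i=1,2$, and since a partial descendant whose two monomials each have $\vr$-degree $1$ and whose product is multi-homogeneous is itself multi-homogeneous, we get $\bm_i-\bm_i'\in\ker^\mh\vi_{[k]}$.

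Finally, each $\bm_i-\bm_i'\in\ker^\mh\vi_{[k]}$ is nonzero with $\deg_\vr=1$, so by Lemma \ref{rho=1} it equals $h_i\,B_i$ for a $\wp$-binomial $B_i$ and some $h_i\in R_0$; since the two monomials of $B_i$ are coprime, $h_i$ is, up to a unit, the common factor of $\bm_i$ and $\bm_i'$, hence divides both $\bm$ and $\bm'$, hence is constant. Thus each $\bm_i-\bm_i'$ is a $\wp$-binomial, which is the assertion. The main obstacle is the second paragraph: showing that the $\fb$-decomposition of $\bm-\bm'$ cannot be ``fake'' (with non-multi-homogeneous halves) \emph{without} assuming $\bm-\bm'$ is a root binomial, i.e. upgrading Lemma \ref{auto homo} to the $\deg_\vr=2$ setting by an explicit index computation on the primary $\pl$ relations.
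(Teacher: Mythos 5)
Your route is genuinely different from the paper's, and it stalls exactly at the step you yourself flag as the main obstacle. The paper does not re-run the index combinatorics here. It observes that a priori the halves $\bm_i-\bm_i'$ either are both multi-homogeneous, or one or both are genuine partial descendants of $\vr$-degree-$2$ binomials; the latter were already put into explicit normal form in Lemma \ref{rho=2} ($\wp$-irreducible) and Lemma \ref{rho=2 wp-reducible} ($\wp$-reducible), and from those normal forms one reads off directly that every genuine partial descendant reintroduces a common $\vp$-factor into $\bm-\bm'$, contradicting the no-common-factor hypothesis. So only the first case survives, and Lemma \ref{rho=1} finishes. All the hard content is recycled; nothing about $\uu\vee\uv$-supports needs to be touched again.

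Your proposal attempts instead a direct $\vr$-degree and support analysis, and there are two concrete problems. First, after stipulating $\bm_1\in R_0$ you describe the residual ``cross-split'' as having the two $\vr$-variables of $\bm$ ``one in $\bm_1$ and one in $\bm_2$'', which contradicts $\bm_1\in R_0$; the degree pattern actually forced by \eqref{non-hom decom} with $\deg_\vr(\bm-\bm')=2$ is $(\deg_\vr\bm_1,\deg_\vr\bm_2,\deg_\vr\bm_1',\deg_\vr\bm_2')=(0,2,1,1)$ up to symmetry. Second, and more seriously, the closing claim that ``a partial descendant whose two monomials each have $\vr$-degree $1$ and whose product is multi-homogeneous is itself multi-homogeneous'' is asserted without proof and is not formal: a half $\bm_1-\bm_1'\in\ker\vi_{[k]}$ can have $\vr$-degree $1$ on each side with the two $\vr$-variables lying in distinct $\PP_F$ and $\PP_{F'}$ (a genuine partial descendant of a $\vr$-degree-$2$ parent, with one $\vr$-variable replaced by its $\vi$-image on each monomial), while $\bm_2-\bm_2'$ carries the complementary $\PP_{F'}$/$\PP_F$ pair so that the $\PP_F$- and $\PP_{F'}$-degrees of $\bm-\bm'$ still balance. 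Excluding only the ``$\bm_1\in R_0$'' configurations therefore does not reduce you to Lemma \ref{rho=1}; ruling out this balanced-but-inhomogeneous case is precisely the support computation you defer. To close the gap, either carry out that computation (in effect re-deriving part of Lemmas \ref{rho=2} and \ref{rho=2 wp-reducible}) or, more economically, follow the paper and invoke those normal forms for the partial-descendant halves, observing that any genuine partial descent forces a common $\vp$-factor.
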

\begin{proof}
Assume $\bm-\bm'$ is $\fb$-reducible, and is written as
$$\bm-\bm'=\bm_1\bm_2 - \bm_1' \bm_2'.$$
A priori, there are the following four possibilities.
\begin{enumerate}
\item $\bm_i-\bm'_i \in \ker^\mh \vi$, $i=1,2$.
Then, necessarily we have $\deg_\vr (\bm_i-\bm'_i) =1$, $i=1,2$.
Then by Lemma \ref{rho=1}, each of 
$\bm_i-\bm'_i \in \ker^\mh \vi$, $i=1,2$, is a $\wp$-binomial.
\item both $\bm_i-\bm'_i$, $i=1,2$, are partial descendants of
binomials of Lemma \ref{rho=2}.
\item both $\bm_i-\bm'_i$, $i=1,2$, are partial descendants of
binomials of Lemma \ref{rho=2 wp-reducible}.
\item one of $\bm_i-\bm'_i$, $i=1,2$, is a partial descendant of
binomials of Lemma \ref{rho=2}, the other is a partial descendant of
binomials of Lemma \ref{rho=2 wp-reducible}.
\end{enumerate}

But, one quickly checks from Lemma \ref{rho=2} and 
Lemma \ref{rho=2 wp-reducible} that any partial descendant
of a binomial from these two lemmas will give rise to a common
factor for the descended binomial, this contradicts that
$\bm-\bm'$ admits no common factor. Hence, this excludes 
(2) - (4) and leaves (1) as the only possible case, which is the statement of the lemma.
\end{proof}

\begin{lemma}\label{vi X stays} 
Consider any binomial $f \in \ker^\mh \vi_{[k]}$.
Assume $f= \vi(X) \bm - \vi(X') \bm'$ 
is a descendant of $X \bm - X' \bm'$ where
$X$ and $X'$ are two \sout{distinct}
homogeneous coordinates of $\PP_F$ for some 
$\bF \in \sF$. Suppose 
$$f = \bm_1 \bm_2 - \bm_1' \bm_2'$$ is a
$\fb$-reducible decomposition of $f$. Then, $\vi(X) \mid \bm_1$
or $\vi(X) \mid \bm_2$. Similarly, $\vi(X') \mid \bm_1'$
or $\vi(X') \mid \bm_2'$.
\end{lemma}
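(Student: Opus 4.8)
The plan is to prove Lemma \ref{vi X stays} by a direct divisibility argument, exploiting that $\vi(X)$ and $\vi(X')$ are quadratic monomials in $\vp$-variables and that a $\fb$-reducible decomposition splits $f$ into a product of two factors each coming from a (partial descendant of a) multi-homogeneous binomial. First I would fix notation: write $f = \vi(X)\bm - \vi(X')\bm'$ as the descendant of the multi-homogeneous binomial $g = X\bm - X'\bm'$, where $X = x_{(\uu,\uv)}$ and $X' = x_{(\uu',\uv')}$ are homogeneous coordinates of $\PP_F$ for some $\bF \in \sF$, so $\vi(X) = x_\uu x_\uv$ and $\vi(X') = x_{\uu'}x_{\uv'}$. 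Since $X,X'$ are distinct $\vr$-variables of the same $\PP_F$, the two sets $\{x_\uu, x_\uv\}$ and $\{x_{\uu'},x_{\uv'}\}$ are disjoint (this is the same elementary fact about $\vr$-variables of a common $\PP_F$ used in the proof of Lemma \ref{rho=1}), and neither $X$ nor $X'$ appears as a $\vr$-variable in $\vi(X)\bm$ or $\vi(X')\bm'$ — so $X$ and $X'$ have been replaced by $\vp$-monomials in passing from $g$ to $f$.

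Next I would analyze the monomial $\bm_1\bm_2 = \vi(X)\bm = x_\uu x_\uv \bm$. The factor $x_\uu$ is a $\vp$-variable, hence a prime element of the polynomial ring $R_{[k]}$, so $x_\uu \mid \bm_1\bm_2$ forces $x_\uu \mid \bm_1$ or $x_\uu \mid \bm_2$; the same holds for $x_\uv$. The only subtlety is to rule out the possibility that $x_\uu$ divides $\bm_1$ but $x_\uv$ divides $\bm_2$ (a "split" of the two $\vp$-factors of $\vi(X)$), which would leave neither $\bm_1$ nor $\bm_2$ divisible by the full product $\vi(X) = x_\uu x_\uv$. To exclude this I would use the structure of a $\fb$-reducible decomposition: each of $\bm_1 - \bm_1'$ and $\bm_2 - \bm_2'$ is a descendant or partial descendant of a multi-homogeneous binomial, so a $\vp$-variable that appears in $\bm_i$ but "should have come from" a $\vr$-variable can be tracked back — indeed, if $x_\uu$ comes from within the original multi-homogeneous parent of $\bm_i - \bm_i'$ as a genuine $\vp$-variable (not from de-homogenizing some $\vr$-coordinate), then by multi-homogeneity of $g$ its counterpart $x_{\uv}$ must have originated from the \emph{same} $\PP_F$-block, i.e. $x_\uu x_\uv = \vi(X)$ must be absorbed by one and the same factor $\bm_i$. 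More precisely: the $\vr$-variable $X = x_{(\uu,\uv)}$ is the unique $\vr$-variable whose $\vi$-image equals $x_\uu x_\uv$, and in $g$ the pair $(\uu,\uv)$ occurs together only through $X$; since the $\fb$-factorization is compatible with the multi-homogeneous grading of the root parent $g$ (this is the content available from Lemmas \ref{auto homo} and \ref{vi X stays}'s sibling results), the de-homogenization replacing $X$ by $x_\uu x_\uv$ happens inside one factor, so $\vi(X) \mid \bm_1$ or $\vi(X)\mid \bm_2$. The statement for $X'$ is symmetric, replacing $\bm$ by $\bm'$ throughout.

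The main obstacle I anticipate is precisely controlling the interaction between the $\fb$-decomposition and the partial-descendant phenomenon: a priori one factor $\bm_i - \bm_i'$ might itself be a partial descendant in which \emph{different} $\vr$-variables were de-homogenized, so I must argue that the specific de-homogenization producing $\vi(X)$ from $X$ cannot be "spread across" the two factors $\bm_1$ and $\bm_2$. The clean way to handle this is to pass to the root parent: lift $f = \bm_1\bm_2 - \bm_1'\bm_2'$ to a factorization of $g = (X\tilde\bm_1)(X_2) - (X'\tilde\bm_1')(X_2')$ at the level of multi-homogeneous binomials (using that $g$ is a root parent, or reducing to that case), observe that the $\vr$-variable $X$ of $\PP_F$ lies entirely in one of the two multi-homogeneous factors by multi-homogeneity of $g$ with respect to the $\PP_F$-grading, and then de-homogenize to conclude $\vi(X) = x_\uu x_\uv$ divides the corresponding $\bm_i$. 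I expect the bookkeeping of which $\PP_F$-grading $X$ belongs to, and verifying it is not simultaneously "used" in both factors, to be the one genuinely delicate point; everything else is unique factorization in a polynomial ring.
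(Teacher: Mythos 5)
Your proposal does not match the paper's proof, and I believe it has a genuine gap at exactly the place you flag as ``delicate.'' The paper's argument is an induction on $\deg_\vr(f)$: it writes $f = \vi(X)Y\bn - \vi(X')Y'\bn'$ by selecting a \emph{different} pair $Y, Y' \in \PP_{F'}$ still present in $f$ as $\vr$-variables, examines how $Y, Y'$ distribute between the two factors $\bm_1 - \bm_1'$ and $\bm_2 - \bm_2'$ of the given $\fb$-decomposition, then \emph{descends} by replacing $Y, Y'$ with $\vi(Y), \vi(Y')$ to get $\barf$ of strictly lower $\vr$-degree, and applies the inductive hypothesis. This direction preserves the $\fb$-decomposition (taking further partial descendants of each factor stays in $\ker \vi_{[k]}$) and preserves the hypothesis about $X$, which is why it works.

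Your proposal goes the opposite way: you want to \emph{lift} the $\fb$-decomposition $f = \bm_1\bm_2 - \bm_1'\bm_2'$ to a factorization of the parent $g = X\bm - X'\bm'$ and then invoke multi-homogeneity of $g$. That lifting step is circular. To re-assemble $\vi(X) = x_\uu x_\uv$ into the $\vr$-variable $X$ inside one of the two factors, you must already know that $x_\uu x_\uv$ lands entirely in $\bm_1$ or entirely in $\bm_2$ --- which is precisely what the lemma asserts. If the split scenario ($x_\uu \mid \bm_1$, $x_\uv \mid \bm_2$) actually occurred, no lift would exist, so ``pass to the root parent'' cannot be used to rule the split out. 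Your cited tool, Lemma~\ref{auto homo}, also does not apply: it is stated only for $f$ itself a root binomial, whereas here $f$ is by hypothesis a proper descendant of $X\bm - X'\bm'$. The ``tracking back'' paragraph that precedes the lifting idea is likewise an appeal to the conclusion rather than a proof of it --- there is no a priori reason the $x_\uu$ appearing in $\bm_1$ must have ``come from'' de-homogenizing $X$; it could have been a free-standing $\vp$-factor of $\bm$ from the start. The inductive, downward argument in the paper is what sidesteps all of this.
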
 
\begin{proof}
We apply induction on $\deg_\vr(\bm -\bm' )$.

By  applying Lemma \ref{rho=1} (for $\deg_\vr=1$), 
Lemmas \ref{rho=2}, 
 \ref{rho=2 wp-reducible}, and  \ref{rho=2 fb reducible}
(for $\deg_\vr=2$), we see that the lemma
holds when $\deg_\vr(\bm -\bm' ) \le 2$.

Assume that $\deg_\vr(\bm -\bm' )=e$
and the lemma holds when $\deg_\vr =e-1$.
Then, we can write $f$ as
$$f= \vi(X) Y \bn - \vi(X') Y' \bn'$$ such that
$Y$ and $Y'$ are homogeneous coordinates of $\PP_{F'}$ for some
$\bF' \in \sF$.
In the decomposition $f = \bm_1 \bm_2 - \bm_1' \bm_2'$,
we have
\begin{enumerate}
\item either $Y$ and $Y'$ appear in exactly one of $\bm_1  - \bm_1' $
and  $\bm_2 - \bm_2'$, w.l.o.g., say, in $\bm_1  - \bm_1' $,
\item or else, w.l.o.g., say, $Y$ appears in $\bm_1$ and $Y'$ appears in $\bm_2'$.
\end{enumerate}
In the former case, we write $\bm_1 = Y \bn_1$ and $\bm_1'=Y'\bn_1'$,
Then, we have
$$f =(Y \bn_1) \bm_2 - (Y' \bn_1') \bm_2'.$$
Note that $\vi(X) \mid \bn_1 \bm_2$ and $\vi(X') \mid \bn_1' \bm_2'.$
Consider $$\barf=(\vi(Y) \bn_1) \bm_2 - (\vi(Y') \bn_1') \bm_2'.$$ 
Because $\deg_\vr \barf =e-1$, by induction, we have
$\vi(X) \mid \bn_1 \mid \bm_1$ or $\vi(X) \mid  \bm_2$. Likewise,
$\vi(X') \mid \bn_1' \mid \bm_1$ or $\vi(X') \mid  \bm_2'$. This implies
the statement.

In the latter case, we write $\bm_1 = Y \bn_1$ and $\bm_2'=Y'\bn_2'$,
Then, we have
$$f =(Y \bn_1) \bm_2 - \bm_1' (Y' \bn_2') $$
Note that $\vi(X) \mid \bn_1 \bm_2$ and $\vi(X') \mid \bm_1' \bn_2'.$
Consider $$\barf=(\vi(Y) \bn_1) \bm_2 -  \bm_1' (\vi(Y')\bn_2').$$ 
Because $\deg_\vr \barf =e-1$, by induction, we have
$\vi(X) \mid \bn_1 \mid \bm_1$ or $\vi(X) \mid  \bm_2$. Likewise,
$\vi(X') \mid \bm_1' $ or $\vi(X') \mid \bn_2' \mid  \bm_2'$. This implies
the statement in this case.

All in all, the lemma is proved.
\end{proof}

\begin{lemma}\label{linear and free} 
Consider any binomial $f \in \ker^\mh \vi_{[k]}$.
Assume $f$ is either  not $\vr$-linear or not $\vi$-square-free.
Then $f$ is $\fb$-reducible.
\end{lemma}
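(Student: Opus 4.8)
\textbf{Proof plan for Lemma \ref{linear and free}.}

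The plan is to argue by contraposition: assuming $f$ is $\fb$-irreducible, I will show $f$ must be both $\vr$-linear and $\vi$-square-free. Since the statement is about a single binomial $f = \bm - \bm' \in \ker^\mh \vi_{[k]}$, I may as well pass first to its root parent (or to a root polynomial dominating $f$), because $\vr$-linearity and $\vi$-square-freeness of a descendant are governed by the same data ($\vee$-sets of the underlying $\vr$-variables), and $\fb$-reducibility passes well along the parent/descendant relation by Lemma \ref{auto homo}. So I would reduce to the case where $f$ is an $\fb$-irreducible root binomial; in particular $\bm$ and $\bm'$ have no common factor, and by Corollary \ref{wp-irr implies not abc no iuv} (if $f$ is $\wp$-irreducible) the image $\vi(\bm)$ carries no rank-one variable $x_{abc}$, while if $f$ happens to be $\wp$-reducible I would instead invoke Lemma \ref{rho=2 wp-reducible} in the induction base.

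The core is an induction on $e := \deg_\vr(f)$. The base cases $e = 1$ and $e = 2$ are already done: $e=1$ is Lemma \ref{rho=1} (then $f$ is a multiple of a $\wp$-binomial, hence has a common factor unless it is literally a $\wp$-binomial, which is $\vr$-linear and $\vi$-square-free), and for $e = 2$, Lemma \ref{rho=2} (the $\wp$-irreducible case) and Lemma \ref{rho=2 wp-reducible} (the $\wp$-reducible case) show that an $\fb$-irreducible such $f$ is automatically $\vr$-linear and $\vi$-square-free. For the inductive step, write $f = \vi(\cdot)$-free-part times stuff — more precisely, pick a $\vr$-variable $X$ dividing $\bm$ (say it is a homogeneous coordinate of $\PP_F$) and its companion $X'$ dividing $\bm'$ with $X, X'$ coordinates of the \emph{same} $\PP_F$ (which exists by multi-homogeneity). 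Factor $\bm = X \bn$, $\bm' = X' \bn'$, so $f = X\bn - X'\bn'$ with root parent, and form the descendant $\barf = \vi(X)\bn - \vi(X')\bn'$. If $X = X'$ then $f = X(\bn - \bn')$, forcing a common factor unless $\bn = \bn'$, so $X \ne X'$. Now $\barf \in \ker^\mh \vi_{[k]}$ with $\deg_\vr \barf = e - 1$. The key leverage is Lemma \ref{vi X stays}: in any $\fb$-reducible decomposition of a descendant of $X\bm - X'\bm'$, the factors $\vi(X)$ and $\vi(X')$ stay inside single blocks — so an $\fb$-reducible decomposition of $f$ would induce one of $\barf$, and conversely $\fb$-irreducibility of $f$ propagates to an appropriate statement about $\barf$. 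Running the induction hypothesis on $\barf$ gives that $\barf$ is $\vr$-linear and $\vi$-square-free; I then have to lift this back to $f$, i.e. check that reinstating $X$ in place of $\vi(X)$ (and $X'$ in place of $\vi(X')$) does not create a squared $\vp$-variable in $\vi(\bm)$ nor a repeated $\vr$-variable in $\bm$. Repeated $\vr$-variable would mean $X$ already divides $\bn$, i.e. $X^2 \mid \bm$; but then $\vi(X)^2 \mid \vi(\bm)$, contradicting $\vi$-square-freeness of $\barf$ — wait, $\barf$ has $\vi(X)\bn$ where $\vi(X) \mid \vi(\bm)$ only linearly unless... so this needs the precise bookkeeping: $\vi(X)$ in $\barf$ versus $X$ in $f$, and the $\vee$-set identities $\uu_i \vee \uv_i = \uu_j \vee \uv_j$ forcing the combinatorial impossibility of two equal sub-indices, exactly as in the two-variable analysis of Lemmas \ref{rho=2} and \ref{rho=2 wp-reducible}.

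The main obstacle I anticipate is precisely this lifting step and the branching in the inductive argument: after forming $\barf$, whether or not $\barf$ is itself $\fb$-irreducible is not automatic — a descendant of an $\fb$-irreducible binomial can become $\fb$-reducible (that is the whole point of the partial-descendant machinery, cf. \eqref{non-hom decom}). So the correct inductive statement is probably not "$\fb$-irreducible $\Rightarrow$ $\vr$-linear and $\vi$-square-free" for $\barf$, but rather a statement quantified over all $\fb$-reducible decompositions, using Lemma \ref{vi X stays} to control where $\vi(X), \vi(X')$ land, and Lemma \ref{auto homo} to keep the pieces multi-homogeneous when $f$ is a root binomial. I expect the cleanest route is: suppose for contradiction that $f$ is $\fb$-irreducible but fails $\vr$-linearity or $\vi$-square-freeness; localize the failure to a pair of $\vr$-variables in a common $\PP_F$ (resp. a squared $\vp$-variable in $\vi(\bm)$); peel off a carefully chosen $\wp$-binomial or a two-$\vr$-variable sub-binomial from Lemma \ref{rho=2}/\ref{rho=2 wp-reducible} as $\bm_1 - \bm_1'$; and verify the complementary factor $\bm_2 - \bm_2'$ still lies in $\ker\vi_{[k]}$ — producing an explicit $\fb$-reducible decomposition, the desired contradiction. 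Getting the "carefully chosen sub-binomial" to have its complement land back in $\ker \vi_{[k]}$ is the delicate part, and it will rely on the $\vee$-set combinatorics (each $\vr$-variable of $\PP_F$ determines $\uu \vee \uv$) together with the rank constraint $\vi(\bm)$ square-free $\Leftrightarrow$ no $x_{abc}$ and no $\wp$-collision, i.e. Corollary \ref{wp-irr implies not abc no iuv}.
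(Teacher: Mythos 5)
Your proposal has the right ingredients—induction on $\deg_\vr$, the base cases from Lemmas \ref{rho=1}, \ref{rho=2}, \ref{rho=2 wp-reducible}, \ref{rho=2 fb reducible}, and the crucial role of Lemma \ref{vi X stays}—but it runs the induction in the wrong direction, and the obstacle you yourself flag is exactly the gap. You set up contraposition (assume $f$ $\fb$-irreducible, prove it is $\vr$-linear and $\vi$-square-free) and then want to apply the inductive hypothesis to the descendant $\barf$. That requires knowing $\barf$ is $\fb$-irreducible, which, as you note, does not follow from $\fb$-irreducibility of $f$: a descendant of an $\fb$-irreducible binomial can be $\fb$-reducible. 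You notice the hole but your proposed repair (localize the failure, peel off a $\wp$- or two-$\vr$-variable sub-binomial from $f$ directly, and check the complementary factor lands in $\ker\vi_{[k]}$) is a different and only loosely sketched argument, not the one that closes the gap.

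The paper proves the statement in the \emph{direct} form, and the asymmetry matters. Assume $f$ fails $\vr$-linearity or $\vi$-square-freeness, witnessed by a pair $X, Y$ of variables in one term. Because $\deg_\vr f > 2$, one can pick a $\vr$-variable $Z$ in that same term and its $\PP_{F'}$-companion $Z'$ in the other term \emph{distinct from $X$ and $Y$}, and form $\barf = XY\,\vi(Z)\,\bn - \vi(Z')\,\bn'$. The hypothesis then passes trivially to $\barf$—it still contains $X, Y$, and in the square-free case the pullback $\vi$-image of a term is literally unchanged by the descent. Now the induction hypothesis hands you a $\fb$-reducible decomposition of $\barf$, which is a strong \emph{existential} conclusion you can lift: Lemma \ref{vi X stays} pins down which block carries $\vi(Z)$ and which carries $\vi(Z')$, and in each of the two cases you simply reinstate $Z$ for $\vi(Z)$ and $Z'$ for $\vi(Z')$ to exhibit a $\fb$-reducible decomposition of $f$. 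In contrast, in your contrapositive setup the conclusion you want to lift from $\barf$ (being linear and square-free) is a \emph{universal} non-existence statement, and it does not transfer to $f$ without extra care, because reinstating $Z$ could in principle create a $\vr$-collision in $\bn$ that $\barf$ did not see. (Your "lift back" worry about creating a squared $\vp$-variable in $\vi(\bm)$ is actually a non-issue, since $\vi(\bm)$ is identical for $f$ and $\barf$; the genuine worry is the $\vr$-linearity transfer, and handling it in the contrapositive requires either choosing $Z$ with foresight you do not yet have, or quantifying over all descents.) Switching to the direct formulation removes both difficulties at once: you get to choose $Z$ after seeing the failure, and you get to lift a decomposition rather than a non-decomposition.
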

\begin{proof}
We apply induction on $\deg_\vr(\bm -\bm' )$.

Let $f \in \ker^\mh \vi_{[k]}$ be a binomial.
Assume $f$ is either not $\vr$-linear or not $\vi$-square-free.

By  applying Lemmas \ref{rho=1}, \ref{rho=2}, 
 \ref{rho=2 wp-reducible}, and  \ref{rho=2 fb reducible},
we see that the lemma
holds when $\deg_\vr(\bm -\bm' ) \le 2$.

Suppose now $\deg_\vr f  =e > 2$.
We assume that the lemma holds when $\deg_\vr   =e-1$.

We can express
$f$ as
$$f= XY \bm - \hbox{(the other term)}$$ such that
either $X$ and $Y$ are homogeneous coordinates of $\PP_F$
for  some $\bF \in \sF$  when $f$ is not $\vr$-linear,
or else, $X$ and $Y$ are two variables  such that
$\vi(XY)$ is not square-free when $f$ is not $\vi$-square-free.
Because $\deg_\vr f  > 2$, there are $Z$ and $Z'$
such that $Z$ and $Z'$ are homogeneous coordinates of $\PP_{F'}$ 
for  some $\bF' \in \sF$ (which may be equal to the $\bF$ above), and we can write 
$$f= XY Z\bn - Z' \bn'$$
for some $\bn$ and $\bn'$.
Then, we consider
$$\barf= XY \vi(Z) \bn -  \vi(Z') \bn' .$$
Clearly, $\barf$ is either  
not $\vr$-linear, or else, not $\vi$-square-free. 
Because $\deg_\vr \barf  =e  -1$, by induction, we
have a $\fb$-reducible decomposition
$$\barf=  (X \bn_1) (Y\bn_2) - (\bn_1') (\bn_2')  $$
such that $X \bn_1 -\bn_1'$ and $Y\bn_2 -  \bn_2'$, 
 are (partial) descendant of a multi-homogenous binomial, 
and belong to $\ker\vi_{[k]}$.
By Lemma \ref{vi X stays}, applied to the $\fb$-reducible decomposition 
$\barf = (X \bn_1) (Y\bn_2) - (\bn_1') (\bn_2') $,
we obtain that $\vi(Z)$ and $\vi(Z')$ appears in exactly one of
$ X \bn_1 -\bn_1'$ and  $Y\bn_2 - \bn_1'$, w.l.o.g., say. in 
$ X \bn_1 -\bn_1'$, or else, w.l.o.g., say, 
$\vi (Z)$ appears in $ X \bn_1$ and $\vi(Z')$ appears in $\bn_2'$.

In the former case, we write  $\bn_1 = \vi(Z) \bn_{11}$ and
$\bn_1'= \vi(Z') \bn_{11}'$. Then, we have
$$\barf = (X \vi(Z) \bn_{11}) (Y\bn_2) - (\vi(Z')\bn_{11}') (\bn_2') .$$
Hence, 
$$f = (X Z \bn_{11}) (Y\bn_2) - ( Z'\bn_{11}') (\bn_2') $$
which implies the desired statement.

In the latter case, we write  $\bn_1 = \vi(Z) \bn_{11}$ and
$\bn_2'= \vi(Z') \bn_{21}'$. Then, we have
$$\barf = (X \vi(Z) \bn_{11}) (Y\bn_2) - (\bn_1') (\vi(Z')\bn_{21}') .$$
Hence, 
$$f = (X Z \bn_{11}) (Y\bn_2) - (\bn_1') (Z'\bn_{21}') .$$
which also implies the desired statement.

All in all, the lemma is proved.
\end{proof}

\begin{cor}\label{cor:linear and free}
Suppose a binomial $f \in \ker^\mh \vi_{[k]}$  is $\fb$-irreducible.
Then it is both $\vr$-linear and $\vi$-square-free.
\end{cor}

\begin{rem}
We say $\bm-\bm \in \ker^\mh \vi_{[k]}$ is homogeneously
$\vi$-square-free if for any element $\uu \in \II_{3,n}$, including
$\uu=(123)$, $\uu$ as a sub-index of a $\vp$-variable
$x_\uu$ or a $\vr$-variable $x_{(\uu, \uv)}$, can only appear in
$\bm$, and respectively in $\bm'$, at most once.
For instance, $x_{(123, 1bc)}x_{(123, 3uv)} \mid \bm$
or $\mid \bm'$ is not
homogeneous $\vi$-square-free.

Corollary \ref{cor:linear and free} admits 
the following stronger version:
Suppose a binomial $f \in \ker^\mh \vi_{[k]}$  is $\fb$-irreducible.
Then it is both $\vr$-linear and 
homogeneously $\vi$-square-free.
By  applying Lemmas \ref{rho=1}, \ref{rho=2}, 
 \ref{rho=2 wp-reducible}, and  \ref{rho=2 fb reducible},
one sees that the above holds when $\deg_\ve \le 2$.
Then the remaining proof of Lemma \ref{linear and free},
carries over verbatim in this case.

But we do not need the above in this article.
\end{rem}

\begin{rem}\label{vr-linear} 
Let $f=\bm-\bm' \in \ker^\mh \vi_{[k]}$. 
Assume $f$ is  $\wp$-irreducible but not $\vr$-linear.
Then $f$ is not square-free in $\vr$-variables.
Consequently, being $\wp$-irreducible and 
square-free implies $\vr$-linear.

By Lemma \ref{abc},  since $f$ is  $\wp$-irreducible
there does not exists $3<a<b<c$ 
such that $x_{(123, abc)} \mid \bm\bm'$. 
Hence, for any fixed $\bF \in \sF$, at most three homogeneous coordinates of $\PP_F$ can appear in the monomials $\bm, \bm'$. 
Suppose $f$ is not linear in the homogeneous coordinates of $\PP_F$ for some $\bF \in \sF$. Then,
having (counting multiplicities) at least two  homogeneous coordinates of $\PP_F$ in  $\bm$ and at least two in $\bm'$,
we see that $f$ is not square-free in the $\vr$-variables of $\PP_F$.
\end{rem}

\subsection{$\ker^\mh \vi$:
$\wp$- and $\frb$-binomials,  and their properties} $\ $

In this subsection, we introduce $\frb$-binomials:
it will be shown in Lemma \ref{equas-for-sVk}
 that $\ker^\mh \vi$ can be reduced to
 $\frb$-binomials, modulo $\cB^\wp$.

\begin{cor}\label{reduce to rb}
Let $f=\bm-\bm'$ be any  root binomial in $\ker^\mh \vi_{[k]}$.
Then it is generated by $\fb$-irreducible  root binomials 
in $\ker^\mh \vi_{[k]}$.

Moreover, if $f$ is $\wp$-irreducible, then it is generated by 
$\fb$-irreducible  root binomials that are also $\wp$-irreducible.
\end{cor}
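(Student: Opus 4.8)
The plan is to prove Corollary \ref{reduce to rb} by induction on $\deg_\vr(f)$, reducing each root binomial $f = \bm - \bm'$ to a combination of $\fb$-irreducible root binomials that live in $\ker^\mh \vi_{[k]}$. The base cases $\deg_\vr(f) = 1$ and $\deg_\vr(f) = 2$ are already handled: for $\deg_\vr = 1$, Lemma \ref{rho=1} says $f$ is a multiple of a $\wp$-binomial (hence of the $\fb$-irreducible type, and $\wp$-binomials are themselves $\fb$-irreducible since they carry no root-parent and no nontrivial $\fb$-decomposition — this I would note explicitly); for $\deg_\vr = 2$, Lemmas \ref{rho=2}, \ref{rho=2 wp-reducible}, and \ref{rho=2 fb reducible} exhaust the cases, and in each case $f$ is either itself $\fb$-irreducible or it is $\fb$-reducible into $\wp$-binomials. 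So the substance is the inductive step.

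For the inductive step, take $f = \bm - \bm'$ a root binomial with $\deg_\vr(f) = e > 2$. First, by Lemma \ref{auto homo}, if $f$ admits \emph{any} $\fb$-reducible decomposition then (since $f$ is a root binomial) that decomposition has the form $f = \bm_1\bm_2 - \bm_1'\bm_2'$ with both $\bm_i - \bm_i' \in \ker^\mh \vi_{[k]}$, and each factor has strictly smaller $\deg_\vr$. Applying the induction hypothesis to each factor (after replacing them by their root parents, which exist and are uniquely determined, and which still lie in $\ker^\mh\vi_{[k]}$) expresses each $\bm_i - \bm_i'$ as a combination of $\fb$-irreducible root binomials; multiplying through by the complementary monomial then expresses $f$ in terms of these. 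Here I would be careful that the product identity $\bm_1\bm_2 - \bm_1'\bm_2' = \bm_2(\bm_1 - \bm_1') + \bm_1'(\bm_2 - \bm_2')$ does the bookkeeping, so "generated by" is literally an ideal-membership statement. If, on the other hand, $f$ admits no $\fb$-reducible decomposition at all, then $f$ is itself $\fb$-irreducible and there is nothing to prove.

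For the "moreover" clause, I would track $\wp$-irreducibility through the same induction. The key point is Corollary \ref{wp-irr implies not abc no iuv}: a $\wp$-irreducible no-common-factor binomial has $\vi$-image not divisible by any rank-one variable $x_{abc}$, and cannot pair a $\vr$-variable $x_{(123,iuv)}$ against $x_{iuv}$. I would argue that if $f$ is $\wp$-irreducible and $f = \bm_1\bm_2 - \bm_1'\bm_2'$ is an $\fb$-decomposition, then each factor $\bm_i - \bm_i'$ (and each of its root parents) inherits $\wp$-irreducibility: a $\wp$-reduction of a factor would, after multiplying back by the complementary monomial, produce a $\wp$-reduction of $f$ modulo $\cB^\wp_{[k]}$ — and one checks via \eqref{fully recover} that applying a $\wp$-binomial inside a sub-factor is the same as applying it inside $f$, so $\wp$-irreducibility of $f$ forces $\wp$-irreducibility of each $\bm_i - \bm_i'$. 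Then the induction hypothesis gives the stronger conclusion for each factor.

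The main obstacle I anticipate is the bookkeeping around root parents and partial descendants: when I split $f = \bm_1\bm_2 - \bm_1'\bm_2'$ the factors $\bm_i - \bm_i'$ are guaranteed multi-homogeneous by Lemma \ref{auto homo}, but they need not be \emph{root} binomials, so I must pass to their root parents, verify these still lie in $\ker^\mh\vi_{[k]}$ (they do, since a descendant lies in the kernel iff its parent does), verify the root parent of a $\wp$-irreducible binomial is again $\wp$-irreducible, and then descend the expression obtained by induction back down to $\bm_i - \bm_i'$ using $\wp$-binomial relations. Making this descent compatible with the "modulo $\cB^\wp_{[k]}$" claim — so that the $\wp$-binomials introduced during descent are absorbed into the allowed generators — is the delicate part; everything else is a routine induction on $\deg_\vr$.
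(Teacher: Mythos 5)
Your skeleton --- base cases via the $\deg_\vr \le 2$ lemmas, $\fb$-decomposition via Lemma \ref{auto homo}, the telescoping identity, and the observation that a $\wp$-reduction pattern in a factor persists in $f$ --- is precisely the paper's. The gap is in your detour through root parents. You note that the factors $\bm_i-\bm_i'$ "need not be root binomials" and propose to replace them by their root parents, apply the induction hypothesis, and "descend the expression back down using $\wp$-binomial relations." Neither half of this plan can work as stated. Passing from $\bm_i-\bm_i'$ to its root parent strictly \emph{increases} $\deg_\vr$ (each $\vi(X)$ you undo adds one), so an induction running on $\deg_\vr$ gives you no leverage on the root parent, which could well have $\deg_\vr$ at least as large as $f$'s. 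And the descent from the root parent back down to $\bm_i-\bm_i'$ changes the multi-degree in $\PP_F$, whereas every element of $\cB^\wp_{[k]}$ is multi-homogeneous and hence degree-preserving; no amount of reduction modulo $\cB^\wp_{[k]}$ moves you between a parent and its descendant.

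The detour is in fact unnecessary, and this is what the paper uses without comment: for a root binomial $f$, the factors of any $\fb$-decomposition are automatically root binomials. Indeed, suppose $\bm_1-\bm_1'$ were a proper full descendant of a multi-homogeneous $X\bn_1 - X'\bn_1'$ with $X, X'$ homogeneous coordinates of $\PP_F$ and $\bm_1 = \vi(X)\bn_1$, $\bm_1' = \vi(X')\bn_1'$. Then $f = \vi(X)(\bn_1\bm_2) - \vi(X')(\bn_1'\bm_2')$ would be a proper descendant of $X(\bn_1\bm_2) - X'(\bn_1'\bm_2')$, which is multi-homogeneous because $X\bn_1-X'\bn_1'$ and $\bm_2-\bm_2'$ both are; this contradicts $f$ being a root binomial. (A partial-descendance scenario is already excluded, since Lemma \ref{auto homo} makes the factors multi-homogeneous while a genuine partial descendant is not.) Granting this, repeated application of Lemma \ref{auto homo} gives a full decomposition $\bm-\bm' = \prod_i\bm_i - \prod_i\bm_i'$ with all factors $\fb$-irreducible root binomials, and then your telescoping identity and the $\wp$-transfer observation close the argument exactly as the paper does. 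Incidentally, your invocation of Corollary \ref{wp-irr implies not abc no iuv} in the "moreover" step is superfluous; the divisibility observation $\bm_i \mid \bm$, $\bm_i' \mid \bm'$ that you also state is all that the paper uses.
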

\begin{proof} 
By  applying Lemmas \ref{rho=1}, \ref{rho=2}, 
 \ref{rho=2 wp-reducible}, and  \ref{rho=2 fb reducible},
we see that the corollary
holds when $\deg_\vr(\bm -\bm' ) \le 2$.

Take any binomial
$(\bm -\bm') \in \ker^\mh \vi_{[k]}$
 with $\deg_\vr (\bm -\bm') > 2$. 
By applying Lemma \ref{auto homo}, we can express
$$\bm -\bm' =\prod_{i=1}^\ell \bm_i -  \prod_{i=1}^\ell \bm_i',$$ 
such that $\bm_i -\bm'_i \in \ker^\mh \vi_{[k]}$ and
are $\fb$-irreducible root binomials,   for all $i \in [\ell]$.
Then, we have
$$\bm -\bm'=\prod_{i=1}^{\ell} \bm_i - \bm_\ell'  \prod_{i=1}^{\ell-1} \bm_i
+ \bm_\ell'  \prod_{i=1}^{\ell-1} \bm_i -\prod_{i=1}^{\ell} \bm_i',
$$ 
$$=(\bm_\ell - \bm_\ell')  \prod_{i=1}^{\ell-1} \bm_i
+ \bm_\ell'  (\prod_{i=1}^{\ell-1} \bm_i -\prod_{i=1}^{\ell-1} \bm_i').$$ 
Thus, by a simple induction on the integer $\ell$, 
we conclude that $f$ is generated by 
$\bm_i -\bm_i' \in \ker^\mh \vi_{[k]}$, $i \in [\ell]$, 
which are $\fb$-irreducible.

Moreover, 
since $\bm -\bm' =\prod_{i=1}^\ell \bm_i - 
 \prod_{i=1}^\ell \bm_i'$,
it follows that if any $\bm_i -\bm_i'$ is $\wp$-reducible, so is $\bm-\bm'$.
That is, if $\bm -\bm'$ is $\wp$-irreducible, 
so are all $\bm_i -\bm_i'$, $i \in [\ell]$.

This establish the corollary.
\end{proof}

\begin{defn} \label{defn:frb}
A $\fb$-irreducible root binomial $\bm-\bm'$ that is also $\wp$-irreducible
is called an $\frb$-binomial. We let 
 $\cB^\frb_{[k]}$ be the set of all $\frb$-binomials of $\ker^\mh \vi_{[k]}$.  
\end{defn}

We summarize the properties of 
$\frb$-binomials below.

\begin{prop}\label{strong sq free}
Let $f=\bm-\bm' \in \ker^\mh \vi$
be an $\frb$-binomial, then we have 
\begin{enumerate}
\item  $f$ is $\vr$-linear and $\vi$-square-free. 
\item $x_{abc} \nmid \bm \bm', \;\; x_{(123,abc)} \nmid \bm \bm'$
for all $3<a<b<c$.
\item $x_{(123, iuv)} x_{iuv} \nmid \bm \bm'$
for all $i \in [3]$.
\end{enumerate}
\end{prop}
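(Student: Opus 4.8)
The plan is to prove the three assertions in reverse order of difficulty, using the characterization that an $\frb$-binomial is precisely a $\fb$-irreducible root binomial that is also $\wp$-irreducible, together with the structural lemmas already established. Part (1) is immediate: Corollary \ref{cor:linear and free} states that any $\fb$-irreducible binomial of $\ker^\mh \vi_{[k]}$ is both $\vr$-linear and $\vi$-square-free, and an $\frb$-binomial is in particular $\fb$-irreducible. So nothing more is needed for (1).

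For part (2), I would invoke Corollary \ref{wp-irr implies not abc no iuv}. Since $f$ is $\wp$-irreducible and has no common factor (being $\fb$-irreducible forces the absence of a non-constant common factor), that corollary gives directly that $\vi(\bm)=\vi(\bm')$ is not divisible by any rank-$1$ variable $x_{abc}$, which is exactly the statement $x_{abc}\nmid\bm\bm'$ for all $3<a<b<c$. For the companion claim $x_{(123,abc)}\nmid\bm\bm'$: suppose $x_{(123,abc)}\mid\bm$ (the case $\mid\bm'$ is symmetric). By Lemma \ref{abc}, since $x_{(123,abc)}$ is the unique $\vr$-variable whose $\vi$-image is divisible by $x_{abc}$ and $f$ has no common factor, we would be forced to have $x_{abc}\mid\bm'$, hence $x_{abc}\mid\vi(\bm')=\vi(\bm)$, contradicting the first half of (2) just proved. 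Thus $x_{(123,abc)}\nmid\bm\bm'$.

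For part (3), the argument is again a contradiction fed through $\wp$-irreducibility. Suppose toward a contradiction that $x_{(123,iuv)}x_{iuv}\mid \bm\bm'$ for some $i\in[3]$, $u<v\notin[3]$. Because $f$ has no common factor, $x_{(123,iuv)}$ and $x_{iuv}$ cannot both divide the same monomial (their product would have to be a common factor, since $x_{(123,iuv)}$ appears uniquely in its own $\PP_{F}$ and $x_{iuv}$ is a $\vp$-variable)—more carefully, if $x_{(123,iuv)}x_{iuv}\mid\bm$ then, since $x_{(123,iuv)}$ is a $\vr$-variable of $\PP_{F_{(123,iuv)}}$ whose $\vi$-image $x_{123}x_{iuv}$ is divisible by $x_{iuv}$, this would force $\bm$ not $\vi$-square-free (the index $iuv$ appearing twice), contradicting (1). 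Hence one of them divides $\bm$ and the other divides $\bm'$; without loss of generality $x_{(123,iuv)}\mid\bm$ and $x_{iuv}\mid\bm'$. But this is exactly the configuration that Corollary \ref{wp-irr implies not abc no iuv} rules out for a $\wp$-irreducible binomial with no common factor: it is there stated to be impossible to have $x_{(123,iuv)}\mid\bm$ and $x_{iuv}\mid\bm'$ (nor the reverse), for any rank-$0$ variable $x_{iuv}$. This contradiction completes (3).

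The main obstacle, if any, is purely bookkeeping rather than conceptual: one must be careful in (3) to rule out the possibility that $x_{(123,iuv)}$ and $x_{iuv}$ lie in the same monomial before appealing to Corollary \ref{wp-irr implies not abc no iuv}, and the clean way to do that is to note it would immediately violate the $\vi$-square-freeness from part (1) (the integer index $iuv$ would occur with multiplicity two under $\vi$). Everything else reduces to direct citation of Corollaries \ref{cor:linear and free} and \ref{wp-irr implies not abc no iuv} and Lemma \ref{abc}.
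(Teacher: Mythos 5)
Your proof is correct and follows essentially the same route as the paper: part (1) via Corollary \ref{cor:linear and free}, part (2) from $\wp$-irreducibility (the paper cites Lemma \ref{abc} directly, you cite the derived Corollary \ref{wp-irr implies not abc no iuv}, which bundles Lemmas \ref{abc} and \ref{uu uv}), and part (3) by first eliminating the same-monomial case via $\vi$-square-freeness from (1) and then the cross-monomial case via $\wp$-irreducibility (the paper cites Lemma \ref{uu uv} where you cite the same corollary). The only minor redundancy is your extra Lemma \ref{abc} contradiction for the second half of (2), which is already subsumed by the corollary since $\vi(x_{(123,abc)})=x_{abc}$ under the convention $x_{123}=1$.
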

\begin{proof} 
(1) follows from  Corollary \ref{cor:linear and free}
because $f$ is $\fb$-irreducible.
 (2)  follows from Lemma \ref{abc} because $f$ is $\wp$-irreducible.

For (3), we assume $x_{(123, iuv)} x_{iuv} \mid \bm \bm'$ for the contrary.
If $x_{(123, iuv)} x_{iuv} \mid \bm$ or  $\bm'$, then because of (1), it contradicts that $\bm -\bm'$ is $\vi$-square-free.
Thus, by assumption, we can assume
$$x_{(123, iuv)} \mid  \bm, \;\; x_{iuv} \mid \bm'
\;\; \hbox{or} \;\; x_{(123, iuv)} \mid  \bm', \;\; x_{iuv} \mid \bm.
$$
Then by Lemma \ref{uu uv}, $f$ is $\wp$-reducible,
a contradiction.  
\end{proof}


Examples of $\frb$-binomials  of $\deg_vr =2$ can be found in
Example \ref{exam:frb deg =2}.   
Example of $\frb$-binomials  with $\deg_vr >2$ are given below.

\begin{example}\label{exam:frb deg >3} 
Examples of $\frb$-binomials  of
 $\deg_\vr \ge 3$ belonging to $R_\vr$.

Fix $a,b,c \in [k]$, all being distinct:
\begin{eqnarray}
x_{(12a,13b)}x_{(13a,12c)}x_{(12b,13c)} \; \nonumber \\
-x_{(13a,12b)}x_{(12a,13c)}x_{(13b,12c)}. \label{rk0-0} 
\end{eqnarray}
\begin{eqnarray}
x_{(12a,13b)}x_{(13a,12c)}x_{(12b,23c)} x_{(23b,13c)} \; \nonumber \\
-x_{(13a,12b)}x_{(12a,13c)}x_{(23b,12c)} x_{(13b,23c)}. \label{rk0-0-4}
\end{eqnarray}

Fix $a,b,c, \bar a, \bar b, \bar c \in [k]$, all being distinct, satisfying $a, \bar a < b<c$ and $a, \bar a < \bar b<\bar c$:
\begin{eqnarray}
\;\; \;x_{(12a,3bc)}x_{(13a,2\bar b \bar c)}x_{(13 \bar a,2bc)} x_{(12 \bar a,3 \bar b \bar c)} \; \nonumber \\
-x_{(13a,2bc)}x_{(12a,3\bar b \bar c)}x_{(12 \bar a,3bc)} x_{(13 \bar a,2 \bar b \bar c)}.  \label{rk1-1}
\end{eqnarray}
Fix $a,b,c, a', \bar a, \bar b, \bar c \in [k]$, all being distinct, satisfying $a, \bar a < b<c$ and $a', \bar a < \bar b<\bar c$:
\begin{eqnarray}
\;\; \; x_{(12a,13a')}x_{(13a,2bc)}x_{(12a',3\bar b \bar c)}x_{(12 \bar a,3bc)} x_{(13 \bar a,2 \bar b \bar c)} 
\; \nonumber \\
- x_{(13a,12a')} x_{(12a,3bc)}x_{(13a',2\bar b \bar c)}x_{(13 \bar a,2bc)} x_{(12 \bar a, 3 \bar b \bar c)}.\label{rk0-1}
\end{eqnarray} 
These binomials are arranged so that one sees visibly the matching for multi-homogeneity.
\end{example}

\begin{example}\label{exam:frb deg >3 not vr} 
Examples of $\frb$-binomials of
 $\deg_\vr \ge 3$ but not belonging to $R_\vr$.

Fix $a,b,c, \bar a, \bar b, \bar c \in [k]$, all being distinct, satisfying $\bar a <a< b<c$ and $\bar a ,b< \bar b<\bar c$:
\begin{eqnarray}
\;\; \;x_{12b} x_{3ac}x_{(13b,2\bar b \bar c)} x_{(12 \bar a,3 \bar b \bar c)} x_{(13 \bar a,2ac)}  \; \nonumber \\
-x_{13b} x_{2ac} x_{(12b,3\bar b \bar c)}  x_{(13 \bar a,2 \bar b \bar c)}x_{(12 \bar a,3ac)} .  \label{rk1-1}
\end{eqnarray}
Here, the only obstruction for the above binomial to admit
 a root-parent in $R_\vr$  is the inequality $a<b$.

Fix $a,b,c, a', \bar a, \bar b, \bar c \in [k]$, all being distinct, satisfying $\bar a <a < b<c$ and $a', \bar a< \bar b<\bar c$:
\begin{eqnarray}
 x_{13b}  x_{2ac} x_{(12b, 13a')} x_{(12a', 3\bar b \bar c)} x_{(13 \bar a, 2 \bar b \bar c)} x_{(12 \bar a,3ac)}   \; \nonumber \\
- x_{12b} x_{3ac}  x_{(13b, 12a')} x_{(13a', 2\bar b \bar c)} x_{(12 \bar a, 3 \bar b \bar c)} x_{(13 \bar a,2ac)} .\label{rk0-1}
\end{eqnarray} 
Here, the only obstruction for the above binomial to admit
 a root-parent in $R_\vr$  is the inequality $a<b$.
\end{example}

\subsection{The non-toroidal part of the defining ideal   
of $\sV$ in $\sR$: $\ker^\mh \vi_\Gr$ } $\;$

 We now investigate the  defining ideal of $\sV$ in $\sR$ in this subsection.

Corresponding to the embedding $\sV_{[k]} \subset \sR_{[k]}$ of \eqref{embed sV}, 
 we have the homomorphism
   \begin{equation}\label{bar-vi-k-gr}
\vi_{[k],\Gr}: \; R_{[k]}  \lra R_0/\bar I_\wp, \;\;\;
\vi_{[k],\Gr}|_{R_0}=\Id_{R_0},
\end{equation} 
$$  x_{(\uu_s,\uv_s)} \to x_{\uu_s} x_{\uv_s} +\bar I_\wp$$
for all  $s \in S_{F_i}, i \in [k]$, where $\bar I_\wp$ is the de-homogenization of the $\pl$ ideal $I_\wp$
with respect to the chart $(p_{123} \equiv 1)$, i,e., $\bar I_\wp$ is the ideal of $R_0$ generated by the relations in $\sF$.

  \begin{lemma}\label{defined-by-ker-Gr} 
The scheme $\sV_{[k]}$, as a closed subscheme of 
 $\sR_{[k]}= \bU \times  \prod_{i \in [k]} \PP_{F_i}$,
 is defined by $\ker^\mh \vi_{[k], \Gr}$.
\end{lemma}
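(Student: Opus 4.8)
The plan is to show that $\sV_{[k]}$, defined as the closure of the graph of the rational map $\Theta_{[k],\Gr}$, is cut out inside $\sR_{[k]}$ precisely by the multi-homogeneous elements of $\ker \vi_{[k],\Gr}$. First I would set up the bihomogeneous (more precisely, multi-homogeneous in the blocks of $\vr$-variables indexed by $i\in[k]$) coordinate language: the scheme $\sR_{[k]}=\bU\times\prod_{i\in[k]}\PP_{F_i}$ has (de-homogenized) Cox-type ring $R_{[k]}$, and a closed subscheme is determined by a multi-homogeneous ideal. The homomorphism $\vi_{[k],\Gr}\colon R_{[k]}\to R_0/\bar I_\wp$ sends each $\vr$-variable $x_{(\uu_s,\uv_s)}$ to the class of the monomial $x_{\uu_s}x_{\uv_s}$ and restricts to the identity on $R_0$; its kernel is a graded ideal, and I would denote by $\ker^\mh\vi_{[k],\Gr}$ its multi-homogeneous part. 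The target $R_0/\bar I_\wp$ is the coordinate ring of $\bU_\Gr=\bU\cap\Gr^{3,E}$ by Proposition \ref{primary-generate}.

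The core of the argument is the standard ``graph closure equals vanishing of the saturated ideal of relations'' principle. On the open locus $U^\circ\subset\bU_\Gr$ where all the monomials $x_{\uu_s}x_{\uv_s}$ appearing as $\vr$-coordinates of a given $\PP_{F_i}$ are not simultaneously zero, the map $\Theta_{[k],\Gr}$ is a morphism, and its graph is manifestly the common zero locus of the equations $x_{(\uu_s,\uv_s)}x_{\uu_t}x_{\uv_t}-x_{(\uu_t,\uv_t)}x_{\uu_s}x_{\uv_s}$ together with $\bar I_\wp$; these are exactly the generators of $\ker^\mh\vi_{[k],\Gr}$ restricted there. Since $\sR_{[k]}\to\bU$ is proper (the fibers are products of projective spaces), the closure $\sV_{[k]}$ of this graph is a closed subscheme, and I would argue that the closed subscheme $V(\ker^\mh\vi_{[k],\Gr})$ is reduced-irreducible-enough and agrees with $\sV_{[k]}$ by checking: (i) $\sV_{[k]}\subseteq V(\ker^\mh\vi_{[k],\Gr})$ because every multi-homogeneous relation vanishes on the graph over $U^\circ$ hence on its closure; (ii) $V(\ker^\mh\vi_{[k],\Gr})\subseteq\sV_{[k]}$ by showing the former has no components outside $\sV_{[k]}$, using that $\ker^\mh\vi_{[k],\Gr}$ is the preimage under the multi-homogeneous localization of the ideal defining the graph — equivalently, that $\ker^\mh\vi_{[k],\Gr}$ is already saturated with respect to the relevant irrelevant ideals $\prod_i(\text{all }x_{(\uu_s,\uv_s)},s\in S_{F_i})$. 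I would obtain the saturation statement from the concrete description in Lemma \ref{defined-by-ker} (the toroidal case $\sU_{[k]}=V(\ker^\mh\vi_{[k]})$) combined with the fact that $\sV_{[k]}=\sU_{[k]}\cap(\bU_\Gr\times\prod\PP_{F_i})$ scheme-theoretically, i.e., $\ker^\mh\vi_{[k],\Gr}=(\ker^\mh\vi_{[k]}+\bar I_\wp R_{[k]})^{\mathrm{sat}}$, and then observe that adding the linear-on-$\bU_\Gr$ equations $\bar I_\wp$ does not re-introduce unsaturation because $\bU_\Gr$ is itself an affine space (Proposition \ref{primary-generate}).

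Concretely, the cleanest route is: reduce to the already-proven toroidal statement. By Lemma \ref{defined-by-ker}, $\sU_{[k]}\subset\sR_{[k]}$ is defined by $\ker^\mh\vi_{[k]}$. The scheme $\sV_{[k]}$ is the closure of the graph of $\Theta_{[k]}$ restricted to $\bU_\Gr$; since $\bU_\Gr$ is closed in $\bU$ and $\sU_{[k]}\to\bU$ restricts $\Theta_{[k]}$'s graph closure, $\sV_{[k]}$ is the scheme-theoretic preimage of $\bU_\Gr$ in $\sU_{[k]}$, i.e.\ $\sV_{[k]}=\sU_{[k]}\times_{\bU}\bU_\Gr$, which is defined by $\ker^\mh\vi_{[k]}+\bar I_\wp R_{[k]}$. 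One then checks this ideal is already the full multi-homogeneous kernel: an element $f\in\ker^\mh\vi_{[k],\Gr}$ lifts, after clearing the $\bar I_\wp$-ambiguity using the generators of $\bar I_\wp$ (which by \eqref{equ:localized-uu} are ``$x_\uu=\text{polynomial in basic variables}$'', hence allow eliminating non-basic $\vp$-variables), to an element of $\ker^\mh\vi_{[k]}$ modulo $\bar I_\wp R_{[k]}$; multi-homogeneity is preserved because the primary relations are themselves compatible with the grading. I expect the main obstacle to be exactly this last verification that no saturation is lost — i.e.\ that the naive pullback ideal $\ker^\mh\vi_{[k]}+\bar I_\wp R_{[k]}$ is saturated with respect to the irrelevant ideals of the $\PP_{F_i}$ factors — since a priori intersecting with $\bU_\Gr$ could create embedded components supported on the locus where some $\PP_{F_i}$-coordinates all vanish; handling this will use that $\bU_\Gr$ meets the ``good'' locus $U^\circ$ in a dense open set, so the closure operation is insensitive to the bad locus. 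Granting Lemma \ref{defined-by-ker} and the density of $U^\circ$ in $\bU_\Gr$, the statement follows; the proof is therefore short modulo bookkeeping with the multi-grading.
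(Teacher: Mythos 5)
Your direct argument (the middle paragraph, items (i)--(ii)) is the right one and matches what the paper treats as immediate: a multi-homogeneous $f\in R_{[k]}$ vanishes on $\sV_{[k]}$ if and only if it vanishes on the graph of $\Theta_{[k],\Gr}$ over its dense domain of definition $U^\circ\subset\bU_\Gr$, if and only if $\vi_{[k]}(f)$ vanishes on $U^\circ$, if and only if $\vi_{[k]}(f)\in\bar I_\wp$ (because $\bU_\Gr$ is integral and $U^\circ$ is dense), i.e.\ $f\in\ker^\mh\vi_{[k],\Gr}$. This chain of equivalences shows directly that $\ker^\mh\vi_{[k],\Gr}$ \emph{is} the ideal of $\sV_{[k]}$ --- hence it is automatically saturated (being the ideal of a closed subscheme) and prime (since $R_{[k]}/\ker\vi_{[k],\Gr}$ embeds in the domain $R_0/\bar I_\wp$) --- so $V(\ker^\mh\vi_{[k],\Gr})=\sV_{[k]}$. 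Your step (ii) is not a separate component-chasing argument and needs no appeal to Lemma~\ref{defined-by-ker}; it is just the reverse reading of the same equivalences.

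The third paragraph (your ``cleanest route'') has a genuine gap that you flag but do not close. You assert the scheme-theoretic identity $\sV_{[k]}=\bU_{[k]}\times_{\bU}\bU_\Gr$ and, separately, that $\ker^\mh\vi_{[k],\Gr}$ is only the saturation of $\ker^\mh\vi_{[k]}+\bar I_\wp R_{[k]}$, and then claim the two can be reconciled because $\bU_\Gr$ is an affine space and $U^\circ$ is dense. Neither remark supplies the needed justification: in general the graph closure of a restricted rational map is a \emph{proper} closed subscheme of the fiber product, because the fiber product can acquire extra components over the indeterminacy locus. For the toy map $\AA^2\dashrightarrow\PP^1$, $(x,y)\mapsto[x:y]$, restricted to $Z=\{x=0\}$, the graph closure is $Z\times\{[0:1]\}$ with multi-homogeneous ideal $(x,\xi_0)$, while $\ker^\mh\vi+(x)=(x\xi_1-y\xi_0,\,x)=(x,\,y\xi_0)$ is already saturated and cuts out the \emph{reducible} scheme $\bigl(Z\times\{[0:1]\}\bigr)\cup\bigl(\{(0,0)\}\times\PP^1\bigr)$; note that $Z$ is itself an affine line, so the affine-space observation does not help. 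Since $\bU_\Gr$ does meet the indeterminacy locus of $\Theta_{[k]}$ (for instance at the origin, where all $\pl$ coordinates vanish), the identification $\sV_{[k]}=\bU_{[k]}\times_{\bU}\bU_\Gr$ would require its own argument and should not be taken for granted. Fortunately nothing in the lemma needs it: the direct argument is complete on its own, and the third paragraph should simply be dropped.
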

\begin{proof} This is immediate.
\end{proof}

\subsubsection{Linearized $\pl$ relations}

We need to investigate   
$\ker^\mh \vi_{[k], \Gr}$.

 We let $f \in R_{[k]}$ be any multi-homogenous polynomial.
  Then, by \eqref{bar-vi-k-gr}, $\vi_{[k], \Gr}( f)=0$
 if and only if $\vi_{[k]} (f) \in \bar I_\wp$.
As $\ker^\mh \vi_{[k]}$ has been studied, we can assume
$f \notin \ker^\mh \vi_{[k]}$.
 Thus, we can express $f=\sum_{\bF \in \sF} f_F$ such that 
 $\vi_{[k]} (f_F)$ is a multiple of $\bF$ for all $\bF \in \sF$. It suffices to consider an
 arbitrarily fixed $\bF \in \sF$. Hence, we may assume that $f=f_F$ for some arbitrarily fixed $\bF \in \sF$.
 That is, in such a case,  $\vi_{[k], \Gr} ( f)=0$ if and only if
  $$\hbox{$ \vi_{[k]} (f) =h \bF$  for some $h \in R_{[k]}$.}$$

The definition below introduces a class of obvious but important members of 
$\ker^\mh \vi_{[k], \Gr}$. 

\begin{defn} \label{defn:linear-pl} Given any $\bF \in \sF$, written as 
$\bF =\sum_{s \in S_F} \sgn (s) x_{\uu_s}x_{\uv_s}$, we introduce
$$L_F: \;\; \sum_{s \in S_F} \sgn (s) x_{(\uu_s,\uv_s)} .$$ 
This is called the linearized $\pl$ relation with respect to $\bF$ (or $F$). It is a
canonical linear relation on $\PP_F$.
\end{defn}

We let
\begin{equation}\label{LsF} 
L_\sF=\{L_F \mid \bF \in \sF \} \end{equation}
be the set of all linearized $\pl$ relations.

In more concrete terms, the following are all the  linearized $\pl$ relations
\begin{eqnarray}\nonumber 
L_{(123),1uv}=x_{(123,1uv)}-x_{(12u,13v)} + x_{(13u,12v)}, \label{rk0-1uv}\\ \nonumber
L_{(123),2uv}=x_{(123,2uv)}-x_{(12u,23v)} + x_{(23u,12v)}, \label{rk0-2uv} \\ \nonumber
L_{(123),3uv}=x_{(123,3uv)}-x_{(13u,23v)} + x_{(23u,13v)} ,\label{rk0-3uv}\\ \nonumber
L_{(123),abc}=x_{(123,abc)}-x_{(12a,3bc)} + x_{(13a,2bc)} -x_{(23a,1bc)}. \label{rk1-abc} \nonumber
\end{eqnarray}
  
  Observe here that among all linearized $\pl$ relations, only  
  $L_{F_i}$ with $i \in [k]$ belong to $\bar R_{[k]}$. Clearly, $L_{F_i} \in 
  \ker^\mh \vi_{[k], \Gr}$.

  
  Fix any $\bF \in \sF$.  Let $f \in  R_{[k]}$ be any multi-homogenous polynomial such that 
$\vi_{[k]} (f) =h \bF$  for some $h \in R_0$.
 We then write $\bF=\sum_{s\in S_F} \sgn (s) x_{\uu_s}x_{\uv_s}$. Accordingly, we express
  \begin{equation}\label{initial f}
f=\sum_{s \in S_F} \sgn (s) f_s
\end{equation} 
such that 
 \begin{equation}\label{initial f 2} \hbox{$\vi_{[k]} (f_s) =h x_{\uu_s}x_{\uv_s}$, for all $s \in S_F$.}
\end{equation}

\begin{lemma} \label{f-to-F} 
 Suppose $\vi_{[k]} (f)=h\bF$ for some  $h \in R_{[k]}$ and
$$\hbox{ $x_{\uu_s}x_{\uv_s} \mid f_s$ for some $s \in S_F$.}$$
 Then, $$\hbox{ $f \equiv \bn \bF, \mod (\ker^\mh \vi_{[k]})$, for some $\bn \in R_{[k]}$.}$$
\end{lemma}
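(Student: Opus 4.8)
The plan is to argue by induction on $\deg_\vr(f)$, peeling off one $\vr$-variable at a time from the term $f_s$ that is already divisible by $x_{\uu_s}x_{\uv_s}$. Fix the index $s \in S_F$ with $x_{\uu_s}x_{\uv_s}\mid f_s$, and write $f_s = x_{\uu_s}x_{\uv_s}\, g_s$, so that $\vi_{[k]}(g_s) = h$. The first step is to replace $x_{\uu_s}x_{\uv_s} g_s$ by $x_{(\uu_s,\uv_s)}\,\tilde g_s$ modulo $\ker^\mh\vi_{[k]}$: concretely, I would lift $g_s$ to a multi-homogeneous polynomial $\tilde g_s$ (of the correct multidegrees in each $\PP_{F_i}$, $i\in[k]$) with $\vi_{[k]}(\tilde g_s)=h=\vi_{[k]}(g_s)$, which is possible because $\vi_{[k]}$ is surjective onto $R_0$ and one can match multidegrees by inserting appropriate $\vr$-variables; then $x_{\uu_s}x_{\uv_s} g_s - x_{(\uu_s,\uv_s)}\tilde g_s$ lies in $\ker\vi_{[k]}$, and after adjusting $\tilde g_s$ to the same multidegree as $x_{\uu_s}x_{\uv_s} g_s$ (so the difference is multi-homogeneous) it lies in $\ker^\mh\vi_{[k]}$. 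Thus modulo $\ker^\mh\vi_{[k]}$ we may assume $f_s = x_{(\uu_s,\uv_s)}\,\tilde g_s$.

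The second step propagates this to every other term. For each $t\in S_F\setminus\{s\}$ we have $\vi_{[k]}(f_t) = h\, x_{\uu_t}x_{\uv_t} = \vi_{[k]}(\tilde g_s)\, x_{\uu_t}x_{\uv_t} = \vi_{[k]}\bigl(x_{(\uu_t,\uv_t)}\tilde g_s\bigr)$ (here $x_{(\uu_t,\uv_t)}$ is a homogeneous coordinate of $\PP_F$, distinct from $x_{(\uu_s,\uv_s)}$). Hence $f_t - x_{(\uu_t,\uv_t)}\tilde g_s \in \ker\vi_{[k]}$; after a multidegree adjustment of $\tilde g_s$ inside each $\PP_{F_i}$ it is multi-homogeneous and so lies in $\ker^\mh\vi_{[k]}$. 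Summing over all $t\in S_F$ and using $f_s = x_{(\uu_s,\uv_s)}\tilde g_s$, we get
$$
f = \sum_{t\in S_F}\sgn(t)\, f_t \;\equiv\; \sum_{t\in S_F}\sgn(t)\, x_{(\uu_t,\uv_t)}\,\tilde g_s \;=\; \tilde g_s \cdot L_F \pmod{\ker^\mh\vi_{[k]}},
$$
which is exactly $f \equiv \bn\, \bF$ with $\bn = \tilde g_s$ — wait: one must be careful, the statement writes $\bF$ but means a multiple of $L_F$ up to the identification; more precisely the right-hand side should be $\bn\, L_F$ for $\bn\in R_{[k]}$, and since the lemma's conclusion is stated as $f\equiv \bn\bF$ I read $\bF$ on the right as shorthand for the relation $L_F\in\ker^\mh\vi_{[k],\Gr}$ lifting $\bF$. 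This gives the conclusion with $\bn = \tilde g_s$.

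The main obstacle I anticipate is the bookkeeping in the multidegree-matching: when I lift $g_s$ (with $\vi_{[k]}(g_s)=h$) to a multi-homogeneous $\tilde g_s$, I must ensure $\tilde g_s$ has, in each factor $\PP_{F_i}$, the multidegree that makes $f_s = x_{(\uu_s,\uv_s)}\tilde g_s$ multi-homogeneous \emph{and simultaneously} makes $x_{(\uu_t,\uv_t)}\tilde g_s$ multi-homogeneous of the same multidegree as $f_t$ for every $t$ — these compatibilities hold because $f=\sum\sgn(t)f_t$ is itself multi-homogeneous and each $x_{(\uu_t,\uv_t)}$ has the same $F$-degree ($=1$) while lying in the same $\PP_F$, so the $\PP_F$-multidegrees of the $f_t$ all agree and the $\PP_{F_i}$-multidegrees for $i\neq$ the index of $F$ are forced to agree with those of $h$; I would spell this out once carefully and then the induction on $\deg_\vr$ (reducing to the case $\deg_\vr(g_s)$ smaller, handled by the earlier $\deg_\vr\le 2$ analysis via Lemmas \ref{rho=1}, \ref{rho=2} if a residual non-multi-homogeneous correction term appears) closes the argument. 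The only genuinely delicate point is confirming that every correction term $f_t - x_{(\uu_t,\uv_t)}\tilde g_s$ can be taken multi-homogeneous rather than merely in $\ker\vi_{[k]}$; this follows from the multidegree compatibility just discussed, so no appeal to the partial-descendant machinery is actually needed here.
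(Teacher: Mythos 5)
Your proof does not establish the stated lemma; it establishes (under the stated hypothesis) the conclusion of Lemma~\ref{f-to-LF}, not Lemma~\ref{f-to-F}, and the two are genuinely different.

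The key confusion is the parenthetical near the end, where you decide to ``read $\bF$ on the right as shorthand for $L_F$.'' It is not: $\bF = \sum_t \sgn(t)\, x_{\uu_t}x_{\uv_t}$ lives in $R_0$ and has $\PP_F$-degree $0$, while $L_F = \sum_t\sgn(t)\,x_{(\uu_t,\uv_t)}$ has $\PP_F$-degree $1$. Their difference is in $\ker\vi_{[k]}$ but \emph{not} in $\ker^\mh\vi_{[k]}$ (each summand $x_{(\uu_s,\uv_s)}-x_{\uu_s}x_{\uv_s}$ is non-homogeneous in $\PP_F$). So ``$f\equiv\bn\bF$'' and ``$f\equiv\bn L_F$'' modulo $\ker^\mh\vi_{[k]}$ are distinct assertions, and the paper uses them as two separate lemmas with complementary hypotheses ($x_{\uu_s}x_{\uv_s}\mid f_s$ versus $x_{(\uu_s,\uv_s)}\mid f_s$); see the dichotomy in Lemma~\ref{f-abc}.

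Your first step — replacing $f_s = x_{\uu_s}x_{\uv_s}\,g_s$ by $x_{(\uu_s,\uv_s)}\,\tilde g_s$ modulo $\ker^\mh\vi_{[k]}$ — is also not generally possible. For the difference to be multi-homogeneous, $\tilde g_s$ would have to have $\PP_F$-degree exactly one less than $g_s$. But in the situation this lemma is meant to cover (and in particular in the base case $\deg_\vr f = 0$, where $f = h\bF\in R_0$), the term $f_s$ has $\PP_F$-degree $0$, so no such $\tilde g_s$ exists. You are trading the $\vp$-pair for a $\vr$-variable of $\PP_F$, which raises the $\PP_F$-degree; the lemma's hypothesis instead asks you to \emph{keep} the $\vp$-pair and show every other term $f_t$ shares the common factor $g_s$.

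The paper's induction goes in the opposite direction: it does not touch $\PP_F$ at all. It selects some $G\in\sF$ (possibly $\ne F$) whose $\vr$-variables appear in $f$, factors one coordinate $X_t$ of $\PP_G$ out of each $f_t$, replaces each $X_t$ by $\vi_{[k]}(X_t)$ to drop $\deg_\vr$ by one, applies the inductive hypothesis to get $\bar f\equiv\bar\bn\bF$, extracts the identities $\vi_{[k]}(X_t)T_t\equiv\bar\bn x_{\uu_t}x_{\uv_t}$, and then lifts back through the $\PP_G$-factor to conclude $f\equiv X_s T_s'\bF$. That descent-and-lift happens in a factor $\PP_G$ disjoint from the pair $x_{\uu_s}x_{\uv_s}$, which is why the $\bF$ on the right stays in $R_0$. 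To repair your proof you would need to redo it along these lines — in particular, avoid promoting $x_{\uu_s}x_{\uv_s}$ to $x_{(\uu_s,\uv_s)}$.
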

\begin{proof}
We prove the  statement by applying induction on $\deg_\vr (f)$.

When $\deg_\vr (f)=0$, then by a direct inspection, we must have
$f=h \bF$. Hence, the statement holds.

Assume $\deg_\vr (f)=e>0$ and the statement holds for $\deg_\vr (f)=e-1$.

Then, we  can write
 $$f= X_s T_s' \sgn(s) x_{\uu_s}x_{\uv_s} + \sum_{t \in S_F \- s} \sgn(t) X_t T_t $$
 for some $T_s' \in R$ and $T_t \in R$ for all $t \in S_F \-s$ such that
 $\{X_t \mid t \in S_F\}$ is a subset of the homogenous coordinates  of $\PP_G$
 for some $G\in \sF$.
 
 Consider 
 $$\bar  f= \vi_{[k]}(X_s) T_s'  \sgn(s) x_{\uu_s}x_{\uv_s} + \sum_{t \in S_F \- s} \sgn(t) \vi_{[k]}(X_t) T_t.$$
Then, we have  $\vi_{[k]} (\bar f)= \vi_{[k]} (f)=h\bF$.
 By the inductive assumption, applied to $\bar f$, we have 
 $$\hbox{ $\bar f \equiv \bar\bn \bF, \mod (\ker^\mh \vi_{[k]})$,
 for some $\bar\bn \in  \bar R_{[k]}$.}$$
 This implies that 
 $$ \vi_{[k]}(X_s) T_s'  \equiv \bar\bn, \; \vi_{[k]}(X_t) T_t \equiv \bar\bn x_{\uu_t}x_{\uv_t},
 \mod (\ker^\mh \vi_{[k]}), \; \forall \; t \in S_F \- s.$$
 Substituting $\bar\bn$ by $\vi_{[k]}(X_s) T_s' $ in the second equality above, we have
  $$\vi_{[k]}(X_t) T_t \equiv \vi_{[k]}(X_s) T_s' x_{\uu_t}x_{\uv_t}, \mod (\ker^\mh \vi_{[k]}) , \; \forall \; t \in S_F \- s.$$
  Consequently,
   $$X_t T_t \equiv X_s T_s' x_{\uu_t}x_{\uv_t}, \mod (\ker^\mh \vi_{[k]}), \; \forall \; t \in S_F \- s.$$
 Hence, we obtain
 $$f 
  \equiv X_s T_s' \sgn (s) x_{\uu_s}x_{\uv_s} + \sum_{t \in S_F \- s} X_s T_s' \sgn (t) x_{\uu_t}x_{\uv_t}
  = X_s T_s' \bF, \mod (\ker^\mh \vi_{[k]}).$$
  Thus, the lemma follows by induction.
\end{proof}

If $\bF=\sum_{t \in S_F} \sgn (t) x_{\uu_t}x_{\uv_t}$, then we have
$L_F=\sum_{t \in S_F} \sgn (t) x_{(\uu_t,\uv_t)}$.

\begin{lemma} \label{f-to-LF} 
 Suppose $\vi_{[k]} (f)=h\bF$ for some  $h \in R_{[k]}$ and
$$\hbox{ $x_{(\uu_s, \uv_s)} \mid f_s$ \; for some $s \in S_F$.}$$
 Then, $$\hbox{ $f \equiv \bn L_F, \mod (\ker^\mh \vi_{[k]})$, 
for some $\bn \in \bar R_{[k]}$.}$$
\end{lemma}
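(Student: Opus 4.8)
The proof of Lemma \ref{f-to-LF} should parallel that of Lemma \ref{f-to-F}, running an induction on $\deg_\vr(f)$, but with the roles of the $\vr$-variable $x_{(\uu_s,\uv_s)}$ and the monomial $x_{\uu_s}x_{\uv_s}$ interchanged in the base case and in the bookkeeping. Here is the plan.

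\textbf{Base case.} When $\deg_\vr(f) = 1$, each $f_s$ in the decomposition \eqref{initial f} satisfies $\vi_{[k]}(f_s) = h\, x_{\uu_s}x_{\uv_s}$ with $\deg_\vr(f_s)\le 1$, and since $x_{(\uu_s,\uv_s)} \mid f_s$ by hypothesis, in fact $f_s = h_s\, x_{(\uu_s,\uv_s)}$ for some $h_s\in R_0$ with $\vi_{[k]}(h_s)\, x_{\uu_s}x_{\uv_s} = h\, x_{\uu_s}x_{\uv_s}$, whence $\vi_{[k]}(h_s) = h$. For the remaining indices $t\in S_F\-s$, multi-homogeneity on $\PP_F$ forces $f_t$ to be $\vr$-linear in the coordinates of $\PP_F$ as well; writing $f_t = h_t\, x_{(\uu_t,\uv_t)}$ (or, if $f_t$ carries no $\vr$-variable of $\PP_F$, using a $\wp$-binomial of $\cB^\wp_{[k]}$ to trade $x_{\uu_t}x_{\uv_t}$ for $x_{(\uu_t,\uv_t)}$ modulo $\ker^\mh\vi_{[k]}$ exactly as in the descent arguments of \S\ref{singular-model}) one gets $\vi_{[k]}(h_t) = h = \vi_{[k]}(h_s)$, so $h_t \equiv h_s \bmod \ker^\mh\vi_{[k]}$. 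Substituting back, $f \equiv h_s \sum_{t\in S_F}\sgn(t)\, x_{(\uu_t,\uv_t)} = h_s\, L_F \bmod \ker^\mh\vi_{[k]}$, giving the claim with $\bn = h_s$.

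\textbf{Inductive step.} Assume $\deg_\vr(f) = e > 1$ and the statement holds for $\deg_\vr = e-1$. Pick a $\vr$-variable $X_s$ of some $\PP_G$, $G\in\sF$, dividing the summand $f_s$ (one exists since $x_{(\uu_s,\uv_s)}\mid f_s$), and write $f = X_s T_s' \sgn(s) x_{(\uu_s,\uv_s)}' + \sum_{t\in S_F\-s}\sgn(t) X_t T_t$ where $\{X_t \mid t\in S_F\}$ is the set of $\PP_G$-coordinates occurring and the notation is arranged as in Lemma \ref{f-to-F}; here I must be careful to separate the case $G = F$ (where $X_s$ may itself be one of the $x_{(\uu_t,\uv_t)}$) from $G\ne F$. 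Form $\barf = \vi_{[k]}(X_s) T_s'\sgn(s) x_{(\uu_s,\uv_s)}' + \sum_{t}\sgn(t)\vi_{[k]}(X_t) T_t$, which has $\vi_{[k]}(\barf) = \vi_{[k]}(f) = h\bF$ and $\deg_\vr(\barf) = e-1$, and still carries $x_{(\uu_s,\uv_s)}$ dividing its relevant summand. By the inductive hypothesis $\barf \equiv \bar\bn\, L_F \bmod \ker^\mh\vi_{[k]}$; comparing coefficients of the $x_{(\uu_t,\uv_t)}$ gives $\vi_{[k]}(X_t) T_t \equiv \vi_{[k]}(X_s) T_s' \bmod \ker^\mh\vi_{[k]}$ for every $t$, hence $X_t T_t \equiv X_s T_s' \bmod \ker^\mh\vi_{[k]}$, and therefore $f \equiv X_s T_s' \sum_{t\in S_F}\sgn(t) x_{(\uu_t,\uv_t)} = X_s T_s'\, L_F \bmod \ker^\mh\vi_{[k]}$, closing the induction with $\bn = X_s T_s'$.

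\textbf{Main obstacle.} The delicate point, absent from the proof of Lemma \ref{f-to-F}, is the interaction when $G = F$: then the $\vr$-variable $X_s$ pulled out of $f_s$ lives on the very same $\PP_F$ whose linear relation $L_F$ we are trying to produce, so the matching of $\vr$-variables across the terms $f_s, f_t$ must be tracked with care to avoid double-counting a coordinate of $\PP_F$, and one must ensure that the reduction modulo $\cB^\wp_{[k]}$ used to normalize $f_t$ into the form $h_t x_{(\uu_t,\uv_t)}$ does not reintroduce $\vr$-degree. I expect this is handled exactly as in the descent lemmas (Lemmas \ref{uu uv}, \ref{rho=1}) already established: whenever a summand $f_t$ fails to be visibly $\vr$-linear in $\PP_F$ it is $\wp$-reducible, and one reduces modulo $\cB^\wp_{[k]}$ first, strictly decreasing a suitable complexity, before applying the induction. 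Everything else is the same bookkeeping as Lemma \ref{f-to-F}.
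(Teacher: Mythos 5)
Your approach is genuinely different from the paper's. You re-run the induction on $\deg_\vr(f)$ from scratch, pulling out an \emph{auxiliary} $\vr$-variable $X_s$ from $f_s$ while keeping $x_{(\uu_s,\uv_s)}$ intact and reducing to the inductive hypothesis for $\deg_\vr = e-1$. The paper instead reduces in \emph{one step} to Lemma~\ref{f-to-F}: it writes $f_s = x_{(\uu_s,\uv_s)}T_s$ (no extra $X_s$) and $f_t = X_t T_t$ with $\{x_{(\uu_s,\uv_s)},X_t\}\subset\PP_F$-coordinates, descends all of these factors at once to form $\barf$ (so $x_{\uu_s}x_{\uv_s}\mid\barf_s$), invokes Lemma~\ref{f-to-F} directly to get $\barf\equiv\bn\bF$, then ascends each summand via $X_tT_t\equiv x_{(\uu_t,\uv_t)}\bn$ using that $X_t$ and $x_{(\uu_t,\uv_t)}$ are coordinates of the same $\PP_F$. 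No fresh induction is needed; the ``By induction'' at the end of the paper's proof refers to Lemma~\ref{f-to-F}. This reduction buys you simplicity and sidesteps exactly the bookkeeping you flagged as the ``main obstacle.''

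As written, your induction also has real gaps. First, in the inductive step the existence of a factor $X_s$ of $f_s$ \emph{in addition to} $x_{(\uu_s,\uv_s)}$ follows from $\deg_\vr(f_s)=\deg_\vr(f)=e>1$, not from ``$x_{(\uu_s,\uv_s)}\mid f_s$'' as you state; if you took $X_s=x_{(\uu_s,\uv_s)}$ with multiplicity one, the descended $\barf_s$ would no longer satisfy the hypothesis of the inductive claim. Second, the displayed chain $\vi_{[k]}(X_t)T_t\equiv\vi_{[k]}(X_s)T_s'$, hence $X_tT_t\equiv X_sT_s'$, is wrong as stated; comparing the summands of $\barf\equiv\bar\bn L_F$ gives $\vi_{[k]}(X_t)T_t\equiv\bar\bn\,x_{(\uu_t,\uv_t)}\equiv\vi_{[k]}(X_s)T_s'\,x_{(\uu_t,\uv_t)}$, with the crucial factor $x_{(\uu_t,\uv_t)}$ present (your final line $f\equiv X_sT_s'L_F$ tacitly uses the corrected form, so this is a slip in the middle, but it must be fixed for the ascent step to even type-check). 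Third, the parenthetical in your base case about ``if $f_t$ carries no $\vr$-variable of $\PP_F$'' cannot occur when $\deg_\vr(f)=1$, because multi-homogeneity forces every $f_t$ to have the same $\PP_F$-degree as $f_s$, namely $1$; the case you actually need to treat there is $f_t = h_tX_t$ with $X_t\ne x_{(\uu_t,\uv_t)}$, which you then normalize by a $\wp$-binomial. Finally, you correctly identify the $G=F$ interaction as the delicate point but leave it unresolved; the paper's argument forces $G=F$ from the start (since $x_{(\uu_s,\uv_s)}$ must be among the $\PP_G$-coordinates) and never has to treat the two cases separately, which is another reason its route is preferable.
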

\begin{proof}
By assumption, we  can write
 $$f=    \sgn(s) x_{(\uu_s, \uv_s)} T_s+ \sum_{t \in S_F \- s} \sgn(t) X_t T_t $$
 for some $T_t \in R$ for all $t \in S_F$ such that
 $\{x_{(\uu_s, \uv_s)} , X_t \mid t \in S_F \- s\}$ 
 is a subset of the homogenous coordinates  of $\PP_G$
 for some $G \in \sF$.
 
 Consider 
 $$\bar  f= \sgn(s)  x_{\uu_s}x_{\uv_s}  T_s+ \sum_{t \in S_F \- s} \sgn(t) \vi_{[k]}(X_t) T_t. $$
 By Lemma \ref{f-to-F}, we have 
 $$\bar f \equiv \bn F, \mod (\ker^\mh \vi_{[k]}), \; \hbox{for some $\bn \in R_{[k]}$}.$$
  This implies that 
 $$ T_s \equiv \bn, \; \vi_{[k]}(X_t) T_t \equiv x_{\uu_t}x_{\uv_t} \bn,
 \mod (\ker^\mh \vi_{[k]}) , \; \forall \; t \in S_F \- s.$$
 Observe that $X_t$ and $x_{(\uu_t, \uv_t)}$ are both homogenous coordinate of $\PP_G$.
 Consequently, 
 $$X_t T_t \equiv  x_{(\uu_t, \uv_t)} \bn,
 \mod (\ker^\mh \vi_{[k]}) , \; \forall \; t \in S_F \- s.$$
 Therefore, 
  $$f  =    \sgn(s) x_{(\uu_s, \uv_s)} T_s+ \sum_{t \in S_F \- s} \sgn(t) X_t T_t $$
 $$ \equiv \sgn(s) x_{(\uu_s, \uv_s)} \bn+ \sum_{t \in S_F \- s} \sgn(t) x_{(\uu_t, \uv_t)} \bn
= \bn L_F,  \mod (\ker^\mh \vi_{[k]}).$$
By induction, the lemma holds.
\end{proof}

\begin{lemma}\label{f-abc} Suppose $\vi(f) = h F$ for some $F=F_{(123,abc)}$ of rank-1. Let $f_1$ be the term of $f$ giving rise
to $h x_{abc}$.
Then either $x_{abc} \mid f_1$ or $x_{(123,abc)} \mid f_1$.
Correspondingly, either $f \equiv \bn F$ or $f \equiv \bn L_F,
\mod (\ker^\mh \vi_{[k]})$.
\end{lemma}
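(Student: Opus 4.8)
The plan is to reduce Lemma \ref{f-abc} to the two preceding lemmas, Lemma \ref{f-to-F} and Lemma \ref{f-to-LF}, by first showing that the ``critical'' term $f_1$ of $f$ (the summand whose $\vi$-image produces the monomial $h x_{abc}$) must be divisible by one of exactly two $\vr$- or $\vp$-variables, namely $x_{abc}$ itself or $x_{(123,abc)}$. The starting point is the identity $\vi_{[k]}(f) = hF$ with $F = F_{(123,abc)}$ of rank one, so $F = x_{123}x_{abc} - x_{12a}x_{3bc} + x_{13a}x_{2bc} - x_{23a}x_{1bc}$, and after dehomogenization $\bF = x_{abc} - x_{12a}x_{3bc} + x_{13a}x_{2bc} - x_{23a}x_{1bc}$. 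Writing $f = \sum_{s\in S_F}\sgn(s)f_s$ as in \eqref{initial f} so that $\vi_{[k]}(f_s) = h x_{\uu_s}x_{\uv_s}$, the term $f_1$ is precisely the one indexed by the leading term, i.e. $\vi_{[k]}(f_1) = h\, x_{abc}$ (here $x_{\uu_{s_F}} = x_{abc}$ since $p_\um = 1$ on the chart).

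First I would analyze which variables of $R_{[k]}$ can possibly have $\vi$-image divisible by $x_{abc}$. Since $\vi$ sends a $\vp$-variable $x_\uw$ to $x_\uw$ and a $\vr$-variable $x_{(\uu,\uv)}$ to the product $x_\uu x_\uv$, the only variables whose $\vi$-image is divisible by the rank-$1$ coordinate $x_{abc}$ are: the $\vp$-variable $x_{abc}$, and the $\vr$-variables $x_{(\uu,\uv)}$ with $\{a,b,c\} \subseteq \uu \vee \uv$; but among the latter, because $(abc)$ has rank one and any $\vr$-variable $x_{(\uu,\uv)}$ with $abc$ appearing among its six indices and $\uu\vee\uv$ a $6$-element multiset containing $\{1,2,3\}$ forces $x_{(\uu,\uv)} = x_{(123,abc)}$ (this is the uniqueness observation already used in the proof of Lemma \ref{abc}, that $x_{(123,abc)}$ is the unique $\vr$-variable whose $\vi$-image is divisible by $x_{abc}$). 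Hence every monomial summand of $f_1$ must contain either $x_{abc}$ or $x_{(123,abc)}$. To upgrade this from ``every monomial of $f_1$'' to ``$f_1$ itself is divisible by one fixed choice'', I would invoke the multi-homogeneity of $f$ together with the fact that $x_{abc}$ and $x_{(123,abc)}$ live on different factors ($x_{abc}$ is a coordinate on $\bU$, while $x_{(123,abc)}$ is a homogeneous coordinate on $\PP_{F_{(123,abc)}}$), so a single multi-homogeneous term $f_1$ is divisible by a fixed power of $x_{(123,abc)}$; if that power is positive then $x_{(123,abc)} \mid f_1$, otherwise $x_{abc} \mid f_1$.

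Once the dichotomy $x_{abc}\mid f_1$ or $x_{(123,abc)}\mid f_1$ is established, the conclusion is immediate: in the first case Lemma \ref{f-to-F} applied with $s = s_F$ (whose hypothesis $x_{\uu_s}x_{\uv_s}\mid f_s$ reads here as $x_{abc}\mid f_1$, since $x_{\uu_{s_F}}x_{\uv_{s_F}} = x_{123}x_{abc}$ dehomogenizes to $x_{abc}$) gives $f \equiv \bn F \mod (\ker^\mh \vi_{[k]})$; in the second case Lemma \ref{f-to-LF} applied with the same $s$ gives $f \equiv \bn L_F \mod (\ker^\mh \vi_{[k]})$. This yields exactly the stated conclusion.

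The main obstacle I anticipate is the careful bookkeeping in the dichotomy step — specifically, making rigorous the claim that a $\vr$-variable whose $\vi$-image is divisible by $x_{abc}$ must be $x_{(123,abc)}$, and that no ``mixed'' term of $f_1$ can be divisible simultaneously by $x_{abc}$ and by $x_{(123,abc)}$ in a way that would be relevant (it could be, but then it is divisible by $x_{(123,abc)}$ and we are in the second case). One has to be a little careful that $h$ itself might contain $x_{abc}$ or $x_{(123,abc)}$ as factors, so the statement is really about the \emph{chosen} decomposition $f = \sum \sgn(s) f_s$ and the distinguished term $f_1$; but since Lemmas \ref{f-to-F} and \ref{f-to-LF} are stated precisely in terms of such a decomposition, this matches up cleanly. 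Everything else is a routine consequence of the structure of $\vi$ and the two cited lemmas.
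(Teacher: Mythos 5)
Your proof takes the same route as the paper's: both rest on the observation (already used in the proof of Lemma~\ref{abc}) that $x_{(123,abc)}$ is the unique $\vr$-variable whose $\vi$-image is divisible by $x_{abc}$, conclude that each monomial of $f_1$ is divisible by $x_{abc}$ or by $x_{(123,abc)}$, and then invoke Lemma~\ref{f-to-F} or Lemma~\ref{f-to-LF} accordingly. The one place where your write-up slips is the ``upgrade'' from per-monomial divisibility to divisibility of the whole $f_1$: multi-homogeneity fixes each monomial's \emph{total} degree in the four coordinates of $\PP_{F_{(123,abc)}}$, but not the exponent of $x_{(123,abc)}$ specifically, so two monomials of $f_1$ can carry different coordinates of that projective factor (e.g.\ $x_{(123,abc)}\,x_{12a}x_{3bc}$ and $x_{abc}\,x_{(12a,3bc)}$ have the same multi-degree and the same $\vi$-image), and the literal dichotomy on $f_1$ can then fail. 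The harmless repair is that any two monomials of $f_1$ have equal $\vi$-image up to scalar, hence differ by a binomial of $\ker^\mh\vi_{[k]}$; replacing $f_1$ by a single monomial modulo $\ker^\mh\vi_{[k]}$ restores the dichotomy, and this suffices because the lemma's conclusion is itself stated modulo $\ker^\mh\vi_{[k]}$. This is the content the paper compresses into ``immediate''; your argument is substantively the same.
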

\begin{proof}
The first statement is immediate 
because  $x_{(123,abc)} $ is the unique $\vr$-variable whose $\vi$-image contains the factor $x_{abc}$.
The last statement follows from  Lemma \ref{f-to-F} and Lemma \ref{f-to-LF}, correspondingly.
\end{proof}

\subsubsection{$\vp\vr$-$\pl$ relations}

\begin{example}\label{this ex}
The following exmaple shows that for $F$ of rank-0, e.g., 
$F_{(123, 1bc)}$, there exists $f \in R$ such that $\vi(f) = h F$ for some $h$ but
$$\hbox{neither $f \equiv \bn F$ nor $f \equiv \bn L_F,
\mod (\ker^\mh \vi)$.}$$
\begin{equation}\label{that ex}
x_{13a}x_{23b}x_{23c}x_{(23a,1bc)}x_{(123,2bc)}-
x_{13c}x_{23a}x_{23b}x_{(13a,2bc)}x_{(12b,23c)}+
x_{13b}x_{23a}x_{23c}x_{(13a,2bc)}x_{(23b,12c)}.
\end{equation}
\end{example}

\begin{defn}\label{defn:h-pl}
Let  $f \in R_{[k]}$. Assume $f \notin L_\sF$ with $\deg_\vr f>0$ be as in \eqref{initial f} such that
 $\vi (f) =h F$ for some $\bF \in \sF$ and all the terms of $f$ as
in the expression of \eqref{initial f} are co-prime, 
modulo $\ker^\mh \vi_{[k]}$,
then we say $f$ is a $\hpl$ relation.

We denote the set of all $\hpl$  relations  in  $R_{[k]}$
by  $\sF^{\vp\vr}_{[k]}$.
\end{defn}

 \begin{cor}\label{cB-LF-generate}
The ideal  $ \ker^\mh \vi_{[k],\Gr}$ is generated by 
$\ker^\mh \vi_{[k]}$,  and all the relations in $\sF$,
$\{L_{F_i} \mid i \in [k]\}$, and
$\sF^{\vp\vr}_{[k]}$.
\end{cor}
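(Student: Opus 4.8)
The plan is to build on the reduction already set up before Lemma~\ref{f-to-F}, together with Lemmas~\ref{f-to-F}, \ref{f-to-LF}, \ref{f-abc} and Definition~\ref{defn:h-pl}. The inclusion ``$\supseteq$'' is immediate: the relations of $\sF$ generate $\bar I_\wp$ and so map to $0$ under $\vi_{[k],\Gr}$; each $L_{F_i}$, $i\in[k]$, satisfies $\vi_{[k]}(L_{F_i})=F_i\in\bar I_\wp$; each $\hpl$ relation $f$ has $\vi_{[k]}(f)=hF$ a multiple of some $\bF\in\sF$; and $\ker^\mh\vi_{[k]}\subseteq\ker^\mh\vi_{[k],\Gr}$ trivially. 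So the entire content is the reverse inclusion: an arbitrary multi-homogeneous $f\in\ker^\mh\vi_{[k],\Gr}$ lies in the ideal $J$ generated by $\ker^\mh\vi_{[k]}$, $\sF$, $\{L_{F_i}\mid i\in[k]\}$ and $\sF^{\vp\vr}_{[k]}$.

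For the reverse inclusion I would argue as follows. Since $\vi_{[k],\Gr}(f)=0\iff\vi_{[k]}(f)\in\bar I_\wp$ and $\ker^\mh\vi_{[k]}\subseteq J$, the discussion preceding Lemma~\ref{f-to-F} lets me assume $f$ is multi-homogeneous with $\vi_{[k]}(f)=h\bF$ for a single fixed $\bF=\sum_{s\in S_F}\sgn(s)x_{\uu_s}x_{\uv_s}\in\sF$ and some $h\in R_{[k]}$. If $\deg_\vr f=0$ then by inspection $f=h\bF\in(\sF)\subseteq J$. If $\deg_\vr f>0$, write $f=\sum_{s\in S_F}\sgn(s)f_s$ with $\vi_{[k]}(f_s)=h\,x_{\uu_s}x_{\uv_s}$ as in \eqref{initial f}. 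If the $\vp$-monomial $x_{\uu_s}x_{\uv_s}$ divides $f_s$ for some $s$, Lemma~\ref{f-to-F} gives $f\equiv\bn\bF\pmod{\ker^\mh\vi_{[k]}}$, so $f\in J$; if the $\vr$-variable $x_{(\uu_s,\uv_s)}$ divides $f_s$ for some $s$, Lemma~\ref{f-to-LF} gives $f\equiv\bn L_F\pmod{\ker^\mh\vi_{[k]}}$, and since this $\vr$-variable occurs in $f\in R_{[k]}$ it is a coordinate of some $\PP_{F_i}$ with $i\in[k]$, whence $\bF=F_i$ and $L_F\in\{L_{F_i}\}$, so again $f\in J$. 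When $\rk(\bF)=1$, Lemma~\ref{f-abc} shows the term $f_1$ producing $h\,x_{abc}$ is always divisible by $x_{abc}$ or by $x_{(123,abc)}$, so one of the two cases always occurs.

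It remains to treat $\bF$ of rank $0$ when no $f_s$ is divisible by $x_{\uu_s}x_{\uv_s}$ or by $x_{(\uu_s,\uv_s)}$. Here I would strip from $f$ the largest factor common to all its terms modulo $\ker^\mh\vi_{[k]}$, writing $f\equiv d\cdot g\pmod{\ker^\mh\vi_{[k]}}$ with $g=\sum_s\sgn(s)g_s$, $\vi_{[k]}(g)=h'\bF$, and the $g_s$ co-prime modulo $\ker^\mh\vi_{[k]}$. Then $g$ is classified directly: if $\deg_\vr g=0$ it is a unit multiple of $\bF\in\sF$; if $g\in L_\sF$ then, $g$ involving $\vr$-variables of some $\PP_{F_i}$ with $i\in[k]$, $g=L_{F_i}$; and otherwise $g\in\sF^{\vp\vr}_{[k]}$ by Definition~\ref{defn:h-pl}, no further condition being required there. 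In every case $f=dg+(\hbox{element of }\ker^\mh\vi_{[k]})\in J$.

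The main obstacle is this last step: making rigorous, and terminating, the extraction of the ``common factor modulo $\ker^\mh\vi_{[k]}$''. A literal greatest common divisor of the $f_s$ is not sufficient, because a shared factor can become visible only after rewriting via $\wp$-binomials (in the spirit of Lemmas~\ref{abc}, \ref{uu uv} and the reductions in Corollary~\ref{reduce to rb}). The clean way is an induction --- on $\deg_\vr f$ together with total degree --- in which each application of a $\wp$-binomial, or of Lemma~\ref{f-to-F} or Lemma~\ref{f-to-LF}, strictly lowers the induction quantity, and the base case is exactly the co-prime situation of Definition~\ref{defn:h-pl}; the bookkeeping then runs parallel to the proofs of Lemmas~\ref{f-to-F} and \ref{f-to-LF}. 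Everything else reduces to routine verification.
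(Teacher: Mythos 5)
Your proof is correct and takes essentially the same approach the paper intends: reduce a multi-homogeneous $f\in\ker^\mh\vi_{[k],\Gr}$ modulo $\ker^\mh\vi_{[k]}$ to the case $\vi_{[k]}(f)=h\bF$ for a single $\bF\in\sF$, strip common factors to get co-prime terms, and observe that what remains is either a multiple of $\bF$ (when $\deg_\vr=0$), a linearized $\pl$ relation $L_{F_i}$ with $i\in[k]$, or a $\hpl$ relation by Definition~\ref{defn:h-pl}. The paper's own proof of the corollary is only the phrase ``this follows directly from the above discussions,'' so you have in effect written out the proof the paper suppresses; your detour through Lemmas~\ref{f-to-F}, \ref{f-to-LF} and \ref{f-abc} is not strictly needed at this stage (those lemmas do the further reduction used in Lemma~\ref{equas-for-sVk}) but does no harm, and your acknowledgment that ``co-prime modulo $\ker^\mh\vi_{[k]}$'' needs a terminating induction on degree rather than a literal gcd is a fair and honest observation that the paper itself does not address.
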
  
\begin{proof}
This follows directly from the above discussions.
\end{proof}

\begin{prop}\label{h-pl}
 Let $f \in  R_{[k]}$ be any $\vp\vr$-relation
such that $\vi(f)=hF$ for some $h$.
 Then, following the notation above, we have
$$ 
\hbox{$x_{(\uu_t,\uv_t)} f_s - x_{(\uu_s,\uv_s)} f_t \in 
\ker^\mh \vi_{[k]}$} $$
for all $s, t \in S_F$.
\end{prop}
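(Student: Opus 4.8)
The goal is to show that for each pair $s,t\in S_F$ the binomial
$$g_{st}:=x_{(\uu_t,\uv_t)}f_s-x_{(\uu_s,\uv_s)}f_t$$
is multi-homogeneous and lies in $\ker\vi_{[k]}$. Since $g_{ss}=0$ and $g_{ts}=-g_{st}$, it suffices to fix a pair $s\ne t$. The statement is only non-vacuous when the relevant $\vr$-variables belong to $R_{[k]}$, i.e. when $F=F_i$ for some $i\in[k]$; otherwise $x_{(\uu_s,\uv_s)},x_{(\uu_t,\uv_t)}\notin R_{[k]}$ and there is nothing to prove. We may also assume $h\ne 0$, since $h=0$ would force $\vi_{[k]}(f)=0$, contradicting that $f$ is a $\hpl$ relation (in particular $f\notin\ker^\mh\vi_{[k]}$, so each $f_s\ne 0$).

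\textbf{The easy half.} That $g_{st}\in\ker\vi_{[k]}$ is a one-line check. Applying $\vi_{[k]}$ and using $\vi_{[k]}(x_{(\uu_s,\uv_s)})=x_{\uu_s}x_{\uv_s}$ together with the defining property $\vi_{[k]}(f_s)=h\,x_{\uu_s}x_{\uv_s}$ from \eqref{initial f 2},
$$\vi_{[k]}(g_{st})=x_{\uu_t}x_{\uv_t}\cdot h\,x_{\uu_s}x_{\uv_s}-x_{\uu_s}x_{\uv_s}\cdot h\,x_{\uu_t}x_{\uv_t}=0.$$
The remaining, genuinely substantive, point is multi-homogeneity. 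Since $x_{(\uu_s,\uv_s)}$ and $x_{(\uu_t,\uv_t)}$ are homogeneous coordinates of the \emph{same} projective space $\PP_{F_i}$, both carry multidegree $e_i$, so $g_{st}$ will be multi-homogeneous as soon as the constituent pieces $f_s$ and $f_t$ in the decomposition \eqref{initial f} of the multi-homogeneous polynomial $f$ are each multi-homogeneous of the common multidegree $\mathbf d$ of $f$; granting this, $x_{(\uu_t,\uv_t)}f_s$ and $x_{(\uu_s,\uv_s)}f_t$ both have multidegree $\mathbf d+e_i$. I would establish the multi-homogeneity of the $f_s$ by the same induction on $\deg_\vr(f)$ that underlies Lemmas \ref{f-to-F} and \ref{f-to-LF}: in the inductive step one writes $f=\sum_{t\in S_F}\sgn(t)X_tT_t$ with $\{X_t\}$ homogeneous coordinates of a single $\PP_G$, passes to the descendant $\barf$ obtained by replacing each $X_t$ by $\vi_{[k]}(X_t)$ (so $\deg_\vr\barf=\deg_\vr f-1$ and $\vi_{[k]}(\barf)=\vi_{[k]}(f)$), applies the inductive hypothesis to $\barf$, and then lifts the multi-homogeneous pieces of $\barf$ back along the $X_t$, using the co-primality of the $f_s$ built into Definition \ref{defn:h-pl} to see that this lift keeps the $\PP_{F_j}$-multidegrees matched.

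\textbf{Main obstacle.} The one place requiring genuine care is exactly this multi-homogeneity of the constituents $f_s$: the $\vi_{[k]}$-image computation is automatic, but one must argue that the bookkeeping that attributes monomials of $f$ to the individual terms $x_{\uu_s}x_{\uv_s}$ of $F$ can be performed compatibly with all the $\PP_{F_j}$-gradings — and this is precisely where the $\hpl$-hypothesis (co-primality of the $f_s$ modulo $\ker^\mh\vi_{[k]}$, equivalently the non-divisibility facts feeding Lemmas \ref{f-to-F}--\ref{f-to-LF}) must be invoked, rather than being a purely formal consequence of $f$ being multi-homogeneous.
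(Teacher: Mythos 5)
Your identity $\vi_{[k]}(g_{st})=0$ via \eqref{initial f 2} is exactly the paper's one-line argument. The point where you go astray is in declaring multi-homogeneity of $g_{st}$ to be the "main obstacle" requiring a fresh induction on $\deg_\vr(f)$ and invoking the co-primality clause of Definition~\ref{defn:h-pl}. That is not needed, and the claim that multi-homogeneity of the $f_s$ is not "a purely formal consequence of $f$ being multi-homogeneous" is incorrect. The decomposition \eqref{initial f} partitions the monomials of $f$: each $f_s$ is (up to sign) a sub-sum of the monomials of $f$. Since $f$ is multi-homogeneous, all its monomials share the same multi-degree $\mathbf d$, and therefore each $f_s$ is automatically multi-homogeneous of multi-degree $\mathbf d$ with no further argument. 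As $x_{(\uu_s,\uv_s)}$ and $x_{(\uu_t,\uv_t)}$ are both homogeneous coordinates of the same $\PP_{F_i}$, they both raise the $i$-th grading by $1$, so $x_{(\uu_t,\uv_t)}f_s$ and $x_{(\uu_s,\uv_s)}f_t$ have identical multi-degree $\mathbf d+e_i$ and $g_{st}$ is multi-homogeneous. The co-primality in the $\hpl$-definition plays no role here; the proposition holds for any multi-homogeneous $f$ with $\vi_{[k]}(f)=hF$ once the decomposition \eqref{initial f}--\eqref{initial f 2} is fixed, which is the paper's reading when it says the statement "follows directly from \eqref{initial f 2}."
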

\begin{proof}
This follows directly from \eqref{initial f 2}.
\end{proof}

\subsubsection{Reducing $\ker^\mh \vi_\Gr$ 
 to $\cB^\wp, \cB^\frb$, and $L_\sF$} $\ $
More precisely, below, we will
\begin{itemize}
\item reduce $\ker^\mh \vi$ to $\cB^\wp, \cB^\frb$;
\item reduce  $\sF$ and $\sF^{\vp\vr}$ to $L_\sF$ modulo 
   $\ker^\mh \vi$.
\end{itemize}

As in \eqref{list sF}, we can list all relations of $\sF$ as
$$\bF_1< \bF_2 <\cdots < \bF_\up.$$

\begin{defn}\label{fv-k=0} Fix $k \in [\up]$.
For every $\bF_i \in \sF$ with $i \in [k]$, choose and fix an arbitrary element 
$s_{F_i, o}\in S_{F_i}$.
Then, the scheme $\sR_{[k]}$ is covered by the affine open charts
of the form  $$\bU \times \prod_{i \in [k]}
(x_{(\uu_{s_{F_i, o}}, \uv_{s_{F_i, o}})} \equiv 1)
\subset \sR_{[k]}= \bU \times  \prod_{i \in [k]} \PP_{F_i} .$$ 
We call such an affine open subset a standard chart of
$\sR_{[k]}$, often denoted by $\fV$.
\end{defn}

Given any standard chart $\fV$ as in the definition above, we let 
$$\fV'=\bU \times \prod_{i \in [k-1]}
(x_{(\uu_{s_{F_i, o}}, \uv_{s_{F_i, o}})} \equiv 1)
\subset \sR_{\sF_{[k-1]}}= \bU \times  \prod_{i \in [k-1]} \PP_{F_i} .$$ 
Then, this is a standard chart of $\sR_{\sF_{[k-1]}}$, uniquely determined by $\fV$.
We say $\fV$ lies over $\fV'$. In general, 
suppose $\fV''$ is a standard chart of $\sR_{\sF_{[j]}}$ with $j <k-1$. Via induction,
we say $\fV$ lies over $\fV''$ if $\fV$ lies over some $\fV'$ as above and
$\fV'$ lies over $\fV''$.

Note that the standard chart $\fV$ of
$\sR_{[k]}$ in the above definition is uniquely indexed 
by the set 
\begin{equation}\label{index-sR}
 \La_{ [k]}^o=\{(\uu_{s_{F_i,o}},\uv_{s_{F_i,o}}) \in \La_{F_i} \mid i \in [k] \}
 \end{equation}
where 
\begin{equation}\label{LaFi}\nonumber
 \La_{F_i}=\{(\uu_s, \uv_s) \mid s \in S_{F_i}\}
\end{equation}
 is  the index set for all the homogeneous coordinates in $\PP_{F_i}$.
Given $ \La_{[k]}^o$, we let 
$$ \La_{[k]}^\star=(\bigcup_{i \in [k]}\La_{F_i}) \- \La_{[k]}^o.$$
We set 
$$\La_\sF^o:=\La_{\sF_{[\up]}}^o \;\;\hbox{and} \;\;\La_\sF^\star:=\La_{\sF_{[\up]}}^\star.$$

Recall from
Definition \ref{defn:frb}, $\cB_{[k]}^\frb$ is the set of
root binomials that are both $\fb$-irreducible and $\wp$-irreducible.

\begin{lemma}\label{equas-for-sVk}
 The scheme $\sV_{[k]}$, as a closed subscheme of
$\sR_{[k]}= \bU \times  \prod_{i=1}^k \PP_{F_i} $,
is defined by the following relations in
$$\cB^\wp_{[k]}, \cB^\frb_{[k]} \; 
\{L_{F_i} ,  i \in [k]\}, \; \{\bF_j,  k < j\le \up\}. $$
\end{lemma}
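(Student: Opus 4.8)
The plan is to establish the equality of closed subschemes chart by chart, using the standard affine charts $\fV$ of Definition \ref{fv-k=0}, which cover $\sR_{[k]}$; on such a chart the distinguished $\vr$-coordinates $x_{(\uu_{s_{F_i,o}},\uv_{s_{F_i,o}})}$, $i\in[k]$, are units, and this will let us absorb the $\vr$-coordinate multipliers that unavoidably arise below. Write $J$ for the ideal generated by the relations in $\cB^\wp_{[k]}$, $\cB^\frb_{[k]}$, $\{L_{F_i}:i\in[k]\}$, $\{\bF_j:k<j\le\up\}$, and $I:=\ker^\mh\vi_{[k],\Gr}$ for the defining ideal of $\sV_{[k]}$ furnished by Lemma \ref{defined-by-ker-Gr}. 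The inclusion $J\subseteq I$ is routine: the $\wp$- and $\frb$-binomials lie in $\ker^\mh\vi_{[k]}\subseteq\ker^\mh\vi_{[k],\Gr}$; $\vi_{[k],\Gr}(L_{F_i})=\bF_i+\bar I_\wp=0$ for $i\in[k]$ because $\bF_i$ generates $\bar I_\wp$; and $\vi_{[k],\Gr}(\bF_j)=\bF_j+\bar I_\wp=0$ for $k<j\le\up$ since $\bF_j\in R_0$. So $\sV_{[k]}\subseteq V(J)$.

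For $I\subseteq J$ (up to multiplication by the unit $\vr$-coordinates of a standard chart), Corollary \ref{cB-LF-generate} reduces the task to placing $\ker^\mh\vi_{[k]}$, $\sF$, $\{L_{F_i}:i\in[k]\}$ and $\sF^{\vp\vr}_{[k]}$ into $J$. The heart is $\ker^\mh\vi_{[k]}$, which is generated by binomials: for a binomial $f$ I would first clear any common factor, then, using the $\wp$-reduction (equation \eqref{fully recover} together with Lemmas \ref{abc} and \ref{uu uv} and Corollary \ref{wp-irr implies not abc no iuv}), reduce $f$ modulo $\cB^\wp_{[k]}$ to a $\wp$-irreducible binomial of no larger $\vr$-degree, pass modulo $\cB^\wp_{[k]}$ to its root parent (each demotion step replacing $X\bn-X'\bn'$ by $\vi(X)\bn-\vi(X')\bn'$ being reversible after multiplying by $X$, via the $\wp$-binomial $\vi(X)X'-\vi(X')X$), and then apply Corollary \ref{reduce to rb} to write what remains through $\fb$-irreducible root binomials; those that are $\wp$-irreducible are exactly the elements of $\cB^\frb_{[k]}$, while a $\wp$-reducible one is again, by equation \eqref{fully recover}, a $\vr$-coordinate times a binomial of strictly smaller $\vr$-degree, to which the induction applies. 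The families $\sF$ and $\sF^{\vp\vr}_{[k]}$ are then handled with the help of the linearized relations. For $\sF$: $\bF_j$ with $j>k$ is already a generator of $J$; for $i\le k$, summing the $\cB^\wp_{[k]}$-congruences $x_{\uu_s}x_{\uv_s}\,x_{(\uu_t,\uv_t)}\equiv x_{(\uu_s,\uv_s)}\,x_{\uu_t}x_{\uv_t}$ over $t\in S_{F_i}$ with the signs $\sgn(t)$ (these being the governing $\wp$-binomials of $\PP_{F_i}$, and $x_\um x_{\uu_{F_i}}=x_{\uu_{F_i}}$ being the leading term of $\bF_i$) yields $x_{\uu_s}x_{\uv_s}\,L_{F_i}\equiv x_{(\uu_s,\uv_s)}\,\bF_i\pmod{\cB^\wp_{[k]}}$ for every $s\in S_{F_i}$, so $\bF_i$ lies in $J$ modulo a $\vr$-coordinate multiplier, hence in $J|_\fV$ on any standard chart. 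For $\sF^{\vp\vr}_{[k]}$: given a $\hpl$ relation $f$ with $\vi_{[k]}(f)=hF$ and terms $f_s$, Proposition \ref{h-pl} gives $x_{(\uu_t,\uv_t)}f_s-x_{(\uu_s,\uv_s)}f_t\in\ker^\mh\vi_{[k]}\subseteq J$, whence $x_{(\uu_{t_0},\uv_{t_0})}\,f\equiv f_{t_0}\,L_F$ modulo $J$; if $F=F_i$ with $i\le k$ then $L_{F_i}\in J$ puts $f$ in $J$ modulo a $\vr$-coordinate multiplier, while if $F=F_j$ with $j>k$ then no coordinate of $\PP_{F_j}$ occurs in $R_{[k]}$, so Lemmas \ref{f-to-F} and \ref{f-abc} force $f\equiv\bn\bF_j\pmod{\ker^\mh\vi_{[k]}}$, again landing in $J$. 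Collecting these, $I|_\fV=J|_\fV$ on every standard chart, which gives the stated description of $\sV_{[k]}$.

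The main obstacle is the reduction of an arbitrary binomial of $\ker^\mh\vi_{[k]}$ to $\cB^\wp_{[k]}\cup\cB^\frb_{[k]}$: turning the interleaved operations — clearing common factors, $\wp$-reduction, passing to root parents, and the $\fb$-factorization of Lemma \ref{auto homo} behind Corollary \ref{reduce to rb} — into a single well-founded induction (on $\deg_\vr$, with the $\fb$-factorization length as an inner parameter) requires care, since passing to a root parent can reintroduce $\wp$-reducibility and clearing factors can change $\vr$-degree. The remaining friction is purely bookkeeping: $\bF_i$ ($i\le k$) and the $\hpl$ relations enter $J$ only after multiplication by a $\vr$-coordinate, which is harmless precisely because one argues on the standard affine charts of $\sR_{[k]}$, where those coordinates are invertible.
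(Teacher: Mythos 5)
Your proposal follows the paper's route exactly: start from Lemma \ref{defined-by-ker-Gr} and Corollary \ref{cB-LF-generate}, work chart-by-chart on the standard charts of Definition \ref{fv-k=0} so that the distinguished $\vr$-coordinates are units absorbing the unavoidable $\vr$-multipliers, reduce $\ker^\mh\vi_{[k]}$ to $\cB^\wp_{[k]}\cup\cB^\frb_{[k]}$, and reduce $\bF_i$ and the $\hpl$ relations to $L_{F_i}$ via the $\cB^\wp$-congruence $x_{(\uu,\uv)}\bF_i \equiv x_\uu x_\uv L_{F_i}$ and via Proposition \ref{h-pl}. The one place you diverge is the order of operations inside the binomial reduction: you clear common factors, $\wp$-reduce, then pass to a root parent, while the paper's Claims 1--3 pass to a root binomial first (by multiplying by the chart's distinguished unit, decreasing $\deg_\vp$), then $\wp$-reduce (observing the quotient stays a root binomial of smaller $\deg_\vr$), then invoke Corollary \ref{reduce to rb}. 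You correctly flag the interleaving as the delicate point; the paper's ordering is simply the one that makes the induction transparent, since it never has to worry about promotion reintroducing $\wp$-reducibility. One small caution on your ``reversible demotion'' step: the relation $Xf\equiv\vi(X)g\pmod{\cB^\wp_{[k]}}$ only absorbs on a chart when $X$ is the distinguished coordinate $x_{(\uu_{s_{F_i,o}},\uv_{s_{F_i,o}})}$, which is what the paper multiplies by; for a generic $\vr$-coordinate $X$ the multiplication is not invertible. Your separate handling of $\hpl$ relations with $\vi(f)=hF_j$, $j>k$, is a point the paper's proof does not spell out, and as stated it leans on Lemmas \ref{f-to-F}/\ref{f-abc} whose divisibility hypotheses are not automatic when $x_{\uu_s}$ or $x_{\uv_s}$ can be hidden inside $\vi$-images of $\vr$-coordinates from $\PP_{F_i}$ with $i\le k$; this would need a line of justification, but it is the same gap the published proof leaves implicit.
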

\begin{proof}  
By Corollary \ref{cB-LF-generate},
it suffices to show the following
\begin{enumerate}
\item
$\ker^\mh \vi_{[k]}$ can be reduced to $\cB_{[k]}^\frb$,
modulo $\cB^\wp_{[k]}$;
\item 
$\sF^{\vp\vr}_{[k]}$ 
can be reduced to $\{L_{F_i} ,  i \in [k]\}$,
\item  $\bF_i$  can also be reduced to $L_{F_i}$ for all $i \in [k]$.
\end{enumerate}

(1). We first reduce $\ker^\mh \vi_{[k]}$ to $\cB_{[k]}^\frb$,
modulo $\cB^\wp_{[k]}$;

\noindent
{\bf Claim 1.} Suppose
a binomial $f =\bm-\bm' \in \ker^\mh \vi_{[k]}$
is not a root binomial. Then $f$
can be reduced to a root binomial in
 $\ker^\mh \vi_{[k]}$.

By definition \ref{fv-k=0}, 
$\sR_{[k]}=\bU \times  \prod_{i=1}^k \PP_{F_i} $
is covered by standard affine charts.
Fix any standard chart $\sR_{[k]}$ as in Definition \ref{fv-k=0}. 
We want to reduce   $f|_\fV$  to a root binomial in $\ker^\mh \vi_{[k]}$, restricted to the chart $\fV$.

We prove it by induction on $\deg_\vp (f)$, 
where $\deg_\vp f$ denote the degree of $f$ considered as
a polynomial in $\vp$-variables only

When  $\deg_\vp (f) =0$, the statement holds trivially.

Assume that statement holds for $\deg_\vp< e$ for some $e>0$.

Consider $\deg_\vp (f) = e$.

 By definition, since $f$ is not a root binomial,    we can write 
$$f= x_{\uu_s} x_{\uv_s} \bn_s - x_{\uu_t} x_{\uv_t} \bn_t$$
with  $x_{(\uu_s,\uv_s)}$ and $x_{(\uu_t,\uv_t)}$
being  homogeneous coordinates of $\PP_{F_i}$ for some 
$i \in [k]$, and for some $\bn_s, \bn_t \in R_{[k]}$.

Consider the following two relations in $\cB_{[k]}^\wp$
$$x_{\uu_s} x_{\uv_s} 
x_{(\uu_{s_{F_i, o}}, \uv_{s_{F_i, o}})}-
x_{\uu_{s_{F_i, o}}}x_{\uv_{s_{F_i, o}}} x_{(\uu_s, \uv_s)}, $$
$$
x_{\uu_t} x_{\uv_t} x_{(\uu_{s_{F_i, o}}, \uv_{s_{F_i, o}})}- x_{\uu_{s_{F_i, o}}}x_{\uv_{s_{F_i, o}}}x_{(\uu_t, \uv_t)}.$$
This implies that
$$ x_{(\uu_{s_{F_i, o}}, \uv_{s_{F_i, o}})}f= 
x_{\uu_{s_{F_i, o}}}x_{\uv_{s_{F_i, o}}}(x_{(\uu_s, \uv_s)} \bn_s - x_{(\uu_t, \uv_t)} \bn_t),
\; \mod  \cB_{[k]}^\wp.$$
Note that on the chart $\fV$,  
$x_{(\uu_{s_{F_i, o}}, \uv_{s_{F_i, o}})}$ 
is invertible (see Definition \ref{fv-k=0}).
Also, observe that 
$(x_{(\uu_s, \uv_s)} \bn_s - 
x_{(\uu_t, \uv_t)} \bn_t) \in  \ker^\mh \vi_{[k]}$
because $f \in  \ker^\mh \vi_{[k]}$.
Since  
$$\deg_\vp (x_{(\uu_s, \uv_s)} \bn_s - x_{(\uu_t, \uv_t)} \bn_t) < e,$$
the statement then follows from the inductive assumption.

{\bf Claim 2.} Any $\wp$-reducible 
root binomial $f =\bm-\bm' \in \ker^\mh \vi_{[k]}$
 can be reduced to $\wp$-irreducible root binomials,
modulo $\cB^\wp$.

Since $f$ is $\wp$-reducible, by using the notation immediately above
\eqref{fully recover}, we obtain
\begin{equation}\label{fully recover 2}
f=\bm- \bm'=x_{(\uu,\uv)}   \bn -x_{\uu}x_{\uv}  x_{(\uu',\uv')} \bn'
\equiv x_{(\uu,\uv)} (  \bn -x_{\uu'}x_{\uv'} \bn'), \; \mod \cB^\wp.
\end{equation}
Because, $f$ is a root binomial, one sees that 
$g=\bn -x_{\uu'}x_{\uv'} \bn'$ must also be a root binomial.
Now if $g$  is $\wp$-irreducible, then we are
done. Otherwise, we  repeat and apply the same procedure to
$g$. As $\deg_\vr g < \deg_\vr f$, the process must terminate.
Thus, the claim holds.

Then, by applying  Corollary \ref{reduce to rb},
  we obtain that 

{\bf Claim 3.} Any $\wp$-irreducible
 root binomial $f =\bm-\bm' \in \ker^\mh \vi_{[k]}$
can be reduced to $\fb$-irreducible
 root binomials in $\ker^\mh \vi_{[k]}$
that are also $\wp$-irreducible, that is,
 reduced to $\frb$-binomials of $\ker^\mh \vi_{[k]}$.

Combining Claims 1, 2, and 3,
we obtain that the defining equations of $\ker^\mh \vi_{[k]}$
can be reduced to $\cB^\wp_{[k]} \sqcup \cB^{\frb}_{[k]}$.
This proves (1).

(2).
Next, we reduce 
$\sF^{\vp\vr}_{[k]}$ to $\{L_{F_i} ,  i \in [k]\}$.
Take any $f \in \sF^{\vp\vr}_{[k]}$  as in Lemma \ref{h-pl}.
Fix any $s \in S_F$.
By  Lemma \ref{h-pl},  for any $t \in S_F$, we have
$$ \hbox{$x_{(\uu_t,\uv_t)} f_s - x_{(\uu_s,\uv_s)} f_t \in 
\ker^\mh \vi_{[k]}$}. $$
Thus, from $f=\sum_{t \in S_F} \sgn (t) f_t$, we obtain
$$x_{(\uu_s,\uv_s)} f = \sum_{t \in S_F} \sgn (t) x_{(\uu_s,\uv_s)} f_t
\equiv \sum_{t \in S_F} \sgn (t) x_{(\uu_t,\uv_t)} f_s = f_s L_F, 
\; \mod \ker^\mh \vi_{[k]} .$$
Since $\PP_{F_i}$ can be covered by affine open charts 
 $(x_{(\uu_s,\uv_s)} \ne 0)$, $s \in S_F$, 
we conclude that  $f$ depends on $L_F$,
modulo $\ker^\mh \vi_{[k]}$, or, now equivalently,
modulo $\cB_{[k]}^\wp  \sqcup \cB_{[k]}^\frb$.

(3). 
We now reduce $\bF_i$  to $L_{F_i}$ for all $i \in [k]$.

Fix any $i \in [k]$. Take and fix any 
$(\uu, \uv) \in \{(\uu_s, \uv_s) \mid s \in S_{F_i}\}$. 
Consider the binomial relations of $\cB_{[k]}^\wp$
  \begin{equation}\label{Buv-s-1st}
 x_{(\uu,\uv)} x_{\uu_s} x_{\uv_s} - 
x_{\uu} x_{\uv}x_{(\uu_s, \uv_s)}  , \;\; \hbox{for all $s \in S_{F_i}$.}
  \end{equation}
  By multiplying $\sgn (s)$ to  \eqref{Buv-s-1st}
and adding them all together,  
we obtain, \begin{equation}\label{Fi=Li-1st}
   x_{(\uu,\uv)} \bF_i =x_{\uu} x_{\uv} L_{F_i}, \mod (\cB_{[k]}^\wp),
 \end{equation}
 Since $\PP_{F_i}$ can be covered by affine open charts 
 $(x_{(\uu,\uv)} \ne 0)$, $(\uu, \uv) \in \La_{F_i}$, we conclude that  $\bF_i$ depends on $L_{F_i}$, modulo $\cB_{[k]}^\wp$,
for all $i \in [k]$.

All in all, the lemma is proved.
\end{proof}


As before, we let
\begin{equation}\label{LsF} \nonumber
L_\sF=\{L_F \mid \bF \in \sF \} \end{equation}
be the set of all linearized $\pl$ relations.

By the case of Lemma \ref{equas-for-sVk} when $k=\up$, we have

\begin{cor}\label{eq-tA-for-sV}  
The scheme $\sV$, as a closed subscheme of
$\sR= \bU \times  \prod_{\bF \in \sF} \PP_F$,
is defined by the following relations in 
$$\cB^\wp , \cB^\frb , \; L_\sF. $$
\end{cor}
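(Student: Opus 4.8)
The plan is to obtain this as the terminal case $k=\up$ of Lemma \ref{equas-for-sVk}, which has already been established for every $k\in[\up]$. First I would invoke the identifications fixed in \S\ref{singular-model}: one has $\sV=\sV_{\sF_{[\up]}}=\sV_{[\up]}$ and $\sR=\sR_{\sF_{[\up]}}=\sR_{[\up]}=\bU\times\prod_{\bF\in\sF}\PP_F$, so the embedding $\sV\embed\sR$ is precisely \eqref{embed sV} with $k=\up$. Likewise $\cB^\wp_{[\up]}=\cB^\wp$, $\cB^\frb_{[\up]}=\cB^\frb$, and $\{L_{F_i}:i\in[\up]\}=L_\sF$, since $\sF=\{\bF_1<_\wp\cdots<_\wp\bF_\up\}$ lists all $\um$-primary $\pl$ relations.

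Next I would observe that the fourth family appearing in Lemma \ref{equas-for-sVk}, the residual primary relations $\{\bF_j:k<j\le\up\}$, is empty when $k=\up$, since there is no integer $j$ with $\up<j\le\up$. Substituting $k=\up$ into the conclusion of that lemma therefore leaves exactly the three families $\cB^\wp$, $\cB^\frb$, and $L_\sF$ cutting out $\sV$ inside $\sR$, which is the assertion.

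The only point deserving a word of care, and the only place any genuine content sits, is that the reduction steps in the proof of Lemma \ref{equas-for-sVk} are uniform in $k$ and hence require nothing new at $k=\up$: (i) Claims 1--3 reducing $\ker^\mh\vi_{[k]}$ to $\cB^\wp_{[k]}\sqcup\cB^\frb_{[k]}$ modulo $\cB^\wp_{[k]}$, which rest on Corollary \ref{reduce to rb} together with the $\deg_\vr\le 2$ structure results, Lemmas \ref{rho=1}, \ref{rho=2}, \ref{rho=2 wp-reducible}, \ref{rho=2 fb reducible}; (ii) the reduction of $\sF^{\vp\vr}_{[k]}$ to $\{L_{F_i}:i\in[k]\}$ via Proposition \ref{h-pl}; and (iii) the reduction of each $\bF_i$ to $L_{F_i}$ through the congruence $x_{(\uu,\uv)}\bF_i\equiv x_\uu x_\uv L_{F_i},\ \mod\cB^\wp_{[k]}$ of \eqref{Fi=Li-1st}. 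All three arguments are carried out in the excerpt for arbitrary $k\in[\up]$, so no obstacle remains; the corollary is immediate, the real work having been done already in Lemma \ref{equas-for-sVk}.
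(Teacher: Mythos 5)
Your proposal is correct and follows exactly the paper's route: the corollary is stated there as the specialization $k=\up$ of Lemma \ref{equas-for-sVk}, and your observation that the residual family $\{\bF_j : \up < j \le \up\}$ is empty, together with the identifications $\sV=\sV_{[\up]}$, $\sR=\sR_{[\up]}$, $\cB^\wp_{[\up]}=\cB^\wp$, $\cB^\frb_{[\up]}=\cB^\frb$, $\{L_{F_i}\}_{i\in[\up]}=L_\sF$, is precisely what makes the substitution immediate.
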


\subsection{ Governing generating relations} $\;$

\subsubsection{We begin with setting and recalling some notations.}

For any $\bF \in \sF$,  we let 
\begin{equation}\label{LaF}
 \La_F=\{(\uu_s, \uv_s) \mid s \in S_F\}.
\end{equation}
This is  an index set for all the homogeneous coordinates in $\PP_F$.
For  later use, 
 we also set  $$ \La_{\sF} =\sqcup_{\bF \in \sF} \La_F.$$
For any $\bF \in \sF$, we let
\begin{equation}\label{cB1F}
\cB_F^\wp=\{x_{\uu'} x_{\uv'}  x_{(\uu,\uv)} - x_{\uu} x_{\uv} x_{(\uu',\uv')} \mid (\uu,\uv), 
(\uu',\uv') \in  \La_F\}
\end{equation}
Thus, we have
$$ \cB^\wp =\bigcup_{\bF \in  \sF} \cB_F^\wp.$$
 Further,  we set
\begin{equation}\label{cBk all}
 \cBk =\cB_{[k]}^\wp \sqcup \cB_{[k]}^\frb.
\end{equation}
When $k=\up$, we write
$$ \cB =\cB_{[\up]}^\wp \sqcup \cB_{[\up]}^\frb.$$

\subsubsection{Governing binomial relations}

Given $\bF \in \sF$, recall from  \eqref{the-form-LF},
 $\uu_F$ is the index for the leading variable $x_{\uu_F}$ of the primary
$\pl$ relation $\bF$.
\begin{defn}
We call $x_{((123), \uu_F)}$ the leading $\vr$-variable for any $\bF \in \sF$.
\end{defn}

\begin{defn}\label{gov-b}
A binomial relations of $\cB^\wp$ is said to be governing if it contains a leading $\vr$-variable, equivalently it contains a leading $\vp$-variable.
\end{defn}

\begin{defn}\label{gov-ngov-bi}
Fix any $\bF \in \sF$. 
We let  $\cB^\gov_F$ be the set of governing binomials of $\cB_F^\wp$ 
that contains the leading $\vr$-variable $x_{((123), \uu_F)}$. These are called governing binomials 
with respect to $\bF$. We  set $\cB_F^\ngv= \cB_F  \- \cB_F^\gov$.
The binomials of  $\cB_F^\ngv$ are called non-governing binomials with respect to $\bF$.
\end{defn}

Typically in a proof we write governing binomials of $\cB^\gov_F$ as
\begin{equation}\label{gv-bi}
B_{F, s}: \;
x_{(\uu_s, \uv_s)}x_{\uu_F} - x_{(\um, \uu_F)}x_{\uu_s}x_{\uv_s},
\; s \in S_F \- s_F
\end{equation}
and we write non-governing binomials of $\cB^\ngv_F$ as
\begin{equation}\label{ngv-bi}
B_{F, (s,t)}: \;
x_{(\uu_s, \uv_s)}x_{\uu_t}x_{\uv_t} - x_{(\uu_t, \uv_t)}x_{\uu_t}x_{\uu_s}x_{\uv_s},
\; s, t \in S_F \- s_F
\end{equation}

We let 
\begin{equation}\label{cBk-gov}
\cB_{[k]}^\gov =\sqcup_{i \in [k]} \cB^\gov_{F_i}
\end{equation}
 and then set
\begin{equation}\label{cBk-ngv}\cB_{[k]}^\ngv=\cB_{[k]}^\wp  \- \cB_{[k]}^\gov.
\end{equation}
 We let
$$\cBgov=\cB_{[\up]}^\gov,\;\; \cBngv=\cB_{[\up]}^\ngv, \;\; \cB^\frb=\cB^\frb_{[\up]}.$$
Then, we have
\begin{equation}\label{cB all}
\cB=\cB^\gov \sqcup \cB^\ngv \sqcup \cB^\frb.
\end{equation}
Although a binomial of $\cB^\frb$ is also non-governing, we usually refers it as
$\frb$-binomial (it is of $\vr$-degree 2 or higher), and by a non-governing binomial, we mean a binomial of $\cB^\ngv$ (it is always of
$\vr$-degree 1).

In concrete terms, 
all the governing binomials are classified as follows.
\begin{eqnarray}\label{all gov bi}
\hbox{all the governing binomials:} \;\;\;\;\;\;\;\;\;\;\; \;\;\;\;\;\;\;\;\;\;\; \;\;\;\;\;\;\;\; \\
x_{1uv}x_{(12u,13v)} - x_{12u}x_{13v} x_{(123,1uv)}, \; x_{1uv}x_{(13u,12v)}- x_{13u}x_{12v}x_{(123,1uv)}, \nonumber \\
x_{2uv}x_{(12u,23v)} -x_{12u}x_{23v} x_{(123,2uv)}, \;  x_{2uv}x_{(23u,12v)}-x_{23u}x_{12v} x_{(123,2uv)}, \nonumber \\
x_{3uv}x_{(13u,23v)} -x_{13u}x_{23v}x_{(123,3uv)}, \;  x_{3uv}x_{(23u,13v)} -x_{23u}x_{12v}x_{(123,3uv)},\nonumber\\
 x_{abc}x_{(12a,3bc)}-x_{12a}x_{3bc} x_{(123,abc)},\;
 x_{abc}x_{(13a,2bc)}-x_{13a}x_{2bc} x_{(123,abc)},\nonumber \\
x_{abc}x_{(23a,1bc)} -x_{23a}x_{1bc}x_{(123,abc)} \nonumber
\end{eqnarray}
for all $u<v \in [n]\-[3]$ and $a<b<c \in [n]\- [3]$.
We see that  the terms of all the primary $\pl$ relations 
 are separated into the two terms of the above binomials.

\begin{defn}
We also call the linearized $\pl$ relation $L_F$ a governing relation
for any $\bF \in \sF$.
\end{defn}

\begin{defn}\label{defn:block}
Given any $\bF \in \sF$, we let $\fG_F=\cB^\gov_F \sqcup \{L_F\}$. We call it the block of  governing  relations with respect to $F$. 
\end{defn}
We let
$$\fG=\bigsqcup_{\bF \in \sF} \fG_F.$$
Note that the set $\fG$ is totally ordered:
$$\fG_{F_1} < \cdots  < \fG_{F_\up}.$$

We observe here that
\begin{equation}\label{dim}
\dim (\prod_{\bF \in \sF} \PP_F) = \sum_{\bF \in \sF} |S_F\- s_F| 
= \sum_{\bF \in \sF} |\cB_F^\gov|=|\cB^\gov|,
\end{equation}
where $|K|$ denotes the cardinality of a finite set $K$.
{\it This is an important identity — not accidental, even if it may seem so}

\subsection{Some initial setups for induction: $\vp$-, $\vr$-, and 
$\fL$-divisors of $\sR$} $\ $

\begin{defn} \label{-divisor}
Consider the scheme $\sR =\bU \times  \prod_{\bF \in \sF} \PP_F$.

Recall that the affine chart $\bU$ comes equipped with the local free variables 
$\{x_\uu\}_{\uu \in \II_{3,n}\- (123)}$.
For any $\uu \in \II_{3,n}\- (123)$, we set
$$X_\uu:=(x_\uu =0) \subset \sR.$$
We call $X_\uu$ the $\pl$ divisor, in short, the $\vp$-divisor,  of $\sR$ associated with $\uu$.
We let $\cD_\vp$ be the set of all $\vp$-divisors on the scheme $\sR$.

In addition to the $\vp$-divisors,  the scheme $\sR$  
comes equipped with the divisors
$$X_{(\uu, \uv)}:=(x_{(\uu, \uv)}=0)$$
for all $(\uu,\uv) \in \La_{{ \sF}}$.
We call $X_{(\uu, \uv)}$ the $\vr$-divisor corresponding to $(\uu, \uv)$.  
We let $\cD_{\vr}$ be the set  of all $\vr$-divisors of $\sR$. 

Further, the scheme $\sR$ also 
comes equipped with the divisors 
$$ D_{L_F} := (L_F=0)$$
for all $\bF \in \sF$. 
We call $D_{L_F}$ the $\fL$-divisor corresponding to $F$.
We let $\cD_{\fL}$ be the set of all $\fL$-divisors of $\sR$.
\end{defn}


To be cited as the initial cases of certain inductions later on,
we need the following two propositions.

\begin{prop}\label{meaning-of-var-p-k=0} Consider any standard
 chart (Definition \ref{fv-k=0})
$$\fV=\bU \times \prod_{i \in [k]}
(x_{(\uu_{s_{F_i, o}}, \uv_{s_{F_i, o}})} \equiv 1)$$ of $\sR_{[k]}$, 
indexed by  $\La_{[k]}^o$ as in \eqref{index-sR}.
It  comes equipped with the set of free variables
$$\var_\fV=\{x_{\fV, \uw}, \; x_{\fV, (\uu,\uv)} \mid \uw \in \II_{3,n} \- (123), \; 
(\uu,\uv) \in \La_{[k]}^\star
\}$$ 
and the de-homogenized linearized $\pl$ relations $L_{\fV, F}$ for all $\bF \in \sF$
such that on the standard chart $\fV$, we have
\begin{enumerate}
\item the divisor  $X_{ \uw}\cap \fV$ is defined by $(x_{\fV,\uw}=0)$ for every 
$\uw \in \II_{3,n} \setminus (123)$;
\item the divisor  $X_{(\uu,\uv)}\cap \fV$ is defined by $(x_{\fV,(\uu,\uv)}=0)$ for every 
$(\uu,\uv) \in \La_{[k]}^\star.$     
\item the divisor  $D_{L_F}\cap \fV$ is defined by $(L_{\fV,F}=0)$ for every 
$\bF \in \sF$.
\end{enumerate}
In particular, all the above divisors are smooth.
\end{prop}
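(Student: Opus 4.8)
\textbf{Proof proposal for Proposition \ref{meaning-of-var-p-k=0}.}

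The plan is to recognize this as an entirely elementary unravelling of the definitions, with essentially no geometric content beyond the observation that a product of an affine space with standard affine charts of projective spaces is again an affine space with explicit coordinates. First I would fix the standard chart
$$\fV=\bU \times \prod_{i \in [k]} (x_{(\uu_{s_{F_i, o}}, \uv_{s_{F_i, o}})} \equiv 1) \subset \sR_{[k]}= \bU \times \prod_{i \in [k]} \PP_{F_i}.$$
By definition of the standard affine chart of a product of projective spaces, setting $x_{(\uu_{s_{F_i, o}}, \uv_{s_{F_i, o}})} \equiv 1$ for each $i \in [k]$ de-homogenizes $\PP_{F_i}$, so that $\fV$ is the affine space with coordinate ring $\kk[x_\uw, \, x_{(\uu,\uv)}]$ where $\uw$ runs over $\II_{3,n}\-(123)$ (the local free variables of $\bU$) and $(\uu,\uv)$ runs over $\La_{[k]}^\star = (\bigcup_{i\in[k]}\La_{F_i}) \- \La_{[k]}^o$, i.e. all $\vr$-indices except the ones chosen to be set equal to $1$. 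I would simply name these coordinates $x_{\fV,\uw}$ and $x_{\fV,(\uu,\uv)}$, which gives the asserted set $\var_\fV$.

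Next I would verify the three itemized statements one at a time. For (1): the $\vp$-divisor $X_\uw = (x_\uw = 0)$ lives in the $\bU$-factor, and $x_\uw$ is one of the coordinate functions on $\fV$ untouched by the de-homogenization, so $X_\uw \cap \fV = (x_{\fV,\uw}=0)$. For (2): for $(\uu,\uv)\in \La_{[k]}^\star$, the $\vr$-divisor $X_{(\uu,\uv)}=(x_{(\uu,\uv)}=0)$ restricts to $(x_{\fV,(\uu,\uv)}=0)$ since $x_{(\uu,\uv)}$ descends to a coordinate function on $\fV$; note that for the excluded indices $(\uu,\uv)\in\La_{[k]}^o$ the corresponding $\vr$-divisor does not meet $\fV$ at all, which is why only indices in $\La_{[k]}^\star$ appear. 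For (3): given $\bF \in \sF$, write $L_F = \sum_{s\in S_F}\sgn(s)\,x_{(\uu_s,\uv_s)}$ as in Definition \ref{defn:linear-pl}; de-homogenizing on $\fV$ means replacing each $x_{(\uu_s,\uv_s)}$ by its image — either $1$ (if $(\uu_s,\uv_s)\in\La_{[k]}^o$), or $x_{\fV,(\uu_s,\uv_s)}$ (if it lies in $\La_{[k]}^\star$), or, when $\bF=\bF_i$ for $i\le k$ and its linearized form still involves only indices already present, the appropriate substitution — yielding a well-defined polynomial $L_{\fV,F}$ in the free variables of $\fV$ whose zero locus is $D_{L_F}\cap\fV$. (For $\bF=\bF_j$ with $j>k$, the relevant $\PP_{F_j}$ does not appear in $\sR_{[k]}$ and one interprets $L_{\fV,F_j}$ accordingly; in the case $k=\up$ of ultimate interest every $L_F$ is accounted for.) Finally, the smoothness clause in the last sentence is immediate: each divisor is cut out of the affine space $\fV$ by a single polynomial that is either one of the coordinate variables (cases (1),(2)) or a nonzero linear form in the coordinate variables (case (3)), hence is a smooth hypersurface.

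I do not expect any genuine obstacle here; the only thing requiring a little care is bookkeeping — being precise about which $\vr$-indices survive de-homogenization (those in $\La_{[k]}^\star$) versus which are set to $1$ (those in $\La_{[k]}^o$), and correspondingly which $\vr$-divisors actually meet the chart. The statement is flagged in the text as something "to be cited as the initial cases of certain inductions later on," so its role is to pin down notation cleanly rather than to prove anything deep; the write-up should therefore emphasize the explicit identification of coordinates and the substitution rule for $L_F$, and can dispatch the smoothness assertion in one line.
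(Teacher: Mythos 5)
Your proof is correct and follows essentially the same route as the paper's: de-homogenize each $\PP_{F_i}$ on the chart, name the resulting coordinates $x_{\fV,\uw}$ and $x_{\fV,(\uu,\uv)}$, and observe that (1)--(3) and the smoothness clause follow by direct inspection. The paper's own proof is even terser (it names the coordinates and says the rest is "straightforward to check"), so the additional detail you supply — in particular the remark about which $\vr$-divisors actually meet the chart and the caveat about $\bF_j$ with $j>k$ not having a $\PP_{F_j}$-factor in $\sR_{[k]}$ — is a sound amplification rather than a deviation.
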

\begin{proof} 
Recall  that $\bU=(p_{(123)} \equiv 1)$.
Then, we let $x_{\fV, \uw}=x_\uw$ for all  $\uw \in \II_{3,n} \- (123)$.
Now consider every 
$i \in [k]$.
Upon setting $x_{(\uu_{s_{F_i, 0}}, \uv_{s_{F_i, 0}})} \equiv 1$, we let
 $x_{\fV, (\uu_s,\uv_s)} = x_{(\uu_s,\uv_s)}$ be the de-homogenization of $x_{(\uu_s,\uv_s)}$
 for all  $s  \in S_{F_i} \- s_{F_i,o}$.
From here,  the statement is straightforward to check.
\end{proof}

\begin{prop}\label{equas-fV[k]}  Let the notation be as in
Propsotion \ref{meaning-of-var-p-k=0}. Then,
the scheme $\sV_{[k]} \cap \fV$, as a closed subscheme of $\fV$
is defined by the following relations
$$\cB_{\fV,[k]}, \; \{L_{\fV, F_i} ,  i \in [k]\}, \; \{\bF_j,  k < j\le \up\}. $$
where  the equations of $\cB_{\fV, [k]}$ and $ \{L_{\fV, F_i} ,  i \in [k]\}$
are the de-homogenizations of the equations
of $\cB_{[k]}$ (see \eqref{cBk all})
 and $\{L_{F_i} ,  i \in [k]\}$ with respect to the chart $\fV$.
\end{prop}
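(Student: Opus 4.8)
The statement is purely the affine-chart shadow of the global description in Lemma~\ref{equas-for-sVk}, so the plan is to unwind the dictionary between multi-homogeneous ideals on $\sR_{[k]}=\bU\times\prod_{i\in[k]}\PP_{F_i}$ and affine ideals on a product of standard charts. Recall that the standard chart $\fV$ of Definition~\ref{fv-k=0} is
$$\fV=\bU\times\prod_{i\in[k]}\bigl(x_{(\uu_{s_{F_i,o}},\uv_{s_{F_i,o}})}\equiv 1\bigr),$$
so that, by Proposition~\ref{meaning-of-var-p-k=0}, its coordinate ring is $R_0$ with the de-homogenized $\vr$-variables $x_{\fV,(\uu,\uv)}$, $(\uu,\uv)\in\La_{[k]}^\star$, adjoined. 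Concretely this coordinate ring is the multi-degree-zero subring of the localization $R_{[k]}\bigl[\,x_{(\uu_{s_{F_i,o}},\uv_{s_{F_i,o}})}^{-1}:i\in[k]\,\bigr]$, the multi-grading being the one in which a polynomial is homogeneous precisely when it is homogeneous in the coordinates of each $\PP_{F_i}$ separately.

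First I would record the standard localization fact: if $I\subset R_{[k]}$ is a multi-homogeneous ideal — here $I=\ker^\mh\vi_{[k],\Gr}$, whose zero locus is $\sV_{[k]}$ by Lemma~\ref{defined-by-ker-Gr} — then the ideal of $\sV_{[k]}\cap\fV$ in $\cO(\fV)$ is the multi-degree-zero part of $I\cdot R_{[k]}\bigl[x_{(\uu_{s_{F_i,o}},\uv_{s_{F_i,o}})}^{-1}\bigr]$, and if $g_1,\dots,g_m$ are multi-homogeneous generators of $I$ then their de-homogenizations $\bar g_1,\dots,\bar g_m$, obtained by dividing each $g_\alpha$ by the appropriate power of the monomial $\prod_{i\in[k]} x_{(\uu_{s_{F_i,o}},\uv_{s_{F_i,o}})}$ (a unit on $\fV$), generate that ideal; generators that are already multi-degree zero — such as the primary relations $\bF_j\in R_0$ — simply survive unchanged. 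Because localization is exact no new generators can appear, and because the cleared denominators are units none can be lost.

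Next I would feed in Lemma~\ref{equas-for-sVk}: $\sV_{[k]}$ is cut out of $\sR_{[k]}$ by the relations in $\cB^\wp_{[k]}$, $\cB^\frb_{[k]}$, $\{L_{F_i}:i\in[k]\}$ and $\{\bF_j:k<j\le\up\}$. Each $\wp$-binomial of $\cB^\wp_{[k]}$ is multi-homogeneous, being of degree one in exactly one factor $\PP_{F_i}$ and degree zero in the others; each $\frb$-binomial of $\cB^\frb_{[k]}$ is a root binomial, hence multi-homogeneous; each $L_{F_i}$ is linear, so homogeneous of degree one in $\PP_{F_i}$; and each $\bF_j$ with $j>k$ lies in $R_0$ and involves no $\vr$-variable, as is visible from the forms in \eqref{all primaries}. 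De-homogenizing this generating set with respect to $\fV$ produces precisely the relations collected in $\cB_{\fV,[k]}$ together with $\{L_{\fV,F_i}:i\in[k]\}$ and $\{\bF_j:k<j\le\up\}$, which is the asserted set of defining equations.

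The only point that needs genuine, if light, care — and which I regard as the main, though modest, obstacle — is the compatibility of a multi-homogeneous generating set with restriction to a \emph{product} of standard charts rather than a single one: one must check that a multi-homogeneous element de-homogenizes to a single regular function on $\fV$ (division by $\prod_{i\in[k]} x_{(\uu_{s_{F_i,o}},\uv_{s_{F_i,o}})}^{d_i}$, where $d_i$ is its degree in the $i$-th grading), and that this operation is a ring homomorphism onto $\cO(\fV)$ carrying $I$ onto the ideal of $\sV_{[k]}\cap\fV$. This is the standard multi-projective analogue of the $\Proj$/de-homogenization correspondence, and once it is in place the proposition is immediate; the remaining verifications — that each listed relation is indeed multi-homogeneous of the stated degree, and that the $\bF_j$ with $j>k$ are $\vr$-variable free — are routine inspections using \eqref{all gov bi} and \eqref{all primaries}.
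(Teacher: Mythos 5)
Your proposal is correct and takes essentially the same route as the paper: the paper's own proof is a one-line citation that the result ``follows directly from Lemma~\ref{equas-for-sVk}, using the notations of Proposition~\ref{meaning-of-var-p-k=0} and its proofs,'' and you are simply spelling out the de-homogenization/localization dictionary that makes that deduction direct. All the ingredients you invoke (multi-homogeneity of the $\wp$- and $\frb$-binomials, degree-one homogeneity of $L_{F_i}$, $\vr$-variable-freeness of $\bF_j$ for $j>k$) are exactly what the paper leaves implicit.
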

\begin{proof} This follows directly from Lemma \ref{equas-for-sVk},
using the notations of Proposition \ref{meaning-of-var-p-k=0} and its proofs.
\end{proof}

As a special case of the above proposition, we have

\begin{prop}\label{equas-p-k=0} 
Let the notation be as in Proposition \ref{equas-fV[k]} for $k=\up$.  
Then, the scheme $\sV\cap \fV$, as a closed subscheme of
the chart $\fV$ of $\sR$,  is defined by 
$$\cB_{\fV} \sqcup  \{L_{\fV, F_i} ,  i \in [\up]\}$$
where $\cB=\cB^\gov \sqcup \cB^\ngv \sqcup \cB^\frb.$
\end{prop}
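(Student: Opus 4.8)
The plan is to obtain Proposition~\ref{equas-p-k=0} as the terminal case $k=\up$ of Proposition~\ref{equas-fV[k]} (equivalently, as the chart-local form of Corollary~\ref{eq-tA-for-sV}). No substantive new argument is needed; the work is entirely a reconciliation of the indexed notation, so I will organize it that way.

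First I would specialize Proposition~\ref{equas-fV[k]} to $k=\up$. By construction $\sV=\sV_{\sF_{[\up]}}$ and $\sR=\sR_{\sF_{[\up]}}$, and a standard chart $\fV$ of $\sR$ in the sense of Definition~\ref{fv-k=0} is precisely a standard chart of $\sR_{[\up]}$. Proposition~\ref{equas-fV[k]} then says that $\sV\cap\fV$, as a closed subscheme of $\fV$, is cut out by $\cB_{\fV,[\up]}$, the de-homogenized linearized relations $\{L_{\fV,F_i}\mid i\in[\up]\}$, and the residual untransformed relations $\{\bF_j\mid \up<j\le\up\}$. The last set is empty, since no integer $j$ satisfies $\up<j\le\up$; hence $\sV\cap\fV$ is cut out, inside $\fV$, by $\cB_{\fV,[\up]}\sqcup\{L_{\fV,F_i}\mid i\in[\up]\}$.

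It then remains to identify $\cB_{\fV,[\up]}$ with $\cB_\fV$. By \eqref{cBk all} one has $\cB_{[\up]}=\cB_{[\up]}^\wp\sqcup\cB_{[\up]}^\frb$, while \eqref{cBk-gov}, \eqref{cBk-ngv} and \eqref{cB all} give $\cB_{[\up]}^\wp=\cB^\gov\sqcup\cB^\ngv$, and therefore $\cB_{[\up]}=\cB=\cB^\gov\sqcup\cB^\ngv\sqcup\cB^\frb$. Since $\cB_{\fV,[\up]}$ is, by the statement of Proposition~\ref{equas-fV[k]}, the de-homogenization of $\cB_{[\up]}$ relative to $\fV$, it is exactly the de-homogenization $\cB_\fV$ of $\cB$ appearing in Proposition~\ref{equas-p-k=0}; likewise $\{L_{\fV,F_i}\mid i\in[\up]\}$ is the de-homogenization of $L_\sF$, because $\sF=\{F_1,\dots,F_\up\}$. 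Substituting these identifications into the description from the previous paragraph yields exactly the assertion of Proposition~\ref{equas-p-k=0}. The only point requiring any care, and the closest thing to an obstacle, is the bookkeeping: checking that the ambient chart $\fV$, the generating set $\cB$, and the linearized relations $L_\sF$ are genuinely the $k=\up$ instances of their $[k]$-indexed counterparts, and that the index range $\up<j\le\up$ is empty so that none of the original quadratic $\pl$ relations $\bF_j$ survives in the chart-local presentation. Once this is noted, the proposition is immediate.
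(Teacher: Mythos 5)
Your proof is correct and takes the same route the paper does: the paper itself offers no separate proof, merely prefacing the proposition with the remark that it is a special case of Proposition~\ref{equas-fV[k]}, and your write-up just makes explicit the routine bookkeeping (the empty index range $\up < j \le \up$ and the identifications $\cB_{[\up]} = \cB$, $\{L_{F_i}\}_{i\in[\up]} = L_\sF$) that the paper leaves implicit.
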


Here, it is worth to mention that the  governing relations in the proposition are
\begin{eqnarray}  
B_{ \fV,(s_F,s)}: \;\; x_{\fV, (\uu_s, \uv_s)}x_{\fV, \uu_F} - x_{\fV, (\um,\uu_F)}   x_{\fV, \uu_s} x_{\fV, \uv_s}, \;\;
\forall \;\; s \in S_F \- s_F, \label{eq-B-k=0} \\
\;\;\; L_{\fV, F}: \;\; \;\;\;\; \sum_{s \in S_F} \sgn (s) x_{\fV,(\uu_s,\uv_s)} 
\label{linear-pl-k=0} \;\;\;\;\;\;\;\;\;\;\;\;\;\; \;\;\;\;\;\;\;\;\;\;\;\;\;\;\;\;\;\;\;\;\;\;
\end{eqnarray}
for all $\bF \in \sF$.
The  non-governing binomial relations of $\vr$-degree 1 in the proposition are
\begin{equation} \label{eq-Bres-pk=0}
B_{ \fV,(s,t)}: \;\; x_{\fV,(\uu_s, \uv_s)}x_{\fV,\uu_t}x_{\fV, \uv_t}
-x_{\fV,(\uu_t, \uv_t)}x_{\fV,\uu_s}x_{ \fV,\uv_s}, \;\; \forall \;\; s, t \in S_F \- s_F
\end{equation}
for all $\bF \in \sF$.

\section{The Universal $\vt$-Blowups}\label{vt-blowups} 

{\it  In this section, we
 begin the process of eliminating zero factors of  governing binomials
by  sequential blowups. It is divided into two subsequences.
The first are $\vt$-blowups. The second are $\wp$-blowups, which importantly mingle with
$\ell$-blowups. The $\wp$- and $\ell$-blowups will be the topics of the next section.}
 

To start, it is useful to fix some terminology, used throughout.

\subsection{Some conventions on blowups} \label{blowupConventions} $\ $

Let $X$ be a 
scheme over the base field $\kk$.
When we blow up the scheme $X$ along the ideal (the homogeneous ideal, respectively)
$I=\langle f_0, \cdots, f_m \rangle$,  generated by some elements $f_0, \cdots, f_m$,
we will realize the blowup scheme $\widetilde X$ as the graph of the closure
of the rational map $$f: X \dashrightarrow \PP^m,$$
$$ x \to [f_0(x), \cdots, f_m(x)].$$
Then, upon fixing the generators  $f_0, \cdots, f_m$, we have a natural embedding
\begin{equation}\label{general-blowup} \xymatrix{
\widetilde{X}  \ar @{^{(}->}[r]  & X \times \PP^m.
 }
\end{equation}
We let
\begin{equation}
\pi: \widetilde{X} \lra X 
\end{equation}
be the induced blowup morphism.

We will refer to the projective space $\PP^m$ as the  {\it factor
projective space}  of the blowup corresponding to the generators  $f_0, \cdots, f_m$.
We let $[\xi_0, \cdots, \xi_m]$ be the homogeneous coordinates of
the factor projective space $\PP^m$,  corresponding  to $(f_0, \cdots, f_m)$. 

When $X$ is smooth and the center of the blowup is also smooth, then, the scheme 
$\widetilde X$, as a closed subscheme of $X \times \PP^m$, is defined by the relations
\begin{equation}
f_i \xi_j - f_j \xi_i, \;\; \hbox{for all $0\le i \ne j\le m$}.
\end{equation}

In addition, if we have an algebraic group $G$ (over $\kk$) acting on a scheme $X$,
we say that the action is quasi-free if every isotropy subgroup of
$G$ on any point $x \in X$ is a connected group.
This in particular implies that when $G$ is reductive 
(such as the split torus $(\GG_m)^r$ for any positive integer),
then it acts freely on any stable locus $X^s$ for any
$G$-ample linearization over $X$. In particular,
any geometric GIT quotient $X^s/G$ is smooth,  provided
$X$ is smooth and the action is quasi-free.
Note that the maximal torus $\TT=(\GG_m)^n/\GG_m$ 
acts on the $\pl$ projective space $\PP(\wedge^3 E)$ quasi-freely,
hence on the Grassmannian $\Gr^{3, E}$ and the open subset $\bU$
 quasi-freely, as well. 
In what follows, we will show that
the $\TT$-action on $\bU \subset \Gr^{3, E}$
lifts to a quasi-free action on the blowup scheme in every step.

\begin{defn}\label{general-standard-chart}
Suppose that the scheme $X$ is covered by a set $\{\fV' \}$ of open subsets, called
(standard) charts. 

Fix any $0\le i\le m$. We let
\begin{equation} 
\fV=  (\fV' \times (\xi_i \ne 0)) \cap \widetilde{X} .
\end{equation}
We also often express this chart as
 $$\fV= (\fV' \times (\xi_i \equiv 1)) \cap \widetilde{X}.$$
It is an open subset of  $\widetilde{X}$, and will be called a standard chart of $\widetilde{X}$
lying over the (standard) chart $\fV'$ of $X$. Note that every standard chart of $\widetilde{X}$
lies over a unique (standard) chart $\fV'$ of $X$.
Clearly, $\widetilde{X}$ is covered by the finitely many  standard charts.

 In general, we let
 $$\widetilde{X}_k \lra \widetilde{X}_{k-1} \lra  \cdots  \lra \widetilde X_0:=X$$
 be a sequence of blowups such that every blowup $\widetilde{X_j} \to \widetilde{X}_{j-1}$ is
 as in \eqref{general-blowup}, $j \in [k]$.  
 
 Consider any $0\le j < k$.  Let $\fV$ (resp. $\fV''$) be a standard chart of $\widetilde{X}_k$
 (resp. of $\widetilde{X}_j$). Let $\fV'$ be the unique standard chart $\fV'$ of $\widetilde X_{k-1}$
 such that $\fV$ lies over $\fV'$.
 Via induction, we say $\fV$ lies over $\fV''$ if $\fV'$ equals to (when $j=k-1$) or lies over $\fV''$
 (when $j < k-1$).
\end{defn}

 We keep the notation as above. Let $\widetilde{X}  \to X$ be a blowup as
in \eqref{general-blowup}; we let $\fV$ be a standard chart of $\widetilde{X}$, lying over
a unique (standard) chart $\fV'$ of $X$; let
$\pi_{\fV, \fV'}: \fV \lra \fV'$ be the induced projection.

\begin{defn}\label{general-proper-transform-of-variable}  Assume that
the open chart $\fV$ (resp. $\fV'$) 
comes equipped with
a set of local free variables  in $\var_\fV$ (resp. $\var_{\fV'}$).
Let $y \in \var_\fV$ (resp. $y' \in \var_{\fV'}$) be a local free variable of $\fV$ (resp. $\fV'$).
We say the local free variable $y$ is a proper transform of
the local free variable $y'$ if the divisor  $(y=0)$ on the chart $\fV$ is the proper transform of  
the divisor $(y'=0)$ on the chart $\fV'$.
\end{defn}

Keep the notation and assumption as in Definition \ref{general-proper-transform-of-variable}.

We assume in addition that the induced blowup morphism 
$$\pi^{-1} (\fV') \lra \fV'$$
corresponds to  the blowup of $\fV'$ along
the coordinate subspace of $\fV'$ defined by
 $$Z=\{y'_0= \cdots =y'_m =0\}$$
 with $\{y'_0, \cdots, y'_m\} \subset \var_{\fV'}$.
As earlier, we let $\PP^m$ be the corresponding factor projective space
with homogeneous coordinates $[\xi_0, \cdots, \xi_m]$, corresponding to $(y'_0, \cdots, y'_m)$.
 
 Without loss of generality, we assume that 
  the standard chart $\fV$ corresponds to $(\xi_0 \equiv 1)$, that is,
 $$\fV = (\fV' \times (\xi_0 \equiv 1)) \cap \widetilde{X}.$$
 Then, we have that $\fV$, as a closed subscheme of $\fV' \times (\xi_0 \equiv 1)$,
 is defined 
 \begin{equation}\label{general-blowup-formulas}
y'_i  - y'_0 \xi_i, \;\; \hbox{for all $i \in [m]$}.
\end{equation}

The following proposition is standard and will be applied throughout. 

\begin{prop}\label{generalmeaning-of-variables} Keep the notation and assumption as above.
In addition, we let $E$ be the exceptional divisor of the blowup $\widetilde{X}  \to X$.

Then, the standard chart $\fV$ comes equipped with a set of free variables 
$$\var_\fV=\{ \zeta, y_1, \cdots, y_m;  y:=y'  \mid y' \in \var_{\fV'} \- \{y_0' , \cdots, y_m'\} \}$$
where $\zeta:=y_0', y_i :=\xi_i, i \in [m]$
such that
\begin{enumerate}
\item $E \cap \fV =(\zeta =0)$; we call $\zeta$ the exceptional variable/parameter of $E$ on $\fV$;
\item $y_i \in \var_\fV$ is a proper transform of  $y'_i \in \var_{\fV'}$ for all  $i \in [m]$;
\item $y \in \var_\fV$ is a proper transform of  
$y' \in \var_\fV$ for all  $y' \in \var_{\fV'} \- \{y_0' , \cdots, y_m'\}$.
\end{enumerate}
\end{prop}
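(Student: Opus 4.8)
The plan is to reduce the entire statement to one explicit coordinate computation on the fixed standard chart $\fV$ and then read off the three assertions. First I would invoke \eqref{general-blowup-formulas}: the chart $\fV$ sits inside the affine space $\fV' \times (\xi_0 \equiv 1)$, which has coordinate functions $\var_{\fV'} \cup \{\xi_1, \dots, \xi_m\}$, cut out by the $m$ equations $y_i' - y_0'\xi_i = 0$ for $i \in [m]$. These equations form a regular sequence that eliminates $y_1', \dots, y_m'$ in favor of the remaining coordinates, so the projection forgetting $y_1', \dots, y_m'$ identifies $\fV$ with the affine space whose coordinate ring is the polynomial ring on
\[
\{\zeta := y_0',\ y_1 := \xi_1,\ \dots,\ y_m := \xi_m\} \cup \{y := y' \mid y' \in \var_{\fV'} \setminus \{y_0',\dots,y_m'\}\}.
\]
This both defines $\var_\fV$ as in the statement and shows that $\fV$ is smooth with these as local free variables. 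Along the way I would record the formulas for $\pi_{\fV,\fV'}\colon \fV \to \fV'$, namely $y_0' \mapsto \zeta$, $y_i' \mapsto \zeta y_i$ for $i \in [m]$, and $y' \mapsto y$ for the untouched variables; here I use the standing assumption that $I_Z = (y_0', \dots, y_m')$ on $\fV'$ and that $\widetilde{X} \to X$ is the graph-closure realization of the blowup, so that over $\fV'$ it is $\Bl_Z \fV'$ covered by the charts $(\xi_i \equiv 1)$.

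Assertion (1) then follows immediately: the exceptional divisor satisfies $I_E \cdot \mathcal{O}_{\widetilde{X}} = \pi^{-1} I_Z \cdot \mathcal{O}_{\widetilde{X}}$, and on $\fV$ the pullback of $(y_0', \dots, y_m')$ is generated by $\zeta, \zeta y_1, \dots, \zeta y_m$, i.e. by $\zeta$. Hence $E \cap \fV = (\zeta = 0)$, and it is smooth. This is exactly why $\zeta$ deserves to be called the exceptional parameter of $E$ on $\fV$.

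For (2) and (3) I would compare total and proper transforms of the relevant coordinate hyperplanes. For $i \in [m]$, the total transform of $(y_i' = 0)$ has ideal $(\zeta y_i)$ on $\fV$, so it decomposes as the exceptional component $(\zeta = 0)$ together with $(y_i = 0)$; since $\pi$ restricts to an isomorphism over $\fV' \setminus Z$, the proper transform of $(y_i' = 0)$ — the closure of $\pi^{-1}\bigl((y_i'=0)\setminus Z\bigr)$ — is precisely $(y_i = 0)$, giving $y_i$ as the proper transform of $y_i'$ in the sense of Definition \ref{general-proper-transform-of-variable}. For $y' \in \var_{\fV'} \setminus \{y_0',\dots,y_m'\}$, the total transform of $(y' = 0)$ is just $(y = 0)$ on $\fV$, which contains no exceptional component because $\zeta \nmid y$ among the free variables; therefore it coincides with its own proper transform, and $y$ is the proper transform of $y'$.

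I expect no genuine conceptual obstacle here; the only real work is bookkeeping. One must check that \eqref{general-blowup-formulas} presents $\fV$ as the asserted affine space with no hidden relations (which is clear since the ambient space is already affine space and the $m$ defining equations are linear in, and independent over, the variables $y_1',\dots,y_m'$), and that the passage from total to proper transform removes exactly the divisor $(\zeta=0)$ with multiplicity one for each $(y_i'=0)$, $i\in[m]$, and nothing for the untouched variables. These are the standard facts about blowing up a smooth scheme along a smooth center cut out by part of a regular system of parameters, and I would cite them as such rather than reprove them. Uniformity of this local picture across all standard charts $(\xi_i \equiv 1)$ is what makes the proposition usable as the inductive base in the later $\vt$-, $\wp$-, and $\ell$-blowup analyses.
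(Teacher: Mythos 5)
Your argument is correct and is essentially the argument the paper has in mind: the paper's entire proof is "It is straightforward from \eqref{general-blowup-formulas}," and what you have written is precisely the bookkeeping that "straightforward" compresses — eliminating $y_1',\dots,y_m'$ via the $m$ linear relations to identify $\fV$ with an affine space on the stated variables, reading off $E\cap\fV=(\zeta=0)$ from $\pi^*I_Z=(\zeta)$, and then decomposing total transforms $\pi^*(y_i'=0)=(\zeta=0)+(y_i=0)$ (resp.\ $\pi^*(y'=0)=(y=0)$ with no exceptional component) to obtain the proper-transform claims in the sense of Definition \ref{general-proper-transform-of-variable}. No gap; this is the same computation, spelled out.
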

\begin{proof}
It is straightforward from \eqref{general-blowup-formulas}.
\end{proof}
 
Let  $\bf m$ be a monomial in $\var_{\fV}$. Then, for every variable $x \in \var_{\fV}$,
we let $\deg_x {\bf m}$ be the degree of $x$ in $\bf m$. 
 
 \begin{defn}\label{general-proper-transforms} 
 Keep the notation and assumption as in Proposition \ref{generalmeaning-of-variables}.
In addition, we let 
$$\phi=\{y'_0, \cdots, y'_m\} \subset \var_{\fV'}.$$

 Let $B_{\fV'}=T^0_{\fV'}- T^1_{\fV'}$ be 
 a binomial with variables in $\var_{\fV'}$.
We let $$m_{\phi, T^i_{\fV'}} = \sum_{j=0}^m \deg_{y'_j} (T^i_{\fV'}), \;\; i =0, 1, $$ 
$$l_{\phi, B_{\fV'}} = \min \{m_{\phi, T^0_{\fV'}}, m_{\phi, T^1_{\fV'}}\}.$$ 

Applying \eqref{general-blowup-formulas}, we substitute $y'_i$ by $y'_0 \xi_i$, for all $i \in [m]$,
into $B_{\fV'}$ and switch $y_0'$ by $\zeta$ and $\xi_i$ by $y_i$ with $i \in [m]$ to obtain
the pullback $\pi_{\fV,\fV'}^* B_{\fV'}$
where $\pi_{\fV,\fV'}: \fV \lra \fV'$ is the induced projection.
We then let 
\begin{equation}\label{define-proper-t}
B_\fV = (\pi_{\fV,\fV'}^* B_{\fV'}) / \zeta^{l_{\phi, B_{\fV'}}}.
\end{equation}
We call $B_\fV$, a binomial in $\var_\fV$, the proper transform of $B_{\fV'}$.

In general, for any polynomial $f_{\fV'}$ in $\var_{\fV'}$ such that 
$f_{\fV'}$ does not vanish identically along 
$Z= (y'_0= \cdots= y'_m=0)$, we let
$f_\fV = \pi_{\fV,\fV'}^* f_{\fV'}$. This is the pullback, but for convenience, we also call
$f_\fV$ the proper transform of $f_{\fV'}$. 

Moreover, suppose $\zeta$ appears in $B_\fV =(\pi_{\fV,\fV'}^* B_{\fV'}) / \zeta^{l_{\psi, B_{\fV'}}}$ or 
in $f_\fV= \pi_{\fV,\fV'}^* f_{\fV'}$,  and is obtained through the substitution $y'_i$ by $y'_0 \xi_i$
(note here that $\zeta:=y'_0$ and $i$ needs not to be unique), 
then we say that the exceptional parameter $\zeta$
is acquired by $y_i'$. In general, for  sequential blowups, if $\zeta$ is acquired by $y'$ and $y'$ is 
acquired by $y''$, then we also say $\zeta$ is acquired by $y''$.
\end{defn}

\begin{lemma}\label{same-degree}
We keep the same assumption and notation as in Definition \ref{general-proper-transforms}.

We let $T_{\fV', B}$ (resp. $T_{\fV, B}$) be any fixed term of $B_{\fV'}$ (resp.
$B_\fV$).  Consider any $y \in \var_\fV \- \zeta$ and  let 
$y' \in \var_{\fV'}$ be such that $y$ is the proper transform of $y'$. 
Then,  $y^b \mid  T_{\fV, B}$ if and only if 
$y'^b \mid  T_{\fV', B}$ for all integers $b \ge 0$.
\end{lemma}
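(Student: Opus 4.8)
The statement is purely local and combinatorial: it compares the exponent of a proper-transform variable $y$ in a term of $B_\fV$ with the exponent of the corresponding variable $y'$ in the matching term of $B_{\fV'}$. The plan is to unwind the definitions in Definition~\ref{general-proper-transforms} and Proposition~\ref{generalmeaning-of-variables} and simply track degrees through the substitution and the division by a power of the exceptional parameter. First I would recall the blowup formula \eqref{general-blowup-formulas}: on the chart $\fV=(\fV'\times(\xi_0\equiv 1))\cap\widetilde X$ we substitute $y'_i\mapsto y'_0\xi_i=\zeta\,y_i$ for $i\in[m]$, $y'_0\mapsto\zeta$, and leave every other local free variable $y'\in\var_{\fV'}\-\{y'_0,\dots,y'_m\}$ unchanged (its proper transform is itself, by Proposition~\ref{generalmeaning-of-variables}(3)). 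So the variables $y$ under consideration split into three types, and in each case the claim must be verified separately.

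\textbf{Key steps.} Fix a term $T_{\fV',B}$ of $B_{\fV'}$ and let $T_{\fV,B}$ be the corresponding term of $B_\fV$, so that after substitution $\pi_{\fV,\fV'}^*T_{\fV',B}=\zeta^{m_{\phi,T_{\fV',B}}}\cdot T_{\fV,B}'$ where $T_{\fV,B}'$ is the monomial obtained by replacing each $y'_j$ ($j\in[m]$) appearing in $T_{\fV',B}$ by $y_j$, each occurrence of $y'_0$ contributing only to the $\zeta$-power, and leaving the remaining variables untouched; then $T_{\fV,B}=\zeta^{\,m_{\phi,T_{\fV',B}}-l_{\phi,B_{\fV'}}}\cdot T_{\fV,B}'$ after the division by $\zeta^{l_{\phi,B_{\fV'}}}$ from \eqref{define-proper-t}. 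Now take $y\in\var_\fV\-\zeta$ with $y$ the proper transform of $y'\in\var_{\fV'}$. Case (i): $y'\in\var_{\fV'}\-\{y'_0,\dots,y'_m\}$; then $y=y'$ is untouched by the substitution and $\deg_y T_{\fV,B}=\deg_y T_{\fV,B}'=\deg_{y'}T_{\fV',B}$, so $y^b\mid T_{\fV,B}\iff y'^b\mid T_{\fV',B}$. Case (ii): $y=y_i=\xi_i$ for some $i\in[m]$, so $y'=y'_i$; since the substitution $y'_i\mapsto\zeta y_i$ introduces exactly one $y_i$ per occurrence of $y'_i$ and no other variable of $\var_\fV\-\zeta$ produces a $y_i$, we again get $\deg_{y_i}T_{\fV,B}=\deg_{y'_i}T_{\fV',B}$. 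Case (iii) does not arise here because $y'_0=\zeta$ is excluded by the hypothesis $y\in\var_\fV\-\zeta$. In every case $\deg_y T_{\fV,B}=\deg_{y'}T_{\fV',B}$, which immediately yields the divisibility equivalence for all $b\ge 0$.

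\textbf{The main obstacle.} There is essentially no analytic difficulty; the only thing requiring care is bookkeeping — making sure that (a) the $\zeta$-power extracted by the division in \eqref{define-proper-t} is common to both terms of the binomial (so that the non-$\zeta$ part of each term survives intact, which is exactly why $l_{\phi,B_{\fV'}}=\min$ of the two term-degrees is used) and (b) no spurious extra copies of $y_i$ are created or destroyed in passing from $T_{\fV',B}$ to $T_{\fV,B}$. I would phrase the argument so that it applies to a single term at a time (which is all the lemma asks) and note that, since $B_\fV$ is defined by dividing the whole pullback by the same $\zeta^{l_{\phi,B_{\fV'}}}$, the correspondence between terms of $B_{\fV'}$ and terms of $B_\fV$ is the obvious one; then the per-term degree identity $\deg_y T_{\fV,B}=\deg_{y'}T_{\fV',B}$ is the content of the lemma. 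I would also remark that the same computation shows $\deg_y$ is unchanged even when one works with a general polynomial $f_{\fV'}$ in place of a binomial (for which no division occurs), so the lemma is a special case of a slightly more general degree-preservation statement — but that remark is optional and I would keep the proof to the binomial case actually needed.
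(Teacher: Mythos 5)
Your proof is correct and takes the same approach as the paper, which simply asserts the lemma is clear from \eqref{define-proper-t}; you have merely made explicit the degree-tracking under the substitution $y'_j\mapsto\zeta y_j$ and the division by $\zeta^{l_{\phi,B_{\fV'}}}$. The case split and the observation that $\deg_y T_{\fV,B}=\deg_{y'}T_{\fV',B}$ for every $y\in\var_\fV\-\zeta$ is exactly the intended unwinding.
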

\begin{proof}
This is clear from \eqref{define-proper-t}.
\end{proof}

\begin{defn}\label{general-termination} 
We keep the same assumption and notation as in Definition \ref{general-proper-transforms}.

Consider an arbitrary binomial  $B_{\fV'}$
(resp. $B_\fV$) with variables in $\var_{\fV'}$ (resp. $\var_\fV$).
Let $\bz' \in \fV'$  (resp. $\bz \in \fV$)  be any fixed closed point of the chart. 
We say $B_{\fV'}$ (resp. $B_\fV$)  terminates at $\bz'$ (resp. $\bz$)
 if (at least) one of the monomial terms of $B_{\fV'}$
(resp. $B_\fV$), say, $T_{\fV', B}$
(resp. $T_{\fV,B}$),
does not vanish at  $\bz'$ (resp. $\bz$).  In such a case, we also say 
$T_{\fV', B}$ (resp. $T_{\fV,B}$) terminates  at $\bz'$ (resp. $\bz$).
 \end{defn}

\subsection{Governing binomial equations: revisited} $\ $


Recall that we have  chosen and fix the total order $``<"$ on $\sF$ and we have listed it as
$$\sF=\{\bF_1 < \cdots < \bF_\Upsilon\}.$$

Fix  and consider $F_k$ for any $k \in [\up]$.
We express $F_k=\sum_{s \in S_{F_k}} \sgn (s) p_{\uu_s} p_{\uv_s}$.
 Its corresponding linearized $\pl$ equation can
be expressed as $\sum_{s \in S_{F_k}} \sgn(s) x_{(\uu_s,\uv_s)}$, denoted by $L_{F_k}$. 
We let $s_{F_k} \in S_{F_k}$ be the index for the leading term of $F_k$, written as 
$ \sgn (s_{F_k}) p_\um p_{\uu_{F_k}}$.
Correspondingly, the leading term of the de-homogenization $\bF_k$ of $F_k$,
and  the leading term of the linearized $\pl$ equation $L_{F_k}$, 
 are defined to be $\sgn (s_{F_k})x_{\uu_{F_k}}$, and  $\sgn (s_{F_k}) x_{(\um,\uu_{F_k})}$,
 respectively. Here, as before, $\um=(123)$.

We are to provide a total ordering on the set $\cB^\gov$.

Throughout the remaining part of this article, for a given totally ordered
set $(K,<)$ ,
when we use $<_\lex$ (resp. $<_\rlex$), we mean
the lexicographical (resp. reverse lexicographical) order induced on 
the subsets of the power set $2^K$ consisting of elements
of equal cardinalities (see Definition \ref{gen-order}).
 
 Recall  from Definition \ref{cFi-partial-order} 
that the set of all $\pl$-variables is also totally ordered, compatible with that
on $\sF$.
(It is neither lexicographical nor reverse-lexicographical.)

 \begin{defn}\label{ordering-cBF} First,
 for any fixed $\bF \in \sF$, we  provide a total ordering on the set $S_F \- s_F$ 
 by induction on $\rk F$ as follows.

 $\bullet$ Suppose $\rk F=0$.  Then, $S_F \- s_F$ consists of two elements $\{s, t\}$.
 We say $s<t$ if $(\uu_s, \uv_s) <_\lex (\uu_t, \uv_t)$ where each pair is listed
 lexicographically according to the order on the set of all $\pl$ variables.

 $\bullet$ Suppose $\rk F=1$. Then, for any $s\in  S_F \- s_F$,
 one of $p_{\uu_s}$ and $p_{\uv_s}$ is a basic variable, the other is of rank equal to
 $0$. Without loss of generality, we suppose $\rk p_{\uu_s}=0$ and let
 $F_{\uu_s}$ be its corresponding $\pl$ relation. Then, we say $s <t$  if $\uu_s < \uu_t$, that is,
 $F_{\uu_s} < F_{\uu_t}$.
 
Next, we furnish a total order on the set $\cB^\gov=\sqcup_{\bF \in \sF} \cB^\gov_F$ as follows.

For any $B' \in \cB^\gov_{F'}$ and $B \in \cB^\gov_{F}$ with $F' \ne F$,
 we say $B'<B$ if $F' < F$.

Recall that
$\cB^\gov_F=\{B_{F,(s_F,s)} \mid s \in S_F \setminus s_F  \}.$
Then, for any two distinct $s, t \in  S_F \- s_F$, we say $B_{F,(s_F,s)}<B_{F,(s_F,t)}$
if $s<t$.
 \end{defn} 
 This ordering is important for our purpose. But, we mention that for a fixed $\bF \in \sF$,
the order within $\cB^\gov_F$ is unimportant: we just need one to proceed.

We shall map $(\cB^\gov, <)$ into $(\ZZ \times \ZZ, <_\lex)$, preserving the orders.

We let $(\ft_{F_k}+1)$ be the number of terms in ${F_k}$ for any $k \in [\up]$. 
($\ft_{F_k}$ only assumes values 2 or 3.  See also Definition \ref{ftF}.)
Then, we can  list $S_{F_k}$ as
$$S_{F_k}=\{s_{F_k}; \; s_1 < \cdots < s_{\ft_{F_k}}\}.$$
Then, we can write 
$$\cB^\gov_{F_k}=\{B_{(k\tau)} \mid  \tau \in [\ft_{F_k}]  \}$$
with 
\begin{equation} \label{eq-B-ktau'}
B_{(k\tau)}: \; x_{(\uu_{s_\tau}, \uv_{s_\tau})}x_{\uu_{F_k}} - x_{(\um,\uu_{F_k})}   x_{\uu_{s_\tau}} x_{\uv_{s_\tau}}, \;
\forall \;\; {s_\tau} \in S_{F_k} \- s_{F_k}. \end{equation}
As
 $$\cB^\gov=\bigsqcup_{k\in [\up]} \cB^\gov_{F_k}=\{B_{(k\tau)} \mid k \in [\up], \;  \tau \in [\ft_{F_k}]  \},$$ 
we let
\begin{equation}\label{indexing-Bmn}
\Index_{\cB^\gov}=\{ (k\tau) \mid k \in [\up], \; \tau \in [\ft_{F_k}] \} \subset \ZZ \times \ZZ
\end{equation}
be the index set of $\cB^\gov$.

Now, observe that the order $`` < "$ on the set $\cB^\gov$  coincides with
the lexicographic order on $\Index_{\cB^\gov}$, that is, 
$$B_{(k\tau)} < B_{(k'\tau')} \iff (k,\tau) <_\lex (k',\tau').$$

\begin{defn}\label{pm-term} Given any governing binomial equation
$B_{(k\tau)}$ as in \eqref{eq-B-ktau'},
we let 
$$T^+_{(k\tau)}= \; x_{(\uu_{s_\tau}, \uv_{s_\tau})}x_{\uu_{F_k}},$$ called the plus-term of $B_{(k\tau)}$,
and $$T^-_{(k\tau)}= x_{(\um,\uu_{F_k})}   x_{\uu_{s_\tau}} x_{\uv_{s_\tau}},$$
called the minus-term of $B_{(k\tau)}$.
Then, we have $$B_{(k\tau)}=T^+_{(k\tau)}-T^-_{(k\tau)}.$$
\end{defn}

We do not name any term of a binomial of $\cB^\ngv \cup \cB^\frb$ a plus-term or a minus-term since
the two terms of such a  binomial  are rather indistinguishable.

 Recall that $$\fG=\bigsqcup_{\bF \in \sF} \fG_F, \;\; \hbox{where $\fG_F=\{\cB^\gov_F, L_F\}$.}$$
 We endow a total order on $\fG$ as follows. We say
 $\fG_{F'} < \fG_F$ if $F'< F$. 
This order is compatible with the order on $\sF$ as well as with the one on $\cB^\gov$.

Before turning to a new subsection, it is worth to remind the reader here that by 
Corollary \ref{eq-tA-for-sV}, the scheme $\sV$, as a closed subscheme of
$\sR= \bU \times  \prod_{\bF \in  \sF} \PP_F $,
is defined by the following relations
\begin{eqnarray}
   B_{(k\tau)}, \;\; L_\sF, \;\;  \cB^\ngv, \;\;  \cB^\frb 
\end{eqnarray}
where $L_\sF$ is the set of all linearized $\pl$ relation as in \eqref{LsF}.

\subsection{$\vt$-centers and $\vt$-blowups}\label{vr-centers} $\ $

{\it Besides serving as a part of   the process of eliminating  zero factors of
 the governing binomial relations, upon performing $\vt$-blowups,
 the proper transforms of all non-governing binomial relations 
in $\cB^\ngv$ will
become dependent on the proper transforms of the governing binomial relations,  therefore, these relations will be discarded after  $\vt$-blowups.

Every of the $\vt$-blowups
is the blowup of a smooth ambient space along a codimension two
smooth closed subvariety that is the intersection of two smooth
divisors. 
}

Recall that the scheme $\tsR_{\vt_{[0]}}:=\sR$ comes equipped with two kinds of divisors:
$\vp$-divisors $X_\uw$ for all $\uw \in \II_{3,n}\- (123)$
and $\vr$-divisors $X_{(\uu,\uv)}$ for all $(\uu,\uv) \in \La_\sF$. 

As in \eqref{ld-index}, we have
$$\II_{3,n}^{\lt}=\{(1uv),(2uv),(3uv),(abc) \mid 3<u<v \le n,\; 3<a<b<c\le n\}.$$
This is the index set for nonbasic variables, or equivalently, the index set for the leading variables of
primary $\pl$ relations.

\begin{defn}\label{defn:vr-centers}
Fix any $\uu \in \II_{3,n}^{\lt}$. 
We let
$$\vt_\uu=(X_\uu, X_{((123),\uu)}).$$
We call it the  $\vt$-set with respect to $\uu$.
We then call the scheme-theoretic intersection
 $$Z_{\vt_\uu}=X_\uu \cap X_{((123),\uu)}$$
the $\vt$-center with respect to $\uu$.
\end{defn}

We let
$$\Theta=\{\vt_\uu \mid \uu \in \II_{3,n}^{\lt}\},\;\;\;
\cZ_\Theta=\{Z_{\vt_\uu} \mid \uu \in \II_{3,n}^{\lt}\}.$$
We let $\Theta$, respectively,  $\cZ_\Theta$,
inherit the total order from $\II_{3,n}^{\lt}$. 
Thus,  if we write
$$\II_{3,n}^{\lt}=\{\uu_1 < \cdots <\uu_{\up}\}$$
and also write $\vt_{\uu_k}=\vt_{[k]}$, $Z_{\vt_{\uu_k}}=Z_{\vt_{[k]}}$, then, we can express
$$\cZ_\Theta=\{Z_{\vt_{[1]}} < \cdots < Z_{\vt_{[\up]}} \}.$$

We then blow up $\sR$ along 
$Z_{\vt_{[k]}},\; k \in [\up]$, in the above order.  More precisely,
we start by setting $\tsR_{\vt_{[0]}}:=\sR$.  Suppose 
$\tsR_{\vt_{[k-1]}}$ has been constructed for some $k \in [\up]$. We then let
$$\tsR_{\vt_{[k]}} \lra \tsR_{\vt_{[k-1]}}$$
be the blowup of $\tsR_{\vt_{[k-1]}}$ along the proper transform of $Z_{\vt_{[k]}}$,
and we call it  the $\vt$-blowup in ($\vt_{[k]}$).

 The above gives rise to the following sequential $\vt$-blowups
\begin{equation}\label{vt-sequence}
\tsR_{\vt_{[\up]}} \to \cdots \to \tsR_{\vt_{[1]}} \to \tsR_{\vt_{[0]}}:=\sR,
\end{equation}

Every blowup $\tsR_{\vt_{[j]}} \lra \tsR_{\vt_{[j-1]}}$
comes equipped with an exceptional divisor, denoted by $E_{\vt_{[j]}}$.
Fix $k \in [\up]$. For any $j < k$, we let $E_{\vt_{[k]},j }$ be the proper transform
of $E_{\vt_{[j]}}$ in $\tsR_{\vt_{[k]}}$.  For notational consistency, we set
$E_{\vt_{[k]}}=E_{\vt_{[k]},k }$. We call the divisors $E_{\vt_{[k]},j }$, $j\le k$, the exceptional divisors
on $\tsR_{\vt_{[k]}}$.

\begin{prop}\label{Taction-vt}
The quasi-free $\TT$-action on $\tsR_{\vt_{[k]}}$ lifts to 
a quasi-free $\TT$-action on $\tsR_{\vt_{[k]}}$.
\end{prop}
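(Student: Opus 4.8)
The plan is to prove this by induction along the $\vt$-blowup tower \eqref{vt-sequence}, the substantive content being that the quasi-free $\TT$-action on $\tsR_{\vt_{[k-1]}}$ lifts to a quasi-free $\TT$-action on $\tsR_{\vt_{[k]}}$ commuting with the blowup morphism; iterating from $k=0$ then yields the statement. The base case $\tsR_{\vt_{[0]}}=\sR=\bU\times\prod_{\bF\in\sF}\PP_F$ is covered by Lemma \ref{lift-action-to-Um[k]}: the isotropy at a point $(x,y)$ is the isotropy of $x$ in $\bU$, and this is connected since $\TT$ acts quasi-freely on $\PP(\wedge^3 E)\supset\bU$.

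For the inductive step I would first record that the center is $\TT$-invariant. Writing $\uu_k=(u_1u_2u_3)$, the $\vp$-variable $x_{\uu_k}$ is a $\TT$-semi-invariant of character $\chi_{\uu_k}=t_{u_1}t_{u_2}t_{u_3}t_1^{-1}t_2^{-1}t_3^{-1}$, so $X_{\uu_k}$ is a $\TT$-eigendivisor; the $\vr$-variable $x_{((123),\uu_k)}$ is $\TT$-invariant (trivial character), so $X_{((123),\uu_k)}$ is pointwise $\TT$-fixed. Hence $Z_{\vt_{[k]}}=X_{\uu_k}\cap X_{((123),\uu_k)}$ and its proper transform in $\tsR_{\vt_{[k-1]}}$ are smooth $\TT$-invariant closed subschemes, so by the universal property of blowing up the $\TT$-action lifts uniquely to $\tsR_{\vt_{[k]}}$, preserving the exceptional divisor $E_{\vt_{[k]}}$ (smoothness of $\tsR_{\vt_{[k]}}$ being already known). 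I would also note, for later use, that since every $\vt$-center involves only the single index $\uu_j$, the proper transform of each $\vp$-divisor $X_\uu$ remains cut out by a local free variable of character $\chi_\uu$ on every admissible chart, so all $\vp$-characters are preserved along the entire tower.

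The remaining and main point is quasi-freeness of the lifted action, which I would verify chart by chart. Over a chart $\fV'$ of $\tsR_{\vt_{[k-1]}}$ the two standard charts of $\tsR_{\vt_{[k]}}$ are affine spaces on which $\TT$ acts diagonally (the blowup formulas \eqref{general-blowup-formulas} being monomial), so every local free variable in $\var_\fV$ carries a character: in the chart where $x_{\uu_k}$ survives as the exceptional parameter $\zeta$ one has character $\chi_{\uu_k}$ on $\zeta$ and $\chi_{\uu_k}^{-1}$ on the complementary coordinate $x_{((123),\uu_k)}/x_{\uu_k}$; in the other chart $\zeta=x_{((123),\uu_k)}$ is $\TT$-invariant and $x_{\uu_k}/x_{((123),\uu_k)}$ has character $\chi_{\uu_k}$; all other variables keep their $\fV'$-characters. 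The isotropy at a closed point $\tilde z$ is the intersection of the kernels of the characters of the coordinates nonvanishing at $\tilde z$. If $\tilde z\notin E_{\vt_{[k]}}$ this equals the isotropy of $\pi(\tilde z)$, connected by induction. Using the $\PP^1$-bundle structure of $E_{\vt_{[k]}}$ over $Z_{\vt_{[k]}}$: over $z\in Z_{\vt_{[k]}}$, at the two coordinate-line points of the fiber the isotropy is all of $\TT_z$ (connected by induction), while at a diagonal point of the fiber the extra constraint $\chi_{\uu_k}(t)=1$ yields isotropy $\TT_z\cap\ker\chi_{\uu_k}$; since $x_{\uu_k}(z)=0$, the character $\chi_{\uu_k}$ does not occur among those cutting out $\TT_z$, so — using that the $\vp$-coordinates on $\fV'$ are algebraically unconstrained — I can choose $z'\in\fV'$ with the same nonvanishing coordinates as $z$ together with $x_{\uu_k}(z')\neq 0$, for which $\TT_{z'}=\TT_z\cap\ker\chi_{\uu_k}$, which is connected by quasi-freeness on $\tsR_{\vt_{[k-1]}}$. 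This exhausts all points, giving quasi-freeness on $\tsR_{\vt_{[k]}}$, and restricting to the invariant locus gives it on $\tsV_{\vt_{[k]}}$.

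The main obstacle I anticipate is exactly this quasi-freeness over the exceptional locus: a primitive character of $\TT$ need not restrict to a primitive character of an isotropy subtorus, so one cannot merely invoke connectedness of $\ker\chi_{\uu_k}$; one must exploit that only the single new character $\chi_{\uu_k}$ enters the blowup (because the $\vr$-direction $x_{((123),\uu_k)}$ is $\TT$-fixed) and that the coordinates on $\bU$, hence on every admissible chart along the $\vt$-tower, are unobstructed, so that $\TT_z\cap\ker\chi_{\uu_k}$ is genuinely an isotropy group of the previous, quasi-free, action. The rest is routine bookkeeping of characters through the chart formulas, for which the preservation of all $\vp$-characters along the tower is the key simplification.
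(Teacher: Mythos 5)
Your proposal takes the same inductive route as the paper (lift up the $\vt$-tower; base case is Lemma \ref{lift-action-to-Um[k]}), but with a somewhat different presentation. The paper's argument simply writes down the $\TT$-action on the factor projective space $\PP_{\vt_{[k]}}$ --- namely $\bt\cdot[\xi_0,\xi_1]=[\chi_{\uu_k}(\bt)\,\xi_0,\xi_1]$ in your notation --- observes that this makes the embedding $\tsR_{\vt_{[k]}}\hookrightarrow\tsR_{\vt_{[k-1]}}\times\PP_{\vt_{[k]}}$ equivariant, and dismisses quasi-freeness with ``one checks.'' You instead invoke the universal property of blowing up along a $\TT$-invariant center (cleaner than guessing the formula) and then spend most of the effort verifying quasi-freeness chart by chart, which is precisely the part the paper elides. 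Your analysis --- that the only new character entering is $\chi_{\uu_k}$ because all $\vr$-variables carry trivial character, that a diagonal point of the exceptional $\PP^1$-fiber over $z$ has isotropy $\TT_z\cap\ker\chi_{\uu_k}$, and that this group is realized as the isotropy of a nearby point downstairs by turning on the $\chi_{\uu_k}$-coordinate --- is a legitimate way to close that gap; it works because on every $\vt$-chart the local variables form a free set (Proposition \ref{meaning-of-var-vtk}), so any prescribed vanishing pattern is attained by a closed point of the previous stage. This is a genuine value-add over the paper's terse treatment.

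One slip worth correcting: you assert that ``$X_{((123),\uu_k)}$ is pointwise $\TT$-fixed.'' It is not --- its points still move under $\TT$ through the $\bU$-factor. What you actually use, and all you need, is that the divisor is $\TT$-invariant as a closed subscheme, which holds because its defining equation $x_{((123),\uu_k)}$ is a $\TT$-invariant function.
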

\begin{proof}
By Lemma \ref{lift-action-to-Um[k]}, we have a
quasi-free $\TT$-action on $\tsR_{\vt_{[0]}}=\sR$.

Suppose the
quasi-free $\TT$-action on $\tsR_{\vt_{[k-1]}}$ has been constructed.
Consider the embedding $$\tsR_{\vt_{[k]}} \lra  \tsR_{\vt_{[k-1]}} \times \PP_{\vt_{[k]}}$$
where $\PP_{\vt_{[k]}}$ is the factor projective space with homogeneous coordinates $[\xi_0,\xi_1]$
corresponding to $(X_{\uu_k}, X_{(\um,\uu_k)})$.
Writing $\uu_k=(u_1u_2u_3)$, we let $\TT$ act on $\PP_{\vt_{[k]}}$
by
$$\bt \cdot [\xi_0, \xi_1] = [\frac{t_{u_1}t_{u_2}t_{u_3}}{t_1t_2t_3}
\xi_0,\xi_1]$$
where $\bt=(t_1,\cdots, t_n)$ is a representative of an arbitrary element
of $\TT=\GG_m^n/\GG_m$. One checks that this is well-defined and
lifts the
quasi-free $\TT$-action on $\tsR_{\vt_{[k-1]}}$ to 
a quasi-free $\TT$-action on $\tsR_{\vt_{[k]}}$.
\end{proof}

For every 
$\uw \in \II_{3,n} \setminus (123)$, we let
$X_{\vt_{[k]}, \uw}$ be the proper transform of $X_\uw$ in $\tsR_{\vt_{[k]}}$,
still called $\vp$-divisor;
for every $(\uu,\uv) \in \La_\sF$,
we let  $X_{\vt_{[k]}, (\uu,\uv)}\cap \fV$ be 
 the proper transform of $X_{(\uu,\uv)}$ in $\tsR_{\vt_{[k]}}$,
still called $\vr$-divisor.
for every $\bF \in \sF$,
we let  $D_{\vt_{[k]}, L_F}$ be the proper transform of $D_{L_F}$ in $\tsR_{\vt_{[k]}}$,
still called the $\fL$-divisor.


\subsection{Properties of $\vt$-blowups}\label{prop-vt-blowups} $\ $

{\it
As in the rest of this paper,  we will use $\um$ and $(123)$ interchangeably, as
using $\um$ in place of $(123)$ may save some space in display.}

By Definition \ref{general-standard-chart}, the scheme $\tsR_{\vt_{[k]}}$
is covered by a set of standard charts.

\begin{prop}\label{meaning-of-var-vtk}
Consider any standard chart $\fV$ of $\tsR_{\vt_{[k]}}$, 
 lying over a unique chart $ \fV_{[0]}$ of $\tsR_{\vt_{[0]}}=\sR$.
 We suppose that the chart $ \fV_{[0]}$ is indexed by
$\La_\sF^o=\{(\uu_{s_{F,o}},\uv_{s_{F,o}}) \mid \bF \in \sF \}$
(cf. \eqref{index-sR}).
We let $\II_{3,n}^\star=\II_{3,n}\- (123)$ and $\La_\sF^\star=\La_\sF \- \La_\sF^o$.

Then, the standard chart $\fV$ comes equipped with 
$$\hbox{a subset}\;\; \fe_\fV  \subset \II_{3,n}^\star \;\;
 \hbox{and a subset} \;\; \fd_\fV  \subset \La_{\sF}^\star$$
such that every exceptional divisor  $E_{\vt_{[k]}, j} , j \in[k]$
of $\tsR_{\vt_{[k]}}$
with $E_{\vt_{[k]}, j} \cap \fV \ne \emptyset$ is 
either labeled by a unique element $\uw \in \fe_\fV$
or labeled by a unique element $(\uu,\uv) \in \fd_\fV$. 
We let $E_{\vt_{[k]}, \uw}$ be the unique exceptional divisor 
on the chart $\fV$ labeled by $\uw \in \fe_\fV$; we call it an $\vp$-exceptional divisor.
We let $E_{\vt_{[k]}, (\uu,\uv)}$ be the unique exceptional divisor 
on the chart $\fV$ labeled by $(\uu,\uv) \in \fd_\fV$;  we call it an $\vr$-exceptional divisor.
(We note here that being $\vp$-exceptional or $\vr$-exceptional is strictly relative to the given
standard chart.)

Further, the standard chart $\fV$  comes equipped with the set of free variables
\begin{equation}\label{variables-vtk} 
\var_{\fV}:=\left\{ \begin{array}{ccccccc}
\ve_{\fV, \uw} , \;\; \de_{\fV, (\uu,\uv) }\\
x_{\fV, \uw} , \;\; x_{\fV, (\uu,\uv)}
\end{array}
  \; \Bigg| \;
\begin{array}{ccccc}
 \uw \in  \fe_\fV,  \;\; (\uu,\uv)  \in \fd_\fV  \\ 
\uw \in  \II_{3,n}^\star \- \fe_\fV,  \;\; (\uu, \uv) \in \La_\sF^\star \-  \fd_\fV  \\
\end{array} \right \}
\end{equation} such that 
on the standard chart $\fV$, we have
\begin{enumerate}
\item the divisor  $X_{\vt_{[k]}, \uw}\cap \fV$ 
 is defined by $(x_{\fV,\uw}=0)$ for every 
$\uw \in \II_{3,n}^\star \- \fe_\fV$;
\item the divisor  $X_{\vt_{[k]}, (\uu,\uv)}\cap \fV$ is defined by $(x_{\fV,(\uu,\uv)}=0)$ for every 
$(\uu,\uv) \in \La^\star_\sF\- \fd_\fV$;
\item the divisor  $D_{\vt_{[k]}, L}\cap \fV$ is defined by $(L_{\fV,F}=0)$ for every
$\bF \in \sF$ where $L_{\fV, F}$ is the proper transform of $L_F$; 
\item the divisor  $X_{\vt_{[k]}, \uw}$ does not intersect the chart for all $\uw \in \fe_\fV$;
\item the divisor  $X_{\vt_{[k]}, (\uu, \uv)}$ does not intersect the chart for all $(\uu, \uv) \in \fd_\fV$;
\item the $\vp$-exceptional divisor 
$E_{\vt_{[k]}, \uw} \;\! \cap  \fV$  labeled by an element $\uw \in \fe_\fV$
is define by  $(\ve_{\fV,  \uw}=0)$ for all $ \uw \in \fe_\fV$;
\item the $\vr$-exceptional divisor 
$E_{\vt_{[k]},  (\uu, \uv)}\cap \fV$ labeled by  an element $(\uu, \uv) \in \fd_\fV$
is define by  $(\de_{\fV,  (\uu, \uv)}=0)$ for all $ (\uu, \uv) \in \fd_\fV$;
\item  any of the remaining exceptional divisor of $\tsR_{\vt_{[k]}}$
other than those that are labelled by some  $\uw \in \fe_\fV$ or $(\uu,\uv) \in \fd_\fV$ 
 does not intersect the chart.
\end{enumerate}
\end{prop}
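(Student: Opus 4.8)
The statement is a structural description of any standard affine chart of the iterated blowup $\tsR_{\vt_{[k]}}$, and the natural strategy is induction on $k$, using the conventions for sequential blowups set up in \S\ref{blowupConventions}, in particular Proposition \ref{generalmeaning-of-variables} and Definition \ref{general-proper-transforms}. The base case $k=0$ is precisely Proposition \ref{meaning-of-var-p-k=0} (with $k=\up$ there), where $\fe_\fV=\emptyset$, $\fd_\fV=\emptyset$, no exceptional divisors occur, and items (1)--(3) hold while (4)--(8) are vacuous. So the real work is the inductive step: assuming the proposition for $\tsR_{\vt_{[k-1]}}$, deduce it for $\tsR_{\vt_{[k]}}$.

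For the inductive step I would fix a standard chart $\fV$ of $\tsR_{\vt_{[k]}}$; by Definition \ref{general-standard-chart} it lies over a unique standard chart $\fV'$ of $\tsR_{\vt_{[k-1]}}$, to which the inductive hypothesis applies, giving the data $\fe_{\fV'}$, $\fd_{\fV'}$, the free variables $\var_{\fV'}$, and properties (1)--(8) on $\fV'$. The chart $\fV$ is obtained from the blowup $\tsR_{\vt_{[k]}}\to\tsR_{\vt_{[k-1]}}$ of the proper transform of the $\vt$-center $Z_{\vt_{[k]}}=X_{\uu_k}\cap X_{(\um,\uu_k)}$. The key local observation is: if $X_{\vt_{[k-1]},\uu_k}\cap\fV'=\emptyset$ or $X_{\vt_{[k-1]},(\um,\uu_k)}\cap\fV'=\emptyset$ — i.e. if $\uu_k\in\fe_{\fV'}$ or $(\um,\uu_k)\in\fd_{\fV'}$ or the chart simply misses one of the two divisors — then the center misses $\fV'$, the blowup is an isomorphism over $\fV'$, and $\fV$ inherits everything from $\fV'$ verbatim, with the new exceptional divisor $E_{\vt_{[k]}}$ not meeting $\fV$; this is covered by item (8). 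Otherwise both divisors meet $\fV'$, cut out by $x_{\fV',\uu_k}=0$ and $x_{\fV',(\um,\uu_k)}=0$ with $\uu_k\in\II_{3,n}^\star\setminus\fe_{\fV'}$ and $(\um,\uu_k)\in\La_\sF^\star\setminus\fd_{\fV'}$, so the center is the codimension-two coordinate subspace $(x_{\fV',\uu_k}=x_{\fV',(\um,\uu_k)}=0)$ of $\fV'$. Now I apply Proposition \ref{generalmeaning-of-variables} with $m=1$: there are two standard charts over $\fV'$. On the chart where $\xi_0\equiv1$ corresponds to $x_{\fV',\uu_k}$, the new exceptional variable is $\ve_{\fV,\uu_k}:=x_{\fV',\uu_k}$ and one adds $\uu_k$ to $\fe_{\fV'}$ to form $\fe_\fV$, keeping $\fd_\fV=\fd_{\fV'}$; the proper transform $x_{\fV,(\um,\uu_k)}$ of $x_{\fV',(\um,\uu_k)}$ is the new coordinate on $\PP_{\vt_{[k]}}$, while $X_{\vt_{[k]},\uu_k}$ no longer meets the chart, giving (4) and (6) for the new index. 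Symmetrically, on the chart where $\xi_0\equiv1$ corresponds to $x_{\fV',(\um,\uu_k)}$, one sets $\de_{\fV,(\um,\uu_k)}:=x_{\fV',(\um,\uu_k)}$, adds $(\um,\uu_k)$ to $\fd_{\fV'}$ to form $\fd_\fV$, and obtains (5) and (7). In all cases the remaining variables in $\var_{\fV'}$ are carried along as their proper transforms (Proposition \ref{generalmeaning-of-variables}(3)), so (1), (2) transfer directly, and (3) follows because the $\fL$-divisors $D_{L_F}$ are not among the blowup centers, so $D_{\vt_{[k]},L_F}\cap\fV$ is still defined by the proper transform $L_{\fV,F}$ of $L_F$ (using that $L_F$ does not vanish identically along the center — which I would note follows from the $\vr$-linearity of $L_F$ and the fact that each $\vr$-variable appears at most once in $L_F$). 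Finally item (8) is handled by the standard remark that an exceptional divisor created at an earlier stage $j<k$ and not meeting $\fV'$ cannot meet $\fV$, while one created at stage $j$ with $E_{\vt_{[k-1]},j}\cap\fV'\ne\emptyset$ is already accounted for by an element of $\fe_{\fV'}\subset\fe_\fV$ or $\fd_{\fV'}\subset\fd_\fV$ by induction.

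The step I expect to be the main obstacle is bookkeeping consistency of the labels: one must check that at each stage the index $\uu_k$ (resp. $(\um,\uu_k)$) that gets adjoined to $\fe$ (resp. $\fd$) is genuinely new — i.e. $\uu_k\notin\fe_{\fV'}$ and $(\um,\uu_k)\notin\fd_{\fV'}$ — whenever the center actually meets the chart, so that the disjointness built into \eqref{variables-vtk} is preserved and the labelling of exceptional divisors by elements of $\II_{3,n}^\star\sqcup\La_\sF^\star$ remains well-defined and injective. This is where one uses crucially that the $\vt$-centers $Z_{\vt_{[j]}}$ for distinct $j$ involve distinct pairs $(\uu_j,(\um,\uu_j))$ (since the $\uu_j$ are distinct elements of $\II_{3,n}^\lt$), together with item (4)/(5) of the inductive hypothesis: once $X_{\vt_{[j]},\uu_j}$ has been removed from a chart by an earlier $\vt$-blowup, it never reappears, so no index can be adjoined twice on the same chart. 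Making this precise — tracking which divisors survive on which charts through the whole sequence \eqref{vt-sequence} — is the routine but genuinely delicate part; everything else is a direct application of the blowup formulas \eqref{general-blowup-formulas} and Proposition \ref{generalmeaning-of-variables}.
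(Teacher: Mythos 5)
Your proposal is correct and follows the same route as the paper's own proof: induction on $k$, with base case reducing to Proposition~\ref{meaning-of-var-p-k=0}, followed by a case analysis of the inductive step according to whether the $\vt$-center meets the chart and, if it does, which of the two charts of the factor $\PP^1$ one lands in, assigning $\ve_{\fV,\uu_k}$ (with $\uu_k$ adjoined to $\fe_\fV$) on the $\vp$-chart and $\de_{\fV,(\um,\uu_k)}$ (with $(\um,\uu_k)$ adjoined to $\fd_\fV$) on the $\vr$-chart. The one point worth flagging: the "delicate" bookkeeping you worry about is in fact immediate, since the $\vt$-centers $Z_{\vt_{[j]}}$ are indexed by distinct elements $\uu_j \in \II_{3,n}^\lt$ and the $\vp$-labels in $\II_{3,n}^\star$ are disjoint from the $\vr$-labels in $\La_\sF^\star$, so the freshness of the adjoined index at stage $k$ never fails; the paper simply omits this remark.
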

\begin{proof}
When $k=0$, we have $\tsR_{\vt_{[0]}}=\sR$. 
In this case, we set $$ \fe_\fV = \fd_\fV =\emptyset.$$
Then, the statement follows from 
Proposition \ref{meaning-of-var-p-k=0} with $k=\up$.

We now suppose that the statement holds for $\tsR_{\vt_{[k-1]}}$
for some $k \in [\up]$. 

We consider $\tsR_{\vt_{[k]}}$.

As in the statement, we let $\fV$ be a standard chart 
of $\tsR_{\vt_{[k]}}$, lying over a (necessarily unique) 
standard chart  $\fV'$ of $\tsR_{\vt_{[k-1]}}$.

If $(\um,\uu_k) \in  \La_{[k]}^o$
(cf. \eqref{index-sR}), 
then $\fV'$
does not intersect the proper transform of the blowup center $Z_{\vt_k}$ and
$\fV \to \fV'$ is an isomorphism. In this case, we let $\var_\fV=\var_{\fV'}$,
$ \fe_{\fV'}=\fe_\fV, $ and $\fd_{\fV} =\fd_{\fV'}.$ Then, the statements on $\fV'$
carry over to $\fV$.

In what follows, we assume $(\um,\uu_k) \notin  \La_{[k]}^o$.

Consider the embedding $$\tsR_{\vt_{[k]}} \lra  \tsR_{\vt_{[k-1]}} \times \PP_{\vt_{[k]}}$$
where $\PP_{\vt_{[k]}}$ is the factor projective space with homogeneous coordinates $[\xi_0,\xi_1]$
corresponding to $(X_{\uu_k}, X_{(\um,\uu_k)})$.
We let $E_{\vt_{[k]}}$ be the exceptional divisor created by 
the blowup $\tsR_{\vt_{[k]}} \to  \tsR_{\vt_{[k-1]}}$.

First, we consider the case when 
$$\fV = \tsR_{\vt_{[k]}} \cap (\fV' \times (\xi_0 \equiv 1).$$
We let $Z'_{\vt_k}$ be the proper transform of the $\vt$-center $Z_{\vt_k}$
in  $\tsR_{\vt_{[k-1]}}$. Then,
in this case, on the chart $\fV'$, we have
 $$Z'_{\vt_k} \cap \fV' = \{ x_{\fV', \uu_k} = x_{\fV', (\um,\uu_k)}=0\}$$
 where   $x_{\fV', \uu_k}$ (resp. $x_{\fV', (\um,\uu_k)}$) is the proper transform of $x_\uu$ (resp.
 $x_{ (\um,\uu_k)}$)
 on the chart $\fV'$.
 Then, $\fV$ as a closed subset of  $\fV' \times (\xi_0 \equiv 1)$ is defined by
 $$x_{\fV', (\um,\uu_k)} = x_{\fV', \uu_k} \xi_1.$$
 We let 
 $$\fe_\fV= \uu_k \sqcup \fe_{\fV'},\; \fd_\fV=  \fd_{\fV'},\;\; \hbox{and}$$
 $$\ve_{\fV, \uu_k}=x_{\fV', \uu_k}, \;  x_{\fV, (\um,\uu_k)}=\xi_1; \;
 y_\fV = y_{\fV'}, \; \forall \; y_{\fV'} \in \var_{\fV'}\- \{x_{\fV', \uu_k}, x_{\fV', (\um,\uu_k)}\}.$$
 Observe that  
 $E_{\vt_{[k]}} \cap \fV = (\ve_{\fV, \uu_k}=0)$ and
 $x_{\fV, (\um,\uu_k)}=\xi_1$ is the proper transform of  $x_{\fV', (\um,\uu_k)}$.
 By the inductive assumption on the chart $\fV'$, one verifies directly that
 (1) - (7) of the proposition hold (cf. Proposition \ref{generalmeaning-of-variables}).
 
Next, we consider the case when 
$$\fV = \tsR_{\vt_{[k]}} \cap (\fV' \times (\xi_1 \equiv 1).$$
 Then, $\fV$ as a closed subset of  $\fV' \times (\xi_1 \equiv 1)$ is defined by
 $$ x_{\fV', \uu_k} =x_{\fV', (\um,\uu_k)} \xi_0.$$
 We let 
 $$\fe_\fV=  \fe_{\fV'},\; \fd_\fV=  \{(\um,\uu_k)\} \sqcup \fd_{\fV'},\;\; \hbox{and}$$
 $$\de_{\fV,  (\um,\uu_k)}=x_{\fV',  (\um,\uu_k)}, \;  x_{\fV, \uu_k}=\xi_0; \;
 y_\fV = y_{\fV'}, \; \forall \; y_{\fV'} \in \var_{\fV'}\- \{x_{\fV', \uu_k}, x_{\fV', (\um,\uu_k)}\}.$$
 Observe that  
 $E_{\vt_{[k]}} \cap \fV = (\de_{\fV,  (\um,\uu_k)}=0)$ and
 $x_{\fV, \uu_k}=\xi_0$ is the proper transform of  $x_{\fV', \uu_k}$.
 By the inductive assumption on the chart $\fV'$, like in the above case,
 one checks directly that
 (1) - (7) of the proposition hold.

 This proves the proposition.
 \end{proof}

Observe here that $x_{\fV, \uu}$ with $\uu \in \fe_{\fV}$ and
 $x_{\fV, (\uu,\uv)}$ with  $(\uu,\uv) \in \de_{\fV}$ are not variables in $\var_\fV$.
For notational convenience, to be used throughout, we make a convention:  
\begin{equation}\label{conv:=1}
\hbox{$\bullet$ $x_{\fV, \uu} = 1$ if  $\uu \in \fe_{\fV}$; 
\;\; $\bullet$ $x_{\fV, (\uu,\uv)} = 1$ if  $(\uu,\uv) \in \fd_{\fV}$.}
\end{equation}

\smallskip
For any $k \in [\up]$, the $\vt$-blowup in ($\vt_{[k]}$)
gives rise to \begin{equation}\label{tsV-vt-k} \xymatrix{
\tsV_{\vt_{[k]}} \ar[d] \ar @{^{(}->}[r]  &\tsR_{\vt_{[k]}} \ar[d] \\
\sV \ar @{^{(}->}[r]  & \sR,
}
\end{equation}
where $\tsV_{\vt_{[k]}}$ is the proper transform of $\sV$ 
in  $\tsR_{\vt_{[k]}}$. 

Alternatively, we can set $\tsV_{\vt_{[0]}}:=\sV_\sF$.  Suppose 
$\tsV_{\vt_{[k-1]}}$ has been constructed for some $k \in [\up]$. We then let
 $\tsV_{\vt_{[k]}} \subset \tsR_{\vt_{[k]}}$ be the proper transform of $\tsV_{\vt_{[k-1]}}$.

\begin{prop}\label{Taction-vt-v}
The quasi-free $\TT$-action on $\tsV_{\vt_{[k]}}$ lifts to 
a quasi-free $\TT$-action on $\tsV_{\vt_{[k]}}$.
\end{prop}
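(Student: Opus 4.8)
The plan is to mirror the proof of Proposition \ref{Taction-vt} almost verbatim, carrying the $\TT$-action through the restricted blowups rather than the ambient ones. First I would recall from Lemma \ref{lift-action-to-Um[k]} (together with its diagram \eqref{action-1}) that the birational morphism $\sV=\tsV_{\vt_{[0]}}\to \bU_\Gr$ is $\TT$-equivariant for the quasi-free $\TT$-action on $\bU$, so the base case $k=0$ holds: the quasi-free $\TT$-action on $\sV$ is the one induced from $\bU$, and it is quasi-free because it is the restriction of the quasi-free $\TT$-action on $\sR$ (Proposition \ref{Taction-vt} with $k=0$) to the invariant closed subscheme $\sV\subset\sR$.

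Next, assuming inductively that a quasi-free $\TT$-action on $\tsV_{\vt_{[k-1]}}$ has been constructed, lifting the one on $\sV$ and compatible with the action on $\tsR_{\vt_{[k-1]}}$, I would observe that the $\vt$-center $Z_{\vt_{[k]}}=X_{\uu_k}\cap X_{(\um,\uu_k)}$ is $\TT$-invariant (both $X_{\uu_k}$ and $X_{(\um,\uu_k)}$ are defined by semi-invariant coordinates, per the action on $\PP(\wedge^3E)$). Hence its proper transform in $\tsR_{\vt_{[k-1]}}$ is $\TT$-invariant, and so is its scheme-theoretic intersection with $\tsV_{\vt_{[k-1]}}$, which is the center of the induced blowup $\tsV_{\vt_{[k]}}\to\tsV_{\vt_{[k-1]}}$. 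By functoriality of blowups along invariant ideals, the $\TT$-action on $\tsV_{\vt_{[k-1]}}$ lifts canonically to $\tsV_{\vt_{[k]}}$, and since $\tsV_{\vt_{[k]}}\hookrightarrow\tsR_{\vt_{[k]}}$ is the proper transform and the $\TT$-action on $\tsR_{\vt_{[k]}}$ from Proposition \ref{Taction-vt} restricts to it compatibly with the two blowup squares, the lift is the restriction of the ambient lift.

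It then remains to check quasi-freeness, i.e. that every isotropy subgroup $\TT_{\bz}$ for $\bz\in\tsV_{\vt_{[k]}}$ is connected. This follows immediately from quasi-freeness of the $\TT$-action on $\tsR_{\vt_{[k]}}$ (Proposition \ref{Taction-vt}): for $\bz\in\tsV_{\vt_{[k]}}\subset\tsR_{\vt_{[k]}}$ the isotropy group computed inside $\tsV_{\vt_{[k]}}$ coincides with the one computed inside $\tsR_{\vt_{[k]}}$, since isotropy is a pointwise notion and $\tsV_{\vt_{[k]}}$ is a $\TT$-invariant subscheme. The main (indeed the only nontrivial) point is the verification that the $\vt$-center is $\TT$-invariant so that the blowup admits a canonical $\TT$-linearization; this is where I would be careful to note that the explicit description of the factor projective space $\PP_{\vt_{[k]}}$ with its $\TT$-weights $[\tfrac{t_{u_1}t_{u_2}t_{u_3}}{t_1t_2t_3}\xi_0,\xi_1]$ from the proof of Proposition \ref{Taction-vt} restricts correctly, but everything else is formal. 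I expect no genuine obstacle here — the statement is a routine "restrict the equivariant structure to an invariant closed subscheme and to the invariant blowup" argument, and the one-line proof "All statements are immediate" in the style of Lemma \ref{lift-action-to-Um[k]} would in fact suffice, though I would spell out the invariance of the center for clarity.
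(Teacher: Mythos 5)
Your proposal is correct and follows the same route as the paper, which simply cites Proposition \ref{Taction-vt}: the quasi-free $\TT$-action on $\tsR_{\vt_{[k]}}$ restricts to the $\TT$-invariant closed subscheme $\tsV_{\vt_{[k]}}$, and quasi-freeness is inherited pointwise. Your spelled-out version (invariance of the center, compatibility with the blowup squares, isotropy computed on the subscheme) is precisely the content the paper's one-line proof compresses.
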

\begin{proof}
This follows from Lemma \ref{Taction-vt}.
\end{proof}

\begin{defn} Fix any standard chart $\fV$ of $\tsR_{\vt_{[k]}}$ lying over
a unique standard chart $\fV'$ of $\tsR_{\vt_{[k-1]}}$ for any $k \in [\up]$. 
When $k=0$, we let $B_\fV$ and $L_{\fV, F}$ be as in 
 Proposition \ref{equas-p-k=0} for any
$B \in \cB^\gov \cup \cB^\ngv  \cup \cB^\frb$ and $\bF \in \sF$. Consider any fixed general $k \in [\up]$.
Suppose $B_{\fV'}$ and $L_{\fV', F}$ have been constructed over $\fV'$.
Applying Definition \ref{general-proper-transforms}, we obtain their proper transforms on the chart $\fV$
$$B_{\fV}, \;\; \forall \; B \in \cB^\gov \cup \cB^\ngv  \cup \cB^\frb; \;\; L_{\fV, F}, \;\;
\forall \; \bF \in \sF.$$ 
\end{defn}

We need the following notations.

\begin{defn}\label{<,>}
Fix any $k \in [\up]$.
We let $\cB^\gov_{< k}$, respectively $\cB^\ngv_{< k}$ or $L_{ <k}$,
be the set of all governing, respectively non-governing binomials of $\vr$-degree 1 or linear $\pl$ relations corresponding to
$F<F_k$. Similarly, we let $\cB^\gov_{ > k}$ (resp. $\cB^\ngv_{ > k}$, $\cL_{ >k}$)
be the set of all main (resp. non-governing binomials of  $\vr$-degree 1 or linear $\pl$) relations corresponding to
$F>F_k$.  Likewise, replacing $<$ by $\le$ or $>$ by $\ge$,
we can introduce 
$\cB^\gov_{\le k}$, $\cB^\ngv_{\le k}$, and $L_{\le k}$ or
$\cB^\gov_{ \ge k}$,  $\cB^\ngv_{ \ge k}$, and $\cL_{\ge k}$.
Then, upon restricting the above to a fixed standard chart $\fV$, we obtain
$\cB^\gov_{\fV, < k}$, $\cB^\ngv_{\fV, < k}$, $L_{\fV, <k}$, etc..
\end{defn}

Recall from the above proof, we have 
$$\tsR_{\vt_{[k]}} \subset \tsR_{\vt_{[k-1]}} \times \PP_{\vt_k}$$
where $\PP_{\vt_k}$ be the factor projective space of the blowup
$\tsR_{\vt_{[k]}} \lra \tsR_{\vt_{[k-1]}}$. 
We write $\PP_{\vt_k}=\PP_{[\xi_0,\xi_1]}$ such that 
$[\xi_0,\xi_1]$ corresponds to $(X_{\uu_k}, X_{(\um,\uu_k)})$.

\begin{defn} \label{vp-vr-chart}
Let $\fV'$ be any standard chart on $\tsR_{\vt_{[k-1]}}$. Then, 
we call $$\fV=\tsR_{\vt_{[k]}} \cap (\fV' \times (\xi_0 \equiv 1))$$
a $\vp$-standard chart of $\tsR_{\vt_{[k]}}$; we call $$\fV=\tsR_{\vt_{[k]}} \cap (\fV' \times (\xi_1 \equiv 1))$$
a $\vr$-standard chart of $\tsR_{\vt_{[k]}}$.
\end{defn}

As in  the proof of 
Proposition \ref{meaning-of-var-vtk}, we let $\fV$ be a standard chart 
of $\tsR_{\vt_{[k]}}$ lying over a (necessarily unique) 
standard chart  $\fV'$ of $\tsR_{\vt_{[k-1]}}$.
Also, $\fV$ lies over a unique standard chart $\fV_{[0]}$ of 
of $\tsR_{\vt_{[0]}}$. We let 
\begin{equation}\label{fV-fV0}
\pi_{\fV, \fV_{[0]}}: \fV \to \fV_{[0]}
\end{equation}
be the induced projection.

\begin{prop}\label{eq-for-sV-vtk}
We keep the notation and assumptions
 in Proposition \ref{meaning-of-var-vtk}. 

Suppose $(\um,\uu_k) \in \La_{[k]}^o$ or
 $\fV$ is a $\vr$-standard chart.  Then, we have
that the scheme $\tsV_{\vt_{[k]}}  \cap \fV$, as a closed subscheme of
the chart $\fV$ of $\tsR_{\vt_{[k]}} $,  is defined by 
\begin{eqnarray} 
\cB^\frb_\fV,  \;\; \cB^\gov_{\fV, < k} ,\;\; \cL_{\fV, <k} , \;\;\;\;\;\;\;\;\;\;  \\
B_{ \fV,(s_{F_k},s)}: \;\;\;\;\;\; x_{\fV, (\uu_s, \uv_s)}  x_{\fV, \uu_k} 
  - \tilde{x}_{\fV, \uu_s} \tilde{x}_{\fV, \uv_s}, \;\;
\forall \;\; s \in S_{F_k} \- s_{F_k},  \label{eq-B-vt-lek=0} \\ 
L_{\fV, F_k}: \;\; \sgn (s_F) \de_{\fV, (\um,\uu_k)} +
\sum_{s \in S_F \- s_F} \sgn (s) x_{\fV,(\uu_s,\uv_s)},   \label{linear-pl-vtk=0} \\
\cB^\gov_{\fV, > k},\;\; \cB^\ngv_{\fV, > k}, \;\; \cL_{\fV, >k},\;\;\; \;\;\; \;\; \;\; \;\; \;\; \;\; \;\; \;\; \;\; \;\; \;\; \;\; \;\; \label{eq-hq-vtk=0}
\end{eqnarray}
where $\tilde{x}_{\fV, \uu_s}=
\pi_{\fV, \fV_{[0]}}^* x_{\fV_{[0]}, \uu_s}$ and 
$ \tilde{x}_{\fV, \uv_s}= \pi_{\fV, \fV_{[0]}}^* x_{\fV_{[0]}, \uv_s}$
 are monomials in $\var_\fV$, 
and $\pi_{\fV, \fV_{[0]}}$ is as in \eqref{fV-fV0}.

 Suppose  $(\um,\uu_k) \notin \La_{[k]}^o$ and $\fV$ is a $\vp$-standard chart.
Then, we have
that the scheme $\tsV_{\vt_{[k]}}  \cap \fV$, as a closed subscheme of
the chart $\fV$ of $\tsR_{\vt_{[k]}} $,  is defined by 
\begin{eqnarray} 
\cB_\fV^\frb,  \;\; \cB^\gov_{\fV, < k} ,\;\; \cL_{\fV, <k}, \;\;\;\;\;\;\;\;\;\;  \\
B_{ \fV,(s_{F_k},s)}: \;\;\;\;\;\; x_{\fV, (\uu_s, \uv_s)} - x_{\fV, (\um,\uu_k)}  
 \tilde{x}_{\fV, \uu_s} \tilde{x}_{\fV, \uv_s}, \;\;
\forall \;\; s \in S_{F_k} \- s_{F_k},  \label{eq-B-vt-lek=0-00} \\ 
L_{\fV, F_k}: \;\; \sgn (s_F) \ve_{\fV, \uu_k} x_{\fV,(\um,\uu_k)}+
\sum_{s \in S_F \- s_F} \sgn (s) x_{\fV,(\uu_s,\uv_s)},   \label{linear-pl-vtk=0-00} \\
\cB^\gov_{\fV, > k},\;\; \cB^\ngv_{\fV, > k}, \;\; \cL_{\fV, >k},\;\;\; \;\;\; \;\; \;\; \;\; \;\; \;\; \;\; \;\; \;\; \;\; \;\; \;\; \;\; \label{eq-hq-vtk=0-00}
\end{eqnarray}
where $\tilde{x}_{\fV, \uu_s}=
\pi_{\fV, \fV_{[0]}}^* x_{\fV_{[0]}, \uu_s}$ and 
$ \tilde{x}_{\fV, \uv_s}= \pi_{\fV, \fV_{[0]}}^* x_{\fV_{[0]}, \uv_s}$
 are  monomials in $\var_\fV$,


Moreover, for any binomial 
$B \in \cB^\gov \sqcup \cB^\ngv_{>k}$, $B_\fV$ is $\vr$-linear and square-free.

Furthermore, consider  an arbitrary binomial 
$B \in \cB^\frb$ and its proper transform $B_{\fV}$ on the chart 
$\fV$. Let $T_{\fV, B}$ be any fixed term of $B_\fV$.
Then, $T_{\fV, B}$  is $\vr$-linear and admits at most one
(counting multiplicity) $\vt$-exceptional parameter in the form of 
$$\hbox{$\de_{\fV,(\um, \uu)}$ for some $(\um, \uu) \in \fd_\fV$ 
or $\ve_{\fV,\uu} x_{\fV, (\um, \uu)}$ for some $\uu \in \fe_\fV$} $$
if $T_{\fV, B}$ contains $x_{\fV, (\um, \uu)}$ 
(hence $\uu \in \II_{3,n}^{\lt,0}$); or 
$$\hbox{  $\ve_{\fV,\uu}$ for some $\uu \in \fe_\fV$ or
$\de_{\fV,(\um, \uu)} x_{\fV,\uu}$ for some $(\um, \uu) \in \fd_\fV$}$$
if $T_{\fV, B}$ contains $x_{\fV, \uu}$ (hence $\uu \in \II_{3,n}^{\lt,0}$).
In particular, $B_\fV$ is square-free.
\end{prop}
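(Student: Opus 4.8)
The plan is to prove the whole statement by induction on $k\in\{0,1,\dots,\up\}$, carrying along simultaneously the description of the defining ideal of $\tsV_{\vt_{[k]}}\cap\fV$, the redundancy of $\cB^\ngv_{\le k}$, and the $\vr$-linearity/square-freeness of the proper transforms of the governing and $\frb$-binomials. The base case $k=0$ is Proposition \ref{equas-p-k=0}: there $\tsR_{\vt_{[0]}}=\sR$, $\fe_\fV=\fd_\fV=\emptyset$, the listed equations are exactly $\cB^\frb_\fV,\cB^\gov_\fV,\cB^\ngv_\fV,L_{\fV,\sF}$, every term carries no $\vt$-exceptional parameter, the $\vr$-linearity and square-freeness of governing binomials is read off from their explicit list \eqref{all gov bi}, and for $\frb$-binomials it is Proposition \ref{strong sq free}(1)--(3).

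For the inductive step, I would pass from $\tsR_{\vt_{[k-1]}}$ to $\tsR_{\vt_{[k]}}$, the blowup along the proper transform of the smooth codimension-two center $Z_{\vt_k}=X_{\uu_k}\cap X_{(\um,\uu_k)}$, and first invoke the already-established Proposition \ref{meaning-of-var-vtk} to fix, on a standard chart $\fV$ of $\tsR_{\vt_{[k]}}$ lying over $\fV'$ of $\tsR_{\vt_{[k-1]}}$ and over $\fV_{[0]}$ of $\sR$, the variables $\var_\fV$, the sets $\fe_\fV,\fd_\fV$, and the divisors meeting $\fV$. There are three cases. If $(\um,\uu_k)\in\La_{[k]}^o$ the center misses $\fV'$ and $\fV\to\fV'$ is an isomorphism, so the $(k-1)$-description pulls back verbatim (this is the "trivial" subcase of the first regime). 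If $\fV$ is a $\vr$-standard chart, one substitutes by $x_{\fV',\uu_k}=\de_{\fV,(\um,\uu_k)}x_{\fV,\uu_k}$ and $x_{\fV',(\um,\uu_k)}=\de_{\fV,(\um,\uu_k)}$; if $\fV$ is a $\vp$-standard chart, by $x_{\fV',\uu_k}=\ve_{\fV,\uu_k}$ and $x_{\fV',(\um,\uu_k)}=\ve_{\fV,\uu_k}x_{\fV,(\um,\uu_k)}$. Applying Definition \ref{general-proper-transforms} to each equation of the $(k-1)$-list, the only relations altered are those involving $x_{\uu_k}$ or $x_{(\um,\uu_k)}$: these are $L_{F_k}$ (only its leading term) and the governing binomials $B_{(k\tau)}$ of $\fG_{F_k}$; by Proposition \ref{leadingTerm} the remaining $\pl$- and $\vr$-variables occurring in them are untouched by $\vt_{[j]}$ for $j<k$ except for the rank-$0$ factors $x_{\uu_s},x_{\uv_s}$, which have already become the monomials $\tilde x_{\fV,\uu_s},\tilde x_{\fV,\uv_s}$. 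Each term of $B_{(k\tau)}$ contains exactly one of $x_{\uu_k},x_{(\um,\uu_k)}$ to exponent one, so after pullback the common exceptional factor ($\ve_{\fV,\uu_k}$ resp.\ $\de_{\fV,(\um,\uu_k)}$) divides off, producing precisely \eqref{eq-B-vt-lek=0-00} resp.\ \eqref{eq-B-vt-lek=0}, while in $L_{F_k}$ only the leading term acquires the exceptional parameter, yielding \eqref{linear-pl-vtk=0-00} resp.\ \eqref{linear-pl-vtk=0}; everything else is carried over, accounting for $\cB^\frb_\fV$, $\cB^\gov_{\fV,<k}$, $\cL_{\fV,<k}$ and the blocks \eqref{eq-hq-vtk=0}, \eqref{eq-hq-vtk=0-00}.

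Next I would show that at this stage $\cB^\ngv_{\le k}$ may be dropped. The key is the identity $x_{\uu_F}\,B_{F,(s,t)}=x_{\uu_t}x_{\uv_t}\,B_{F,s}-x_{\uu_s}x_{\uv_s}\,B_{F,t}$, valid for all $\bF\in\sF$ and $s,t\in S_F\setminus s_F$. On charts of $\tsR_{\vt_{[k]}}$ meeting $\tsV_{\vt_{[k]}}$: over a $\vp$-standard chart of the $\vt_{[j]}$-blowup ($j\le k$) the relations $B_{\fV,(s_{F_j},s)}$ solve directly for the non-leading $\vr$-variables $x_{\fV,(\uu_s,\uv_s)}$ (cf.\ \eqref{eq-B-vt-lek=0-00}), and substituting these into $B_{F_j,(s,t)}$ makes it vanish; over a $\vr$-standard chart the pulled-back transform of the displayed identity, together with Lemma \ref{same-degree} to track exceptional powers, forces the proper transform of $B_{F_j,(s,t)}$ into the ideal generated by the listed governing binomials and $\pl$ relations; over charts where the $\vt_{[j]}$-blowup was trivial the redundancy is inherited. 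Iterating over $j\le k$ gives the claim.

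The final and most delicate ingredient — the part I expect to be the main obstacle — is the propagation of $\vr$-linearity and square-freeness. For $B\in\cB^\gov\sqcup\cB^\ngv_{>k}$ this is routine: the substitutions $x\mapsto\ve x$, $x\mapsto\de x$, $x\mapsto x$ and the division by the common exceptional power preserve square-freeness and $\vr$-linearity of a term once the pre-image term has these properties and contains $x_{\uu_k},x_{(\um,\uu_k)}$ to exponent $\le 1$, which holds by the inductive hypothesis. The hard work is the $\frb$-binomial clause: one must verify that across \emph{all} the $\vt_{[j]}$-blowups with $j\le k$ a fixed term $T_{\fV,B}$ of $B\in\cB^\frb$ remains $\vr$-linear and square-free and absorbs a $\vt$-exceptional parameter only in the controlled shapes $\de_{\fV,(\um,\uu)}$, $\ve_{\fV,\uu}x_{\fV,(\um,\uu)}$, $\ve_{\fV,\uu}$, or $\de_{\fV,(\um,\uu)}x_{\fV,\uu}$, with the index forced into $\II_{3,n}^{\lt,0}$. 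This is where I would use Proposition \ref{strong sq free}(2)--(3) — the rank-one leading variables $x_{abc}$ and $x_{(\um,abc)}$ never occur in a $\frb$-binomial, and $x_{(\um,iuv)}x_{iuv}$ never divides one of its monomials — together with the sharper homogeneous $\vi$-square-freeness recorded in the Remark after Corollary \ref{cor:linear and free} (the index $(123)$ appears at most once as a sub-index of any monomial of a $\frb$-binomial). These constraints bound, term by term, the leading variables that can be present, so that each $\vt_{[j]}$-blowup contributes at most one new exceptional parameter, to exponent one, to each term; organizing this bookkeeping cleanly, and in particular confirming that no squared exceptional parameter is ever created and that the counting respects the ``at most one'' assertion, is the crux of the argument.
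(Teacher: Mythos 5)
Your overall inductive architecture — base case via Proposition \ref{equas-p-k=0}, case split according to whether $(\um,\uu_k)\in\La_{[k]}^o$, whether $\fV$ is a $\vr$-standard or $\vp$-standard chart, and the substitutions $x_{\fV',\uu_k}\mapsto \de_{\fV,(\um,\uu_k)}x_{\fV,\uu_k}$, $x_{\fV',(\um,\uu_k)}\mapsto\de_{\fV,(\um,\uu_k)}$ (resp.\ $x_{\fV',\uu_k}\mapsto\ve_{\fV,\uu_k}$, $x_{\fV',(\um,\uu_k)}\mapsto\ve_{\fV,\uu_k}x_{\fV,(\um,\uu_k)}$) — is exactly the paper's. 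The reading of the displayed equations \eqref{eq-B-vt-lek=0}--\eqref{linear-pl-vtk=0-00} off these substitutions, and the treatment of $\cB^\gov\sqcup\cB^\ngv_{>k}$ (the leading $\vp$- and $\vr$-variables of $F_k$ never appear earlier), are also in line with the paper. But there are two gaps.

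The first concerns the reduction of $\cB^\ngv_{\le k}$. Your chosen identity
$x_{\uu_F}\,B_{F,(s,t)}=x_{\uu_t}x_{\uv_t}\,B_{F,(s_F,s)}-x_{\uu_s}x_{\uv_s}\,B_{F,(s_F,t)}$
is true but carries the prefactor $x_{\uu_F}$, which does \emph{not} trivialize after blowup: on the $\vr$-standard chart, $x_{\uu_F}\mapsto\de_{\fV,(\um,\uu_F)}x_{\fV,\uu_F}$ and after dividing by the common $\de_{\fV,(\um,\uu_F)}$ you are left with the \emph{non-invertible} factor $x_{\fV,\uu_F}$; on the $\La^o$ chart no cancellation occurs at all and the factor $x_{\uu_F}$ persists and may vanish on $\tsV_{\vt_{[k]}}$. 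The paper uses the other identity, with prefactor $x_{(\um,\uu_F)}$: on the $\La^o$ chart that factor is identically $1$; on the $\vr$-standard chart it becomes the exceptional $\de_{\fV,(\um,\uu_F)}$ and divides out cleanly, leaving $B_{\fV,(s,t)}=x_{\fV,(\uu_t,\uv_t)}B_{\fV,(s_{F_k},s)}-x_{\fV,(\uu_s,\uv_s)}B_{\fV,(s_{F_k},t)}$; on the $\vp$-standard chart it becomes the invertible variable $x_{\fV,(\um,\uu_F)}$ (cf.\ Corollary \ref{no-(um,uu)}). Also, "redundancy is inherited" on the trivial $\La^o$ charts is incorrect: at level $k-1$ the relations $\cB^\ngv_{\ge k}$ are \emph{still present} in the defining list, and the dependency of $B_{F_k,(s,t)}$ must be established anew at level $k$, exactly by the identity just described.

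The second gap is the $\frb$-binomial square-freeness clause, which you explicitly leave as "the crux." You need not invoke the homogeneous $\vi$-square-freeness from the Remark after Corollary \ref{cor:linear and free} — the paper explicitly disclaims using it. The paper's argument is actually short: by Proposition \ref{strong sq free}\,(2) and (3), a term of a $\frb$-binomial contains at most one of $x_{\uu_k},x_{(\um,\uu_k)}$ (and if it contains $x_{(\um,\uu_k)}$, then $\uu_k$ has rank $0$), so after the $\vt_{[k]}$-blowup only one new $\vt$-exceptional parameter is introduced into that term, to exponent one, and it has precisely one of the four listed shapes; inductively applying this from $j=1$ to $k$ gives the full claim. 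Work this out explicitly rather than appeal to the stronger (unused) property.
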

\begin{proof} We follow the notation as in the proof of 
Proposition \ref{meaning-of-var-vtk}.

When $k=0$,   we have $(\tsV_{\vt_{[0]}} \subset \tsR_{\vt_{[0]}})=  (\sV_\sF \subset \sR)$. 
Then, the statement follows from
Proposition \ref{equas-p-k=0}.

Suppose that the statement holds for $(\tsV_{\vt_{[k-1]}} \subset \tsR_{\vt_{[k-1]}})$
for some $k \in [\up]$. 

We now consider $(\tsV_{\vt_{[k]}} \subset \tsR_{\vt_{[k]}})$.

As in  the proof of 
Proposition \ref{meaning-of-var-vtk}, we let $\fV$ be a standard chart 
of $\tsR_{\vt_{[k]}}$ lying over a (necessarily unique) 
standard chart  $\fV'$ of $\tsR_{\vt_{[k-1]}}$.
Also, $\fV$ lies over a unique standard chart $\fV_{[0]}$ of 
of $\tsR_{\vt_{[0]}}$. We let $\pi_{\fV, \fV_{[0]}}: \fV \to \fV_{[0]}$ be the induced projection.

We divide the proof into two parts according to different types of the chart.
 
\noindent 
$\bullet$ {\bf (A).}
{\it First, we suppose 
$(\um,\uu_k) \in \La_{[k]}^o$ on the chart $\fV$
or $\fV$ is a $\vr$-standard chart.}
In the former  case, 
by Definition \ref{fv-k=0} (cf. also \eqref{index-sR}),
$x_{\fV_{[0]}, (\um,\uu_k)}\equiv 1$,
hence $x_{\fV, (\um,\uu_k)}\equiv 1$.
In the latter case, as $\fV$ is a $\vr$-standard chart,
 $(\um,\uu_k) \in \fd_\fV$, hence
  $x_{\fV, (\um,\uu_k)}= 1$  by \eqref{conv:=1}.

{\bf (A): Defining relations}.
 We first prove the statement about the defining equations of 
$\tsV_{\vt_{[k]}} \cap \fV$ in $\fV$. 
 By applying the inductive assumption to  $\fV'$,
 it suffices to prove that on the chart $\fV$,
the proper transform of any non-governing binomial  with respect to
 $F_k$ depends on the governing binomials with respect to $F_k$
(see Definition \ref{gov-ngov-bi}).
To this end,  we take any two $s, t \in S_{F_k} \- s_{F_k}$ and consider
 the non-governing binomial  $B_{F_k,(s,t)}$ (see \eqref{ngv-bi}).
Then, in any of the two cases under the situation {\bf A}, one calculates and finds that we have the following two governing binomials 
 $$B_{ \fV,(s_{F_k},s)}: \;\;\;\;\;\; x_{\fV, (\uu_s, \uv_s)}  x_{\fV, \uu_k}   - 
 \tilde x_{\fV, \uu_s} \tilde x_{\fV, \uv_s},$$
 $$B_{ \fV,(s_{F_k},t)}: \;\;\;\;\;\; x_{\fV, (\uu_t, \uv_t)}  x_{\fV, \uu_k}   - \tilde x_{\fV, \uu_t} \tilde x_{\fV, \uv_t},$$
 where  $ \tilde x_{\fV, \uw}=\pi_{\fV, \fV_{[0]}}^* x_{\fV_{[0]},\uw}$ denoted the pullback
 for any $\uw \in \II_{3,n} \- \um$.
  Similarly, one calculates and finds  that 
 we have 
$$ B_{ \fV,(s,t)}: \;\; x_{\fV,(\uu_s, \uv_s)}\tilde x_{\fV,\uu_t} \tilde x_{\fV, \uv_t}-
x_{\fV,(\uu_t, \uv_t)} \tilde x_{\fV,\uu_s} \tilde x_{ \fV,\uv_s}.$$ 
Then, one verifies directly that we have
$$ B_{ \fV,(s,t)}=x_{\fV, (\uu_s, \uv_s)} B_{ \fV,(s_{F_k},t)} -x_{\fV, (\uu_t, \uv_t)} B_{ \fV,(s_{F_k},s)}.$$
This proves  the statement about the defining equations of $\tsV_{\vt_{[k]}} \cap \fV$ in $\fV$.

The above allows us to discard the non-governing relations 
in $\cB^\ngv_{\le k}$ on the chart $\fV$.

 {\bf (A): $\vr$-linear and square-free}.
First, consider any $B \in \cB^\gov$ with respect to $F_j$.
Observe that  $x_{(\um, \uu_k)}$ uniquely appears in the governing binomials 
with respect to $F_k$;  $x_{\uu_k}$ only appears in the governing binomials 
with respect to $F_k$ and   the minus terms of certain governing binomials 
of $F_{j}$ with $j>k$. It follows that $B_\fV$ is $\vr$-linear and square-free.
 
 Likewise, consider any $B \in \cB^\ngv_{>k}$
 with respect to $F_j$ with $j >k$.
It is of the form
$$ B_{(s,t)}: \;\; x_{(\uu_s, \uv_s)}x_{\uu_t}  x_{\uv_t}-
x_{(\uu_t, \uv_t)} x_{\uu_s}  x_{ \uv_s}$$ 
for some $s \ne t \in S_{F_j}$.
Observe here that $B=B_{(s,t)}$ does not contain any $\vr$-variable of the form $x_{(\um, \uu)}$ and
the $\vp$-variables in $B$ are identical to
those of the minus terms of the corresponding governing binomials.
Hence, the same line of the proof above for  governing binomials
 implies that $B_\fV$ is $\vr$-linear and square-free.

Finally, consider any $B \in \cB^\frb$.  


Note that the proper transform of the $\vt$-center 
$\vt_{[k]}$ on the chart $\fV'$ equals to 
$$(x_{\fV', \uu_k}, x_{\fV', (\um, \uu_k)}).$$ Thus,
from the chart $\fV'$ to the $\vr$-standard chart $\fV$, 
we have that  $x_{\fV',(\um, \uu_k)}$ becomes 
$\de_{\fV,(\um, \uu_k)}$ and
 $x_{\fV',\uu_k}$ turns into $\de_{\fV,(\um, \uu_k)} x_{\fV, \uu_k}$ 
 in $B_\fV$.

If $B_{\fV'}$ does not contain $x_{\fV',\uu_k}$
nor $x_{\fV',(\um, \uu_k)}$, or $(\um,\uu_k) \in \La_{[k]}^o$,
then the form of $B_{\fV'}$ remains unchanged
 (except for the meanings of its variables). 
In this case, all the statements about $B_\fV$ holds by the inductive assumption.

Suppose next that $B_{\fV'}$  contains  $ x_{\fV',\uu_k}$ or
$x_{\fV',(\um, \uu_k)}$, and $(\um,\uu_k) \notin \La_{[k]}^o$.
Then it contains exactly one of $ x_{\fV',\uu_k}$ and
$x_{\fV',(\um, \uu_k)}$ but not both 
according to   Proposition \ref{strong sq free} (4) (also cf. (3)).
Therefore, $B_\fV$ admits
the $\vt$-exceptional parameter in the form
 $$\de_{\fV,(\um, \uu_k)}$$
without $x_{\fV, \uu_k}$
if one of the terms of $B$ contains $x_{(\um, \uu_k)}$, or, in the form
 $$\de_{\fV,(\um, \uu_k)}x_{\fV, \uu_k}$$
if one of the terms of $B$ contains $x_{\uu_k}$.
Thus, by the inductive assumption on $\fV'$, 
the final statements on  $B_\fV$ hold, in the situation {\bf A}. 

Thus,  this proves all the  statement of the proposition when 
$(\um,\uu_k) \in  \La_{[k]}^o$ or when $\fV$ is a $\vr$-standard chart.

\noindent 
$\bullet$ {\bf (B).}
{\it Next, we consider the case when $(\um,\uu_k) \notin \La_{[k]}^o$
and  $\fV$ is a $\vp$-standard chart. }
 
{\bf (B): Defining relations}. 
 Again, to prove the statement about the defining equations of $\tsV_{\vt_{[k]}} \cap \fV$ in $\fV$, 
  it suffices to prove that the proper transform
 of any non-governing binomial of $F_k$ depends on the governing binomials on the chart $\fV$.
To show this, we again take any two $s, t \in S_{F_k} \- s_{F_k}$.
 On the chart $\fV$, we have the following two the governing binomials 
 $$B_{ \fV,(s_{F_k},s)}: \;\;\;\;\;\; x_{\fV, (\uu_s, \uv_s)}    - x_{\fV, (\um,\uu_k)}
 \tilde x_{\fV, \uu_s} \tilde x_{\fV, \uv_s},$$
 $$B_{ \fV,(s_{F_k},t)}: \;\;\;\;\;\; x_{\fV, (\uu_t, \uv_t)}    - x_{\fV, (\um,\uu_k)}
 \tilde x_{\fV, \uu_t} \tilde x_{\fV, \uv_t}.$$
We also have  the following non-governing binomial
$$ B_{ \fV,(s,t)}: \;\; x_{\fV,(\uu_s, \uv_s)}\tilde x_{\fV,\uu_t} \tilde x_{\fV, \uv_t}-
x_{\fV,(\uu_t, \uv_t)} \tilde x_{\fV,\uu_s} \tilde x_{ \fV,\uv_s}.$$ 
 Then, we have
 $$B_{ \fV,(s,t)}=  \tilde x_{\fV, \uu_t} \tilde x_{\fV, \uv_t} B_{ \fV,(s_{F_k},s)}
 -\tilde x_{\fV, \uu_s} \tilde x_{\fV, \uv_s} B_{ \fV,(s_{F_k},t)}.$$
 Thus, the  statement of the proposition about the equations of  $\tsV_{\vt_{[k]}}  \cap \fV$ follows.
 
 {\bf (B): $\vr$-linear and square-free}.
 Next, consider any $B \in \cB^\gov \sqcup \cB^\ngv_{>k}$. The fact
 that $B_\fV$ is $\vr$-linear and 
 square-free follows from the same line of proof in the previous case.
 
Finally, consider any $B \in \cB^\frb$.
Again, the proper transform of the $\vt$-center 
$\vt_{[k]}$ on the chart $\fV'$ equals to $(x_{\fV', \uu_k}, x_{\fV', (\um, \uu_k)})$. Thus, 
 if $B_{\fV'}$ does not contain $ x_{\fV',\uu_k}$
nor $x_{\fV',(\um, \uu_k)}$,
then the form of $B_{\fV'}$ remains unchanged.
In this case, all the statements about $B_\fV$ holds by the inductive assumption.
Suppose next that $B_{\fV'}$  contains  $x_{\fV',\uu_k}$ or
$x_{\fV',(\um, \uu_k)}$.
Then it contains exactly one of $x_{\fV',\uu_k}$ and
$x_{\fV',(\um, \uu_k)}$ but not both 
according to   Proposition \ref{strong sq free} (3) and (4).
 Hence, from the chart $\fV'$ to the $\vp$-standard chart $\fV$, 
we have that 
$ x_{\fV',\uu_k}$ turns into $\ve_{\fV,\uu_k}$ and
 $x_{\fV',(\um, \uu_k)}$ turns into $\ve_{\fV, \uu_k} x_{\fV,(\um, \uu_k)}$  in $B_\fV$.
  Therefore, by the inductive induction on $B_{\fV'}$, we see that 
the final statements on  $B_\fV$ hold.

This completes the proof of the proposition.
 \end{proof}

We need the final case of $\vt$-blowups. So, we set 
$$\tsR_{\vt}:=\tsR_{\vt_{[\up]}}, \; \tsV_{\vt}:=\tsV_{\vt_{[\up]}}.$$

\begin{cor}\label{eq-for-sV-vr} 
Let the notation be as in Proposition \ref{equas-fV[k]} for $k=\up$.  
Then, the scheme $\tsV_{\vt} \cap \fV$, as a closed subscheme of
the chart $\fV$ of $\tsR_{\vt}=\tsR_{\vt_{[\up]}}$,  is defined by 
\begin{eqnarray} 
\cB^\gov_{\fV} ,\;\; L_{\sF, \fV}, \;\; \cB^\frb_\fV,
\end{eqnarray}
where the first two sets make of all governing relations.
Further, for any binomial $B_\fV \in \cB^\gov_\fV \cup \cB^\frb_\fV$, it is  $\vr$-linear and square-free.
\end{cor}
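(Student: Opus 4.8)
\textbf{Proof plan for Corollary \ref{eq-for-sV-vr}.}

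The plan is to deduce this corollary as the terminal case $k=\up$ of Proposition \ref{eq-for-sV-vtk}, together with a bookkeeping argument that accounts for \emph{all} the relations $\cB^\ngv$, not just those indexed by $F_j$ with $j\le k$. First I would observe that applying Proposition \ref{eq-for-sV-vtk} inductively, step by step through the sequence \eqref{vt-sequence}, already shows on any standard chart $\fV$ of $\tsR_{\vt_{[k]}}$ that $\tsV_{\vt_{[k]}}\cap\fV$ is cut out by $\cB^\frb_\fV$, $\cB^\gov_{\fV,<k}$ (together with the block for $F_k$ written explicitly in \eqref{eq-B-vt-lek=0} or \eqref{eq-B-vt-lek=0-00}), $\cL_{\fV,<k}$, $L_{\fV,F_k}$, and the leftover relations $\cB^\gov_{\fV,>k}$, $\cB^\ngv_{\fV,>k}$, $\cL_{\fV,>k}$. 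When $k=\up$ there are no relations with index $>\up$, so the leftover sets $\cB^\gov_{\fV,>\up}$, $\cB^\ngv_{\fV,>\up}$, $\cL_{\fV,>\up}$ are empty; meanwhile the accumulated governing binomials $\cB^\gov_{\fV,\le\up}=\cB^\gov_\fV$, the accumulated linearized relations $\cL_{\fV,\le\up}=L_{\sF,\fV}$, and $\cB^\frb_\fV$ are exactly the three families in the statement.

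The point that needs care is that every non-governing binomial of $\cB^\ngv$ must be shown to be dependent, not merely those with index $\le\up$ (which is automatically all of them once $k=\up$, but the dependency must have been established as the blowup at step $k$ was performed). This is precisely the content of the ``Defining relations'' portion of the proof of Proposition \ref{eq-for-sV-vtk}: at step $k$, on either a $\vp$- or $\vr$-standard chart, one checks the identity expressing $B_{\fV,(s,t)}$ for $s,t\in S_{F_k}\setminus s_{F_k}$ as an $R_\fV$-combination of the two governing binomials $B_{\fV,(s_{F_k},s)}$ and $B_{\fV,(s_{F_k},t)}$ — namely $B_{\fV,(s,t)} = x_{\fV,(\uu_s,\uv_s)}B_{\fV,(s_{F_k},t)} - x_{\fV,(\uu_t,\uv_t)}B_{\fV,(s_{F_k},s)}$ in case (A), and the symmetric identity with the roles of $\vp$- and $\vr$-factors swapped in case (B). Since by Definition \ref{defn:vr-centers} every $\uu\in\II_{3,n}^\lt$ indexes a $\vt$-blowup, by the time we reach $k=\up$ the block $\cB^\ngv_{F_k}$ has been discarded for every $F_k\in\sF$, hence $\cB^\ngv_\fV$ is entirely dependent on $\cB^\gov_\fV\cup L_{\sF,\fV}\cup\cB^\frb_\fV$. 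Thus the three displayed families indeed generate the ideal of $\tsV_\vt\cap\fV$.

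Finally, the ``$\vr$-linear and square-free'' assertion for any $B_\fV\in\cB^\gov_\fV\cup\cB^\frb_\fV$ is inherited verbatim from the last two paragraphs of Proposition \ref{eq-for-sV-vtk}: for $B\in\cB^\gov$ the argument there (using that $x_{(\um,\uu_k)}$ appears uniquely among the governing binomials of $F_k$ and $x_{\uu_k}$ appears only there and in minus-terms of later governing binomials) gives $\vr$-linearity and square-freeness of $B_\fV$ after every step, hence at step $\up$; for $B\in\cB^\frb$ the stronger structural statement in Proposition \ref{eq-for-sV-vtk} — that each term of $B_\fV$ is $\vr$-linear and carries at most one $\vt$-exceptional parameter of the indicated shape — in particular implies square-freeness, and again propagates to $k=\up$. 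I expect the main obstacle to be purely organizational rather than mathematical: one must be careful that the labels ``$\cB^\gov_{\fV,\le k}$'' accumulated across all $\up$ steps really reassemble into the full set $\cB^\gov_\fV$ on a fixed chart $\fV$, which requires tracking that the explicit block \eqref{eq-B-vt-lek=0}/\eqref{eq-B-vt-lek=0-00} for $F_k$ at step $k$ becomes an element of $\cB^\gov_{\fV,<k'}$ for all later $k'>k$ with its form stabilized (no further $\vt$-exceptional factors accrue to governing binomials, by the uniqueness-of-occurrence observations just cited). Once that consistency is noted, the corollary is immediate.
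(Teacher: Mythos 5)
Your proposal is correct and matches the paper's (implicit) proof: the corollary is exactly the $k=\up$ case of Proposition \ref{eq-for-sV-vtk}, where $\cB^\gov_{\fV,>\up}$, $\cB^\ngv_{\fV,>\up}$, and $\cL_{\fV,>\up}$ are empty, so the displayed relations collapse to $\cB^\gov_\fV$, $L_{\sF,\fV}$, $\cB^\frb_\fV$, and the $\vr$-linearity and square-freeness assertions are precisely the final two statements of that proposition specialized to $k=\up$. Your extra bookkeeping about the identities $B_{\fV,(s,t)} = x_{\fV,(\uu_s,\uv_s)}B_{\fV,(s_{F_k},t)} - x_{\fV,(\uu_t,\uv_t)}B_{\fV,(s_{F_k},s)}$ and the stabilization of $\cB^\gov_{\fV,\le k}$ is simply replaying the internal mechanism of Proposition \ref{eq-for-sV-vtk}'s own inductive proof, which is sound but not needed once that proposition is taken as given.
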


\begin{cor}\label{no-(um,uu)} Fix any $k \in [\up]$. 
Let $X_{\vt, (\um,\uu_k)}$ be the proper transform of
$X_{(\um,\uu_k)}$ in $\tsR_{\vt}$. Then 
$$\tsV_{\vt} \cap X_{\vt, (\um,\uu_k)} =\emptyset.$$ 
Put it differently, on any chart $\fV$ of  $\tsR_{\vt}$, 
$x_{\fV, (\um,\uu_k)}$ is invertible as long as it is a variable
in $\var_\fV$. In particular, 
the scheme $\tsV_{\vt}$ is
 covered by the standard charts that either lie over  the
chart $(x_{(\um,\uu_k)} \equiv 1)$ of $\sR$ or lie over
the $\vr$-standard chart of $\tsR_{\vt_{[k]}}$. 
\end{cor}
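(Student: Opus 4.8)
The plan is to prove Corollary \ref{no-(um,uu)} by a direct appeal to the two preceding results, Proposition \ref{eq-for-sV-vtk} and Corollary \ref{eq-for-sV-vr}, rather than by any new blowup analysis. The key observation is that for each fixed $k \in [\up]$, the $\vt$-blowup in $(\vt_{[k]})$ is designed precisely to separate the two divisors $X_\uu$ and $X_{(\um,\uu_k)}$ (in the notation of Definition \ref{defn:vr-centers}, the divisors $X_{\uu_k}$ and $X_{((123),\uu_k)}$), and after that separation the proper transform $X_{\vt_{[k]},(\um,\uu_k)}$ can meet $\tsV_{\vt_{[k]}}$ only inside the image of a chart where $x_{\fV,(\um,\uu_k)}$ is invertible.

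More precisely, first I would fix $k$ and work on an arbitrary standard chart $\fV$ of $\tsR_{\vt_{[k]}}$ on which $x_{\fV,(\um,\uu_k)}$ is an honest variable of $\var_\fV$ (i.e.\ $(\um,\uu_k) \notin \fd_\fV$ and $(\um,\uu_k) \notin \La_{[k]}^o$ — equivalently $\fV$ is neither a chart lying over $(x_{(\um,\uu_k)}\equiv 1)$ nor a $\vr$-standard chart of the $k$-th blowup). By the third bullet of Proposition \ref{meaning-of-var-vtk}, such a chart $\fV$ must be a $\vp$-standard chart of $\tsR_{\vt_{[k]}}$, so I am in the second case of Proposition \ref{eq-for-sV-vtk}. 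Then the governing binomials $B_{\fV,(s_{F_k},s)}$ of \eqref{eq-B-vt-lek=0-00} read
$$x_{\fV,(\uu_s,\uv_s)} - x_{\fV,(\um,\uu_k)}\,\tilde x_{\fV,\uu_s}\tilde x_{\fV,\uv_s}, \qquad s \in S_{F_k}\-s_{F_k},$$
and the linearized relation $L_{\fV,F_k}$ of \eqref{linear-pl-vtk=0-00} reads $\sgn(s_F)\,\ve_{\fV,\uu_k}x_{\fV,(\um,\uu_k)} + \sum_{s\in S_F\-s_F}\sgn(s)\,x_{\fV,(\uu_s,\uv_s)}$. Substituting the governing binomials into $L_{\fV,F_k}$ and setting $x_{\fV,(\um,\uu_k)}=0$ forces $\ve_{\fV,\uu_k}\cdot 0 = 0$ trivially, but — and this is the point — substituting $x_{\fV,(\uu_s,\uv_s)} = x_{\fV,(\um,\uu_k)}\tilde x_{\fV,\uu_s}\tilde x_{\fV,\uv_s}$ into $L_{\fV,F_k}$ yields $x_{\fV,(\um,\uu_k)}\bigl(\sgn(s_F)\ve_{\fV,\uu_k} + \sum_{s}\sgn(s)\tilde x_{\fV,\uu_s}\tilde x_{\fV,\uv_s}\bigr)$; after dividing by the exceptional parameter $\ve_{\fV,\uu_k}$ of $E_{\vt_{[k]}}$ (which is where $x_{\fV,(\um,\uu_k)}$ sits, by Proposition \ref{meaning-of-var-vtk}(6) and the construction in case {\bf B} of the proof of Proposition \ref{eq-for-sV-vtk}), the relation defining the proper transform becomes $\sgn(s_F) + \sum_{s}\sgn(s)\,\tilde x_{\fV,\uu_s}\tilde x_{\fV,\uv_s}\cdot(\text{unit})$, whose reduction mod $(x_{\fV,(\um,\uu_k)})$ is a nonzero constant. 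Hence $\tsV_{\vt_{[k]}}\cap X_{\vt_{[k]},(\um,\uu_k)}\cap\fV = \emptyset$ on every such chart. For the remaining charts — those lying over $(x_{(\um,\uu_k)}\equiv1)$ of $\sR$ or $\vr$-standard at step $k$ — the divisor $X_{\vt_{[k]},(\um,\uu_k)}$ simply does not meet the chart at all, by Proposition \ref{meaning-of-var-vtk}(5) together with convention \eqref{conv:=1}. Finally, the property "$x_{\fV,(\um,\uu_k)}$ is invertible whenever it is a variable" is preserved under all subsequent $\vt$-blowups indexed by $\uu_j$ with $j\neq k$ (such a blowup neither introduces nor annihilates $x_{(\um,\uu_k)}$, by inspection of Definition \ref{defn:vr-centers} and Proposition \ref{meaning-of-var-vtk}), so the conclusion propagates to $\tsR_{\vt}=\tsR_{\vt_{[\up]}}$ and hence to $\tsV_\vt$; the last sentence of the corollary about which standard charts cover $\tsV_\vt$ is then a direct restatement, using that a chart where $x_{\fV,(\um,\uu_k)}$ is either absent or invertible, for this specific $k$, is exactly one lying over $(x_{(\um,\uu_k)}\equiv1)$ or over the $\vr$-standard chart of $\tsR_{\vt_{[k]}}$.

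The main obstacle I anticipate is the bookkeeping in the propagation step: one must check that for $j \neq k$, passing from $\tsR_{\vt_{[j-1]}}$ to $\tsR_{\vt_{[j]}}$ does not create a new standard chart on which $x_{\fV,(\um,\uu_k)}$ reappears as a genuine (non-invertible) variable after having been made invertible. This requires tracking, through the inductive construction in the proof of Proposition \ref{meaning-of-var-vtk}, that the $\vt$-center $Z_{\vt_{[j]}} = X_{\uu_j}\cap X_{(\um,\uu_j)}$ involves only the indices $\uu_j$ and $(\um,\uu_j)$, so the substitution rules of Definition \ref{general-proper-transforms} affect only those two variables and leave $x_{\fV,(\um,\uu_k)}$ untouched — in particular it is never acquired as a denominator nor multiplied into an exceptional parameter in a way that could vanish. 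This is routine but must be stated carefully; everything else is a clean bookkeeping consequence of Proposition \ref{eq-for-sV-vtk} and its proof.
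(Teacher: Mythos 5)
Your general strategy is right — focus on a chart of $\tsR_{\vt_{[k]}}$ where $x_{\fV,(\um,\uu_k)}$ is an honest variable, observe that such a chart must be a $\vp$-standard chart, and then read off invertibility from the block of governing relations $\fG_{F_k}$ — and the propagation observation at the end is sound. But the algebra in the middle is genuinely wrong, and the fix you need is exactly the paper's much cleaner trick, which you are circling around without landing on.

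The decisive point you miss is this: the chart $\fV$ lies over a standard chart $\fV_{[0]}$ of $\sR$ indexed by some $s_{F_k,o}\in S_{F_k}$, and since we are \emph{not} on the chart $(x_{(\um,\uu_k)}\equiv 1)$, necessarily $s_{F_k,o}\neq s_{F_k}$, i.e.\ $x_{\fV,(\uu_{s_{F_k,o}},\uv_{s_{F_k,o}})}\equiv 1$. Combined with the convention $x_{\fV,\uu_k}=1$ on a $\vp$-standard chart, the \emph{single} governing binomial $B_{\fV,(s_{F_k},s_{F_k,o})}$ of \eqref{eq-B-vt-lek=0-00} already reads
\[
1 \;-\; x_{\fV,(\um,\uu_k)}\,\tilde x_{\fV,\uu_{s_{F_k,o}}}\tilde x_{\fV,\uv_{s_{F_k,o}}},
\]
and this vanishing on $\tsV_{\vt_{[k]}}\cap\fV$ forces $x_{\fV,(\um,\uu_k)}$ to be a unit there; no appeal to $L_{\fV,F_k}$ or to the other binomials is needed. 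Moreover the plus term of this binomial is literally the constant $1$, which persists as $1$ through all subsequent proper transforms, so the argument carries over to $\tsR_\vt$ without the bookkeeping you worry about at the end. This is the paper's proof (Corollary \ref{no-(um,uu)}), and it is strictly simpler than your route.

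Your route, by contrast, substitutes all the governing binomials into $L_{\fV,F_k}$ and factors out $x_{\fV,(\um,\uu_k)}$, obtaining (on $\tsV_{\vt_{[k]}}$) an identity of the form $x_{\fV,(\um,\uu_k)}\cdot(\,\cdot\,)=0$. This alone does not prove $x_{\fV,(\um,\uu_k)}\neq 0$; you would still need the parenthetical factor to be a unit, which you do not establish. The subsequent manipulations are incoherent with that factorization: dividing by $\ve_{\fV,\uu_k}$ does not produce the displayed expression because $\ve_{\fV,\uu_k}$ is not a common factor of the bracket, and $\ve_{\fV,\uu_k}$ is \emph{not} the same variable as $x_{\fV,(\um,\uu_k)}$ (on a $\vp$-standard chart the former is the exceptional parameter labelled by $\uu_k$, the latter is the proper transform of the $\vr$-coordinate); and the reduction of $x_{\fV,(\um,\uu_k)}\cdot(\,\cdot\,)$ modulo the ideal $(x_{\fV,(\um,\uu_k)})$ is identically zero, not a nonzero constant. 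The version of your substitution argument that actually works keeps the $s=s_{F_k,o}$ term as the constant $\sgn(s_{F_k,o})$ (do \emph{not} factor $x_{\fV,(\um,\uu_k)}$ out of it, since it is already a constant on the chart): then $L_{\fV,F_k}=0$ becomes $\sgn(s_{F_k,o})\;+\;x_{\fV,(\um,\uu_k)}\cdot(\text{rest})=0$, which forces $x_{\fV,(\um,\uu_k)}$ to be a unit — but this is visibly just a more roundabout way of invoking the binomial $B_{\fV,(s_{F_k},s_{F_k,o})}$. Finally, a small point: the claim that a chart on which $x_{\fV,(\um,\uu_k)}$ is a genuine variable must be a $\vp$-standard chart is correct, but it follows from the chart construction in the proof of Proposition \ref{meaning-of-var-vtk}, not from its third enumerated item, which concerns $\fL$-divisors.
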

\begin{proof} Fix any standard chart $\fV$. 

If $\fV$ lies over the chart $(x_{(\um,\uu_k)} \equiv 1)$ of $\sR$, 
that is, $(\um,\uu_k) \in  \La_{[k]}^o$,
then the first statement $\tsV_{\vt} \cap X_{\vt, (\um,\uu_k)} =\emptyset$
follows from the definition. 

If $\fV$ lies over a $\vr$-standard chart of $\tsR_{\vt_{[k]}}$, 
then the fact that $\tsV_{\vt} \cap X_{\vt, (\um,\uu_k)} =\emptyset$ follows from Proposition \ref{meaning-of-var-vtk}  (4)
(cf. also $x_{\fV, (\um,\uu_k)} = 1$ by the convention \eqref{conv:=1}).

Suppose $\fV$ lies over a $\vp$-standard chart 
$\fV_{\vt_{[k]}}$ of $\tsR_{\vt_{[k]}}$. Then, in this case,
we have the following governing binomial relation on the chart $\fV_{\vt_{[k]}}$
\begin{equation}\label{invertible-umuu}
 B_{ \fV_{\vt_{[k]}},(s_{F_k}, s_{F, o})}: \;\;\;\; 1   - 
x_{\fV_{\vt_{[k]}}, (\um,\uu_k)}x_{\fV_{\vt_{[k]}}, \uu_{s_{F, o}}} x_{\fV_{\vt_{[k]}}, \uv_{s_{F, o}}}
\end{equation}
because $x_{\fV_{\vt_{[k]}}, (\uu_{s_{F,o}},\uu_{s_{F,o}})} \equiv 1$ 
with $(\uu_{s_{F,o}},\uu_{s_{F,o}}) \in  \La_{[k]}^o$
and $x_{\fV_{\vt_{[k]}}, \uu_k} = 1$ by \eqref{conv:=1}.  
Thus $x_{\fV_{\vt_{[k]}}, (\um,\uu_k)}$ is nowhere vanishing along
  $\tsV_{\vt_{[k]}} \cap \fV$.  This implies the statement.
  \end{proof}

\begin{defn}\label{preferred-chart-vt}
We call any standard chart of $\tsR_\vt$ as described by Corollary \ref{no-(um,uu)} 
 a preferred standard chart. 
\end{defn}

\begin{rem} On $\vp$-standard charts, $x_{\fV_{\vt_{[k]}}, (\um,\uu_k)}$ is an invertible
variable. Thus,  all $\vp$-standard charts  miss the divisor $X_{\vt, (\um,\uu_k)}$. 
On the other hand, on $\vr$-standard charts, $x_{\fV_{\vt_{[k]}}, (\um,\uu_k)}$ is not a variable, instead, $\de_{\fV_{\vt_{[k]}}, (\um,\uu_k)}$ is. 
We choose to work with the latter.
\end{rem}

\section{The Universal $\wp$- and $\ell$-Blowups}\label{sect:wp/ell-blowups} 

\subsection{ The initial setup: $\wp_0$-blowups and $\ell_0$-blowup} $\ $

Our initial scheme is  $\tsR_{\ell_{-1}}:=\tsR_\vt$.

We let 
$$\tsR_{\ell_0} \lra \tsR_{\wp_0} \lra \tsR_{\ell_{-1}}:= \tsR_\vt$$
be the trivial blowups along the empty set. 
In the sequel, a blowup is called trivial if it is a blowup along the emptyset.
We make
the identifications
$$\tsR_{\ell_0} = \tsR_{\wp_0} =\tsR_{\ell_{-1}}= \tsR_\vt.$$

For any $k \in [\up]$, we are to construct 
$$\tsR_{\ell_k} \lra \tsR_{\wp_k} \lra \tsR_{\ell_{k-1}}.$$ 
The morphism $\tsR_{\wp_k} \to \tsR_{\ell_{k-1}}$ is decomposed as
a sequential  blowups based on all relations in $\cB^\gov_{F_k}$,
and, $\tsR_{\ell_k} \to \tsR_{\wp_k} $ is a single   blowup
based on the relation $L_{F_k}$.

We do it by induction on the set $[\up]$.

As a part of the initial data on
the scheme $\tsR_{\ell_{-1}}=\tsR_{\vt}$, we have equipped it with the following sets of smooth divisors,

 $\bullet$  The set  $\cD_{\vt, \vp}$ of
  the proper transforms $X_{\vt, \uw}$ of $\vp$-divisors $X_\uw$ for all $\uw \in \II_{3,n}\- \um$.
  These are still called $\vp$-divisors. 

$\bullet$  The set  $\cD_{\vt, \vr}$ of the proper transforms $X_{\vt, (\uu,\uv)}$
of $\vr$-divisors $X_{(\uu,\uv)}$ for all $(\uu,\uv) \in \La_\sF$.
These are still called $\vr$-divisors. 

 $\bullet$    The set $\cE_{\vt}$ of the proper transforms
 $E_{\vt, k} \subset \tsR_{\vt}$ of the $\vt$-exceptional divisors 
 $E_{\vt_{[k]}} \subset \tsR_{\vt_{[k]}}$ for all $k \in [\up]$.


$\bullet$    The set  $\cD_{\vt, \fL}$ of the proper transforms $D_{\vt, L_F}$
of $\fL$-divisors $D_{L_F}$ for all $\bF \in \sF$.
These are still called $\fL$-divisors. 

Thus, on the intial scheme $\tsR_{\vt}$, we have the set of  smooth divisors
$$\cD_\vt=\cD_{\vt, \vr}\sqcup \cD_{\vt, \vp} \sqcup \cE_{\vt}.$$
In addition, we have $\cD_{\vt, \fL}$. The set $\cD_{\vt, \fL}$ will 
not be used until the $\ell$-blowup.

As the further initial data, we need to introduce the instrumental notion: $``$association$"$
with multiplicity  as follows. 

\begin{defn}
 Consider any governing binomial relation $B \in \cB^\gov$ written as
$$
 B=B_{(s_{F_k},s)}=T^+_B-T^-_B: 
\;\;\;\;\;\; x_{(\uu_s, \uv_s)}  x_{\uu_k}   - 
x_{(\um, \uu_k)} x_{\uu_s} x_{uv\uv_s}$$
for some $k \in [\up]$ and $s \in S_{F_k}$, where $\uu_k=\uu_{F_k}$.
Consider any $\vp$-divisor, $\vr$-divisor, or  exceptional divisor $Y$ on $\tsR_{\vt}$.

Let  $Y=X_{\vt,\uu}$ be any $\vp$ divisor for some $\uu \in \II_{3,n}$. 
We set
$$m_{Y, T^+_B}=\left\{
\begin{array}{rcl}
1, & \;\; \hbox{if  $\uu=\uu_k$} \\
0 ,& \;\; \hbox{otherwise.}\\ 
\end{array} \right. $$ 
$$m_{Y, T^-_B}=\left\{
\begin{array}{rcl}
1, & \;\; \hbox{if  $\uu=\uu_s$ or $\uu=\uv_s$,} \\
0 ,& \;\; \hbox{otherwise.} \\ 
\end{array} \right. $$ 

Let  $Y=X_{\vt, (\uu,\uv)}$ be any $\vr$ divisor. We set
$$m_{Y, T^+_B}=\left\{
\begin{array}{rcl}
1, & \;\; \hbox{if  $(\uu,\uv)=(\uu_s,\uv_s)$} \\
0 ,& \;\; \hbox{otherwise.}\\ 
\end{array} \right. $$

Due to Corollary \ref{no-(um,uu)}, we do not  associate
$X_{(\um,\uu_k)}$ with $T^-_B$.
Hence, we set
$$m_{Y, T^-_B}=0.$$


Let  $Y=E_{\vt, j}$ be any exceptional-divisor for some $j \in [\up]$. If $k=j$,
we set  $$m_{Y, T^+_B}=m_{Y, T^-_B}=0.$$
Suppose now $k \ne j$. We  set
$$m_{Y, T^+_B}=0, $$ 
$$m_{Y, T^-_B}=m_{X_{\vt, \uu_j}, T^-_B}. $$ 

We call the number $m_{Y, T^\pm_B}$ the multiplicity of $Y$ associated with the term $T^\pm_B$.
We say $Y$ is associated with $T^\pm_B$ if $m_{Y, T^\pm_B}$ is positive. 
We do not say $Y$ is associated with $T^\pm_B$
if the multiplicity $m_{Y, T^\pm_B}$ is zero.
\end{defn}

\begin{example}
  Let $B=T_B^+ - T_B^-= x_{2uv}x_{(12u,23v)} -x_{12u}x_{23v} x_{(123,2uv)}$.
Then, we have 
$$m_{Y, T^-_B}=\left\{
\begin{array}{rcl}
1, & \;\; \hbox{if \; $Y=X_{12u}, X_{23v}$ or $X_{(123,2uv)}$;} \\
0 ,& \;\; \hbox{otherwise.}\\ 
\end{array} \right. $$
\end{example}

\begin{defn} Consider any linearized $\pl$ relation
$$L_F=\sum_{s \in S_F} \sgn (s) x_{(\uu_s, \uv_s)}.$$
for some $F\in \sF$.  Fix any $s \in S_F$.
Consider any $\vp$-divisor, $\vr$-divisor, or  exceptional-divisor $Y$ on $\tsR_{\vt}$.

Let  $Y=X_{\vt,\uw}$ be any $\vp$ divisor for some $\uw \in \II_{3,n}$. We set
$m_{Y, s}=0.$

Let  $Y=X_{\vt, (\uu,\uv)}$ be any $\vr$ divisor. We set
$$m_{Y, s}=\left\{
\begin{array}{rcl}
1, & \;\; \hbox{if  $(\uu,\uv)=(\uu_s,\uv_s)$} \\
0 ,& \;\; \hbox{otherwise.}\\ 
\end{array} \right. $$

Let  $Y=E_{\vt, j}$ be any exceptional-divisor for some $k \in [\up]$. We let
$$m_{Y, s}=m_{X_{\vt, (\um, \uu_k)}, s} \;. $$

We call the number $m_{Y, s}$ the multiplicity of $Y$ associated with $s \in S_F$.
We say $Y$ is associated with $s$ if $m_{Y,s}$ is positive. We do not say $Y$ is associated with $s$
if the multiplicity $m_{Y,s}$ is zero.
\end{defn}

Now, take any $k \in [\up]$. 

We  suppose that all the blowups
$$\tsR_{\ell_{j}} \lra \tsR_{\wp_{j}} \lra \tsR_{\ell_{j-1}}$$  
have been constructed for all the blocks in 
$$\fG_{ [k-1]}=\bigsqcup_{j \in [k-1]} \fG_j$$
 such that for all $j \in [k-1]$:

$\bullet$ $\tsR_{\wp_{j}}\lra\tsR_{\ell_{j-1}}$ is a sequential $\wp$-blowups
with respect to  the governing binomial relations of the block $\cB^\gov_{F_{j}}$.

$\bullet$ $\tsR_{\ell_{j}}\lra\tsR_{\wp_{j}}$ is a single $\ell$-blowup
with respect to  $L_{F_{j}}$.

We  are to construct 
$$\tsR_{\ell_{k}} \lra \tsR_{\wp_{k}} \lra \tsR_{\ell_{k-1}}$$  
in the next subsection.

\subsection{$\wp$-blowups and $\ell$-blowup in
the block $(\fG_k)$}\label{wp/ell-centers} $\ $

\subsubsection{$\wp$-blowups in the block $(\fG_k)$}


We proceed by applying  induction on 
the set $$\{(k\tau)\mu \mid k \in [\up], \tau \in [\ft_{F_k}],
\mu \in [\rho_{(k\tau)}]\},$$ ordered lexicographically on $(k,\tau, \mu)$, 
where $\rho_{(k\tau)}$ is a  to-be-defined 
finite positive integer depending on $(k\tau) \in \Index_{\cB^\gov}$
(cf. \eqref{indexing-Bmn}).


The initial case for $\wp$-blowups with respect to
the block $\fG_k$  is $\wp_{(k1)}\fr_0$ and the initial scheme is 
$\sR_{({\wp}_{(k1)}\fr_{0})}:=\tsR_{\ell_{k-1}}$. 
When $k=0$, we get $\sR_{({\wp}_{(11)}\fr_{0})}:=\tsR_{\ell_{-1}}=\tsR_\vt$. 

We suppose that the scheme $\tsR_{({\wp}_{(k\tau)}\fr_{\mu-1})}$ has been constructed and
the following package in $({\wp}_{(k\tau)}\fr_{\mu-1})$ has been introduced
for some  integer $\mu \in [\rho_{(k\tau)}]$, where $1\le \rho_{(k\tau)}\le \infty$ is 
 an integer depending on $(k\tau) \in \Index_{\cB^\gov}$. (It will be proved to be finite.)
 Here, to reconcile notations, we make the convention:
$$({\wp}_{(k\tau)}\fr_0):=({\wp}_{(k(\tau -1))}\fr_{\rho_{(k(\tau-1))}}), \; \forall \;\; 1\le k\le \up, \;
2\le \tau\le \ft_{F_k},$$
$$({\wp}_{(k1)}\fr_0):=({\wp}_{((k-1)\ft_{F_{k-1}})}\fr_{\rho_{((k-1)\ft_{F_{k-1}})}}), \; \forall \;\; 2\le k\le \up, $$
provided that $\rho_{(k(\tau-1))}$ and $\rho_{((k-1)\ft_{F_{k-1}})}$
are (proved to be) finite.

\medskip\noindent
$\bullet$  {\sl The inductive assumption package.} 
{\it The scheme $\tsR_{({\wp}_{(k\tau)}\fr_{\mu-1})}$ has been constructed.
It comes equipped with the following.

\smallskip
{\bf ($\di$) The sets of  divisors required to define blowup centers.}

The set of $\vp$-divisors, 
$$\cD_{({\wp}_{(k\tau)}\fr_{\mu-1}),\vp}: \;\; X_{({\wp}_{(k\tau)}\fr_{\mu-1}), \uw}, \;\; \uw \in \II_{3,n} \- \um.$$

The set of $\vr$-divisors
$$\cD_{({\wp}_{(k\tau)}\fr_{\mu-1}), \vr}: \;\; X_{({\wp}_{(k\tau)}\fr_{\mu-1}), (\uu,\uv)}, \;\; (\uu, \uv) \in \La_\sF.$$


The set of $ \fL$-divisors 
$$\cD_{({\wp}_{(k\tau)}\fr_{\mu-1}),  \fL}: 
{ D_{({\wp}_{(k\tau)}\fr_{\mu-1}), L_{F_j},} \;\; j \in [k-1].}$$

 The set of the exceptional divisors 
 $$\cE_{({\wp}_{(k\tau)}\fr_{\mu-1})}: \;\; E_{({\wp}_{(k\tau)}\fr_{\mu-1}), (k'\tau') \mu' h'},\;\; 
 \hbox{${ (11)0}\le (k'\tau') \mu' \le (k\tau)(\mu -1), \; h' \in [\si_{(k'\tau') \mu'}]$} $$
for some finite positive integer $\si_{(k'\tau')\mu'}$ depending on $(k'\tau')\mu'$.
We set $\si_{(11)0}=\up$. This counts the number of exceptional divisors on 
$\tsR_{(\wp_1\fr_1\fs_0)}=\tsR_\vt$.


We let $$\cD_{({\wp}_{(k\tau)}\fr_{\mu-1})}=\cD_{({\wp}_{(k\tau)}\fr_{\mu-1}),\vr}
 \sqcup \cD_{({\wp}_{(k\tau)}\fr_{\mu-1}),\vp}
\sqcup \cE_{({\wp}_{(k\tau)}\fr_{\mu-1})}$$ be the set of all  the aforelisted divisors. 
The set $\cD_{({\wp}_{(k\tau)}\fr_{\mu-1}),  \fL}$ will not be used until the $\ell$-blowup.

\smallskip
{\bf ($\di$) Associate divisors with terms of defining relations.}

Fix  any $Y \in \cD_{({\wp}_{(k\tau)}\fr_{\mu-1})}$.
 Consider any $B \in \cB^\frb \cup \cB^\gov$ and let $T_B$ be any fixed  term of $B$.
Then, we have that $Y$ is associated with $T_B$ with the multiplicity  $m_{Y,T_B}$, 
a nonnegative integer.  
In the sequel, we say $Y$ is associated with $T_B$  if $m_{Y,T_B}>0$; 
we do not say $Y$ is associated with $T_B$  if $m_{Y,T_B}=0$.

Likewise, for any term of $T_s=\sgn (s) x_{(\uu_s,\uv_s)}$ of $L_F
=\sum_{s \in S_F} \sgn (s) x_{(\uu_s,\uv_s)}$, $Y$ 
is associated with $T_s$ with the multiplicity  $m_{Y,s}$, a nonnegative integer. 
We say $Y$ is associated with $T_s$  if $m_{Y,s}>0$; 
we do not say $Y$ is associated with $T_s$  if $m_{Y,s}=0$.}

\smallskip
We are now  to construct the scheme $\tsR_{({\wp}_{(k\tau)}\fr_\mu)}$.
 The process consists of a finite steps of blowups; the scheme $\tsR_{({\wp}_{(k\tau)}\fr_\mu)}$
is the one obtained in the final step. 




As before, fix any $k \in [\up]$,  we write $\cB^\gov_{F_k}=\{B_{(k\tau)} \mid \tau \in [\ft_{F_k}]\}.$
For every $B_{(k\tau)} \in \cB_{F_k}^\gov$, we have the expression  
$$B_{(k\tau)}=T_{(k\tau)}^+ -T_{(k\tau)}^- =x_{(\uu_s,\uv_s)}x_{\uu_k} -
x_{\uu_s}x_{\uv_s}x_{(\um,\uu_k)} $$  where $ s\in S_{F_k} \- s_{F_k}$ corresponds to $\tau$
and
$x_{\uu_k}$ is the leading variable of $\bF_k$ for some $\uu_k \in \II_{3,n}^\lt$.
We can write $s=(k\tau)$, and, use $B_s$ and $B_{(k\tau)}$ interchangeblly.

\smallskip
{\bf ($\di$)  We now introduce the blowup centers.}

\begin{defn}\label{wp-sets-kmu} 
A pre-${\wp}$-set $\phi$ in $({\wp}_{(k\tau)}\fr_\mu)$, written as
$$\phi=\{Y^+, \; Y^- \},$$
 consists of exactly two divisors 
 of the scheme $\tsR_{({\wp}_{(k\tau)}\fr_{\mu-1})}$ such that  
$Y^\pm$ is  associated with $T_{(k\tau)}^\pm$. 
  
  Given the above pre-${\wp}$-set $\phi$, we let 
$$Z_\phi = Y^+ \cap Y^-$$
 be the scheme-theoretic intersection.
The pre-$\wp$-set $\phi$ (resp. $Z_\phi $) is called a $\wp$-set (resp. $\wp$-center) 
in $({\wp}_{(k\tau)}\fr_\mu)$ if $$Z_\phi \cap \tsV_{({\wp}_{(k\tau)}\fr_{\mu-1})} \ne \emptyset.$$
In such a case, we also call $\phi$ (resp. $Z_\phi $) a $\wp_k$-set (resp. $\wp_k$-center). 
\end{defn}
Recall that due to Corollary \ref{no-(um,uu)}, we do not  associate
$X_{(\um,\uu_k)}$ with $T_{(k\tau)}^-$.
Hence $Y^- \ne X_{({\wp}_{(k\tau)}\fr_{\mu-1}), (\um,\uu_k)}$.
Had we associated $X_{({\wp}_{(k\tau)}\fr_{\mu-1}),(\um,\uu_k)}$ with $T_{(k\tau)}^-$,  the condition
$Z_\phi \cap \tsV_{({\wp}_{(k\tau)}\fr_{\mu-1})} \ne \emptyset$ would also exclude it.



As there are only finitely many $\vp$-, $\vr$-, and exceptional 
divisors on the scheme $\tsR_{({\wp}_{(k\tau)}\fr_{\mu-1})}$, that is,
the set $\cD_{({\wp}_{(k\tau)}\fr_{\mu-1})}$ is finite,
one sees that there are only finitely many ${\wp}$-sets in $({\wp}_{(k\tau)}\fr_\mu)$. 
We let $\Phi_{{\wp}_{(k\tau)}\fr_\mu}$ be the finite set of all ${\wp}$-sets
in $({\wp}_{(k\tau)}\fr_\mu)$; we let $\cZ_{{\wp}_{(k\tau)}\fr_\mu}$ be the finite set of all 
corresponding ${\wp}$-centers in $({\wp}_{(k\tau)}\fr_\mu)$.
We need  a total ordering on the set
$\Phi_{{\wp}_{(k\tau)}\fr_\mu}$, hence on the set $\cZ_{{\wp}_{(k\tau)}\fr_\mu}$,
to produce a canonical sequential blowups.

\smallskip
{\bf ($\di$) Ordering the set of blowup centers.}

\begin{defn}\label{order-phi} Let $\cD^\pm_{({\wp}_{(k\tau)}\fr_{\mu-1})}$ be the set of all 
divisors associated with $T_{(k\tau)}^\pm$. 

We order the set  $\cD^+_{({\wp}_{(k\tau)}\fr_{\mu-1})}$  as follows.
We let $X_{({\wp}_{(k\tau)}\fr_{\mu-1}),(\uu_s,\uv_s)}$ be the largest and
$X_{({\wp}_{(k\tau)}\fr_{\mu-1}),\uu_k}$ be the second largest. The rest are
exceptional divisors. We order them by reversing the order of occurrence
of the exceptional divisors. 
By Definition \ref{p-t}, $\cD^+_{({\wp}_{(k\tau)}\fr_{\mu-1})}$ is totally ordered.

We order the set  $\cD^-_{({\wp}_{(k\tau)}\fr_{\mu-1})}$  as follows.
We let $\cD^-_{({\wp}_{(k\tau)}\fr_{\mu-1}),\vp}$ be the subset 
of $\vp$-divisors with the order 
induced from that  on the set of all $\pl$-variables.
We let $\cE^-_{({\wp}_{(k\tau)}\fr_{\mu-1})}$  be the subset of exceptional divisors
by reversing the order of occurrence.
We then declare
$$
\cE^-_{({\wp}_{(k\tau)}\fr_{\mu-1})}<  \cD^-_{({\wp}_{(k\tau)}\fr_{\mu-1}),\vp}.$$
By Definition \ref{p-t}, $\cD^-_{({\wp}_{(k\tau)}\fr_{\mu-1})}$ is totally ordered.

Now, let $\phi_1, \phi_2 \in \Phi_{{\wp}_{(k\tau)}\fr_\mu}$ be any two distinct elements.
Write $\phi=\{Y_i^+, Y_i^-\}, i=1, 2$.
We $\phi_1 < \phi_2$ if $Y_1^+<Y_2^+$ or $Y_1^+=Y_2^+$ and $Y_1^-<Y_2^-$. 

This endows $\Phi_{{\wp}_{(k\tau)}\fr_\mu}$ a total order $`` < "$.
\end{defn}

Thus, we can list $\Phi_{{\wp}_{(k\tau)}\fr_\mu}$ as
$$\Phi_{ {\wp}_{(k\tau)}\fr_\mu}=\{\phi_{(k\tau)\mu 1} < \cdots < \phi_{(k\tau)\mu \si_{(k\tau)\mu}}\}$$
for some finite positive integer $\si_{(k\tau)\mu}$ depending on $(k\tau)\mu$. We then let the set 
$\cZ_{\wp_{(k\tau)}\fr_\mu}$ of the corresponding $\wp$-centers
inherit the total order from that of $\Phi_{{\wp}_{(k\tau)}\fr_\mu}$. 
Then, we can express
$$\cZ_{\wp_{(k\tau)}\fr_\mu}=\{Z_{\phi_{(k\tau)\mu1}} < \cdots < Z_{\phi_{(k\tau)\mu\si_{(k\tau)\mu}}}\}.$$


We let $\tsR_{({\wp}_{(k\tau)}\fr_\mu \fs_1)} \lra \tsR_{({\wp}_{(k\tau)}\fr_{\mu-1})}$ 
be the blowup of $\tsR_{({\wp}_{(k\tau)}\fr_{\mu-1})}$
along the {$\wp$-}center $Z_{\phi_{(k\tau)\mu1}}$. 
Inductively, we assume that 
$\tsR_{({\wp}_{(k\tau)}\fr_\mu\fs_{(h-1)})}$ has been constructed for some 
$ h \in [\si_{(k\tau)\mu}]$.
We then let  $$\tsR_{({\wp}_{(k\tau)}\fr_\mu\fs_h)} \lra \tsR_{({\wp}_{(k\tau)}\fr_\mu\fs_{h-1})}$$ be the blowup of
$ \tsR_{({\wp}_{(k\tau)}\fr_\mu\fs_{h-1})}$ along (the proper transform of) the  $\wp$-center 
$Z_{\phi_{(k\tau)\mu h}}$. We call it a $\wp$-blowup or a $\wp_k$-blowup.

Here, to reconcile notation, we set 
$$\tsR_{({\wp}_{(k\tau)}\fr_\mu\fs_{0})}:=\tsR_{({\wp}_{(k\tau)}\fr_{\mu-1})}
:=\tsR_{({\wp}_{(k\tau)}\fr_{\mu-1}\fs_{\si_{(k\tau)(\mu-1)}})}.$$

All of these can be summarized as the sequence 
$$\tsR_{({\wp}_{(k\tau)}\fr_\mu)}:= \tsR_{({\wp}_{(k\tau)}\fr_\mu\fs_{\si_{(k\tau)\mu}})}
 \lra \cdots \lra \tsR_{({\wp}_{(k\tau)}\fr_\mu\fs_1)} \lra \tsR_{({\wp}_{(k\tau)}\fr_{\mu-1})}.$$

\smallskip
{\bf ($\di$) The new  divisors required
 to continue the blowing up process.}

 Given $h \in [\si_{(k\tau)\mu}]$,
consider the induced morphism
$\tsR_{({\wp}_{(k\tau)}\fr_\mu\fs_h)} \lra 
\tsR_{({\wp}_{(k\tau)}\fr_{\mu-1})}$. 

$\bullet$ We let $X_{({\wp}_{(k\tau)}\fr_\mu\fs_h), \uw}$ be the proper transform
of $X_{(\wp_{(k\tau)}\fr_{\mu-1}), \uw}$ in $\tsR_{({\wp}_{(k\tau)}\fr_\mu\fs_h)}$,
for all $\uw \in \II_{3,n} \- \um$. These are still called $\vp$-divisors.
We denote the set of all $\vp$-divisors
 on  $\tsR_{({\wp}_{(k\tau)}\fr_\mu\fs_h)}$ by  $\cD_{({\wp}_{(k\tau)}\fr_\mu\fs_h),\vp}$.

$\bullet$ We let $X_{({\wp}_{(k\tau)}\fr_\mu\fs_h), (\uu, \uv)}$ be the proper transform
  of the $\vr$-divisor $X_{({\wp}_{(k\tau)}\fr_{\mu-1}), (\uu, \uv)}$ in $\tsR_{({\wp}_{(k\tau)}\fr_\mu\fs_h)}$,
  for all $(\uu, \uv) \in \La_\sF$. These are still called $\vr$-divisors.
We denote the set of all $\vr$-divisors
 on  $\tsR_{({\wp}_{(k\tau)}\fr_\mu\fs_h)}$ by  $\cD_{({\wp}_{(k\tau)}\fr_\mu\fs_h),\vr}$.
 
 
 $\bullet$ We let $D_{({\wp}_{(k\tau)}\fr_\mu\fs_h), L_F}$ be the proper transform
  of the $\fL$-divisor $D_{({\wp}_{(k\tau)}\fr_{\mu-1}), L_F}$ in $\tsR_{({\wp}_{(k\tau)}\fr_\mu\fs_h)}$,
  for all $\bF \in \sF$. These are still called $\fL$-divisors.
We denote the set of all $\fL$-divisors
 on  $\tsR_{({\wp}_{(k\tau)}\fr_\mu\fs_h)}$ by  $\cD_{({\wp}_{(k\tau)}\fr_\mu\fs_h),\fL}$.

 $\bullet$ We let
 $E_{({\wp}_{(k\tau)}\fr_\mu\fs_h),  (k'\tau')\mu' h'}$ be the proper transform 
 of $E_{({\wp}_{(k\tau)}\fr_{\mu-1}),  (k'\tau')\mu' h'}$ in $\tsR_{({\wp}_{(k\tau)}\fr_\mu \fs_h)}$,
 for all   $(11)0 \le (k'\tau')\mu'\le (k\tau)(\mu-1)$
 with $h' \in [\si_{(k'\tau')\mu'}]$.
We denote the set of these exceptional divisors on  
  $\tsR_{({\wp}_{(k\tau)}\fr_\mu\fs_h)}$ by $\cE_{({\wp}_{(k\tau)}\fr_\mu \fs_h),{\rm old}}$.
 
  We let  $$\bar\cD_{({\wp}_{(k\tau)}\fr_\mu\fs_h)}= \cD_{({\wp}_{(k\tau)}\fr_\mu\fs_h),\vp}
  \sqcup \cD_{({\wp}_{(k\tau)}\fr_\mu\fs_h),\vr} \sqcup \cE_{({\wp}_{(k\tau)}\fr_\mu\fs_h),{\rm old}}$$ 
  be the set of  all of the aforementioned divisors on $\tsR_{({\wp}_{(k\tau)}\fr_\mu \fs_h)}$.
 The set $\cD_{({\wp}_{(k\tau)}\fr_\mu\fs_h),\fL}$ will not be used until the $\ell$-blowup.

 In addition to the proper transforms of the divisors
  from $\tsR_{({\wp}_{(k\tau)}\fr_{\mu-1})}$, there are
the following {\it new} exceptional divisors.

For any $ h \in [\si_{(k\tau)\mu}]$, we let $E_{({\wp}_{(k\tau)}\fr_\mu\fs_h)}$ be the exceptional divisor of 
the blowup $\tsR_{({\wp}_{(k\tau)}\fr_\mu\fs_h)} \lra \tsR_{({\wp}_{(k\tau)}\fr_\mu\fs_{h-1})}$.
 For any $1\le h'< h\le \si_{(k\tau)\mu}$,
we let $E_{({\wp}_{(k\tau)}\fr_\mu\fs_h), (k\tau)\mu h'}$ 
be the proper transform in $\tsR_{({\wp}_{(k\tau)}\fr_\mu \fs_h)} $
of the exceptional divisor $E_{({\wp}_{(k\tau)}\fr_\mu \fs_{h'})}$. 
To reconcile notation, we also set 
$E_{({\wp}_{(k\tau)}\fr_\mu\fs_h), (k\tau)\mu h}:=E_{({\wp}_{(k\tau)}\fr_\mu \fs_h)}$.
We  set $$\cE_{({\wp}_{(k\tau)}\fr_\mu \fs_h),{\rm new}}=\{ E_{({\wp}_{(k\tau)}\fr_\mu\fs_h), 
(k\tau)\mu h'} \mid 1\le h'\le h\le \si_{(k\tau)\mu} \} .$$
We then order the exceptional divisors of $ \cE_{({\wp}_{(k\tau)}\fr_\mu\fs_h),{\rm new}}$
by  reversing the order of occurrence, that is, 
$E_{({\wp}_{(k\tau)}\fr_\mu\fs_h), (k\tau)\mu h''}\le E_{({\wp}_{(k\tau)}\fr_\mu\fs_h), (k\tau)\mu h'}$
 if $h'' \ge h'$.
 
 We then let 
 $$\cD_{({\wp}_{(k\tau)}\fr_\mu \fs_h)} =\bar\cD_{({\wp}_{(k\tau)}\fr_\mu \fs_h)} 
\sqcup  \cE_{({\wp}_{(k\tau)}\fr_\mu \fs_h),{\rm new}}.$$

Finally, we set
 $\tsR_{({\wp}_{(k\tau)})\fr_\mu}:= \tsR_{({\wp}_{(k\tau)}\fr_\mu\fs_{\si_{(k\tau)\mu}})}$, and let
 $$\cD_{({\wp}_{(k\tau)}\fr_\mu),\vp}=\cD_{({\wp}_{(k\tau)}\fr_\mu\fs_{\si_{(k\tau)\mu}}),\vp}, \;
 \cD_{({\wp}_{(k\tau)}\fr_\mu),\vr}=\cD_{({\wp}_{(k\tau)}\fr_\mu\fs_{\si_{(k\tau)\mu}}),\vr},$$
 $$\cE_{{\wp}_{(k\tau)}\fr_\mu}=\cE_{({\wp}_{(k\tau)}\fr_\mu\fs_{\si_{(k\tau)\mu}}), {\rm old}}  
 \sqcup \cE_{({\wp}_{(k\tau)}\fr_\mu\fs_{\si_{(k\tau)\mu}}), {\rm new}} .$$
 This can be  summarized as 
$$\cD_{({\wp}_{(k\tau)}\fr_\mu)}:=\cD_{({\wp}_{(k\tau)}\fr_\mu),\vp}  
\sqcup \cD_{({\wp}_{(k\tau)}\fr_\mu),\vr}
\sqcup \cE_{({\wp}_{(k\tau)}\fr_\mu)}.$$
 This way,  we have equipped 
  the scheme  $\tsR_{({\wp}_{(k\tau)}\fr_\mu)}$ 
  with the set $\cD_{({\wp}_{(k\tau)}\fr_\mu),\vp}$ of $\vp$-divisors, 
 the set $\cD_{({\wp}_{(k\tau)}\fr_\mu),\vr}$ of $\vr$-divisors, 
 the set  $\cE_{{\wp}_{(k\tau)}\fr_\mu}$ of exceptional divisors,
 together with the set $\cD_{({\wp}_{(k\tau)}\fr_\mu),\fL}$ of $\fL$-divisors
 (which will not be used until the $\ell$-blowup).

\smallskip
{\bf ($\di$) Associate divisors with  defining relations in 
the round $({\wp}_{(k\tau)}\fr_\mu)$,
as required to carry on the process of induction. }

We do it inductively  on  the set $ [ \si_{(k\tau)\mu}]$. 



\begin{defn} \label{association-vs} 
Fix any $B \in \cB^\gov \sqcup \cB^\frb $. We let $T_B$ be any fixed term of the binomial $B$.
Meanwhile, we also consider any $\bF \in \sF$ and let $T_s$ be the term of $L_F$ corresponding to
any fixed $s \in S_F$.

We assume that the notion of $``$association$"$ in  
$({\wp}_{(k\tau)}\fr_\mu\fs_{h-1})$ has been introduced. That is, for every divisor 
$Y' \in \cD_{({\wp}_{(k\tau)}\fr_\mu\fs_{h-1})}$, the multiplicities
$m_{Y', T_B}$ and $m_{Y',s}$ have been defined.

Consider an arbitrary divisor $Y \in \cD_{({\wp}_{(k\tau)}\fr_\mu\fs_h)}$.



First, suppose $Y \ne E_{({\wp}_{(k\tau)}\fr_\mu\fs_h)}$.
Then, it is the proper transform of a (unique) divisor $Y' \in \cD_{({\wp}_{(k\tau)}\fr_\mu\fs_{h-1})}$.
In this case, we set 
$$m_{Y, T_B}=m_{Y', T_B},\;\;\; m_{Y,s}=m_{Y',s}.$$

Next, we consider the exceptional $Y=E_{({\wp}_{(k\tau)}\fr_\mu\fs_h)}$.

We let $\phi= \phi_{(k\tau)\mu h}$. 
We have that 
$$\phi=\{ Y^+,  Y^-  \} \subset \cD_{(\wp_{(k\tau)}\fr_{\mu-1})}.$$

For any $B \in \cB^\gov \cup \cB^\frb$, we write $B=T_B^0-T_B^1$. We let
$$m_{\phi, T_B^i}=m_{Y^+, T_B^i}+ m_{Y^-, T_B^i}, \; i=0,1,$$ 
 $$l_{\phi, B}=\min \{m_{\phi, T_B^0}, m_{\phi, T_B^1}\}.$$ 
 (For instance, by definition, $l_{\phi, B}>0$ when $B=B_{(k\tau)}$. In general, it can be zero.)
 Then, we let
  $$m_{E_{({\wp}_{(k\tau)}\fr_\mu\fs_h)},T_B^i}= m_{\phi, T_B^i}-  l_{\phi, T_B^i}.$$

Likewise, 
for $s \in S_F$ with $F\in \sF$,
we let
$$m_{E_{(\wp_{(k\tau)}\fr_\mu\fs_h)},s}=m_{Y^+, s} +  m_{Y^-, s}.$$

We say $Y$ is associated with $T_B$ (resp. $T_s$)  if its multiplicity $m_{Y, T_B}$ 
(resp. $m_{Y, s}$) is positive.
We do not say $Y$ is associated with $T_B$ (resp. $T_s$)   
if its multiplicity $m_{Y,T_B}$ (resp. $m_{Y, s}$) equals to zero.
\end{defn}

When $h = \si_{(k\tau)\mu}$, we obtain all the desired data on 
$\tsR_{({\wp}_{(k\tau)}\fr_\mu)}=\tsR_{({\wp}_{(k\tau)}\fr_\mu)\fs_{\si_{(k\tau)\mu}}}$.

\smallskip
{\bf ($\di$) Recapitulating  the inductive procedure of $\wp$-blowups
in ($\fG_k$).}

Now, with all the aforedescribed data equipped for $\tsR_{({\wp}_{(k\tau)}\fr_\mu)}$, 
we obtain our inductive package in 
 $({\wp}_{(k\tau)}\fr_\mu)$. This allows us to introduce the set 
 $\Phi_{\wp_{(k\tau)}\fr_{\mu+1}}$ of ${\wp}$-sets 
 and the set  $\cZ_{\wp_{(k\tau)}\fr_{\mu+1}}$ of ${\wp}$-centers
 in  $({\wp}_{(k\tau)}\fr_{\mu+1})$ as in Definition \ref{wp-sets-kmu}, 
 endow a total order on $\Phi_{\wp_{(k\tau)}\fr_{\mu+1}}$
 and  $\cZ_{\wp_{(k\tau)}\fr_{\mu+1}}$ as in 
 Definition \ref{order-phi}, 
 and then  advance to the next 
round of  the $\wp$-blowups.  Here, 
 to reconcile notations, we set
 $$({\wp}_{(k\tau)}\fr_{\rho_{(k\tau)+1}}):=({\wp}_{((k(\tau +1))}\fr_{1}), \;\; 1\le \tau <\ft_{F_k};$$
$$({\wp}_{(k\ft_{F_k})}\fr_{\rho_{(k\ft_{F_k})+1}}):=({\wp}_{((k+1)1)}\fr_{1}), \;\;  1\le k < \up,$$
provided that $\rho_{(k\tau)}$ and $\rho_{(k\ft_{F_k})}$ are (proved to be) finite.

Given any  $({\wp}_{(k\tau)}\fr_\mu \fs_h)$, the ${\wp}$-blowup in (${\wp}_{(k\tau)}\fr_\mu\fs_h$)
gives rise to \begin{equation}\label{tsv-ktauh} \xymatrix{
\tsV_{({\wp}_{(k\tau)}\fr_\mu\fs_h)} \ar[d] \ar @{^{(}->}[r]  & \tsR_{({\wp}_{(k\tau)}\fr_\mu\fs_h)} \ar[d] \\
\sV \ar @{^{(}->}[r]  & \sR,
}
\end{equation}
where $\tsV_{({\wp}_{(k\tau)}\fr_\mu\fs_h)} $ is the proper transform of $\sV$ 
in  $\tsR_{({\wp}_{(k\tau)}\fr_\mu\fs_h)}$. 

We let $\tsV_{({\wp}_{(k\tau)}\fr_\mu)}=\tsV_{({\wp}_{(k\tau)}\fr_\mu\fs_{\si_{(k\tau)\mu}})}$.

\begin{defn}\label{defn:rhoktau}
Fix any $k \in [\up], \tau \in [\ft_{F_k}]$. Suppose there exists a finite integer $\mu$ such that
for any pre-$\wp$-set $\phi$  
in $({\wp}_{(k\tau)}\fr_{\mu +1})$ (cf. Definition \ref{wp-sets-kmu}), 
we have 
$$Z_\phi  \cap \tsV_{({\wp}_{(k\tau)}\fr_\mu)} = \emptyset.$$
Then, we let $\rho_{(k\tau)}$ be the smallest integer such that the above holds.
Otherwise, we let $\rho_{(k\tau)}=\infty$.
\end{defn}
{\it It will be shown soon 
that $\rho_{(k\tau)}$ is finite for all  $k \in [\up]$,
$\tau \in [\ft_{F_k}]$.}

For later use, granting the finiteness of $\rho_{(k\tau)}$, we let
\begin{equation}\label{indexing-Phi}
\Phi_k=\{ \phi_{(k\tau)\mu h} \mid                    
\tau \in [\ft_{F_k}],
\mu \in [\rho_{(k\tau)}],
 h \in [\si_{(k\tau)\mu}]\},  \end{equation}
$$\Index_{\Phi_k}=\{ (k\tau)\mu h \mid     
 \tau \in [\ft_{F_k}],  \mu \in [\rho_{(k\tau)}],
 h \in [\si_{(k\tau)\mu}]\}. $$
 Then, the order of $\wp_k$-blowups coincides with the lexicographical  order 
on $\Index_{\Phi_k}$

Upon proving that $\rho_{(k\tau)}$ is finite  for all $\tau \in [\ft_{F_k}]$,
 we can summarize the process
of ${\wp_k}$-blowups as a single sequence of blowup morphisms:
\begin{equation}\label{grand-sequence-wp}
\tsR_{\wp_k} \lra \cdots \lra 
\tsR_{({\wp}_{(k\tau)}\fr_\mu\fs_h)} \lra \tsR_{({\wp}_{(k\tau)}\fr_\mu\fs_{h-1})}
\lra \cdots \lra \tsR_{({\wp}_{(k1)}\fr_0)}:=\tsR_{\ell_{k-1}},\end{equation}
where $\tsR_{\wp_k}:=\tsR_{({\wp}_{(k \ft_{F_k})}\fr_{\rho_{k \ft_{F_k}}})} 
:=\tsR_{({\wp}_{(k \ft_{F_k})}\fr_{\rho_{k \ft_{F_k}}}\fs_{\si_{(k \ft_{F_k})\rho_{k \ft_{F_k}}}})} $
is the  blowup scheme reached in  the final step 
$({\wp}_{(k \ft_{F_k})}\fr_{\rho_{k \ft_{F_k}}}
\fs_{\si_{(k \ft_{F_k})\rho_{k \ft_{F_k}}}})$ of all $\wp$-blowups in $(\fG_k)$.

Further, the end of all ${\wp}$-blowups in $(\fG_k)$ gives rise to the following induced diagram 
\begin{equation}\label{tsv-final} \xymatrix{
\tsV_{\wp_k} \ar[d] \ar @{^{(}->}[r]  & \tsR_{\wp_k} \ar[d] \\
\sV \ar @{^{(}->}[r]  & \sR,
}
\end{equation}
where $\tsV_{\wp_k}$ is the proper transform of $\sV$  in  $\tsR_{\wp_k}$.

\subsubsection{The $\ell$-blowup in  the block $(\fG_k)$} 


\begin{defn}\label{ell-set-k} 
Let $D_{\wp_k, L_{F_k}}$ be proper transform of the $\fL$-divisor defined by
$L_{F_k}$ and $E_{\wp_k, \vt_k}$ be  the proper transform
of the exceptional divisor $E_{\vt, k}$ (of $\tsR_\vt$)
 in $\tsR_{\wp_k}$.

We call the set of the two divisors 
$$\chi_k=\{D_{\wp_k, L_{F_k}}, E_{\wp_k, \vt_k} \}$$
the pre-$\ell$-set with respect to $L_{F_k}$ or just pre-$\ell_k$-set.
We let $Z_{\chi_k}$ be the scheme-theoretic intersection
$$Z_{\chi_k} = D_{ \wp_k, L_{F_k}} \cap  E_{ \wp_k, \vt_k}.$$
The pre-$\ell$-set $\chi_k$ (resp. $Z_{\chi_k} $) is called a $\ell$-set
 (resp. $\ell$-center)  with respect to $L_{F_k}$
 if $$Z_{\chi_k} \cap \tsR_{\wp_k} \ne \emptyset.$$
\end{defn}
{\it Indeed, we believe that the intersection 
$Z_{\chi_k} \cap \tsR_{\wp_k}$ is always non-empty. 
But, the non-empty assumption possesses no harm for what to follow.}

We then let
$$\tsR_{\ell_k} \lra \tsR_{\wp_k}$$
be the blowup of $\tsR_{\wp_k}$ along $Z_{\chi_k}$.
This is called the $\ell$-blowup with respect to $F_k$ or in $(\fG_k)$,
or simply $\ell_k$-blowup.

\smallskip
{\bf ($\di$) The divisors in  $(\ell_k)$,
 required to carry on the process of induction.}

Recall that $\tsR_{\wp_k}$ comes equipped with the sets of
$\vr$-, $\vp$-, $\fL$-, and exceptional divisors.
For any such a divisor, we take its proper transform in $\tsR_{\ell_k}$ and let
it inherit its original name.

There is only one new divisor on $\tsR_{\ell_k}$:
we let $E_{\ell_k}$ be the exceptional divisor of the blowup 
$\tsR_{\ell_k} \lra \tsR_{\wp_k}$,  and call it the $\ell_k$-exceptional divisor.
(We comment here that this is not to be confused with the $\fL$-divisor
$D_{\ell_k, L_k}$.)

\smallskip
{\bf ($\di$) The notion of $``$association$"$ in  $(\ell_k)$,
 required for carrying on induction. }

\begin{defn} \label{ell-association} 
Fix any binomial $B \in \cB^\gov_G \sqcup 
 B^\frb$ with $G>F_k$, written as $B=T_B^+ -T_B^-$.
For any $\bF \in \sF$, let $T_s$ be the term of $L_F$ corresponding to
some fixed $s \in S_F$.

As assumed, the notion of $``$association$"$ in  
$ (\wp_k)$ has been introduced. That is, for any divisor 
$Y'$ on $\tsR_{\wp_k}$ , the multiplicities
 $m_{Y', T^\pm_B}$, $m_{Y',s}$ have been defined.

Suppose a divisor $Y$ of  $\tsR_{\ell_k}$ is the proper transform 
of $Y'$ of  $\tsR_{\wp_k}$.
We define
$$ m_{Y, T^\pm_{B}} =  m_{Y', T^\pm_{B}}, \;\; m_{Y,s}= m_{Y', s}.$$
We define 
$$  m_{E_{\ell_k}, T^\pm_{B}} =  m_{E_{\wp_k, \vt_k}, T^\pm_{B}}, \;\;
 m_{E_{\ell_k},s}= m_{E_{\wp_k, \vt_k}, s}.$$

We say $Y$ is associated with  $T_B^\pm$ or $T_s$ if its multiplicity  
$m_{Y, T^\pm_B}$ or $m_{Y,s}$  is positive.
\end{defn}

Finally, we have the following diagram \begin{equation}\label{ell_k-schemes} \xymatrix{
\tsV_{\ell_k} \ar[d] \ar @{^{(}->}[r]  & \tsR_{\ell_k} \ar[d] \\
\tsV_{\wp_k} \ar @{^{(}->}[r]  & \sR_{\wp_k},  }
\end{equation}
where $\tsV_{\ell_k} $ is the proper transform of $\sV$ 
in  $\tsR_{\ell_k} $.

When $k=\up$, we write $\tsR_{\ell}:= \tsR_{\ell_\up}$
and $\tsV_{\ell}:=\tsV_{\ell_\up}$. We obtain
\begin{equation}\label{final-schemes} \xymatrix{
\tsV_{\ell} \ar[d] \ar @{^{(}->}[r]  & \tsR_{\ell} \ar[d] \\
\sV \ar @{^{(}->}[r]  & \sR.  }
\end{equation}
The schemes $(\tsV_{\ell} \subset \tsR_{\ell})$ are the ones we aim to construct.

\medskip

For the use in induction in the next subsection, we set
$\tsR_{({\wp}_{(k1)}\fr_1\fs_{0})}:=\tsR_{\ell_{k-1}}$.
When $k=0$, we have 
$\tsR_{({\wp}_{(11)}\fr_1\fs_{0})}:=\tsR_{\ell_{-1}}:=\tsR_{\vt}$.

In the next two sections, we will state and prove  certain properties about 
$$\tsV_{\wp_{(k\tau)}\fr_\mu\fs_h} \subset \tsR_{\wp_{(k\tau)}\fr_\mu\fs_h},$$
 using induction on the indexes $(k\tau)\mu h$.
To include $(\tsV_{\ell_k} \subset \tsR_{\ell_k})$ in the statements and proofs,
 we let ${\ell_k}$ correspond to one more step after the last step of $\wp_k$-blowups,
namely, $\si_{(k \ft_{F_k})\rho_{k \ft_{F_k}}}+1$. In full writing, we set
$$\tsR_{({\wp}_{(k \ft_{F_k})}\fr_{\rho_{k \ft_{F_k}}}\fs_{(\si_{(k \ft_{F_k})\rho_{k \ft_{F_k}}}+1)})}
:=\tsR_{\ell_k}, \;\; 
\tsV_{({\wp}_{(k \ft_{F_k})}\fr_{\rho_{k \ft_{F_k}}}\fs_{(\si_{(k \ft_{F_k})\rho_{k \ft_{F_k}}}+1)})}
:=\tsV_{\ell_k}, $$
We also write 
$$\ell_k:= (k \ft_{F_k})\rho_{k \ft_{F_k}}(\si_{(k \ft_{F_k})\rho_{k \ft_{F_k}}}+1),$$
or
$$\ell_k:=({\wp}_{(k \ft_{F_k})}\fr_{\rho_{k \ft_{F_k}}}
\fs_{(\si_{(k \ft_{F_k})\rho_{k \ft_{F_k}}}+1)}).$$
Both mean we consider the $\ell_k$-blowup, and the notations are
only used for convenience in induction.

We will prove various statements by induction on the following totally ordered set
\begin{equation}\label{the set}
\Om= \{(11)10 \}\sqcup \bigsqcup_{k=1}^\up 
(\Index_{\Phi_k} \sqcup \{\ell_k\}) 
\end{equation}
whose general element is denoted by  $(k\tau)\mu h$,
where  we use 
$\ell_k:= (k \ft_{F_k})\rho_{k \ft_{F_k}}(\si_{(k \ft_{F_k})\rho_{k \ft_{F_k}}}+1)$
so that $\ell_k$ is also in the form of $(k\tau)\mu h$.
We have the following linear order
$$\{(11)10 \}< \Index_{\Phi_k} <  \{\ell_k\} < \Index_{\Phi_{k+1}}$$
for all $k \in [\up-1]$.
Now, using the lexicographic order on  the set 
$\Index_{\Phi_k}$ as in \eqref{indexing-Phi}, 
by the comment immediately after Definition \ref{p-t}, the set
$\Om$ of \eqref{the set}
is totally ordered. Furthermore, this order also coincides with
the lexicographical order on  $\{(k\tau)\mu h\}$.

\subsection{Properties of $\wp$-blowups and $\ell$-blowup in 
the block $(\fG_k)$}\label{subs:prop-vs-blowups} $\ $

\begin{defn}
Fix  any $(k\tau)\mu h \in \Om$. Consider the inclusion
$$\tsV_{({\wp}_{(k\tau)}\fr_\mu\fs_h)}\subset \tsR_{({\wp}_{(k\tau)}\fr_\mu\fs_h)}.$$
An affine chart $\fV$ 
of $\tsR_{({\wp}_{(k\tau)}\fr_\mu\fs_h)}$ is called admissible if
 $\fV \cap \tsV_{({\wp}_{(k\tau)}\fr_\mu\fs_h)} \ne \emptyset$.
\end{defn}
Thus,  $\tsV_{({\wp}_{(k\tau)}\fr_\mu\fs_h)}$ can be covered by
admissible charts. Again, recall that give a blowup morphsim
$Y \lra X$, we say an affine chart $\fV$ of $Y$ lies over
an affine chart $\fV'$ $X$ if  $\fV$ is mapped into $\fV'$.

Fix  any $(k\tau)\mu h \in \Om$.
In the following proposition,  we will introduce 
various terms  about $\tsR_{({\wp}_{(k\tau)}\fr_\mu\fs_h)}$
and prove their  properties by using induction on $(k\tau)\mu h \in \Om$.
This will set up a platform required
to reach our ultimate goal.

\begin{prop}\label{meaning-of-var-wp/ell} We apply induction to 
$(k\tau)\mu h \in \Om$.
Suppose that inductively the scheme 
$$\tsR_{({\wp}_{(k\tau)}\fr_\mu\fs_h)}/\tsR_{({\wp}_{(k\tau)}\fr_\mu\fs_{h-1})}$$
 has been constructed, and admits 
a finite set of admissible affine open charts covering 
$\tsV_{({\wp}_{(k\tau)}\fr_\mu\fs_h)}/\tsV_{({\wp}_{(k\tau)}\fr_\mu\fs_{h-1})}$.
These charts enjoy the following properties.

Fix and consider any admissible
 affine chart $\fV$ of $\tsR_{({\wp}_{(k\tau)}\fr_\mu\fs_h)}$, 
 lying over a unique admissible affine chart 
$\fV'$ of $\tsR_{({\wp}_{(k\tau)}\fr_\mu\fs_{h-1})}$,
 and both lying over a unique affine  chart $ \fV_{[0]}$ of $\sR$
(cf. Definition \ref{general-standard-chart}).
 We suppose that the chart $ \fV_{[0]}$ is indexed by
$\La_\sF^o=\{(\uv_{s_{F,o}},\uv_{s_{F,o}}) \mid \bF \in \sF \}.$  
We set 
$$\hbox{$\II_{3,n}^\star=\II_{3,n}\- \um$, $\La_\sF^\star=\La_\sF \- \La_\sF^o$,
and $L_{\sF, [k]}=\{L_{F_j} \mid j \in [k]\}$.}$$


\medskip
{\bf ($\di$) Smoothness 
of $\tsR_{({\wp}_{(k\tau)}\fr_\mu\fs_h)}$ along $\tsV_{({\wp}_{(k\tau)}\fr_\mu\fs_h)}$ }
\smallskip

By shrinking the admissible affine  chart $\fV$  if necessary, we have that
$\fV$ is smooth. More precisely, $\fV' \cap Z'_{\phi_{(k\tau) \mu h}}$
is smooth, where $Z'_{\phi_{(k\tau) \mu h}}$ is the proper transform
of the $\wp$- or $\ell$-center $Z_{\phi_{(k\tau) \mu h}}$ in  
$\tsR_{({\wp}_{(k\tau)}\fr_\mu\fs_{h-1})}$.

\medskip
{\bf ($\di$) Various exceptional divisors on the chart and their labels}
\smallskip

The smooth admissible 
affine chart $\fV$ of $ \tsR_{({\wp}_{(k\tau)}\fr_\mu\fs_h)}$ comes equipped with 
$$\hbox{a subset}\;\; \fe_\fV  \subset \II_{3,n}^\star , \;\;
 \hbox{a subset} \;\; \fd_\fV  \subset \La_{\sF}^\star, \;\;
 \hbox{and a subset} \;\;  \fl_\fV \subset  {L_{\sF,[k]}}
 $$ and also
 $$\hbox{the subset} \;\; \fd_\fV^\lt=\{(\um,\uu_F) \mid L_F \in \fl_\fV\} \subset \fd_\fV$$ 
 such that every exceptional divisor $E$ 
(i.e., not a $\vp$- nor a $\vr$-divisor nor a $\fL$-divisor)
  of $\tsR_{{\wp}_{(k\tau)}}$
with $E \cap \fV \ne \emptyset$ is 
either labeled by a unique element $\uw \in \fe_\fV$
 or labeled by a unique element $(\uu,\uv) \in \fd_\fV$ or
 labeled by a unique element $ L \in \fl_\fV$. 
We let $E_{ \ell_k, \uw}$ be the unique exceptional divisor 
on the chart $\fV$ labeled by $\uw \in \fe_\fV$; we call it an $\vp$-exceptional divisor.
We let $E_{\ell_k , (\uu,\uv)}$ be the unique exceptional divisor 
on the chart $\fV$ labeled by $(\uu,\uv) \in \fd_\fV$;  we call it an $\vr$-exceptional divisor.
 We let $E_{\ell_k, L}$ be the unique exceptional divisor 
on the chart $\fV$ labeled by $L \in \fl_\fV$; we call it an $\fl$-exceptional divisor.
(We note here that being $\vp$-exceptional or $\vr$-exceptional or $\fl$-exceptional is strictly relative to the given admissible affine chart. Again, $E_{\ell_k, L}$ is not to be confused with
$D_{\ell_k, L}$, the $\fL$-divisor of $\tsR_{({\wp}_{(k\tau)}\fr_\mu\fs_h)}$.)

\medskip
{\bf ($\di$)  Various free variables of  on the chart and their labels}
\smallskip

Further, the chart $\fV$ admits 
 a set of free variables
\begin{equation}\label{variables-wp/ell} 
\var_{\fV}:=\left\{ \begin{array}{ccccccc}
\ve_{\fV, \uw} , \;\; \de_{\fV, (\uu,\uv)} \\ 
y_{\fV,(\um, \uu_F)}        \\
x_{\fV, \uw} , \;\; x_{\fV, (\uu,\uv)} 
\end{array}
  \; \Bigg| \;
\begin{array}{ccccc}
 \uw \in  \fe_\fV,  \;\; (\uu,\uv)  \in \fd_\fV { \- \fd^\lt_\fV} \\
 (\um, \uu_F) { \in \fd^\lt_\fV}  \\    
\uw \in  \II_{3,n}^\star \- \fe_\fV,  \;\; (\uu, \uv) \in \La_\sF^\star \-  \fd_\fV  
\end{array} \right \}.
\end{equation}
and a set of exceptional variables for $\ell$-exceptional divisors
$$ \var_{\fl_\fV}=\{\de_{\fV,(\um, \uu_F)}  
\mid L_F \in \fl_\fV,
\; i.e., \; (\um, \uu_F) \in \fd^\lt_\fV\}$$
such that, $y_{\fV,(\um, \uu_F)} \in \var_{\fV},  (\um, \uu_F) { \in \fd^\lt_\fV} $, are invertible on the chart, which will be restated in (9) below and be prove
accordingly\footnote{We remark here that only  elements 
$(\um, \uu_F)$  in $\fd^\lt_\fV$ may index two variables  
$\de_{\fV,(\um, \uu_F)}$ and  $y_{\fV,(\um, \uu_F)}$: in such a case,
$\de_{\fV,(\um, \uu_F)}$ is $\ell$-exceptional and
$y_{\fV,(\um, \uu_F)}$ is the proper transform of $\de_{\fV',(\um, \uu_F)}$
where $\fV$ lies over $\fV'$, a chart of $\tsR_{({\wp}_{(k\tau)}\fr_\mu\fs_{h-1})}$.
}.

 Furthermore, we also have a set of free variables
\begin{equation}\label{variables-wp/ell-vee} 
\var_\fV^\vee=(\{ y \in \var_\fV \} \- \{y_{(\um,\uu_F)} \mid L_F \in \fl_\fV \})
\sqcup \{ \de_{\fV,(\um, \uu_F)}  
 \mid L_F \in \fl_\fV  \}.
\end{equation}

 We set
$$ \var_\fV^+=\var_\fV \sqcup \var_{\fl_\fV}.$$
Then, all the relations in $\cB_\fV^\gov,  \cB^\frb_\fV,  \{L_{\fV, F} \mid  F \in \sF\}$
are polynomials in $\var_\fV^+$. 





\medskip
{\bf ($\di$) Nine properties on the chart}
\smallskip

 Finally,  on the smooth admissible chart $\fV$, we have
\begin{enumerate}
\item the divisor  $X_{({\wp}_{(k\tau)}\fr_\mu\fs_h), \uw}\cap \fV$ 
 is defined by $(x_{\fV,\uw}=0)$ for every 
$\uw \in \II_{3,n}^\star \- \fe_\fV$;
\item the divisor  $X_{({\wp}_{(k\tau)}\fr_\mu\fs_h), (\uu,\uv)}\cap \fV$ is defined by $(x_{\fV,(\uu,\uv)}=0)$ for every 
$(\uu,\uv) \in \La^\star_\sF\- \fd_\fV$;
\item the divisor  $X_{({\wp}_{(k\tau)}\fr_\mu\fs_h), \uw}$ does not intersect the chart for all $\uw \in \fe_\fV$;
\item the divisor  $X_{({\wp}_{(k\tau)}\fr_\mu\fs_h), (\uu, \uv)}$ does not intersect the chart for all $ (\uu, \uv) \in \fd_\fV$;
\item the $\vp$-exceptional divisor 
$E_{({\wp}_{(k\tau)}\fr_\mu\fs_h), \uw} \;\! \cap  \fV$  labeled by an element $\uw \in \fe_\fV$
is define by  $(\ve_{\fV,  \uw}=0)$ 
for all $ \uw \in \fe_\fV$;
\item the $\vr$-exceptional divisor 
$E_{({\wp}_{(k\tau)}\fr_\mu\fs_h),  (\uu, \uv)}\cap \fV$ labeled by  an element $(\uu, \uv) \in \fd_\fV$
is define by  $(\de_{\fV,  (\uu, \uv)}=0)$ 
for all $ (\uu, \uv) \in \fd_\fV$;  
 \item the $\fl$-exceptional divisor 
$ E_{{  {  \ell_k }  },  L_F}\cap \fV$ labeled by  an element ${ L_F \in} \fl_\fV$
is define by  $({ \de_{\fV, (\um, \uu_F)}} =0)$ 
for all ${ L_F \in} \fl_\fV$;
\item any of the remaining {\rm exceptional} divisors
of $\tsR_{({\wp}_{(k\tau)}\fr_\mu\fs_h)}$ 
other than those that are labelled by some  $\uw \in \fe_\fV$ or $(\uu,\uv) \in \fd_\fV$ 
or $L \in \fl_\fV$ does not intersect the chart.

\medskip
 ($\di e$) In particular, all the aforementioned divisors are smooth.


 \item Assume $({\wp}_{(k\tau)}\fr_\mu\fs_h)=\ell_k$. %

Fix and consider any $j \in [k]$. 
  
Suppose $\fV$ lies over the admissible affine chart $(x_{(\um,\uu_{F_j})} \equiv 1)$ 
  of $\sR$. Then, $ L_{F_j} \notin \fl_\fV$, and  the $ \ell_j$-blowup is trivial. 
  
Suppose $\fV$ lies over the $\vr$-standard chart of 
$\tsR_{\vt_{[j]}}$. Then, $ L_{F_j} \in \fl_\fV$, and 
we can choose  admissible affine charts of
$\tsR_{\ell_k}$ such that they cover $\tsV_{\ell_k}$, and on any such chart $\fV$,
 we can express
\begin{equation} \nonumber 
L_{\fV ,F_j} 
=1+ \sgn(s_{F_j}) { y}_{\fV, (\um, \uu_{F_j})}
\end{equation} 
where ${ y}_{\fV, (\um, \uu_{F_j})} \in \var_\fV$ is  an invertible variable 
 in $\var_\fV$ 
labelled by  $(\um, \uu_{F_j})$.
\end{enumerate}
\end{prop}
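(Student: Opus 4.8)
The plan is to prove Proposition \ref{meaning-of-var-wp/ell} by induction on the totally ordered index set $\Om$ of \eqref{the set}, exactly as the statement already anticipates. The base case is $(k\tau)\mu h=(11)10$, for which $\tsR_{({\wp}_{(11)}\fr_1\fs_0)}=\tsR_\vt$; here all the assertions reduce to Proposition \ref{meaning-of-var-vtk} and Corollary \ref{eq-for-sV-vr} on $\tsV_\vt\subset\tsR_\vt$, with $\fl_\fV=\fd^\lt_\fV=\emptyset$ and $\var^+_\fV=\var_\fV$, so nothing new is required beyond a direct translation of notation. For the inductive step, I would fix $(k\tau)\mu h\in\Om$, assume the full package on $\tsR_{({\wp}_{(k\tau)}\fr_\mu\fs_{h-1})}$ (with its admissible charts $\fV'$ and all the labelling data $\fe_{\fV'},\fd_{\fV'},\fl_{\fV'}$, the free variables \eqref{variables-wp/ell}, and properties (1)--(9)), and analyze a single blowup $\tsR_{({\wp}_{(k\tau)}\fr_\mu\fs_h)}\to\tsR_{({\wp}_{(k\tau)}\fr_\mu\fs_{h-1})}$, which is either a $\wp_k$-blowup along $Z_{\phi_{(k\tau)\mu h}}=Y^+\cap Y^-$ (for $h\le\si_{(k\tau)\mu}$) or the single $\ell_k$-blowup along $Z_{\chi_k}=D_{\wp_k,L_{F_k}}\cap E_{\wp_k,\vt_k}$ (the extra step indexed $\ell_k$).

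The core of the induction is local and splits by chart type. On a chart $\fV'$ of $\tsR_{({\wp}_{(k\tau)}\fr_\mu\fs_{h-1})}$, the two divisors $Y^\pm$ (resp. $D_{\wp_k,L_{F_k}}$ and $E_{\wp_k,\vt_k}$) are, by properties (1)--(8) applied at stage $h-1$, cut out by explicit members of $\var_{\fV'}$ or by a local equation $L_{\fV',F_k}$; in particular either $Z_{\phi}\cap\fV'$ is empty (then $\fV\to\fV'$ is an isomorphism and the data is copied verbatim), or it is a codimension-two coordinate-type subscheme cut by two of the listed free variables, or — in the $\ell_k$ case — the intersection $(L_{\fV',F_k}=0)\cap(\de_{\fV',(\um,\uu_{F_k})}=0)$. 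The smoothness assertion at the top of the proposition is then the statement that $\fV'\cap Z'_{\phi_{(k\tau)\mu h}}$ is smooth, which follows because this locus is (after shrinking) defined by two free variables in the $\wp$-case, and because $L_{\fV',F_k}$ itself, when $\de_{\fV',(\um,\uu_{F_k})}$ has been extracted, has the form $\sgn(s_{F_k})\de_{\fV',(\um,\uu_{F_k})}+(\hbox{terms})$ with a genuine free variable as a monomial, so $(L_{\fV',F_k}=0)\cap(\de_{\fV',(\um,\uu_{F_k})}=0)=(L_{\fV',F_k}=0)\cap(\de_{\fV',(\um,\uu_{F_k})}=0)$ is again a smooth coordinate-type locus — this uses Corollary \ref{eq-for-sV-vr}, the square-freeness/$\vr$-linearity carried along, and Corollary \ref{no-(um,uu)} to guarantee $x_{(\um,\uu_{F_k})}$ is not among the vanishing factors on the relevant charts. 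On each of the at most two new standard charts $\fV=(\fV'\times(\xi_0\equiv1))\cap\tsR$ and $\fV=(\fV'\times(\xi_1\equiv1))\cap\tsR$, I would read off the substitutions $y'_i\mapsto y'_0\xi_i$ from \eqref{general-blowup-formulas}, rename via Proposition \ref{generalmeaning-of-variables}, and declare the updated label sets: the new exceptional divisor $E_{({\wp}_{(k\tau)}\fr_\mu\fs_h)}$ receives the label of whichever of $Y^\pm$ (resp. $D_{\wp_k,L_{F_k}}$, $E_{\wp_k,\vt_k}$) is ``exceptional'' on that chart, contributing a new element to $\fe_\fV$, $\fd_\fV$, or $\fl_\fV$; the deleted variable $y'_0$ becomes the exceptional parameter $\ve_{\fV,-}$, $\de_{\fV,-}$, or $\de_{\fV,(\um,\uu_{F_k})}$; and every other variable of $\var_{\fV'}$ is carried over as its proper transform. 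Properties (1)--(8) on $\fV$ then follow mechanically from those on $\fV'$ together with Proposition \ref{generalmeaning-of-variables}, and ($\di e$) is immediate since each listed divisor is locally a coordinate hyperplane.

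The genuinely new point, and the one I expect to be the main obstacle, is part (9): that on the $\ell_k$-chart lying over the $\vr$-standard chart of $\tsR_{\vt_{[j]}}$, the proper transform of $L_{F_j}$ takes the normalized form $L_{\fV,F_j}=1+\sgn(s_{F_j})\,y_{\fV,(\um,\uu_{F_j})}$ with $y_{\fV,(\um,\uu_{F_j})}$ \emph{invertible}. Tracking this requires combining several threads: first, by Proposition \ref{eq-for-sV-vtk} and Corollary \ref{no-(um,uu)}, after the $\vt$-blowups $L_{F_j}$ on such a chart already reads $\sgn(s_{F_j})\de_{\fV,(\um,\uu_{F_j})}+\sum_{s\ne s_{F_j}}\sgn(s)x_{\fV,(\uu_s,\uv_s)}$ with the leading $\de$ a bona fide exceptional parameter and the remaining $\vr$-variables summing to something nonvanishing along $\tsV$; then one must verify that the $\wp_k$-blowups with $k\le j$ (which never touch $L_{F_j}$ for $j<k$, and for $j=k$ are performed \emph{before} the $\ell_k$-blowup) terminate all governing binomials, so that on $\tsV_{\wp_k}$ the quantity $\sum_{s\ne s_{F_j}}\sgn(s)x_{\fV,(\uu_s,\uv_s)}$ becomes invertible (an application of the ``termination'' assertion at the end of \S\ref{blocks of gov} and the $\vr$-linear, square-free bookkeeping of Proposition \ref{eq-for-sV-vtk}); and finally that the single $\ell_k$-blowup along $D_{\wp_k,L_{F_k}}\cap E_{\wp_k,\vt_k}$ divides the pulled-back $L_{F_k}$ by the exceptional parameter $\de_{\fV,(\um,\uu_{F_k})}$ of $E_{\wp_k,\vt_k}$, producing $1+\sgn(s_{F_k})y_{\fV,(\um,\uu_{F_k})}$ with $y_{\fV,(\um,\uu_{F_k})}$ the proper transform of $\de_{\fV',(\um,\uu_{F_k})}$, which is now a free variable that is invertible because $\tsV_{\ell_k}$ lies in the proper transform $D_{\ell_k,L_{F_k}}$ and is disjoint from $E_{\ell_k}$ on that chart. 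Making the division clean — i.e.\ checking that the pullback of $L_{F_k}$ is divisible by exactly the first power of the $E_{\wp_k,\vt_k}$-parameter and by nothing extra — is where the accumulated square-freeness and the careful ordering of $\wp$- before $\ell$-blowups within a block are essential, and it is the step I would write out most carefully; the remaining charts (those over $(x_{(\um,\uu_{F_j})}\equiv1)$, where the $\ell_j$-blowup is trivial and $L_{F_j}\notin\fl_\fV$) are comparatively routine.
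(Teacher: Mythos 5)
Your overall plan matches the paper's: induction on $\Om$ with base case $(11)10$ given by Proposition \ref{meaning-of-var-vtk} (with $\fl_\fV=\emptyset$), followed by a chart-by-chart analysis of a single blowup, renaming variables via Proposition \ref{generalmeaning-of-variables} and updating $\fe_\fV,\fd_\fV,\fl_\fV$ case by case. Properties (1)--(8) are indeed routine bookkeeping once the case distinction on the type of $y_i'$ is set up, and the two places where real work is needed — smoothness of the $\ell_k$-center and part (9) — are the ones you single out.

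However, your treatment of (9) contains a genuine gap, and an unnecessary detour. You assert that one ``must verify that the $\wp_k$-blowups $\ldots$ terminate all governing binomials, so that on $\tsV_{\wp_k}$ the quantity $\sum_{s\ne s_{F_j}}\sgn(s)x_{\fV,(\uu_s,\uv_s)}$ becomes invertible.'' Termination of the governing binomials gives no such thing: even if every $\vr$-variable is invertible, their signed sum $L^\star_{\fV',F_k}$ can certainly vanish, and indeed the exceptional divisor $E_{\ell_k}$ of the $\ell_k$-blowup is defined locally by $L^\star_{\fV',F_k}=0$ on the chart $(\xi_0\equiv 1)$ — so that sum is manifestly not invertible in a neighborhood. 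The paper's argument is both shorter and correct: write $L_{\fV',F_k}=\sgn(s_{F_k})\de_{\fV',(\um,\uu_{F_k})}+L^\star_{\fV',F_k}$, substitute $\de_{\fV',(\um,\uu_{F_k})}=L^\star_{\fV',F_k}\cdot\xi_1$ from the blowup equation, and factor $L_{\fV',F_k}=L^\star_{\fV',F_k}\bigl(1+\sgn(s_{F_k})\xi_1\bigr)$, so $L_{\fV,F_k}=1+\sgn(s_{F_k})\xi_1$. Since $L_{\fV,F_k}=0$ is one of the defining equations of $\tsV_{\ell_k}\cap\fV$, we get $\xi_1=-\sgn(s_{F_k})=\pm1$ along $\tsV_{\ell_k}$, whence $\xi_1=y_{\fV,(\um,\uu_{F_k})}$ is invertible after shrinking. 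This also disposes of your worry about ``checking the pullback of $L_{F_k}$ is divisible by exactly the first power'' — the factorization above is a trivial algebraic identity, not a multiplicity statement requiring square-freeness.

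Relatedly, your justification of the smoothness of $\fV'\cap Z_{\chi_k}$ is too vague: ``with a genuine free variable as a monomial'' is not by itself a smoothness criterion for the intersection. The paper's actual point is that if $\de_{\fV_{[0]},(\um,\uu_{F_k})}(\bz_0)\ne 0$ the blowup is irrelevant near $\bz$; and if it vanishes, then because $L_{\fV_{[0]},F_k}$ is a de-homogenized \emph{homogeneous} linear form on $\PP_{F_k}$, at least two of its terms must be nonzero at $\bz_0$, hence at least one $\vr$-variable $x_{\fV_{[0]},(\uu_s,\uv_s)}$ with $s\ne s_{F_k}$ does not vanish and does not divide the other terms — making $(\de_{\fV',(\um,\uu_{F_k})},L_{\fV',F_k})$ a regular sequence near $\bz'$. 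This is exactly where the homogeneity of the linearized Plücker relation earns its keep; your sketch does not isolate it. The rest of your proposal is sound, but these two points need to be reworked before the proof of (9) closes.
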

\begin{proof} 
 As mentioned and set up earlier (cf. \eqref{the set}), we will  prove by induction on 
 $$(k\tau)\mu h \in \Om$$
where $(11)10$ is the smallest element.

For the initial case when $(k\tau)\mu h =(11)10$,
the scheme is $\tsR_{({\wp}_{(11)}\fr_1\fs_0)}=\tsR_{\vt}$.
In this case, we set $\fl_\fV=\emptyset$.
But then, this proposition is the same as Proposition \ref{meaning-of-var-vtk} with $k=\up$.

For induction, we suppose that all the statements hold over 
$\tsR_{({\wp}_{(k\tau)}\fr_\mu\fs_{h-1})}$
for some (suitable) $(k\tau)\mu (h-1) \in \Om$. 
(Recall that for the largest element of $\Index_{\Phi_k}$, if we add 1 to the index of the step, then, by convention, it corresponds to $\ell_k$.)

We now consider $\tsR_{({\wp}_{(k\tau)}\fr_\mu\fs_h)}$. 
We let 
$$\pi: \tsR_{({\wp}_{(k\tau)}\fr_\mu\fs_h)} \lra 
\tsR_{({\wp}_{(k\tau)}\fr_\mu\fs_{h-1})}$$
be the blowup morphism. We let 
$Z'_{\phi_{(k\tau) \mu h}} \subset \tsR_{({\wp}_{(k\tau)}\fr_\mu\fs_{h-1})}$
be the center of the blowup of $\pi$, this is, the proper transform
of $Z_{\phi_{(k\tau) \mu h}}$ in  $\tsR_{({\wp}_{(k\tau)}\fr_\mu\fs_{h-1})}$.

We have the embedding
$$\xymatrix{
\tsR_{({\wp}_{(k\tau)}\fr_\mu\fs_h)}\ar @{^{(}->}[r]  &
 \tsR_{({\wp}_{(k\tau)}\fr_\mu\fs_{h-1})} \times \PP_{\phi_{(k\tau)\mu h}},  }
$$ where $\PP_{\phi_{(k\tau)\mu h}}$ is the factor projective space. 
We let $\phi'_{(k\tau)\mu h}=\{Y'_0, Y'_1\}$ 
where $Y'_0, Y'_1$ are, respectively,
 the proper transforms in $\tsR_{({\wp}_{(k\tau)}\fr_\mu\fs_{h-1})}$ 
 of the two  divisors of the ${\wp}$-set $\phi_{(k\tau)\mu h}=\{Y^+,Y^-\}$
with $Y^\pm$ being associated with $T^\pm_{(k\tau)}$, or,
 in the case when
$({\wp}_{(k\tau)}\fr_\mu\fs_{h-1})=\wp_k$, 
then $({\wp}_{(k\tau)}\fr_\mu\fs_{h})=\ell_k$ and
 $(Y'_0, Y'_1)=(D_{\wp_k, L_{F_k}}, E_{\wp_k,\vt_k})$.
In addition, we let $[\xi_0, \xi_1]$ 
 be the  homogenous coordinates of $\PP_{\phi_{(k\tau)\mu h}}$
 corresponding to $\{Y'_0, Y'_1\}$.

\medskip
{\bf ($\di$) Smoothness 
of $\tsR_{({\wp}_{(k\tau)}\fr_\mu\fs_h)}$ along $\tsV_{({\wp}_{(k\tau)}\fr_\mu\fs_h)}$
}

The question is local. So, we work on  charts.
We let $\fV$ be any standard affine chart  of $\tsR_{({\wp}_{(k\tau)}\fr_\mu\fs_h)}$ 
lying over a unique standard affine
chart $\fV'$ of  $\tsR_{({\wp}_{(k\tau)}\fr_\mu\fs_{h-1})}$ such that 
$$\fV \subset (\fV' \times (\xi_i \equiv 1)) \cap \tsR_{({\wp}_{(k\tau)}\fr_\mu\fs_h)}, \;
\hbox{ for $i=0$ or 1. }$$
To prove that $\tsR_{({\wp}_{(k\tau)}\fr_\mu\fs_h)}$ is smooth 
 along $\tsV_{({\wp}_{(k\tau)}\fr_\mu\fs_h)}$, we will focus
on the  affine
charts  $\fV$ that meet $\tsV_{({\wp}_{(k\tau)}\fr_\mu\fs_h)}$ in non-empty set,
namely admissible charts, 
and when necessary, we may shrink $\fV$ to achieve smoothness and 
certain desired properties
(for example, making $y_{\fV, (\um, \uu_F)}$ invertible).
As stated in the proposition, these charts are call admissible charts.
Given an admissible chart $\fV'$ of $\tsR_{({\wp}_{(k\tau)}\fr_\mu\fs_{h-1})}$, 
 in the same vein, we only need to focus on the situation when
$ Z'_{\phi_{(k\tau) \mu h}}$ 
meets $\tsV_{({\wp}_{(k\tau)}\fr_\mu\fs_h)}\cap \fV'$ along a nonempty closed subset.

So, we fix and consider any admissible affine chart $\fV'$ of $\tsR_{({\wp}_{(k\tau)}\fr_\mu\fs_{h-1})}$ as discussed above.

By assumption, the admissible affine chart $\fV'$ is smooth and
 comes equipped with
a subset $\fe_{\fV'} \subset \II_{3,n}\- \um$, a subset  $\fd_{\fV'} \subset \La^\star_\sF$,
and admits the following three sets of  variables:

$\bcd$ a set of free variables
\begin{equation}\label{variables-wp/ell'} 
\var_{{\fV'}}:=\left\{ \begin{array}{ccccccc}
\ve_{{\fV'}, \uw} , \;\; \de_{{\fV'}, (\uu,\uv)} \\ 
y_{{\fV'},(\um, \uu_F)}        \\
x_{{\fV'}, \uw} , \;\; x_{{\fV'}, (\uu,\uv)} 
\end{array}
  \; \Bigg| \;
\begin{array}{ccccc}
 \uw \in  \fe_{\fV'},  \;\; (\uu,\uv)  \in \fd_{\fV'} { \- \fd^\lt_{\fV'}} \\
 (\um, \uu_F) { \in \fd^\lt_{\fV'}}  \\    
\uw \in  \II_{3,n}^\star \- \fe_{\fV'},  \;\; (\uu, \uv) \in \La_\sF^\star \-  \fd_{\fV'}  
\end{array} \right \},
\end{equation}
such that $y_{{\fV'},(\um, \uu_F)} $ are invertible on the chart $\fV'$;

$\bcd$ a set of exceptional variables for $\ell$-exceptional divisors
$$ \var_{\fl_{\fV'}}=\{\de_{{\fV'},(\um, \uu_F)}                 
\mid L_F \in \fl_{\fV'},
\; i.e., \; (\um, \uu_F) \in \fd^\lt_{\fV'}\};$$

$\bcd$ and a set of free variables
$$\var_{\fV'}^\vee=(\{ y \in \var_{\fV'} \} \- \{y_{(\um,\uu_F)} \mid L_F \in \fl_{\fV'} \})
\sqcup \{ \de_{{\fV'},(\um, \uu_F)}                        
\mid L_F \in \fl_{\fV'}  \}.$$
All together, they verify the properties (1)-(9) as in the proposition.


First, we consider the case when $({\wp}_{(k\tau)}\fr_\mu\fs_{h}) < \ell_k$.

Then, on the chart $\fV'$, by  assumption, we have
\begin{equation}\label{YY01-wp}
Y'_0 \cap \fV' =(y'_0 =0), \; Y'_1 \cap \fV' =(y'_1 =0), 
 \;\; \hbox{for some $y'_0, y'_1 \in  \var^+_{\fV'}$.} 
 \end{equation} 
 Now, because ${ y}_{\fV, (\um, \uu_{F_j})} \in \var_\fV$ is  
an invertible variable on the admissible affine chart $\fV'$,
we see that  when $({\wp}_{(k\tau)}\fr_\mu\fs_{h}) < \ell_k$,
 we must have $y'_0, y'_1 \in  \var^\vee_{\fV'}$. Then, 
it is immediate that the blowup center in this case
$$ Z'_{\phi_{(k\tau) \mu h}} \cap \fV'= (y'_0= y'_1 =0)$$  
is smooth on the chart $\fV'$.

Next, we consider the case when $({\wp}_{(k\tau)}\fr_\mu\fs_{h}) = \ell_k$,
that is, when $({\wp}_{(k\tau)}\fr_\mu\fs_{h-1}) = \wp_k$.

In this case, the $\ell$-set is 
$(Y'_0, Y'_1)=(D_{\wp_k, L_{F_k}}, E_{\wp_k,\vt_k})$.
By the inductive assumption, we have
$$E_{\wp_k,\vt_k} \cap \fV' = (\de_{\fV', (\um, \uu_{F_k})} =0).$$
The divisor $D_{\wp_k, L_{F_k}} \cap \fV'$ is defined
by the proper transform $L_{\fV', F_k}$. 
 We can assume that $\fV'$
 lies over a unique chart $ \fV_{[0]}$ of $\tsR_{\vt_{[0]}}=\sR$.
Over the chart $ \fV_{[0]}$, we can express
\begin{equation}\label{2 terms ne 0}
L_{\fV_{[0]}, F_k}= \de_{\fV_{[0]}, (\um, \uu_{F_k})} +
\sum_{s \in S_{F_k} \- s_{F_k}} x_{\fV_{[0]}, (\uu_s, \uv_s)}.
\end{equation}
Take any point 
$$ \bz \in \fV \cap \tsV_{({\wp}_{(k\tau)}\fr_\mu\fs_h)}.$$
We let $\bz_0 \in \fV_{[0]}$ be the image point of $\bz$.
If $\de_{\fV_{[0]}, (\um, \uu_{F_k})} (\bz_0) \ne 0$, then 
$\de_{\fV', (\um, \uu_{F_k})} (\bz') \ne 0$ where $\bz' \in \fV'$ is
the image of $\bz$. Therefore, the blowup
does not change the neighborhood of this point, hence for any
point $\bz \in \fV$ lying over $\bz'$, we obtained that $\bz$ is a smooth 
point of $\fV$. Next, we suppose 
$\de_{\fV_{[0]}, (\um, \uu_{F_k})} (\bz_0) = 0$.
Then, there must be at least two terms\footnote{
Here, the homogenous linearized $\pl$ relation plays its decisive role,
which in turn shows the decisive role played by the construction of 
$\sR \subset \bU \times \prod_{\bF \in \sF} \PP_F$.
Also, note that we are forced to focus on admissible charts, as other charts may not even be smooth and, in any case, are irrelevant.}
of $L_{\fV_{[0]}, F_k}$ that
that do not vanish at $\bz_0$. 
At least one of these two terms must be 
a $\vr$-variable (the other might be set to be the constant 1 by de-homogenization).
We may suppose that
$$x_{\fV_{[0]}, (\uu_s, \uv_s)} (\bz_0) \ne 0, \; 
\hbox{for some $s \in S_{F_k} \- s_{F_k}$}.$$
Then,  around a small neighborhood $\fV'' \subset \fV'$
of the point $\bz'$, we obtain that
\begin{equation}\label{2 terms ne 0'}
L_{\fV'', F_k}= \de_{\fV'', (\um, \uu_{F_k})} +
x_{\fV'', (\uu_s, \uv_s)} + \cdots
\end{equation}
and the $\vr$-variable $x_{\fV'', (\uu_s, \uv_s)}$
can not divide any other terms. 
Because  $Z'_{\phi_{(k\tau) \mu h}} \cap \fV''$ is defined by
$$\de_{\fV'', (\um, \uu_{F_k})} =L_{\fV'', F_k}=0,$$
we obtain that the blowup center 
$Z'_{\phi_{(k\tau) \mu h}} \cap \fV''$ is smooth at $\bz'$,
Hence $\bz$ is a smooth in a small affine neighborhood 
$\fV$ lying over $\fV'$.

Therefore, all in all, 
by shrinking $\fV'$ and $\fV$ if necessary (we keep and recycle the same notations),
we obtain that $\fV$ is smooth because
 $$ \pi^{-1}(\fV') \lra \fV' $$ 
 is the blowup of  $ \fV'$ along a smooth center.

This proves that $\tsR_{({\wp}_{(k\tau)}\fr_\mu\fs_h)}$ is smooth
along $\tsV_{({\wp}_{(k\tau)}\fr_\mu\fs_h)}$.

 Therefore, we can cover $\tsV_{({\wp}_{(k\tau)}\fr_\mu\fs_h)}$ by
a set of smooth admissible affine charts $\{\fV\}$ lying over 
a set of smooth admissible affine charts $\{\fV'\}$ of $\tsV_{({\wp}_{(k\tau)}\fr_\mu\fs_{h-1})}$. 
In what follows, by $``$chart$"$ we always mean 
an admissible chart as just described.

We now prove the remaining statements for a fixed smooth admissible affine chart 
$\fV$ of the scheme $\tsR_{({\wp}_{(k\tau)}\fr_\mu\fs_h)}$,
lying over a (unique) fixed smooth admissible affine chart 
$\fV'$ of the scheme $\tsR_{({\wp}_{(k\tau)}\fr_\mu\fs_{h-1})}$.

First, we suppose that the proper transform $Z'_{\phi_{(k\tau) \mu h}}$ 
in $\tsR_{({\wp}_{(k\tau)}\fr_\mu\fs_{h-1})}$
of the $\wp$- or the $\ell_k$-center $Z_{\phi_{(k\tau) \mu h}}$ does not
meet the chart $\fV'$.  
(Here, when $(k\tau) \mu h$ corresponds to $\ell_k$, 
we let $\phi_{(k\tau) \mu h}:=\chi_k$.)
Then, we let $\fV$ inherit all the data from those of $\fV'$, that is,
we set $\fe_{\fV}=\fe_{\fV'}$, $\fd_{\fV}=\fd_{\fV'}$, $\fl_{\fV}=\fl_{\fV'}$, $\var_\fV= \var_{\fV'}$,
$\var_{\fl_{\fV'}}= \var_{\fl_{\fV'}}$, and $\var_\fV^+= \var_{\fV'}^+$:
changing the subindex $``\ \fV' \ "$ for all the variables in $\var_{\fV'}^+$
to $``\ \fV \ "$.
As the $\wp$- or $\ell$-blowup  along the proper transform of $Z_{({\wp}_{(k\tau)}\fr_\mu\fs_h)}$
does not affect the chart $\fV'$, one checks readily that 
the statements of the proposition hold for $\fV$.

Next, we suppose that 
$Z'_{\phi_{(k\tau) \mu h}}$
meets the chart $\fV'$ along a nonempty closed subset. 

Then, the admissible chart  
$\fV$  is a (possibly small)  affine open subset  of  
$$(\fV' \times (\xi_i \equiv 1)) \cap  \tsR_{({\wp}_{(k\tau)}\fr_\mu\fs_h)}$$
of the scheme $ \tsR_{({\wp}_{(k\tau)}\fr_\mu\fs_{h})}$, 
which is, as a closed subscheme of  
$\fV' \times (\xi_i \equiv 1),$  is defined  by
\begin{equation}\label{blowup-relation:wp/ell}
y'_j = y'_i \xi_j, \;  \hbox{ with $j  \in \{0, 1\} \- i$}.             
\end{equation}

There are { six} possibilities for
$Y'_i \cap \fV'$ according to 
the types of the variable $y_i'$.
Based on the first four of such possibilities, 
we set 
\begin{equation}\label{proof:de-fv-vskmu}
\left\{ 
\begin{array}{lccr}
\fe_{\fV}=\fe_{\fV'} \sqcup \uw, \; \fd_{\fV}= \fd_{\fV'}, \; \fl_\fV=\fl_{\fV'},
&  \hbox{if} \; y'_i=x_{\fV', \uw}\; \hbox{for some} \;\; \uw \in \II_{3,n}^\star \- \fe_{\fV'}\\
\fe_{\fV}=\fe_{\fV'}, \; \fd_{\fV}= \fd_{\fV'} ,\; \fl_\fV=\fl_{\fV'}, &\hbox{if} \;    y'_i=\ve_{\fV', \uw} \; \hbox{for some} \;\; \uw \in  \fe_{\fV'}\\
\fd_{\fV}=\fd_{\fV'}\sqcup (\uu,\uv), \; \fe_{\fV}= \fe_{\fV'}, \; \fl_\fV=\fl_{\fV'},& \hbox{if} \; 
y'_i=x_{\fV', (\uu,\uv)}\; \hbox{for some}  \;\; (\uu,\uv) \in  \La_{d,n}^*  \\
\fd_{\fV}=\fd_{\fV'}, \; \fe_{\fV}= \fe_{\fV'}, \; \fl_\fV=\fl_{\fV'},& \hbox{if} \; 
y'_i=\de_{\fV', (\uu,\uv)}\; \hbox{for some}  \;\; (\uu,\uv) \in  \fd_{\fV'} \-\fd^\lt_{\fV'} \\
\end{array} \right.
\end{equation}
For the fifth possibility, we let
$$ \fl_\fV=\fl_{\fV'}, \; \fd_{\fV}=\fd_{\fV'}, \; \fe_{\fV}= \fe_{\fV'} , \;\; \hbox{if} \; 
y'_i=\de_{\fV', (\um,\uu_F)}\; \hbox{for some}  \;\; L_F \in  \fl_{\fV'} .$$
For the sixth possibility, 
we let 
$$\hbox{$\fl_\fV=\fl_{\fV'} \sqcup L_{F_k}, \; \fd_{\fV}=\fd_{\fV'}, \; \fe_{\fV}= \fe_{\fV'} $, \;\;
if $(y_i'=0)$ defines  $D_{\wp_k, L_{F_k}}\cap \fV'$.}$$
(This last case corresponds the case of the $\ell$-blowup with respect to $L_{F_k}$.)

Accordingly, we introduce 
\begin{equation}\label{proof:new-ex-fv-vskmu}
\left\{ 
\begin{array}{lccr}
\ve_{\fV, \uw}=y'_i, \; 
&  \hbox{if} \; y'_i=x_{\fV', \uw}\; \hbox{for some} \;\; \uw \in \fe_\fV \- \fe_{\fV'}\\
\ve_{\fV, \uw}=y'_i  , &\hbox{if} \;    y'_i=\ve_{\fV', \uw} \; \hbox{for some} \;\; \uw \in  \fe_{\fV'}=\fe_{\fV}\\
\de_{\fV, (\uu,\uv)}=y'_i,& \hbox{if} \; 
y'_i=x_{\fV', (\uu,\uv)}\; \hbox{for some}  \;\; (\uu,\uv) \in  \La_{d,n}^*  \\
\de_{\fV, (\uu,\uv)}=y'_i, & \hbox{if} \; 
y'_i=\de_{\fV', (\uu,\uv)}\; \hbox{for some}  \;\; (\uu,\uv) \in  \fd_{\fV'} \- \fd^\lt_{\fV'} \\
\end{array} \right.
\end{equation}
$$\hbox{$\de_{\fV, (\um, \uu_F)}=y'_i,$  \; if $y'_i=\de_{\fV', (\um, \uu_F)}$, 
for some  $L_F \in  \fl_{\fV'}$}$$ 
$$\hbox{$\de_{\fV, (\um, \uu_{F_k})}=y_i'$, \;\; if $(y_i'=0)$ defines  $D_{\wp_k, L_{F_k}}\cap \fV'$.}$$
(The last case corresponds  the $\ell$-blowup with respect to $L_{F_k}$.)

This defines the exceptional variable for the blowup.


To introduce the variable corresponding to $j  \in \{0, 1\} \- i$,
we then set
\begin{equation}\label{proof:var-xi-fv-vskmu}
\left\{ 
\begin{array}{lcr}
x_{\fV,\ua}=\xi_j, &   \hbox{if $y'_j=x_{\fV', \ua} $}\\
\ve_{\fV, \ua}=\xi_j, &  \hbox{if $y'_j= \ve_{\fV', \ua}$} \\
x_{\fV, (\ua, \ub)}= \xi_j, & \;\;\;\; \hbox{if $y'_j= x_{\fV', (\ua, \ub)}$} \\
\de_{\fV, (\ua, \ub)}= \xi_j, & \;\;\;\;\;\;\;\;\;\;\;
\hbox{if $y'_j= \de_{\fV', (\ua, \ub)}$
and $(\ua, \ub) \ne (\um, \uu_{F_k})$}. 
\end{array} \right.
\end{equation}
In addition,  we let $y_{\fV, (\um,\uu_F)}=\xi_j,$ 
if $y'_j= \de_{\fV', (\um,\uu_{F_k})}$ when $L_{F_k} \in \fl_{\fV}$. 
This is in the case $\ell_k$-blowup, and in this case,
$y_{\fV, (\um, \uu_{F_k})}$ is the proper transform 
   of $\de_{\fV', (\um, \uu_{F_k})}$.  
The exceptional divisor of $\ell_k$-blowup
is denoted by $\de_{\fV, (\um, \uu_{F_k})}$, which is already introduced earlier
in \eqref{proof:new-ex-fv-vskmu}.

Thus, we have introduced $y'_i,  \xi_j \in \var^+_\fV$ where $y'_i$, and  $\xi_j$
are endowed with the new names as in \eqref{proof:new-ex-fv-vskmu}, and in 
\eqref{proof:var-xi-fv-vskmu},  respectively.

Next, we define the set 
$\var_\fV \- \{y'_i, \xi_j\}$ 
to consist of the following variables:
\begin{equation}\label{proof:var-fv-vskmu}
\left\{ 
\begin{array}{lccr}
x_{\fV,\uw}=x_{\fV', \uw}, &  \forall \;\; \uw \in \II_{3,n}^\star\- \fe_\fV \;\; \hbox{and $x_{\fV', \uw} \ne y_i',y'_j$}\\
x_{\fV, (\uu, \uv)}=x_{\fV', (\uu, \uv)}, & \;\; \forall \;\; 
(\uu, \uv) \in \La_\sF^\star \- \fd_{\fV} \;\; \hbox{and $x_{\fV', (\uu,\uv)} \ne y_i', y'_j$} \\
\ve_{\fV, \uw}= \ve_{\fV', \uw}, & \forall \;\; \uw \in \fe_\fV \;\; \hbox{and $\ve_{\fV', \uw} \ne y'_i$},  y'_j \\
\de_{\fV, (\uu, \uv)}= \de_{\fV', (\uu, \uv)}, &  \;\; \forall \;\; (\uu, \uv) \in \fd_{\fV} \;\; \hbox{and $\de_{\fV', (\uu, \uv)} \ne y'_i$}, y'_j \\
y_{\fV, (\uu, \uu_F)}= y_{\fV', (\uu, \uu_F)}, &  \;\; \forall \;\; L_F \in \fl_{\fV}. 
\end{array} \right.
\end{equation}
Here, $L_F \in \fl_{\fV}$ implies that $L_F \in \fl_{\fV'}$ as we are considering
$\var_\fV \- \{y'_i, \xi_j\}$.

We let $\var_\fV$ be the set of
 the variables in  \eqref{proof:new-ex-fv-vskmu},
 \eqref{proof:var-xi-fv-vskmu}, and \eqref{proof:var-fv-vskmu}.
Substituting \eqref{blowup-relation:wp/ell},
one sees  that $\var_\fV$ is a set of free variables on the open chart $\fV$.
 This describes \eqref{variables-wp/ell} in the proposition.
 
 We then let $\var_{\fl_\fV}=\{\de_{\fV, (\um,\uu_F) } \mid L_F \in \fl_\fV\},$
 and obtain 
 $$\var_\fV^\vee=(\{ y \in \var_\fV \} \- \{y_{(\um,\uu_F)} \mid L_F \in \fl_\fV \})
\sqcup \{ \de_{\fV,(\um, \uu_F)}  
 \mid L_F \in \fl_\fV  \}.$$
  One sees 
 that $\var_\fV^\vee$ is also a set of free variables on the open chart $\fV$.
This checks \eqref{variables-wp/ell-vee} in the proposition.
 
 We set $\var_\fV^+=\var_\fV \sqcup \var_{\fl_\fV}.$
By substituting \eqref{blowup-relation:wp/ell} and taking proper transforms,
 one sees that all the relations in $\cB_\fV^\gov,  \cB_\fV^{\frb}, L_{\sF, \fV}$
are polynomials in $\var_\fV^+$.

 Now, it remains to verity (1)-(9) of the proposition on the chart $\fV$.

First, consider  the unique new exceptional divisor 
$E_{({\wp}_{(k\tau)}\fr_\mu\fs_h)}$
created by the blowup
$\tsR_{({\wp}_{(k\tau)}\fr_\mu\fs_h)} \lra \tsR_{({\wp}_{(k\tau)}\fr_\mu\fs_{h-1})}. $ 
Then, we have
$$ E_{({\wp}_{(k\tau)}\fr_\mu\fs_h)} \cap \fV = (y'_i=0)$$
where $y'_i$ is renamed as in \eqref{proof:new-ex-fv-vskmu} and 
in the  sentence immediately following it.
This way, the new exceptional divisor $E_{({\wp}_{(k\tau)}\fr_\mu\fs_h)}$
 is labelled on the chart $\fV$.
Further, we have that the proper transform of $Y'_i$ in
 $\tsR_{({\wp}_{(k\tau)}\fr_\mu\fs_h)}$ 
 does not meet the chart $\fV$, and
if $Y'_i$ is an exceptional parameter labeled by some element of 
$\fe_{\fV'} \sqcup \fd_{\fV'} \sqcup \fl_{\fV'}$, 
 then, on the chart $\fV$, its proper transform is no longer labelled by that element.
 This verifies the cases of (3)-(7) {\it whenever the statement therein involves
  the newly created exceptional divisor 
 $E_{({\wp}_{(k\tau)}\fr_\mu\fs_h)}$.}
  
For any of the remaining $\vp$-, $\vr$-, 
and exceptional divisors on $\tsR_{({\wp}_{(k\tau)}\fr_\mu\fs_h)}$,
it is the proper transform of a unique corresponding
 $\vp$-, $\vr$-, and exceptional divisor on $\tsR_{({\wp}_{(k\tau)}\fr_\mu\fs_{h-1})}$. 
 Hence,  by applying   the inductive assumption on $\fV'$ accordingly, 
 we conclude that
every of  (1)-(8) of the proposition hold on $\fV$.
 
 (9)

 Assume ${\wp}_{(k\tau)}\fr_\mu\fs_h=\ell_k$.

Fix and consider any $j \in [k]$.

By the inductive assumption, the statement holds for  $L_{F_j}$
when ${\wp}_{(k\tau)}\fr_\mu\fs_h=\ell_j$.
Now by the construction of $\wp$ or $\ell$-center (which is defined
by the vanishing of a variable, and another variable or 
the proper transform of $L_F$ over any chart
that meets the center),  we conclude that $L_{F_j}$
remains  as stated in the proposition over the admissible chart $\fV$.

We now consider $L_{F_k}$ and prove the statement.

Assume $\fV'$ lies over $(x_{(\um,\uu_{F_k})}\equiv 1)$, then 
$Z_{\chi_k} \cap \fV = \emptyset$ because $E_{\wp_k, \vt_k} \cap \fV = \emptyset$, 
where $Z_{\chi_k}$ is  the $  \ell_k$-center. Hence, the statement holds.

Assume $\fV'$ lies over the $\vr$-standard chart of $\tsR_{\vt_{[k]}}$ (hence,
so does $\fV$). Notice that the variable $\de_{\fV', (\um, \uu_F)}$ never appears
in any relation of the blocks $\fG_{\fV', F_j}$ for all $j \le k$, except $L_{\fV', F_k}$.
Hence, by  Proposition \ref{eq-for-sV-vtk} 
  and the inductive assumption, $L_{\fV', F_k}$ takes the following form
\begin{equation}\label{LFwith-de} 
L_{\fV', F_k}=\sgn(s_{F_k})\de_{\fV', (\um, \uu_F)} 
+ \sum_{s \in  S_{F_k} \- s_{F_k}} \sgn(s) 
\pi_{\fV', \fV_{[0]}}^* x_{\fV', (\uu_s, \uv_s)} .
\end{equation}



Then, one sees from the definition that the $\ell_k$-blowup ideal on the chart $\fV'$ is
$$\langle L_{\fV', F_k}^\star, \; \de_{\fV', (\um, \uu_F)}  \rangle $$
where $L_{\fV', F_k}^\star=\sum_{s \in  S_{F_k} \- s_{F_k}} \sgn(s) 
\pi_{\fV', \fV_{[0]}}^* x_{\fV', (\uu_s, \uv_s)}$.
We let $\PP_{[\xi_0, \xi_1]}$ be the factor projective space of the $ \ell_k$-blowup
such that $(\xi_0, \xi_1)$ corresponds to $(L_{\fV', F_k}^\star, \; \de_{\fV', (\um, \uu_F)})$.

 First, we consider the chart $(\xi_ 0\equiv 1)$.

Then,   $\fV=(\fV' \times (\xi_0 \equiv 1)) \cap \tsR_{\ell_k}$, 
as a closed subscheme of  
$\fV' \times (\xi_0 \equiv 1),$ is defined by
\begin{equation}\label{proof:x0-chart-ell}
\de_{\fV', (\um, \uu_{F_k})} = L_{\fV', F_k}^\star \cdot \xi_1.
\end{equation}
Notice that on this chart, we have
$$E_{\ell_k} \cap \fV =  (L_{\fV', F_k}^\star  =0)$$
where $E_{\ell_k}$ is the exceptional divisor created by the blowup $\tsR_{\ell_k} \to \tsR_{\wp_k}$.
Upon substituting \eqref{proof:x0-chart-ell}, we obtain
$$L_{\fV', {F_k}}=L_{\fV', F_k}^\star (1 +  \sgn(s_{F_k}) \xi_1),$$ 
and 
\begin{equation}\label{for-isom}
L_{\fV, {F_k}}= 1 + \sgn(s_{F_k}) \xi_1 .
\end{equation}
This implies that $\xi_1$ is invertible along $\tsV_{\ell_k}$.
Thus, if necessary, we can shrink the chart $\fV$ such that it still contains
 $\tsV_{\ell_k}$, and assume that $\xi_1$ is invertible on $\fV$.
Now, as $\xi_1$ is the proper transform 
 $y_{\fV, (\um, \uu_{F_k})}$  of $\de_{\fV', (\um, \uu_{F_k})}$,
we can also write
$$L_{\fV, {F_k}}= 1 + \sgn(s_{F_k}) y_{\fV, (\um, \uu_{F_k})} $$ 
with  $y_{\fV, (\um, \uu_{F_k})}$ being invertible on the chart (shrinking
the chart if necessary).
Thus, we obtain the desired form of $L_{\fV, F_k}$ as stated in (9).


 \medskip
Next, we  consider 
 the chart $(\xi_1 \equiv 1)$. 
 
 Then, the chart  $\fV=(\fV' \times (\xi_1 \equiv 1)) \cap \tsR_{\ell_k}$ 
of the scheme $\tsR_{\ell_k}$, as a closed subscheme of  
$\fV' \times (\xi_1\equiv 1),$ is defined by
\begin{equation} \label{proof:wp/ell-t0}
L_{\fV', F_k}^\star = \de_{\fV', (\um, \uu_{F_k})} \xi_0.
\end{equation}
By substitution, we obtain 
$$L_{\fV', F_k}=\de_{\fV', (\um, \uu_{F_k})} (\xi_0 +  \sgn(s_{F_k}) )$$
and
\begin{equation} 
\nonumber
L_{\fV, F_k}=\xi_0 +   \sgn(s_{F_k}).
\end{equation}
Then, this implies that $\xi_0$ is invertible along $\fV \cap \tsV_{\ell_k}$.
Hence, again, if necessary, by shrinking the chart $\fV$ 
such that it still contains $\fV \cap \tsV_{\ell_k}$, 
we can assume that $\xi_0$ is invertible on the chart.
Therefore, we can discard the current chart and switch back to the chart lying
over $(\xi_0 \equiv 1)$.  This sends us back to the previous case
where the statement is proved.


By Corollary \ref{no-(um,uu)} and the above discussion,
 the charts chosen in (9) together cover  the scheme $\tsV_{\ell_k}$.

 This completes the proof.
\end{proof}

\begin{defn}\label{preferred-chart-ell}
An admissible chart of $\tsR_{({\wp}_{(k\tau)}\fr_\mu\fs_h)}$ 
as characterized by
Proposition \ref{meaning-of-var-wp/ell}  (9) 
will be  called a preferred admissible chart. 
\end{defn}
By Proposition \ref{meaning-of-var-wp/ell}  (9),
the set of preferred admissible charts cover 
$\tsV_{({\wp}_{(k\tau)}\fr_\mu\fs_h)}$.


\begin{cor}\label{ell-isom}
We let $$\rho_{\ell_k, \wp_k}: \tsV_{\ell_{k}} \lra \tsV_{\wp_{k}}$$
be the morphism
 induced from the blowup morphism $\rho_{\ell_k, \wp_k}: \tsR_{\ell_{k}} \lra \tsR_{\wp_{k}}$.
 Then, $\rho_{\ell_k, \wp_k}$ is an isomorphism.
\end{cor}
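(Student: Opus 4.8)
The plan is to prove that $\rho_{\ell_k, \wp_k}: \tsV_{\ell_k} \to \tsV_{\wp_k}$ is an isomorphism by showing that the blowup $\tsR_{\ell_k} \to \tsR_{\wp_k}$, when restricted to the proper transform $\tsV_{\wp_k}$, is a blowup along a Cartier divisor. Recall that the $\ell_k$-blowup is the blowup of $\tsR_{\wp_k}$ along the center $Z_{\chi_k} = D_{\wp_k, L_{F_k}} \cap E_{\wp_k, \vt_k}$, and that $\tsV_{\wp_k}$ is contained in $D_{\wp_k, L_{F_k}}$ (this is because $\sV$ lies in the $\fL$-divisor $D_{L_{F_k}}$, as $L_{F_k}$ is one of its defining relations by Corollary \ref{eq-tA-for-sV}, and taking proper transforms preserves this containment). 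The key point is then that, locally, the center $Z_{\chi_k}$ cut out on $\tsV_{\wp_k}$ reduces to the divisor $E_{\wp_k, \vt_k} \cap \tsV_{\wp_k}$, which is Cartier; blowing up a scheme along a Cartier divisor is an isomorphism.

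First I would work locally on preferred admissible charts, invoking Proposition \ref{meaning-of-var-wp/ell} (9) which already carries out the relevant local computation: on charts lying over $(x_{(\um, \uu_{F_k})} \equiv 1)$ the $\ell_k$-blowup is trivial (so the map is trivially an isomorphism there), and on charts lying over the $\vr$-standard chart of $\tsR_{\vt_{[k]}}$ the proof of (9) shows that the $\ell_k$-blowup ideal on $\fV'$ is $\langle L^\star_{\fV', F_k}, \de_{\fV', (\um, \uu_{F_k})} \rangle$, and that after the blowup one has $L_{\fV, F_k} = 1 + \sgn(s_{F_k}) y_{\fV, (\um, \uu_{F_k})}$ with $y_{\fV, (\um, \uu_{F_k})}$ invertible along $\tsV_{\ell_k}$ (after shrinking). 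The restriction of $L_{\fV, F_k}$ to $\tsV_{\ell_k}$ vanishes identically since $\tsV_{\ell_k} \subset D_{\ell_k, L_{F_k}}$, and this is compatible with $y_{\fV, (\um, \uu_{F_k})} = -\sgn(s_{F_k})$ being a unit. The upshot is that on $\tsV_{\wp_k}$ the ideal $\langle L^\star_{\fV', F_k}, \de_{\fV', (\um, \uu_{F_k})}\rangle$ restricts to the principal ideal generated by $\de_{\fV', (\um, \uu_{F_k})}$ (up to a unit): indeed along $\tsV_{\wp_k}$ we have $L_{\fV', F_k} = 0$, i.e. $L^\star_{\fV', F_k} = -\sgn(s_{F_k}) \de_{\fV', (\um, \uu_{F_k})}$, so the two generators differ by a unit on $\tsV_{\wp_k}$. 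Hence the restricted center is the Cartier divisor $E_{\wp_k, \vt_k} \cap \tsV_{\wp_k} = (\de_{\fV', (\um, \uu_{F_k})} = 0)$ on the chart.

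Having established that the center of the $\ell_k$-blowup, pulled back to $\tsV_{\wp_k}$, is a Cartier divisor, the induced morphism $\tsV_{\ell_k} \to \tsV_{\wp_k}$ is the blowup of $\tsV_{\wp_k}$ along a Cartier divisor — this is because the proper transform $\tsV_{\ell_k}$ of $\tsV_{\wp_k}$ equals the strict transform, which for a blowup along an ideal whose restriction is locally principal and generated by a nonzerodivisor is canonically isomorphic to the original scheme (the universal property of the blowup is satisfied by the identity, or one checks that $\Proj$ of the Rees algebra of a principal ideal generated by a nonzerodivisor recovers the scheme). One must be slightly careful that $\de_{\fV', (\um, \uu_{F_k})}$ restricts to a nonzerodivisor on $\tsV_{\wp_k}$; this follows from the fact that $\tsV_{\wp_k}$ (and its irreducible components) are not contained in the exceptional divisor $E_{\wp_k, \vt_k}$, which in turn follows from $\tsV_{\wp_k}$ being birational to $\bU \cap \Gr^{3,E}$ — the exceptional divisors of the sequence of blowups are not components of the proper transform.

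The main obstacle I anticipate is the bookkeeping of charts: one needs the preferred admissible charts of Proposition \ref{meaning-of-var-wp/ell} (9) to actually cover $\tsV_{\ell_k}$ (which is asserted at the end of that proof via Corollary \ref{no-(um,uu)}), and one must confirm the gluing — that the local isomorphisms patch to a global one. This is essentially formal once the local statement is in hand, since isomorphism is a local property on the target and the transition maps are compatible. A cleaner alternative, avoiding chart computations entirely, is to observe directly that $\tsV_{\wp_k} \subset D_{\wp_k, L_{F_k}}$ and that $Z_{\chi_k} = D_{\wp_k, L_{F_k}} \cap E_{\wp_k, \vt_k}$, so the scheme-theoretic pullback of the blowup ideal $\cI_{Z_{\chi_k}}$ to $\tsV_{\wp_k}$ equals the pullback of $\cI_{E_{\wp_k, \vt_k}}$, which is an invertible sheaf; by the universal property of blowing up, $\tsR_{\ell_k} \times_{\tsR_{\wp_k}} \tsV_{\wp_k} \to \tsV_{\wp_k}$ is then an isomorphism onto the blowup of $\tsV_{\wp_k}$ along an invertible ideal, i.e. $\tsV_{\wp_k}$ itself, and $\tsV_{\ell_k}$ — the closure of the preimage of $\tsV_{\wp_k} \setminus Z_{\chi_k}$, which is dense in the fiber product by the nonzerodivisor remark — is exactly this component. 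I would present the local-chart version as the primary argument since it dovetails directly with what is already proved, and mention the global universal-property argument as the conceptual reason.
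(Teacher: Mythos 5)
Your proof is correct and takes essentially the same approach as the paper: the paper's proof simply reads off from Proposition~\ref{meaning-of-var-wp/ell}~(9) (equation~\eqref{for-isom}) that on a preferred chart $L_{\fV, F_k} = 1 + \sgn(s_{F_k})\xi_1$, which forces $\xi_1 = -\sgn(s_{F_k})$ along $\tsV_{\ell_k}$, so the preimage of $\tsV_{\wp_k}$ sits as $\tsV_{\wp_k} \times \{[1, -\sgn(s_{F_k})]\}$ and the map is an isomorphism. Your local-chart analysis is the same computation, and your ``blowup along a Cartier divisor'' / universal-property framing is a tidy conceptual wrapper around it — particularly the observation that $\cI_{Z_{\chi_k}}\cdot\cO_{\tsV_{\wp_k}} = \cI_{E_{\wp_k,\vt_k}}\cdot\cO_{\tsV_{\wp_k}}$ because $\tsV_{\wp_k}\subset D_{\wp_k, L_{F_k}}$ — but it does not constitute a genuinely different route.
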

\begin{proof} 
We continue  to use the notation of  the proof of Proposition \ref{meaning-of-var-wp/ell}  (9).
We can cover  $\tsV_{\ell_{k}}$ by preferred admissible affine charts.
Let $\fV$ be any such a chart such that $Z_{\chi_k} \cap \fV' \ne \emptyset$.
Then, by \eqref{for-isom}, we have
$$L_{\fV, {F_k}}= 1 + \sgn(s_{F_k}) \xi_1 .$$
Then, $\xi_1=-\sgn(s_{F_k})$. This implies that 
$$\rho_{\ell_k, \wp_k}^{-1}(\tsV_{\wp_{k}}) =  \tsV_{\wp_{k}} \times [1, -\sgn(s_{F_k})].$$
Hence, the morphisms 
$\rho_{\ell_k, \wp_k}$ is an isomorphism.
\end{proof}

\begin{rem} 
The induced blowup
 $\tsV_{\ell_k} \lra \tsV_{\wp_k}$ is an isomorphism as it is a blowup along
a Cartier divisor, which is also proved in the above
Corollary \ref{ell-isom}. 
However, as  mentioned in \S \ref{tour},
had this blowup  not performed here, then 
the  $``$zero factor$"$ $\de_{\fV, (\um, \uu_{F_k})}$ 
would remain as (the proper transform of)
an exceptional variable directly transformed
from the $\vr$-variable $x_{\fV, (\um, \uu_{F_k})}$
 but would not carry the information of (linearized) $\pl$ relation.  
In addition,  for $F_k$ of rank zero, $L_{F_k}$ could have taken the form
$$L_{\fV, F_k}=\de_{\fV, (\um, \uu_{F_k})} + \hbox{other terms}$$
and during some subsequent $\wp$-blowups (assuming no $\ell$-blowups), 
the term $\de_{\fV, (\um, \uu_{F_k})}$ might acquire some extra exceptional
variables, and 
this would bring the author to an unknown territory.
\end{rem}

Not particularly used, using similar arguments to
some of the above proof, we can show

\begin{prop} Let the notation be as in Proposition \ref{meaning-of-var-wp/ell}. Then, the proper transform of
the divisor $D_{L_F}=(L_F=0)$ in $\tsR_{({\wp}_{(k\tau)}\fr_\mu\fs_h)}$
is smooth for all $\bF \in \sF$.
\end{prop}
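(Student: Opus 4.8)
The plan is to prove smoothness of the proper transform $D_{({\wp}_{(k\tau)}\fr_\mu\fs_h), L_F}$ by the same induction on $(k\tau)\mu h \in \Om$ that underpins Proposition \ref{meaning-of-var-wp/ell}, working chart by chart on the preferred admissible charts that were constructed there. The base case is $\tsR_{({\wp}_{(11)}\fr_1\fs_0)}=\tsR_\vt$, where the statement follows from Proposition \ref{meaning-of-var-vtk}(3): on every standard chart $\fV$, $D_{\vt_{[\up]}, L}\cap\fV$ is cut out by the single equation $(L_{\fV,F}=0)$ and $L_{\fV,F}$ is (by Propositions \ref{eq-for-sV-vtk} and \ref{meaning-of-var-vtk}) a de-homogenized linearized $\pl$ relation whose leading term is a free variable or an exceptional parameter times a free variable — in particular its differential is nonzero along $D_{L_F}$, so the divisor is smooth.

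For the inductive step I would fix an admissible chart $\fV$ of $\tsR_{({\wp}_{(k\tau)}\fr_\mu\fs_h)}$ lying over $\fV'$ of $\tsR_{({\wp}_{(k\tau)}\fr_\mu\fs_{h-1})}$, and distinguish whether or not the blowup center $Z'_{\phi_{(k\tau)\mu h}}$ meets $\fV'$. If it does not, then $\fV\to\fV'$ is an isomorphism on a neighborhood and $D_{({\wp}_{(k\tau)}\fr_\mu\fs_h), L_F}\cap\fV$ is identified with $D_{({\wp}_{(k\tau)}\fr_\mu\fs_{h-1}), L_F}\cap\fV'$, which is smooth by inductive hypothesis. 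If the center does meet $\fV'$, I would use the explicit blowup relation \eqref{blowup-relation:wp/ell} to write $L_{\fV,F}$ as the proper transform of $L_{\fV',F}$ and analyze its leading term. The key point, which I would extract from the proof of Proposition \ref{meaning-of-var-wp/ell}, is that after a $\vt$-blowup with respect to $F$ the leading term of $L_{\fV',F}$ is (the proper transform of) the single variable $x_{(\um,\uu_F)}$, that this variable occurs in $L_F$ with multiplicity one and nowhere else in any block $\fG_{F_j}$ with $j\le k$ except $L_{F_k}$ itself, and hence under any $\wp$-blowup its proper transform remains a single free or exceptional variable times a unit; for an $\ell_k$-blowup the formula $L_{\fV,F_k}=1+\sgn(s_{F_k})\,y_{\fV,(\um,\uu_{F_k})}$ of Proposition \ref{meaning-of-var-wp/ell}(9) shows the divisor becomes empty (or, over other charts, cut out by a difference of a unit and a variable). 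In every case $L_{\fV,F}$ has a monomial term whose differential is a nonzero multiple of a coordinate differential not killed by the vanishing of the other terms, so $(L_{\fV,F}=0)$ is a smooth hypersurface.

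The main obstacle will be the bookkeeping needed to guarantee that the leading (lowest-multiplicity) term of $L_{\fV,F}$ stays \emph{reduced} — i.e. never acquires an exceptional factor to a power $\ge 2$ — through the long sequence of $\wp$-blowups. This is exactly the square-freeness/leading-variable mechanism emphasized throughout the paper: one must invoke that the $\vr$-variable $x_{(\uu_F,\uv_F)}$ (equivalently $x_{(\um,\uu_F)}$) in the leading term appears uniquely in its own binomial and in $L_F$ (the two "hugely important" properties of \S\ref{blocks of gov}), so every $\wp$-center touching $L_F$ either contains that divisor with multiplicity one or does not contain it at all, and the proper transform therefore loses no reducedness. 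Granting that — which is essentially the content already proved for the governing binomials in Propositions \ref{eq-for-sV-vtk} and \ref{equas-wp/ell-kmuh} — the smoothness of $D_{L_F}$ is a formal consequence, chart by chart, and the global statement follows because the preferred admissible charts cover $\tsV_{({\wp}_{(k\tau)}\fr_\mu\fs_h)}$ and, away from $\tsV$, one may shrink to charts where the analysis is identical or the center misses the chart entirely.
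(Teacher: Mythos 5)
Your proposal is correct in spirit but takes a genuinely more elaborate route than the paper, and the choice of which term of $L_{\fV,F}$ to exploit is the crux of the difference. You track the \emph{leading} term $\de_{\fV,(\um,\uu_F)}$ (resp. $\ve_{\fV,\uu_F}\,x_{\fV,(\um,\uu_F)}$) through the full sequence of blowups by induction on $(k\tau)\mu h$, and you correctly identify that the hard part is guaranteeing this term stays reduced. The paper avoids that bookkeeping entirely. It splits into $j\le k$ and $j>k$. For $j\le k$ it invokes Proposition \ref{meaning-of-var-wp/ell}(9) exactly as you do, getting $L_{\fV,F_j}=1+\sgn(s_{F_j})y_{\fV,(\um,\uu_{F_j})}$, which is manifestly a smooth hypersurface (not empty, by the way — your parenthetical ``the divisor becomes empty'' is inaccurate: the divisor is the coset $y=-\sgn(s_{F_j})$, a nonempty smooth hyperplane section, and in fact it contains $\tsV_{\ell_j}$). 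For $j>k$ the paper does \emph{not} track the leading term at all. Instead it fixes a point $\bz$ on the proper transform, maps it to $\bz_0\in\sR$, and observes that because $\bz_0$ lies in $\PP_F$ and on $D_{L_F}$, at least two $\vr$-coordinates of $\PP_F$ are nonzero at $\bz_0$; at most one of them is the chart coordinate, so some free variable $x_{\fV_{[0]},(\uu_s,\uv_s)}$ is nonvanishing at $\bz_0$. Since that divisor misses $\bz_0$, every blowup center involving it misses a neighborhood of $\bz_0$, so after shrinking, $x_{\fV,(\uu_s,\uv_s)}$ survives as a free variable appearing linearly and uniquely in $L_{\fV,F}$, and the differential is immediately nonzero.

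The gain in the paper's approach is substantial: it replaces your induction and its ``main obstacle'' by a single pointwise homogeneity argument that is insensitive to how the leading term degenerates. Your method, by choosing the one $\vr$-variable of $\PP_F$ that \emph{does} get hit by a blowup (the $\vt_j$-blowup), commits you to precisely the worst-case bookkeeping; it can be made to work (since for $j>k$ the exceptional divisor $E_{\vt_j}$ and $\de_{\fV,(\um,\uu_F)}$ never enter a $\wp$-center up through block $\fG_{F_k}$), but you have not actually carried out that verification, and its correctness relies on features of the ordering (rank-$0$ leading indices appearing in minus-terms only of \emph{later}, rank-$1$ blocks) that are not spelled out in your sketch. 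If you want to keep your inductive route, the fact you need to add is exactly this: for $j>k$, $E_{\vt_j}$ has multiplicity zero against every term of every $B_{(k'\tau')}$ with $k'\le k$, so the leading term of $L_{\fV,F_j}$ never enters a blowup ideal. But the paper's homogeneity argument gets the same conclusion from first principles in two lines.
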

\begin{proof}
Fix any $\bF_j \in \sF$ with $j \in [\up]$.
When $j \in [k]$, by Proposition \ref{meaning-of-var-wp/ell} (9),
for any preferred admissible chart $\fV$ of $\tsR_{({\wp}_{(k\tau)}\fr_\mu\fs_h)}$,
we have 
$$L_{\fV, {F_j}}= 1 + \sgn(s_{F_j}) y_{\fV, (\um, \uu_{F_j})} .$$
This clearly implies that the proper transform of
the divisor $D_{L_F}$ in $\tsR_{({\wp}_{(k\tau)}\fr_\mu\fs_h)}$
is smooth.

Now consider the case when $F=F_j$ with $j >k$.
Like above, we consider any admissible chart $\fV$.
Take any point $\bz \in D_{\fV, L_F} \subset \tsR_{({\wp}_{(k\tau)}\fr_\mu\fs_h)}$
lying over $\bz_0 \in \fV_{[0]} \subset \tsR$. Consider $L_{\fV_{[0]}, F}$.
By the homogeneity of the coordinates of $\PP_F$,
there must be two $\vr$-variables\footnote{
This means, a priori, we may make a choice such that $\fV$ never
lies over $(x_{(\um, \uu_F)}=1)$. This might save a case to consider,
but we do not choose to do so.}
 of $\PP_F$ such that they do not
vanish at the point $\bz_0$. Then, 
possibly after shrinking the charts if necessary,
 one of these two $\vr$-variables,
say, $x_{\fV_{[0]},(\uu_s,\uv_s)}$ for some $s \in \S_F$, will remain
to be a variable  $x_{\fV, (\uu_s,\uv_s)}$ in $\var_\fV$, and moreover,
$L_{\fV, F}$ can be expressed as
$$L_{\fV, F}=x_{\fV, (\uu_s,\uv_s)} + \hbox{other terms}$$
such that $x_{\fV, (\uu_s,\uv_s)}$ does not divide any of the other terms.
This proves that $D_{\fV, L_F}$ is smooth at $\bz$, hence is smooth.
\end{proof}

\begin{prop}\footnote{This proposition is not
used anywhere in the current article, but will be applied in the forthcoming
part II of the series.}\label{Taction-wp/ell}
We continue to follow the notation of Proposition \ref{meaning-of-var-wp/ell}.
 We apply induction on 
 $$(k\tau)\mu h \in \Om$$ and assume all the statements in this proposition
hold for $(k\tau)\mu (h-1) \in \Om$.
Consider any admissible smooth affine chart $\fV$ lying over $\fV'$.
Then, every variable of $\var_\fV^+=\var_\fV \sqcup \var_{\fl_\fV}$
comes equipped with a unique $\TT$-weight (a $\TT$-character)
such that with the induced $\TT$-action on $\fV$, 
the morphism $\fV \lra \fV'$ is $\TT$-equivariant
and $\fV \cap \tsV_{({\wp}_{(k\tau)}\fr_\mu\fs_h)}$ is $\TT$-invariant. 
Moreover, 
these $\TT$-actions on various admissible charts are compatible, making
the morphsim 
$\tsV_{({\wp}_{(k\tau)}\fr_\mu\fs_h)} \lra \tsV_{({\wp}_{(k\tau)}\fr_\mu\fs_{h-1})}$ $\TT$-equivariant.
\end{prop}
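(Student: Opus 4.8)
The statement is an equivariant enhancement of Proposition \ref{meaning-of-var-wp/ell}: having already produced, chart-by-chart, the explicit free variables $\var_\fV^+ = \var_\fV \sqcup \var_{\fl_\fV}$ via the blowup relations \eqref{blowup-relation:wp/ell} (and \eqref{proof:x0-chart-ell}, \eqref{proof:wp/ell-t0} in the $\ell_k$ case), the task is to equip each such variable with a $\TT$-character so that the induced action is compatible with the blowup maps. My plan is to run the same induction on $(k\tau)\mu h \in \Om$ used to prove Proposition \ref{meaning-of-var-wp/ell}, carrying the $\TT$-weights along as extra data. The base case $(11)10$ is $\tsR_{\vt}$, where Proposition \ref{Taction-vt-v} already gives a quasi-free (in particular, honest) $\TT$-action, and one reads off from the $\pl$-grading that each $\vp$-variable $x_\uu$ has weight $t_{u_1}t_{u_2}t_{u_3}/(t_1t_2t_3)$, each $\vr$-variable $x_{(\uu,\uv)}$ has weight equal to $(\text{wt }x_\uu)(\text{wt }x_\uv)$, and the exceptional parameters $\ve_{\fV,\uu}$, $\de_{\fV,(\um,\uu)}$ inherit their weights from the divisors they cut out (this is precisely the weight bookkeeping already appearing in the proof of Proposition \ref{Taction-vt}, where $\TT$ acts on the factor $\PP_{\vt_{[k]}}$).

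\textbf{Inductive step.} Assume the weights have been assigned on $\fV'$, a chart of $\tsR_{({\wp}_{(k\tau)}\fr_\mu\fs_{h-1})}$, making $\fV' \to \fV''$ equivariant and $\fV' \cap \tsV_{({\wp}_{(k\tau)}\fr_\mu\fs_{h-1})}$ invariant. The $\wp$- or $\ell$-center $Z_{\phi_{(k\tau)\mu h}}$ is the intersection of two divisors, each $\TT$-invariant by the inductive hypothesis (the divisors in every $\wp$-set are $\vp$-, $\vr$-, or exceptional divisors, and in the $\ell_k$ case $D_{\wp_k, L_{F_k}}$ is invariant because $L_{F_k}$ is $\TT$-homogeneous — a linearized $\pl$ relation, hence isobaric of weight $t_1t_2t_3/(t_1t_2t_3)=1$ — wait, more precisely of the single common weight of all its $\vr$-variable summands). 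Hence the factor projective space $\PP_{\phi_{(k\tau)\mu h}} = \PP_{[\xi_0,\xi_1]}$ carries a natural linear $\TT$-action for which $\tsR_{({\wp}_{(k\tau)}\fr_\mu\fs_h)} \hookrightarrow \tsR_{({\wp}_{(k\tau)}\fr_\mu\fs_{h-1})} \times \PP_{\phi_{(k\tau)\mu h}}$ is equivariant: writing $Y'_0 \cap \fV' = (y'_0=0)$, $Y'_1 \cap \fV' = (y'_1=0)$, one lets $\bt\cdot[\xi_0,\xi_1] = [(\text{wt }y'_0)(\bt)^{-1}\xi_0,\ (\text{wt }y'_1)(\bt)^{-1}\xi_1]$, normalized so the chosen chart's coordinate is fixed. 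Then on the chart $\fV$ defined by $y'_j = y'_i\xi_j$, each renamed variable of $\var_\fV^+$ (as listed in \eqref{proof:new-ex-fv-vskmu}, \eqref{proof:var-xi-fv-vskmu}, \eqref{proof:var-fv-vskmu}) acquires its weight: $y'_i$ keeps $\text{wt }y'_i$ (now called $\ve_{\fV,\cdot}$ or $\de_{\fV,\cdot}$), $\xi_j$ gets $(\text{wt }y'_j)(\text{wt }y'_i)^{-1}$ (now $x_{\fV,\cdot}$, $\ve_{\fV,\cdot}$, $\de_{\fV,\cdot}$ or $y_{\fV,(\um,\uu_F)}$), and the untouched variables keep their old weights. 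One checks directly from $y'_j = y'_i\xi_j$ that this makes $\fV \to \fV'$ equivariant, and since $\tsV_{({\wp}_{(k\tau)}\fr_\mu\fs_h)}$ is the proper transform of the invariant subscheme $\tsV_{({\wp}_{(k\tau)}\fr_\mu\fs_{h-1})}$, it is invariant. For the $\ell_k$ step, the same works with $L_{\fV',F_k}^\star$ in place of one of the $y'_i$: $L_{\fV',F_k}^\star$ is $\TT$-homogeneous (a sum of $\vr$-variables of equal weight), so \eqref{proof:x0-chart-ell}–\eqref{proof:wp/ell-t0} are equivariant relations, confirming in particular that the resulting $y_{\fV,(\um,\uu_{F_k})}$ has a well-defined weight and that $L_{\fV,F_k} = 1 + \sgn(s_{F_k})y_{\fV,(\um,\uu_{F_k})}$ is compatible with $\TT$-invariance of $\tsV_{\ell_k}$. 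Compatibility across charts follows because the weights are pinned down by the (invariant) divisors and the (equivariant) gluing \eqref{blowup-relation:wp/ell}, exactly as in the passage from charts to $\tsR_{({\wp}_{(k\tau)}\fr_\mu\fs_h)}$ in Proposition \ref{meaning-of-var-wp/ell}.

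\textbf{Main obstacle.} The genuinely delicate point is the $\ell_k$-blowup and the $\TT$-homogeneity of $L_{F_k}$. For $\wp$-blowups the center is an honest intersection of two coordinate-type divisors, and homogeneity of the ambient data is transparent. But $D_{\wp_k,L_{F_k}}$ is cut out by the proper transform of a \emph{polynomial}, and one must verify that this proper transform remains $\TT$-homogeneous throughout all intermediate $\vt$- and $\wp$-blowups — i.e., that every monomial of $L_{\fV',F_k}$ carries the same $\TT$-weight on every admissible chart. This should follow from the fact that the original $L_F$ is isobaric (all its $\vr$-variable summands have the common weight $(\text{wt }x_{\uu_{F}})(\text{wt }x_{\um}/x_\um)$, reflecting the Plücker grading) together with the inductive invariance of the divisors whose exceptional parameters get substituted in; but making this precise requires tracking weights of the acquired exceptional parameters $\de_{\fV,(\um,\uu_{F_k})}$ and $\ve_{\fV,\uu_{F_k}}$ against the form \eqref{2 terms ne 0} of $L_{\fV_{[0]},F_k}$, which is where the argument has to be spelled out carefully. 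Everything else is the same equivariant bookkeeping that already underlies Propositions \ref{Taction-vt} and \ref{Taction-vt-v}.
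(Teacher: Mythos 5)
Your proposal follows essentially the same inductive scheme as the paper: run the induction on $(k\tau)\mu h \in \Om$ underlying Proposition \ref{meaning-of-var-wp/ell}, taking Proposition \ref{Taction-vt} as base case, and carry $\TT$-weights along through the chart relations \eqref{blowup-relation:wp/ell}. Your chart-level weight formula $\text{wt}(\xi_j) = (\text{wt }y'_j)(\text{wt }y'_i)^{-1}$ is the unique assignment making $y'_j = y'_i\xi_j$ isobaric and is exactly what the explicit formula in the proof of Proposition \ref{Taction-vt} produces; the paper's own wording (``inherit the $\TT$-weight from that of its corresponding variable over $\fV'$,'' plus decreeing weight zero for $\de_{\fV,(\um,\uu_{F_k})}$ and $y_{\fV,(\um,\uu_{F_k})}$ in the $\ell_k$ case) is terser, so you are if anything more explicit. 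One slip: your displayed action $\bt\cdot[\xi_0,\xi_1] = [(\text{wt }y'_0)(\bt)^{-1}\xi_0,\ (\text{wt }y'_1)(\bt)^{-1}\xi_1]$ has the character inverted — for the rational map $[\xi_0,\xi_1]=[y'_0,y'_1]$ to be equivariant one needs $\bt\cdot\xi_i = (\text{wt }y'_i)(\bt)\xi_i$, as in Proposition \ref{Taction-vt} — and this display contradicts the (correct) ratio formula you state a sentence later.

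The ``main obstacle'' you flag — that the proper transform $L_{\fV',F_k}$ must be $\TT$-isobaric for the $\ell_k$-blowup center to be invariant and for the weight-zero assignment to $y_{\fV,(\um,\uu_{F_k})}$ to be consistent — is a genuine gap in your write-up as it stands, since you say the argument ``has to be spelled out carefully'' without spelling it out. It closes with one more inductive clause: at every earlier step of the sequence the proper transform of $L_{F_k}$ (or of any isobaric polynomial) is obtained from an isobaric polynomial by pulling back along a morphism you have already shown to be $\TT$-equivariant and then dividing by $\zeta^{\ell}$ for the local exceptional parameter $\zeta$; both operations rescale every monomial by the same character, so isobaric-ness propagates and, since all $\vr$-variables on $\fV_{[0]}$ carry trivial weight, $L_{\fV',F_k}$ is isobaric of trivial weight. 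With that clause inserted your argument is complete and, since the paper asserts the weight-zero assignments without this justification, is in fact more self-contained than the paper's.
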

\begin{proof}
We prove by induction on $(k\tau)\mu h \in \Om$ 
where $(11)10$ is the smallest element.

For the initial case when $(k\tau)\mu h =(11)10$,
the scheme is $\tsR_{({\wp}_{(11)}\fr_1\fs_0)}=\tsR_{\vt}$.
In this case,  this proposition is just Proposition \ref{Taction-vt} with $k=\up$:
 one checks readily that the statement of this proposition holds.

For induction, we suppose that all the statements hold over 
$\tsR_{({\wp}_{(k\tau)}\fr_\mu\fs_{h-1})}$ along 
$\tsV_{({\wp}_{(k\tau)}\fr_\mu\fs_{h-1})}$, that is, over a
set of $\TT$-invariant admissible affine charts of 
$\tsR_{({\wp}_{(k\tau)}\fr_\mu\fs_{h-1})}$ that cover 
$\tsV_{({\wp}_{(k\tau)}\fr_\mu\fs_{h-1})}$,
for some (suitable) $(k\tau)\mu (h-1) \in \Om$.

We now consider any fixed admissible chart
$\fV$ of $\tsR_{({\wp}_{(k\tau)}\fr_\mu\fs_h)}$
lying over a $\TT$-invariant admissible affine chart
$\fV'$ of  $\tsR_{({\wp}_{(k\tau)}\fr_\mu\fs_{h-1})}$.
The blowup morphism
$$\pi: \tsR_{({\wp}_{(k\tau)}\fr_\mu\fs_h)} \lra 
\tsR_{({\wp}_{(k\tau)}\fr_\mu\fs_{h-1})}$$
induces 
$$\fV \lra \fV'.$$

First, we consider the exceptional variables in
\eqref{proof:new-ex-fv-vskmu}.
We then let every variable there to inherit the $\TT$-weight
from that of its corresponding variable over $\fV'$, except the last one,
that is,  when $(y_i'=0)$ defines  $D_{\wp_k, L_{F_k}}\cap \fV'$,
in which case, we assign weight zero to the variable $\de_{\fV, (\um, \uu_{F_k})}$. 

Next, we consider the variables in \eqref{proof:var-xi-fv-vskmu}.
Again, we let every variable there to inherit the $\TT$-weight
from that of its corresponding variable over $\fV'$.
In addition, we assign weight zero to the variable  $y_{\fV, (\um, \uu_{F_k})}$.

Finally, for the variables in
\eqref{proof:var-fv-vskmu}, we simply let every of them to 
inherit the $\TT$-weight
from that of its corresponding variable over $\fV'$.

Then, one sees immediately that the above defines a $\TT$-action on $\fV$,
 makes the morphism $\fV \lra \fV'$ $\TT$-equivariant, and
furthermore, $\fV \cap \tsV_{({\wp}_{(k\tau)}\fr_\mu\fs_h)}$
is $\TT$-invariant.
Moreover, one checks readily that these $\TT$-actions on 
different admissible charts $\fV$ are compatible, 
therefore, we obtain an induced $\TT$-equivariant morphism
$$
\tsV_{({\wp}_{(k\tau)}\fr_\mu\fs_h)}\lra 
 \tsV_{({\wp}_{(k\tau)}\fr_\mu\fs_{h-1})} .
$$ 
\end{proof}

\subsection{Proper transforms of 
defining relations in $(\wp_{(k\tau)}\fr_\mu\fs_h)$ and in $(\ell_k)$}   $\ $

 Consider any fixed $B \in \cB^\gov  \cup \cB^\frb$ and $\bF \in \sF$.
Suppose $B_{\fV'}$ and $L_{\fV', F}$ have been constructed over $\fV'$.
Applying Definition \ref{general-proper-transforms}, we obtain the proper transforms on the chart $\fV$
$$B_{\fV}, \;
\forall \; B \in \cB^\gov  \cup \cB^\frb; \;\; L_{\fV, F}, \; 
\forall \; i\bF \in \sF.$$

\begin{defn}\label{termninatingB-wp} 
{\rm  (cf. Definition \ref{general-termination})} 
Consider any governing binomial relation $B \in \cB^\gov$. 
Let $\fV$ be an admissible affine chart of $\tsR_{({\wp}_{(k\tau)}\fr_\mu\fs_h)}$
 (including $\tsR_{\ell_k}$)
and $\bz \in  \tsV_{({\wp}_{(k\tau)}\fr_\mu\fs_h)} \cap \fV$ be a closed point.
We say that $B$ terminates at $\bz$ if (at least one, hence both of) its two terms 
of $B_{\fV}$ does not vanish at $\bz$.
 We say $B$ terminates on the chart $\fV$ 
 if it terminates at all closed points of $ \tsV_{({\wp}_{(k\tau)}\fr_\mu\fs_h)} \cap \fV$.
 We say $B$ terminates on  $\tsR_{({\wp}_{(k\tau)}\fr_\mu\fs_h)}$ if it terminates on
 all  admissible affine charts $\fV$ of  $\tsR_{({\wp}_{(k\tau)}\fr_\mu\fs_h)}$.
 \end{defn}

In the sequel, for any $B =T^+_B - T^-_B \in \cB^\gov$, we express
$B_\fV= T^+_{\fV, B} - T^-_{\fV, B}$. 
If $B=B_{(k\tau)}$ for some $k \in [\up]$ and $\tau \in [\ft_{F_k}]$, we also write 
$$B_\fV= T^+_{\fV, (k\tau)} - T^-_{\fV, (k\tau)}.$$

Below, we follow the notations of Proposition \ref{meaning-of-var-wp/ell} as well as those in its proof. 

In particular, we have that $\fV$ is a admissible affine chart of $\tsR_{({\wp}_{(k\tau)}\fr_\mu\fs_h)}$
(including $\tsR_{\ell_k}$),
 lying over a admissible affine chart $\fV'$ of $\tsR_{({\wp}_{(k\tau)}\fr_\mu\fs_{h-1})}$. We have that
$\phi'_{(k\tau)\mu h}=\{Y'_0, \;Y'_1\}$  is the proper transforms of $\phi_{(k\tau)\mu h}=\{Y^+, Y^-\}$
 in $\tsR_{({\wp}_{(k\tau)}\fr_\mu\fs_{h-1})}$ with $Y^\pm$ being associated with $T^\pm_{(k\tau)}$
 or $\phi_{(k\tau)\mu h}=\chi_k$ in which case $Y_0'$ is the $\fL$-divisor $D_{\wp, L_{F_k}}$.
 Likewise, $Z'_{\phi_{(k\tau)\mu h}}$ is the proper transforms of the ${\wp}$-center
 $Z_{\phi_{(k\tau)\mu h}}$ or is the $\ell_k$-center $ Z_{\chi_k}$
   in $\tsR_{({\wp}_{(k\tau)}\fr_\mu\fs_{h-1})}$.
 Also, assuming that $Z'_{\phi_{(k\tau)\mu h}} \cap \fV'  \ne \emptyset$,
then, as in \eqref{YY01-wp}, we have
 $$Y'_0 \cap \fV' =(y'_0 =0), \; Y'_1 \cap \fV' =(y'_1 =0), 
 \;\; \hbox{with $y'_0, y'_1 \in \var_{\fV'}^+$}, $$
or in the case of $\ell_k$-blowup, 
$(y'_0 =0)$ defines the (proper transform of)
 the $\fL$-divisor $ D_{\wp, L_{F_k}} \cap \fV'$ and
$y'_1 = \de_{\fV', (\um, \uu_{F_k})}$.
Further, we have $\PP_{\phi_{(k\tau)\mu h}}=\PP_{[\xi_0,\xi_1]}$ with 
 the homogeneous coordinates  $[\xi_0,\xi_1]$ corresponding to $(y'_0,y'_1)$.

\begin{prop}\label{equas-wp/ell-kmuh}
Let the notation be as in Proposition \ref{meaning-of-var-wp/ell} and be as  above.

Let $\fV$ be any preferred admissible chart of 
$\tsR_{(\wp_{(k\tau)}\fr_\mu\fs_h)}$
(cf. Definition \ref{preferred-chart-ell}). Then, the scheme 
$\tsV_{({\wp}_{(k\tau)}\fr_\mu\fs_h)}\cap \fV$, 
as a closed subscheme of the chart $\fV$ 
 is defined by $$\cB_\fV^\gov, \; \cB^\frb_\fV, \; L_{\sF, \fV}.$$

Assume $Z'_{\phi_{(k\tau)\mu h}} \cap \fV'  \ne \emptyset$.
We let $\zeta=\zeta_{\fV, (k\tau)\mu h}$ be the exceptional parameter in $\var_\fV^+$ such that
$$E_{({\wp}_{(k\tau)}\fr_\mu\fs_h)} \cap \fV = (\zeta=0).$$


Then,  the following hold.
\begin{enumerate}
\item Suppose $\fV \subset (\fV' \times (\xi_0 \equiv 1)) \cap \tsR_{(\wp_{(k\tau)}\fr_\mu\fs_h)}$.
We let $y_1 \in \var_\fV^+$  be the proper transform of $y_1'$. 
Then, we have
\begin{itemize}
\item[(1a)]   $T^+_{\fV, (k\tau)}$
 is square-free, $y_1 \nmid T^+_{\fV, (k\tau)}$, 
 and $\deg (T^+_{\fV, (k\tau)}) =\deg (T^+_{\fV', (k\tau)})-1$.
Suppose $\deg_{y_1'} T^-_{\fV', (k\tau)}=b$ for some 
integer $b$, positive by definition, then
we have $\deg_{\zeta} T^-_{\fV, (k\tau)}=b-1$
and $\deg_{y_1} T^-_{\fV, (k\tau)}=b$.
Consequently,
either $T^-_{\fV, (k\tau)}$ is linear in $y_1$ or else $\zeta \mid T^-_{\fV,  (k\tau)}$.
\item[(1b)]  Let $B \in   \cB^\gov$ with $B > B_{(k\tau)}$.   Then,
$T^+_{\fV, B}$ is square-free and $y_1 \nmid T^+_{\fV, B}$.
Suppose $B \in  \cB^\gov_{F_k}$ and  $y_1 \mid T^-_{\fV, B}$,
 then either $T^-_{\fV, B}$ is linear in $y_1$ or $\zeta \mid T^-_{\fV, B}$.
Suppose $B \notin  \cB^\gov_{F_k}$ and  $y_1 \mid T^-_{\fV, B}$, then $\zeta \mid T^-_{\fV, B}$.
\end{itemize} 
\item Suppose $\fV \subset (\fV' \times (\xi_1 \equiv 1)) \cap \tsR_{(\wp_{(k\tau)}\fr_\mu\fs_h)}$.
We let $y_0  \in \var_\fV^+$  be the proper transform of $y_0'$. 
 Then, we have
\begin{itemize}
\item[(2a)]  $T^+_{\fV, (k\tau)}$ is square-free,
 $y_0 \nmid T^-_{\fV, (k\tau)}$, and
 $ \deg (T^-_{\fV, (k\tau)}) =\deg (T^-_{\fV', (k\tau)})-1$.
\item[(2b)] Let $B \in   \cB^\gov$ with $B > B_{(k\tau)}$.  
Then, $T^+_{\fV, B}$ is square-free.
Suppose  $B \in   \cB^\gov_{F_k}$, then  $y_0 \nmid T^-_{\fV, B}$.
Suppose $B \notin  \cB^\gov_{F_k}$ and $y_0 \mid T^-_{\fV, B}$, then $\zeta \mid T^-_{\fV, B}$.
\end{itemize}
\item  $\rho_{(k\tau)}< \infty$. 
Moreover, for every $B \in \cB^\gov$ 
with $B \le B_{(k\tau)}$, we have that $B$ 
terminates on 
$\tsR_{(\wp_{(k\tau)}\fr_{\rho_{(k\tau)}})}=
\tsR_{(\wp_{(k\tau)}\fr_{\rho_{(k\tau)}}\fs_{\si_{(k\tau)\rho_{(k\tau)}}})}$.
In particular, for all $B \in \cB^\gov$ 
with $B \le B_{(k\ft_{F_k})}$, $B$ terminates on 
$\tsR_{\wp_k}$. 
 Consequently, $T^+_{\fV,B}$ is square-free for  all $B \in \cB^\gov$ and for any
admissible chart $\fV$ of $\tsR_{\wp_k}$.

\item 
 Consider any fixed term $T_B$ of any given $B \in \cB^\frb$. 
 We can assume  $y_i'$ turns into $\zeta$ for some $i \in \{0, 1\}$ and
$y_j$ is the proper transform of  $y_j'$ with $j=\{0,1\}\-\{i\}$ 
 Suppose $y_j \mid T_{\fV, B}$, then either $T_{\fV, B}$ is linear in $y_j$ or 
$\zeta \mid T_{\fV, B}$.
\end{enumerate}      
\end{prop}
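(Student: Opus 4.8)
\textbf{Proof plan for Proposition \ref{equas-wp/ell-kmuh}.}

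The plan is to prove all four items simultaneously by induction on $(k\tau)\mu h \in \Om$, exactly as in the proof of Proposition \ref{meaning-of-var-wp/ell}, reusing the chart-by-chart bookkeeping established there. For the base case $(k\tau)\mu h = (11)10$, the scheme is $\tsR_\vt$ and the statements reduce to assertions about $\cB^\gov_\fV$, $\cB^\frb_\fV$ and $L_{\sF,\fV}$ on charts of $\tsR_\vt$, which follow from Corollary \ref{eq-for-sV-vr} (square-freeness and $\vr$-linearity of governing and $\frb$-binomials) together with Proposition \ref{eq-for-sV-vtk} and Proposition \ref{strong sq free}. For the inductive step, I would fix an admissible chart $\fV$ over $\fV'$, and split according to whether $Z'_{\phi_{(k\tau)\mu h}}$ meets $\fV'$; if not, every term transforms trivially and all statements descend verbatim. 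If it does, I would write $\fV$ as a subset of $\fV' \times (\xi_i \equiv 1)$ via the blowup relation $y'_j = y'_i \xi_j$ of \eqref{blowup-relation:wp/ell}, and carry out the substitution term-by-term.

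The first statement about defining relations is the easy part: on any admissible chart, Proposition \ref{meaning-of-var-wp/ell} (together with the reduction in Corollary \ref{eq-for-sV-vr}, which discards $\cB^\ngv$ after $\vt$-blowups and $\cB^\frb$ never being needed for the design) shows that $\tsV_{(\wp_{(k\tau)}\fr_\mu\fs_h)}\cap \fV$ is cut out by the proper transforms of $\cB^\gov$, $\cB^\frb$ and $L_\sF$; this is inherited from $\fV'$ since the blowup ideal is generated by variables (or the proper transform of an $L_F$) and the proper transform of the defining ideal generates the proper transform scheme along the blowup center. For (1) and (2), the key point is how the leading pair $(x_{\uu_{F_k}}, x_{(\um,\uu_{F_k})})$ and the divisors $Y^\pm$ associated with $T^\pm_{(k\tau)}$ interact with the blowup. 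By the inductive form of $B_{(k\tau)}$, exactly one factor of $T^+_{\fV',(k\tau)}$ and one monomial's worth of factors of $T^-_{\fV',(k\tau)}$ vanish on $Y'_0 \cap Y'_1$; dividing by $\zeta^{l_{\phi,B}}$ as in \eqref{define-proper-t} and using that $l_{\phi, B_{(k\tau)}} = 1$ (since $m_{\phi, T^+_{(k\tau)}} = 1$ for the plus term, by the definition of association) gives the degree bookkeeping $\deg(T^+_{\fV,(k\tau)}) = \deg(T^+_{\fV',(k\tau)}) - 1$ and the claims on $\deg_\zeta$, $\deg_{y_1}$ (or $\deg_{y_0}$). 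Square-freeness of $T^+_{\fV, B}$ and the ``$y_1 \nmid T^+_{\fV,B}$'' / ``$y_0 \nmid T^-_{\fV,B}$'' statements follow from the inductive square-freeness together with Lemma \ref{same-degree}, which says the $y$-degree of a term in the proper transform equals the $y'$-degree of the corresponding term upstairs; the uniqueness of the leading $\vr$-variable in its own governing binomial (the observation in \S\ref{blocks of gov}) is what prevents a second occurrence.

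The main obstacle is statement (3), the finiteness $\rho_{(k\tau)} < \infty$, i.e. termination of the $\wp$-blowups in a single round. The plan is to track a well-founded quantity: for a governing binomial $B_{(k\tau)}$ written as $T^+ - T^-$ on an admissible chart, (1a) and (2a) show that under each $\wp$-blowup one of the two terms strictly drops in total degree (the plus term drops when $\xi_0 \equiv 1$, and the minus term drops when $\xi_1 \equiv 1$), while no new genuine factors can be created in $T^+$ beyond exceptional parameters that are themselves controlled by (1b), (2b) — in particular $\zeta$-divisibility of $T^-_{\fV,B}$ is preserved forward for later $B$. Since $\rho_{(k\tau)}$ counts rounds and within a round the relevant degrees are bounded and strictly decrease in the controlled terms, the process must stop; once it stops, by definition no pre-$\wp$-set $\phi$ in the next round meets $\tsV$, which by Definition \ref{wp-sets-kmu} means every such $Z_\phi$ is disjoint from $\tsV$, i.e. at each point of $\tsV \cap \fV$ at least one factor of each term of $B_\fV$ is nonzero, which is exactly termination of $B$ on $\tsR_{(\wp_{(k\tau)}\fr_{\rho_{(k\tau)}})}$. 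Propagating this through all $\tau \in [\ft_{F_k}]$ and using that terminated binomials stay terminated under subsequent blowups (their terms are nowhere-vanishing along $\tsV$, hence units, hence unaffected) gives termination of all $B \le B_{(k\ft_{F_k})}$ on $\tsR_{\wp_k}$, and the square-freeness of $T^+_{\fV,B}$ on charts of $\tsR_{\wp_k}$ then follows from (1a)–(2a) applied cumulatively. Finally, statement (4) for $\cB^\frb$ is handled just like the $\cB^\gov$ case but using Proposition \ref{strong sq free} (so that at most one factor of $B_{\fV'}$ lies on the blowup center and the transform is $\vr$-linear with at most one exceptional parameter acquired), giving ``$T_{\fV,B}$ linear in $y_j$ or $\zeta \mid T_{\fV,B}$'' by the same degree count as in (1a).
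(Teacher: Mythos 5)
Your proposal follows the same overall scheme as the paper's proof (induction on $\Om$, degree bookkeeping via the substitution $y'_j = y'_i\xi_j$, and invoking square-freeness of $\frb$-binomials), but there are three places where the plan is either vague or incorrect.

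First, you never separate out the $\ell_k$-blowup from the $\wp$-blowups. The paper proves ($1_\wp$), ($2_\wp$), (3) first, then returns for ($1_\ell$), ($2_\ell$), ($4_\ell$). This is not cosmetic: for the $\ell_k$-blowup the center is $D_{\wp_k,L_{F_k}}\cap E_{\wp_k,\vt_k}$, so $y_0'$ is a local defining function of the $\fL$-divisor, which is \emph{not} a factor of any governing or $\frb$-binomial. That is precisely why $(2_\ell)$ is void on preferred charts, why $(1_\ell)$ reduces to observing that the $\cB^\gov_{F_k}$ all terminate by (3), and why $(4_\ell)$ is settled by noting that $y_0'$ simply cannot appear in $B\in\cB^\frb$ and $y_j = y_{\fV,(\um,\uu_{F_k})}$ divides nothing. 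Your uniform substitution argument does not apply here, since $y_0'$ is not one of the variables in $\var_\fV^\vee$ in the same sense.

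Second, your justification of (1b)/(2b) through ``inductive square-freeness together with Lemma~\ref{same-degree}'' misses the operative structural fact, namely that $x_{(\uu_{s_{\tau'}},\uv_{s_{\tau'}})}$ appears in no other relation and $x_{\uu_{F_{k'}}}$ appears in no block earlier than $\fG_{F_{k'}}$. This is what forces $T^+_B$ to \emph{retain its original two-factor form} (hence square-free, and $y_1 \nmid T^+_{\fV,B}$ or $y_0 \nmid T^-_{\fV,B}$) through all blowups in earlier blocks; Lemma~\ref{same-degree} is a general compatibility of proper transforms and would not by itself rule out $T^+_B$ acquiring a second copy of some variable. You should also split the case $B\in\cB^\gov_{F_k}$ (where $T^-_B$ may be nonlinear in $y_1$ and the $\zeta$-dichotomy requires the same $\zeta^{b-1}$ bookkeeping as in (1a)) from the case $B\notin\cB^\gov_{F_k}$ (where $y_0'$ can never divide $T^+_{\fV',B}$, forcing $\zeta \mid T^-_{\fV,B}$ outright).

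Third, your parenthetical for (4) claiming that Proposition~\ref{strong sq free} gives ``at most one factor of $B_{\fV'}$ lies on the blowup center'' is wrong as stated, and if taken literally the conclusion would be trivial and the hard case missed. The proof must handle the case where $y_0' \mid T_{\fV',0}$ and $y_1' \mid T_{\fV',1}$ both hold, and then the substance is to argue that $y_0'$ divides $T_{\fV',0}$ to order exactly one. Proposition~\ref{strong sq free} alone does not give this on a proper transform; the paper uses the ``imbalance'' of the $\wp$-centers (the $Y^+$ divisor traces back to a unique factor $x_{\uu_{F_k}}$ or $x_{(\uu_s,\uv_s)}$ of the original $\frb$-binomial, and $\vr$-linearity plus $\vi$-square-freeness forbid repetition). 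Only then does the $\zeta^{b-1}$ computation transfer from (1a). Your sketch elides the step that actually carries the weight.
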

\begin{proof} 
We continue to follow  the notation in the proof of Proposition \ref{meaning-of-var-wp/ell}.

We prove the  proposition by applying induction on
$(k\tau) \mu h  \in \Om$.


The initial case is 
$(k\tau) \mu h=(11)1 0$ with $\tsR_{({\wp}_{(11)}\fr_1 \fs_0)}=\tsR_\vt$. 
In this case, 
the  statement about defining equations
 of  $\tsR_{(\wp_{(11)}\fr_1 \fs_0)} \cap \fV$ follows from
Proposition \ref{eq-for-sV-vtk} with $k=\up$;
 the remainder statements (1) - (4) are void.

Assume that the proposition holds  for 
$({\wp}_{(k\tau)}\fr_\mu \fs_{h-1})$
with $(k\tau)\mu (h-1)  \in \Om$.

Consider $({\wp}_{(k\tau)}\fr_\mu \fs_h)$. 

Consider any preferred admissible 
 chart $\fV$ of $\tsR_{({\wp}_{(k\tau)}\fr_\mu \fs_{h})}$,
lying over a  preferred admissible chart of $\fV'$ of  $\tsR_{{\wp}_{(k\tau)}\fr_\mu \fs_{h-1})}$.
By assumption, all the desired statements of the proposition hold over the chart $\fV'$.



The  statement of the proposition on the defining equations
 of $\tsV_{(\wp_{(k\tau)}\fr_\mu \fs_{h})} \cap \fV$
follows straightforwardly from the inductive assumption.

For the statements of (1)-(4), we structure our proofs as follows.
Because the $\ell_k$-blowup occurs after all $\wp$-blowups 
in the block $(\fG_k)$ are performed,
we will prove (1), (2), and (3) for $\wp$-blowups in  $(\fG_k)$ first, 
 then we will return to
prove (1) and (2) for the $\ell_k$-blowup.
 (3) is unrelated to $\ell_k$-blowup. 
We prove (4) at the end.

(1)

($1_\wp$) We first consider the case when the blowup is 
a $\wp_k$ blowup 
(not the $\ell_k$-blowup).

 It helps to recall that we have the following blocks of
binomial relations in $(\fG_k)$.
\begin{eqnarray}\label{all gov bi 2}
\hbox{all the governing binomials:} \;\;\;\;\;\;\;\;\;\;\; \;\;\;\;\;\;\;\;\;\;\; \;\;\;\;\;\;\;\; \\
x_{1uv}x_{(12u,13v)} - x_{12u}x_{13v} x_{(123,1uv)}, \; x_{1uv}x_{(13u,12v)}- x_{13u}x_{12v}x_{(123,1uv)}; \nonumber \\
x_{2uv}x_{(12u,23v)} -x_{12u}x_{23v} x_{(123,2uv)}, \;  x_{2uv}x_{(23u,12v)}-x_{23u}x_{12v} x_{(123,2uv)}; \nonumber \\
x_{3uv}x_{(13u,23v)} -x_{13u}x_{23v}x_{(123,3uv)}, \;  x_{3uv}x_{(23u,13v)} -x_{23u}x_{12v}x_{(123,3uv)}; \nonumber\\
 x_{abc}x_{(12a,3bc)}-x_{12a}x_{3bc} x_{(123,abc)},\;
 x_{abc}x_{(13a,2bc)}-x_{13a}x_{2bc} x_{(123,abc)},\nonumber \\
x_{abc}x_{(23a,1bc)} -x_{23a}x_{1bc}x_{(123,abc)} \nonumber
\end{eqnarray}
for all $u<v \in [n]\-[3]$ and $a<b<c \in [n]\- [3]$.
In the first three cases, we have only two binomial relations in the block $(\fG_k)$;
in the last, we have three.

 We may express 
\begin{equation}\label{Bktau here}
B_{(k\tau)}= x_{(\uu_{s_\tau},\uv_{s_\tau})} x_{\uu_k}
 -x_{\uu_{s_\tau}} x_{\uv_{s_\tau}} x_{(\um,\uu_k)}
\end{equation}
where  $x_{\uu_k}$ is the leading variable of $\bF_k$, and $s_\tau \in S_{F_k} \- s_{F_k}$ corresponds
to $\tau \in [\ft_{F_k}]$. 

(1a)

First, observe  that the variables $x_{(\uu_{s_\tau},\uv_{s_\tau})}$ 
and $x_{\uu_k}$ do not appear in any relation of 
the block $ (\fG_{F_j})$
with $j <k$. 
Thus, when $\tau=1$, both of the following two
must hold: (1). $T^+_{\fV, (k1)}$ admits at most two 
factors, one is the proper transform 
$x_{\fV, (\uu_{s_\tau},\uv_{s_\tau})}$ of  
$x_{(\uu_{s_\tau},\uv_{s_\tau})}$
and the other is the proper transform 
$x_{\fV, \uu_k}$ of  $x_{\uu_k}$; 
(2). If $(\uu_{s_\tau},\uv_{s_\tau}) \in \fd_\fV$
or $\uu_k \in \fe_\fV$ labels exceptional divisor, then
it does not appear in $T^+_{\fV, (k1)}$. 
Because the chart $\fV$ is over $(\xi_0 \equiv 1)$, 
the condition of case (2) is met,  hence the case (2) must occur.
This implies that $T^+_{\fV, (k1)}$ is square-free ,
$y_1 \nmid T^+_{\fV, (k1)}$,
as well as $\deg T^+_{\fV, (k1)}=\deg T^+_{\fV', (k1)}-1$.
Now, suppose $\deg_{y_1'} T^-_{\fV', (k\tau)}=b$ for some 
integer $b$,  then we can express
\begin{equation}\label{use here}
B_{\fV', (k1)} = m_0 y_0' - m_1 (y_1')^b
\end{equation}
for some monomials $m_0$ and $m_1$ such that $y_0' \nmid m_0$.
Then, after plugging 
$$y_1'= y_0' \xi_1= \zeta y_1'$$
 into $B_{\fV, (k1)}$ of \eqref{use here},
we obtain
$$B_{\fV, (k1)} = m_0  - m_1 \zeta^{b-1} (y_1)^b.$$
Hence, we have $\deg_{\zeta} T^-_{\fV, (k\tau)}=b-1$
and $\deg_{y_1} T^-_{\fV, (k\tau)}=b$.
Consequently,
either $T^-_{\fV, (k\tau)}$ is either
 linear in $y_1$ or else $\zeta \mid T^-_{\fV,  (k\tau)}$.

For a general $\tau \in [\ft_{F_k}]$, 
From \eqref{Bktau here}, 
$T^+_{(k\tau)}= x_{(\uu_{s_\tau},\uv_{s_\tau})} x_{\uu_k}$,
we see that in $T^+_{\fV',(k\tau)}$, 
only $x_{\uu_k}$ can possibly 
 acquire some exceptional variables
from the previous blowups in the same block $(\fG_k)$ with respect to
$B_{(k\tau')}$ for some $\tau' < \tau$.

 Hence, prior to  performing
$\wp$-blowups with respect to the block $\cB^\gov_{F_k}$,  we have
\begin{equation}\label{B-fV''}
B_{\fV'', (k\tau)}: x_{\fV'', (\uu_{s_\tau},\uu_{s_\tau})} x_{\fV'', \uu_F}
 -a_{\fV'',\tau}, \; \tau \in [\ft_{F_k}]
\end{equation}
 where $a_{\fV'',\tau}$ 
are some monomials and $\fV''$ is an admissible chart.
Then, after we perform some
$\wp$-blowups with respect to $B_{(k\tau')}$ with $\tau' < \tau$,
 we have
 \begin{equation}\label{B-fV'}
B_{\fV', (k\tau)}: x_{\fV', (\uu_{s_\tau},\uu_{s_\tau})} (\prod \ve ) x_{\fV', \uu_F} -a_{\fV',\tau}, \; \tau \in [\ft_{F_k}]
\end{equation}
 where $(\prod \ve )$, which can be equal to 1, or otherwise,
is a square-free product of exceptional variables acquired by  $x_{\fV', \uu_F}$, and
$\fV'$ is an admissible chart.
Recall the convention that 
 we set $ x_{\fV', \uu_F} \equiv 1$ if $\uu_F$ labels 
the exceptional variable $\ve_{\fV', \uu_F}$ on the chart.

By the inductive assumption, $T^+_{\fV', (k\tau)}$ is square-free.
Then, as it is impossible to have $y_1' \mid T^+_{\fV', (k\tau)}$,
we obtain that $T^+_{\fV, (k\tau)}$ is square-free,
$y_1 \nmid T^+_{\fV, (k1)}$,
as well as $\deg T^+_{\fV, (k1)}=\deg T^+_{\fV', (k1)}-1$.
The remainder statement can be proved by the parallel calculation
as in the case above when $\tau=1$.

(1b). Let $B > B_{(k\tau)}$. 

First, we suppose $B \in \cB^\gov_{F_k}$.

In this case, we  we can write $B=B_{(k\tau')}$ with $\tau' \in [\ft_{F_k}]$ and $\tau' > \tau$, and then can express 
$$B=B_{(k\tau')}= x_{(\uu_{s_{\tau'}},\uv_{s_{\tau'}})} x_{\uu_{F_k}} -
x_{\uu_{s_{\tau'}}} x_{\uv_{s_{\tau'}}} x_{(\um,\uu_{F_k})}.$$
We have $T^+_B=x_{(\uu_{s_{\tau'}},\uv_{s_{\tau'}})} x_{\uu_{F_k}}$,  and it
{\it retains} this form prior to the $\wp$-blowups with respect to binomials of $\cB_{F_k}$
because $x_{\uu_{F_k}}$ and $x_{(\uu_{s_{\tau'}},\uv_{s_{\tau'}})}$ do not appear in any 
relation $\fG_{F_j}$ with $j <k$. 
(Recall here the convention: $x_{\fV, \uu} = 1$ if  $\uu \in \fe_{\fV}$;
 $x_{\fV, (\uu,\uv)} = 1$ if  $(\uu,\uv) \in \fd_{\fV}$.)

Starting the $\wp$-blowups with respect to the first binomial relation 
$B_{(k1)}$ of $\cB_{F_k}$, 
$T^+_B$ can only acquire exceptional parameters through the 
leading variable $x_{\uu_{F_k}}$.  
 From here, one sees directly  that $T^+_{\fV, (k\tau')}$ is square-free.


Now, suppose $y_1 \mid T^-_{\fV, B}$.
 We can assume $\deg_{y'_1}  (T^-_{\fV', B})=b$ for some 
integer $b>0$. 
Since  $T^+_B$ is square-free, 
using the calculation parallel to the one used in (1a)
around \eqref{use here},
we can obtain the following two possibilities: 

\noindent 
$\bcd$ $\deg_{y_1}  (T^-_{\fV, B})=b$ and $\deg_{\zeta}  (T^-_{\fV, B})=b-1$, if $y_0' \mid  T^+_{\fV, B}$.

\noindent
$\bcd$ $\deg_{y_1}  (T^-_{\fV, B})=b$ and $\deg_{\zeta}  (T^-_{\fV, B})=b$, if $y_0' \nmid  T^+_{\fV, B}$.

Hence,  either $T^-_{\fV, B}$ is linear in $y_1$ when $b=1$ in either case, 
 or else,   $\zeta \mid T^-_{\fV, B}$ when $b>1$ in the first case or in  any situation of  the second case.

Next, we treat the case when $B \notin \cB^\gov_{F_k}$.

We  we can write $B=B_{(k'\tau')}$ with $\tau' \in [\ft_{F_{k'}}]$
 and $k' > k$.
We can express 
$$B=B_{(k'\tau')}= x_{(\uu_{s_{\tau'}},\uv_{s_{\tau'}})} x_{\uu_{F_{k'}}} -
x_{\uu_{s_{\tau'}}} x_{\uv_{s_{\tau'}}} x_{(\um,\uu_{F_{k'}})}.$$
Since $x_{(\uu_{s_{\tau'}},\uv_{s_{\tau'}})}$ and $x_{\uu_{F_{k'}}}$ do not appear
in any relation in $\fG_{F_j}$ with $j <k'$ and $k <k'$, we see that
$T^+_B=x_{(\uu_{s_{\tau'}},\uv_{s_{\tau'}})} x_{\uu_{F_{k'}}}$
{\it retains} this form under the current blowup, in particular, $T^+_{\fV, B}$ is square-free and $y_1 \nmid T^+_{\fV, B}$. Indeed, for the similar reason,
 $y_0' \nmid T^+_{\fV', B}$.  
Furthermore, if $y_1 \mid T^-_{\fV, B}$, 
then again, because $y_0'$ cannot appear in  $T^+_{\fV', B}$, we obtain
$\zeta \mid T^-_{\fV, B}$.

This proves ($1_\wp$).

(2)

($2_\wp$) We continue to consider the case when the blowup is $\wp_k$ blowup (not the $\ell_k$-blowup).  

(2a).  The reason and calculation used in this part are similar to those used in the
proof of (1).

The proof of the fact that the plus-term 
$T^+_{\fV, (k\tau)}$ is square-free, is again based on that
 $x_{(\uu_{s_\tau},\uv_{s_\tau})}$ uniquely appears in $B_{(k\tau)}$
and only the leading term  $x_{\uu_k}$ can possibly acquire exceptional variables
through earlier blowups in the same block $(\fG_k)$, hence,
 is totally analogous to the corresponding part of (1a). 
It is clear that $y_0' \nmid T^-_{\fV', (k\tau)}$, hence 
$y_0 \nmid T^-_{\fV, (k\tau)}$.
The remainder statements also follow from similar
straightforward calculations. We omit the obvious details.

(2b) Let $B > B_{(k\tau)}$.

Suppose $B \in \cB^\gov_{F_k}$.

The fact that $T^+_{\fV, B}$ is square-free, again, follows from the same line of arguments as  in the corresponding part of (1b), 
we avoid repeating same proof.
In addition, one sees that $y_0' \nmid  T^-_{\fV', B}$,
hence $y_0 \nmid  T^-_{\fV, B}$.

Suppose $B \notin \cB^\gov_{F_k}$. 

We  we can write $B=B_{(k'\tau')}$ with $\tau' \in [\ft_{F_{k'}}]$
 and $k' > k$.
We can express 
$$B=B_{(k'\tau')}= x_{(\uu_{s_{\tau'}},\uv_{s_{\tau'}})} x_{\uu_{F_{k'}}} -
x_{\uu_{s_{\tau'}}} x_{\uv_{s_{\tau'}}} x_{(\um,\uu_{F_{k'}})}.$$
Since $x_{(\uu_{s_{\tau'}},\uv_{s_{\tau'}})}$ and $x_{\uu_{F_{k'}}}$ do not appear
in any relation in $\fG_{F_j}$ with $j <k'$ and $k <k'$, we see that
$T^+_B=x_{(\uu_{s_{\tau'}},\uv_{s_{\tau'}})} x_{\uu_{F_{k'}}}$
{\it retains} this form under the current blowup, in particular, $T^+_{\fV, B}$ is square-free and $y_0', y_1' \nmid T^+_{\fV', B}$. 
Therefore, if $y_0 \mid T^-_{\fV, B}$, 
then also $\zeta \mid T^-_{\fV, B}$.

This proves (2b), hence completes the case of ($2_\wp$).

(3) (This statement is exclusively about $\wp$-blowups.)

From $\tsR_{({\wp}_{(k\tau)}\fr_\mu\fs_{h-1})}$ to $\tsR_{({\wp}_{(k\tau)}\fr_\mu\fs_{h})}$,
over any chart $\fV$ of $\tsR_{({\wp}_{(k\tau)}\fr_\mu\fs_{h})}$,
 by (1a) and (2a), we have either
$$\deg (T^+_{\fV, (k\tau)}) =\deg (T^+_{\fV', (k\tau)})-1$$ or
 $$\deg (T^-_{\fV, (k\tau)}) =\deg (T^-_{\fV', (k\tau)})-1.$$
 Hence, after finitely many steps, over any chart $\fV$, either all variables in
 $B_{\fV, (k\tau)}$ are invertible along the proper transform of $\tsV$,
 or else, one of the two terms of $B_{\fV, (k\tau)}$ must  become a constant.


This implies that the process of $\wp$-blowups in ($\wp_{(k\tau)}$)
must terminate after finitely many rounds.
That is, $\rho_{(k\tau)} < \infty$. 

The  statement about termination of 
the governing binomials $B$ with $B \le B_{(k\tau)}$
 follows from 
$\rho_{(k'\tau')} < \infty$ for all $(k'\tau') < (k\tau)$
(by the inductive assumption) and $\rho_{(k\tau)} < \infty$.

Now we prove the last statement about the square-freeness of $T^+_{\fV,B}$ for any $B \in \cB^\gov$ and any
admissible chart $\fV$ of $\tsR_{\wp_k}$.
When $B \in \cB^\gov$ with $B \in \fG_{F_j}$
for some $j <k$, note that $B$ terminates on $\tsR_{\wp_j}$,
hence the square-freeness of $T^+_{\fV,B}$
 follows from the inductive assumption.
When $B \in \fG_{F_j}$ with $j \ge k$, it follows from 
(the proofs of) (1) and (2).

This establishes (3).

\medskip

($1_\ell$) Now we return to (1) to consider the $\ell_k$-blowup.

In this case, we have $y_0'$, defining the $\fL$-divisor $D_{\wp_k, L_{F_k}}$ on the chart
$\fV'$, does not appear in any relation $B$ of $\cB^\frb$.

(1a) By (3) (already proved), all $B_{(k\tau)}$ of $\cB^\gov_{F_k}$ terminate. Thus,
it follows that $T_{\fV, (k\tau)}^+$ is square-free. The remaining statements are void.

(1b) Then, the same line of aruments  applied to $x_{\uu_{F_{k'}}}$ as in (1a) of 
($1_\wp$) can be reused to obtain the desried statement.

($2_\ell$) We now consider (2) for the $\ell_k$-blowup.

Because $\fV$ is a preferred chart (by assumption), the statement is void.

(4) 

 It is worth recalling that we have the following relations in $\cB^\frb$
 that do not admit root-parents in $R_\vr$.
$$ x_{1bc} x_{2b'c'} x_{(13a,2bc)} x_{(23a,1b'c')}- x_{2bc}  x_{1b'c'} x_{(23a,1bc)} x_{(13a,2b'c')}$$
for suitable choices of $a,b,c, b',c'$ which do not belong to $\{1,2,3\}$.

Fix $a,b,c, \bar a, \bar b, \bar c \in [n]$, all being distinct, satisfying $\bar a <a< b<c$ and $\bar a ,b< \bar b<\bar c$:
$$
x_{12b} x_{3ac}x_{(13b,2\bar b \bar c)} x_{(12 \bar a,3 \bar b \bar c)} x_{(13 \bar a,2ac)} 
-x_{13b} x_{2ac} x_{(12b,3\bar b \bar c)}  x_{(13 \bar a,2 \bar b \bar c)}x_{(12 \bar a,3ac)} .  \label{rk1-1}
$$
Here, the only obstruction for the above binomial to admit
 a root-parent in $R_\vr$
 is the inequality $a<b$.

Fix $a,b,c, a', \bar a, \bar b, \bar c \in [n]$, all being distinct, satisfying $\bar a <a < b<c$ and $a', \bar a< \bar b<\bar c$:
$$
 x_{13b}  x_{2ac} x_{(12b, 13a')} x_{(12a', 3\bar b \bar c)} x_{(13 \bar a, 2 \bar b \bar c)} x_{(12 \bar a,3ac)}   
- x_{12b} x_{3ac}  x_{(13b, 12a')} x_{(13a', 2\bar b \bar c)} x_{(12 \bar a, 3 \bar b \bar c)} x_{(13 \bar a,2ac)} .\label{rk0-1}
$$
Here, the only obstruction for the above binomial to admit
 a root-parent in $R_\vr$ is the inequality $a<b$.

The key point that is observed and will be employed below
is the heavy $``$imbalance$"$ of the $\wp$-centers:
$x_{\uu_{F_k}}$ plays too strongly a leading and 
dominant role, let alone any $x_{(\uu_s, \uv_s)}$ that simply uniquely appears in its own governing binomial relation.

($4_\wp$)  We first consider the case when the blowup 
is $\wp_k$ blowup (not the $\ell_k$-blowup).

As in \eqref{B-fV''}, 
prior to  performing
$\wp$-blowups with respect to the block $\cB^\gov_{F_k}$,  we have
 $$B_{\fV'', (k\tau)}: x_{\fV'', (\uu_{s_\tau},\uu_{s_\tau})} x_{\fV'', \uu_F} -a_{\fV'',\tau}, \; \tau \in [\ft_{F_k}]$$
 where $a_{\fV'', \tau}$ are some monomials and $\fV''$ is an admissible chart.
Then, after we perform some
$\wp$-blowups with respect to $B_{(k\tau')}$ with $\tau' < \tau$,
 as in \eqref{B-fV'},
we have
 $$B_{\fV', (k\tau)}: x_{\fV', (\uu_{s_\tau},\uu_{s_\tau})} (\prod \ve ) x_{\fV', \uu_F} -a_{\fV',}\tau, \; \tau \in [\ft_{F_k}]$$
 where $(\prod \ve )$, which can be equal to 1, or otherwise,
is a square-free product of exceptional variables acquired by  $x_{\fV', \uu_F}$, and $\fV'$ is an admissible chart.

Now fix and consider any $B \in\cB^\frb$. 
We can write $B_{\fV'}=T_{\fV',0} -T_{\fV',1}$.
 
 As before, we let $(y_0', y_1')$ be the $\wp$-set on the chart $\fV'$, prior to performing
the blowup $\tsR_{({\wp}_{(k\tau)}\fr_\mu \fs_{h})} \lra 
\tsR_{({\wp}_{(k\tau)}\fr_\mu \fs_{h-1})}$, inducing
 $\fV \lra \fV'$. W.l.o.g., we assume $y_0'$ is associated with 
$T_{\fV', (k\tau)}^+= x_{\fV', (\uu_{s_\tau},\uu_{s_\tau})} (\prod \ve ) x_{\fV', \uu_F}$.

\noindent
$\bcd$ Suppose neither of $y_0'$ and  $y_1'$ appear in $B_{\fV'}=T_{\fV',0} -T_{\fV',1}$.
Then there is nothing to prove.

\noindent
$\bcd$ Suppose both of $y_0'$ and  $y_1'$ appear in only one term of
$B_{\fV'}$, say, $T_{\fV',0}$. Then, we clearly  obtain 
$y_j , \zeta \mid T_{\fV',0}$ as stated in (4).

\noindent
$\bcd$ Suppose $y_0'$ and  $y_1'$ appear in both terms of
$B_{\fV'}$. W.l.o.g., we may assume $y_0' \mid T_{\fV',0}$, 
$y_1' \mid T_{\fV',1}$.
Then under this situation, by
 Lemma \ref{strong sq free} and the comments in the beginning of
($4_\wp$), we conclude that $y_0'$ is linear in $T_{\fV',0}$, 
that is, its exponent/multiplicity in $T_{\fV',0}$ equals 1.
We may assume that $\deg_{y_1'} T_{\fV',1} = b \ge 1$.
Then, over the chart $(\xi_0\equiv 1)$, we have $y_j$ as stated in (4),
equals $y_1$, 
can only divide $T_{\fV,1}$, and by the same calculation used around \eqref{use here},
it is linear in $T_{\fV,1}$ if $b=1$, or otherwise
$\zeta \mid T_{\fV,1}$.
Next, we consider  the chart $(\xi_1\equiv 1)$. Then, again by the same calculation used around \eqref{use here}, we obtain $y_j$ as state  in (4), equals $y_0$, 
can only divide $T_{\fV,0}$ and is linear in $T_{\fV,0}$.

This proves ($4_\wp$).

    \medskip
($4_\ell$)  We now move on to consider the case when the blowup is  $\ell_k$-blowup.
    
    In this case, note that $y_0'$, locally defining $D_{\wp_k, L_{F_k}}$,
    does not appear in $B \in \cB^\frb$. 
    Also, since $\fV$ is a  preferred chart over $(\xi_0 \equiv 1)$, 
$y_j$ as stated in (4), equals $y_{\fV, (\um, \uu_{F_k)}}$
(see Proposition \ref{meaning-of-var-wp/ell} (9) and its proof).
Therefore, $y_j$ cannot divide any term of $T_{\fV, B}$.
(Note that in this case, we have $\zeta=\de_{\fV, (\um, \uu_{F_k)}}$
    with $L_{F_k} \in \fl_\fV$, 
and  $\zeta$ may appear in  $T_{\fV, B}$.)

    This proves (4).

All in all, by induction,    Proposition \ref{equas-wp/ell-kmuh} is proved.  
\end{proof}

The following is  a restatement of some statements of the preceding proposition.
As the idea will be crucially applied in 
constructing various birational transforms of $\Ga$-schemes, 
we isolate and state it as a corollary.

\begin{cor}\label{linear-or-vanish} Let the setup and notation
be as in Proposition \ref{equas-wp/ell-kmuh}. 
As in Proposition \ref{equas-wp/ell-kmuh} (4),
we can assume  $y_i'$ turns into the exceptional variable
$\zeta$ for some $i \in \{0, 1\}$ and
$y_j$ is the proper transform of  $y_j'$ with $j=\{0,1\}\-\{i\}$.
Then for any binomial $B \in \cB^\gov \sqcup  \cB^{\frb}$
and any tern $T_{\fV,B}$ of $B_\fV$,
if $y_j \mid T_{\fV, B}$, then either $T_{\fV, B}$ is linear in $y_j$,
 or else, $\zeta \mid T_{\fV, B}$.
\end{cor}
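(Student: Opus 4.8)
The corollary is exactly a repackaging of the parenthetical conclusions of Proposition~\ref{equas-wp/ell-kmuh}, so the plan is simply to trace where each clause has already been proved and assemble them, splitting into the two cases $B \in \cB^\gov$ and $B \in \cB^\frb$. First I would recall the setup: on the admissible chart $\fV$ of $\tsR_{(\wp_{(k\tau)}\fr_\mu\fs_h)}$ lying over $\fV'$, the blowup center on $\fV'$ is $(y_0'=y_1'=0)$ (or the $\ell_k$-analogue), one of $y_0',y_1'$ becomes the exceptional parameter $\zeta$ and the other, say $y_j'$ with $j=\{0,1\}\setminus\{i\}$, has proper transform $y_j$, and the defining relations \eqref{blowup-relation:wp/ell} replace $y_i'$ by $\zeta$ and $y_j'$ by $y_j$ (up to the factor $\zeta$). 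So the content to extract is: \emph{if $y_j$ divides a term $T_{\fV,B}$ of $B_\fV$, then either that term is linear in $y_j$ or $\zeta$ divides it.}

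For $B \in \cB^\gov$, this is precisely the collection of statements in Proposition~\ref{equas-wp/ell-kmuh}(1) and (2). Concretely: if $\fV$ lies over $(\xi_0\equiv 1)$ then $y_j = y_1$, and (1a)--(1b) say that $T^+_{\fV,B}$ is square-free with $y_1\nmid T^+_{\fV,B}$ (so the only term that can be divisible by $y_1$ is the minus-term, when $B$ lies in the relevant block), and in that case $T^-_{\fV,B}$ is either linear in $y_1$ or divisible by $\zeta$; when $B\notin\cB^\gov_{F_k}$, divisibility by $y_1$ forces divisibility by $\zeta$ outright, which is a fortiori the desired alternative. Symmetrically, if $\fV$ lies over $(\xi_1\equiv 1)$ then $y_j=y_0$, and (2a)--(2b) give $y_0\nmid T^-_{\fV,B}$ (and $y_0\nmid T^+_{\fV,B}$ in the relevant cases), again with the ``$\zeta\mid$'' alternative when $y_0$ does divide a term. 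For the $\ell_k$-blowup one invokes the $\ell$-parts of the proof ($1_\ell$), ($2_\ell$): since by (3) all $B_{(k\tau)}\in\cB^\gov_{F_k}$ have terminated, $T^+_{\fV,B}$ is square-free and the chart being preferred makes the relevant divisibilities vacuous. In every sub-case the conclusion matches the corollary.

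For $B \in \cB^\frb$, the statement is literally Proposition~\ref{equas-wp/ell-kmuh}(4): with the same naming conventions, if $y_j\mid T_{\fV,B}$ then $T_{\fV,B}$ is linear in $y_j$ or $\zeta\mid T_{\fV,B}$. There is essentially nothing further to do here except to note that the hypothesis of the corollary ($y_i'$ turning into $\zeta$, $y_j$ the proper transform of $y_j'$) is exactly the hypothesis under which (4) was stated. I would therefore write: ``This is a direct consequence of Proposition~\ref{equas-wp/ell-kmuh}: the case $B\in\cB^\frb$ is part~(4), and the case $B\in\cB^\gov$ follows from parts~(1) and~(2) according as $\fV$ lies over the chart $(\xi_0\equiv 1)$ or $(\xi_1\equiv 1)$ (with the $\ell_k$-blowup handled by the $\ell$-parts of the proof of loc.\ cit.), upon observing that whenever a divisibility $y_j\mid T_{\fV,B}$ can occur at all, loc.\ cit.\ already asserts the stated dichotomy, and whenever loc.\ cit.\ instead asserts $\zeta\mid T_{\fV,B}$ this is the second alternative of the dichotomy.''

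\textbf{Expected obstacle.} There is no genuine mathematical obstacle — the corollary is deliberately a ``restatement.'' The only care needed is bookkeeping: making sure that the case distinctions in parts~(1) and~(2) of Proposition~\ref{equas-wp/ell-kmuh} ($B=B_{(k\tau)}$ vs.\ $B>B_{(k\tau)}$, and $B\in\cB^\gov_{F_k}$ vs.\ $B\notin\cB^\gov_{F_k}$, and the two choices of which $\xi$-chart) collectively cover every situation in which $y_j$ could divide a term, and that in each such situation the ``linear in $y_j$ or $\zeta\mid$'' alternative is indeed what is asserted there (noting that an unconditional ``$\zeta\mid T_{\fV,B}$'' is a special case of the alternative). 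I would present this as a short paragraph of case-matching rather than any new argument.
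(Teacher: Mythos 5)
Your proposal is correct and follows essentially the same route as the paper's proof, which simply observes that the case $B\in\cB^\gov$ follows by combining Proposition~\ref{equas-wp/ell-kmuh}(1) and (2), and the case $B\in\cB^\frb$ is a verbatim restatement of Proposition~\ref{equas-wp/ell-kmuh}(4). Your additional case-matching bookkeeping (which $\xi$-chart, $B\in\cB^\gov_{F_k}$ or not, the $\ell_k$ sub-case) is sound and just spells out what the paper leaves implicit.
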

\begin{proof} If $B \in \cB^\gov$, it
follows immediately from the combinations of 
Proposition \ref{equas-wp/ell-kmuh} (1) and (2). If
$B \in  \cB^{\frb}$, it is 
the same statement of Proposition \ref{equas-wp/ell-kmuh}  (4).
\end{proof}

\section{$\Ga$-schemes and Their  Birational Transforms}\label{Gamma-schemes}

\subsection{$\Ga$-schemes} $\ $

Here, we return to the initial affine chart $\bU \subset \PP(\wedge^3 E)$.

\begin{defn}\label{ZGa} 
Let $\Ga$ be an arbitrary subset of $\var_{\bU}=\{x_\uu \mid \uu \in \II_{3,n} \-\um \}$. 
We let $I_\Ga$ be the ideal of $\kk[x_\uu]_{\uu \in \II_{3,n}\-\um}$ generated by all the elements 
$x_{\uu}$ in $\Ga$, 
 and, 
 $$I_{\wp,\Ga}=\langle x_\uu, \; \bF \mid x_\uu \in \Ga, \; \bF \in \sF \rangle$$
  be the ideal of $\kk[x_\uu]_{\uu \in \II_{3,n}\-\um}$ generated by $I_\Ga$ together with 
all the de-homogenized $\um$-primary $\pl$ relations of $\Gr^{3, E}$. We let
 $Z_\Ga$ $(\subset \Gr^{3, E} \cap \bU)$ be the closed subscheme of 
 the affine space $\bU$ defined by the ideal $I_{\wp,\Ga}$.
The subscheme $Z_\Ga$ is called  the $\Ga$-scheme of $\bU$. 
Note that  $Z_\Ga \ne \emptyset$ since $0 \in Z_\Ga$.
\end{defn}

(Thus, a $\Ga$-scheme is  an intersection of certain Schubert divisors with the chart $\bU$.
 But, in this article, we do not investigate $\Ga$-schemes in any {\it Schubert} way.)
 

Take $\Gamma =\emptyset$. Then, $I_{\wp,\emptyset}$ is the ideal generated by 
all the de-homogenized $\um$-primary $\pl$ relations.
Thus,  $Z_\emptyset =\bU \cap \Gr^{3, E}$.  

Let $\Ga$ be any fixed subset of $\var_{\bU}$.
We let  $\bU_\Ga$ be the coordinate  subspace of $\bU$ defined by $I_\Ga$.
That is,
$$\bU_\Ga=\{(x_{\uu} =0)_{ x_\uu \in \Ga}\} \subset \bU.$$
This is a coordinate subspace of dimension 
${n \choose 3}-1 - |\Ga|$ where $|\Ga|$ is the cardinality of $\Ga$. Then,
$Z_\Ga$ is the scheme-theoretic intersection of $\Gr^{3, E}$ with
the coordinate  subspace $\bU_\Ga$.
For any $\um$-primary $\pl$ equation $\bF \in \sF$, we let $\bF|_\Ga$ be the induced 
polynomial obtained from  the de-homogeneous polynomial $\bF$
by setting $x_{\uu} =0$  for all $x_\uu \in \Gamma$. 
Then, $\bF|_\Ga$ becomes a polynomial on the affine subspace $\bU_\Ga$.
We point out that $\bF|_\Ga$ can be identically zero on $\bU_\Ga$.

\begin{defn}\label{rel-irrel}
 Let $\Ga$ be any fixed subset of $\var_{\bU}$.
 Let ($\bF$) $F$ be any fixed (de-homogenized)  $\um$-primary $\pl$ relation. 
 We say ($\bF$) $F$  is $\Ga$-irrelevant if  every term of 
 $\bF$ belongs to  the ideal $I_\Ga$.   Otherwise, we say ($\bF$) $F$ is $\Ga$-relevant.
 We let $\sFgr$ be the set of all $\Ga$-relevant de-homogenized $\um$-primary $\pl$ relations.
 We let $\sFgir$ be the set of all $\Ga$-irrelevant  de-homogenized $\um$-primary $\pl$ relations.
\end{defn}
As we will take $\um=(123)$, we will often drop the subindex $\um$, for example, we
will write $\sF^{\rm rel}_\Ga$ for $\sFgr$,  $\sF^{\rm irr}_\Ga$ for $\sFgir$, etc.

If  $\bF$ is $\Ga$-irrelevant,  then $\bF|_\Ga$ is identically zero along $\bU_\Ga$. 
Indeed,  $\bF$ is $\Ga$-irrelevant if and only if every term of $\bF$ contains a member of $\Ga$.
 The sufficiency direction is clear. To see the necessary direction,
 we suppose a term $x_\uu x_\uv \in I_\Ga$, then as $I_\Ga$ is prime 
 (the coordinate  subspace $\rU_{\um, \Ga}$ is integral), we have
 $x_\uu$ or  $x_\uv \in \Ga$.

\subsection{$\sF$-transforms of  $\Ga$-schemes in $\sV_{[k]}$}
\label{subsection:wp-transform-sfk}   $\ $





In what follows, we keep  notation of Proposition \ref{equas-fV[k]}. 

Recall that for any $\bF \in \sF$,
$\La_F=\{(\uu_s, \uv_s) \mid s \in S_F\}.$


\begin{lemma}\label{wp-transform-sVk-Ga} 
Fix any  subset $\Ga$ of $\bU$.  Assume that $Z_\Ga$ is integral. 

Fix and consider any $k \in [\up]$. Then, we have the following:
\begin{itemize}
\item there exists a closed subscheme $Z_{\sF_{  [k]},\Ga}$ of $\sV_{[k]}$
with an induced morphism 
 $$Z_{\sF_{  [k]},\Ga}
 \to Z_\Ga;$$
\item $Z_{\sF_{  [k]},\Ga}$ comes equipped with an irreducible component  
$Z^\dagger_{\sF_{  [k]},\Ga}$ with the induced morphism 
$Z^\dagger_{\sF_{  [k]},\Ga} 
 \to Z_\Ga$;
 \item  for any standard chart $\fV$ of $\sR_{[k]}$ such that
$Z_{[k],\Ga} \cap \fV \ne \emptyset$, there  exists a subset, 
possibly empty,
$$ \tGa^\zero_{\fV} \; \subset \;  \var_\fV. $$
\end{itemize}

Further,  consider any given standard chart $\fV$ of $\sR_{[k]}$ with
$Z_{[k],\Ga} \cap \fV \ne \emptyset$. Then,
the following hold.
\begin{enumerate} 
\item 
The scheme $Z_{[k],\Ga} \cap \fV$, as a closed subscheme of the chart $\fV$,
is defined by the following relations
\begin{eqnarray} 
\;\;\;\;\; y , \; \; \; y \in  \tGa^\zero_\fV ,   \label{Ga-rel-sVk=0} \\
\cB^\frb_{\fV, [k]},    \nonumber \\ 
B_{\fV,(s,t)}: \;\;\;  x_{\fV, (\uu_s, \uv_s)}x_{\fV,\uu_t} x_{\fV,\uv_t} - x_{\fV, (\uu_t,\uv_t)}  
 x_{\fV,\uu_s} x_{\fV,\uv_s},  \;\; s, t \in S_{F_i}, 
  \;  i \in [k], \nonumber\\
L_{\fV, F_i}: \;\; \sum_{s \in S_{F_i}} \sgn (s) x_{\fV, (\uu_s,\uv_s)}, \; \;  
 i \in [k], \nonumber \\
\bF_{\fV,j}: \;\; \sum_{s \in S_{F_j}} \sgn (s) x_{\fV, \uu_s}x_{\fV,\uv_s}, \; \; k < j\le \up.  \nonumber
\end{eqnarray} 
 Further, we take $\tGa^\zero_\fV \subset \var_\fV$
 to be the maximal subset (under inclusion)
among all those subsets that satisfy the above.
\item The induced morphism  $Z^\dagger_{\sF_{  [k]},\Ga}  
\to Z_\Ga$ is birational. 
\item For any variable $y=x_{\fV, \uu}$ or $y=x_{\fV, (\uu,\uv)} \in \var_\fV$,
$Z^\dagger_{[k],\Ga} \cap \fV \subset (y=0)$ 
if and only if $Z_{[k],\Ga} \cap \fV \subset (y=0)$.
(We remark here that this property is not used within this lemma, but 
will be used as the initial case of Lemma \ref{vt-transform-k}.)
\end{enumerate}
\end{lemma}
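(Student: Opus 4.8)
\textbf{Proof proposal for Lemma \ref{wp-transform-sVk-Ga}.}

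The plan is to carry out the entire statement by induction on $k \in [\up]$, reusing the structure already visible in the excerpt: namely the rational maps $\Theta_{[k],\Gr}$ and their graph closures $\sV_{[k]}$, together with the explicit equations of $\sV_{[k]} \cap \fV$ from Proposition \ref{equas-fV[k]}. For the base case, note that when all of $\sV_{[0]} = \bU_\Gr$ (or whatever the smallest index is) the scheme $Z_{\sF_{[0]},\Ga}$ is simply $Z_\Ga$ itself sitting inside $\bU \cap \Gr^{3,E}$, and all claims are vacuous or immediate. For the inductive step, I would construct $Z_{\sF_{[k]},\Ga}$ as the closure of the image of $Z_\Ga^\circ$ (the matroid Schubert cell, i.e. the locus where $x_\uv \neq 0$ for $x_\uv \notin \Ga$) under $\Theta_{[k],\Gr}$ restricted to $Z_\Ga$, or equivalently as the proper transform / graph closure relative to $Z_{\sF_{[k-1]},\Ga}$ under the forgetful map $\sR_{[k]} \to \sR_{[k-1]}$. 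The induced morphism $Z_{\sF_{[k]},\Ga} \to Z_\Ga$ is then the composite of projections, and $Z^\dagger_{\sF_{[k]},\Ga}$ is the unique irreducible component dominating $Z_\Ga$ — its existence is guaranteed because $Z_\Ga$ is assumed integral, so the dominant component is unique.

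For the explicit local equations (1), the key observation is that on a standard chart $\fV$ of $\sR_{[k]}$, the equations of $\sV_{[k]} \cap \fV$ are exactly those listed in Proposition \ref{equas-fV[k]}, namely $\cB_{\fV,[k]}$, the linearized $\pl$ relations $L_{\fV,F_i}$ for $i \in [k]$, and the de-homogenized primary $\pl$ relations $\bF_{\fV,j}$ for $j > k$. To cut down from $\sV_{[k]}$ to $Z_{\sF_{[k]},\Ga}$ one imposes, in addition, the pullbacks of the generators $x_\uu \in \Ga$ of $I_\Ga$. The set $\tGa^\zero_\fV$ is then defined to be the set of all local free variables of $\fV$ which must vanish identically on $Z_{\sF_{[k]},\Ga} \cap \fV$ — this includes not only the direct pullbacks of members of $\Ga$ but also any $\vr$-variable $x_{\fV,(\uu,\uv)}$ that is forced to vanish because, via a governing or non-governing binomial $B_{F,(s,t)}$, its $\vi$-image $x_\uu x_\uv$ lies in $I_\Ga$. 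I would verify that, after saturating by the invertible variables on the chart (the ones defining the matroid cell), imposing $\{ y = 0 : y \in \tGa^\zero_\fV\}$ together with the $\sV_{[k]}$-equations exactly recovers $Z_{\sF_{[k]},\Ga}$; taking $\tGa^\zero_\fV$ maximal is then well-defined and unambiguous. Statement (2), birationality of $Z^\dagger_{\sF_{[k]},\Ga} \to Z_\Ga$, follows because $\Theta_{[k],\Gr}$ is a morphism on the open dense subset $Z_\Ga^\circ$ (where all the relevant $\vp$-variables are invertible, so the $\vr$-coordinates $x_\uu x_\uv$ are determined and nonzero), hence it is a section of the projection there and an isomorphism onto its image over that open set.

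For statement (3), the containment $Z^\dagger_{[k],\Ga} \cap \fV \subset (y=0) \iff Z_{[k],\Ga} \cap \fV \subset (y=0)$: the $(\Leftarrow)$ direction is trivial. For $(\Rightarrow)$, suppose $y$ vanishes on the dominant component $Z^\dagger$ but not on all of $Z_{[k],\Ga} \cap \fV$. Then $y$ is a nonzerodivisor modulo the ideal of $Z^\dagger$, so multiplying the equations appropriately and using that $Z^\dagger$ dominates $Z_\Ga$ birationally, I would argue that the vanishing of $y$ on $Z^\dagger$ forces a corresponding vanishing relation to hold on a dense open of $Z_\Ga$, and hence — by saturating / clearing the invertible variables — that $y$ (or rather the variable or $\vi$-image it corresponds to under the chart identification of Proposition \ref{equas-fV[k]}) already lies in the ideal defining all of $Z_{[k],\Ga} \cap \fV$; the point is that the binomial and linear structure of the equations means no "extra" component can separate the vanishing locus of a single coordinate variable from $Z^\dagger$. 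The main obstacle I anticipate is precisely this last step: making rigorous that a coordinate variable cannot vanish on the dominant component without vanishing on the whole scheme, which requires controlling the other (non-dominant, "boundary") components of $Z_{[k],\Ga}$ and using the square-freeness and $\vr$-linearity of the defining binomials (Corollary \ref{cor:linear and free}, Proposition \ref{strong sq free}) to rule out a boundary component on which $y$ is invertible while $Z^\dagger \subset (y=0)$; the binomial relations $B_{F,(s,t)}$ linking $\vr$-variables to products of $\vp$-variables are what propagate the vanishing, and keeping that bookkeeping straight across all charts and all $i \in [k]$ is the delicate part.
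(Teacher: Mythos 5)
Your proposal diverges from the paper in a way that undermines the lemma's central claim, and also misses a case split that the paper needs.

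\textbf{The construction of $Z_{\sF_{[k]},\Ga}$.} You define $Z_{\sF_{[k]},\Ga}$ as the closure of the image of $Z_\Ga^\circ$ under $\Theta_{[k],\Gr}$, ``or equivalently as the proper transform.'' That object is irreducible (it is the closure of an image of an irreducible set), so it would coincide with $Z^\dagger_{\sF_{[k]},\Ga}$ and there would be nothing left for (2) to say. More seriously, a proper transform / graph closure has no clean local ideal description in general, so you cannot hope to prove (1) — the explicit list of generators on each chart — for that object. The paper's $Z_{\sF_{[k]},\Ga}$ is intentionally larger and possibly reducible: it is the \emph{scheme-theoretic preimage} $\rho_{[k]}^{-1}(Z_{\sF_{[k-1]},\Ga})$ intersected with the coordinate hyperplanes $(x_{(\uu,\uv)}=0)$ for $(\uu,\uv)$ in an explicitly constructed subset $\La^\zero_{F_k,\Ga}$ of $\La_{F_k}$. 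It is exactly because one works with preimages and coordinate cuts (rather than saturations/closures) that the local generators in (1) can be written down; $Z^\dagger$ is then recovered as the closure inside $Z_{\sF_{[k]},\Ga}$ of the preimage of a dense open of $Z^\dagger_{\sF_{[k-1]},\Ga}$. Your version collapses the two and thereby loses statement (1).

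\textbf{The $\Ga$-irrelevant case.} The paper splits the inductive step according to whether $F_k$ is $\Ga$-relevant or $\Ga$-irrelevant. When $F_k$ is $\Ga$-irrelevant, every term $x_{\uu_s}x_{\uv_s}$ of $\bF_k$ lies in $I_\Ga$, so the would-be map to $\PP_{F_k}$, $\bz \mapsto [x_{\uu_s}x_{\uv_s}(\bz)]_{s \in S_{F_k}}$, sends all of $Z_\Ga$ to $[0:\cdots:0]$: the rational map does not extend to \emph{any} open subset of $Z_\Ga$, and ``the closure of the image of $Z_\Ga^\circ$ under $\Theta_{[k],\Gr}$'' is not defined. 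The paper handles this by observing that the fiber of $\rho_{[k]}^{-1}(Z_{\sF_{[k-1]},\Ga}) \to Z_{\sF_{[k-1]},\Ga}$ is cut out inside $\PP_{F_k}$ by a \emph{linear} system $\{L_{F_k},\cB^\frb_{[k]}\}$ in the $\vr$-variables (by $\vr$-linearity of $\frb$-binomials), selecting a rank-maximizing minor $\La^{\rm det}_{F_k,\Ga}$ over a dense open of $Z^\dagger_{\sF_{[k-1]},\Ga}$, solving, and then reading off which $\vr$-variables vanish. Your sketch does not address this case at all, and your birationality argument for (2) (``the $\vr$-coordinates $x_\uu x_\uv$ are determined and nonzero'') is false in it.

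\textbf{Statement (3).} You correctly flag this as the delicate step, but you point at the wrong tools. For $(\uu,\uv)\in \La_{F_k}$, the argument is: the binomial $x_{\fV,\uu}x_{\fV,\uv}-x_{\fV,(\uu,\uv)}x_{\uu_{s_{F_k,o}}}x_{\uv_{s_{F_k,o}}}$ shows $x_{\fV,\uu}x_{\fV,\uv}$ vanishes identically on $Z^\dagger$, and since $Z^\dagger$ is \emph{integral} (birational to the integral $Z_\Ga$) one of $x_\uu, x_\uv$ already lies in $\Ga$, whence $(\uu,\uv)\in\La^\zero_{F_k,\Ga}$ and $Z_{\sF_{[k]},\Ga}\subset(x_{\fV,(\uu,\uv)}=0)$. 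Square-freeness and $\vr$-linearity (which you invoke) are not what drives this step; the driver is primeness of $I_\Ga$ together with the explicit preimage-plus-coordinate-cut definition of $Z_{\sF_{[k]},\Ga}$ — which is precisely what your construction discarded.
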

\begin{proof}
We prove the statement by induction on $k$ with $k \in \{0\} \cup [\up]$.

 When $k=0$,  we have $\sR_{\sF_{[0]}}:=\bU$, $\sV_{\sF_{[0]}}:=\bU \cap \Gr^{3, E}$.
 There exists a unique chart $\fV=\bU$. In this case,  we set 
 $$Z_{\sF_{[0]},\Ga}=Z^\dagger_{\sF_{[0]},\Ga}:=Z_\Ga$$
 Further, we let
 $$ \tGa^\zero_\fV=\Ga.$$ 
Then,  the statement holds trivially.

Inductively, we suppose that Lemma \ref{wp-transform-sVk-Ga}
 holds for $\sV_{\sF_{[k-1]}} \subset \sR_{\sF_{[k-1]}} $.

 We now consider $\sV_{[k]} \subset \sR_{[k]}$.
 
Recall that we have the natural birational morphsim
$$\rho_{[k]}: \sV_{[k]} \lra \sV_{[k-1]},$$
induced from the forgetful map  $\sR_{[k]} \lra \sR_{[k-1]}$.

First, we suppose $F_k$ is $\Ga$-relevant.

In this case, we set 
\begin{equation}\label{construction-ZkLa} \La^\zero_{F_k, \Ga} 
:=\{ x_{(\uu,\uv)}\in \La_{F_k} \mid \hbox{$x_\uu$ or $x_\uv \in \Ga$}\}.
\end{equation}
(Here, recall the convention of \eqref{uv=vu}: $x_{(\uu,\uv)}= x_{(\uv,\uu)}$.) 

We then let $\rho_{[k]}^{-1}(Z_{\sF_{ [k-1]},\Ga})$ be the scheme-theoretic pre-image and define
$Z_{\sF_{[ k]},\Ga}$ to be
  the scheme-theoretic intersection
\begin{equation}\label{construction-ZkGa}
Z_{\sF_{[ k]},\Ga}=\rho_{[k]}^{-1}(Z_{\sF_{ [k-1]},\Ga}) \cap (x_{(\uu,\uv)} =0 \mid (\uu,\uv) \in  \La^\zero_{F_k, \Ga}),
\end{equation}

Next, because $F_k$ is $\Ga$-relevant
 and $Z^\dagger_{\sF_{ [k-1]},\Ga}$ is birational to $Z_\Ga$,
one checks that  $Z^\dagger_{\sF_{ [k-1]},\Ga}$ 
is not contained in the exceptional locus of
the birational morphism $\rho_{[k]}$. 
Thus, there exists  a Zariski open subset $Z^{\dagger\circ}_{\sF_{ [k-1]},\Ga}$
 of $Z^\dagger_{\sF_{ [k-1]},\Ga}$ such that 
 $$\rho_{[k]}^{-1}(Z^{\dagger\circ}_{\sF_{ [k-1]},\Ga})  \lra Z^{\dagger\circ}_{\sF_{ [k-1]},\Ga}$$
is an isomorphism.

We claim 
\begin{equation}\label{inclusion-dagger}
 \rho_{[k]}^{-1}(Z^{\dagger\circ}_{\sF_{ [k-1]},\Ga})   \subset Z_{\sF_{[ k]},\Ga}=
\rho_{[k]}^{-1}(Z_{\sF_{ [k-1]},\Ga}) \cap (x_{(\uu,\uv)}=0 \mid (\uu,\uv) \in  \La^\zero_{F_k, \Ga}).
\end{equation}
To see this, note that since $\bF_k$ is $\Ga$-relevant, 
there exists a term $x_{\uu_s}x_{\uv_s}$ of $\bF_k$ 
for some $s \in S_{F_k}$ such that it does not vanish
generically along $Z^{\dagger}_{\sF_{ [k-1]},\Ga}$ (which is birational to $Z_\Ga$).
Then, we consider the binomial relation of $\sV_{[k]}$ in $\sR_{[k]}$
  \begin{equation}\label{Buv-s}
  x_{(\uu, \uv)}x_{\uu_s} x_{\uv_s} - x_{\uu} x_{\uv} x_{(\uu_s,\uv_s)},
  \end{equation}
  for any $(\uu, \uv) \in \La_{F_k}$.
 It follows that $x_{(\uu, \uv)}$  vanishes identically 
along $\rho_{[k]}^{-1}(Z^{\dagger\circ}_{\sF_{ [k-1]},\Ga}) \cong Z^{\dagger\circ}_{\sF_{ [k-1]},\Ga}$ 
 if $x_\uu$ or $x_\uv \in \Ga$.  Hence, \eqref{inclusion-dagger} holds.

 We then let $Z^\dagger_{\sF_{ [k]},\Ga}$ be the closure of 
 $\rho_{[k]}^{-1}(Z^{\dagger\circ}_{\sF_{ [k-1]},\Ga})$ in $Z_{\sF_{ [k]},\Ga}$.
  Since $Z^\dagger_{\sF_{ [k]},\Ga}$  is closed in $Z_{\sF_{ [k]},\Ga}$
 and contains the Zariski  open subset $\rho_{[k]}^{-1}(Z^{\dagger\circ}_{\sF_{ [k-1]},\Ga})$
 of $Z_{[k],\Ga}$, it is an irreducible component of  $Z_{\sF_{ [k]},\Ga}$.

Further, consider any standard chart $\fV$ of $\sR_{[k]}$,
lying over a unique standard chart $\fV'$ of $\sR_{\sF_{[k-1]}}$,
such that $Z_{[k],\Ga}\cap \fV \ne \emptyset$.
We set 
\begin{equation}\label{zero-one-fV-sFk} 
\tGa^\zero_\fV= \tGa^\zero_{\fV'} \sqcup 
\{ x_{(\uu,\uv)}\in \La_{F_k} \mid \hbox{$x_\uu$ or $x_\uv \in \Ga$}\}.  
\end{equation}

We are now ready to 
prove Lemma \ref{wp-transform-sVk-Ga} (1), (2) and (3) in the case of $\sR_{[k]}$.

(1).  Note that scheme-theoretically, we have
$$ \rho_{[k]}^{-1}(Z_{\sF_{ [k-1]},\Ga}) \cap \fV=
 \pi_{[k], \sF_{[k-1]}}^{-1}(Z_{\sF_{ [k-1]},\Ga}) \cap \sV_{[k]} \cap \fV$$
 where $\pi_{[k],  \sF_{[k-1]}}: \sR_{[k]} \to \sR_{\sF_{[k-1]}}$ is the projection.
 We can apply Lemma \ref{wp-transform-sVk-Ga} (1)  in the case of $\sR_{\sF_{[k-1]}}$
 to  $Z_{\sF_{ [k-1]},\Ga}$ and $\pi_{[k], \sF_{[k-1]}}^{-1}(Z_{\sF_{ [k-1]},\Ga})$, 
 apply Proposition \ref{equas-fV[k]}  to $\sV_{[k]} \cap \fV$, and 
 use the construction \eqref{construction-ZkGa} of $Z_{\sF_{[ k]},\Ga}$
 (cf. \eqref{construction-ZkLa} and \eqref{zero-one-fV-sFk}), 
 we then obtain that $Z_{\sF_{[ k]},\Ga} \cap \fV$, as a closed subscheme of $\fV$, is defined by 
$$ y, \;\; y \in  \tGa^\zero_\fV;\;\; \cB^{\frb}_{[k]}; $$
$$  B_{\fV, (s,t)},\;\; s, t \in S_{F_i} \; \hbox{ with all $i \in [k]$}$$
 $$  L_{\fV,F_i}, \; i \in [k]; \;  \bF_{\fV,j}, \; k <j\le \up.$$ 
 Then, the above implies Lemma \ref{wp-transform-sVk-Ga} (1) in the case of $\sR_{[k]}$.

(2). By construction, we have that the composition
$\tZ^\dagger_{[k],\Ga} \to \tZ^\dagger_{\sF_{[k-1]},\Ga} \to Z_\Ga$
is birational. 
This proves Lemma \ref{wp-transform-sVk-Ga} (2) in the case of $\sR_{[k]}$.

  (3). It suffices to prove that if $Z^\dagger_{\sF_{[ k]},\Ga}\cap \fV \subset (y=0)$,
 then $Z_{\sF_{[ k]},\Ga} \cap \fV \subset (y=0).$
 
 If $y=x_{\fV,\uu}$ ($=x_\uu$, cf. the proof of Proposition \ref{meaning-of-var-p-k=0}), 
 then 
$x_\uu \in \Ga$ because $Z^\dagger_{\sF_{[ k]},\Ga}$ is birational to $Z_\Ga$.
Therefore, $Z_{\sF_{[ k]},\Ga} \cap \fV \subset (y=0)$ by \eqref{Ga-rel-sVk=0},
which holds by (the just proved) Lemma \ref{wp-transform-sVk-Ga} (1) for  $\sR_{[k]}$.

Now assume $y=x_{\fV, (\uu, \uv)}$.
Here, $x_{\fV, (\uu, \uv)}$ is the de-homogenization of $x_{(\uu,\uv)}$
(cf. the proof of Proposition \ref{meaning-of-var-p-k=0}). Below,
upon setting $x_{(\uu_{s_{F_i,o}}, \uv_{s_{F_i,o}})} \equiv 1$ for all $i \in [k]$ (cf. Definition \ref{fv-k=0}),
 we can write $x_{\fV, (\uu, \uv)}=x_{\fV', (\uu, \uv)}=x_{(\uu,\uv)}$.

 Suppose $(\uu, \uv) \in \La_{F_i}$ with $i \in [k-1]$.
By taking the images of $Z^\dagger_{\sF_{[ k]},\Ga}\cap \fV \subset (y=0)$
under $\rho_{[k]}$, we obtain 
$Z^\dagger_{\sF_{[ k-1]},\Ga} \cap \fV' \subset (x_{(\uu, \uv)}=0)$.
Hence, we have
$Z_{\sF_{[k-1]},\Ga} \cap \fV' \subset (x_{(\uu, \uv)}=0)$ by 
Lemma \ref{wp-transform-sVk-Ga} (3)  for 
$ \sR_{\sF_{[k-1]}}$.
Therefore,  $x_{(\uu, \uv)} \in \tGa^\zero_{\fV'}$ by the maximality of the subset $ \tGa^\zero_{\fV'}$.
Then, by  \eqref{zero-one-fV-sFk},
$Z_{\sF_{[ k]},\Ga} \cap \fV \subset   (x_{(\uu, \uv)}=0)$.

Now suppose $(\uu, \uv) \in \La_{F_k}$.
 Consider the relations  $$x_{\fV,\uu} x_{\fV,\uv} -x_{\fV, (\uu, \uv)}x_{\uu_{s_{F_k,o}}}x_{\uv_{s_{F_k,o}}} .$$ 
 Here, we have used $x_{(\uu_{s_{F_k,o}}, \uv_{s_{F_k,o}})} \equiv 1$. 
  Then, we have  $x_{\fV,\uu} x_{\fV,\uv}$ vanishes identically along $Z^\dagger_{\sF_{[ k]},\Ga}$,
  hence,  so does
  one of $x_{\fV, \uu}$ and $x_{\fV, \uv}$, that is, $x_\uu$ or $x_\uv \in \Ga$,
  since $Z^\dagger_{\sF_{[ k]},\Ga}$ (birational to $Z_\Ga$) is integral.\footnote{Here, the assumption that $Z_\Ga$ is integral is used.}
In either case, it implies that 
  $Z_{\sF_{[ k]},\Ga} \subset (x_{(\uu, \uv)}=0)$ by \eqref{construction-ZkLa} and  
  \eqref{construction-ZkGa}. 

This proves the lemma when $F_k$ is $\Ga$-relevant.

\smallskip

 Next, we suppose $F_k$ is $\Ga$-irrelevant. 
 
In this case, we have that
$$ ( \rho_{[k]}^{-1}(Z_{\sF_{  [k-1]},\Ga}) ) / (Z_{\sF_{[k-1]},\Ga})$$ 
is defined by the set of
equations of $L_{F_k}$ and $\cB^{\frb}_{ [k]}$, all regarded as
relations in $\vr$-variables of $\PP_{F_k}$. All these relations are  
 linear in $\vr$-variables of $F_k$, 
according to  Lemma \ref{strong sq free} (1).


 Putting together, we call $\{L_{F_k}, \cB^\frb_{ [k]}\}$ a linear system 
  in $\vr$-variables of $F_k$. 
 
 We can let 
 $\La^{\rm det}_{F_k, \Ga}$ be the subset of $\La_{F_k}$ such that 
  the minor corresponding to  variables
 $$\{x_{\fV,(\uu,\uv)} \mid (\uu,\uv) \in \La^{\rm det}_{F_k,\Ga}\}$$ 
 achieves the maximal rank of the linear system $ \{L_{F_k}, \cB^{\frb}_{[k]}|\}$,
  regarded as relations in $\vr$-variables of $F_k$,
 at any point of some fixed Zariski open subset $Z_{\sF_{[k-1]},\Ga}^{\dagger\circ}$ of 
 $Z^\dagger_{\sF_{[k-1]},\Ga}$. 

 We then set and plug 
 \begin{equation}\label{a-ne-0}
  x_{(\uu,\uv)} = 0, \; \forall \; (\uu,\uv)  \notin \La^{\rm det}_{F_k,\Ga}
 \end{equation}
   into the linear system $\{L_{F_k}, \cB^{\frb}_{ [k]}\}$
   to obtain an induced linear system of full rank 
   over $Z_{\sF_{[k-1]},\Ga}^{\dagger\circ}$. This induced linear system can be solved
  over the Zariski open subset $Z^{\dagger\circ}_{\sF_{[k-1]},\Ga}$ such that
  all variables $$\{x_{(\uu,\uv)} \mid (\uu,\uv) \in \La^{\rm det}_{F_k, \Ga}\}$$ 
 are explicitly  determined by the coefficients of the induced linear system.
    
     We then  let 
   \begin{equation}\label{det=0}
   \La^\zero_{F_k, \Ga} \subset \La_{F_k}
   \end{equation}
   be the subset consisting of    $ (\uu,\uv) \notin \La^{\rm det}_{F_k ,\Ga}$ 
and $(\uu,\uv) \in \La^{\rm det}_{F_k ,\Ga}$ 
   such that $x_{(\uu,\uv)} \equiv 0$ over
   $Z^{\dagger\circ}_{\sF_{[k-1]},\Ga}$.
    Observe here that we immediately obtain that  for any $(\uu,\uv) \in \La_{F_k}$,
\begin{equation}\label{who-in-dagger}
\hbox{$x_{(\uu,\uv)}$  vanishes identically over $Z^{\dagger\circ}_{\sF_{[k-1]},\Ga}$
   if and only if  $(\uu,\uv) \in \La^\zero_{F_k,\Ga}$.}
   \end{equation}

We let $Z_{\sF_{[ k]},\Ga}$ be
  the scheme-theoretic intersection
   \begin{equation}\label{ZkGa-irr}
   \rho_{[k]}^{-1}(Z_{\sF_{ [k-1]},\Ga}) \cap ( x_{(\uu,\uv)}=0 , \; (\uu,\uv) \in  \La^\zero_{F_k,\Ga}) . \end{equation}

Now, 
fix and consider any standard chart $\fV$ of $\sR_{[k]}$, lying over a
  standard chart $\fV'$  of $\sR_{\sF_{[k-1]}}$ with $\tZ_{[k],\Ga}\cap \fV \ne \emptyset$, 
  equivalently, $Z_{\sF_{[k-1]}} \cap \fV' \ne \emptyset$. We set 
\begin{eqnarray}\label{zero-one-fV-sFk-irr}
\tGa^\zero_\fV= \tGa^\zero_{\fV'} \sqcup    \{ x_{\fV, (\uu,\uv)} \mid 
(\uu, \uv) \in \La^\zero_{F_k,\Ga}\}. 
\end{eqnarray}

We are now ready to 
prove Lemma \ref{wp-transform-sVk-Ga} (1), (2) and (3) in the case of $\sR_{[k]}$.

  Similar to the proof of Lemma \ref{wp-transform-sVk-Ga} (1) for the previous case when
  $\bF_k$ is $\Ga$-relevant, 
  by Lemma \ref{wp-transform-sVk-Ga} (1)  in the case of $\sR_{\sF_{[k-1]}}$
  applied to $Z_{\sF_{ [k-1]},\Ga}$ and $\rho_{[k]}^{-1}(Z_{\sF_{ [k-1]},\Ga})$,
 applying Proposition \ref{equas-fV[k]} to $\sV_{F_{[k]}} \cap \fV$, and using 
 \eqref{ZkGa-irr} and \eqref{zero-one-fV-sFk-irr}, 
 we obtain that $Z_{\sF_{[ k]},\Ga} \cap \fV$, as a closed subscheme of $\fV$, is defined by 
$$y, \;\; y \in  \tGa^\zero_\fV; \;\; \cB^{\frb}_{[k]}; $$
$$  B_{\fV, (s,t)},\;\; s, t \in S_{F_i} \; \hbox{ with all $i \in [k]$}$$
$$ L_{\fV,F_i}, \;  \; i \in [k]; \; 
 \bF_{\fV,j}, \; k <j\le \up.$$ 
 Then, the above implies that Lemma \ref{wp-transform-sVk-Ga} (1) holds  on $\sR_{[k]}$.



  Next, by construction, the induced morphism
  $$ \rho_{[k]}^{-1} (Z^{\dagger\circ}_{[k],\Ga}) 
  \cap (x_{(\uu,\uv)}=0 , \; (\uu,\uv) \in  \La^\zero_{F_k,\Ga}) 
   \lra Z^{\dagger\circ}_{\sF_{[k-1]}}$$
  is an isomorphism. We let 
$Z^\dagger_{\sF_{ [k]},\Ga}$ be the closure of 
$$ \rho_{[k]}^{-1} (Z^{\dagger\circ}_{[k],\Ga}) 
  \cap (x_{(\uu,\uv)}=0 , \; (\uu,\uv) \in  \La^\zero_{F_k,\Ga} ) $$ in $Z_{\sF_{[ k]},\Ga}$.
  Then, it is closed in $Z_{\sF_{[ k]},\Ga}$ and contains 
  an open subset of $Z_{\sF_{[ k]},\Ga}$, hence, is an irreducible
  component of $Z_{\sF_{[ k]},\Ga}$.  It follows that the composition
  $$Z^\dagger_{\sF_{ [k]},\Ga}\to Z^{\dagger}_{\sF_{[k-1]}} \to Z_\Ga$$  is birational.
  This proves    Lemma \ref{wp-transform-sVk-Ga} (2) on $\sR_{[k]}$.
  

Finally,  we are to prove  Lemma \ref{wp-transform-sVk-Ga} (3) on $\sR_{[k]}$.
Suppose $Z^\dagger_{\sF_{ [k]},\Ga} \cap \fV \subset (y=0)$ for some $y \in \var_\fV$.
If $y=x_{\fV, \uu}$ or $y=x_{\fV, (\uu, \uv)}$ with  $(\uu, \uv) \in \La_{F_i}$ with $i \in [k-1]$,
then the identical  proof in the previous case carries over here without changes.
We now suppose $Z^\dagger_{\sF_{ [k]},\Ga}\cap \fV \subset (x_{\fV,(\uu,\uv)}=0)$
with $(\uu, \uv) \in \La_{F_k}$, then by \eqref{who-in-dagger},
$(\uu,\uv) \in \La^\zero_{F_k,\Ga}$. Thus,  by \eqref{ZkGa-irr}, 
$Z_{\sF_{ [k]},\Ga} \subset (x_{(\uu,\uv)}=0)$.
This proves    Lemma \ref{wp-transform-sVk-Ga} (3) on $\sR_{[k]}$.

By induction, Lemma \ref{wp-transform-sVk-Ga}  is proved.
\end{proof}

We call $Z_{[k],\Ga}$ the $\sF$-transform of $Z_\Ga$
in $\sV_{[k]}$ for any $k\in [\up]$.

\subsection{$\vt$-transforms of  $\Ga$-schemes in  $\tsV_{\vt_{[k]}}$}
\label{subsection:wp-transform-vtk}  $\ $

We now construct the $\vt$-transform of $Z_\Ga$ in $\tsV_{\vt_{[k]}}
\subset \tsR_{\vt_{[k]}}$.  The meaning of the notation $\cB^\ngv_{\fV, >k}$
below can be found in Definition \ref{<,>}.

\begin{lemma}\label{vt-transform-k} 
 Fix any subset $\Ga$ of $\bU$.  Assume that $Z_\Ga$ is integral. 

Fix any $k \in [\up]$.

Then, we have the following:
\begin{itemize}
\item  there exists a closed subscheme $\tZ_{\vt_{[k]},\Ga}$ of
$\tsV_{\vt_{[k]}}$ with an induced morphism  
$$\tZ_{\vt_{[k]},\Ga} 
\to Z_\Ga; $$
\item   $\tZ_{\vt_{[k]},\Ga}$ comes equipped with an irreducible component  
$\tZ^\dagger_{\vt_{[k]},\Ga}$ with the induced morphism 
$\tZ^\dagger_{\vt_{[k]},\Ga}  
\to Z_\Ga$;
\item  for any preferred admissible chart $\fV$ of $\tsR_{\vt_{[k]}}$ such that
$\tZ_{\vt_{[k]},\Ga} \cap \fV \ne \emptyset$, there are two subsets, possibly empty,
$$ \tGa^\zero_{\fV} \; \subset \;  \var_\fV, \;\;\;
\tGa^\one_{\fV} \; \subset \;  \var_\fV.$$ 
\end{itemize}

Further,  consider any given preferred admissible chart $\fV$ of $\tsR_{\vt_{[k]}}$ with
$\tZ_{\vt_{[k]},\Ga} \cap \fV \ne \emptyset$. Then,  the following hold:
\begin{enumerate}
\item the scheme $\tZ_{\vt_{[k]},\Ga} \cap \fV$,
 as a closed subscheme of the chart $\fV$,
is defined by the following relations
\begin{eqnarray} 
\;\;\;\;\; y , \; \; \; y \in  \tGa^\zero_\fV , \label{Ga-rel-wp-ktauh-00}\\
\;\;\;  y -1, \; \; \; y \in  \tGa^\one_\fV,  \nonumber \\
\cB_\fV^\gov, \; \cB^\ngv_{\fV, >k}, \;  \cB^\frb_\fV, \; L_{\fV,\sF};\nonumber
\end{eqnarray}
 further, we take $\tGa^\zero_\fV \subset \var_\fV$
 to be the maximal subset (under inclusion)
among all those subsets that satisfy the above;
\item the induced morphism $\tZ^\dagger_{ \vt_{[k]},\Ga} \to Z_\Ga$ is birational;
\item for any variable $y \in \var_\fV$, $\tZ^\dagger_{\vt_{[k]},\Ga} \cap \fV \subset (y=0)$ if and only 
if  $\tZ_{\vt_{[k]},\Ga} \cap \fV \subset (y=0)$. Consequently, 
 $\tZ^\dagger_{\vt_{[k]},\Ga} \cap \fV \subset \tZ_{\vt_{[k+1]}} \cap \fV$ if and only 
if  $\tZ_{\vt_{[k]},\Ga} \cap \fV \subset \tZ_{\vt_{[k+1]}}\cap \fV$ 
where $\tZ_{\vt_{[k+1]}}$ is the proper transform of 
 the $\vt$-center $Z_{\vt_{[k+1]}}$ in  $\tsR_{\vt_{[k]}}$. 
\end{enumerate}
\end{lemma}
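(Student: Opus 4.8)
\textbf{Proof proposal for Lemma \ref{vt-transform-k}.}

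The plan is to prove all four bullet points and statements (1)--(3) simultaneously by induction on $k \in \{0\} \cup [\up]$, running in parallel with the sequential $\vt$-blowups \eqref{vt-sequence}. The base case $k=0$ is $\tsR_{\vt_{[0]}}=\sR$, so $\tsV_{\vt_{[0]}}=\sV$, and here I would take $\tZ_{\vt_{[0]},\Ga}:=Z_{\sF,\Ga}$, the $\sF$-transform in $\sV=\sV_{\sF_{[\up]}}$ constructed in Lemma \ref{wp-transform-sVk-Ga} with $k=\up$, and $\tZ^\dagger_{\vt_{[0]},\Ga}:=Z^\dagger_{\sF,\Ga}$; then statements (1)--(2) are exactly Lemma \ref{wp-transform-sVk-Ga} (1)--(2) (with $\tGa^\one_\fV=\emptyset$, since no $\ell$-blowup has occurred yet), and statement (3) is Lemma \ref{wp-transform-sVk-Ga} (3), which was explicitly flagged there as supplying this initial case. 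Note that only the $\vr$-standard charts of $\sR$ are relevant, by Proposition \ref{meaning-of-var-p-k=0} and the convention \eqref{conv:=1}, and on them the governing relations of Proposition \ref{equas-p-k=0} take the advertised form.

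For the inductive step, assume everything holds on $\tsR_{\vt_{[k-1]}}$; I would construct $\tZ_{\vt_{[k]},\Ga}$ following the same template as Lemma \ref{wp-transform-sVk-Ga}, now driven by the $\vt$-blowup $\pi\colon\tsR_{\vt_{[k]}}\to\tsR_{\vt_{[k-1]}}$ along the proper transform of the $\vt$-center $Z_{\vt_{[k]}}=X_{\uu_k}\cap X_{(\um,\uu_k)}$. The dichotomy is whether $\tZ^\dagger_{\vt_{[k-1]},\Ga}$ is contained in that center: if not, $\tZ_{\vt_{[k]},\Ga}$ is simply the proper transform (scheme-theoretic strict transform) of $\tZ_{\vt_{[k-1]},\Ga}$, and the unique component $\tZ^\dagger_{\vt_{[k]},\Ga}$ birational to $Z_\Ga$ is the closure of the isomorphic preimage of a dense open $\tZ^{\dagger\circ}_{\vt_{[k-1]},\Ga}$; if it is contained in the center, I take $\tZ_{\vt_{[k]},\Ga}$ to be a suitable rational slice of the total transform, i.e.\ intersect $\pi^{-1}(\tZ_{\vt_{[k-1]},\Ga})$ with the vanishing of the exceptional-type variable whose restriction to $\tZ^\dagger$ is not generically zero, paralleling \eqref{ZkGa-irr}. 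The local description (1) then follows by combining the inductive hypothesis on the chart $\fV'$ of $\tsR_{\vt_{[k-1]}}$ with the blowup substitution \eqref{blowup-relation:wp/ell} applied via Definition \ref{general-proper-transforms}, and with Proposition \ref{eq-for-sV-vtk} giving the governing relations of $\tsV_{\vt_{[k]}}\cap\fV$; crucially, by Corollary \ref{no-(um,uu)} only preferred standard charts (Definition \ref{preferred-chart-vt}) need be considered, so $x_{\fV,(\um,\uu_k)}$ is either $\equiv 1$ or replaced by $\de_{\fV,(\um,\uu_k)}$. The new $\tGa^\zero_\fV$, $\tGa^\one_\fV$ are obtained from those of $\fV'$ by adjoining the variables forced to vanish (resp.\ to equal $1$) by the slicing; maximality of $\tGa^\zero_\fV$ is then re-derived exactly as in Lemma \ref{wp-transform-sVk-Ga} (1). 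Birationality (2) is immediate from the construction as a composition of birational maps, using that $Z_\Ga$ is integral.

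The heart of the matter, and what I expect to be the main obstacle, is statement (3): the equivalence $\tZ^\dagger_{\vt_{[k]},\Ga}\cap\fV\subset(y=0) \iff \tZ_{\vt_{[k]},\Ga}\cap\fV\subset(y=0)$ for every free variable $y\in\var_\fV$. The direction $\Leftarrow$ is trivial since $\tZ^\dagger\subset\tZ$. For $\Rightarrow$ one must argue, variable by variable, that if $y$ vanishes on the distinguished (birational-to-$Z_\Ga$) component then it already lies in $\tGa^\zero_\fV$, hence vanishes on all of $\tZ_{\vt_{[k]},\Ga}\cap\fV$. For a $\vp$-variable $x_{\fV,\uu}$ this is forced because $\tZ^\dagger$ is birational to the integral $Z_\Ga$, so $x_\uu\in\Ga$ and \eqref{Ga-rel-wp-ktauh-00} applies. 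For $\vr$-variables and exceptional variables the argument is more delicate: I would push $\tZ^\dagger_{\vt_{[k]},\Ga}\cap\fV\subset(y=0)$ down to $\fV'$ under $\pi$, invoke the inductive hypothesis (3) on $\tsR_{\vt_{[k-1]}}$, and then use the governing binomial relations $B_{\fV,(s_F,s)}$ and $\wp$-binomials connecting $x_{\fV,(\uu,\uv)}$ to the product $x_{\fV,\uu}x_{\fV,\uv}$ — together with integrality of $\tZ^\dagger$ — to conclude that the relevant index pair already lies in $\La^\zero$ or its analogue, hence $y\in\tGa^\zero_\fV$ by maximality. The subtlety is handling the case where $\tZ^\dagger_{\vt_{[k-1]},\Ga}$ sits inside the $\vt$-center $Z_{\vt_{[k]}}$, where one must carefully track how the exceptional variable $\de_{\fV,(\um,\uu_k)}$ (or $\ve_{\fV,\uu_k}$) relates to the slicing and to the linearized $\pl$ relation $L_{\fV,F_k}$ of \eqref{linear-pl-vtk=0}; here the homogeneity built into $\sR=\bU\times\prod_{\bF\in\sF}\PP_F$ is what keeps the linear relation in play and is essential to excluding degenerate vanishings. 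Once (3) holds for all free variables, the consequence about containment in the proper transform $\tZ_{\vt_{[k+1]}}$ of the next $\vt$-center follows formally, since that center is cut out by exactly two of the free coordinate divisors on $\fV$.
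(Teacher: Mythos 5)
Your overall architecture — induction on $k$ with base case $k=0$ delegated to Lemma~\ref{wp-transform-sVk-Ga} for $k=\up$, then a dichotomy driven by whether $\tZ^\dagger_{\vt_{[k-1]},\Ga}$ lands inside the (proper transform of the) $\vt$-center, taking the proper transform when it does not and a ``rational slice'' of the total transform when it does, and finally pushing down under $\pi$ plus maximality to prove (3) — is the same skeleton as the paper's proof. However, the ``contained'' case is where the real work lies, and there you have a concrete gap.

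What the paper does in the contained case: after setting the variables of $\overline{\Ga}^\zero_\fV$ to $0$ and those of $\overline{\Ga}^\one_\fV$ to $1$ in the defining relations, the residual relations involving the proper transform $y_1$ of the non-exceptional $\vt$-set variable form a system that is \emph{linear} in $y_1$ (this is the payoff of the square-freeness and $\vr$-linearity established in Proposition~\ref{eq-for-sV-vtk}, and it is not automatic: it is exactly why that proposition propagates square-freeness so carefully). The paper then branches on the rank of this linear system over the generic point of $\tZ^\dagger_{\vt_{[k-1]},\Ga}$: in the rank-one case $(\star a)$ the fiber coordinate $y_1$ is determined, and one either cuts by $(y_1=0)$ (if $y_1$ vanishes generically on the dagger) or takes the full preimage (if it does not); in the rank-zero case $(\star b)$ the preimage is a trivial $\PP^1$-bundle over $\tZ^\dagger_{\vt_{[k-1]},\Ga}$, and one must instead cut at $[\xi_0,\xi_1]=[1,1]$, i.e.\ at $y_1=1$. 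This last branch is what populates $\tGa^\one_\fV$.

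Your proposal does not distinguish these branches. The prescription ``intersect $\pi^{-1}(\tZ_{\vt_{[k-1]},\Ga})$ with the vanishing of the exceptional-type variable whose restriction to $\tZ^\dagger$ is not generically zero'' is incorrect as stated: setting a generically-nonvanishing variable to zero would discard the component birational to $Z_\Ga$. Even corrected to ``is generically zero'', it only captures one of the three sub-cases and never produces a $(y_1=1)$ slice, so it cannot account for how $\tGa^\one_\fV$ grows under $\vt$-blowups; your remark that $\tGa^\one_\fV=\emptyset$ at $k=0$ ``since no $\ell$-blowup has occurred yet'' suggests you are attributing the $\tGa^\one$ variables entirely to the later $\ell$-blowups, which is not so. Without the linear-system/rank analysis, statement (1) fails in the rank-zero case and statement (3) for $y=y_1$ has no argument. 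You did flag the linearity issue as ``the subtlety'' and correctly pointed to the homogeneity of $\prod_\bF\PP_F$ as the mechanism, so you saw where the difficulty lies — but the proposal as written does not close it.
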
  
\begin{proof} We prove by induction on 
$k \in \{0\} \cup [\up]$. 


The initial case is $k=0$. In this case, we have
 $$\tsR_{\vt_{[0]}}:=\sR_{\sF}, \;\;
 \tsV_{\vt_{[0]}}:=\sV_{\sF}, \;\;
  \tZ_{\vt_{[0]},\Ga}:=Z_{\sF_{ [\up]},\Ga}, \;\;
  \tZ^\dagger_{\vt_{[0]},\Ga}:=Z^\dagger_{\sF_{ [\up]},\Ga}.$$
Then, in this case,  Lemma \ref{vt-transform-k}
 is Lemma  \ref{wp-transform-sVk-Ga} for $k=\up$, where
  we  set $\tGa^\one_\fV=\emptyset$.

We now suppose that Lemma \ref{vt-transform-k} holds
over $\tsR_{\vt_{[k-1]}}$ for some $k\in[\up]$.

We then consider the case of  $\tsR_{\vt_{[k]}}$.

Suppose that $\tZ_{\vt_{[k-1]}, \Ga}$,  or equivalently $\tZ^\dagger_{\vt_{[k-1]},\Ga}$,
by Lemma \ref{vt-transform-k} (3) in the case of $\tsR_{\vt_{[k-1]}}$,
 is not contained in 
$Z'_{\vt_{[k]}}$ where $Z'_{\vt_{[k]}}$ is the proper transform in $\tsR_{\vt_{[k-1]}}$
 of the $\vt$-center $Z_{\vt_{[k]}}$ (of $\tsR_{\vt_{[0]}}$).
We then let $\tZ_{\vt_{[k]},\Ga}$ (respectively, $\tZ^\dagger_{\vt_{[k]},\Ga}$)
 be the proper transform of $\tZ_{\vt_{[k-1]},\Ga}$ (respectively,
  $\tZ^\dagger_{\vt_{[k-1]},\Ga}$) in $\tsV_{\vt_{[k]}}$.
As $\tZ^\dagger_{\vt_{[k]},\Ga}$ is closed in $\tZ_{\vt_{[k]},\Ga}$
and contains a Zariski open subset of $\tZ_{\vt_{[k]},\Ga}$, it is an irreducible
component of $\tZ_{\vt_{[k]},\Ga}$.

Further, consider any standard chart $\fV$ of $\tsR_{\vt_{[k]}}$,
lying over a unique standard chart $\fV'$ of $\tsR_{\vt_{[k-1]}}$,
such that $\tZ_{\vt_{[k]},\Ga}\cap \fV \ne \emptyset$.
We set 
$$\tGa^\zero_\fV=\{y_\fV \mid y_\fV  \hbox{ is the proper transform of some $y_{\fV'} \in \tGa^\zero_{\fV'}$}\};$$
$$\tGa^\one_\fV=\{y_\fV \mid y_\fV  \hbox{ is the proper transform of some $y_{\fV'} \in \tGa^\one_{\fV'}$}\}.$$

We now prove Lemma \ref{vt-transform-k} (1), (2) and (3) in the case of $\tsR_{\vt_{[k]}}$.

 We can apply
 Lemma \ref{vt-transform-k} (1) in the case of $\tsR_{\vt_{[k-1]}}$
 to $\tZ_{\vt_{[k-1]},\Ga}$ to obtain the defining equations of $\tZ_{\vt_{[k-1]},\Ga}\cap \fV'$
as stated in the lemma; we note here that these equations include $\cB^\ngv_{\fV', \ge k}$.
We then  take the proper transforms of these equations in $\fV'$ to obtain the corresponding
equations in $\fV$,  and then apply (the proof of) 
 Proposition \ref{eq-for-sV-vtk} to reduce $\cB^\ngv_{\fV, \ge k}$ to $\cB^\ngv_{\fV, > k}$.
 Because $\tZ_{\vt_{[k]}\Ga}$  is the proper transform of $\tZ_{\vt_{[k-1]},\Ga}$,
this implies Lemma \ref{vt-transform-k} (1)  in the case of $\tsR_{\vt_{[k]}}$.

By construction, we have that the composition
$\tZ^\dagger_{\vt_{[k]},\Ga} \to \tZ^\dagger_{\vt_{[k-1]},\Ga} \to Z_\Ga$
is birational. 
This proves
Lemma \ref{vt-transform-k} (2) in the case of $\tsR_{\vt_{[k]}}$.

To show Lemma \ref{vt-transform-k} (3) in $\tsR_{\vt_{[k]}}$, 
we fix any $y \in \var_\fV$. It suffices to show that
if $\tZ^\dagger_{\vt_{[k]},\Ga}\cap \fV
\subset (y=0)$, then $\tZ_{\vt_{[k]},\Ga}\cap \fV
\subset (y=0)$.  By construction, $y \ne \zeta_{\fV,\vt_{[k]}}$, the exceptional variable
in $\var_\fV$ corresponding to the $\vt$-center $Z_{\vt_{[k]}}$. Hence, $y$ is the proper transform
of some $y' \in \var_{\fV'}$. Then, by taking the images of $\tZ^\dagger_{\vt_{[k]},\Ga}\cap \fV
\subset (y=0)$ under the morphism $\rho_{\vt_{[k]}}: \tsV_{\vt_{[k]}} \to \tsV_{\vt_{[k-1]}}$ 
(which is  induced from the blowup morphism $\pi_{\vt_{[k]}}: \tsR_{\vt_{[k]}} \to \tsR_{\vt_{[k-1]}}$), 
we obtain
$\tZ^\dagger_{\vt_{[k-1]},\Ga}\cap \fV'
\subset (y'=0)$,  hence, $\tZ_{\vt_{[k-1]},\Ga}\cap \fV'
\subset (y'=0)$ by the inductive assumption.
 Then, as $\tZ_{\vt_{[k]},\Ga}$ is the proper transform of $\tZ_{\vt_{[k-1]},\Ga}$,
 we obtain $\tZ_{\vt_{[k]},\Ga}\cap \fV \subset (y=0)$.
 
 The last statement Lemma \ref{vt-transform-k} (3)  follows from the above because
 $\tZ_{\vt_{[k+1]}}\cap \fV=(y_0=y_1=0)$ for some $y_0, y_1 \in \var_\fV$.

\smallskip

We now suppose that $\tZ_{\vt_{[k-1]},\Ga}$, or equivalently $\tZ^\dagger_{\vt_{[k-1]},\Ga}$,
 by  Lemma \ref{vt-transform-k} (3) in $\tsR_{\vt_{[k-1]}}$,
 is contained in the proper transform $Z'_{\vt_{[k]}}$ 
 of the $\vt$-center $Z_{\vt_{[k]}}$. 

Consider  any standard chart $\fV$ of $\tsR_{\vt_{[k]}}$,
lying over a unique standard chart $\fV'$ of $\tsR_{\vt_{[k-1]}}$,
such that $\tZ_{\vt_{[k]},\Ga}\cap \fV \ne \emptyset$.

We let $\vt'_{[k]}$
 be the proper transform in  the chart $\fV'$ of the $\vt$-set ${\vt_{[k]}}$.
Then, $\vt'_{[k]}$ consists of two  variables 
 $$\vt'_{[k]}=\{y'_0,y'_1\} \subset \var_{\fV'}.$$

 We let $\PP^1_{[\xi_0,\xi_1]}$ be the factor projective space for the
$\vt$-blowup $\tsR_{\vt_{[k]}} \to \tsR_{\vt_{[k-1]}}$ with $[\xi_0,\xi_1]$ corresponding to $(y'_0,y'_1)$.
Without loss of generality,  we can assume that the open chart $\fV$ is given by 
  $$(\fV' \times (\xi_0 \equiv 1) ) \cap \tsR_{\vt_{[k]}} \subset \fV' \times \PP^1_{[\xi_0,\xi_1]}.$$

 We let $\zeta_\fV:=\zeta_{\fV, \vt_{[k]}} \in \var_\fV$ be such that 
$E_{\vt_{[k]}} \cap \fV=(\zeta_\fV =0)$ where $E_{\vt_{[k]}}$ is the exceptional divisor
of the blowup $\tsR_{\vt_{[k]}} \to \tsR_{\vt_{[k-1]}}$. 
Note here that according to the proof of Proposition \ref{meaning-of-var-vtk},
the variable $y'_0$ 
corresponds to (or turns into) the exceptional $\zeta_\fV$ on the chart $\fV$.
 We then let $y_1 (=\xi_1) \in \var_\fV$ be 
 the proper transform of $y'_1 \in \var_{\fV'}$ on the chart $\fV$.

  
In addition, we  observe that 
 $$\vt'_{[k]}=\{y'_0,  y'_1\} \subset \tGa^\zero_{\fV'}$$ 
because $\tZ_{\vt_{[k]},\Ga}$ is contained in the proper transform
$Z'_{\vt_{[k]}}$ of the $\vt$-center $Z_{\vt_{[k]}}$.

We set, 
\begin{equation}\label{Gazero-ktah-contained-in-bar-vtk} 
\overline{\Ga}^\zero_\fV= \{\zeta_\fV, \; y_\fV \mid y_\fV  
\hbox{ is the proper transform of some $ y_{\fV'} \in \tGa^\zero_{\fV'} \- \vt'_{[k]}$}\},
\end{equation}
\begin{equation}\label{Gaone-ktah-contained-in-bar-vtk}
 \overline{\Ga}^\one_{\fV}=\{\ y_\fV \mid y_\fV  
\hbox{ is the proper transform of some $y_{\fV'} \in \tGa^\one_{\fV'}$} \}.
\end{equation}

Consider the scheme-theoretic pre-image  
$\rho_{\vt_{[k]}}^{-1}(\tZ_{\vt_{[k-1]},\Ga})$
where $\rho_{\vt_{[k]}}: \tsV_{\vt_{[k]}} \to \tsV_{\vt_{[k-1]}}$ is  induced 
from the blowup morphism $\pi_{\vt_{[k]}}: \tsR_{\vt_{[k]}} \to \tsR_{\vt_{[k-1]}}$.

Note that  scheme-theoretically, we have,
$$\rho_{\vt_{[k]}}^{-1}(\tZ_{\vt_{[k-1]},\Ga})  \cap \fV=
\pi_{\vt_{[k]}}^{-1}(\tZ_{\vt_{[k-1]},\Ga}) \cap \tsV_{\vt_{[k]}} \cap \fV .$$
Applying Lemma \ref{vt-transform-k} (1) in $\tsR_{\vt_{[k-1]}}$ to $\tZ_{\vt_{[k-1]},\Ga}$ 
and $\rho_{\vt_{[k]}}^{-1}(\tZ_{\vt_{[k-1]},\Ga})$, 
and applying Proposition \ref{eq-for-sV-vtk} to 
$\tsV_{\vt_{[k]}} \cap \fV$, we obtain that 
$\rho_{\vt_{[k]}}^{-1}(\tZ_{\vt_{[k-1]},\Ga})  \cap \fV$,
as a closed subscheme of $\fV$,  is defined by 
\begin{equation}\label{vt-pre-image-defined-by}
y_\fV \in \overline{\Ga}^\zero_\fV; \;\;  y_\fV-1, \; y_\fV \in \overline{\Ga}^\one_{\fV};\;\;
\cB^\gov_\fV; \;\; \cB^\ngv_{\fV, >k}; \;\; \cB^\frb_{\fV}; \;\; 
L_{\sF, \fV}.
\end{equation} 
(Observe here that $\zeta_\fV \in  \overline{\Ga}^\zero_\fV$.)

Thus, by setting $y_\fV=0$ for all $y_\fV \in  \overline{\Ga}^\zero_\fV$ and
 $y_\fV=1$ for all $y_\fV \in \overline{\Ga}^\one_{\fV}$ in 
 $\cB^\gov_\fV, \cB^\ngv_{\fV, >k}, \cB^\frb_{\fV}, L_{\sF, \fV}$ of  the above,
 we obtain 
 \begin{equation}\label{vt-lin-xi-pre}
 \tilde{\cB}^\gov_\fV, \tilde\cB^\ngv_{\fV,>k}, \tilde\cB^\frb_{\fV}, \tilde{L}_{\fV,\sF}.
 \end{equation}
 Note that  for any $\bF \in \sF$, if $L_{\fV, F}$ contains $y_1$,
 then it contains $\zeta_\fV$, hence $\tilde L_{\fV, F}$ does not contain $y_1$.
 We keep those equations of \eqref{vt-lin-xi-pre} that  contain the variable
 $y_1$ and obtain
 \begin{equation}\label{lin-xi-vtk}
 \hat{\cB}^\gov_\fV, \hat\cB^\ngv_{\fV,>k}, \hat\cB^\frb_{\fV}, 
 \end{equation}
 viewed as a  system of equations in $y_1$.
 By Proposition \ref{eq-for-sV-vtk} (the last two statements), 
one sees that \eqref{lin-xi-vtk} is  a  {\it linear} system of equations in $y_1$.
Furthermore, we have that 
 $$(\rho_{\vt_{[k]}}^{-1}(\tZ_{\vt_{[k-1]},\Ga})\cap \fV)/(\tZ_{\vt_{[k-1]},\Ga}\cap \fV')$$
 is defined by the linear system \eqref{lin-xi-vtk}.

There are the following two cases for \eqref{lin-xi-vtk}:

$(\star a)$  the rank of the linear system  $\eqref{lin-xi-vtk}$ equals one 
over general points of  $\tZ^\dagger_{\vt_{[k-1]},\Ga}$.

$(\star b)$  the rank of the linear system  $\eqref{lin-xi-vtk}$ equals zero
at general points of  $\tZ^\dagger_{\vt_{[k-1]},\Ga}$, hence at all points
of $\tZ^\dagger_{\vt_{[k-1]},\Ga}$.

\smallskip\noindent
{\it Proof of Lemma \ref{vt-transform-k} for $\tsR_{\vt_{[k]}}$ under the condition $(\star a)$.}
\smallskip

By the condition $(\star a)$,
 there exists a Zariski open subset $\tZ^{\dagger\circ}_{\vt_{[k-1]},\Ga}$ 
of $\tZ^\dagger_{\vt_{[k-1]},\Ga}$ such that the rank of the linear system
 \eqref{lin-xi-vtk} equals one at any point of  $\tZ^{\dagger\circ}_{\vt_{[k-1]},\Ga}$. 
By solving $y_1$ from
 the linear system \eqref{lin-xi-vtk} over $\tZ^{\dagger\circ}_{\vt_{[k-1]},\Ga}$, we obtain 
that the induced morphism
$$
\rho_{\vt_{[k]}}^{-1}(\tZ^{\dagger\circ}_{\vt_{[k-1]},\Ga}) 
 \lra \tZ^\circ_{\vt_{[k-1]},\Ga}$$ is an isomorphism. 

First, we suppose  $y_1$ is identically zero along 
$\rho_{\vt_{[k]}}^{-1}(\tZ^{\dagger\circ}_{\vt_{[k-1]},\Ga})$.
We then set,  
\begin{equation}\label{Gazero-ktah-contained-in-a-vtk} 
\tGa^\zero_\fV=\{y_1 \} \cup \overline{\Ga}^\zero_\fV
\end{equation}
where $\overline{\Ga}^\zero_\fV$ is as in \eqref{Gazero-ktah-contained-in-bar-vtk}.
In this case, we let
\begin{equation}\label{ZktahGa-with-xi-vtk}
\tZ_{\vt_{[k]},\Ga}=\rho_{\vt_{[k]}}^{-1}(\tZ_{\vt_{[k-1]},\Ga})\cap D_{y_1}
\end{equation}
scheme-theoretically, where $D_{y_1}$ is the closure of $(y_1=0)$ in $\tsR_{\vt_{[k]}}$.
Note that  $D_{y_1}$  is simply the proper transform of one of 
 the two divisors in the $\wp$-set. We let
  $D_{y_0}$ be the proper transforms
of the other divisor in the $\wp$-set.
They $D_{y_1}$ and $D_{y_0}$ are disjoint. 
Hence, only one of them can contain 
$\rho_{\vt_{[k]}}^{-1}(\tZ^{\dagger\circ}_{\vt_{[k-1]},\Ga})$.
This implies that  $D_{y_1}$ comes with
$\rho_{\vt_{[k]}}^{-1}(\tZ^{\dagger\circ}_{\vt_{[k-1]},\Ga})$ in such a case,
and does not depend on the choice of the chart $\fV$.

Next, suppose  $y_1$ is not identically zero along 
$\rho_{\vt_{[k]}}^{-1}(\tZ^{\dagger\circ}_{\vt_{[k-1]},\Ga})$.
We then set,  
\begin{equation}\label{Gazero-ktah-contained-in-a-vtk'} 
\tGa^\zero_\fV= \overline{\Ga}^\zero_\fV
\end{equation}
where $\overline{\Ga}^\zero_\fV$ is as in \eqref{Gazero-ktah-contained-in-bar-vtk}.
In this case, we let
\begin{equation}\label{ZktahGa-without-xi-vtk}
\tZ_{\vt_{[k]},\Ga}=\rho_{\vt_{[k]}}^{-1}(\tZ_{\vt_{[k-1]},\Ga}).
\end{equation}
We always set (under the condition $(\star a)$)
\begin{equation}\label{Gaone-ktah-contained-in-a-vtk} \tGa^\one_{\fV}= \overline{\Ga}^\one_{\fV}
\end{equation}
where $\overline{\Ga}^\one_{\fV}$ is as in  \eqref{Gaone-ktah-contained-in-bar-vtk}.

In each case, by construction, we have 
$$\rho_{\vt_{[k]}}^{-1}(\tZ^{\dagger\circ}_{\vt_{[k-1]},\Ga})  
\subset \tZ_{\vt_{[k]}\Ga},$$ and we let $\tZ^\dagger_{\vt_{[k]},\Ga}$ be the closure of 
$\rho_{(\vt_{[k]}}^{-1}(\tZ^{\dagger\circ}_{\vt_{[k-1]},\Ga}) $ in $\tZ_{\vt_{[k]},\Ga}$.
It is an irreducible component of $\tZ_{\vt_{[k]},\Ga}$ because
$\tZ^\dagger_{\vt_{[k]},\Ga}$ is closed in $\tZ_{\vt_{[k]},\Ga}$ and contains
the Zariski open subset $\rho_{\vt_{[k]}}^{-1}(\tZ^{\dagger\circ}_{\vt_{[k-1]},\Ga}) $
of $\tZ_{\vt_{[k]},\Ga}$.
Then, we obtain that the composition
 $$\tZ^\dagger_{\vt_{[k]},\Ga} \lra \tZ^\dagger_{\vt_{[k-1]},\Ga}
 \lra Z_\Ga$$ is birational.
 This proves Lemma \ref{vt-transform-k} (2)  over $\tsR_{\vt_{[k]}}$.


In each case of the above (i.e., \eqref{Gazero-ktah-contained-in-a-vtk} and
\eqref{Gazero-ktah-contained-in-a-vtk'}), 
  by the paragraph of \eqref{vt-pre-image-defined-by},
one sees that $\tZ_{\vt_{[k]},\Ga}\cap \fV$,
 as a closed subscheme of $\fV$, is defined by the equations as stated in the Lemma.  
 This proves Lemma \ref{vt-transform-k} (1)  over $\tsR_{\vt_{[k]}}$.

It remains to prove Lemma \ref{vt-transform-k} (3) over $\tsR_{\vt_{[k]}}$.

Fix any $y \in \var_\fV$, it suffices to show that
if $\tZ^\dagger_{\vt_{[k]},\Ga}\cap \fV
\subset (y=0)$, then $\tZ_{(\vt_{[k]},\Ga}\cap \fV
\subset (y=0)$.  
If $y \ne \zeta_\fV,  y_1$, then $y$ is the proper transform of some variable $y' \in \var_{\fV'}$.
Hence, by taking the images under $\rho_{\vt_{[k]}}$, we have $\tZ^\dagger_{\vt_{[k-1]},\Ga}\cap \fV'
\subset (y'=0)$; by Lemma \ref{vt-transform-k} (3) in $\tsR_{\vt_{[k-1]}}$, we obtain
$\tZ_{\vt_{[k-1]},\Ga}\cap \fV'
\subset (y'=0)$,  thus $y' \in \tGa^\zero_{\fV'}$ by the maximality of
the subset $\tGa^\zero_{\fV'}$. Therefore, 
$\tZ_{\vt_{[k]},\Ga}\cap \fV \subset ( y=0)$,  
by (the already-proved) Lemma \ref{vt-transform-k} (1) for $\tsR_{\vt_{[k]}}$
(cf. \eqref{Gazero-ktah-contained-in-bar-vtk} and \eqref{Gazero-ktah-contained-in-a-vtk} 
or \eqref{Gazero-ktah-contained-in-a-vtk'}). 
Next, suppose $y = y_1$ (if it occurs). Then, by construction, 
$\tZ_{\vt_{[k]},\Ga}\cap \fV \subset (y=0)$. 
Finally, we let $y =\zeta_\fV$. 
 Again, by construction, $\tZ_{\vt_{[k]},\Ga}\cap \fV \subset ( \zeta_\fV=0)$. 

As earlier, the last statement Lemma \ref{vt-transform-k} (3)  follows from the above.

\smallskip\noindent
{\it  Proof of Lemma \ref{vt-transform-k} over $\tsR_{\vt_{[k]}}$ under the condition $(\star b)$.}
\smallskip

Under the condition $(\star b)$, we have that $$ 
\rho_{\vt_{[k]}}^{-1}(\tZ^\dagger_{\vt_{[k-1]},\Ga})
 \lra \tZ^\dagger_{\vt_{[k-1]},\Ga}$$
can be canonically identified with the trivial $\PP_{[\xi_0,\xi_1]}$-bundle:
$$\rho_{\vt_{[k]}}^{-1}(\tZ^\dagger_{(\vt_{[k-1]},\Ga}) 
= \tZ^\dagger_{\vt_{[k-1]},\Ga}
\times \PP_{[\xi_0,\xi_1]}.$$
In this case, we define 
$$\tZ_{\vt_{[k]},\Ga}= \rho_{\vt_{[k]}}^{-1}(\tZ_{\vt_{[k-1]},\Ga})\cap ((\xi_0,\xi_1)= (1,1)),$$
 $$\tZ^\dagger_{\vt_{[k]},\Ga}= 
 \rho_{\vt_{[k]}}^{-1}(\tZ^\dagger_{\vt_{[k-1]},\Ga})
 \cap ((\xi_0,\xi_1)= (1,1)),$$
 both scheme-theoretically.
 The induced morphism $\tZ^\dagger_{\vt_{[k]},\Ga} \lra \tZ^\dagger_{\vt_{[k-1]},\Ga}$
 is an isomorphism. 
 Again, one sees that $\tZ^\dagger_{\vt_{[k]},\Ga}$ is 
 an irreducible component of $\tZ_{\vt_{[k]},\Ga}$. Therefore,
 $$\tZ^\dagger_{\vt_{[k]},\Ga} \lra
 \tZ^\dagger_{\vt_{[k-1]},\Ga} \lra  Z_\Ga$$ is  birational.
 This proves Lemma \ref{wp/ell-transform-ktauh} (2) over $\tsR_{\vt_{[k]}}$.

Further, under the condition $(\star b)$, we set 
 \begin{eqnarray}
 \tGa^\zero_\fV= \overline{\Ga}^\zero_\fV,  \;\;\;\;
\tGa^\one_\fV=\{y_1\}\cup \overline{\Ga}^\one_{\fV}. \nonumber \end{eqnarray}
Then, again, by the paragraph of \eqref{vt-pre-image-defined-by},
one sees that $\tZ_{\vt_{[k]},\Ga}\cap \fV$,
 as a closed subscheme of $\fV$, is defined by the equations as stated in the Lemma. 
 This proves Lemma \ref{vt-transform-k} (1) over $\tsR_{\vt_{[k]}}$.

 It remains to prove Lemma \ref{vt-transform-k} (3) in over $\tsR_{\vt_{[k]}}$.

Fix any $y \in \var_\fV$, it suffices to show that
if $\tZ^\dagger_{\vt_{[k]}, \Ga}\cap \fV
\subset (y=0)$, then $\tZ_{\vt_{[k]},\Ga}\cap \fV
\subset (y=0)$.  By construction, $y \ne y_1$.
Then, the corresponding proof of Lemma \ref{vt-transform-k} (3) for $\tsR_{\vt_{[k]}}$
under the condition $(\star a)$ goes through here without change.
 The last statement Lemma \ref{vt-transform-k} (3)  follows from the above.
This proves Lemma \ref{vt-transform-k} (3) in $\tsR_{\vt_{[k]}}$
under the condition $(\star b)$.

This completes the proof of Lemma \ref{vt-transform-k}.
\end{proof}

 We call $\tZ_{\vt_{[k]},\Ga}$ the $\vt$-transform of $Z_\Ga$ 
 in $\tsV_{\vt_{[k]}}$ for any $k \in [\up]$.

  We need the final case of Lemma 
\ref{vt-transform-k}. We set
$$\tZ_{\vt, \Ga}:=\tZ_{\vt_{[\up]},\Ga},
 \;\; \tZ^\dagger_{\vt, \Ga}:=\tZ^\dagger_{\vt_{[\up]},\Ga}.$$

\subsection{$\wp$- and $\ell$-transforms of  
$\Ga$-schemes in   $\tsV_{({\wp}_{(k\tau)}\fr_\mu\fs_h)}$ and in $\tsV_{\ell_k}$}
\label{subsection:wp-transform-vsk}  $\ $

We now construct the ${\wp}$-transform of $Z_\Ga$ in $\tsV_{({\wp}_{(k\tau)}\fr_\mu\fs_h)}
\subset \tsR_{({\wp}_{(k\tau)}\fr_\mu\fs_h)}$. 
Here, as in Proposition \ref{meaning-of-var-wp/ell},  
we assume that the last of
$\tsR_{({\wp}_{(k\tau)}\fr_\mu\fs_h)}$ within
the block $(\fG_k)$ is $\tsR_{\ell_k}$.


\begin{lemma}\label{wp/ell-transform-ktauh} 
 Fix any subset $\Ga$ of $\bU$.  Assume that $Z_\Ga$ is integral. 

Consider $(k\tau) \mu h \in  \Index_{\Phi_k} \sqcup \{\ell_k\}$.

Then, we have the following:
\begin{itemize}
\item  there exists a closed subscheme $\tZ_{({\wp}_{(k\tau)}\fr_\mu\fs_h),\Ga}$ of
$\tsV_{({\wp}_{(k\tau)}\fr_\mu\fs_h)}$ with an induced morphism  
$\tZ_{({\wp}_{(k\tau)}\fr_\mu\fs_h),\Ga} 
\to Z_\Ga$;
\item   $\tZ_{({\wp}_{(k\tau)}\fr_\mu\fs_h),\Ga}$ comes equipped with an irreducible component  
$\tZ^\dagger_{({\wp}_{(k\tau)}\fr_\mu\fs_h),\Ga}$ with the induced morphism 
$\tZ^\dagger_{({\wp}_{(k\tau)}\fr_\mu\fs_h),\Ga}  
\to Z_\Ga$;
\item  for any admissible affine chart $\fV$ of $\tsR_{({\wp}_{(k\tau)}\fr_\mu\fs_h)}$ such that
$\tZ_{({\wp}_{(k\tau)}\fr_\mu\fs_h),\Ga} \cap \fV \ne \emptyset$, there come equipped with
two subsets, possibly empty,
$$ \tGa^\zero_{\fV} \; \subset \; \var^\vee_\fV, \;\;\;
\tGa^\one_{\fV} \; \subset \;  \var_\fV.$$ 
\end{itemize}

Further,  consider any given chart $\fV$ of $\tsR_{({\wp}_{(k\tau)}\fr_\mu\fs_h)}$ with
$\tZ_{({\wp}_{(k\tau)}\fr_\mu\fs_h),\Ga} \cap \fV \ne \emptyset$. 
Then,  the following hold:
\begin{enumerate}
\item the scheme $\tZ_{({\wp}_{(k\tau)}\fr_\mu\fs_h),\Ga} \cap \fV$,
 as a closed subscheme of the chart $\fV$,
is defined by the following relations
\begin{eqnarray} 
\;\;\;\;\; y , \; \; \; y \in  \tGa^\zero_\fV , \label{Ga-rel-wp-ktauh}\\
\;\;\;  y -1, \; \; \; y \in  \tGa^\one_\fV,  \nonumber \\
\cB_\fV^\gov, \; \cB^\frb_\fV, \; L_{\sF, \fV}; \nonumber
\end{eqnarray}
 further, we take $\tGa^\zero_\fV \subset \var_\fV$
 to be the maximal subset (under inclusion)
among all those subsets that satisfy the above.
\item the induced morphism $\tZ^\dagger_{({\wp}_{(k\tau)}\fr_\mu\fs_h),\Ga} \to Z_\Ga$ is birational; 
\item for any variable $y \in \var_\fV$, $\tZ^\dagger_{({\wp}_{(k\tau)}\fr_\mu\fs_h),\Ga} \cap \fV \subset (y=0)$ if and only 
if  $\tZ_{({\wp}_{(k\tau)}\fr_\mu\fs_h),\Ga} \cap \fV \subset (y=0)$. Consequently, 
 $\tZ^\dagger_{({\wp}_{(k\tau)}\fr_\mu\fs_h),\Ga}\cap \fV \subset \tZ_{\phi_{(k\tau)\mu(h+1)}}\cap \fV$ if and only 
if  $\tZ_{({\wp}_{(k\tau)}\fr_\mu\fs_h),\Ga} \cap \fV \subset \tZ_{\phi_{(k\tau)\mu(h+1)}}\cap \fV$ 
where $\tZ_{\phi_{(k\tau)\mu(h+1)}}$ is the proper transform of the $\wp$-center 
$Z_{\phi_{(k\tau)\mu(h+1)}}$ in  $\tsR_{({\wp}_{(k\tau)}\fr_\mu\fs_h)}$. 
 \end{enumerate}
\end{lemma}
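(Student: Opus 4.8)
The plan is to induct on $(k\tau)\mu h$ ranging over $\{(11)10\} \sqcup \bigsqcup_{k \in [\up]} (\Index_{\Phi_k} \sqcup \{\ell_k\})$, following the proof of Lemma~\ref{vt-transform-k} almost verbatim, with the $\vt$-center replaced by the relevant $\wp$- or $\ell$-center and with Proposition~\ref{eq-for-sV-vtk} replaced by Proposition~\ref{equas-wp/ell-kmuh} and Corollary~\ref{linear-or-vanish}. The base case is $(k\tau)\mu h = (11)10$, where $\tsR_{({\wp}_{(11)}\fr_1\fs_0)} = \tsR_\vt$; here one sets $\tZ_{({\wp}_{(11)}\fr_1\fs_0),\Ga} := \tZ_{\vt,\Ga}$ and $\tZ^\dagger := \tZ^\dagger_{\vt,\Ga}$, and properties (1)--(3) together with the defining relations follow directly from Lemma~\ref{vt-transform-k} with $k=\up$ (note $\cB^\ngv_{\fV,>\up} = \emptyset$, and $\tGa^\one_\fV$ is as provided there, while $\var^\vee_\fV = \var_\fV$ at that stage).

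For the inductive step, write $\pi\colon \tsR_{({\wp}_{(k\tau)}\fr_\mu\fs_h)} \to \tsR_{({\wp}_{(k\tau)}\fr_\mu\fs_{h-1})}$ for the blowup along (the proper transform $Z'$ of) the $\wp$-center $Z_{\phi_{(k\tau)\mu h}}$, or along $Z_{\chi_k}$ in the case $\ell_k$, and let $\rho$ be the induced morphism of $\tsV$'s. By part (3) at the previous stage, $\tZ^\dagger_{({\wp}_{(k\tau)}\fr_\mu\fs_{h-1}),\Ga}$ is either not contained in $Z'$ or contained in $Z'$. In the first case I take $\tZ_{({\wp}_{(k\tau)}\fr_\mu\fs_h),\Ga}$ and $\tZ^\dagger$ to be the proper transforms of the corresponding schemes at the previous stage; then (1)--(3) follow by taking proper transforms of the defining relations and applying Proposition~\ref{equas-wp/ell-kmuh} to the ambient $\tsV_{({\wp}_{(k\tau)}\fr_\mu\fs_h)}$, exactly as in the non-contained case of Lemma~\ref{vt-transform-k}, and $\tGa^\zero_\fV \subset \var^\vee_\fV$, $\tGa^\one_\fV$ are defined as proper transforms of the corresponding sets on $\fV'$.

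The substance is the contained case. On a preferred admissible chart $\fV$ over $\fV'$ with $Z' \cap \fV' \ne \emptyset$ one has $Z' \cap \fV' = (y_0' = y_1' = 0)$ with $y_0', y_1' \in \var^\vee_{\fV'}$ (for the $\ell_k$-blowup, $y_0'$ defines $D_{\wp_k, L_{F_k}}$ and $y_1' = \de_{\fV',(\um,\uu_{F_k})}$), and containment forces $\{y_0',y_1'\} \subset \tGa^\zero_{\fV'}$. Passing to the chart over $(\xi_0 \equiv 1)$, substituting $y_1' = \zeta y_1$, setting the members of $\overline{\Ga}^\zero_\fV$ to $0$ and those of $\overline{\Ga}^\one_\fV$ to $1$ in $\cB^\gov_\fV, \cB^\frb_\fV, L_{\sF,\fV}$, and retaining only the relations that still involve $y_1$, yields --- and this is the key input --- a \emph{linear} system in $y_1$. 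Linearity is precisely Corollary~\ref{linear-or-vanish}: any term of a governing or $\frb$-binomial divisible by $y_1$ is either linear in $y_1$ or divisible by $\zeta$, so after the substitution all quadratic-in-$y_1$ contributions carry a factor $\zeta$ and drop out of the slice equations. From here the analysis matches Lemma~\ref{vt-transform-k}: if this system has rank one over a general point of $\tZ^\dagger$ at the previous stage (case $(\star a)$) we solve for $y_1$, obtain an isomorphism $\rho^{-1}(\tZ^{\dagger\circ}) \cong \tZ^{\dagger\circ}$, and set $\tZ_{({\wp}_{(k\tau)}\fr_\mu\fs_h),\Ga}$ to be $\rho^{-1}(\tZ_{({\wp}_{(k\tau)}\fr_\mu\fs_{h-1}),\Ga})$, intersected further with the proper transform $D_{y_1}$ of one center divisor when $y_1 \equiv 0$ on the slice; if it has rank zero (case $(\star b)$) the $\PP^1$-bundle $\rho^{-1}(\tZ^\dagger)$ is trivial and we take the slice $(\xi_0,\xi_1) = (1,1)$. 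In each subcase $\tZ^\dagger$ is the closure of $\rho^{-1}(\tZ^{\dagger\circ})$, an irreducible component mapping birationally to $Z_\Ga$ by composition, and $\tGa^\zero_\fV, \tGa^\one_\fV$ are read off as in \eqref{Gazero-ktah-contained-in-bar-vtk}--\eqref{Gaone-ktah-contained-in-a-vtk}; property (3) is proved, as before, by pushing forward $\tZ^\dagger \subset (y=0)$ under $\rho$, using the maximality of $\tGa^\zero_{\fV'}$ and the previous stage's (3), with $y = \zeta$ and $y = y_1$ handled separately by construction.

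The main obstacle I anticipate is the $\ell_k$ step: since the $\ell_k$-center is cut by the proper transform of $L_{F_k}$ rather than a coordinate variable, the linear-system argument must be read through Proposition~\ref{meaning-of-var-wp/ell}~(9) and Corollary~\ref{ell-isom}, which say that on a preferred chart $L_{\fV',F_k} = \sgn(s_{F_k})\de_{\fV',(\um,\uu_{F_k})} + L^\star_{\fV',F_k}$ and $\tsV_{\ell_k} \to \tsV_{\wp_k}$ is an isomorphism; one then checks that containment of $\tZ^\dagger$ in $Z_{\chi_k}$ can only occur when $L^\star_{\fV',F_k}$ vanishes identically on $\tZ^\dagger$ over that chart, in which case the slice is again governed by a single linear relation, reducing to case $(\star a)$. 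The other delicate point is purely bookkeeping: keeping $\var^\vee_\fV$ and $\var_\fV$ straight so that no invertible variable $y_{(\um,\uu_F)}$ is ever placed into $\tGa^\zero_\fV$ --- this is forced, since such a variable cannot vanish identically on $\tZ^\dagger$ --- which is routine once the structure above is in place.
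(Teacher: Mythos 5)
Your proposal is correct and matches the paper's proof in structure and in all the essential inputs: the same induction over $\{(11)10\}\sqcup\Index_{\Phi_k}\sqcup\{\ell_k\}$, the same dichotomy between $\tZ^\dagger$ contained/not contained in the center (with proper transforms in the latter case), the same use of Corollary~\ref{linear-or-vanish} (derived from Proposition~\ref{equas-wp/ell-kmuh}~(1),~(2),~(4)) to get linearity of the slice system and the resulting $(\star a)/(\star b)$ case split, and the same invocation of Corollary~\ref{ell-isom} and Proposition~\ref{meaning-of-var-wp/ell}~(9) to handle the $\ell_k$-blowup as an isomorphism that reduces to case $(\star a)$ with $y_{\fV,(\um,\uu_{F_k})}$ nowhere vanishing. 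The bookkeeping caveat you flag (that $\de_{\fV,(\um,\uu_{F_k})}$ lives in $\var^\vee_\fV$, not $\var_\fV$, and that the invertible $y_{\fV,(\um,\uu_F)}$ never enters $\tGa^\zero_\fV$) is exactly the point the paper records at the end of its $\ell$-transform argument.
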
  
\begin{proof} We prove by induction on 
 $(k\tau) \mu h \in  \{(11)10)\} \sqcup \Index_{\Phi_k} \sqcup \{\ell_k\}$ (cf. \eqref{indexing-Phi}).


The initial case is $(11)10$. In this case, we have
 $$\tsR_{({\wp}_{(11)}\fr_1\fs_0)}:=\tsR_{\vt}, \;\;
 \tsV_{({\wp}_{(11)}\fr_1\fs_0)}:=\tsV_{\vt} , \;\;
  \tZ_{({\wp}_{(11)}\fr_1\fs_0),\Ga}:=\tZ_{\vt,\Ga}, \;\;
   \tZ^\dagger_{({\wp}_{(11)}\fr_1\fs_0),\Ga}:=\tZ^\dagger_{\vt,\Ga}.$$
Then, in this case,  Lemma \ref{wp/ell-transform-ktauh}
 is Lemma \ref{vt-transform-k} for $k=\up$.

We now suppose that Lemma \ref{wp/ell-transform-ktauh} holds for $(k\tau)\mu(h-1)$
for some $(k\tau)\mu h \in \Index_{\Phi_k}$.

 \medskip
 
 \centerline{$\bullet$ $\wp$-transforms}
 
 \medskip

We treat exclusively $\wp$-transforms of $Z_\Ga$ first, 
in other words, we assume that 
$$\hbox{$\tsR_{({\wp}_{(k\tau)}\fr_\mu\fs_{h-1})}\ne \tsR_{\wp_k}$ and
 $\tsR_{({\wp}_{(k\tau)}\fr_\mu\fs_h)} \ne \tsR_{\ell_k}$.}$$
We treat $\ell$-transforms of $Z_\Ga$ in the end.

We then consider the case of $(k\tau)\mu h$.

We let 
\begin{equation}\label{rho-wpktaumuh}
 \rho_{(\wp_{(k\tau)}\fr_\mu\fs_h)}: \tsV_{(\wp_{(k\tau)}\fr_\mu\fs_h)} \lra
\tsV_{(\wp_{(k\tau)}\fr_\mu\fs_{h-1})}\end{equation}
be the morphism induced from
$ \pi_{(\wp_{(k\tau)}\fr_\mu\fs_h)}: \tsR_{(\wp_{(k\tau)}\fr_\mu\fs_h)} \to
\tsR_{(\wp_{(k\tau)}\fr_\mu\fs_{h-1})}$.

Suppose that $\tZ_{({\wp}_{(k\tau)}\fr_\mu\fs_{h-1}),\Ga}$,  equivalently 
$\tZ^\dagger_{({\wp}_{(k\tau)}\fr_\mu\fs_{h-1}),\Ga}$,
by Lemma \ref{wp/ell-transform-ktauh}  (3) in $({\wp}_{(k\tau)}\fr_\mu\fs_{(h-1)})$,
 is not contained in 
$Z'_{\phi_{(k\tau)\mu h}}$ where $Z'_{\phi_{(k\tau)\mu h}}$ is the proper transform 
in $\tsR_{({\wp}_{(k\tau)}\fr_\mu\fs_{h-1})}$
 of the ${\wp}$-center $Z_{\phi_{(k\tau)\mu h}}$(of $\tsR_{({\wp}_{(k\tau)}\fr_{\mu-1})}$).

We then let $\tZ_{({\wp}_{(k\tau)}\fr_\mu\fs_h),\Ga}$,
 respectively $\tZ^\dagger_{({\wp}_{(k\tau)}\fr_\mu\fs_h),\Ga}$, be the
proper transform of $\tZ_{({\wp}_{(k\tau)}\fr_\mu\fs_{h-1}),\Ga}$,
 respectively $\tZ^\dagger_{({\wp}_{(k\tau)}\fr_\mu\fs_{h-1}),\Ga}$,
in $\tsV_{({\wp}_{(k\tau)}\fr_\mu\fs_h)}$.
As $\tZ^\dagger_{({\wp}_{(k\tau)}\fr_\mu\fs_h),\Ga}$ is closed in $\tZ_{({\wp}_{(k\tau)}\fr_\mu\fs_h),\Ga}$
and contains a Zariski open subset of $\tZ_{({\wp}_{(k\tau)}\fr_\mu\fs_h),\Ga}$, it is an irreducible
component of $\tZ_{({\wp}_{(k\tau)}\fr_\mu\fs_h),\Ga}$.

Further, consider any admissible affine chart $\fV$ of $\tsR_{({\wp}_{(k\tau)}\fr_\mu\fs_h)}$,
lying over a unique admissible affine chart $\fV'$ of $\tsR_{({\wp}_{(k\tau)}\fr_\mu\fs_{h-1})}$,
such that $\tZ_{({\wp}_{(k\tau)}\fr_\mu\fs_h),\Ga}\cap \fV \ne \emptyset$.
We set 
$$\tGa^\zero_\fV=\{y_\fV \mid y_\fV  
\hbox{ is the proper transform of some $y_{\fV'} \in \tGa^\zero_{\fV'}$}\};$$
$$\tGa^\one_\fV=\{y_\fV \mid y_\fV  
\hbox{ is the proper transform of some $y_{\fV'} \in \tGa^\one_{\fV'}$}\}.$$

We now prove  Lemma \ref{wp/ell-transform-ktauh}  (1), (2) and (3) in $({\wp}_{(k\tau)}\fr_\mu\fs_h)$.

Lemma \ref{wp/ell-transform-ktauh}  (1) in $({\wp}_{(k\tau)}\fr_\mu\fs_h)$ 
follows  from Lemma \ref{wp/ell-transform-ktauh} (1) in $({\wp}_{(k\tau)}\fr_\mu\fs_{h-1})$
because $\tZ_{({\wp}_{(k\tau)}\fr_\mu\fs_h),\Ga}$  is the proper transform of 
$\tZ_{({\wp}_{(k\tau)}\fr_\mu\fs_{h-1}),\Ga}$. 

By construction, we have that 
$\tZ^\dagger_{({\wp}_{(k\tau)}\fr_\mu\fs_h),\Ga} \to \tZ^\dagger_{({\wp}_{(k\tau)}\fr_\mu\fs_{h-1}),\Ga} \to Z_\Ga$
is birational.   
This proves Lemma \ref{wp/ell-transform-ktauh}  (2) in $({\wp}_{(k\tau)}\fr_\mu\fs_h)$.

To show Lemma \ref{wp/ell-transform-ktauh}  (3) in $({\wp}_{(k\tau)}\fr_\mu\fs_h)$, 
we fix any $y \in \var_\fV$. It suffices to show that
if $\tZ^\dagger_{({\wp}_{(k\tau)}\fr_\mu\fs_h),\Ga}\cap \fV
\subset (y=0)$, then $\tZ_{({\wp}_{(k\tau)}\fr_\mu\fs_h),\Ga}\cap \fV
\subset (y=0)$.  By construction, $y \ne \zeta_{\fV,(k\tau)\mu h}$, the exceptional variable
in $\var_\fV$ corresponding to the $\wp$-set $\phi_{(k\tau)\mu h}$. Hence, $y$ is the proper transform
of some $y' \in \var_{\fV'}$. Then, by taking the images under the morphism
$\rho_{(\wp_{(k\tau)}\fr_\mu\fs_h)}$ of \eqref{rho-wpktaumuh}, we obtain
$\tZ^\dagger_{({\wp}_{(k\tau)}\fr_\mu\fs_{h-1}),\Ga}\cap \fV'
\subset (y'=0)$,  hence, $\tZ_{({\wp}_{(k\tau)}\fr_\mu\fs_{h-1}),\Ga}\cap \fV'
\subset (y'=0)$ by the inductive assumption.
 Then,  as $\tZ_{({\wp}_{(k\tau)}\fr_\mu\fs_{h}),\Ga}$ 
 is the proper transform of $\tZ_{({\wp}_{(k\tau)}\fr_\mu\fs_{h-1}),\Ga}$,
 we obtain $\tZ_{({\wp}_{(k\tau)}\fs_{h}),\Ga}\cap \fV \subset (y=0)$.
  The last statement  of Lemma \ref{wp/ell-transform-ktauh}  (3)  follows from the above because
 $\tZ_{\phi_{(k\tau)\mu(h+1)}}\cap \fV=(y_0=y_1=0)$ for some $y_0, y_1 \in \var_\fV$.

\smallskip

We now suppose that $\tZ_{({\wp}_{(k\tau)}\fr_\mu\fs_{h-1}),\Ga}$, or equivalently 
$\tZ^\dagger_{({\wp}_{(k\tau)}\fr_\mu\fs_{h-1}),\Ga}$,
 by  Lemma \ref{wp/ell-transform-ktauh}  (3) in $({\wp}_{(k\tau)}\fr_\mu\fs_{(h-1)})$,
 is contained in the proper transform  $Z'_{\phi_{(k\tau)\mu h}}$ 
 of $Z_{\phi_{(k\tau)\mu h}}$. 

Consider  any admissible affine chart $\fV$ of $\tsR_{({\wp}_{(k\tau)}\fr_\mu\fs_h)}$,
lying over a unique admissible affine chart $\fV'$ of $\tsR_{({\wp}_{(k\tau)}\fr_\mu\fs_{h-1})}$,
such that $\tZ_{({\wp}_{(k\tau)}\fr_\mu\fs_{h-1}),\Ga}\cap \fV' \ne \emptyset$.

We let $\phi'_{(k\tau)\mu h}$
 be the proper transform in  the chart $\fV'$ of the $\wp$-set $\phi_{(k\tau)\mu h}$. Then,  
 $\phi'_{(k\tau)\mu h}$ consists of two  variables such that
 $$\psi'_{(k\tau)\mu h}=\{y'_0,  y'_1\} \subset  \var_{\fV'}^\vee.$$ 
In addition, we let $\zeta_\fV \in \var_\fV$  be 
 such that  $E_{({\wp}_{(k\tau)}\fr_\mu\fs_{h})} \cap \fV=(\zeta_\fV =0)$ 
 where $E_{({\wp}_{(k\tau)}\fr_\mu\fs_{h})}$
 is the exceptional divisor
of the blowup $\tsR_{({\wp}_{(k\tau)}\fr_\mu\fs_{h})} \to \tsR_{({\wp}_{(k\tau)}\fr_\mu\fs_{h-1})}$. 
  Without loss of generality, we may assume that $y'_0$ 
corresponds to the exceptional variable $\zeta_\fV$ on the chart $\fV$.
 We then let $y_1 \in \var_\fV$ be 
 the proper transform of $y'_1$. 

Now, we observe that 
 $$\phi'_{(k\tau)\mu h}\subset \tGa^\zero_{\fV'}$$
because $\tZ_{({\wp}_{(k\tau)}\fr_\mu\fs_{h-1}),\Ga}$ is contained in the proper transform
$Z'_{\phi_{(k\tau)\mu h}}$.

We set, 
\begin{equation}\label{Gazero-wp-contained-in-bar} 
\overline{\Ga}^\zero_\fV= \{\zeta_\fV, \; y_\fV \mid y_\fV  
\hbox{ is the proper transform of some $ y_{\fV'} \in \tGa^\zero_{\fV'} \-\phi'_{(k\tau)\mu h}$}\},
\end{equation}
\begin{equation}\label{Gaone-wp-contained-in-bar}
 \overline{\Ga}^\one_{\fV}=\{\ y_\fV \mid y_\fV  
\hbox{ is the proper transform of some $y_{\fV'} \in \tGa^\one_{\fV'}$} \}.
\end{equation}

Consider the scheme-theoretic pre-image  
$\rho_{({\wp}_{(k\tau)}\fr_\mu\fs_h)}^{-1}(\tZ_{({\wp}_{(k\tau)}\fr_\mu\fs_{h-1}),\Ga})$.

 Note that scheme-theoretically, we have
$$\rho_{({\wp}_{(k\tau)}\fr_\mu\fs_h)}^{-1}(\tZ_{({\wp}_{(k\tau)}\fr_\mu\fs_{h-1}),\Ga}) \cap \fV=
\pi_{({\wp}_{(k\tau)}\fr_\mu\fs_h)}^{-1}(\tZ_{({\wp}_{(k\tau)}\fr_\mu\fs_{h-1}),\Ga})
 \cap \tsV_{({\wp}_{(k\tau)}\fr_\mu\fs_h)} \cap \fV .$$
Applying Lemma \ref{wp/ell-transform-ktauh} (1)  in $({\wp}_{(k\tau)}\fr_\mu\fs_{h-1})$ to 
$\tZ_{({\wp}_{(k\tau)}\fr_\mu\fs_{h-1}),\Ga}$
and $\rho_{({\wp}_{(k\tau)}\fr_\mu\fs_h)}^{-1}(\tZ_{({\wp}_{(k\tau)}\fr_\mu\fs_{h-1}),\Ga}) $, 
and applying Proposition \ref{equas-wp/ell-kmuh} to 
$\tsV_{({\wp}_{(k\tau)}\fr_\mu\fs_h)} \cap \fV $, we obtain that  the pre-image 
$\rho_{({\wp}_{(k\tau)}\fr_\mu\fs_h)}^{-1}(\tZ_{({\wp}_{(k\tau)}\fr_\mu\fs_{h-1}),\Ga}) \cap \fV$,
as a closed subscheme of $\fV$,  is defined by 
\begin{equation}\label{wp-pre-image-defined-by}
y_\fV \in \overline{\Ga}^\zero_\fV; \;\;  y_\fV-1, \; y_\fV \in \overline{\Ga}^\one_{\fV};\;\;
\cB^\gov_\fV; \;\; \cB^\frb_{\fV}; \;\; L_{\fV,  \sF}.
\end{equation} 
(Observe here that $\zeta_\fV \in  \overline{\Ga}^\zero_\fV$.)

Thus, by setting 
$$\hbox{ $y_\fV=0$ for all $y_\fV \in \overline{\Ga}^\zero_\fV$ and
 $y_\fV=1$ for all $y_\fV \in \overline{\Ga}^\one_{\fV}$} $$ in $\cB^\gov_\fV, 
 \cB^\frb_{\fV}, L_{\fV,  \sF}$ of the above,
 we obtain 
 \begin{equation}\label{vs-lin-xi-pre} \tilde{\cB}^\gov_\fV, 
  \tilde\cB^\frb_{\fV}, \tilde{L}_{\fV,   \sF}.
 \end{equation}
 Note that for any $\bF \in  \sF$,
  if a term of $L_{\fV, F}$ contains $y_1 \in \var_\fV$, 
   then it contains $\zeta_\fV y_1$, hence $\tilde L_{\fV, F}$ does not contain $y_1$.
  We keep those equations of \eqref{vs-lin-xi-pre} such that they contain the variable $y_1 \in \var_\fV$ 
   and obtain
 \begin{equation}\label{vs-lin-xi}
 \hat{\cB}^\gov_\fV, 
 \hat\cB^\frb_{\fV}, 
 \end{equation}
 viewed as a  system of equations in $y_1$.
Then, by Proposition \ref{equas-wp/ell-kmuh}  (1), 2), and (4)
(see also Corollary \ref{linear-or-vanish}),
\eqref{vs-lin-xi} is  a  {\it linear} system of equations in $y_1$.
(We point out that $y_1$ here can correspond to either $y_0$ or $y_1$ 
as in Proposition \ref{equas-wp/ell-kmuh}.)
Furthermore, one sees that 
 $$(\rho_{({\wp}_{(k\tau)}\fr_\mu\fs_h)}^{-1}(\tZ_{({\wp}_{(k\tau)}\fr_\mu\fs_{h-1}),\Ga})\cap \fV)/
 (\tZ_{({\wp}_{(k\tau)}\fr_\mu\fs_{h-1}),\Ga}\cap \fV')$$
 is defined by the linear system \eqref{vs-lin-xi}.

There are the following two cases for \eqref{vs-lin-xi}:

$(\star a)$  the ranks of the linear system  $\eqref{vs-lin-xi}$ equal one
at  general points of  $\tZ^\dagger_{({\wp}_{(k\tau)}\fr_\mu\fs_{h-1}),\Ga}$.

$(\star b)$  the ranks of the linear system  $\eqref{vs-lin-xi}$ equal zero
at general points of  $\tZ^\dagger_{({\wp}_{(k\tau)}\fr_\mu\fs_{h-1}),\Ga}$, hence at all points
of  $\tZ^\dagger_{({\wp}_{(k\tau)}\fr_\mu\fs_{h-1}),\Ga}$.

\smallskip\noindent
{\it 
Proof of Lemma  \ref{wp/ell-transform-ktauh}   in $({\wp}_{(k\tau)}\fr_\mu \fs_h)$ under the condition $(\star a)$.}
\smallskip

By the condition $(\star a)$,
 there exists a Zariski open subset $\tZ^{\dagger\circ}_{({\wp}_{(k\tau)}\fr_\mu\fs_{h-1}),\Ga}$ 
of $\tZ^\dagger_{({\wp}_{(k\tau)}\fr_\mu\fs_{h-1}),\Ga}$ such that the rank of the linear system
  $\eqref{vs-lin-xi}$ equals one at any point of  $\tZ^{\dagger\circ}_{({\wp}_{(k\tau)}\fr_\mu\fs_{h-1}),\Ga}$. 
By solving  $y_1$ from
 the linear system \eqref{vs-lin-xi} over $\tZ^{\dagger\circ}_{({\wp}_{(k\tau)}\fr_\mu\fs_{h-1}),\Ga}$, we obtain 
that the induced morphism
$$
\rho_{({\wp}_{(k\tau)}\fr_\mu\fs_h)}^{-1}(\tZ^{\dagger\circ}_{({\wp}_{(k\tau)}\fr_\mu\fs_{h-1}),\Ga})  \lra \tZ^\circ_{({\wp}_{(k\tau)}\fr_\mu\fs_{h-1}),\Ga}$$ is an isomorphism. 

Suppose  $y_1$ is identically zero along 
$\rho_{({\wp}_{(k\tau)}\fr_\mu\fs_h)}^{-1}(\tZ^{\dagger\circ}_{({\wp}_{(k\tau)}\fr_\mu\fs_{h-1}),\Ga})$.
We then set,  
\begin{equation}\label{Gazero-ktah-contained-in-a} 
\tGa^\zero_\fV=\{y_1 \} \cup \overline{\Ga}^\zero_\fV
\end{equation}
where $\overline{\Ga}^\zero_\fV$ is as in \eqref{Gazero-wp-contained-in-bar}.
In this case, we let
\begin{equation}\label{ZktahGa-with-xi}
\tZ_{({\wp}_{(k\tau)}\fr_\mu\fs_{h}),\Ga}=\rho_{({\wp}_{(k\tau)}\fr_\mu\fs_h)}^{-1}(\tZ_{({\wp}_{(k\tau)}\fr_\mu\fs_{h-1}),\Ga})\cap D_{y_1}
\end{equation}
scheme-theoretically, where $D_{y_1}$
 is the closure of $(y_1=0)$ in $\tsR_{({\wp}_{(k\tau)}\fr_\mu\fs_h)}$.
Similar to the argument as in the proof Lemma \ref{vt-transform-k},
 $D_{y_1}$ does not depend on the choice of the chart $\fV$.
More precisely, note that  $D_{y_1}$  is  the proper transform of one of 
 the two divisors in the $\wp$-set. We let
  $D_{y_0}$ be the proper transforms
of the other divisor in the $\wp$-set.
They $D_{y_1}$ and $D_{y_0}$ are disjoint. 
Hence, only one of them can contain 
$\rho_{({\wp}_{(k\tau)}\fr_\mu\fs_h)}^{-1}(\tZ^{\dagger\circ}_{({\wp}_{(k\tau)}\fr_\mu\fs_{h-1}),\Ga})$.
This implies that  $D_{y_1}$ comes with
$\rho_{({\wp}_{(k\tau)}\fr_\mu\fs_h)}^{-1}(\tZ^{\dagger\circ}_{({\wp}_{(k\tau)}\fr_\mu\fs_{h-1}),\Ga})$ in such a case,
hence does not depend on the choice of the chart $\fV$.

Suppose $y_1$ is not identically zero along 
$\rho_{({\wp}_{(k\tau)}\fr_\mu\fs_h)}^{-1}(\tZ^{\dagger\circ}_{({\wp}_{(k\tau)}\fr_\mu\fs_{h-1}),\Ga})$.
We then set,  
\begin{equation}\label{Gazero-ktah-contained-in-a-2} 
\tGa^\zero_\fV= \overline{\Ga}^\zero_\fV
\end{equation}
where $\overline{\Ga}^\zero_\fV$ is as in \eqref{Gazero-wp-contained-in-bar} .
In this case, we let
\begin{equation}\label{ZktahGa-without-xi}
\tZ_{({\wp}_{(k\tau)}\fr_\mu\fs_{h}),\Ga}=\rho_{({\wp}_{(k\tau)}\fr_\mu\fs_h)}^{-1}(\tZ_{({\wp}_{(k\tau)}\fr_\mu\fs_{h-1}),\Ga}).
\end{equation}
We always set (under the condition $(\star a)$)
\begin{equation}\label{Gaone-ktah-contained-in-a} \tGa^\one_{\fV}= \overline{\Ga}^\one_{\fV}
\end{equation}
where $\overline{\Ga}^\one_{\fV}$ is as in  \eqref{Gaone-wp-contained-in-bar}.

In each case, we have 
$$\rho_{({\wp}_{(k\tau)}\fr_\mu\fs_h)}^{-1}(\tZ^{\dagger\circ}_{({\wp}_{(k\tau)}\fr_\mu\fs_{h-1}),\Ga})  
\subset \tZ_{({\wp}_{(k\tau)}\fr_\mu\fs_h),\Ga},$$ and we let $\tZ^\dagger_{({\wp}_{(k\tau)}\fr_\mu\fs_h),\Ga}$ be the closure of 
$\rho_{({\wp}_{(k\tau)}\fr_\mu\fs_h)}^{-1}(\tZ^{\dagger\circ}_{({\wp}_{(k\tau)}\fr_\mu\fs_{h-1}),\Ga}) $ in $\tZ_{({\wp}_{(k\tau)}\fr_\mu\fs_h),\Ga}$.
It is an irreducible component of $\tZ_{({\wp}_{(k\tau)}\fr_\mu\fs_h),\Ga}$ because
$\tZ^\dagger_{({\wp}_{(k\tau)}\fr_\mu\fs_h),\Ga}$ is closed in $\tZ_{({\wp}_{(k\tau)}\fr_\mu\fs_h),\Ga}$ and contains
the Zariski open subset $\rho_{({\wp}_{(k\tau)}\fr_\mu\fs_h)}^{-1}(\tZ^{\dagger\circ}_{({\wp}_{(k\tau)}\fr_\mu\fs_{h-1}),\Ga}) $
of $\tZ_{({\wp}_{(k\tau)}\fr_\mu\fs_h),\Ga}$.
Then, it follows that the composition
 $$\tZ^\dagger_{({\wp}_{(k\tau)}\fr_\mu\fs_h),\Ga} \lra \tZ^\dagger_{({\wp}_{(k\tau)}\fr_\mu\fs_{h-1}),\Ga}
 \lra Z_\Ga$$ is birational.  
 This proves Lemma \ref{wp/ell-transform-ktauh} (2) in $({\wp}_{(k\tau)}\fs_h)$.


In each case of \eqref{Gazero-ktah-contained-in-a} and \eqref{Gazero-ktah-contained-in-a-2}, 
by the paragraph of \eqref{wp-pre-image-defined-by},
we have that $\tZ_{({\wp}_{(k\tau)}\fr_\mu\fs_h),\Ga}\cap \fV$
 as a closed subscheme of $\fV$ is defined by the equations as stated in the Lemma. 
 This proves Lemma \ref{wp/ell-transform-ktauh} (1) in $({\wp}_{(k\tau)}\fr_\mu\fs_h)$.

It remains to prove Lemma \ref{wp/ell-transform-ktauh} (3) in $({\wp}_{(k\tau)}\fr_\mu\fs_h)$.

Fix any $y \in \var_\fV$, it suffices to show that
if $\tZ^\dagger_{({\wp}_{(k\tau)}\fr_\mu\fs_{h}),\Ga}\cap \fV
\subset (y=0)$, then $\tZ_{({\wp}_{(k\tau)}\fr_\mu\fs_{h}),\Ga}\cap \fV
\subset (y=0)$.  
If $y \ne \zeta_\fV,  y_1$, then $y$ is the proper transform of some variable $y' \in \var_{\fV'}$.
Hence, by taking the images under $\rho_{({\wp}_{(k\tau)}\fr_\mu\fs_h)}$,
 we obtain $\tZ^\dagger_{({\wp}_{(k\tau)}\fr_\mu\fs_{h-1}),\Ga}\cap \fV'
\subset (y'=0)$, and then,  by Lemma \ref{wp/ell-transform-ktauh} (3) in (${\wp}_{(k\tau)}\fr_\mu\fs_{h-1}$),
$\tZ_{({\wp}_{(k\tau)}\fr_\mu\fs_{h-1}),\Ga}\cap \fV'
\subset (y'=0)$,  thus $y' \in \tGa^\zero_{\fV'}$ by the maximality of $\tGa^\zero_{\fV'}$. 
 Therefore, 
$\tZ_{({\wp}_{(k\tau)}\fr_\mu\fs_{h}),\Ga}\cap \fV \subset ( y=0)$,  
by (the already-proved) Lemma \ref{wp/ell-transform-ktauh} (1) in $(\wp_{(k\tau)}\fr_\mu\fs_h)$.
Next, suppose $y = y_1$ (if it occurs). Then, by construction,
 $\tZ_{({\wp}_{(k\tau)}\fs_{h}),\Ga}\cap \fV \subset (y=0)$. 
Finally, we let $y =\zeta_\fV$. 
 Again, by construction, $\tZ_{({\wp}_{(k\tau)}\fr_\mu\fs_{h}),\Ga}\cap \fV \subset (\zeta_\fV=0)$. 
As in the previous case, the last statement  of Lemma \ref{wp/ell-transform-ktauh}  (3)  follows from the above.

\smallskip\noindent
{\it  Proof of Lemma \ref{wp/ell-transform-ktauh} in $({\wp}_{(k\tau)}\fr_\mu\fs_h)$
 under the condition $(\star b)$.}
\smallskip

Under the condition $(\star b)$, we have that $$ 
\rho_{({\wp}_{(k\tau)}\fr_\mu\fs_h)}^{-1}(\tZ^\dagger_{({\wp}_{(k\tau)}\fr_\mu\fs_{h-1}),\Ga})
 \lra \tZ^\dagger_{({\wp}_{(k\tau)}\fr_\mu\fs_{h-1}),\Ga}$$
can be canonically identified with the trivial $\PP_{[\xi_0,\xi_1]}$-bundle:
$$\rho_{({\wp}_{(k\tau)}\fr_\mu\fs_h)}^{-1}(\tZ^\dagger_{({\wp}_{(k\tau)}\fr_\mu\fs_{h-1}),\Ga}) 
= \tZ^\dagger_{({\wp}_{(k\tau)}\fr_\mu\fs_{h-1}),\Ga}
\times \PP_{[\xi_0,\xi_1]}.$$
In this case, we let $\bp=[1,1] \in \PP_{[\xi_0,\xi_1]}$, and define 
$$\tZ_{({\wp}_{(k\tau)}\fr_\mu\fs_h),\Ga} := 
\rho_{({\wp}_{(k\tau)}\fr_\mu\fs_h)}^{-1}(\tZ_{({\wp}_{(k\tau)}\fr_\mu\fs_{h-1}),\Ga}) 
\times_{\tZ_{({\wp}_{(k\tau)}\fr_\mu\fs_{h-1}),\Ga}} \bp$$
 $$\tZ^\dagger_{({\wp}_{(k\tau)}\fr_\mu\fs_h),\Ga}:= 
 \rho_{({\wp}_{(k\tau)}\fr_\mu\fs_h)}^{-1}(\tZ^\dagger_{({\wp}_{(k\tau)}\fr_\mu\fs_{h-1}),\Ga})
 \times_{\tZ^\dagger_{({\wp}_{(k\tau)}\fr_\mu\fs_{h-1}),\Ga}} \bp.$$
 
 The induced morphism $\tZ^\dagger_{({\wp}_{(k\tau)}\fr_\mu\fs_h),\Ga} \lra \tZ^\dagger_{({\wp}_{(k\tau)}\fr_\mu\fs_{h-1}),\Ga}$
 is an isomorphism. 
 Again, one sees that $\tZ^\dagger_{({\wp}_{(k\tau)}\fr_\mu\fs_h),\Ga}$ is 
 an irreducible component of $\tZ_{({\wp}_{(k\tau)}\fr_\mu\fs_h),\Ga}$. Therefore,
 $$\tZ^\dagger_{({\wp}_{(k\tau)}\fr_\mu\fs_h),\Ga} \lra
 \tZ^\dagger_{({\wp}_{(k\tau)}\fr_\mu\fs_{h-1}),\Ga} \lra  Z_\Ga$$ is birational.
 This proves Lemma \ref{wp/ell-transform-ktauh} (2) in $({\wp}_{(k\tau)}\fr_\mu\fs_h)$.

Further, under the condition $(\star b)$, we set 
 \begin{eqnarray}
 \tGa^\zero_\fV= \overline{\Ga}^\zero_\fV,  \;\;\;\;
\tGa^\one_\fV=\{y_1\} \cup \overline{\Ga}^\one_{\fV}. \nonumber \end{eqnarray}

Then, by 
the paragraph of \eqref{wp-pre-image-defined-by},
we have that $\tZ_{({\wp}_{(k\tau)}\fr_\mu\fs_h),\Ga}\cap \fV$,
 as a closed subscheme of $\fV$, is defined by the equations as stated in the Lemma. 
 This proves Lemma \ref{wp/ell-transform-ktauh} (1) in $({\wp}_{(k\tau)}\fr_\mu\fs_h)$.

 It remains to prove Lemma \ref{wp/ell-transform-ktauh} (3) in $({\wp}_{(k\tau)}\fr_\mu\fs_h)$.

Fix any $y \in \var_\fV$, it suffices to show that
if $\tZ^\dagger_{({\wp}_{(k\tau)}\fs_{h}),\Ga}\cap \fV
\subset (y=0)$, then $\tZ_{({\wp}_{(k\tau)}\fs_{h}),\Ga}\cap \fV
\subset (y=0)$.  By construction, $y \ne y_1$.
Then, the corresponding proof of Lemma \ref{wp/ell-transform-ktauh} (3) in $({\wp}_{(k\tau)}\fr_\mu\fs_h)$
under the condition $(\star a)$ goes through here without change. As earlier, 
the last statement  of Lemma \ref{wp/ell-transform-ktauh}  (3)  follows from the above. 
This proves Lemma \ref{wp/ell-transform-ktauh} (3) in $({\wp}_{(k\tau)}\fr_\mu\fs_h)$
 under the condition $(\star b)$.
 
 \medskip
 
 \centerline{$\bullet$ $\ell$-transform}
 
 \medskip
 
 Now, we assume that 
 $\tsR_{({\wp}_{(k\tau)}\fr_\mu\fs_{h-1})}=\tsR_{\wp_k}$ and
 $\tsR_{({\wp}_{(k\tau)}\fr_\mu\fs_h)}=\tsR_{\ell_k}$.
 By Corollary \ref{ell-isom},
 $\rho_{\ell_k, \wp_k}: \tsR_{\ell_k} \to \tsR_{\wp_k}$ is an isomorphism.
 In this case, we let
 $$ \tZ_{\ell_k, \Ga} =\rho_{\ell_k, \wp_k}^{-1}(\tZ_{\wp_k, \Ga}),$$
 $$ \tZ^\dagger_{\ell_k, \Ga} =\rho_{\ell_k, \wp_k}^{-1}(\tZ^\dagger_{\wp_k, \Ga}).$$

 
 
 


 Consider any admissible affine chart $\fV$ of $\tsR_{\ell_k}$,
lying over a unique admissible affine chart $\fV'$ of $\tsR_{\wp_k}$,
such that $\tZ_{\wp_k,\Ga}\cap \fV' \ne \emptyset$.

First, we suppose that $\tZ_{{ \wp_k},\Ga} \cap \fV'$,  
or equivalently $\tZ^\dagger_{{ \wp_k},\Ga} \cap \fV'$, 
by Lemma \ref{wp/ell-transform-ktauh} (3) in $(\wp_k)$,
 is not contained in   the $\ell$-center $Z_{\chi_k} \cap \fV'$.

We set 
$$\tGa^\zero_\fV=\{y_\fV \mid y_\fV  \hbox{ is the proper transform of some $y_{\fV'} \in \tGa^\zero_{\fV'}$}\};$$
$$\tGa^\one_\fV=\{y_\fV \mid y_\fV  \hbox{ is the proper transform of some $y_{\fV'} \in \tGa^\one_{\fV'}$}\}.$$

Then, Lemma  \ref{wp/ell-transform-ktauh} (1), (2) and (3) follow
from the same  proofs  for the corresponding
cases of $\wp$-blowups.
 We avoid repetation.

 We now suppose that $\tZ_{{\wp_k},\Ga}\cap \fV'$, or equivalently
 $\tZ^\dagger_{{ \wp_k},\Ga} \cap \fV'$,
 by  Lemma \ref{wp/ell-transform-ktauh} (3) in $(\wp_k)$,
 is contained in  $Z_{\chi_k} \cap \fV'$ 
 
This case corresponds to the precious case under the condition $(\star a)$
where $y_1$ there corresponds to $y_{\fV, (\um, \uu_{F_k})}$ here,  and
$y_{\fV, (\um, \uu_{F_k})}$ is not
identically zero along $\tZ_{{ \ell_k},\Ga}$.
So, we follow the proof in that case.

Thus, we set, 
\begin{equation}\label{Gazero-ell-contained-in-bar} 
{\tGa}^\zero_\fV= \{\zeta_\fV, \; y_\fV \mid y_\fV  
\hbox{ is the proper transform of some $ y_{\fV'} \in \tGa^\zero_{\fV'} \-\chi_k$}\},
\end{equation}
\begin{equation}\label{Gaone-ell-contained-in-bar}
{\Ga}^\one_{\fV}= \{\ y_\fV \mid y_\fV  
\hbox{ is the proper transform of some $y_{\fV'} \in \tGa^\one_{\fV'}$} \}.
\end{equation}


Then, following the correponding proofs for the precious case 
{\it under the condition $(\star a)$
where $y_1$ is not identically zero,} 
Lemma  \ref{wp/ell-transform-ktauh} (1), (2) and (3) follow. 
But, we need to point out that here, 
 $ \zeta_\fV=\de_{\fV, (\um,\uu_{F_k})}$ is not a  variable in $\var_\fV$,
 but a free variable in $\var_\fV^\vee$.

Putting all together, this completes the proof of Lemma \ref{wp/ell-transform-ktauh}.    
\end{proof}

 We call $\tZ_{({\wp}_{(k\tau)}\fr_\mu\fs_{h}),\Ga}$ the $\wp$-transform of $Z_\Ga$ 
 in $\tsV_{({\wp}_{(k\tau)}\fr_\mu\fs_{h})}$ for any $(k\tau)\mu h \in \Index_{\Phi_k}$.
 We call $\tZ_{\ell_k}$ he $\wp$-transform of $Z_\Ga$ 
 in $\tsV_{\ell_k}$.

   We need the final case of Lemma 
\ref{wp/ell-transform-ktauh}. We set
$$\tZ_{\ell, \Ga}:=\tZ_{\ell_\up,\Ga}, \;\; \tZ^\dagger_{\ell, \Ga}
:=\tZ^\dagger_{ \ell_\up,\Ga}.$$

\begin{cor} \label{ell-transform-up} 
Fix any admissible smooth affine chart $\fV$ of $\tsR_\ell$ as described in
Lemma \ref{wp/ell-transform-ktauh}
such that $\tZ_{\ell,\Ga} \cap \fV \ne \emptyset.$
Then, $\tZ_{\ell,\Ga} \cap \fV$,  as a closed subscheme of $\fV$,
is defined by the following relations
\begin{eqnarray} 
\;\;\;\;\; y , \; \; \forall \;\;  y \in   \tGa^\zero_\fV { \subset \var^\vee_\fV}; 
\;\;\;  y -1, \; \; \forall \;\;  y \in  \tGa^\one_\fV; \nonumber \\
\cB_\fV^\gov, \; 
\cB^\frb_\fV, \; L_{\sF, \fV}.
\end{eqnarray}
Furthermore, the induced morphism $\tZ^\dagger_{\ell,\Ga} \to Z_\Ga$ is birational.
\end{cor}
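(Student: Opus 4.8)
The plan is to obtain Corollary \ref{ell-transform-up} as the terminal instance of the inductive Lemma \ref{wp/ell-transform-ktauh}. Recall the conventions fixed just before \eqref{final-schemes} and in the paragraph following \eqref{the set}: for $k=\up$ one writes $\tsR_\ell:=\tsR_{\ell_\up}$, $\tsV_\ell:=\tsV_{\ell_\up}$, $\tZ_{\ell,\Ga}:=\tZ_{\ell_\up,\Ga}$ and $\tZ^\dagger_{\ell,\Ga}:=\tZ^\dagger_{\ell_\up,\Ga}$, and $\ell_\up$ is a bona fide element of the totally ordered index set $\Om$ of \eqref{the set}, written in the form $(k\tau)\mu h$ via
\[
\ell_\up=(\up\,\ft_{F_\up})\,\rho_{\up \ft_{F_\up}}\,\bigl(\si_{(\up \ft_{F_\up})\rho_{\up \ft_{F_\up}}}+1\bigr).
\]
Hence Lemma \ref{wp/ell-transform-ktauh}, and in particular its parts (1) and (2), applies verbatim at the index $\ell_\up$, producing the closed subscheme $\tZ_{\ell,\Ga}\subset\tsV_\ell$, its distinguished irreducible component $\tZ^\dagger_{\ell,\Ga}$, and the induced morphisms to $Z_\Ga$.

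Next I would match the chart terminology of the corollary with that of the lemma. By Proposition \ref{meaning-of-var-wp/ell} (applied at the index $\ell_\up$) and Definition \ref{preferred-chart-ell}, the admissible affine charts of $\tsR_{\ell_\up}$ are smooth after a harmless shrinking, each comes equipped with the free-variable sets $\var_\fV$, $\var_\fV^\vee$, $\var_\fV^+$ and the labelled exceptional variables, and the preferred ones cover $\tsV_{\ell_\up}$. Since $\tZ_{\ell,\Ga}\subset\tsV_{\ell_\up}$, an admissible smooth chart $\fV$ with $\tZ_{\ell,\Ga}\cap\fV\ne\emptyset$ is exactly a chart of the kind treated in Lemma \ref{wp/ell-transform-ktauh} at the index $\ell_\up$ with $\tZ_{({\wp}_{(k\tau)}\fr_\mu\fs_h),\Ga}\cap\fV\ne\emptyset$. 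Then part (1) of that lemma supplies subsets $\tGa^\zero_\fV\subset\var_\fV^\vee$ and $\tGa^\one_\fV\subset\var_\fV$ for which $\tZ_{\ell,\Ga}\cap\fV$, as a closed subscheme of $\fV$, is cut out by $\{y\mid y\in\tGa^\zero_\fV\}$, $\{y-1\mid y\in\tGa^\one_\fV\}$, together with $\cB^\gov_\fV$, $\cB^\frb_\fV$ and $L_{\sF,\fV}$ --- precisely the displayed list of the corollary. One notes here that no primary relation $\bF_{\fV,j}$ survives, because $\up$ is the largest index and there is no $j>\up$ (cf. Lemma \ref{wp-transform-sVk-Ga}~(1), where such terms are still present for $k<\up$), and that no non-governing binomial survives, because those relations became dependent and were discarded after the $\vt$-blowups (Corollary \ref{eq-for-sV-vr}); both facts are already incorporated into Lemmas \ref{vt-transform-k} and \ref{wp/ell-transform-ktauh}. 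The ``furthermore'' clause is then part (2) of Lemma \ref{wp/ell-transform-ktauh} at the same index: the induced morphism $\tZ^\dagger_{\ell,\Ga}\to Z_\Ga$ is birational.

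The only step requiring any care is the bookkeeping of the previous two paragraphs --- checking that $\ell_\up$ lies in the range of Lemma \ref{wp/ell-transform-ktauh}, and that the ``admissible smooth affine chart of $\tsR_\ell$ as described in Lemma \ref{wp/ell-transform-ktauh}'' mentioned in the corollary is literally the chart used there, which is the content of Proposition \ref{meaning-of-var-wp/ell}. There is no analytic or geometric obstacle, since the entire content of the corollary has been absorbed into the inductive statements of Lemmas \ref{vt-transform-k} and \ref{wp/ell-transform-ktauh}. If one additionally wants the companion fact that these charts cover $\tZ_{\ell,\Ga}$ --- as is needed for the Jacobian analysis used in the proof of Theorem \ref{main2:intro} --- it follows immediately from $\tZ_{\ell,\Ga}\subset\tsV_\ell$ and the covering assertion in Proposition \ref{meaning-of-var-wp/ell}~(9).
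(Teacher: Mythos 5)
Your proposal is correct and takes essentially the same route the paper does: the paper offers no separate proof of Corollary \ref{ell-transform-up} precisely because it is the specialization of Lemma \ref{wp/ell-transform-ktauh}~(1) and~(2) at the terminal index $\ell_\up$, after fixing the notation $\tZ_{\ell,\Ga}:=\tZ_{\ell_\up,\Ga}$, $\tZ^\dagger_{\ell,\Ga}:=\tZ^\dagger_{\ell_\up,\Ga}$. Your added bookkeeping --- verifying that $\ell_\up\in\Om$, that the charts in the corollary are those of Proposition \ref{meaning-of-var-wp/ell}, and that no primary relations $\bF_{\fV,j}$ with $j>\up$ nor non-governing binomials survive --- is exactly the sanity checking the paper leaves implicit.
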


\section{The Main Theorem on the Final Scheme $\tsV_\ell$}\label{main-statement}

{\it 
 Let $p$ be an arbitrarily fixed prime number.
 Let $\mathbb F$ be either $\QQ$ or a finite field with $p$ elements.
In this entire section, 
every scheme  is defined over $\ZZ$, consequently,
 is defined over $\mathbb F$, and is considered as a scheme over 
 the perfect field $\mathbb F$.
}

\smallskip

\subsection{Preparing for applying the Jacobian criterion} $\ $

  Take any $(k\tau \mu h) \in \Index_{\Phi_k}$ (cf. \ref{indexing-Phi}). 
  Consider  the $\wp$-blowup at $(k\tau \mu h)$
  $$\pi: \tsR_{(\wp_{(k\tau)}\fr_\mu\fs_h)} \lra \tsR_{(\wp_{(k\tau)}\fr_\mu\fs_{h-1})}.$$
   Fix any admissible smooth affine
 chart $\fV$ of  $\tsR_{(\wp_{(k\tau)}\fr_\mu\fs_h)} $ 
   and we let $\fV'$ be the (unique) admissible smooth affine
 chart of $\tsR_{(\wp_{(k\tau)}\fr_\mu\fs_{h-1})}$ 
   such that $\fV$ lies over $\fV'$. 
    Then, we can assume that the induced morphism $\pi^{-1}(\fV') \lra \fV'$
   corresponds to the ideal of the form
   $$\langle y_0', y_1' \rangle,$$
   where $y_0'$ is a variable in $T^+_{\fV', (k\tau)}$, 
   $y_1'$ is a variable in $T^-_{\fV', (k\tau)}$, and $B_{\fV',(k\tau)}=T^+_{\fV', (k\tau)}-T^-_{\fV', (k\tau)}.$
  We let $\PP_{[\xi_0,\xi_1]}$ be the corresponding factor projective space
  such that $(\xi_0,\xi_1)$ corresponds to  $(y_0', y_1')$.
  
  Observe here that the set of variables in $T^\pm_{\fV', (k\tau)}$ corresponds a subset of
  divisors associated to $T^\pm_{(k\tau)}$, hence possesses a naturally induced total order.
  
  Given any point $\bz$ on a chart, we say a variable is blowup-relevant at $\bz$
  if it can appear in the local blowup ideal $\langle y_0', y_1' \rangle$ as above 
  such that the corresponding  blowup center contains $\bz$. 
For example, a variable is not blowup-relevant at $\bz$ if it does not vanish at $\bz$.
   
   \begin{lemma}\label{keepLargest} 
      Let $\langle y_0', y_1' \rangle$ be the local blowup ideal as in the above such that 
   $y_0'$ is the largest blowup-relevant variable at the point $\bz$
    among all the variables of $T^+_{\fV', (k\tau)}$. Then,
    the chart $\fV$ containing the point $\bz$ can be chosen to lie over $(\xi_1 \equiv 1)$ so that
    the proper transform $y_0$ of $y_0'$ belongs to the  term $T^+_{\fV, (k\tau)}$.
   \end{lemma}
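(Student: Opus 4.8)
The plan is to trace carefully through the construction of the $\wp$-blowup chart structure already set up in Proposition \ref{meaning-of-var-wp/ell} and its proof, and to show that the hypothesis on $y_0'$ forces a particular choice of affine chart. Recall from Definition \ref{wp-sets-kmu} and Definition \ref{order-phi} that a $\wp$-set $\phi=\{Y^+,Y^-\}$ in $(\wp_{(k\tau)}\fr_\mu)$ consists of one divisor $Y^+$ associated with $T^+_{(k\tau)}$ and one divisor $Y^-$ associated with $T^-_{(k\tau)}$; after passing to the chart $\fV'$ the two divisors $Y^+, Y^-$ are cut out by $y_0', y_1' \in \var^+_{\fV'}$ respectively. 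First I would fix the point $\bz \in \fV \cap \tsV_{(\wp_{(k\tau)}\fr_\mu\fs_h)}$ and recall that its image $\bz' \in \fV'$ lies on the $\wp$-center $Z'_{\phi_{(k\tau)\mu h}} = (y_0'=y_1'=0)$, so both $y_0'(\bz')=0$ and $y_1'(\bz')=0$.

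Next I would analyze the two standard affine charts of $\tsR_{(\wp_{(k\tau)}\fr_\mu\fs_h)}$ lying over $\fV'$, namely $(\fV'\times(\xi_0\equiv 1))\cap \tsR_{(\wp_{(k\tau)}\fr_\mu\fs_h)}$ and $(\fV'\times(\xi_1\equiv 1))\cap \tsR_{(\wp_{(k\tau)}\fr_\mu\fs_h)}$, using the defining relation \eqref{blowup-relation:wp/ell}: $y_j' = y_i'\xi_j$. On the chart over $(\xi_0\equiv 1)$, the variable $y_0'$ turns into the exceptional parameter $\zeta=\zeta_{\fV,(k\tau)\mu h}$ (by the proof of Proposition \ref{meaning-of-var-wp/ell}, the variable defining the blown-up first divisor becomes the exceptional one), so $y_0'$ does \emph{not} survive as a variable in $T^+_{\fV,(k\tau)}$; this is precisely case (1) of Proposition \ref{equas-wp/ell-kmuh} and there $y_0 \nmid T^+_{\fV,(k\tau)}$ in the sense that it is absorbed into $\zeta$. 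On the chart over $(\xi_1\equiv 1)$, by contrast, $y_1'$ becomes the exceptional parameter $\zeta$ and the proper transform $y_0$ of $y_0'$ remains an honest free variable in $\var^+_\fV$; by case (2) of Proposition \ref{equas-wp/ell-kmuh}, $T^+_{\fV,(k\tau)}$ retains $y_0$ as a factor (since $y_0$ divides $T^+_{\fV',(k\tau)}$ and the blown-up divisor from the minus side does not collapse the plus-term's factor). So the claim reduces to showing that $\bz$ actually lies on the chart over $(\xi_1\equiv 1)$: I would invoke the assumption that $y_0'$ is the \emph{largest} blowup-relevant variable among the variables of $T^+_{\fV',(k\tau)}$ at $\bz$, together with the ordering on $\cD^+_{(\wp_{(k\tau)}\fr_{\mu-1})}$ from Definition \ref{order-phi} (in which $x_{(\uu_s,\uv_s)}$ is the largest, $x_{\uu_k}$ the second largest, and exceptionals are ordered by reversing occurrence). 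The $\wp$-set $\phi_{(k\tau)\mu h}$ whose center passes through $\bz$ has its plus-divisor $Y^+$ equal to the one corresponding to the largest blowup-relevant variable, which is $y_0'$ by hypothesis; on the blowup this divisor is exactly the one blown out on the chart lying over $(\xi_1\equiv 1)$, where $\xi_1$ (the coordinate dual to $y_1'$) is set to $1$ and $\xi_0$ becomes the coordinate corresponding to the proper transform of $Y^-$. Wait—I should double-check the index convention: with $(\xi_0,\xi_1)$ corresponding to $(y_0',y_1')$, the chart $(\xi_1\equiv 1)$ has $y_0' = y_1'\xi_0$, so $y_1'$ becomes the exceptional parameter and $y_0$ persists, confirming the desired conclusion.

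The main obstacle I anticipate is bookkeeping precision rather than conceptual difficulty: I must make sure that "largest blowup-relevant variable among the variables of $T^+_{\fV',(k\tau)}$" really does single out $y_0'$ as the divisor $Y^+$ in the $\wp$-set whose center passes through $\bz$, and not merely as \emph{a} candidate. This requires checking that the canonical ordering on $\wp$-sets in Definition \ref{order-phi} (which orders $\phi_1<\phi_2$ by comparing $Y_1^+$ to $Y_2^+$ first) together with the inductive processing order of the blowups guarantees that, at the step $(k\tau)\mu h$ whose center contains $\bz$, the plus-side divisor is indeed the one attached to the largest blowup-relevant plus-variable — because any $\wp$-set with a strictly larger plus-divisor would have been blown up at an earlier step, removing $\bz$ from the locus where that larger variable vanishes, or else would already be disjoint from $\tsV$. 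Once that is pinned down, the rest is the routine chart computation via \eqref{blowup-relation:wp/ell} described above, and the conclusion that $y_0 \mid T^+_{\fV,(k\tau)}$ follows from Proposition \ref{equas-wp/ell-kmuh}(2a). I would close by noting that shrinking $\fV$ if necessary (as permitted throughout by Proposition \ref{meaning-of-var-wp/ell}) preserves admissibility and smoothness, so the chosen chart $\fV$ containing $\bz$ has all the asserted properties.
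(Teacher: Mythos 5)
Your chart-by-chart computation of the blowup is correct once you untangle the index convention, and you have correctly located the crux: one must justify why the point $\bz$ can be covered by a chart lying over $(\xi_1\equiv 1)$. But the argument you give for this step does not close it, and a direction error signals the misstep.

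You write that ``any $\wp$-set with a strictly larger plus-divisor would have been blown up at an earlier step, removing $\bz$ from the locus where that larger variable vanishes.'' Definition \ref{order-phi} orders $\wp$-sets by $Y^+$ first, so a strictly larger plus-divisor is processed \emph{later}, not earlier; that sentence is backwards. More importantly, even if one reverses the direction, the sentence establishes only that the larger plus-variables are nonvanishing at $\bz$ --- which is already built into the hypothesis ``$y_0'$ is the largest blowup-relevant variable'' --- and says nothing about which chart $\bz$ lands on. The problematic case is precisely when $\bz$ lies over a point $\bz'$ of the blowup center $(y_0'=y_1'=0)$: then the $\PP^1$-fiber over $\bz'$ contains both $[1{:}0]$ (only on $(\xi_0\equiv 1)$) and $[0{:}1]$ (only on $(\xi_1\equiv 1)$), and one cannot choose the chart at will without further input.

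The idea you are missing is \emph{termination} of $B_\fV$. The paper starts on the $(\xi_0\equiv 1)$ chart, where $y_0'$ becomes the exceptional parameter and the binomial takes the shape $B_\fV = a - \xi_1 b$. Since $y_0'$ was the largest blowup-relevant variable of $T^+_{\fV',(k\tau)}$ at $\bz$, every variable that survives in the plus-term $a$ after the blowup is already nonvanishing at $\bz$ --- here is where ``the order of the $\wp$-blowups'' is used --- so $B_\fV$ terminates at $\bz$. From $B_\fV(\bz)=0$ and $a(\bz)\ne 0$ one reads off $\xi_1(\bz)\ne 0$, which is exactly the invertibility of $\xi_1$ along $\tsV\cap\fV$ that permits switching to the $(\xi_1\equiv 1)$ chart (after shrinking). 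Then the chart formula $y_0'=y_1'\xi_0$ you already wrote down makes $\xi_0=y_0$ persist as the proper transform of $y_0'$ in $T^+_{\fV,(k\tau)}$, which is the conclusion. Your proof never derives, or even mentions, the termination of $B_\fV$ or the resulting invertibility of $\xi_1$, so the chart choice is asserted rather than established.
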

   \begin{proof} Using the notation before the statement of the lemma, 
   we can write,
   $$B_{\fV'} = a' y_0' - b' y_1'$$ 
   where $a'$ and $b'$ are some monomials.
   
   Suppose $\fV$  lies over $(\xi_0 \equiv 1)$. Then, by taking proper transforms, we obtain
    $$B_{\fV} = a  - \xi_1 b $$ 
    where $a$ and $b$ are some monomials.
    Because  $y_0'$ is the largest variable in the term $T^+_{\fV, (k\tau)}$, 
    by the order of the $\wp$-blowups, we have that $B_{\fV}$ 
    terminates. Hence, $\xi_1$ is invertible along
    $\tsV_{(\wp_{(k\tau)}\fr_\mu\fs_h)} \cap \fV$. Thus, 
    by shrinking the chart if necessary, we can switch to the chart 
    $(\xi_1 \equiv 1)$.
    
    Now, let $\fV$  lie over $(\xi_1 \equiv 1)$. Then, we obtain
    $$B_{\fV} = a \xi_0 -  b, $$ 
     where $a$ and $b$ are some monomials, and
   $\xi_0=y_0$ is the proper transform of $y_0'$. Hence, the statement follows.
   \end{proof}

We aim to show that the scheme $\tsV_\ell$ is smooth. The question is local.
 In the sequel, we will focus on a fixed closed point $\bz \in \tsV_\ell$ throughout.

Fix any preferred admissible chart of $\tsR_\ell$ containing $\bz$.
We let $\fV_{[0]}$ be the unique admissible affine chart of $\sR$ 
such that $\fV$ lies over $\fV_{[0]}$ and 
we let $\bz_0 \in \sV_{[0]}$ be the image point of $\bz$.




 \begin{defn}\label{pleasant-v} 
 Consider the ordered set of blocks of relations 
\begin{equation}\label{good-eq-list}\fG=\{\fG_{F_1} < \cdots <\fG_{F_\up}\}.
\end{equation}
 Fix and consider any variable $y \in \var_\fV$ that appears in some relation
 of a block $\fG_{F_k}$ of $\fG$ in the above, for some $k\in[\up]$. 
 We say that $y$ is pleasant if   $y$ does not appear in any relation of  any earlier block $\fG_{F_i}$ with $i<k$.
 \end{defn}

The main  statement that we will prove is

  \begin{lemma}\label{max-minor} 
  Fix any closed point $\bz \in \tsV_\ell$. Consider any $\bF \in \sF $.
  Then, there exists a preferred  admissible
  chart $\fV$ of $\tsR_\ell$ 
  containing the point $\bz$ such that the Jacobian $J(\fG_{\fV,F})$ 
admits a maximal minor $J^*(\fG_{\fV,F})$ such that
  it is an invertible square matrix at $\bz$,
   and all the variables that are used to compute $J^*(\fG_{\fV,F})$
 are pleasant with respect to the list
$\fG=\{\fG_{F_1} < \cdots <\fG_{F_\up}\}$.  
  \end{lemma}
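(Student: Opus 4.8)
The plan is to localize at $\bz$, pass to a carefully chosen preferred admissible chart of $\tsR_\ell$, and read the required minor off the explicit, essentially triangular, shape of the Jacobian of the block $\fG_F$, using the normal forms of the transformed governing and linearized relations from Propositions \ref{meaning-of-var-wp/ell} and \ref{equas-wp/ell-kmuh} together with the chart‑steering device Lemma \ref{keepLargest}. Fix $\bz\in\tsV_\ell$ and $F=F_k$; write $\cB^\gov_{F_k}=\{B_{(k\tau)}\mid\tau\in[\ft_{F_k}]\}$ with $B_{(k\tau)}=T^+_{(k\tau)}-T^-_{(k\tau)}$, where $T^+_{(k\tau)}=x_{(\uu_{s_\tau},\uv_{s_\tau})}x_{\uu_{F_k}}$ and $T^-_{(k\tau)}=x_{(\um,\uu_{F_k})}x_{\uu_{s_\tau}}x_{\uv_{s_\tau}}$, and $L_F=\sgn(s_F)x_{(\um,\uu_{F_k})}+\sum_\tau\sgn(s_\tau)x_{(\uu_{s_\tau},\uv_{s_\tau})}$. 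I would pick a preferred admissible chart $\fV$ of $\tsR_\ell$ through $\bz$ (such charts cover $\tsV_\ell$ by Proposition \ref{meaning-of-var-wp/ell}(9) and Definition \ref{preferred-chart-ell}), invoking Lemma \ref{keepLargest} at every $\wp$-blowup in the block $\fG_{F_k}$ so that, for each $\tau$, the proper transform of the largest blowup‑relevant variable of $T^+_{\fV',(k\tau)}$ stays inside $T^+_{\fV,(k\tau)}$. Since in the ordering of $\wp$-centers (Definition \ref{order-phi}) the $\vr$-divisor $X_{(\uu_{s_\tau},\uv_{s_\tau})}$ is the largest and the leading $\vp$-divisor $X_{\uu_{F_k}}$ the second largest divisor associated with $T^+_{(k\tau)}$, this keeps $x_{\fV,(\uu_{s_\tau},\uv_{s_\tau})}$ — and, when needed below, also $x_{\fV,\uu_{F_k}}$ — a \emph{linear} factor of the square-free monomial $T^+_{\fV,(k\tau)}$ (square-freeness: Proposition \ref{equas-wp/ell-kmuh}(3)). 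Because $B_{\fV,(k\tau)}$ is a defining relation of $\tsV_\ell$ (Proposition \ref{equas-wp/ell-kmuh}) and terminates on $\tsR_\ell$ (Proposition \ref{equas-wp/ell-kmuh}(3)), we have $T^+_{\fV,(k\tau)}(\bz)=T^-_{\fV,(k\tau)}(\bz)\ne0$, so in particular $x_{\fV,(\uu_{s_\tau},\uv_{s_\tau})}$ and $x_{\fV,\uu_{F_k}}$ are units at $\bz$.

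As columns I would take the $\vr$-variables $x_{\fV,(\uu_{s_\tau},\uv_{s_\tau})}$, $\tau\in[\ft_{F_k}]$: distinct coordinates of $\PP_{F_k}$, hence — together with their transforms — pleasant with respect to $\fG$ (Definition \ref{pleasant-v}), and each occurring among the relations of $\fG$ on $\fV$ only in $B_{\fV,(k\tau)}$, there only in the plus-term ($x_{(\uu_{s_\tau},\uv_{s_\tau})}$ is not a $\vp$-variable, so it is absent from $T^-_{\fV,(k\tau)}$; and on a chart over the $\vr$-standard chart of $\tsR_{\vt_{[k]}}$ it is absent from $L_{\fV,F_k}=1+\sgn(s_{F_k})y_{\fV,(\um,\uu_{F_k})}$, by Proposition \ref{meaning-of-var-wp/ell}(9)). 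Thus $\partial B_{\fV,(k\tau)}/\partial x_{\fV,(\uu_{s_\tau},\uv_{s_\tau})}=T^+_{\fV,(k\tau)}/x_{\fV,(\uu_{s_\tau},\uv_{s_\tau})}$ is a unit at $\bz$ and the $B$-rows against these columns form a diagonal block with invertible diagonal at $\bz$. For $L_{\fV,F}$ I split into the two sub-cases of Proposition \ref{meaning-of-var-wp/ell}(9). If $\fV$ lies over the $\vr$-standard chart of $\tsR_{\vt_{[k]}}$, I add the column $y_{\fV,(\um,\uu_{F_k})}$ — the transform of the leading $\vr$-variable $x_{(\um,\uu_{F_k})}$, hence pleasant; it appears in no $B_{\fV,(k\tau)}$ because the $\vt$-blowups eliminate $x_{(\um,\uu_{F_k})}$ from the governing binomials (Corollary \ref{no-(um,uu)}, Proposition \ref{eq-for-sV-vtk}), and $\partial L_{\fV,F}/\partial y_{\fV,(\um,\uu_{F_k})}=\sgn(s_{F_k})=\pm1$; so $J^*(\fG_{\fV,F})$ is block-diagonal of size $\ft_{F_k}+1$ with determinant $\pm\prod_\tau\!\bigl(T^+_{\fV,(k\tau)}/x_{\fV,(\uu_{s_\tau},\uv_{s_\tau})}\bigr)$, a unit at $\bz$.

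If instead $\fV$ lies over $(x_{(\um,\uu_{F_k})}\equiv1)$ of $\sR$ (so $x_{(\um,\uu_{F_k})}=1$ and the $\ell_k$-blowup is trivial on $\fV$), I add the column $x_{\fV,\uu_{F_k}}$, pleasant because the leading $\vp$-variable of $F_k$ occurs in no earlier block $\fG_{F_i}$, $i<k$ — the property built into the ordering $\fG_{F_1}<\cdots<\fG_{F_\up}$ (rank-$0$ blocks before rank-$1$ blocks; cf. Proposition \ref{leadingTerm}). Writing $L_{\fV,F}=\sgn(s_F)+\sum_\tau\sgn(s_\tau)g_\tau$ with $g_\tau$ the transform of $x_{(\uu_{s_\tau},\uv_{s_\tau})}$, which the $\wp$-blowups of $\fG_{F_k}$ keep linear in $x_{\fV,(\uu_{s_\tau},\uv_{s_\tau})}$ (attaching only exceptional monomial factors; cf. Corollary \ref{linear-or-vanish}), and using $\partial B_{\fV,(k\tau)}/\partial x_{\fV,\uu_{F_k}}=T^+_{\fV,(k\tau)}/x_{\fV,\uu_{F_k}}$ and $\partial L_{\fV,F}/\partial x_{\fV,\uu_{F_k}}=0$, the submatrix on the columns $\{x_{\fV,(\uu_{s_\tau},\uv_{s_\tau})}\}_\tau\cup\{x_{\fV,\uu_{F_k}}\}$ takes the block shape with leading diagonal block $\mathrm{diag}(a_\tau)$, last column $b=(b_\tau)_\tau$, last row $w=(w_\tau)_\tau$ and zero corner, where $a_\tau=T^+_{\fV,(k\tau)}/x_{\fV,(\uu_{s_\tau},\uv_{s_\tau})}$, $b_\tau=T^+_{\fV,(k\tau)}/x_{\fV,\uu_{F_k}}$, $w_\tau=\partial L_{\fV,F}/\partial x_{\fV,(\uu_{s_\tau},\uv_{s_\tau})}$. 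From $b_\tau/a_\tau=x_{\fV,(\uu_{s_\tau},\uv_{s_\tau})}/x_{\fV,\uu_{F_k}}$ and $\sum_\tau w_\tau x_{\fV,(\uu_{s_\tau},\uv_{s_\tau})}=L_{\fV,F}-\sgn(s_F)$ (Euler, $g_\tau$ linear) one obtains $\det J^*(\fG_{\fV,F})=-\bigl(\prod_\tau a_\tau\bigr)\sum_\tau w_\tau b_\tau/a_\tau=-\bigl(\prod_\tau a_\tau\bigr)x_{\fV,\uu_{F_k}}^{-1}\bigl(L_{\fV,F}-\sgn(s_F)\bigr)$, which at $\bz$ equals $\sgn(s_F)\bigl(\prod_\tau a_\tau\bigr)x_{\fV,\uu_{F_k}}^{-1}\ne0$. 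In either case $J^*(\fG_{\fV,F})$ is the sought invertible maximal minor in pleasant columns, which proves Lemma \ref{max-minor}.

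The main obstacle is the chart bookkeeping: checking that Lemma \ref{keepLargest} may be invoked simultaneously for all $\tau$ (the choices are independent, since the $\vr$-variables $x_{(\uu_{s_\tau},\uv_{s_\tau})}$ lie in pairwise disjoint governing binomials and are blown up last in the block) and compatibly with remaining on a chart of one of the two types of Proposition \ref{meaning-of-var-wp/ell}(9), and then pinning down the exact shapes of $L_{\fV,F}$ and $T^\pm_{\fV,(k\tau)}$ there — notably that the $\wp$- and $\ell_k$-blowups attach only monomials in exceptional variables to each $g_\tau$ while keeping it linear in its $\vr$-variable. With that in place, the closing linear algebra is immediate; summing these block structures over all $F\in\sF$ is what will then yield smoothness of $\tsV_\ell$.
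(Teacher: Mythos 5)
Your overall strategy matches the paper's: split on the two chart types of Proposition \ref{meaning-of-var-wp/ell}(9), steer charts with Lemma \ref{keepLargest} so that the largest blowup-relevant variable of each plus-term survives, and read a maximal minor off the resulting near-triangular Jacobian of the single block $\fG_{F_k}$. Your Case $(\alpha)$ (over $(x_{(\um,\uu_{F_k})}\equiv 1)$) computes the same minor as the paper's, and although your intermediate identity
$\det J^*=-\bigl(\prod_\tau a_\tau\bigr)x_{\fV,\uu_{F_k}}^{-1}\bigl(L_{\fV,F}-\sgn(s_F)\bigr)$
implicitly assumes $y_{\fV,\uu_{F_k}}\mid T^+_{\fV,(k\tau)}$ for \emph{every} $\tau$ — which fails for $\tau$ in $S_F^{\vr,\inc}$, where Lemma \ref{keepLargest} keeps the $\vr$-variable instead and $y_{\uu_F}$ may have been blown out of the plus term — the corresponding entries $w_\tau$ vanish at $\bz$, so the conclusion is still correct.

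However, Case $(\beta)$ (over a $\vr$-standard chart of $\tsR_{\vt_{[k]}}$) has a genuine gap. In that case $\fV$ lies over a chart $\fV_{[0]}$ of $\sR$ on which $x_{(\uu_{s_0},\uv_{s_0})}\equiv 1$ for some $s_0\in S_{F_k}\setminus s_{F_k}$ (necessarily $s_0\in S_F^{\vr,\ori}$, since $\bz\in\fV$). That index $s_0$ lies in $[\ft_{F_k}]$, so $x_{\fV,(\uu_{s_0},\uv_{s_0})}$ is \emph{not} a variable in $\var_\fV$ — it is the constant $1$. Your claimed column set $\{x_{\fV,(\uu_{s_\tau},\uv_{s_\tau})}\}_{\tau\in[\ft_{F_k}]}\cup\{y_{\fV,(\um,\uu_{F_k})}\}$ therefore contributes only $\ft_{F_k}$ genuine columns, one fewer than the $\ft_{F_k}+1$ rows, and the ``diagonal entry'' $\partial B_{\fV,(k\tau_0)}/\partial x_{\fV,(\uu_{s_0},\uv_{s_0})}$ for the missing column is undefined. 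The paper repairs exactly this hole: it drops the phantom column and instead uses $y_{\fV,\uu_F}$ (the proper transform of the leading $\vp$-variable $x_{\uu_F}$, possibly an exceptional variable) as the column paired with the row $B_{\fV,s_0}:\;a_0\,y_{\fV,\uu_F}-c_0$, for which $a_0$ is a unit at $\bz$ precisely because $x_{(\uu_{s_0},\uv_{s_0})}$ is invertible there; the remaining $*$-entries in the $y_{\uu_F}$-column sit below the diagonal and do not obstruct invertibility. So you need both $y_{\fV,\uu_F}$ \emph{and} $y_{\fV,(\um,\uu_F)}$ as columns in Case $(\beta)$; keeping only the second leaves the minor rectangular and the argument incomplete.
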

  
To prove this lemma,
    consider the governing binomial relations
 $$B_{\fV_{[0]}, s}:  x_{\fV_{[0]},(\uu_{s}, \uv_{s})}x_{\fV_{[0]}, \uu_F}-
 x_{\fV_{[0]}, (\um, \uu_F)}x_{\fV_{[0]},\uu_{s}} x_{\fV_{[0]},\uv_{s}}$$
 for all $s \in S_F \- s_F$.
We let 
$$S_F^{\vr,\ori}=\{ s \in S_F \- s_F \mid  x_{\fV_{[0]},(\uu_{s}, \uv_{s})} (\bz_0) \ne 0\}$$
and 
$$S_F^{\vr,\inc}=\{ s \in S_F \- s_F \mid  x_{\fV_{[0]},(\uu_{s}, \uv_{s})} (\bz_0) = 0\}$$
The two subsets $S_F^{\vr,\ori}$ and $S_F^{\vr,\inc}$ depend on the point $\bz$.

We divide the proof into two parts.

\subsection{Applying the Jacobian criterion: Case $(\alpha)$}
\label{alpha} $\ $

\medskip\noindent  
  {\it Case $(\alpha)$.  First, we assume 
  $x_{\fV_{[0]}, (\um,\uu_F)}(\bz_0)\ne 0$. }
  
  As  $x_{\fV_{[0]}, (\um, \uu_F)}(\bz_0)\ne 0$, 
  we can assume that $\fV$ lies over $(x_{(\um, \uu_F)} \equiv 1)$.
 
 We can write $$S_F^{\vr,\ori}=\{s_1, \cdots, s_\ell\} \;\; \hbox{and}\;\; S_F^{\vr,\inc}=\{t_1, \cdots, t_q\}$$
for some integers $l$ and $q$ such that $l + q= |S_F|-1$.

Then, on the chart $\fV_{[0]}$, the set $\cB^{\vr,\ori}_F$ consists of the following relations
    \begin{equation}\label{cB-vr-ori-alpha}  
    B_{\fV_{[0]}, s_i}: \;\;  x_{\fV_{[0]},(\uu_{s_i}, \uv_{s_i})}x_{\fV_{[0]}, \uu_F}-
  x_{\fV_{[0]},\uu_{s_i}} x_{\fV_{[0]},\uv_{s_i}} , \;\; 1 \le i \le l.
\end{equation}
By the relation $L_{\fV_{[0]}, F}(\bz_0)=0$, we see that there must exist $s \in S_F\-s_F$ such that
$x_{\fV_{[0]},(\uu_{s}, \uv_{s})}(\bz_0) \ne 0$, thus,
  the set $\cB^{\vr,\ori}_F$ must not be empty. Hence, $l>0$. This observation 
  will be used later.

  The set $\cB^{\vr, \inc}_F$ consists of the following relations
    \begin{equation}\label{cB-vr-inc-alpha}  
    B_{\fV_{[0]}, t_i}: \;\;  x_{\fV_{[0]},(\uu_{t_i}, \uv_{t_i})}x_{\fV_{[0]}, \uu_F}-
  x_{\fV_{[0]},\uu_{t_i}} x_{\fV_{[0]},\uv_{t_i}}, \;\; 1 \le i \le q
\end{equation}
for some integer $q \ge 0$ with $q=0$ when $\cB^{\vr, \inc}_F =\emptyset$.

We treat the relations of \eqref{cB-vr-ori-alpha} first.

First observe that during any of $\wp$-blowups, if a variable $y$ acquires 
an exceptional parameter $\ve$, then we have $\ve <y$ by Definition \ref{order-phi}.

Consider $B_{s_i}$ for any fixed $i \in [l]$.

We suppose $B_s \in \cB^\gov_{F}$ is the smallest binomial relation.

Assume that $\tsR_{(\wp_{(k\tau)}\fr_\mu\fs_h)} \lra \tsR_{(\wp_{(k\tau)}\fr_\mu\fs_{h-1})}$ 
is the last (non-trivial) $\wp$-blowup that makes $B_{s}$
terminate in $\tsR_{(\wp_{(k\tau)}\fr_\mu\fs_h)}$ for some $(k\tau)\mu h \in \Index_{\Phi_k}$
(cf. \eqref{indexing-Phi}). We let $\hs=(\wp_{(k\tau)}\fr_\mu\fs_h)$
and $\hs'=(k\tau)\fr_\mu\fs_{h-1})$.
Then,  over some chart $\fV_{\hs'}$ of $\tsR_{\hs'}$, the last $\wp$-blowup must correspond
to $(x_{\fV_{\hs'},\uu_F}, y_{\fV_{\hs'}})$ when $x_{\fV_{[0]},(\uu_{s}, \uv_{s})}(\bz_0)\ne0$
or $(x_{\fV_{\hs'},(\uu_{s}, \uv_{s})}, y_{\fV_{\hs'}})$ when $x_{\fV_{[0]},(\uu_{s}, \uv_{s})}(\bz_0)=0$,
where $y_{\fV_{\hs'}}$ is a variable in the minus term of $B_{\fV_{\hs'}, s}$.
We apply Lemma \ref{keepLargest} to $B_{\fV_{\hs'}, s}$.
Then,  either we have the $\vp$-variable $x_{\fV_{\hs},\uu_F}$
terminates and belongs to $B_{\fV_\hs, s}$
(e.g., when $x_{\fV_{[0]},(\uu_{s}, \uv_{s})} (\bz_0) \ne 0$),  or,
 $x_{\fV_{\hs'},\uu_F}$  turns into an exceptional-variable
$\ve_{\fV, \uu_F}$ (e..g, in the case when $x_{\fV_{[0]},(\uu_{s}, \uv_{s})} (\bz_0) = 0$).
Further, $x_{\fV_{\hs},\uu_F}$ or $\ve_{\fV_\hs, \uu_F}$
also  appears in all the remaining binomials that are larger than $B_s$ (in this special
case, it is just all  the remaining binomials since $B_s$ is assumed to be smallest; 
we term it this way so that the same line of arguments can be reused later).

During the $\wp$-blowups with respect to $B_s$, after the variable $x_{\uu_F}$
terminates or becomes exceptional, any further $\wp$-blowup must correspond
 $(x_{\fV_{\hs'},(\uu_{s}, \uv_{s})}, y_{\fV_{\hs'}})$ when $x_{\fV_{[0]},(\uu_{s}, \uv_{s})}(\bz_0)=0$.
 But, such a $\wp$-blowup will not affect {\it the plus term} of $B_{\fV_\hs, t}$ with $t \ne s$. 

In any case, $x_{\fV_{\hs},\uu_F}$ or $\ve_{\fV, \uu_F}$, remains to be second largest variable,
second only the $\vr$-variables
in $B_{\fV_\hs, s_i}$ with $s_i >s$. 
 
 We then move on to the second smallest binomial relation of $\cB^\gov_F$ 
 and repeat all the above arguments, until 
it is the turn to start the process of $\wp$-blowups with respect to $B_{s_i}$.

Then,  because $x_{\fV_{\hs},\uu_F}$ or $\ve_{\fV, \uu_F}$, 
 remains to be the largest blowup-relevant variable in $B_{\fV_\hs, s_i}$
 (since $x_{\fV_{[0]},(\uu_{s_i}, \uv_{s_i})}(\bz_0)\ne0$), we can then
 apply Lemma \ref{keepLargest} to  $B_{\fV_\hs, s_i}$ to obtain that 
 that there exists a chart $\fV$
containing the point $\bz$  such that we have
\begin{equation}\label{cB-ori}  \nonumber 
    B_{\fV, s_i}: \;\; a_ix_{\fV_\vt,(\uu_{s_i}, \uv_{s_i})} y_{\fV,\uu_F} - c_i
\end{equation}
for some monomial $a_i$ and $c_i$,
where $y_{\fV,\uu_F}$ is either the $\vp$-variable $x_{\fV,\uu_F}$
or the proper transform of an exceptional-variable $\ve_{\uu_F}$. 


Hence,  by shrinking the charts if necessary, we conclude that 
 that there exists a chart $\fV$
containing the point $\bz$  such that we have
\begin{equation}\label{cB-vr-ori} 
   B_{\fV, s_i}: \;\; a_i x_{\fV_\vt,(\uu_{s_i}, \uv_{s_i})} y_{\fV,\uu_F} - c_i, \;\; \hbox{for all} \;\; i \in [l]
\end{equation}
where $y_{\fV,\uu_F}$ is either the $\vp$-variable $x_{\fV,\uu_F}$
or the proper transform of an exceptional-variable $\ve_{ \uu_F}$ such that all of these
relations  terminate at $\bz$.

Now consider $B_{\fV_\vt, t_i}$ with $i \in [q]$.


Because $x_{\fV_{[0]},(\uu_{t_i}, \uv_{t_i})}(\bz_0)=0$ and 
$x_{\fV_{[0]},(\uu_{t_i}, \uv_{t_i})}$ is the largest
variable in the plus term of $B_{\fV_{[0]}, t_i}$, 
we can  apply Lemma \ref{keepLargest} directly to  $B_{t_i}$ for all $i \in [q]$ to obtain that 
 there exists a chart $\fV$
containing the point $\bz$  such that we have
$$B_{\fV, t_i}: \;\;  b_i x_{\fV, (\uu_{t_i}, \uv_{t_i})} - d_i , i \in [q].$$
where $b_i$ and $d_i$ are some monomials for all $i \in [q]$.
 
Put all together, shrinking the charts if necessary, we conclude that 
there exists a chart $\fV$ of $\tsR^\circ_\ell$, containing the point $\bz$ such that we have
\begin{eqnarray}\label{cB-vrChart-final}  
   B_{\fV, s_i}: \;\; a_i x_{\fV,(\uu_{s_i}, \uv_{s_i})} y_{\fV,\uu_F} - c_i, \;\; i \in [l]  \\
  B_{\fV, t_i}: \;\;  b_i x_{\fV,(\uu_{t_i}, \uv_{t_i})} - d_i , \;\; i \in [q]. \;\;\;\;\;\;\;\;\nonumber
\end{eqnarray}

Further, because $x_{\fV_{[0]}, (\um,\uu_F)}(\bz_0)\ne 0$,
  the $\ell$-blowups do not affect the (unique) chart $\fV_\vt$ 
  of $\tsR_\vt$ which $\fV$ lies over. Hence, we have
\begin{equation}\label{for-1column}
L_{\fV, F} =\sgn(s_F) + \sum_{i=1}^l \sgn (s_i) x_{\fV, (\uu_{s_i},\uv_{s_i})}
  + \sum_{i=1}^q \sgn (s_i) e_i x_{\fV, (\uu_{t_i},\uv_{t_i})}
  \end{equation}
  where $e_i$ are monomials in exceptional variables such that $e_i(\bz)=0$,
  for all $i \in [q]$.

 {\it As the chart $\fV$ is fixed and is clear from the context, in the sequel, to save space,
we will selectively drop some subindex $``\ \fV \ "$. For instance, we may write
$y_{\uu_F}$ for $y_{\fV,\uu_F}$, $x_{(\uu_{s_1},\uv_{s_1})}$ for $x_{\fV, (\uu_{s_1},\uv_{s_1})}$, etc.
A confusion is unlikely.
}

\smallskip


We  introduce the following maximal minor of the Jacobian 
  $J(\cB^\gov_{\fV,F}|_{\tGa_\fV}, L_{\fV,F}|_{\tGa_\fV})$
$$J^*(\cB^\gov_{\fV,F}|_{\tGa_\fV}, L_{\fV,F}|_{\tGa_\fV})= {{\partial(B_{\fV, s_1}|_{\tGa_\fV} \cdots B_{\fV, s_{\l}}|_{\tGa_\fV}, B_{\fV, t_1}|_{\tGa_\fV} \cdots B_{\fV, t_q}|_{\tGa_\fV},
L_{\fV,F}|_{\tGa_\fV})} \over {{\partial(y_{\uu_F},
x_{(\uu_{s_1},\uv_{s_1})} \cdots x_{(\uu_{s_{\l}},\uv_{s_{\l}})},
x_{(\uu_{t_1},\uv_{t_1})} \cdots x_{(\uu_{t_q},\uv_{t_q})}
 )}}} .$$


Then, one calculates and finds that at the point $\bz$, it is equal to
\begin{eqnarray} \nonumber
{\footnotesize
\left(
\begin{array}{cccccccccc}
a_1x_{(\uu_{s_1}, \uv_{s_1})} & a_1y_{\uu_F}   & \cdots & 0 & 0  & \cdots &0 \\
\vdots \\
a_l x_{(\uu_{s_{\l}}, \uv_{s_{\l}})} & 0 &  \cdots & a_l y_{\uu_F} & 0 &  \cdots & 0\\
* & 0 & \cdots & 0 & b_1 & \cdots & 0 \\
\vdots \\
* & 0 & \cdots & 0 & 0 & \cdots & b_q \\
0 & \sgn (s_1)&  \cdots  & \sgn (s_{\l}) & 0 & \cdots & 0
\end{array}
\right) (\bz).
}
\end{eqnarray}
Recall here that we have $l >0$. 

We can use the last $q$ columns to cancel the entries marked $``* "$ in the first column without
affecting the remaining entries.
  
  Then, multiplying the first column by $-y_{\uu_F}$ ($\ne 0$ at $\bz$), we obtain
\begin{eqnarray} \nonumber
{\footnotesize
\left(
\begin{array}{cccccccccc}
-a_1x_{(\uu_{s_1}, \uv_{s_1})}y_{\uu_F} & a_1y_{\uu_F}   & \cdots & 0 & 0  & \cdots &0 \\
\vdots \\
- a_l x_{(\uu_{s_{\l}}, \uv_{s_{\l}})} y_{\uu_F}& 0 &  \cdots & a_l y_{\uu_F} & 0 &  \cdots & 0\\
0 & 0 & \cdots & 0 & b_1  & \cdots & 0 \\
\vdots \\
0 & 0 & \cdots & 0 & 0 & \cdots & b_q   \\
0 & \sgn (s_1)&  \cdots  & \sgn (s_{\l}) & 0 & \cdots & 0
\end{array}
\right) (\bz).
}
\end{eqnarray}
Multiplying the $(i+1)$-th column by $x_{(\uu_{s_i}, \uv_{s_i})}$ and adding it to the first column 
for all $1\le i\le  {\l}$, 
we obtain
\begin{eqnarray} \nonumber
{\footnotesize
\left(
\begin{array}{cccccccccc}
0 & a_1y_{\uu_F}   & \cdots & 0 & 0  & \cdots &0 \\
\vdots \\
0 & 0 &  \cdots & a_l y_{\uu_F} & 0 &  \cdots & 0\\
0 & 0 & \cdots & 0 & b_1   & \cdots & 0 \\
\vdots \\
0 & 0 & \cdots & 0 & 0 & \cdots & b_q  \\
\sum_{i=1}^l \sgn (s_i) x_{(\uu_{s_i}, \uv_{s_i})}  & \sgn (s_1)&  \cdots  & \sgn (s_{\l}) & 0 & \cdots & 0
\end{array}
\right) (\bz).
}
\end{eqnarray}
But, at the point $\bz$, by \eqref{for-1column}, we have
$$\sum_{i=1}^l \sgn (s_i) x_{(\uu_{s_i}, \uv_{s_i})} (\bz) = - \sgn (s_F) \ne 0.$$
Thus, we conclude that 
$J^*(\cB^\ori_{\fV,F}|_{\tGa_\fV}, L_{\fV,F}|_{\tGa_\fV})$ is a square matrix of full rank at $\bz$,
and one sees that all the variables used to compute it are pleasant with respect to
the list \eqref{good-eq-list}. More precisely, $x_{(\uu_{s_i}, \uv_{s_i})}$ and 
 $x_{(\uu_{t_j}, \uv_{t_j})}$ are pleasant because they uniquely appear in the block
 $\fG_{F}$. The variable $y_{\fV, \uu_F}$ does not appear in the block $\fG_{\fV, F'}$ with $F' <F$,
 because all the relations of $\fG_{F'}$, 
 terminate before $\wp$- and $\ell$-blowups
 with respect to the relations of the block of $\fG_F$ are performed.


 
 \subsection{Applying the Jacobian criterion: Case $(\beta)$}
\label{beta} $\ $

 \medskip\noindent  
  {\it Case $(\beta)$.  Next, we assume 
  $x_{\fV_{[0]}, (\um,\uu_F)}(\bz_0)= 0$. }

  As  $x_{\fV_{[0]}, (\um, \uu_F)}(\bz_0)= 0$, 
we can assume  $\fV$ lies over $(x_{(\uv_{s_0}, \uv_{s_0})} \equiv 1)$ for some 
$s_0 \in S_F \- s_F$.

In this case, we can write 
 $$S_F^{\vr,\ori}=\{s_0, s_1, \cdots, s_\ell\} \;\; \hbox{and}\;\; S_F^{\vr,\inc}=\{t_1, \cdots, t_q\}$$
for some integers $l$ and $q$ such that $l + q= |S_F|-2$.

Then, on the chart $\fV_{[0]}$, the set $\cB^{\vr,\ori}_F$ consists of the following relations
    \begin{eqnarray}\label{cB-vr-ori-beta}  
     B_{\fV_{[0]}, s_0}: \;\;  x_{\fV_{[0]},\uu_F} -
  x_{\fV_{[0]}, (\um, \uu_F)}x_{\fV_{[0]},\uu_{s_0}}  x_{\fV_{[0]},\uv_{s_0}} \;\;\;\;\; \\
 \;\;\;\;   B_{\fV_{[0]}, s_i}: \;\;  x_{\fV_{[0]},(\uu_{s_i}, \uv_{s_i})}x_{\fV_{[0]}, \uu_F}-
x_{\fV_{[0]}, (\um, \uu_F)}  x_{\fV_{[0]},\uu_{s_i}} x_{\fV_{[0]},\uv_{s_i}} , \;\; 1 \le i \le l.
\end{eqnarray}

  The set $\cB^{\vr, \inc}_F$ consists of the following relations
    \begin{equation}\label{cB-vr-inc-beta}  
    B_{\fV_{[0]}, t_i}: \;\;  x_{\fV_{[0]},(\uu_{t_i}, \uv_{t_i})}x_{\fV_{[0]}, \uu_F}-
 x_{\fV_{[0]}, (\um, \uu_F)} x_{\fV_{[0]},\uu_{t_i}} x_{\fV_{[0]},\uv_{t_i}}, \;\; 1 \le i \le q
\end{equation}
for some integer $q \ge 0$ with $q=0$ when $\cB^{\vr, \inc}_F =\emptyset$.


 By Corollary \ref{no-(um,uu)}, we can assume that $\fV$ lies over 
 a preferred chart, that is, in this case, the $\vr$-chart with respect to $F$. 
 Then, by Proposition \ref{eq-for-sV-vtk},  we have
  \begin{eqnarray}\label{cB-vrChart}   
     B_{\fV_\vt, s_0}: \;\;  x_{\fV_\vt,\uu_F} -
 \tilde x_{\fV_\vt,\uu_{s_0}} \tilde x_{\fV_\vt,\uv_{s_0}} \;\;\;\;\; \\
 B_{\fV_\vt, s_i}:  x_{\fV_\vt,(\uu_{s_i}, \uv_{s_i})} x_{\fV_\vt,\uu_F}-
 \tilde x_{\fV_\vt,\uu_{s_i}} \tilde x_{\fV_\vt,\uv_{s_i}} , 1 \le i \le l=|S_F| -2 \nonumber \\
 B_{\fV_\vt, t_i}:  x_{\fV_\vt,(\uu_{t_i}, \uv_{t_i})} x_{\fV_\vt,\uu_F}-
 \tilde x_{\fV_\vt,\uu_{t_i}} \tilde x_{\fV_\vt,\uv_{t_i}} ,  \;\;  i \in [q],
\end{eqnarray}
\begin{equation}\label{vrChart-LF-for-lt-inc}
 L_{\fV_\vt, F} =\sgn(s_F) \de_{\fV_\vt, (\um, \uu_F)}
+ \sum_{s \in S_F\- s_F} \sgn (s) x_{\fV_\vt, (\uu_{s},\uv_{s})}
  \end{equation}
where $\fV_\vt$ is the unique chart of $\tsR_\vt$ that $\fV$ lies over.

We treat the relation $B_{\fV_\vt, s_0}$ first. 

Notice that
$x_{\fV_\vt,\uu_F}$ is the largest variable in the plus-term of $B_{\fV_\vt, s_0}$. 
If $B_{s_0}$ is the smallest in $\cB^\gov_F$, then we can  apply
Lemma \ref{keepLargest} directly to $B_{\fV_\vt, s_0}$
 and conclude that there exists a chart $\fV$ containing $\bz$
such that we have
$$B_{\fV, s_0}: \;\; a_0 x_{\fV,\uu_F} - c_0$$
for some monomial $a_0$ and $c_0$.

Suppose $B_{s_0}$ is not the smallest. Then by the same lines of 
 arguments applied for $B_{\fV, s_i}$ with $i \in [l]$ as in {\it Case $(\alpha)$},
we can  obtain   that there exists a chart $\fV$
containing the point $\bz$  such that we have
\begin{equation}\label{cB-vt-ori-beta}  \nonumber 
     B_{\fV, s_0}: \;\; a_0 y_{\fV,\uu_F} - c_0
\end{equation}
where $y_{\fV,\uu_F}$ is either the $\vp$-variable $x_{\fV,\uu_F}$
or the proper transform of an exceptional-variable $\ve_{\fV', \uu_F}$. 

Now consider $B_{\fV_\vt, s_i}$ with $ i \in [l]\}$.
because $x_{\fV_{[0]},(\uu_{s_i}, \uv_{s_i})}(\bz_0) \ne 0$, one sees that 
there exists a chart $\fV$ containing $\bz$
such that we have
$$B_{\fV, s_i}: \;\; a_i x_{\fV,(\uu_{s_i}, \uv_{s_i})} - c_i, \;\; i \in [l]$$
for some monomial $a_i$ and $c_i$.

Next, consider $B_{\fV_\vt, t_i}$ with $i \in [q]$.

Because $x_{\fV_\vt,(\uu_{t_i}, \uv_{t_i})}$ is the largest blowup-relevant 
variable in the plus term of $B_{\fV_\vt, t_i}$, 
we can  apply Lemma \ref{keepLargest} to  $B_{t_i}$ for all $i \in [q]$ to obtain that 
 there exists a chart $\fV$
containing the point $\bz$  such that we have
$$B_{\fV, t_i}: \;\; b_i x_{\fV,(\uu_{t_i}, \uv_{t_i})} - d_i , i \in [q]$$
for some monomial $b_i$ and $d_i$.

Put all together, shrinking the charts if necessary, we conclude that 
there exists a chart $\fV$ containing the point $\bz$ such that we have
\begin{eqnarray}\label{cB-beta-all}  
B_{\fV, s_0}: \;\; a_0 y_{\fV,\uu_F} - c_0        \;\;\;\;\;\;\;\;\;\;\;\; \;\;\;\;\;\;\;\;\;\;\;\;\;\;\;           \\ 
  B_{\fV, s_i}: \;\; a_i x_{\fV,(\uu_{s_i}, \uv_{s_i})} - c_i, \;\; i \in [l]  \;\;\;\;\;\;\;\;\; \nonumber \\
 B_{\fV, t_i}: \;\;  b_i x_{\fV,(\uu_{t_i}, \uv_{t_i})} - d_i , \;\; i \in [q]. \;\;\;\;\;\;\;\;  \nonumber
\end{eqnarray}




Furthermore, by Proposition \ref{meaning-of-var-wp/ell} (9),  
 we can choose the chart $\fV$ such that
 $$L_{\fV, F}= 1 +\sgn (s_F) y_{\fV, (\um,\uu_F)}$$
  where $y_{\fV, (\um,\uu_F)}$ is the variable for the proper transform of
the divisor $E_{\ell, \vt_k}$ and is pleasant with respect to the list \eqref{good-eq-list}.

Now, we  introduce the following maximal minor of the Jacobian $J(\fG_{\fV,F})$                     
$$J^*(\fG_{\fV,F}|_{\tGa_\fV})= {{\partial(B_{\fV, s_0}|_{\tGa_\fV},
B_{\fV, s_1}|_{\tGa_\fV} \cdots B_{\fV, s_{\l}}|_{\tGa_\fV}), B_{\fV, t_1}|_{\tGa_\fV} \cdots 
B_{\fV, t_q}|_{\tGa_\fV}, L_{\fV, F}|_{\tGa_\fV})}
 \over {{\partial(  y_{ \uu_F},
x_{(\uu_{s_1},\uv_{s_1})} \cdots x_{(\uu_{s_{\l}},\uv_{s_l})}}},
x_{(\uu_{t_1},\uv_{t_1})} \cdots x_{(\uu_{t_q},\uv_{t_q})},
 y_{\fV, (\um,\uu_F)})}. $$

Then, one calculates and finds that at the point $\bz$, it is equal to
\begin{eqnarray} \nonumber
\left(
\begin{array}{cccccccccc}
 a_0 & 0 & \cdots   & 0  & 0   & \cdots & 0 & 0\\
*  &  a_1  & \cdots &   0  & 0 & \cdots & 0 & 0\\
\vdots \\
*  & 0 & \cdots & a_l   & 0   & \cdots & 0 & 0       \\
*  & 0&  \cdots & 0 & b_1 & \cdots  &0     & 0\\
\vdots \\
*  &  0 & \cdots & 0 & 0 & \cdots & b_l         & 0     \\
 0 & * & \cdots & *&  * & \cdots  & * &   \sgn (s_F) 
\end{array}
\right) (\bz).
\end{eqnarray}
Thus, we conclude that 
$J^*(\cB^\gov_{\fV,F}|_{\tGa_\fV})$ 
is a square matrix of full rank at $\bz$, and all the variables that are used to compute it are pleasant.
  
\medskip
{\bf Proof of Lemma \ref{max-minor}.}

\begin{proof} 
It follows immediately by combining the computations in \S \ref{alpha} and \S \ref{beta}.
   \end{proof}

  \subsection{Making conclusion on smoothness} $\ $

   \begin{defn}\label{disjoint-smooth} 
A scheme $X$ is smooth if it is a disjoint union of finitely many connected smooth schemes of
possibly  various dimensions.
\end{defn}

\begin{thm} \label{main-thm}
 Let $\Ga$ be any subset $\var_{\bU}$.
Assume that $Z_\Ga$ is integral.
Let $\tZ_{\ell,\Ga}$ be  the $\ell$-transform of $Z_\Ga$ in $ \tsV_{\ell}$.
Then,  $\tZ_{\ell,\Ga}$ is smooth over  $\Spec \mathbb F$.
Consequently,  $\tZ^\dagger_{\ell,\Ga}$ is smooth over  $\Spec \mathbb F$.
 
 In particular,  when $\Ga=\emptyset$, we obtain that
$\tsV_\ell$ is smooth  over $\Spec \mathbb F$.
\end{thm}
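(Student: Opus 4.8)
The statement is local, so the plan is to fix a closed point $\bz \in \tZ_{\ell,\Ga}$ and produce a preferred admissible affine chart $\fV$ of $\tsR_\ell$ on which $\tZ_{\ell,\Ga}\cap\fV$ is cut out by an explicit system of equations whose Jacobian has full rank at $\bz$. The starting point is Corollary \ref{ell-transform-up}: on any such chart, $\tZ_{\ell,\Ga}\cap\fV$ is defined by the linear equations $y$ ($y\in\tGa^\zero_\fV$), $y-1$ ($y\in\tGa^\one_\fV$), together with $\cB^\gov_\fV$, $\cB^\frb_\fV$, and $L_{\sF,\fV}$. The equations $y$ and $y-1$ are coordinate hyperplanes and may be used to simply eliminate the variables in $\tGa^\zero_\fV \sqcup \tGa^\one_\fV$; thus it suffices to analyze the restricted governing relations $\cB^\gov_\fV|_{\tGa_\fV}$, the restricted $\frb$-binomials $\cB^\frb_\fV|_{\tGa_\fV}$, and the restricted linearized $\pl$ relations $L_{\sF,\fV}|_{\tGa_\fV}$.

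The core input is Lemma \ref{max-minor} (via its two cases in \S\ref{alpha} and \S\ref{beta}): for each $\bF\in\sF$, after possibly shrinking and reselecting the chart $\fV$ containing $\bz$, the Jacobian $J(\fG_{\fV,F}|_{\tGa_\fV})$ of the block $\fG_F=\cB^\gov_F\sqcup\{L_F\}$ admits a maximal minor $J^*(\fG_{\fV,F}|_{\tGa_\fV})$ that is an invertible square matrix at $\bz$, and — crucially — the variables used to compute this minor are \emph{pleasant} with respect to the ordered list $\fG=\{\fG_{F_1}<\cdots<\fG_{F_\up}\}$, i.e. they do not occur in any earlier block. The plan is then: (i) first show $\tsV_\ell$ (the case $\Ga=\emptyset$) is smooth by running Lemma \ref{max-minor} block by block — because the witnessing variables for block $\fG_{F_k}$ are pleasant, the various $J^*(\fG_{\fV,F_k})$ occupy disjoint sets of columns, so their direct sum is an invertible square submatrix of the full Jacobian of $\{\cB^\gov_\fV, L_{\sF,\fV}\}$ at $\bz$; hence $\tsV_\ell\cap\fV$ has, at $\bz$, the dimension predicted by $\#\var_\fV - |\cB^\gov| - |\sF| + (\text{shift})$, and by the dimension identity \eqref{dim} this forces smoothness at $\bz$. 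This also has the key consequence that the ideal of $\tsV_\ell\cap\fV$ is generated by the governing binomials and linearized $\pl$ relations alone, so that $\cB^\frb_\fV$ becomes dependent and can be discarded. (ii) Then, for general $\Ga$, the restricted system $\{\cB^\gov_\fV|_{\tGa_\fV}, L_{\sF,\fV}|_{\tGa_\fV}\}$ is treated the same way: the proofs in \S\ref{alpha}--\S\ref{beta} go through verbatim after setting the $\tGa_\fV$-variables to their constant values, yielding the block-diagonal invertible minor, hence $\tZ_{\ell,\Ga}\cap\fV$ is smooth at $\bz$.

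To conclude smoothness in the sense of Definition \ref{disjoint-smooth}, I would note that $\tZ_{\ell,\Ga}$ is a scheme of finite type over the perfect field $\mathbb F$, that the Jacobian criterion applies at every closed point by the above, and that a finite-type $\mathbb F$-scheme smooth at all closed points is a disjoint union of finitely many connected smooth schemes (of possibly varying dimensions, since different charts/blocks may cut out components of different dimensions) — exactly the statement required. Since $\tZ^\dagger_{\ell,\Ga}$ is, by Lemma \ref{wp/ell-transform-ktauh}, a connected component of $\tZ_{\ell,\Ga}$, it inherits smoothness. The main obstacle is purely bookkeeping rather than conceptual: one must verify that the column index sets used for the various block minors $J^*(\fG_{\fV,F_k})$ are genuinely disjoint, which is precisely what pleasantness guarantees, and that after discarding $\tGa_\fV$-variables and the $\frb$-binomials no rank is lost — this requires carefully matching the count of remaining equations against the count of remaining free variables using \eqref{dim}, together with the observation (from Corollary \ref{ell-transform-up} and Proposition \ref{meaning-of-var-wp/ell}) that on a preferred chart the $\fl$-exceptional parameters $\de_{\fV,(\um,\uu_F)}$ live in $\var^\vee_\fV$ and each $L_{\fV,F}$ with $F=F_j$, $j\in[k]$, has the nondegenerate form $1+\sgn(s_{F_j})y_{\fV,(\um,\uu_{F_j})}$.
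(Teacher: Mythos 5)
Your plan is correct and follows essentially the same route as the paper: working on a preferred admissible chart from Corollary \ref{ell-transform-up}, eliminating the $\tGa_\fV$-constrained variables, invoking Lemma \ref{max-minor} block by block with pleasantness to assemble the invertible minor \eqref{the-grand-matrix}, treating $\Ga=\emptyset$ first so that $\cB^\frb_\fV$ may be discarded, and then applying the Jacobian criterion for general $\Ga$. One cosmetic remark: pleasantness only keeps a block's witnessing variables out of \emph{earlier} blocks, so the assembled minor is block lower-triangular (as in \eqref{the-grand-matrix}) rather than literally a direct sum — the off-diagonal $*$ entries below the diagonal may be nonzero — but invertibility of the diagonal blocks is all that is needed, and your conclusion stands.
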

\begin{proof}
Let $\Ga$ be any subset $\var_{\bU}$. Assume that $Z_\Ga$ is integral.

We let $\tZ_{\ell,\Ga}$ be  the $\ell$-transform of $Z_\Ga$ in $ \tsV_{\ell}$.
(As mentioned earlier, $Z_\Ga$ and $\tZ_{\ell,\Ga}$ are considered as $\FF$-schemes.)
Recall that  $\tZ_{\ell,\emptyset}=\tsV_\ell$ when $\Ga=\emptyset$.

Fix any closed point $\bz \in  \tZ_{\ell,\Ga} \subset \tsV_\ell.$
We let $\fV$ be a admissible affine chart containing the point $\bz$ 
as chosen in Lemma \ref{max-minor}. In the sequel, we call 
such a chart a preferred chart for the point $\bz$.

By Corollary \ref{ell-transform-up},
the scheme $\tZ_{\ell, \Ga} \cap \fV$, if nonempty, 
 as a closed subscheme of
the chart $\fV$ of $\tsR_\ell$,  is defined by 
\begin{eqnarray} 
 y, \; \; y \in  \tGa^\zero_\fV; \;\;\;\; y-1, \;\;y \in \tGa^\one_\fV; \;\;\; \label{eq-for-Ga} \\ 
\cB^\gov_\fV, \; \cB^\frb_\fV, \; L_{\sF, \fV}. \;\;\;  \nonumber
 \end{eqnarray}
We let $$\tGa_\fV=\tGa^\zero_\fV \sqcup \tGa^\one_\fV.$$
By setting $y=0$ for all $y \in \tGa^\zero_\fV $ and 
$y=1$ for all $y \in \tGa^\one_\fV $, we obtain a smooth open subset $\fV_\Ga$ of $\fV$:
 $$\fV_\Ga =\{ y =0, \; y \in  \tGa^\zero_\fV;  \;  y=1, \; y \in \tGa^\one_\fV\} \subset \fV.$$
The open susbet $\fV_\Ga$ comes equipped with the set of free variables
 $$\{ y \mid y \in \var_\fV \-  \tGa_\fV\}.$$     
 For any polynomial $f \in \kk[y]_{y \in \var_\fV}$, we let $f|_{\tGa_\fV}$ be obtained from
 $f$ by setting all variables in  $\tGa^\zero_\fV$ to be  0
 and setting  all variables in  $\tGa^\zero_\fV$ to be 1.
 This way,  $f|_{\tGa_\fV}$
 becomes a polynomial over $\fV_\Ga$.
  For any subset $P$ of polynomials over $\fV$, we let
$P|_{\tGa_\fV}=\{f|_{\tGa_\fV} \mid f \in P\}.$ This way, we have
$\cB^\gov_\fV|_{\tGa_\fV}, \cB^\frb_\fV|_{\tGa_\fV}$,  etc.
 
  Then, $\tZ_{\ell, \Ga}\cap \fV$ 
  can be identified with the closed subscheme of $\fV_\Ga$ defined by
 \begin{eqnarray} 
\cB^\gov_\fV|_{\tGa_\fV}, \; \cB^\frb_\fV|_{\tGa_\fV}, \; L_{\sF, \fV}|_{\tGa_\fV}. \;\;\; \label{eq-for-Ga-reduced}
 \end{eqnarray}

Now, we introduce the following maximal minor of the Jacobian $J(\fG_\fV|_{\tGa_\fV})$
 \begin{equation}\label{the-grand-matrix}  
J^*(\fG_\fV|_{\tGa_\fV})=\left(
\begin{array}{cccccccccc}
 J^*(\fG_{\fV,F_1}|_{\tGa_\fV})  & 0 & 0& \cdots & 0  \\
 * & J^*(\fG_{\fV,F_2}|_{\tGa_\fV}) &  0 & \cdots & 0  \\
\vdots &    \\
 * &  * & * & \cdots & J^*(\fG_{\fV, F_\up}|_{\tGa_\fV}) \\
 \end{array}
\right).
\end{equation}
By Lemmas \ref{max-minor},
all the blocks along diagonal are invertible at $\bz$; the entries in the upper right blocks
are due the fact that the variables used to compute the diagonal blocks are all pleasant. 
Therefore,  \eqref{the-grand-matrix}  is a square matrix of full rank at the point $\bz$.
We need to point out here that the terminating variables that we use to compute diagonal blocks
as in Lemma \ref{max-minor} can not belong to $\tGa_\fV^{=1}$ because 
when the variables of $\tGa_\fV^{=1}$ are introduced, the corresponding governing binomial relation
must not terminate by consrtuction; they obviously do not belong to $\tGa_\fV^{=0}$.

\smallskip
 Now, we begin to prove that $\tZ_{\ell,\Ga}$ is smooth.

 First, we consider the case when $\Ga=\emptyset$. In this case, we have
  $Z_\emptyset=\bU \cap \Gr^{3, E}$ and
$\tZ_{\ell,\emptyset }=\tsV_\ell$. 

As earlier, we fix and consider an arbitrary closed point $\bz \in \fV \subset \tsV_\ell$
 where $\fV$ is a preferred admissible affine chart of $\tsR_\ell$.

We let $J:=J(\cB^\frb, \cB^\gov_\fV, L_{\fV,\sF})$ be the full Jacobian
 of all the defining equations of $\tsV_\ell \cap \fV$ in $\fV$.
We let  $J^*:=J^*(\fG_\fV)$ 
be the matrix of \eqref{the-grand-matrix} in the case of $\Ga=\emptyset$.
at the given point $\bz \in \tsV_\ell$.  (The maximal minor $J^*$ depends on the point $\bz$.)
Let $T_\bz (\tsV_\ell)$ be the Zariski tangent space of
$\tsV_\ell$ at $\bz$. Then,
we have
$$\dim T_\bz (\tsV_\ell)= \dim \tsR_\ell- \rk J(\bz)\le \dim \tsR_\ell- \rk J^*(\bz)$$
$$= \dim \bU + |\cB^\gov|  - (|\cB^\gov| + \up)
= \dim \bU  - \up = \dim \tsV_\ell,$$
where  $\dim \tsR_\ell=\dim \bU + |\cB^\gov|$ by \eqref{dim} and
$\rk J^*(\bz) =|\cB^\gov| + \up$ by \eqref{the-grand-matrix}.
Hence, $\dim T_\bz (\tsV_\ell)  = \dim \tsV_\ell,$
thus, $\tsV_\ell$ is smooth at $\bz$. Therefore, $\tsV_\ell$ is smooth.

Consequently,  one sees that on any preferred admissible affine chart $\fV$ of
the  scheme $\tsR_\ell$, all the relations of $\cB^\frb_\fV$
 must lie in the ideal generated by relations of $\cB^\gov_\fV$ and 
$L_{\sF, \fV}$,  
 thus,   can be discarded from the chart $\fV$.

 Now, we return to a general  subset $\Ga$ of $\var_{\bU}$
 as stated in the theorem.
 
Again, we fix and consider any closed point $\bz \in \fV \subset \tZ_{\ell,\Ga}$
 where $\fV$ is a preferred admissible affine chart of $\tsR_\ell$ for the point.

  By the previous paragraph (immediately after proving that $\tsV_\ell$ is smooth),
  over any preferred admissible affine chart $\fV$ of $\tsR_\ell$
  with $\tZ_{\ell,\Ga} \cap \fV \ne \emptyset$, we can discard 
   $\cB^\frb_\fV|_{\tGa_\fV}$
   from the defining equations
 of $\tZ_{\ell,\Ga} \cap \fV$ and focus only on
 the equations of $\cB^\gov_\fV|_{\tGa_\fV}$ and 
 $L_{\fV,  \sF}|_{\tGa_\fV}$.
 In other words, 
  $\tZ_{\ell,\Ga} \cap \fV$, if nonempty, as a closed subcheme of $\fV_\Ga$
  (which depends on both $\Ga$ and the point $\bz$), is defined by the equations in 
 $$\cB^\gov_\fV|_{\tGa_\fV}, \;\; L_{\sF, \fV}|_{\tGa_\fV}.$$

 Then, by  \eqref{the-grand-matrix}, the rank of the full Jacobian of 
 $\cB^\gov_\fV|_{\tGa_\fV}$ and $L_{\sF, \fV}|_{\tGa_\fV}$
 equals to the number of the above defining equations at the closed point $\bz$ 
 of $\tZ_{\ell,\Ga} \cap \fV$.
 Hence, $\tZ_{\ell,\Ga}$ is smooth at $\bz$, thus,  so is $\tZ_{\ell,\Ga}$.

This proves the theorem.
\end{proof}

Let $X$ be an integral scheme.  We say $X$ admits a resolution  if there exists  a smooth
scheme $\tX$ and a projective  birational morphism
from $\tX$ onto $X$.

\begin{thm}\label{cor:main} 
Let $\Ga$ be any subset $\var_{\bU}$.
Assume that $Z_\Ga$ is integral. Then, the morphism
 $\tZ^\dagger_{\ell, \Ga} \to Z_{\Ga}$ can be decomposed as
\begin{equation}\label{decom}
\tZ^\dagger_{\vr, \Ga} \to \cdots 
\to \tZ^\dagger_{\hs,\Ga}  \to \tZ^\dagger_{\hs',\Ga} \to \cdots \to
Z^\dagger_{\sF_{[j]},\Ga}  \to Z^\dagger_{\sF_{ [j-1]},\Ga} \to \cdots \to Z_\Ga
\end{equation}
such that every morphism $\tZ^\dagger_{\hs,\Ga}  \to \tZ^\dagger_{\hs',\Ga}$
in the sequence is $\tZ^\dagger_{\vt_{[k]},\Ga}  \to \tZ^\dagger_{\vt_{ [k-1]},\Ga}$ for some $k \in [\up]$, or
$ \tZ^\dagger_{(\wp_{(k\tau)}\fr_\mu\fs_{h}),\Ga}
 \to \tZ^\dagger_{(\wp_{(k\tau)}\fr_\mu \fs_{h-1}),\Ga}$ for some $(k\tau) \mu h \in \Index_{\Phi_k}$, or
$ \tZ^\dagger_{\ell_k,\Ga} \to \tZ^\dagger_{\wp_k} $ for some $k \in [\up]$.
Further, every morphism in the sequence is surjective, projective, and  birational.  
In particular, $\tZ^\dagger_{\ell, \Ga} \to Z_\Ga$ 
is a resolution if $Z_\Ga$ is singular.
\end{thm}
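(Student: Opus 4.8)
\textbf{Proof plan for Theorem \ref{cor:main}.}
The plan is to assemble the decomposition \eqref{decom} by stringing together, in order, the three families of birational transforms of $\Ga$-schemes that were already constructed in \S\ref{Gamma-schemes}, and then to invoke the smoothness result of Theorem \ref{main-thm} to conclude that the composition is a resolution. First I would recall that the whole tower of ambient blowups $\tsR_\ell \to \cdots \to \tsR_{\vt_{[1]}} \to \sR \to \sR_{\sF_{[\up-1]}} \to \cdots \to \bU$ decomposes into the $\sF$-forgetful maps, the $\vt$-blowups \eqref{vt-sequence}, the $\wp$-blowups \eqref{grand-sequence-wp}, and the $\ell$-blowups, and that on the level of the main models $\sV_{[k]}$, $\tsV_{\vt_{[k]}}$, $\tsV_{(\wp_{(k\tau)}\fr_\mu\fs_h)}$, $\tsV_{\ell_k}$ these morphisms are all surjective, projective, and birational (Lemmas \ref{lift-action-to-Um[k]}, the construction in \S\ref{vr-centers}, Proposition \ref{meaning-of-var-wp/ell}, Corollary \ref{ell-isom}). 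For each stage, Lemmas \ref{wp-transform-sVk-Ga}, \ref{vt-transform-k}, and \ref{wp/ell-transform-ktauh} produce a subscheme $Z_{\bullet,\Ga}$ together with a distinguished irreducible component $Z^\dagger_{\bullet,\Ga}$ and an induced morphism $Z^\dagger_{\bullet,\Ga}\to Z_\Ga$ which is \emph{birational} by part (2) of each of those lemmas. Chaining the induced morphisms $Z^\dagger_{\sF_{[j]},\Ga}\to Z^\dagger_{\sF_{[j-1]},\Ga}$, then $\tZ^\dagger_{\vt_{[k]},\Ga}\to \tZ^\dagger_{\vt_{[k-1]},\Ga}$, then $\tZ^\dagger_{(\wp_{(k\tau)}\fr_\mu\fs_h),\Ga}\to \tZ^\dagger_{(\wp_{(k\tau)}\fr_\mu\fs_{h-1}),\Ga}$, and finally $\tZ^\dagger_{\ell_k,\Ga}\to \tZ^\dagger_{\wp_k,\Ga}$, and noting that the endpoints match by the notational conventions ($\tZ^\dagger_{\vt_{[0]},\Ga}=Z^\dagger_{\sF_{[\up]},\Ga}$, $\tZ^\dagger_{(\wp_{(11)}\fr_1\fs_0),\Ga}=\tZ^\dagger_{\vt,\Ga}$, $\ell_k$ followed by $\wp_{k+1}$-blowups, etc.), gives exactly the sequence \eqref{decom}.

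Next I would verify each arrow in \eqref{decom} is surjective, projective, and birational. Birationality of each individual arrow follows from a two-out-of-three argument: both $\tZ^\dagger_{\hs,\Ga}\to Z_\Ga$ and $\tZ^\dagger_{\hs',\Ga}\to Z_\Ga$ are birational by the relevant lemma, and the triangle commutes by construction, so $\tZ^\dagger_{\hs,\Ga}\to \tZ^\dagger_{\hs',\Ga}$ is birational. Projectivity: each such morphism is, by construction in \S\ref{subsection:wp-transform-sfk}--\S\ref{subsection:wp-transform-vsk}, either the restriction of a blowup morphism $\tsR_{\hs}\to\tsR_{\hs'}$ (or a forgetful projection $\sR_{\sF_{[j]}}\to\sR_{\sF_{[j-1]}}$) to a closed subscheme, possibly followed by restriction to an irreducible component and/or a fibre product with a point in the factor projective space (the $(\star b)$ case), all of which are projective morphisms; alternatively one can note each is a morphism between projective-over-$Z_\Ga$ (indeed projective-over-$\bU$) schemes that is birational, hence in particular projective. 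Surjectivity then follows since each arrow is a projective, dominant (birational) morphism onto a reduced target, so its image is closed and dense, hence everything.

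Finally, to conclude that $\tZ^\dagger_{\ell,\Ga}\to Z_\Ga$ is a resolution when $Z_\Ga$ is singular, I would invoke Theorem \ref{main-thm}: $\tZ_{\ell,\Ga}$ is smooth over $\FF$, hence so is its connected (equivalently irreducible, since smooth components are disjoint) component $\tZ^\dagger_{\ell,\Ga}$; and by the composition of the arrows just described, $\tZ^\dagger_{\ell,\Ga}\to Z_\Ga$ is surjective, projective, and birational. A smooth scheme equipped with a projective birational morphism onto $Z_\Ga$ is, by definition, a resolution; since $Z_\Ga$ is singular this morphism is not an isomorphism, so it is a genuine resolution of singularities. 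I expect the only mildly delicate point to be bookkeeping — making sure the endpoint identifications between consecutive stages (the $\sF$-stage feeding into the $\vt$-stage, and the $\ell_k$-stage feeding into the $\wp_{k+1}$-stage) are exactly as dictated by the conventions set up in \S\ref{vt-blowups} and \S\ref{sect:wp/ell-blowups}, and that the ``distinguished component $\dagger$'' chosen at each stage is the one that is carried forward at the next (which is built into the construction: $\tZ^\dagger_{\bullet,\Ga}$ is always defined as the closure of the preimage of $\tZ^{\dagger\circ}$ of the previous stage). No new geometric input is needed beyond what the cited lemmas already supply; the argument is purely a matter of concatenation and the formal properties of projective birational morphisms.
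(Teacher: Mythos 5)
Your proposal is correct and takes essentially the same approach as the paper: the paper's proof simply cites the smoothness from Theorem \ref{main-thm} and the construction in Lemmas \ref{wp-transform-sVk-Ga}, \ref{vt-transform-k}, and \ref{wp/ell-transform-ktauh}, which is exactly the chain you assemble. You spell out the intermediate bookkeeping (two-out-of-three for birationality, projectivity via restriction of blowups/forgetful maps, surjectivity from proper + dominant) that the paper leaves implicit, but no new idea or different route is involved.
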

\begin{proof} The smoothness of $\tZ^\dagger_{\ell, \Ga}$ follows from Theorem \ref{main-thm};
the decomposition of $\tZ^\dagger_{\ell, \Ga} \to Z_{\Ga}$ follows from
Lemmas \ref{wp-transform-sVk-Ga},  \ref{vt-transform-k},
 \ref{wp/ell-transform-ktauh}.
\end{proof}

In Part II, we will apply the above to obtain resolution for singular
affine or projective variety $X$ over a perfect field.

\section{Appendix:
Mn\"ev's universality \`a la Lafforgue}\label{universality} $\ $

For readers' convenience, we  review here
Lafforgue's presentation of \cite{La03} on Mn\"ev's universality theorem.

As before, suppose we have a set of vector spaces, 
$E_1, \cdots, E_n$ such that 
$E_\alpha$ is of dimension 1 (or, a free module of rank 1 over $\ZZ$).
 We let 
$$E_I = \bigoplus_{\alpha \in I} E_\alpha, \;\; \forall \; I \subset [n],$$
$$E:=E_{[n]}=E_1 \oplus \ldots \oplus E_n.$$ 
(Lafforgues \cite{La03} considers the  more general case by allowing $E_\alpha$ to be
of any  finite dimension.)

For any fixed  integer $1\le d <n$, the Grassmannian
$$\Gr^{3, E}=\{ F \hookrightarrow E \mid \dim F=d\}$$
 decomposes into a disjoint union of locally closed strata
$$\Gr^{3, E}_\ud=\{ F \hookrightarrow E \mid \dim (F\cap E_I)=d_I,  \;\; \forall \; I \subset [n] \}$$
indexed by the family  $\ud=(d_I)_{I \subset [n]}$ of nonnegative integers $d_I \in \NN$ verifying

$\bullet$ $d_\emptyset=0, d_{[n]}=d$,

$\bullet$ $d_I +d_J\le d_{I\cup J} + d_{I \cap J}$, for all $I, J \subset [n]$.

The family $\ud$ is called a matroid of rank $d$ on the set $[n]$.
The stratum $\Gr^{3, E}_\ud$ is called a  matroid Schubert cell.

The Grassmannian $\Gr^{3, E}$ comes equipped with the (lattice) polytope
$$\Delta^{d,n} =\{ (x_1, \cdots, x_n) \in {\mathbb R}^n \mid 0 \le x_\alpha\le 1, \;
\forall \; \alpha; \; x_1 +\cdots + x_n = d\}.$$
For any $\ui=(i_1,\cdots,i_3) \in \II_{3,n}$, we let $\bx_\ui = (x_1, \cdots, x_n)$ be defined by
\begin{equation}\label{eta-L-2}
\left\{ 
\begin{array}{lcr}
x_i=1, &  \hbox{if $i \in \ui$,} \\
x_i=0, & \hbox{otherwise}.
\end{array} \right.
\end{equation}
It is known that  $\Delta^{d,n} \cap \NN^n =\{\bx_\ui \mid \ui \in \II_{3,n}\}$ and
it consists of precisely the vertices of the polytope $\Delta^{d,n}$.

Then, the matroid $\ud=(d_I)_{I \subset [n]}$ above defines the 
following subpolytope of $\Delta^{d,n}$
$$\Delta^{d,n}_\ud =\{ (x_1, \cdots, x_n) \in \Delta^{d,n} \mid  \sum_{\alpha \in I} x_\alpha \ge d_I, \; \forall \; I \subset [n]\}.$$
This is called the matroid subpolytope of $\Delta^{d,n}$ corresponding to $\ud$.

Recall that we have a canonical decomposition
$$\wedge^3 E=\bigoplus_{\ui \in \II_{3,n}} E_{i_1}\otimes \cdots \otimes E_{i_3}$$
and it gives rise to the $\pl$ embedding of the Grassmannian
$$\Gr^{3, E} \hookrightarrow \PP(\wedge^3 E)=\{(p_\ui)_{\ui \in \II_{3,n}} \in \GG_m 
\backslash (\wedge^3 E  \- \{0\} )\}.$$

\begin{prop}\label{to-Ga} {\rm (Proposition, p4, \cite{La03})} 
Let $\ud$ be any matroid of rank $d$ on the set $[n]$ as considered above.
Then, in the Grassmannian 
$$\Gr^{3, E} \hookrightarrow \PP(\wedge^3 E)=\{(p_\ui)_{\ui \in \II_{3,n}} \in \GG_m 
\backslash (\wedge^3 E \- \{0\} )\},$$
the  matroid Schubert cell $\Gr^{3, E}_\ud$, as a locally closed subscheme, is defined by
$$p_\ui = 0, \;\;\; \forall \; \bx_\ui \notin \Delta^{d,n}_\ud ,$$ 
$$p_\ui \ne 0, \;\;\; \forall \; \bx_\ui \in \Delta^{d,n}_\ud . $$   
\end{prop}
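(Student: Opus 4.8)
The statement to be proved is Proposition \ref{to-Ga}, which identifies the matroid Schubert cell $\Gr^{3,E}_\ud$ inside the $\pl$ projective space as the locally closed subscheme cut out by the vanishing of exactly those $\pl$ coordinates $p_\ui$ whose associated lattice vertex $\bx_\ui$ lies outside the matroid subpolytope $\Delta^{d,n}_\ud$, together with the non-vanishing of the remaining ones. The plan is to unwind the definitions on both sides and match them. First I would recall that, by definition, a point of $\Gr^{d,E}$ corresponds to a $d$-dimensional subspace $F \hookrightarrow E$, and its image under the $\pl$ embedding has coordinate $p_\ui \neq 0$ precisely when the projection $F \to E_{i_1}\oplus\cdots\oplus E_{i_d}$ (equivalently, $E/E_{[n]\setminus\ui} \cong E_\ui$) is an isomorphism, i.e.\ when $F\cap E_{[n]\setminus \ui}=0$ and $F$ surjects onto $E_\ui$. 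So the support $\{\ui : p_\ui(F)\neq 0\}$ is the set of \emph{bases} of the realized matroid of $F$.

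The key step is then the combinatorial identity: for a matroid $\ud$ of rank $d$ on $[n]$, a $d$-subset $\ui\subset[n]$ is a basis of $\ud$ if and only if $\bx_\ui \in \Delta^{d,n}_\ud$. This is the standard description of the matroid polytope (Gelfand--Goresky--MacPherson--Serganova): the vertices of $\Delta^{d,n}_\ud$ are exactly the indicator vectors of the bases, and since $\Delta^{d,n}_\ud \cap \NN^n$ consists only of such vertices (being a face-like subpolytope of the hypersimplex all of whose lattice points are vertices of $\Delta^{d,n}$), the condition $\bx_\ui \in \Delta^{d,n}_\ud$ is equivalent to $\sum_{\alpha\in I} (\bx_\ui)_\alpha \ge d_I$ for all $I$, i.e.\ $|\ui \cap I| \ge d_I$ for all $I\subset[n]$. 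I would verify this last equivalence directly: if $F$ realizes the matroid $\ud$, then $\dim(F\cap E_I)=d_I$, and $F\cap E_{[n]\setminus\ui}=0$ forces, for every $I$, $\dim(F\cap E_I) \le \dim(F/(F\cap E_{[n]\setminus\ui})) $ restricted appropriately — more cleanly, one shows $|\ui\cap I|\ge d_I$ is exactly the condition that no submodular inequality obstructs $\ui$ from being a basis, which is the matroid basis exchange characterization in polytopal form.

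Having established that the support of $p_\bullet$ on $\Gr^{d,E}_\ud$ is precisely $\{\ui : \bx_\ui \in \Delta^{d,n}_\ud\}$, the proposition follows: the closed conditions $p_\ui=0$ for $\bx_\ui\notin\Delta^{d,n}_\ud$ together with $\Gr^{d,E}\subset\PP(\wedge^d E)$ carve out a closed subscheme, and intersecting with the open conditions $p_\ui\neq 0$ for $\bx_\ui\in\Delta^{d,n}_\ud$ gives a locally closed subscheme whose $\kk$-points are exactly those $F$ with realized matroid equal to $\ud$, which is by definition $\Gr^{d,E}_\ud$. One should add a brief remark that the scheme structure matches — the stratum $\Gr^{d,E}_\ud$ is \emph{defined} as the locally closed subscheme with these incidence conditions, so there is nothing further to check beyond the identification of the index sets. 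I expect the main obstacle to be the clean statement and proof of the polytopal matroid-basis identity (the equivalence of $\bx_\ui\in\Delta^{d,n}_\ud$ with $\ui$ being a basis, via the submodular inequalities); this is classical but requires care about whether one needs $\ud$ to arise from an actual realization $F$ or is purely combinatorial. Since here $\ud$ is by hypothesis the matroid of a point of $\Gr^{d,E}$, the realization is available and the argument is somewhat lighter, but I would still present the polytope fact as the crux.
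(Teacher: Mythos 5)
The paper does not prove this proposition itself — it cites it verbatim from Lafforgue's \emph{Chirurgie des grassmanniennes} (Proposition, p.~4 of \cite{La03}) — so there is no paper proof to compare against. Your outline is essentially the correct one (it is the standard Gelfand--Goresky--MacPherson--Serganova matroid-polytope argument), but the pivotal step, the equivalence ``$\bx_\ui \in \Delta^{d,n}_\ud$ $\iff$ $p_\ui(F)\neq 0$ for $F\in\Gr^{d,E}_\ud$,'' is left in a hand-wavy form (``one shows \dots is exactly the condition that no submodular inequality obstructs $\ui$ from being a basis''). The clean version is short and worth spelling out, because it uses nothing deep. Recall $d_I = \dim(F\cap E_I)$, and $p_\ui(F)\ne 0$ iff $\pi_\ui|_F\colon F\to E_\ui$ is an isomorphism, i.e.\ iff $F\cap E_{[n]\setminus\ui}=0$, i.e.\ iff $d_{[n]\setminus\ui}=0$. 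For the direction $\bx_\ui\in\Delta^{d,n}_\ud\Rightarrow p_\ui\ne0$: take $I=[n]\setminus\ui$ in the defining inequality $|\ui\cap I|\ge d_I$ to get $0\ge d_{[n]\setminus\ui}$, hence $d_{[n]\setminus\ui}=0$. For the converse: if $F\cap E_{[n]\setminus\ui}=0$ then $\pi_\ui|_F$ is injective, and for any $I$ the subspace $F\cap E_I$ maps injectively into $\pi_\ui(E_I)=E_{\ui\cap I}$, so $d_I=\dim(F\cap E_I)\le |\ui\cap I|$, which is exactly $\bx_\ui\in\Delta^{d,n}_\ud$. With this inserted, the rest of your argument (identification of the set-theoretic support and the remark that the locally closed scheme structure is a matter of unwinding the definition of $\Gr^{d,E}_\ud$) goes through. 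Note also that the ``polytope fact'' you worried about — that the lattice points of $\Delta^{d,n}_\ud$ are precisely the basis indicator vectors — is not actually needed here in the abstract form: the statement is only about the indicator vectors $\bx_\ui$ with $\ui\in\II_{d,n}$ (all of which are automatically lattice points of the hypersimplex), and for each such $\ui$ one checks membership in $\Delta^{d,n}_\ud$ directly against the inequalities $\sum_{\alpha\in I}x_\alpha\ge d_I$, which is what the two-line argument above does.
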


Let $\ud=(d_I)_{I \subset [n]}$ be a matroid of rank $d$ on the set $[n]$ as above. 
Assume that $\ud_{[n]\setminus \{\alpha\}} =d-1$ for all $1\le \alpha\le n$.
Then, the configuration space $C^{d,n}_\ud$ defined by the matroid $\ud$ is the classifying scheme
of families of $n$ points
$$P_1, \cdots, P_n$$
on the projective space $\PP^{d-1}$ such that for any nonempty subset $I \subset [n]$,
the projective subspace $P_I$ of $\PP^{d-1}$ generated by the points $P_\alpha, \alpha \in I$, is
of dimension 
$$\dim P_I = d-1 -\ud_I.$$

\begin{thm}\label{Mn-La} {\rm (Mn\"ev, Theorem I. 14, \cite{La03})}
Let $X$ be an affine scheme of finite type over $\Spec \ZZ$.
Then, there exists a matroid $\ud$ of rank 3 on
the set $[n]$ such that $\PGL_3$ acts freely on the configuration space $C^{3,n}_\ud$.
Further, there exists a positive integer $r$ and
 an open subset $U \subset X \times \AA^r$ projecting surjectively
onto $X$ such that $U$ is isomorphic to the quotient space
${\underline C}^{3,n}_\ud :=C^{3,n}_\ud/\PGL_3$.
\end{thm}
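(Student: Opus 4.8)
The final statement is the Mn\"ev universality theorem in Lafforgue's formulation, and since this is the expository appendix, the plan is to follow \cite{La03} and reduce the assertion to the classical "algebra of throws" / von Staudt constructions. The starting point would be a presentation of $X$ as a closed subscheme of $\AA^N_{\ZZ}=\Spec\ZZ[t_1,\dots,t_N]$ cut out by finitely many polynomials $f_1,\dots,f_m$. First I would rewrite each relation $f_j=0$ as a straight-line program: a finite sequence of elementary ternary relations of the form $c=a+b$ and $c=a\cdot b$ among the $t_i$ and a collection of new auxiliary variables, one for each intermediate value. This step is purely formal, and it is exactly where the auxiliary factor $\AA^r$ enters (the $r$ new variables) and where the need for an \emph{open} subset is foreshadowed, since a straight-line reduction is valid only on the locus where all intermediate quantities are defined and the relevant denominators are invertible.

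The next step is the geometric encoding. I would fix a projective frame in $\PP^2$ (four points in general position, a distinguished line, and a unit point), recall the von Staudt constructions that realize addition and multiplication of affine coordinates inside $\PP^2$ purely through incidences of points and lines, and replace each elementary relation $c=a+b$ or $c=a\cdot b$ by a fixed small incidence pattern on a bounded number of new points. Dualizing lines to points and amalgamating all these patterns produces a single rank-$3$ matroid $\ud$ on a set $[n]$ with $\ud_{[n]\setminus\{\alpha\}}=2$ for every $\alpha$. One must check that a realization of $\ud$ over an arbitrary base ring records precisely a tuple $(t_i)$ together with the auxiliary data satisfying all $f_j=0$, and — the delicate point — that a generic realization exhibits \emph{no} incidences beyond those prescribed by $\ud$; imposing this last condition is precisely what cuts out the matroid Schubert stratum $\Gr^{3,E}_\ud$ (Proposition \ref{to-Ga}) and, dually, the configuration space $C^{3,n}_\ud$, and it is the reason one lands in an open subset $U\subset X\times\AA^r$ rather than all of $X\times\AA^r$.

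The remaining bookkeeping: because the frame points are in general position, the stabilizer of any realization is trivial, so $\PGL_3$ acts freely on $C^{3,n}_\ud$ and $\underline{C}^{3,n}_{\ud}:=C^{3,n}_{\ud}/\PGL_3$ is a scheme over $\Spec\ZZ$; unwinding the construction identifies it with the locally closed subscheme of $X\times\AA^r$ where every von Staudt construction used is non-degenerate, which is the open set $U$, and $U\to X$ is surjective because for each point of $X$ the auxiliary parameters can be chosen generically. Finally, via the Gelfand--MacPherson correspondence one identifies $\underline{C}^{3,n}_{\ud}$ with the quotient of the matroid Schubert cell $\Gr^{3,E}_\ud$ by the maximal torus $\TT$, which is the form (Theorem \ref{Mn-La-Gr}, cf. \eqref{ud=Ga}) actually used in the body of the paper.

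I expect the main obstacle to be the genericity management that underlies the open locus $U$: one must arrange the straight-line reduction and the choice of matroid so that (i) the non-degeneracy conditions are simultaneously satisfiable over every closed point of $X$, giving surjectivity of $U\to X$, and (ii) a realization with the prescribed incidences acquires no extra incidences over \emph{any} base ring, including in positive characteristic. Point (ii) is where working over $\ZZ$ rather than an algebraically closed field demands extra care, and it is the feature that Lafforgue's matroid-theoretic formulation is designed to make transparent; so in practice this paragraph of the appendix would cite \cite{La03} for the precise verification rather than reproduce it.
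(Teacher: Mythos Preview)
Your sketch is a correct outline of the standard proof of Mn\"ev universality (straight-line programs, von Staudt constructions, genericity bookkeeping), and you are right that this is essentially what is carried out in \cite{La03}. However, the paper itself gives \emph{no} proof of this statement: Theorem~\ref{Mn-La} appears in the expository appendix purely as a citation of Theorem~I.14 of \cite{La03}, stated for the reader's convenience alongside Proposition~\ref{to-Ga} and Theorems~\ref{GM} and~\ref{Mn-La-Gr}, all without argument. So there is nothing to compare against --- your proposal supplies far more than the paper does, and the paper's ``proof'' is simply the attribution to Lafforgue.
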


\begin{thm}\label{GM} {\rm (Gelfand, MacPherson, Theorem I. 11, \cite{La03})}
Let $\ud$ be any matroid of rank $d$ on the set $[n]$ as considered above.
Then, the action of $\PGL_{d-1}$ on $C^{d,n}_\ud$ is free if and only if
$\dim_{\mathbb R} \Delta^{d,n}_\ud =n-1$. Similarly, 
 the action of $\GG_m^n/\GG_m$ on $\Gr^{d,n}_\ud$ is free if and only if
$\dim_{\mathbb R} \Delta^{d,n}_\ud =n-1$. 
Further, when $\dim_{\mathbb R} \Delta^{d,n}_\ud =n-1$,  the quotient
$C^{d,n}_\ud/\PGL_{d-1}$ can be canonically identified with  the quotient
$\Gr^{3, E}_\ud/(\GG_m^n/\GG_m )$.
\end{thm}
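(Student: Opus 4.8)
The plan is to prove all three assertions simultaneously by means of the Gelfand--MacPherson correspondence, realizing the two quotients in question as a single quotient of an auxiliary incidence variety and then reading off freeness from the combinatorics of the matroid polytope $\Delta^{d,n}_\ud$.

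First I would set up the incidence variety. Let $M_{d,n}$ be the variety of $d\times n$ matrices over $\kk$ of rank $d$ with no zero column; $\mathrm{GL}_d$ acts on the left and $(\GGm)^n$ acts on the right by column scaling, and the two actions commute. Taking the span of the rows identifies $M_{d,n}/\mathrm{GL}_d$ with the open subset of $\Gr^{d,n}$ of subspaces $F$ with $d_{[n]\setminus\{\alpha\}}=d-1$ for all $\alpha$ (the standing hypothesis on the matroids considered); taking the $n$ points of $\PP^{d-1}$ cut out by the columns identifies $M_{d,n}/(\GGm)^n$ with the corresponding configuration space, on which the residual $\mathrm{GL}_d$-action descends to $\PGL_{d-1}$. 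The next step is the incidence dictionary: for $F$ the row span of $A$ one has $F\cap E_I=\bigcap_{\alpha\notin I}\ker(F\to E_\alpha)$, so the condition $\dim(F\cap E_I)=d_I$ for all $I$ translates into a condition on the dimensions of the projective spans of the columns of $A$ — matching Proposition \ref{to-Ga}'s description of the matroid Schubert cell on one side and the defining incidences of $C^{d,n}_\ud$ on the other. Consequently the locus $M_\ud\subset M_{d,n}$ where all these incidences hold is $\mathrm{GL}_d\times(\GGm)^n$-invariant, nonempty (it surjects onto $\Gr^{d,n}_\ud\neq\emptyset$), and restricting the two projections gives canonical identifications
\[
\Gr^{d,n}_\ud/(\GGm^n/\GGm)\;\cong\;M_\ud/(\mathrm{GL}_d\times(\GGm)^n)\;\cong\;C^{d,n}_\ud/\PGL_{d-1},
\]
which is the last assertion, once the first two parts guarantee that these are geometric quotients.

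Second I would reduce both freeness criteria to one stabilizer computation. Since $\mathrm{GL}_d$ and $(\GGm)^n$ act freely on $M_{d,n}$ with the central $\GGm=\{(sI,(s,\dots,s))\}$ acting trivially on each others' quotient, freeness of $\TT=\GGm^n/\GGm$ on $\Gr^{d,n}_\ud$, freeness of $\PGL_{d-1}$ on $C^{d,n}_\ud$, and triviality modulo this central $\GGm$ of the $\mathrm{GL}_d\times(\GGm)^n$-stabilizer of every point of $M_\ud$ are all equivalent. To evaluate the last condition I would work on the $\Gr^{d,n}_\ud$ side: in the Pl\"ucker embedding $\TT$ acts on $p_\ui$ through the character $\bx_\ui$, and by Proposition \ref{to-Ga} the support of $[F]\in\Gr^{d,n}_\ud$ is exactly $\{\ui:\bx_\ui\in\Delta^{d,n}_\ud\}$, the vertex set of the matroid polytope; hence $t\in\TT$ fixes $[F]$ iff $\mathbf t^{\,\bx_\ui-\bx_\uj}=1$ for all $\ui,\uj$ in the support, so the stabilizer is the diagonalizable group cut out by the sublattice $\Lambda$ generated by these differences, and it depends only on the cell. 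Since the $\bx_\ui$ lie on $\{\sum x_\alpha=d\}$, $\Lambda\otimes\mathbb R$ has dimension $\dim_{\mathbb R}\Delta^{d,n}_\ud\le n-1$, so the stabilizer is nontrivial whenever $\dim_{\mathbb R}\Delta^{d,n}_\ud<n-1$; conversely, using the standard facts that the edges of a matroid polytope lie in directions $e_\alpha-e_\beta$ and its $1$-skeleton is connected, full-dimensionality forces the edge differences alone to generate the full character lattice of $\TT$, hence $\Lambda$ is everything and the stabilizer is trivial. This gives all three equivalences with the single condition $\dim_{\mathbb R}\Delta^{d,n}_\ud=n-1$.

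The main obstacle I expect is the $\PGL_{d-1}$ side, because the stabilizer of a point configuration is not visibly governed by $\Delta^{d,n}_\ud$: the argument must genuinely route through the identification of the two quotients and keep careful track of which one-parameter subgroups of $\mathrm{GL}_d$ and of $(\GGm)^n$ act identically on $M_\ud$, so that one combinatorial condition controls both. Making precise that all the quotients in the displayed chain are honest geometric (not merely categorical) quotients in this free case, together with the matroid-polytope input invoked above, are the remaining non-formal points; the incidence dictionary and the cocharacter computation itself are routine linear algebra.
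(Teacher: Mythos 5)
The paper does not prove Theorem \ref{GM}: it is quoted as ``Theorem I.11'' from Lafforgue's monograph \cite{La03}, and \S\ref{universality} is explicitly a review appendix whose purpose is to \emph{state}, not to prove, the Gelfand--MacPherson and Mn\"ev results that feed into Theorems \ref{Mn-La} and \ref{Mn-La-Gr}. So there is no ``paper's own proof'' to compare against; what you have written is a reconstruction of the external result.

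On its own merits your sketch is the standard Gelfand--MacPherson argument and it is essentially sound. The incidence variety $M_{d,n}$ with commuting free left $\mathrm{GL}_d$-action and right $(\GGm)^n$-action, the identification of both quotients through $M_\ud$, the reduction of the two freeness statements to a single stabilizer computation on the incidence level (using that both group actions are free on $M_{d,n}$ and that the central $\GGm$ is the common kernel), and the torus-stabilizer computation on the Grassmannian side via Pl\"ucker characters are all correct. The one place that genuinely needs care -- and you do flag it -- is the saturation issue: full-dimensionality of $\Delta^{d,n}_\ud$ only gives that the sublattice $\Lambda$ spanned by the differences $\bx_\ui-\bx_\uj$ over the support has full rank in $\QQ\otimes A_{n-1}$, whereas triviality of the stabilizer group scheme requires $\Lambda = A_{n-1}$ on the nose. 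Your appeal to the Gelfand--Goresky--MacPherson--Serganova facts that matroid-polytope edges lie in root directions $e_\alpha-e_\beta$ and that the $1$-skeleton of any polytope is connected does close this: the graph on $[n]$ whose edges are the occurring root directions must be connected when $\Lambda$ has full rank, a spanning tree then furnishes a $\ZZ$-basis of $A_{n-1}$, and so $\Lambda=A_{n-1}$. Two minor points if you write this out. First, the group acting on $C^{d,n}_\ud$ (configurations in $\PP^{d-1}$) is $\PGL_d$, the $d\times d$ matrices modulo scalars; the statement of Theorem \ref{GM} in the paper carries a ``$\PGL_{d-1}$'' that is inconsistent with the paper's own Theorem \ref{Mn-La}, which uses $\PGL_3$ for rank $3$. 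Second, for the third assertion of Theorem \ref{GM} to be a genuine identification of schemes you need a sentence explaining that in the free case the quotients in your displayed chain exist as geometric quotients; this is standard but should be cited rather than left tacit.
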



By the above correspondence, we have the following equivalent version of Theorem \ref{Mn-La}.

\begin{thm}\label{Mn-La-Gr} {\rm (Mn\"ev, Theorem I. 14, \cite{La03})}
Let $X$ be an affine scheme of finite type over $\Spec \ZZ$.
Then, there exists a matroid $\ud$ of rank 3 on
the set $[n]$ such that $(\GG_m^n/\GG_m )$ acts freely on the  matroid Schubert cell $\Gr^{3,E}_\ud$.
Further, there exists a positive integer $r$ and an open subset $U \subset X \times \AA^r$ projecting 
onto $X$ such that $U$ is isomorphic to the quotient space
$\bGr^{3,E}_\ud:=\Gr^{3,E}_\ud/(\GG_m^n/\GG_m)$.
\end{thm}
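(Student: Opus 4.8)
The plan is to obtain Theorem~\ref{Mn-La-Gr} as a direct translate of Theorem~\ref{Mn-La} under the Gelfand--MacPherson correspondence recorded in Theorem~\ref{GM}; all the arithmetic substance (Mn\"ev's universality itself) already sits inside Theorem~\ref{Mn-La}, and the only work is to pass from the configuration-space description to the Grassmannian description.

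First I would apply Theorem~\ref{Mn-La} to the given affine scheme $X$ of finite type over $\Spec\ZZ$: it furnishes a rank-$3$ matroid $\ud$ on some set $[n]$ for which $\PGL_3$ acts freely on the configuration scheme $C^{3,n}_\ud$, a positive integer $r$, and an open subscheme $U\subset X\times\AA^r$ that projects surjectively onto $X$, together with an isomorphism $U\cong C^{3,n}_\ud/\PGL_3$. This pins down $\ud$, $r$, and $U$ once and for all.

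Next I would invoke Theorem~\ref{GM}. Freeness of the $\PGL_3$-action on $C^{3,n}_\ud$ is equivalent to $\dim_{\mathbb R}\Delta^{3,n}_\ud=n-1$, and this in turn is equivalent to freeness of the quotient torus $\GG_m^n/\GG_m$ on the matroid Schubert cell $\Gr^{3,E}_\ud$; hence the very same $\ud$ produced above has $\GG_m^n/\GG_m$ acting freely on $\Gr^{3,E}_\ud$, which is the first assertion. Furthermore, under this hypothesis Theorem~\ref{GM} provides a canonical identification of quotient schemes $C^{3,n}_\ud/\PGL_3\cong\Gr^{3,E}_\ud/(\GG_m^n/\GG_m)=\bGr^{3,E}_\ud$. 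Composing this with the isomorphism $U\cong C^{3,n}_\ud/\PGL_3$ from the previous step yields $U\cong\bGr^{3,E}_\ud$, while the surjection $U\to X$ and the inclusion $U\subset X\times\AA^r$ are carried along unchanged; this is exactly the second assertion.

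Since both ingredients are cited verbatim, there is no genuine obstacle here, only bookkeeping. The one place I would spend a sentence of care is the matching of the linear groups across the two formulations: the automorphism group of $\PP^{2}$ is $\PGL_3$, so the group written $\PGL_{d-1}$ in the statement of Theorem~\ref{GM} is to be read as $\PGL_d$, which for $d=3$ is $\PGL_3$ and thus agrees with the group appearing in Theorem~\ref{Mn-La}. Beyond that, one should note that the canonical isomorphism of Theorem~\ref{GM} is an isomorphism of schemes over $\ZZ$, so that composing it with the isomorphism of Theorem~\ref{Mn-La} is legitimate, and the transport of the open immersion and of the surjection onto $X$ is purely formal.
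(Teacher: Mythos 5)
Your proposal is correct and follows precisely the path the paper has in mind: the text immediately preceding Theorem~\ref{Mn-La-Gr} simply declares it to be an equivalent version of Theorem~\ref{Mn-La} via the Gelfand--MacPherson correspondence of Theorem~\ref{GM}, which is exactly the translation you spell out. Your sentence about the mismatch between the paper's $\PGL_{d-1}$ in Theorem~\ref{GM} and the $\PGL_3$ of Theorem~\ref{Mn-La} is a welcome clarification of an apparent notational slip (the automorphism group of $\PP^{d-1}$ is $\PGL_d$), and resolving it the way you do is the only reading that makes the two cited theorems compose.
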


Finally, for a given matroid Schubert cell $\Gr^{3,E}_\ud$, we specify its
corresponding $\Ga$ as follows.
Since 
$\Delta^{3,n}_\ud \ne \emptyset$, there exists $\um \in \II_{3,n}$ such that
$\bx_\um \in \Delta^{3,n}_\ud$. (By permutation if necessary, $\um$ can be assumed to be $(123)$.)
We define
\begin{equation}\label{ud=Ga}
\Ga:=\Ga_\ud =\{ \ui \in \II_{3,n} \mid \bx_{\ui} \notin \Delta^{3,n}_\ud \}.
\end{equation}

\bigskip\medskip

Part II will follow.

\end{document}